\newcolumntype{C}{Sc}
\newcommand*{\dt}[1]{%
  \accentset{\mbox{\large\bfseries .}}{#1}}
 \newcommand{\eq}[1][r]
   {\ar@<-3pt>@{-}[#1]
    \ar@<-1pt>@{}[#1]|<{}="gauche"
    \ar@<+0pt>@{}[#1]|-{}="milieu"
    \ar@<+1pt>@{}[#1]|>{}="droite"
    \ar@/^2pt/@{-}"gauche";"milieu"
    \ar@/_2pt/@{-}"milieu";"droite"}
\newcommand{\sk}{\smallskip}
\newcommand{\mk}{\medskip}
\newcommand{\bk}{\bigskip}
\newcommand*{\longhookrightarrow}{\ensuremath{\lhook\joinrel\relbar\joinrel\rightarrow}}
\newcommand{\smxylabel}[1]{{\text{\tiny$#1$}}}
\definecolor{Pin}{RGB}{255,0,255}
\definecolor{Griz}{RGB}{160,160,160}
\renewenvironment{proof}{\noindent {\it Dmonstration.}}{$\hfill \square$}
\newtheorem{thm}{Theorem}[section]
\newtheorem{coro}[thm]{Corollary}
\newtheorem*{thm*}{Theorem}   
\newtheorem{prop}[thm]{Proposition}
\newtheorem{lemma}[thm]{Lemma}
\newtheorem{rem}[thm]{Remark}
\newtheorem{ex}[thm]{Example}
\renewenvironment{proof}{\hspace{-0.4cm}{\bfseries Proof.}}{\qed}
\title[{\bf Flat tori and elliptic hypergeometric functions}]{Moduli spaces of flat tori
and    \\  elliptic hypergeometric functions}
\author[{\bf S. Ghazouani}]{{\bf S\'elim GHAZOUANI}}
\address{S. Ghazouani  [DMA - \'ENS  45 rue d'Ulm 
75230 Paris Cedex 05 - France]}
\email{selim.ghazouani@ens.fr}
\author[{\bf L. Pirio}]{{\bf Luc PIRIO}${}^{(\star)}$}
\address{L. Pirio [IRMAR, CNRS (UMR 6625)  -- 
 Universit Rennes 1 
 \&   LMV,  CNRS (UMR 8100) -- Universit Versailles St-Quentin, France]}
\email{luc.pirio@univ-rennes1.fr}
\thanks{${}^{(\star)}$Corresponding author.}
\begin{document}

${}^{}$
\vspace{-4cm}

\maketitle

\vspace{-0.6cm}

\begin{abstract} 
In the genus one case, we make explicit some constructions of Veech \cite{Veech} on flat surfaces and generalize 
some geometric results of Thurston \cite{Thurston} about moduli spaces of flat spheres as well as  some equivalent ones  
but of an analytico-cohomological nature 
of Deligne and Mostow \cite{DeligneMostow}, which concern 
 Appell-Lauricella hypergeometric functions. \sk

In the dizygotic twin paper \cite{GP}, we  follow Thurston's approach and study moduli spaces of flat tori with conical singularities and prescribed holonomy  by 
means of geometrical methods relying on surgeries for flat surfaces.  In the present paper, we study the same objects making use of  analytical and cohomological methods, more in the spirit of Deligne-Mostow's paper. \sk 

Our starting point is an explicit formula for flat metrics with conical singularities on elliptic curves, in terms of theta functions. From this, we deduce an explicit description of Veech's foliation:  at the level of the Torelli space 
of $n$-marked elliptic curves, 
it is given by an explicit affine first integral. From the preceding result,  one determines exactly  the  leaves of Veech's foliation which are closed subvarieties of the moduli space ${\mathscr M}_{1,n}$ 
 of $n$-marked elliptic curves. We  also  give   a local explicit expression,  in terms of hypergeometric elliptic integrals,  for the Veech  map which defines the complex hyperbolic structure of a leaf.    
\sk 

Then we focus on the  $n=2$ case: in this situation,  Veech's foliation does not depend on the values of the conical angles of the flat tori considered. Moreover, a leaf which is a  closed subvariety of ${\mathscr M}_{1,2}$ is  actually algebraic and is isomorphic to a modular curve $Y_1(N)$ for a certain integer $N\geq 2$.
In the considered situation,  the leaves of Veech's foliation are $\mathbb C\mathbb H^1$-curves. By specializing some results of  Mano and Watanabe \cite{ManoWatanabe}, we make explicit the 
Schwarzian differential equation satisfied by the $\mathbb C\mathbb H^1$-developing map 
of any leaf and use this to  prove that the metric completions of the algebraic ones are complex hyperbolic conifolds which are obtained by adding some of its cusps to $Y_1(N)$. Furthermore, we compute explicitly the conifold angle at any cusp $\mathfrak c\in X_1(N)$, 
 the latter being 0 ({\it i.e.}\;$\mathfrak c$ is an usual cusp) exactly when 
  it does not belong to the metric completion of the considered algebraic leaf. \sk 

In the last section of the paper, we discuss various aspects of the objects previously considered, such as:  some particular cases that we make explicit, some links with classical hypergeometric functions in the simplest cases. 
 We explain how to compute explicitly the $\mathbb C\mathbb H^1$-holonomy of any given algebraic leaf,  which is important in order to determine when the image of such a holonomy is a lattice in ${\rm Aut}(\mathbb C\mathbb H^1)
  \simeq {\rm PSL}(2,\mathbb R)$.
 Finally, we compute  the hyperbolic volumes of some algebraic leaves of Veech's foliation and we use this to give an explicit formula for 
 Veech's volume of the moduli space $\mathscr M_{1,2}$. In particular, 
we show that this volume   is finite, as conjectured in \cite{Veech}.    \sk 
 
 The paper ends with  two appendices. The first consists in a  short and easy introduction to the notion of $\mathbb C\mathbb H^1$-conifold. The second appendix is devoted to the  Gau{\ss}-Manin connection associated to our problem: we first give a general and detailed abstract treatment then we consider the specific case of $n$-punctured elliptic curves,  which is made completely explicit when $n=2$. 
\end{abstract}

\newpage

\setcounter{tocdepth}{1}
\tableofcontents


\section{\bf Introduction}

\subsection{\bf Previous works}
\label{S:PreviousWorks}
\subsubsection{}
\label{S:1.1.1}
The classical  {\bf hypergeometric series} 
defined for $\lvert x\lvert<1$ by 
\begin{equation}
\label{HGF}
F(a,b,c;x)=\sum_{n=0}^{+\infty} \frac{(a)_n(b)_n}{(c)_n(1)_n} x^n
\end{equation}
 together with the  {\bf hypergeometric differential equation} 
  it satisfies 
\begin{equation}
\label{HGE}
x(x-1) \cdot F''+\big(c-(1+a+b)x\big)\cdot  F'- ab\cdot F=0
\end{equation}
certainly constitute   
one of the most beautiful and important parts 
 of the  theory of special functions and of complex geometry of 19th century mathematics and has been studied by many generations of mathematicians since its first appearance in the work of Euler (see \cite[Chap. I]{Gray} for an  historical account).\sk

 The link between the solutions of  \eqref{HGE} and complex geometry is particularly well illustrated by the following very famous results obtained by Schwarz in \cite{Schwarz}: he proved that when the parameters $a,b$ and $c$ are real and 
such that the  three values $\lvert 1-c  \lvert$, $\lvert c-a-b  \lvert$ and $\lvert a-b  \lvert$ all are strictly less than 1, if $F_1$ and $F_2$ form a local basis of the space of solutions of \eqref{HGE} at a point distinct from one of the three singularities $0,1$ and $\infty$ of the latter, then after analytic continuation, the associated (multivalued) {\bf Schwarz's map}
\begin{equation*}
S(a,b,c; \cdot)= \big[ F_1: F_2 \big]\, : \, \stackon[-8pt]{$\mathbb P^1 \setminus \{0,1,\infty\}$}{\vstretch{1.5}{\hstretch{10.0}{\widetilde{\phantom{\;}}}}}\longrightarrow \mathbb P^1
\end{equation*}
actually has values into $\mathbb C\mathbb H^1\subset \mathbb P^1$  and 
induces a conformal isomorphism from the upper half-plane 
$\mathbb H\subset \mathbb P^1 \setminus \{0,1,\infty\}$ onto a hyperbolic triangle\footnote{Actually, Schwarz has proved a more general result that covers not only the hyperbolic case but the euclidean and the spherical cases as well.  See 
{\it e.g.}\;\cite[Chap.III\S3.1]{Gray} for a modern and clear exposition of the results of \cite{Schwarz}}.
 Even if  it is multivalued,  $S(a,b,c;\boldsymbol{\cdot})$  can be used to pull-back the standard complex hyperbolic structure of
$\mathbb C\mathbb H^1$  
 and to endow 
$\mathbb P^1\setminus \{0,1,\infty\}$ with a well-defined complete hyperbolic structure with conical singularities of angle 
$2\pi\lvert 1-c  \lvert$, $2\pi\lvert c-a-b  \lvert$ and $2\pi\lvert a-b  \lvert$ at $0, 1$ and $\infty$ respectively. 
\sk 

It has been known very early\footnote{It seems that Legendre was the very first to establish that $F(a,b,c;x)=\frac{\Gamma(c)}{\Gamma(a)\Gamma(c-a)}\int_0^1 t^{a-1}(1-t)^{c-a-1}(1-xt)^{-b}dt$  holds true when  $\lvert x\lvert<1$ if $a$ and $c$ verify $0<a<c$, {\it cf.}\;\cite[p.\,26]{Dutka}, 

.} that   the following 
{\bf hypergeometric integral} $$
F(x)=
\int_{0}^1 t^{a-1}(1-t)^{c-a-1}(1-xt)^{-b}dt
$$
 is a solution of \eqref{HGE}.  More generally, for any $x$ distinct from $0,1$ and $\infty$, any 1-cycle 
 $\gamma$ in $\mathbb P^1\setminus \{0,1,x,\infty\}$ and any 
 determination of the multivalued function $t^{a-1}(1-t)^{c-a-1}(1-xt)^{-b}$  along  $\gamma $,  the  (locally well-defined) 
 map 
  \begin{equation}
  \label{E:HGint-general}
  F_\gamma( x)= \int_\gamma t^{a-1}(1-t)^{c-a-1}(1-xt)^{-b}dt
  \end{equation}
   is a solution of \eqref{HGE} and a basis of the space of solutions can be obtained by taking independent integration cycles ({\it cf.}\;\cite{Yoshida} for a very pleasant modern exposition of these classical results).
 

\subsubsection{}  
\label{S:Appell-LauricellaHypergeometricFunctions}
Formula  \eqref{E:HGint-general}  leads naturally to a multi-variable generalization,  first considered by Pochammer, Appell and Lauricella, then studied  by Picard and his student Levavasseur (among others). 
Let $\alpha=(\alpha_i)_{i=0}^{n+2}$ be a fixed $(n+3)$-uplet of 
 non-integer real parameters  strictly bigger than $-1$ and  such that 
 $\sum_{i=0}^{n+2}
  \alpha_i = -2$. 
  
Given a  $(n+3)$-uplet  $x=(x_i)_{i=0}^{n+2}$ of distinct points on $\mathbb P^1$, 
  one defines a multivalued holomorphic function of the complex variable $t$ by setting 
  $$T^\alpha_x(t)=  \prod_{i=0}^{n+2} (t-x_i)^{\alpha_i}\, .$$  
  
 Then, for any 1-cycle 
 $\gamma$ supported in $\mathbb P^1\setminus  \{ x\}$ with $\{ x \}=\{x_{0},\ldots, x_{n+2}\}$ and any  choice of  a determination of 
 $T^\alpha_x( t)$ along $\gamma$, one defines a 
 {\bf generalized hypergeometric integral} as 
\begin{equation}
\label{E:GenHINT}
 F_\gamma^\alpha (x)=\int_\gamma 
 T^\alpha_x( t) dt  =\int_\gamma   \prod_{i=0}^{n+2} (t-x_i)^{\alpha_i}dt.
 \end{equation}
 
 Since  $T^\alpha_x( t)$ depends holomorphically on  $x$ and since $\gamma$ does not meet any of the $x_i$'s,  $F_\gamma^\alpha$ is holomorphic as well. 
 In fact, it is natural to normalize the integrant by considering only $(n+3)$-uplets $x$'s normalized such that $x_0=0, x_1=1$ and $x_{n+2}=\infty$. 
 This amounts to consider \eqref{E:GenHINT} as a multivalued function defined on the moduli space ${\mathscr M}_{0,n+3}$ of projective equivalence classes of $n+3$ distinct points on $\mathbb P^1$.  As in the 1-dimensional case, it can be proved that the generalized hypergeometric integrals \eqref{E:GenHINT} are solutions of a linear second-order differential system 
 in $n$ variables which has to be seen as a multi-dimensional generalization of Gau{{\ss}}
   hypergeometric equation 
 \eqref{HGE}.    Moreover, one obtains a basis of the space of solutions of this differential system by considering the (germs of) holomorphic functions 
 $F^\alpha_{\gamma_0},\ldots,F^\alpha_{\gamma_n}$ for some  1-cycles $\gamma_0,\ldots,\gamma_{n}$ forming a basis of a certain group of twisted homology.  
 
 \subsubsection{}
 \label{S:MultidimContext}
 In  this multidimensional context, 
 the associated  {\bf generalized Schwarz's map}    is  the multivalued  map 
 $$
 F^\alpha= \big[   F^\alpha_{\gamma_i}\big]_{i=0}^n : \widetilde{{\mathscr M}_{0,n+2}}\longrightarrow \mathbb P^{n}. 
$$ 

It can be proved that the monodromy of 
 this multivalued function on ${\mathscr M}_{0,n+3}$ leaves invariant an hermitian form $H^\alpha$ on $\mathbb C^{n+1}$ whose signature is $(1,n)$ when all the  $\alpha_i$'s are assumed to be negative.  
 In this case: 
\begin{itemize} 
\item[$\bullet$]
 $F^\alpha$ is an \'etale map with values into the image in $\mathbb P^{n}$ of the complex ball $\{  H^\alpha<1\}$ which is a model of the complex hyperbolic space $\mathbb C\mathbb H^{n}$; \sk
\item[$\bullet$]
  the monodromy group $\Gamma^\alpha$ of $F^\alpha$ is the image of the  monodromy  representation 
$\mu^\alpha$ of the fundamental group of ${\mathscr M}_{0,n+3}$
in $${\mathrm{ PU}}\big(\mathbb C^{n+1},H^\alpha\big)\simeq  {\mathrm{ PU}}(1,n)={\rm Aut}\big(\mathbb C\mathbb H^{n}\big).$$ 
\end{itemize}

As in the classical 1-dimensional case, these results imply that  there is a natural  a priori non-complete complex hyperbolic structure on ${\mathscr M}_{0,n+3}$, obtained as the pull-back of the standard one of  
$\mathbb C\mathbb H^{n}$
by the Schwarz map. We will denote by ${\mathscr M}_{0,\alpha}$ the moduli space ${\mathscr M}_{0,n+3}$ endowed with this $\mathbb C\mathbb H^{n}$-structure. \sk 

Several authors (Picard, Levavasseur, Terada, Deligne-Mostow)  have studied the case when the image of the monodromy $\Gamma^\alpha={\rm Im}(\mu^\alpha) $ is a discrete subgroup of ${\mathrm{ PU}}(1,n)$. In this case, the metric completion 
 of  ${{\mathscr M}_{0,\alpha}}$
is an orbifold  isomorphic  to a quotient orbifold $\mathbb C\mathbb H^{n}/\Gamma^\alpha$.  
Deligne  and Mostow  have obtained  very satisfying results on this problem: 
in  \cite{DeligneMostow,MostowIHES}  (completed in \cite{Mostow}) they  gave an arithmetic criterion on the $\alpha_i$'s,   denoted by $\Sigma$INT, which is necessary and sufficient (up to a few known cases) to ensure that the hypergeometric monodromy group $\Gamma^\alpha$ is discrete. 
Moreover, they have determined all the $\alpha$'s satisfying this criterion and have obtained a list of 94 complex hyperbolic orbifolds of dimension $\geq 2$ constructed via the theory of hypergeometric functions.  Finally, they obtain that some of these orbifolds are non-arithmetic.


\subsubsection{} 
\label{S:Bridge-Thurston-DeligneMostow}
In \cite{Thurston}, taking a different approach, Thurston obtains very similar results to Deligne-Mostow's. His approach is more geometric and concerns moduli spaces of flat Euclidean structures on $\mathbb P^1$   with $n+3$  conical singularities.  For $x\in {\mathscr M}_{0,n+3}$, the metric $m^\alpha_x =\lvert T_x^\alpha(t)dt\lvert ^2$ defines a flat structure on $\mathbb P^1$ with conical singularities at the $x_i$'s.   The bridge between the hypergeometric theory and  Thurston' approach is made by the map $x\mapsto 
 m^\alpha_x $.\sk

Using surgeries for flat structures on the sphere as well as Euclidean polygonal representation of such objects, Thurston recovers geometrically Deligne-Mostow's criterion as well as the finite list of  94 complex hyperbolic  orbifold quotients.  More generally,  he proves that 
for any $\alpha=(\alpha_i)_{i=0}^{n+2}
 \in ]-1,0[^{n+3}$ and not only for the (necessarily rational) ones  satisfying $\Sigma$INT,  the metric completion $\overline{{\mathscr M}}_{0,\alpha}$  carries  a complex hyperbolic conifold structure  (see \cite{Thurston,McMullen} or \cite{GP} for this notion)  which extends the 
$\mathbb C\mathbb H^{n}$-structure of the moduli space ${\mathscr M}_{0,\alpha}$.
\sk


 \subsubsection{} 
\label{S:VeechIntro}
In the very interesting (but  long and  hard-reading hence not so well-known) paper \cite{Veech}, 
Veech  generalizes some parts of the preceding constructions to Riemann surfaces of arbitrary genus $g$.
 Veech's starting point is a nice result by Troyanov \cite{Troyanov} asserting that for any 
 $\alpha=(\alpha_{i})_{i=1}^n\in ]-1,\infty[^n$ such that 
 \begin{equation}
 \label{E:GaussBonnetAlphai}
  \sum_{i=1}^n\alpha_i =2g-2
 \end{equation}
 and any Riemann surface $X$ with a $n$-uplet $x=(x_i)_{i=1}^n$ of marked distinct points on it,   there exists a unique flat metric $m_{X,x}^\alpha$  of area 1 on $X$ with  conical singularities of angle $\theta_i=2\pi(1+\alpha_i)$ at  $x_i$ for every $i=1,\ldots,n$,  in the conformal class associated to the complex structure of $X$. 

 From this, Veech obtains a real analytic isomorphism 
\begin{align}
\label{E:TgnEgn}
\mathcal T\!\!\!\mbox{\it eich}_{g,n}\; & \simeq \;  \mathcal E_{g,n}^\alpha\\
\big[ (X, x)\big]&  \mapsto \Big[ \big(X, m_{X,x}^\alpha\big)\Big]
\nonumber
\end{align}
   between the 
Teichmller space $\mathcal T\!\!\!\mbox{\it eich}_{g,n}$ of $n$-marked Riemann surfaces of ge\-nus $g$ and the space  $\mathcal E_{g,n}^\alpha$ of (isotopy classes of) flat Euclidean structures with $n$ conical points of angles $\theta_1,\ldots,\theta_n$ 
 on the marked surfaces of the same type.   

Using \eqref{E:TgnEgn} to 
identify the Teichmller space with $  \mathcal E_{g,n}^\alpha$, Veech constructs a real-analytic map 
\begin{equation}
\label{E:holAlpha}
{H}_{g,n}^\alpha: \mathcal T\!\!\!\mbox{\it eich}_{g,n}\longrightarrow \mathbb U^{2g}
\end{equation}
which associates to 
(the isotopy class of) a marked genus $g$ Riemann surface $(X,x)$ the unitary linear holonomy of the flat structure on $X$ induced by $m_{X,x}^\alpha$.  
 
 This map is a submersion and even though it is just real-analytic, Veech proves that any  level set  
 \begin{equation*}
 \label{E:LeafAccordingToVeech}
 \mathcal F^\alpha_\rho=\big({H}_{g,n}^\alpha\big)^{-1}(\rho)
 \end{equation*}
 is a   complex submanifold of $\mathcal T\!\!\!\mbox{\it eich}_{g,n}$ of complex dimension $2g-3+n$ if $ \rho\in {\rm Im}({H}^\alpha_{g,n})$ is not trivial.  
For such a unitary character $\rho$ and under the assumption that none of the $\alpha_i$'s is an integer, Veech introduces a certain space of 1-cocycles $\mathscr H^1_\rho$. Then considering not only the linear part but the whole Euclidean holonomies of the elements of $\mathcal F^{\alpha}_\rho$ viewed as classes of flat structures, he 
constructs a  `complete holonomy map'
\begin{equation*}
\label{E:VeechFullHolonomy}
 {\rm Hol}^\alpha_\rho: \mathcal F^{\alpha}_\rho \longrightarrow \mathbb P \mathscr H^1_\rho\simeq \mathbb P^{2g-3+n}
 \end{equation*}  
and proves first that 
 this map is a local biholomorphism, then that   there is a hermitian form $H_\rho^\alpha$ on $\mathscr H^1_\rho$ and $ {\rm Hol}^\alpha_\rho$ maps ${\mathcal F}^\alpha_\rho$  into the projectivization $X_\rho^\alpha
\subset  \mathbb P^{2g-3+n}$ of the set $\{   H_\rho^\alpha<0 \}\subset  \mathscr H^1_\rho$ 
 (compare with 
 \S\ref{S:MultidimContext}).

By a long calculation,   Veech determines explicitly the signature $(p,q)$ of $H_{\rho}^\alpha$ and shows that it does depend only on $\alpha$.   The most interesting case is when $(p,q)=(1,2g-3+n)$.  Then $ {\rm Hol}^\alpha_\rho$ takes its values into $X_\rho^\alpha\simeq \mathbb C\mathbb H^{2g-3+n}$.   By pull-back by 
   ${\rm Hol}^\alpha_\rho$, one endows $\mathcal F^{\alpha}_\rho$ with a natural complex hyperbolic structure.  
   
   One occurrence  of this situation is when $g=0$ and all the $\alpha_i$'s are in $]-1,0[$: 
 in this case there is only one leaf which is the whole Teichmller space $\mathcal T\!\!\!\mbox{\it eich}_{0,n}$ and one recovers precisely the case studied by Deligne-Mostow and Thurston. 
 
   \subsubsection{} 
 \label{SS:InAddition}
      In addition to the genus 0 case, Veech shows that the complex hyperbolic situation also occurs in one other case, namely when 
    \begin{equation}
\label{E:g=1ComplexHyperbolicCase}
     g=1 \quad \mbox{\it and } 
     \quad \mbox{\it all the }   \alpha_i\mbox{\it 's are  in }\, ]-1,0[ \;  \mbox{\it  except  one    which  lies  in } ]0,1[.  
    \end{equation}

    In this case,  the level-sets $\mathcal F_\rho^\alpha$'s of the 
    holonomy map   ${H}^\alpha_{g,n}$ 
    form a real-analytic foliation $\mathcal F^\alpha$ of $\mathcal T\!\!\!{\it eich}_{1,n}$ 
   whose leaves carry  natural $\mathbb C\mathbb H^{n-1}$-structures. \mk 
    
    A remarkable fact established by Veech is that the pure mapping class group $\mathrm{PMCG}_{1,n}$  leaves this foliation invariant and induces biholomorphisms between the leaves which preserve their respective complex hyperbolic structure.   Consequently, all the previous constructions  pass to the 
     quotient by $\mathrm{PMCG}_{1,n}$. 
   One finally obtains  a  foliation on the quotient moduli space ${\mathscr M}_{1,n}$, denoted by $\mathscr F^\alpha$, by complex leaves carrying a (possibly orbifold) complex hyperbolic structure.  Furthermore, 
      it comes from  \eqref{E:holAlpha}
  that the foliation $\mathscr F^\alpha$ is transversally symplectic,  hence 
   one can endow ${\mathscr M}_{1,n}$ with a natural real-analytic volume form $\Omega^\alpha$. \sk 
   
  At this point, interesting questions emerge very naturally: 
  \begin{enumerate}
  \item   which are the leaves of $\mathscr F^\alpha$ that are algebraic submanifolds of ${\mathscr M}_{1,n}$?\sk  
    \item   given a leaf of $\mathscr F^\alpha$ which is an algebraic submanifold of ${\mathscr M}_{1,n}$, what is its topology? Considered with its $\mathbb C\mathbb H^{n-1}$-structure, has it finite volume?\sk
  \item  does the $\mathbb C\mathbb H^{n-1}$-structure of an algebraic  leaf extend to its metric completion (possibly as a conifold complex hyperbolic structure)?\sk
    \item  which are the algebraic leaves of $\mathscr F^\alpha$ whose holonomy representation of their $\mathbb C\mathbb H^{n-1}$-structure has a discrete image in ${\mathrm{ PU}}(1,n-1)$?\sk 
    \item is  it possible to construct new non-arithmetic complex hyperbolic lattices this way?
  \sk
       \item  is the $\Omega^\alpha$-volume of ${\mathscr M}_{1,n}$ finite
       as conjectured by Veech in \cite{Veech}?\sk 
    \end{enumerate}
  
  
  In view of what has been done in the genus 0 case, one can 
  distinguish two distinct ways to address  such questions.  The first,  la Thurston, by using geometric arguments relying on surgeries on flat surfaces. The second,  la Deligne-Mostow, through a more analytical and cohomological reasoning.
  
  Our work shows that both approaches are possible, relevant and fruitful.  
  In the twin paper \cite{GP},  we generalize Thurston's approach whereas in the present text, we generalize the one of Deligne and Mostow to the genus 1 case. 
  



\subsection{\bf Results}
\label{S:Results}
We give below a short review of the results contained in the papers.  
All of them are new, even if some of them (namely the first ones) are obtained by rather elementary considerations. 
We present them below in decreasing order of generality, which essentially corresponds to their order of appearance in the text. \sk 

Throughout the text,  $g$ and $n$  will always refer respectively to the genus of the considered surfaces and to the number of 
cone points they carry   and 
it will  always be assumed that $2g-3+n>0$.

\subsubsection{}\hspace{-0.2cm}
\label{S:GeneralRemark}
Our first results just consist in a general remark followed by a natural construction concerning Veech's constructions, whichever 
 $g$ and $n$  are.
  \sk

 Let $N$ be a flat surface with conical singularities whose isotopy class belongs to a moduli space $\mathcal E_{g,n}^\alpha\simeq \mathcal T\!\!\!{\it eich}_{g,n}$ for some $n$-uplet 
  $\alpha$ as in \S\ref{S:VeechIntro}. 
 Since the target space of the associated linear holonomy character $\rho: \pi_1(N)\rightarrow \mathbb U$  
 is abelian, the latter factors through the abelianization of $\pi_1(N)$, namely the first homology group $H_1(N,\mathbb Z)$. From this simple remark, one deduces that the linear holonomy map 
 \eqref{E:holAlpha} actually factors through the quotient map from $\mathcal T\!\!\!{\it eich}_{g,n} $ onto 
 the associated Torelli space 
  and consequently, Veech's foliation $\mathcal F^{\alpha}$ on $\mathcal T\!\!\!{\it or}_{g,n}$ admits a global first integral  $\mathcal T\!\!\!{\it or}_{g,n}\rightarrow \mathbb U^{2g}$, which will be denoted by ${h}^{\alpha}_{g,n}$. \mk
 
 Let ${e}: \mathbb R^{2g}\rightarrow \mathbb U^{2g}$ be the group morphism 
 whose  components all are the map $s\mapsto \exp(2i\pi s)$. 
 Our second point is that, using classical geometric facts about simple closed curves on surfaces,  one can construct a lift  $\widetilde{H}^\alpha_{g,n} : \mathcal T\!\!\!{\it eich}_{g,n} \rightarrow \mathbb R^{2g}$ of 
Veech's first integral \eqref{E:holAlpha}.  These results can be summarized in the following 
  \begin{prop}
  \label{P:SquareDiag}
There are canonical real-analytic maps $\widetilde{H}^\alpha_{g,n}$ 
 and  ${h}^\alpha_{g,n}$ (in blue below) 
making  the following diagram commutative: 
\begin{equation*}
\label{D:Square}
 \xymatrix@R=1.5cm@C=2.3cm{  
    \mathcal T\!\!\!{\it eich}_{g,n}     \ar@{->}@*{[blue]}[r]^{
    \quad \;
    \textcolor{blue}{\widetilde{H}^\alpha_{g,n}}
      }    
      \ar@{->}[rd]
      ^*[@]{\hbox to 0pt{\hss $\quad \smxylabel{{H}^\alpha_{g,n}}$    \hss}}
    \ar@{->}[d]
    &  \;  \mathbb R^{2g}  
    \hspace{-0.5cm} {}^{}
     \ar@{->}[d]^{e}  \\
  \mathcal   T\!\!\!{\it or}_{g,n}   
    \ar@{->}@*{[blue]}[r]_{
    \quad \,
    \textcolor{blue}{{h}^\alpha_{g,n}}
      }        & 
\; \mathbb U^{2g}.\hspace{-0.5cm} {}^{}
    }
    \end{equation*}
\end{prop}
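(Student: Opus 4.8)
The plan is to treat the two asserted maps separately, since the statement really packages two independent facts: that $H^\alpha_{g,n}$ descends to the Torelli space, producing ${h}^\alpha_{g,n}$, and that it lifts through $e$ to an $\mathbb R^{2g}$-valued map $\widetilde{H}^\alpha_{g,n}$, the square then commuting by construction. Throughout, $N$ denotes the underlying marked surface, $\rho:\pi_1(N)\to\mathbb U$ the linear holonomy of the flat metric $m^\alpha_{X,x}$, and $p:\mathcal T\!\!\!\mbox{\it eich}_{g,n}\to\mathcal T\!\!\!{\it or}_{g,n}$ the canonical projection.

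First I would produce ${h}^\alpha_{g,n}$. By definition the linear holonomy $\rho$ is a \emph{unitary character}, and since $\mathbb U$ is abelian, $\rho$ kills every commutator and hence factors through the abelianization $H_1(N;\mathbb Z)$; thus $H^\alpha_{g,n}$ depends on a point of Teichm\"uller space only through the induced homomorphism $H_1(N;\mathbb Z)\to\mathbb U$. As the Torelli space is by definition the quotient of $\mathcal T\!\!\!\mbox{\it eich}_{g,n}$ by the subgroup of the mapping class group acting trivially on first homology, this homology-level holonomy is invariant under that subgroup, so $H^\alpha_{g,n}$ factors through $p$ and defines ${h}^\alpha_{g,n}$. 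Its real-analyticity is inherited from that of $H^\alpha_{g,n}$ together with the fact that $p$ is a real-analytic covering.

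The substance of the proposition is the canonical lift $\widetilde{H}^\alpha_{g,n}$. Existence of \emph{some} real-analytic lift is automatic, because $\mathcal T\!\!\!\mbox{\it eich}_{g,n}$ is contractible and $e$ is a covering map; the point is rather to produce a canonical one carrying geometric meaning. The plan is to fix simple closed curves $c_1,\dots,c_{2g}$ on the reference surface representing a symplectic basis of $H_1$, and to attach to each $c_j$, in a given flat structure, the total turning of its unit tangent measured against the flat parallel framing of the complement of the cone points --- equivalently, the integral $\oint_{c_j}k_g\,ds$ of the geodesic curvature of $c_j$ for $m^\alpha_{X,x}$. The standard relation between geodesic curvature, the rotation index of the tangent, and the linear holonomy then reads $\oint_{c_j}k_g\,ds\equiv\arg\rho(c_j)\pmod{2\pi}$, so that $e\!\left(\tfrac{1}{2\pi}\oint_{c_j}k_g\,ds\right)=\rho(c_j)$ because $\lvert\rho(c_j)\rvert=1$. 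Collecting these $2g$ real numbers defines $\widetilde{H}^\alpha_{g,n}$ and forces $e\circ\widetilde{H}^\alpha_{g,n}={h}^\alpha_{g,n}\circ p=H^\alpha_{g,n}$, i.e. commutativity of the square; real-analyticity follows by differentiating the turning integral under the integral sign, once one knows --- via Veech's real-analytic isomorphism \eqref{E:TgnEgn} --- that the flat structure, hence the tangent field and the connection, depend real-analytically on the point of $\mathcal T\!\!\!\mbox{\it eich}_{g,n}$.

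The hard part, and the very reason simplicity of the curves is hypothesized, will be the \emph{canonicity} of this lift: one must check that the real number attached to $c_j$ does not depend on the chosen smooth representative of its isotopy class, so that $\widetilde{H}^\alpha_{g,n}$ is well defined and transforms correctly under change of marking. Since $\oint_{c_j}k_g\,ds=\arg\rho(c_j)+2\pi\tau_{c_j}$ with $\arg\rho(c_j)$ already determined by the homology class, the entire dependence on the representative sits in the integer rotation index $\tau_{c_j}$, and this is exactly where classical two-dimensional topology enters. For a simple closed curve the rotation index is rigid: isotopic simple closed curves are regularly isotopic, so their tangent fields undergo a regular homotopy and $\tau_{c_j}$ is preserved (Hopf's Umlaufsatz and the Whitney--Graustein theorem being the underlying statements), whereas a merely immersed representative would let $\tau_{c_j}$ jump by multiples of $2\pi$ and destroy canonicity. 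Once this invariance is secured, the commutativity of the diagram and the real-analyticity of all three maps established above complete the proof.
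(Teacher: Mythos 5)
Your overall route is exactly the one the paper follows: the descent $h^\alpha_{g,n}$ comes from the factorization of the unitary holonomy character through $H_1$ (the Torelli group acting trivially on homology), and the lift $\widetilde{H}^\alpha_{g,n}$ is defined curve-by-curve as the total turning --- the paper's total angular curvature $\kappa$, i.e. your $\oint k_g\,ds$ --- of simple closed curves representing the images of the generators $A_i,B_i$ under the marking, with commutativity of the square read off from the mod-$2\pi$ relation between turning and linear holonomy. (For real-analyticity the paper argues more cheaply than you propose: $\widetilde{H}^\alpha_{g,n}$ is continuous and is a lift through the covering $e$ of the real-analytic map $H^\alpha_{g,n}$, hence is itself real-analytic; no differentiation under the integral sign, nor any analytic dependence of the Delaunay data, is needed.)

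There is, however, one genuine gap in your well-definedness argument. The marking $\psi$ is only defined up to inner automorphism, so what it attaches to each generator $A_i$, $B_i$ is a \emph{free homotopy class} of loops in $X^*$, not an isotopy class. Your invariance check (isotopic simple closed curves are regularly isotopic, hence have equal rotation index) proves constancy of $\kappa$ on a single isotopy class, but it does not show that any two \emph{simple} representatives of the given free homotopy class produce the same number --- and without that, ``the isotopy class of $c_j$'' is not even determined by the data, so the map is a priori multivalued. The missing ingredient is the classical theorem of Epstein \cite{Epstein}: two freely homotopic simple closed curves on a surface are isotopic. This is precisely what the paper invokes, together with the remark that inner automorphisms preserve free homotopy classes, which disposes of the conjugation ambiguity in $\psi$. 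Once Epstein's theorem is inserted, your argument closes: all simple representatives of the class are pairwise isotopic, an isotopy is in particular a homotopy through immersed curves inside the flat part $X^*$, and along such a homotopy the total turning is constant, being a continuous function with values in the discrete coset $\arg\rho(c_j)+2\pi\mathbb Z$ fixed by the homotopy-invariant linear holonomy.
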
 
 
 This result shows that it is more natural to study Veech's foliation on the Torelli space $\mathcal T\!\!\!{\it or}_{g,n}$. Note that the latter is a nice complex variety without orbifold point.  
 Furthermore, the existence of the lift $\widetilde{H}^\alpha_{g,n}$ strongly suggests that the 
level-subsets of  Veech's first integral ${H}_{g,n}^\alpha $ 
  are not connected a priori. 
  \mk 
 
 
 
\subsubsection{}\hspace{-0.2cm}  
\label{SS:SpecializationTog=1}
We now consider only  the case of elliptic curves and specialize everything to the case when $g=1$. 
\mk 

First of  all, by simple geometric considerations specific to this case, one verifies that  the lifted holonomy $\widetilde{H}^\alpha_{1,n}$ descends to the corresponding Torelli space. In other terms:  there exists a real-analytic map $\widetilde{h}_{1,n}^\alpha : \mathcal T\!\!\!{\it or}_{1,n}\rightarrow \mathbb R^{2}$ which fits into the  diagram 
above and makes it commutative.\sk 

From now on, we do no longer make abstract considerations but undertake the opposite approach by  expliciting   everything as much as possible.
\begin{center}
$\Diamond$
\end{center}
\sk 

In the genus 0 case, the link between the `flat surfaces' approach  la Thurston and  the `hypergeometric' one  la Deligne-Mostow comes from the fact that there is an explicit formula for a flat metric with conical singularities on the Riemann sphere (see \S\ref{S:Bridge-Thurston-DeligneMostow} above).  The crucial point of this paper is that something equivalent can be done 
in the $g=1$ case. \sk 

Assume that $\alpha_1,\ldots,\alpha_n$ are fixed real numbers bigger than $-1$ such that $\sum_i \alpha_i=0$.   For $\tau\in \mathbb H$, let  $E_\tau=\mathbb C/(\mathbb Z\oplus \mathbb Z \tau)$ be the associated elliptic curve.  
Then, given $z=(z_i)_{i=1}^n\in \mathbb C^n$    such that  
$[z_1], \ldots,[z_n]$ are $n$ distinct points on $E_\tau$, Troyanov's theorem 
({\it cf.}\;\S\ref{S:VeechIntro}) ensures that, up to normalization, there exists a unique flat metric $m_{\tau,z}^\alpha$ on $E_\tau$ with a singularity of  type $\lvert u^{\alpha_i}du  \lvert^2$ at $[z_i]$ for $i=1,\ldots,n$.  \bk

One can give an explicit formula for this metric by means of theta functions:

\newpage

\begin{prop} 
\label{P:ExplicitFlatMetric} 
Up to normalization, one has 
$$m_{\tau,z}^\alpha=\big\lvert  T_{\tau,z}^\alpha(u)du  \big\lvert^2$$ 
where 
$T_{\tau,z}^\alpha$ is  the following multivalued holomorphic function on $E_\tau$: 
\begin{equation}
\label{E:FonctionT}
T_{\tau,z}^\alpha(u)=  \exp\big(  2i\pi a_0 u    \big)  \prod_{i=1}^n \theta\big(u-z_i, \tau\big)^{\alpha_i}
\end{equation}
where $\theta$ stands for Jacobi's theta function \eqref{E:ThetaFunction} and $a_0$ is given by 
$$
a_0=a_0(\tau,z)= -\frac{\Im{\rm m}\big(\sum_{i=1}^n \alpha_i z_i\big)}{\Im{\rm m}(\tau)}\, .$$
\end{prop}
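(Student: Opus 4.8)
The plan is to combine Troyanov's uniqueness theorem (\S\ref{S:VeechIntro}) with a direct verification. Troyanov guarantees that the flat conical metric in the conformal class of $E_\tau$ with the prescribed singularities exists and is unique up to a positive scalar; hence it suffices to exhibit \emph{one} such metric and check that it has all the required properties. I would therefore set $m:=\big\lvert T_{\tau,z}^\alpha(u)\,du\big\rvert^2$ and verify that $m$ is (i) flat away from the points $[z_i]$, (ii) has a conical singularity of type $\lvert u^{\alpha_i}du\rvert^2$ at each $[z_i]$, and (iii) descends to a genuine single-valued metric on the torus $E_\tau$. A preliminary observation makes the multivaluedness of $T_{\tau,z}^\alpha$ harmless: the metric depends only on the modulus $\lvert T_{\tau,z}^\alpha\rvert$, and the ambiguity in the non-integer powers $\theta(\cdot\,,\tau)^{\alpha_i}$ affects $T_{\tau,z}^\alpha$ only by unit-modulus factors, so $\lvert T_{\tau,z}^\alpha\rvert$ is a well-defined function on the complement of the lattice translates of the $z_i$'s.

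For (i) and (ii): away from the $z_i$ the function $T_{\tau,z}^\alpha$ is locally a nonvanishing holomorphic function $f$, and any metric of the form $\lvert f\,du\rvert^2$ with $f$ holomorphic and nowhere zero is flat, since its Gaussian curvature is proportional to $\Delta \log\lvert f\rvert=0$, the function $\log\lvert f\rvert$ being harmonic. Near $z_i$ I would use the only defining property of Jacobi's $\theta$ needed here, namely that $v\mapsto\theta(v,\tau)$ has a simple zero at $v=0$; therefore $\theta(u-z_i,\tau)^{\alpha_i}=c_i\,(u-z_i)^{\alpha_i}\big(1+o(1)\big)$ with $c_i\neq 0$, while the remaining factors are holomorphic and nonvanishing at $z_i$. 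Hence $m$ is of the model type $\lvert u^{\alpha_i}du\rvert^2$ there, that is, a cone of angle $2\pi(1+\alpha_i)$.

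The heart of the proof, and where the value of $a_0$ is forced, is (iii): the invariance of $\lvert T_{\tau,z}^\alpha\rvert$ under the lattice $\mathbb Z\oplus\mathbb Z\tau$. Here I would invoke the quasi-periodicity of $\theta$, of the shape $\theta(v+1,\tau)=-\theta(v,\tau)$ and $\theta(v+\tau,\tau)=-e^{-i\pi(\tau+2v)}\theta(v,\tau)$. Since $a_0$ is real and $\sum_i\alpha_i=0$, translation by $1$ multiplies $T_{\tau,z}^\alpha$ only by unit-modulus constants, so $\lvert T_{\tau,z}^\alpha(u+1)\rvert=\lvert T_{\tau,z}^\alpha(u)\rvert$ automatically. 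Translation by $\tau$ is the decisive computation: collecting the modulus of the prefactor $\exp(2i\pi a_0 u)$ and of the product, the constraint $\sum_i\alpha_i=0$ annihilates the contributions proportional to $\Im{\rm m}(\tau)$ and to $\Im{\rm m}(u)$, and what survives is an overall factor whose logarithm equals
\[
-2\pi a_0\,\Im{\rm m}(\tau)\;-\;2\pi\,\Im{\rm m}\Big(\textstyle\sum_{i=1}^n \alpha_i z_i\Big).
\]
This vanishes \emph{exactly} when $a_0=-\,\Im{\rm m}\big(\sum_i\alpha_i z_i\big)/\Im{\rm m}(\tau)$, which is the asserted formula; for this and only this choice does $\lvert T_{\tau,z}^\alpha\rvert$, and hence $m$, descend to $E_\tau$.

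Once (i)--(iii) are established, $m$ is a flat metric on $E_\tau$ with the prescribed conical data, so by Troyanov's uniqueness it coincides with $m_{\tau,z}^\alpha$ up to the scalar implicit in the normalization of the area, which is precisely what the statement claims. The only genuinely delicate point is the $\tau$-translation computation in (iii); everything else is either a standard property of $\theta$ or the elementary flatness of metrics of the form $\lvert f\,du\rvert^2$. I would be careful to track the sign and unit-modulus constants dictated by the normalization of $\theta$ in \eqref{E:ThetaFunction}, but since only $\lvert T_{\tau,z}^\alpha\rvert$ enters the metric these constants are irrelevant to the final identity, and the whole matter reduces to matching the real part of the $\tau$-quasi-period of $T_{\tau,z}^\alpha$ against $a_0\,\Im{\rm m}(\tau)$.
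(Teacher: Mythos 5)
Your proof is correct and follows essentially the same route as the paper: the paper's own justification (Lemma \ref{L:rhoexplicit} together with \S\ref{SS:FlatMetricsOnEllipticCurves}) consists precisely in checking that the monodromy of $T_{\tau,z}^\alpha$ on $E_{\tau,z}$ is multiplicative and unitary — the $\tau$-quasi-periodicity computation forcing the stated value of $a_0$ — then using the simple zero of $\theta$ for the conical behaviour and Troyanov's uniqueness to identify the metric. Your harmonicity remark for flatness and the "forcing $a_0$" framing are just minor repackagings of the same argument.
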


While the preceding formula is easy to establish\footnote{It essentially amounts to verify that the monodromy 
of $T_{\tau,z}^\alpha$  on the $n$-punctured elliptic curve $E_{\tau,z}=E_\tau\setminus \{ [z_1],\ldots,[z_n] \}$ is multiplicative and unitary.},  it is the key result on which  the rest of the paper relies. 
Indeed, the `explicitness' of the above formulae for $T_{\tau,z}^\alpha$ and $a_0$  will propagate and this will allow us to make  all Veech's 
 constructions completely explicit in the case of elliptic curves. 

\subsubsection{}\hspace{-0.2cm} Another key ingredient is that there exists a nice and explicit description of the Torelli spaces of marked elliptic curves: this result, due to Nag \cite{Nag}, can be summarized by saying that the parameters $\tau\in \mathbb H$ and $z\in \mathbb C^n$ as above
provide global holomorphic coordinates on $ \mathcal T\!\!\!{\it or}_{1,n}$ for any $n\geq 1$,  if we assume the normalization  $z_1=0$.
 
 Using the coordinates $(\tau,z)$ on $\mathcal T\!\!\!{\it or}_{1,n}$, it is then easy to prove the 
\begin{prop}
\label{P:Main}
\label{P:popo}
 For $(\tau,z)\in \mathcal T\!\!\!{\it or}_{1,n}$, one sets 
 $$a_\infty(\tau,z)=a_0(\tau,z)\tau+\sum_{i=1}^n \alpha_i z_i\in \mathbb R.$$  
 \begin{enumerate}
\item The following map 
 is a primitive first integral of Veech's foliation: 
\begin{align*}
\xi^\alpha  :\,   \mathcal T\!\!\!{\it or}_{1,n}  & \longrightarrow \mathbb R^2 
\\ (\tau,z) & \longmapsto \big(a_0(\tau,z),a_\infty(\tau,z)\big)\, . 
 \end{align*}
 \item  One has ${\rm Im}(\xi^\alpha)=\mathbb R^2$ if $n\geq 3$ and ${\rm Im}(\xi^\alpha)=\mathbb R^2\setminus \alpha_1\mathbb Z^2$ if $n=2$. \sk 
 \item For $a=(a_0,a_\infty)\in {\rm Im}(\xi^\alpha)$, the leaf $\mathcal F_a^{\alpha}=(\xi^\alpha)^{-1}(a)$ in $\mathcal T\!\!\!{\it or}_{1,n}$ is cut out by 
 \begin{equation}
 \label{E:EquationLinearieFeuille}
 a_0\tau+\sum_{i=1}^n\alpha_i z_i= a_\infty\,  .
 \end{equation}
 \item Veech's foliation $\mathcal F^\alpha$ on $\mathcal T\!\!\!{\it or}_{1,n}$ only depends on $[\alpha]\in \mathbb P(\mathbb R^n)$.
 \end{enumerate}
 \end{prop}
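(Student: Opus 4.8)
The plan is to deduce all four assertions from a single computation, namely the explicit evaluation of the linear holonomy of the flat metric $m_{\tau,z}^{\alpha}=\lvert T_{\tau,z}^{\alpha}(u)\,du\rvert^{2}$ of Proposition~\ref{P:ExplicitFlatMetric}. Since this metric is the pull-back of the standard flat metric of $\mathbb{C}$ by a (multivalued) primitive of $T_{\tau,z}^{\alpha}(u)\,du$, its unitary linear holonomy along a loop $\gamma$ in the punctured curve is exactly the unimodular factor by which the multivalued function $T_{\tau,z}^{\alpha}$ is multiplied after analytic continuation along $\gamma$. First I would compute these factors along the two loops $u\mapsto u+1$ and $u\mapsto u+\tau$ generating $H_{1}(E_{\tau},\mathbb{Z})$: using the quasi-periodicity of $\theta$ in \eqref{E:FonctionT} together with the normalisation $\sum_{i=1}^{n}\alpha_{i}=0$, one gets the multiplicative factors $\exp(2i\pi a_{0})$ and $\exp(2i\pi a_{\infty})$ respectively. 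Both are unimodular precisely because $a_{0}$ and $a_{\infty}$ are real, which \emph{en passant} re-proves that $a_{\infty}\in\mathbb{R}$. In particular $e\circ\xi^{\alpha}=h_{1,n}^{\alpha}$, so $\xi^{\alpha}$ is a real-analytic lift of Veech's first integral; being continuous with values in the discrete fibres of $e$ and constant modulo $\mathbb{Z}^{2}$ on each (connected) leaf, it is genuinely constant on leaves. This gives the ``first integral'' half of~(1).

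\textbf{The leaf equation and primitivity.} For~(3) I would work directly with the explicit formulas for $a_{0}$ and $a_{\infty}$. Fixing $(a_{0},a_{\infty})\in\mathbb{R}^{2}$, I claim the single $\mathbb{C}$-affine equation \eqref{E:EquationLinearieFeuille} is equivalent to $\xi^{\alpha}(\tau,z)=(a_{0},a_{\infty})$: its imaginary part reads $a_{0}\,\mathrm{Im}(\tau)+\mathrm{Im}\big(\sum_{i}\alpha_{i}z_{i}\big)=0$, i.e. $a_{0}(\tau,z)=a_{0}$, and substituting this back gives $a_{\infty}(\tau,z)=a_{0}(\tau,z)\tau+\sum_{i}\alpha_{i}z_{i}=a_{\infty}$; the converse is immediate. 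Hence every fibre of $\xi^{\alpha}$ is the trace on $\mathcal T\!\!\!{\it or}_{1,n}$ of one complex affine hyperplane of the coordinate space $\{(\tau,z):z_{1}=0\}$, which is~(3). For the ``primitive'' half of~(1): a nonempty fibre equals such a hyperplane minus the collision locus $\{[z_{i}]=[z_{j}]\}$; as the hyperplane is then not contained in that locus, removing it (a proper analytic subset, of real codimension~$2$) keeps the fibre connected. A connected set contained in a fibre of $h_{1,n}^{\alpha}$, whose connected components are the leaves (\S\ref{SS:InAddition}), must be a single leaf; thus $\xi^{\alpha}$ is primitive.

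\textbf{Image and scaling-invariance.} For~(2) I would solve \eqref{E:EquationLinearieFeuille} inside $\mathcal T\!\!\!{\it or}_{1,n}$. If $n\ge 3$, after fixing any $\tau\in\mathbb{H}$ there remain at least two free variables among $z_{2},\dots,z_{n}$ (recall $z_{1}=0$), so the equation has a positive-dimensional solution set on which the distinctness of $[0],[z_{2}],\dots,[z_{n}]$ fails only on a proper analytic subset; every target is therefore attained and $\mathrm{Im}(\xi^{\alpha})=\mathbb{R}^{2}$. If $n=2$, then $\alpha_{2}=-\alpha_{1}$ and \eqref{E:EquationLinearieFeuille} becomes $z_{2}=(a_{0}\tau-a_{\infty})/\alpha_{1}$, so $(a_{0},a_{\infty})$ lies in the image iff some $\tau\in\mathbb{H}$ makes $z_{2}(\tau)\notin\mathbb{Z}+\mathbb{Z}\tau$. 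The key observation is that $\mathrm{Im}\,z_{2}(\tau)/\mathrm{Im}\,\tau=a_{0}/\alpha_{1}$ does not depend on $\tau$; hence $z_{2}(\tau)$ lies in the lattice either for no $\tau$ or for every $\tau$, the latter happening exactly when $a_{0}/\alpha_{1}\in\mathbb{Z}$ and $a_{\infty}/\alpha_{1}\in\mathbb{Z}$. This yields $\mathrm{Im}(\xi^{\alpha})=\mathbb{R}^{2}\setminus\alpha_{1}\mathbb{Z}^{2}$. Finally, (4) is immediate from the formulas: $\xi^{\lambda\alpha}=\lambda\,\xi^{\alpha}$ for every $\lambda\in\mathbb{R}^{*}$, whence $\ker d\xi^{\lambda\alpha}=\ker d\xi^{\alpha}$ and the two foliations coincide; equivalently the family of hyperplanes \eqref{E:EquationLinearieFeuille} is unchanged under $\alpha\mapsto\lambda\alpha$.

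\textbf{Main obstacle.} The holonomy computation is essentially the content of Proposition~\ref{P:ExplicitFlatMetric}, and (1), (3), (4) are then formal consequences. The one genuinely delicate point is the case $n=2$ of~(2): one must promote the vague statement ``the two cone points are forced to collide'' into the sharp arithmetic conclusion that the missing set is exactly $\alpha_{1}\mathbb{Z}^{2}$, and this hinges on the fact that $\mathrm{Im}\,z_{2}(\tau)/\mathrm{Im}\,\tau$ equals the constant $a_{0}/\alpha_{1}$, turning lattice-membership into an all-or-nothing condition in $\tau$. A secondary care is the connectedness used for primitivity, which relies on the collision locus meeting each leaf-hyperplane in complex codimension one.
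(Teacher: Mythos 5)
Your overall route is the same as the paper's: you use the quasi-periodicity of $\theta$ to show $e\circ\xi^\alpha=h^\alpha_{1,n}$, so that $\xi^\alpha$ is a continuous lift of the linear holonomy and hence constant on the (connected) leaves; you identify the fibres of $\xi^\alpha$ with the affine hyperplanes \eqref{E:EquationLinearieFeuille}; you deduce primitivity from connectedness of these hyperplane sections; and you get (4) from the scaling identity $\xi^{\lambda\alpha}=\lambda\,\xi^{\alpha}$. Parts (1), (3), (4) and your $n=2$ case of (2) are correct; in fact your $n=2$ argument (constancy of $\mathrm{Im}\,z_2(\tau)/\mathrm{Im}\,\tau$, turning lattice membership into an all-or-nothing condition in $\tau$) is a clean reformulation of the paper's proportionality-of-equations argument.

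There is, however, a genuine gap in the case $n\geq 3$ of (2), and it also undermines the connectedness you invoke for primitivity. You assert that on the solution hyperplane ``the distinctness of $[0],[z_2],\dots,[z_n]$ fails only on a proper analytic subset''. This is exactly the point that must be proved: the collision locus is a countable union of hyperplanes $\Sigma_{i,j}^{p,q}=\{z_i-z_j+p+q\tau=0\}$, and since \eqref{E:EquationLinearieFeuille} also cuts out a hyperplane, your claim can only fail if the two hyperplanes coincide, i.e.\ if their linear parts are proportional. Ruling this out is where the hypothesis on $\alpha$ enters: under \eqref{E:g=1ComplexHyperbolicCase} (after reordering, \eqref{E:AssumptionAlpha}) all $\alpha_i$ with $i\geq 2$ are negative, hence nonzero and of the same sign, so $(\alpha_2,\dots,\alpha_n)$ cannot be proportional to a vector with entries $\{+1,-1,0,\dots,0\}$ or $\{-1,0,\dots,0\}$ once $n\geq 3$. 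Your proof never uses any sign condition on $\alpha$, and without one the statement is simply false: for $\alpha=\bigl(0,\tfrac12,-\tfrac12\bigr)$ (which satisfies ``each $\alpha_i>-1$ and $\sum_i\alpha_i=0$'') and $a=(0,0)$, equation \eqref{E:EquationLinearieFeuille} reads $z_2=z_3$ and lies entirely inside the collision locus, so $(0,0)\notin\mathrm{Im}(\xi^\alpha)$. You therefore need to add the proportionality argument together with the standing sign hypothesis; the same observation is what guarantees that each leaf-hyperplane meets the collision locus in complex codimension one, the point you flagged as ``secondary care'' for primitivity but left unproven.
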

 
Point (1) above shows that each level-set 
$\mathcal F_\rho^\alpha=({h}^{\alpha}_{1,n})^{-1}(\rho)$ 
of the linear holonomy map ${h}^\alpha_{1,n}:  
 \mathcal T\!\!\!{\it or}_{1,n}   \rightarrow \mathbb U^2$ is a countable disjoint union of leaves 
 $\mathcal F_a^{\alpha}$'s.
%
 %
   Point (2) answers a question of \cite{Veech}. Finally (3) makes 
   the general and abstract result of Veech mentioned in \S\ref{S:VeechIntro} completely explicit in the $g = 1$ case.
%

\subsubsection{}\hspace{-0.2cm} The pure mapping class group ${\rm PMCG}_{1,n}$ does not act effectively on the Torelli space. Indeed, $\mathcal T\!\!\!{\it or}_{1,n}$ can be seen abstractly as the quotient of  $\mathcal T\!\!\!{\it eich}_{1,n}$ 
by the normal subgroup of the pure mapping class group  formed by mapping classes which act trivially on the homology of the model 
$n$-punctured 2-torus. The latter is called the {\bf Torelli group} and is 
denoted by  ${\rm Tor}_{1,n}$.  

Another key ingredient for what comes next is the fact that 
$$
{\rm Sp}_{1,n}(\mathbb Z):={{\rm PMCG}_{1,n}}_{\big/{\rm Tor}_{1,n}}
$$
as well as its action in the coordinates $(\tau,z)$ on $\mathcal T\!\!\!{\it or}_{1,n}$ can be made explicit. 

For instance, there is an isomorphism  
$${\rm Sp}_{1,n}(\mathbb Z)\simeq {\rm SL}_2(\mathbb Z)\ltimes \big(\mathbb Z^2\big)^{n-1}$$
 with the ${\rm SL}_2(\mathbb Z)$-part acting as usual on the 
variable $\tau\in \mathbb H$.

It is then straightforward to determine, first which are the lifted holonomies $a\in {\rm Im}(\xi^{\alpha})$ whose orbits under ${\rm Sp}_{1,n}(\mathbb Z)$ are discrete; then, for such a holonomy $a$,  what is the image $\mathscr F_a^\alpha=\pi(\mathcal F_a^\alpha)\subset {\mathscr M}_{1,n}$
of the leaf $\mathcal F_a^\alpha$ 
by the quotient map 
$$
\pi : \mathcal T\!\!\!{\it or}_{1,n}
\longrightarrow {\mathscr M}_{1,n}= \mathcal T\!\!\!{\it or}_{1,n}/{\rm Sp}_{1,n}(\mathbb Z).
$$

A $m$-uplet $\nu=( \nu_i)_{i=1}^n\in \mathbb R^m$ is said to be {\bf commensurable} if there exists a 
 constant  $\lambda\neq 0$ such that $\lambda\nu=(\lambda \nu_i)_{i=1}^n$ is rational, {\it i.e.}\;belongs to $\mathbb Q^n$.  
\begin{thm} 
\label{T:Main}
 \begin{enumerate}
 \item Veech's foliation $\mathscr F^\alpha$ on ${\mathscr M}_{1,n}$ admits algebraic leaves 
if and only if $\alpha$ is commensurable. 
\sk
\item  
 The leaf $\, \mathscr F_a^\alpha$ is an algebraic subvariety of $\,{\mathscr M}_{1,n}$ if and only if the $(n+2)$-uplet of real numbers $(\alpha,a)$ is commensurable. 
\end{enumerate}
\end{thm}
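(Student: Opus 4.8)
The plan is to translate the question into the dynamics of the induced action of $\mathrm{Sp}_{1,n}(\mathbb Z)$ on the space of leaves. By Proposition \ref{P:Main}, the leaves of $\mathcal F^\alpha$ on $\mathcal T\!\!\!{\it or}_{1,n}$ are exactly the fibers of the primitive first integral $\xi^\alpha=(a_0,a_\infty)$, so the set of leaves is identified with $\mathrm{Im}(\xi^\alpha)\subset\mathbb R^2$. Since $\mathrm{Sp}_{1,n}(\mathbb Z)$ preserves Veech's foliation, it permutes the leaves and hence acts on $\mathbb R^2$; the first step is to make this action explicit. Using the standard action of $\mathrm{SL}_2(\mathbb Z)\ltimes(\mathbb Z^2)^{n-1}$ in the coordinates $(\tau,z)$ — namely $\tau\mapsto(A\tau+B)/(C\tau+D)$, $z_i\mapsto z_i/(C\tau+D)$ for the linear part and $z_i\mapsto z_i+p_i+q_i\tau$ for the translations — a direct computation from the formulas for $a_0,a_\infty$ in Propositions \ref{P:ExplicitFlatMetric} and \ref{P:Main} should show that $\begin{pmatrix}a_\infty\\a_0\end{pmatrix}$ transforms linearly under $\begin{pmatrix}A&B\\C&D\end{pmatrix}\in\mathrm{SL}_2(\mathbb Z)$, while a translation $(p_i,q_i)_i$ sends $(a_\infty,a_0)$ to $(a_\infty+\sum_i\alpha_ip_i,\,a_0-\sum_i\alpha_iq_i)$. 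Thus the induced action is the affine action of $G_\alpha:=\mathrm{SL}_2(\mathbb Z)\ltimes(\Lambda_\alpha\times\Lambda_\alpha)$, where $\Lambda_\alpha:=\sum_{i=1}^n\alpha_i\mathbb Z\subset\mathbb R$ and $\mathrm{SL}_2(\mathbb Z)$ acts linearly.

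Second, I would establish the dictionary between algebraicity and the dynamics of this action. Since $\pi^{-1}(\mathscr F_a^\alpha)=\bigcup_{b\in G_\alpha\cdot a}\mathcal F_b^\alpha$ is a union of the (complex affine, hence closed) leaves indexed by the orbit $G_\alpha\cdot a$, and since the leaves vary continuously with $b$, this union is closed in $\mathcal T\!\!\!{\it or}_{1,n}$ if and only if the orbit $G_\alpha\cdot a$ is discrete in $\mathbb R^2$; a non-discrete orbit produces a set accumulating onto itself, so $\mathscr F_a^\alpha$ fails to be closed, a fortiori fails to be algebraic. Conversely, when the orbit is discrete the image $\mathscr F_a^\alpha$ is a closed subvariety cut out by the descent of the linear equation \eqref{E:EquationLinearieFeuille}; that such a closed ``linear'' leaf is genuinely algebraic, and not merely a closed analytic subset, should follow from the quasi-projective, arithmetic structure of $\mathscr M_{1,n}$ (this is the phenomenon made fully explicit for $n=2$, where the leaf is identified with a modular curve $Y_1(N)$). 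I expect this equivalence, and specifically the closed-implies-algebraic direction, to be the main technical point, the rest being the clean orbit computation.

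Third, I would solve the resulting arithmetic problem: when is $G_\alpha\cdot a$ discrete? The translation subgroup already forces $\Lambda_\alpha\times\Lambda_\alpha$ to be discrete, i.e.\ $\Lambda_\alpha$ to be a discrete (cyclic) subgroup of $\mathbb R$, which happens precisely when $\alpha=(\alpha_i)$ is commensurable; if $\alpha$ is not commensurable then $\Lambda_\alpha$ is dense, every orbit is dense, and no leaf is algebraic. Assume then $\alpha$ commensurable and write $\Lambda_\alpha=c\,\mathbb Z$; after rescaling by $1/c$ the group $G_\alpha$ becomes the standard affine group $\mathrm{SL}_2(\mathbb Z)\ltimes\mathbb Z^2$, and the orbit of $a/c$ is discrete in $\mathbb R^2$ if and only if its image in the torus $\mathbb R^2/\mathbb Z^2$ has finite $\mathrm{SL}_2(\mathbb Z)$-orbit, i.e.\ is a torsion point, i.e.\ $a/c\in\mathbb Q^2$. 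Since $a/c\in\mathbb Q^2$ is exactly the condition that $a_0,a_\infty$ lie in the one-dimensional $\mathbb Q$-span $\mathbb Q\,c=\mathrm{span}_{\mathbb Q}\{\alpha_1,\dots,\alpha_n\}$, this says precisely that $(\alpha,a)$ is commensurable, which proves (2). Part (1) then follows: algebraic leaves exist iff some $a\in\mathrm{Im}(\xi^\alpha)$ has $(\alpha,a)$ commensurable, which by (2) forces $\alpha$ commensurable, and conversely if $\alpha$ is commensurable one chooses any $a\in c\,\mathbb Q^2\cap\mathrm{Im}(\xi^\alpha)$, which is non-empty since $\mathrm{Im}(\xi^\alpha)$ is $\mathbb R^2$ for $n\ge3$ and $\mathbb R^2$ minus the discrete set $\alpha_1\mathbb Z^2$ for $n=2$, by Proposition \ref{P:Main}(2).
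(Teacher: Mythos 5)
Your overall architecture coincides with the paper's: both reduce algebraicity of $\mathscr F_a^\alpha$ to discreteness of the orbit of $a$ under the explicit affine action \eqref{E:ActionOnHolonomy} of ${\rm SL}_2(\mathbb Z)\ltimes(\mathbb Z^2)^{n-1}$ on $\mathbb R^2$ (via the Camacho--Lins Neto criterion), and then characterize discreteness arithmetically. Your dynamical treatment of the arithmetic step is a legitimate variant of the paper's: where the paper uses elementary subgroup computations (translations force $\mathbb Z(\alpha)=\lambda\mathbb Z$ discrete, then parabolic-type elements force $a_0,a_\infty\in\lambda\mathbb Q$, then Proposition \ref{P:NormalFormForAinQ2} gives the normal form $[a]=[0,-1/N]$ and discreteness), you pass to the torus $\mathbb R^2/\mathbb Z^2$. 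Note, however, that ``finite orbit iff torsion'' is not quite enough there: to rule out discreteness for $a/c\notin\mathbb Q^2$ you need that every \emph{infinite} ${\rm SL}_2(\mathbb Z)$-orbit on the torus is dense (a classical theorem, but it must be invoked); an infinite orbit upstairs could a priori be discrete without being closed. Modulo that citation, your argument correctly establishes the \emph{necessity} direction of both (1) and (2), since algebraic $\Rightarrow$ closed analytic $\Rightarrow$ discrete orbit $\Rightarrow$ $(\alpha,a)$ commensurable.

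The genuine gap is the converse, i.e.\ your ``closed implies algebraic'' step, which you flag as the main technical point but then dispose of by asserting it ``should follow from the quasi-projective, arithmetic structure of $\mathscr M_{1,n}$.'' No such general principle exists: Chow's theorem requires a projective (compact) ambient space, and closed analytic subvarieties of quasi-projective varieties need not be algebraic (graphs of entire functions in $\mathbb C^2$ are closed analytic and non-algebraic). Discreteness of the orbit only gives that $\mathscr F_a^\alpha$ is a closed analytic sub(orbi)variety of $\mathscr M_{1,n}$, which is precisely where the paper's real work begins: in \S\ref{SS:AlgebraicLeavesg=1n>1}, after normalizing to $a=(0,0)$ or $a=(0,-1/N)$ via Proposition \ref{P:NormalFormForAinQ2}, the leaf is pulled back to a suitable finite (hence algebraic) cover $\mathscr M_{1,3}(\Gamma_{\!\!p})$ of the moduli space and identified there, through the explicit affine parametrization $U_0$ and the analysis of the stabilizer, with the total space of a modular elliptic surface $\mathcal E(M_p)\to Y(M_p)$ (resp.\ $\mathcal E_1(Np_2)\to Y_1(Np_2)$) with explicit torsion (multi)sections removed --- see Proposition \ref{P:JustHere}; algebraicity of the leaf then descends from this explicitly algebraic model. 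Without this construction (or a genuine substitute, e.g.\ an applicable extension or definability theorem), the sufficiency halves of both statements of Theorem \ref{T:Main} remain unproven in your proposal.
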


Actually, under the assumption that $\alpha$ is commensurable, one can give an explicit description of the algebraic leaves of $\mathscr F^\alpha$.    The case when $n=2$ is particular and will be treated very carefully in \S\ref{S:Resultats-n=2} below.  But 
the consideration of the case when $n=3$ already suggests 
what happens more generally  and we will give a general description of an algebraic leaf  $\mathscr F_a^\alpha\subset {\mathscr M}_{1,3}$. \sk

First, we remark that  the ${\rm SL}_2(\mathbb Z)$-part of ${\rm Sp}_{1,3}(\mathbb Z)$ acts in a natural way on $a\in {\rm Im}(\xi^\alpha)$  and that, if  the corresponding leaf $\mathscr F_a^\alpha$ 
is algebraic, the latter has a discrete orbit and the image ${S}_a$ of its stabilizer in $ {\rm SL}_2(\mathbb Z)$ is `big' ({\it i.e.}\;of finite index).  Second we recall that the linear projection $\mathcal T\!\!\!{\it or}_{1,n}\rightarrow \mathbb H, \, (\tau,z)\mapsto \tau$ passes to the quotient and induces the map 
${\mathscr M}_{1,n}\rightarrow {\mathscr M}_{1,1}$,  which corresponds to forgetting the last $n-1$ points of a $n$-marked elliptic curve.
\begin{thm} 
\label{T:Main2}
Assume that $\mathscr F_a^\alpha \subset {\mathscr M}_{1,3}$ is an algebraic leaf of\;\,$\mathscr F^\alpha$.  
\begin{enumerate}
\item There exists an integer $N_a$ such that \,${S}_a\simeq \Gamma_1(N_a)$; 
\sk 
\item The leaf\;$\mathscr F_a^\alpha$ is isomorphic to the total space of the elliptic modular surface $\mathscr E_{1}(N_a)\rightarrow Y_1(N_a)$
 from which the union of a finite number of  torsion
 multi-sections has been removed.
 \end{enumerate}
\end{thm}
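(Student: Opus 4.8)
The plan is to make the action of $\mathrm{Sp}_{1,3}(\mathbb Z)\simeq\mathrm{SL}_2(\mathbb Z)\ltimes(\mathbb Z^2)^2$ on the value-space $\mathbb R^2$ of the primitive first integral $\xi^\alpha=(a_0,a_\infty)$ completely explicit, and then to descend the affine description of the leaf furnished by Proposition \ref{P:Main} through the quotient map $\pi:\mathcal T\!\!\!{\it or}_{1,3}\to\mathscr M_{1,3}$. First I would compute the induced action on $a=(a_0,a_\infty)$. Substituting the translation $z_i\mapsto z_i+p_i+q_i\tau$ into the formulas for $a_0$ and $a_\infty$ shows that the lattice part $(\mathbb Z^2)^2$ translates $a$ by $\sum_{i=2,3}(-\alpha_iq_i,\alpha_ip_i)$, hence moves $a$ inside $\Lambda_\alpha\times\Lambda_\alpha$ with $\Lambda_\alpha=\alpha_2\mathbb Z+\alpha_3\mathbb Z$; commensurability of $\alpha$ (given by Theorem \ref{T:Main}) makes $\Lambda_\alpha$ discrete. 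Substituting $\tau\mapsto\gamma\tau$, $z_i\mapsto z_i/(c\tau+d)$ shows that $\gamma=\left(\begin{smallmatrix}a&b\\c&d\end{smallmatrix}\right)\in\mathrm{SL}_2(\mathbb Z)$ acts on $a$ linearly; evaluating on the generators $S=\left(\begin{smallmatrix}0&-1\\1&0\end{smallmatrix}\right)$ and $T=\left(\begin{smallmatrix}1&1\\0&1\end{smallmatrix}\right)$ identifies this with the standard action of $\mathrm{SL}_2(\mathbb Z)=\mathrm{Sp}\big(H_1(E_\tau,\mathbb Z)\big)$ on $H^1(E_\tau,\mathbb R)\cong\mathbb R^2$, i.e. the contragredient of the defining representation, itself isomorphic to it. Consequently $S_a$ is exactly the stabilizer of the class $\bar a$ of $a$ in the torus $\mathbb R^2/\Lambda_\alpha^2$.

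For part (1), Theorem \ref{T:Main} tells us that $\mathscr F_a^\alpha$ is algebraic iff $(\alpha,a)$ is commensurable, which is precisely the condition that $\bar a$ be a torsion point of $\mathbb R^2/\Lambda_\alpha^2$; let $N_a$ be its order. Rescaling so that $\Lambda_\alpha^2$ becomes $\mathbb Z^2$, the class $\bar a$ becomes a vector of exact order $N_a$ in $(\tfrac1{N_a}\mathbb Z/\mathbb Z)^2\cong(\mathbb Z/N_a)^2$, that is a primitive vector modulo $N_a$. Since $\mathrm{SL}_2(\mathbb Z/N_a)$ acts transitively on primitive vectors, $\bar a$ is equivalent to $\tfrac1{N_a}(1,0)$, whose stabilizer in $\mathrm{SL}_2(\mathbb Z)$ is by definition $\Gamma_1(N_a)$. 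Hence $S_a\simeq\Gamma_1(N_a)$, pinning down the stabilizer type as $\Gamma_1$ rather than $\Gamma_0$ or $\Gamma(N_a)$.

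For part (2), using $\alpha_3\neq0$ I would solve the affine equation \eqref{E:EquationLinearieFeuille} for $z_3$, which identifies $\mathcal F_a^\alpha$ with $(\mathbb H\times\mathbb C)$, coordinates $(\tau,z_2)$, minus the three collision loci $[z_1]=[z_2]$, $[z_1]=[z_3]$, $[z_2]=[z_3]$. I would then analyze the stabilizer $\widehat S_a\subset\mathrm{Sp}_{1,3}(\mathbb Z)$ of the leaf: it fits into a short exact sequence $1\to L\to\widehat S_a\to\Gamma_1(N_a)\to1$, where $L\cong\mathbb Z^2$ is the group of pure lattice translations preserving the leaf (those $(v_2,v_3)$ with $\alpha_2v_2+\alpha_3v_3=0$), acting on the fibre coordinate $z_2$ through a sublattice $m\Lambda_\tau$ for a suitable integer $m$, while $\Gamma_1(N_a)$ acts on the base $\tau\in\mathbb H$ as usual and on the fibre by $z_2\mapsto z_2/(c\tau+d)$. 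After rescaling the fibre coordinate so that $m\Lambda_\tau$ becomes $\Lambda_\tau$, the quotient $(\mathbb H\times\mathbb C)/\widehat S_a$ is, by construction, the universal elliptic curve over $Y_1(N_a)=\mathbb H/\Gamma_1(N_a)$, i.e. the total space of $\mathscr E_1(N_a)\to Y_1(N_a)$. Finally, commensurability of $(\alpha,a)$ forces each collision locus to be defined by a condition of the form $z_2\equiv(\text{rational combination of }1,\tau)\bmod(\text{lattice commensurable with }\Lambda_\tau)$, so it descends to a finite union of torsion multisections; removing these from $\mathscr E_1(N_a)$ yields $\mathscr F_a^\alpha$.

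The hard part will be the identification in part (2) of the quotient surface with $\mathscr E_1(N_a)$ itself, together with a correct description of the removed locus. One must check that the extension $\widehat S_a$ reconstructs the universal family rather than merely an isogenous elliptic surface, which forces one to track the fibre lattice $m\Lambda_\tau$ and rescale it to $\Lambda_\tau$ compatibly with the $\Gamma_1(N_a)$-action; and one must verify that the three collision divisors give only finitely many multisections, all of them torsion, which is exactly where the commensurability of $(\alpha,a)$ is used in an essential way. By comparison, the computation of the $\mathrm{SL}_2(\mathbb Z)$-action on $a$ and the resulting identification $S_a\simeq\Gamma_1(N_a)$ are the arithmetic heart but are comparatively routine once the action is made explicit.
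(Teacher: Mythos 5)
Your part (1) is correct and is, in substance, the paper's own argument in a cleaner packaging: where the paper proves by hand the normal form $[a]=[0,-1/N_a]$ for the orbit (Proposition \ref{P:NormalFormForAinQ2}) and then computes the stabilizer of that explicit representative, you invoke the transitivity of ${\rm SL}_2(\mathbb Z/N_a\mathbb Z)$ on vectors of exact order $N_a$; the two statements are equivalent, and both identify $S_a$ with (a conjugate of) $\Gamma_1(N_a)$.

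Part (2), however, has a genuine gap, and it is not where you located it. With the paper's normalization $\alpha_2=-p_2$, $\alpha_3=-p_3$, $\gcd(p_2,p_3)=1$ and $a=(0,-1/N_a)$, an element $\big(M,(k_2,l_2),(k_3,l_3)\big)$ with $M=\begin{pmatrix}a&b\\c&d\end{pmatrix}$ fixes the holonomy datum if and only if $p_2k_2+p_3k_3=\frac{d-1}{N_a}$ and $p_2l_2+p_3l_3=\frac{c}{N_a}$, by \eqref{E:ActionOnHolonomy}. Fixing Bézout integers with $q_2p_2+q_3p_3=1$, every lift of $M\in\Gamma_1(N_a)$ to your group $\widehat S_a$ therefore has $(k_2,l_2)=q_2\big(\frac{d-1}{N_a},\frac{c}{N_a}\big)+p_3(k,l)$ for some $(k,l)\in\mathbb Z^2$, hence by \eqref{E:SP1nACTIONonTor1n} it acts on your fibre coordinate by
\begin{equation*}
z_2\longmapsto \frac{z_2+q_2\,\frac{(d-1)+c\tau}{N_a}+p_3(k+l\tau)}{c\tau+d}\,,
\end{equation*}
and not by $z_2\mapsto z_2/(c\tau+d)$. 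The torsion translation $q_2\frac{(d-1)+c\tau}{N_a}$ is well defined modulo the lattice $p_3\mathbb Z_\tau$ through which your $L$ acts, and since $\gcd(q_2,p_3)=1$ it cannot be removed by changing the lift unless $p_3=1$ or $M$ lies in the proper finite-index subgroup $\Gamma_1(N_ap_3)$. So the extension $\widehat S_a$ does \emph{not} act in product form, and your ``by construction'' identification of the quotient with the universal elliptic curve breaks down. The real difficulty is this twist, not the rescaling of $p_3\mathbb Z_\tau$ to $\mathbb Z_\tau$, which is harmless.

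The paper's proof (\S\ref{SS:AlgebraicLeavesg=1n>1}) circumvents the twist in two steps: it parametrizes the leaf by $\xi\mapsto\big(p_3\xi+\frac{1}{N_ap_2},-p_2\xi\big)$, the affine shift $\frac{1}{N_ap_2}$ being exactly what absorbs the torsion translation, and it first restricts to the subgroup $\Gamma_1(N_ap_2)\ltimes\mathbb Z^2\subset\widehat S_a$, on which the induced action on $(\tau,\xi)$ becomes the standard one \eqref{E:gogigu}; this identifies the preimage of the leaf in an intermediate moduli space with an open part of a modular elliptic surface, and the statement over $\mathscr M_{1,3}$ is then obtained by descending from that level. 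It is precisely this descent that turns torsion \emph{sections} into torsion \emph{multi-sections} (your argument, were it correct, would produce sections), and it is also where the pathological cases acknowledged by the paper arise, e.g.\;$N_a=2$, where $-{\rm Id}\in\Gamma_1(2)$ forces the leaf to be a bundle in punctured projective lines rather than an elliptic surface minus multi-sections — a phenomenon your proof, as written, would not detect.
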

Actually,  the way in which  this result is stated here is not completely correct
 since a finite number of pathological cases do occur. Anyway, it  applies to  most of the algebraic leaves and can actually be made more precise and explicit: for instance, there is exactly one algebraic leaf for each integer $N>0$, one can give  $N_a$ in terms of $a$ and it is possible to list which are the torsion multisections to be removed from $\mathscr E_{1}(N_a)$ in order to get the leaf $\mathscr F_a^\alpha$.

 
\subsubsection{}\hspace{-0.2cm} 
\label{S:VeechMapExplicit}
From \eqref{E:EquationLinearieFeuille}, it comes that $(\tau,z')=(\tau,z_3,\ldots,z_n)$ forms a system of global coordinates on any leaf $\mathcal F_a^\alpha$.  
For  $(\tau,z')\in \mathcal F_a^\alpha$, we denote by  $(\tau,z)$ the element of  $\mathcal T\!\!\!{\it or}_{1,n}$ where $z_2$ is obtained from  $(\tau,z')$ by solving the affine equation \eqref{E:EquationLinearieFeuille}. \sk 

 Our next result is about an explicit expression, in these coordinates, of the restriction  to  $\mathcal F_a^\alpha$, denoted by $V_a^\alpha$,   of 
 Veech's full holonomy map ${\rm Hol}^\alpha_a$ of \S\ref{S:VeechIntro}\footnote{As a global holomorphic map,  Veech's map is only defined on the corresponding leaf
 in ${\mathcal T}\!\!\!{\it eich}_{1,n}$.  
  On  $\mathcal F_a^\alpha\subset {\mathcal T}\!\!\!{\it or}_{1,n}$, it has to be considered as a global multivalued holomorphic function, except if  this leaf has no topology (as when $n=2$,  a case such that  
$\mathcal F_a^\alpha\simeq \mathbb H$ for any $a$).}. From Proposition \ref{P:ExplicitFlatMetric}, it comes immediately that for any 
$(\tau,z)\in \mathcal F_a^\alpha$ fixed, 
$$    
z\mapsto \int^z T_{\tau,z}^\alpha(u)du
$$ 
is `the' developing map of the corresponding flat structure on the punctured elliptic curve
$E_{\tau,z}=E_\tau\setminus\{ [z_1],\ldots,[z_n] \}$. Consequently, there is a local analytic expression for $V_a^\alpha$ whose  components  are obtained  by integrating a fixed determination of the multivalued 1-form 
 $T_{\tau,z}^\alpha(u)du$ along certain 1-cycles in $E_{\tau,z}$. 

Using some results of Mano and Watanabe \cite{ManoWatanabe}, one can extend to our situation the analytico-cohomological approach used in the genus 0 case by Deligne and Mostow in \cite{DeligneMostow}. More precisely, for $(\tau,z)\in \mathcal T\!\!\!{\it or}_{1,n}$, let ${L}_{\tau,z}^{\vee}$ be the local system on $E_{\tau,z}$ whose local sections are given by local determinations of $T_{\tau,z}^\alpha$.  
Following 
\cite{ManoWatanabe}, one defines some ${L}_{\tau,z}^{\vee}$-twisted 1-cycles $\boldsymbol{\gamma}_0,
\boldsymbol{\gamma}_2,\ldots,\boldsymbol{\gamma}_n,\boldsymbol{\gamma}_\infty$ 
 by taking regularizations of the relative twisted 1-simplices obtained by considering  certain determinations of $T_{\tau,z}^\alpha$ along the segments $\ell_0,
\ell_2,\ldots,\ell_n,\ell_\infty$ on $E_{\tau,z}$ represented in  Figure \ref{F:VeryNicePosition0} below. 
\begin{center}
\begin{figure}[!h]
\psfrag{B}[][][1]{$\textcolor{red}{B} $}
\psfrag{1}[][][1]{$1 $}
\psfrag{0}[][][1]{$0 $}
\psfrag{4}[][][1]{$z_2 $}
\psfrag{3}[][][1]{$z_3 $}
\psfrag{2}[][][1]{$z_4 $}
\psfrag{11}[][][1]{$z_5 $}
\psfrag{t}[][][1]{$\tau $}
\psfrag{g0}[][][1]{$\ell_{0} $}
\psfrag{l4}[][][1]{$\ell_2\;\; $}
\psfrag{l3}[][][1]{$\ell_{\!3} \, \;\;\; $}
\psfrag{l2}[][][1]{$\; \ell_4 \; $}
\psfrag{l1}[][][1]{$\; \ell_5 \;$}
\psfrag{gi}[][][1]{$\ell_{\!\!\infty} $}
\includegraphics[scale=0.7]{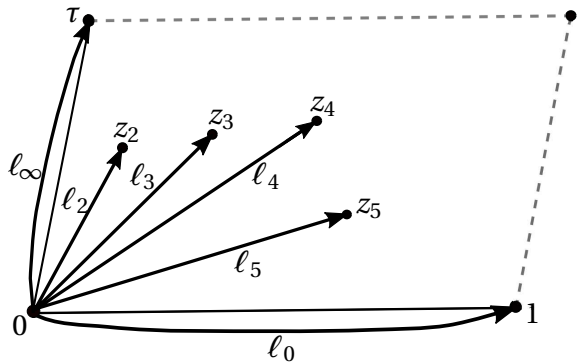}
\caption{For $\bullet=0,2,\ldots,n,\infty$,  $\ell_\bullet$ is the image in the $n$-punctured elliptic curve $E_{\tau,z}$ of the segment $]0,z_\bullet[$ (with $z_0=1$, $z_\infty=\tau$ 
and assuming the normalization $z_1=0$).}
\label{F:VeryNicePosition0}
\end{figure}\vspace{-0.4cm}
\end{center}

By using that the $\boldsymbol{\gamma}_\bullet$'s for $\bullet=0,2,\ldots,n-1,\infty$ induce 
a basis of $H_1(E_{\tau,z}, {L}_{\tau,z}^{\vee})$ ({\it cf.}\;\cite{ManoWatanabe}) and they can be locally continuously extended on the Torelli space,   
  one obtains the following result:
\begin{prop} 
\label{P:VeechExplicit}
\begin{enumerate}
\item The Veech map of $ \mathcal F_{\!\!a}^\alpha$ has a local analytic expression 
	$$V_{\!a}^\alpha: \big(\tau,z'\big)\mapsto \big[F_0(\tau,z):F_3(\tau,z):\cdots: F_n(\tau,z):F_\infty(\tau,z)\big]$$ where for $\bullet=0,3,\ldots,n,\infty$, the  component $F_\bullet$  is  the  (locally defined) {\bf elliptic hypergeomeric integral} depending on  $(\tau,z)\in \mathcal T\!\!\!{\it or}_{1,n}$ defined as 
	$$
F_\bullet: (\tau,z)\longmapsto 	\int_{\boldsymbol{\gamma}_\bullet} T_{\tau,z}^\alpha(u)du\;. 
	$$
\item The matrix of Veech's hermitian form $H_a^\alpha$ on $\mathbb C^{n+1}$ ({\it cf.}\;\S\ref{S:VeechIntro}) in the coordinates associated to  the components $F_\bullet$ of $V_{\!a}^\alpha$ considered in {\rm (1)} can be obtained from  
the twisted intersection products $\boldsymbol{\gamma}_\bullet \,\boldsymbol{\cdot} \, 
\boldsymbol{\gamma}_\circ^{\vee}$ for $\bullet$ and $\circ$ ranging in $\{0,3,\ldots,n,\infty\}$, all of which   can be explicitly computed (see \S\ref{S:IntersectionProduct}). \sk 
\end{enumerate}
\end{prop}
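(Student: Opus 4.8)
The plan is to identify the restriction to the leaf $\mathcal F_a^\alpha$ of Veech's complete holonomy map $\mathrm{Hol}^\alpha_\rho$ of \S\ref{S:VeechIntro} with the twisted period map attached to the local system $L_{\tau,z}^\vee$, and then to read off both its components and the hermitian form $H_a^\alpha$ directly from the twisted (co)homology of the punctured elliptic curve $E_{\tau,z}$, following \cite{ManoWatanabe}. The starting point is Veech's cohomological description: by construction $\mathrm{Hol}^\alpha_\rho$ sends a flat structure to the projective class, in the space of cocycles $\mathscr H^1_\rho$, recording the full (affine) holonomy of its developing map. For $(\tau,z)\in \mathcal F_a^\alpha$, Proposition \ref{P:ExplicitFlatMetric} shows that the corresponding flat structure on $E_{\tau,z}$ has developing map a primitive of the multivalued $1$-form $\omega_{\tau,z}:=T_{\tau,z}^\alpha(u)\,du$. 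Since none of the $\alpha_i$ is an integer, $\omega_{\tau,z}$ is a closed twisted $1$-form for the connection $\nabla_{\tau,z}=d+d\log T_{\tau,z}^\alpha\wedge$, and it defines a class in the twisted cohomology $H^1(E_{\tau,z},\nabla_{\tau,z})$ whose period along any twisted cycle $\boldsymbol\gamma$ is $\int_{\boldsymbol\gamma}T_{\tau,z}^\alpha\,du$. The key identification to make is that Veech's $\mathscr H^1_\rho$ is canonically this twisted cohomology, dual via the period pairing to $H_1(E_{\tau,z},L_{\tau,z}^\vee)$, and that under this dictionary the holonomy cocycle of $(\tau,z)$ is exactly the class of $\omega_{\tau,z}$.

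Granting this, part (1) follows by evaluating periods. By \cite{ManoWatanabe} the cycles $\boldsymbol\gamma_0,\boldsymbol\gamma_2,\ldots,\boldsymbol\gamma_n,\boldsymbol\gamma_\infty$ span $H_1(E_{\tau,z},L_{\tau,z}^\vee)$, subject to the single linear relation forced by $\sum_i\alpha_i=0$; dropping $\boldsymbol\gamma_2$ yields the basis $\{\boldsymbol\gamma_0,\boldsymbol\gamma_3,\ldots,\boldsymbol\gamma_n,\boldsymbol\gamma_\infty\}$. The homogeneous coordinates on $\mathbb P\mathscr H^1_\rho$ dual to this basis are the period functionals, so the corresponding components of the class of $\omega_{\tau,z}$ are precisely $F_\bullet(\tau,z)=\int_{\boldsymbol\gamma_\bullet}T_{\tau,z}^\alpha(u)\,du$ for $\bullet=0,3,\ldots,n,\infty$. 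Because the $\boldsymbol\gamma_\bullet$ extend continuously over the Torelli space, each $F_\bullet$ is a locally defined holomorphic function of $(\tau,z)$; eliminating $z_2$ through the affine equation \eqref{E:EquationLinearieFeuille} then expresses them in the coordinates $(\tau,z')$ on $\mathcal F_a^\alpha$, giving the stated local analytic expression of $V_a^\alpha$.

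For part (2) I would match $H_a^\alpha$ with the natural hermitian intersection form on twisted cohomology. Poincar\'e--Lefschetz duality for the local system identifies the cup-product pairing on $H^1$ with the twisted intersection pairing on $H_1(E_{\tau,z},L_{\tau,z}^\vee)$; transporting it through the period isomorphism, the Gram matrix of $H_a^\alpha$ in the basis dual to $(\boldsymbol\gamma_\bullet)$ is, up to a harmless positive scalar, the matrix of twisted intersection numbers $\boldsymbol\gamma_\bullet\cdot\boldsymbol\gamma_\circ^\vee$ with $\bullet,\circ\in\{0,3,\ldots,n,\infty\}$. These numbers are then computed from the combinatorial model of the $\boldsymbol\gamma_\bullet$ as regularizations of the segments $\ell_\bullet$, exactly in the manner of \cite{ManoWatanabe}, the explicit computation being carried out in \S\ref{S:IntersectionProduct}.

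The main obstacle, I expect, lies in the two identifications highlighted above: reconciling Veech's own formalism for $\mathscr H^1_\rho$ and $H_\rho^\alpha$, as set up in \cite{Veech}, with the twisted de Rham/Betti formalism of \cite{ManoWatanabe} and \cite{DeligneMostow}. In particular one must check that Veech's hermitian form agrees --- up to a positive scalar, which affects neither the signature nor the induced complex hyperbolic structure --- with the intersection-theoretic hermitian form. This is a compatibility between two a priori different encodings of the same geometric datum, namely the complex hyperbolic structure on the leaf, and establishing it requires carefully unwinding both constructions rather than any single computation.
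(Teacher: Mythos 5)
Your plan is the right one in outline---it is essentially the paper's own strategy of comparing Veech's map with a Deligne--Mostow-type period map---but the two places where you write ``granting this'' and ``one must check'' are precisely where the entire mathematical content of the proof lies, and your proposal leaves both open. For part (1), the identification of Veech's cocycle space $\mathscr H^1_\rho$ with twisted cohomology, and of the holonomy cocycle of $(\tau,z)$ with the class of $T^\alpha_{\tau,z}(u)\,du$, is established in the paper (Proposition \ref{P:VDM=VVeech}) by a concrete geometric argument that you do not supply: one takes a geodesic polygonation of the flat surface $E_{\tau,z}$, cuts along a graph to obtain a disk on which the developing map is a single-valued primitive of $T^\alpha_{\tau,z}(u)\,du$, so that the components $\xi_j$ of Veech's local expression are edge-integrals $\int_{e_j}T^\alpha_{\tau,z}(u)\,du$; after regularization these edges give a basis of $H_1(E_{\tau,z},L^\alpha_{\tau,z})$, and since both $(\int_{\boldsymbol e_j}\cdot)_{j\in J}$ and $(\int_{\boldsymbol\gamma_\bullet}\cdot)_\bullet$ are \emph{horizontal} systems of projective coordinates, they differ by a constant projective transformation. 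Without this (or an equivalent) bridge, ``evaluating periods'' does not yet relate Veech's map, defined via affine holonomy cocycles, to the integrals $F_\bullet$.

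The gap is more serious in part (2). You propose to obtain the Gram matrix of $H_a^\alpha$ by transporting the twisted intersection pairing through Poincar\'e duality and the period isomorphism, with the compatibility with Veech's form deferred to ``carefully unwinding both constructions''. The paper explains why the natural conceptual shortcut is unavailable here: in genus $0$ one characterizes the relevant hermitian form by its invariance under the hypergeometric monodromy (Yoshida's argument), but in the present situation some leaves of Veech's foliation on $\mathscr M_{1,2}$ are simply connected (Corollary \ref{C:LeafFr-n=2}), so there is no holonomy whatsoever that could pin the form down, and no ``harmless scalar'' argument can be run. What replaces it (Proposition \ref{P:VeechFormExplicit}, generalizing Proposition 1.11 of \cite{Looijenga}) is a twisted Riemann-bilinear-relations computation: Veech's form evaluated at $V^\alpha_a(\tau,z)$ equals $-\tfrac{i}{2}\int_{E_\tau}\lvert T^\alpha_{\tau,z}\rvert^2\,du\wedge d\overline u$, and one computes this area by cutting $E_{\tau,z}$ along the cycles $\gamma_0,\gamma_2,\gamma_\infty$, choosing a primitive $\Phi$ of $\omega=T^\alpha_{\tau,z}\,du$ on the resulting disk, and applying Stokes' theorem to $d(\Phi\,\overline\omega)$ while tracking the boundary identifications through the monodromy factors $\rho_\bullet$. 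This explicit (and admittedly long) computation is what produces exactly the matrix $I\!\!H_{\rho_a}=(2i\,I\!\!I_{\rho_a})^{-1}$ of \S\ref{S:IntersectionProduct}; duality by itself never connects the area integral defining Veech's form to the intersection numbers, so the compatibility you defer is not a formality but the actual theorem.
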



\subsubsection{}\hspace{-0.4cm}
\label{S:Resultats-n=2}
 We now turn to the case of elliptic curves with two  conical points.  
In this case $\alpha=(\alpha_1,\alpha_2)$ is such that $\alpha_2=-\alpha_1$, so one can take $\alpha_1\in ]0,1[$ as the main parameter and consequently replace $\alpha$ by $\alpha_1$ in all the notations.   For instance, Veech's foliations on $\mathcal T\!\!\!{\it or}_{1,2}$ and $\mathscr M_{1,2}$ will be denoted respectively by $\mathcal F^{\alpha_1}$ and $\mathscr F^{\alpha_1}$ from now on.
 It follows from the fourth point of 
Proposition \ref{P:popo}
 that theses foliations do not depend on $\alpha_1$.   In this case, the leaves of $\mathscr F^{\alpha_1}$, and in particular the algebraic ones, can be described very precisely.  
\mk

 It is enlightening  to make more explicit  Proposition \ref{P:Main} in the case under scrutiny. 
In this case,  the rescaled first integral $\Xi=(\alpha_1)^{-1}\xi^{\alpha_1}: 
\mathcal T\!\!\!{\it or}_{1,2} \longrightarrow \mathbb R^2
$ is independent of $\alpha_1$ and ${\rm Im}(\Xi)=\mathbb R^2\setminus \mathbb Z^2$ as image. 
Denoting by 
   $\Pi$ the restriction to $\mathcal T\!\!\!{\it or}_{1,2}$ of the linear projection $\mathbb H\times \mathbb C\rightarrow \mathbb H$ onto the first factor, one has the 
\begin{prop}
\label{P:Main-n=2}
%
\begin{enumerate}
\item 
The following map is a (real analytic) isomorphism
\begin{equation}
\label{E:Pi-Xi}
\Pi \times \Xi \; :\;  \mathcal T\!\!\!{\it or}_{1,2} \stackrel{ \sim }{  \longrightarrow } \mathbb H\times \big( \mathbb R^2\setminus \mathbb Z^2\big)\, .
\end{equation}
\item
  The push-forward of Veech's foliation $\mathcal F^{\alpha_1}$ on 
 $\mathcal T\!\!\!{\it or}_{1,2}$  
  by  this map is  the horizontal foliation  on  the product\;$\mathbb H\times ( \mathbb R^2\setminus \mathbb Z^2) $. 
  \item 
  \sk 
  By restriction, $\Pi$ induces a biholomorphism between any leaf of $\mathcal F^{\alpha_1}$ and Poincar half-plane
   $\mathbb H$.  In particular, the leaves of\;Veech's foliation 
  on the Torelli space $\mathcal T\!\!\!{\it or}_{1,2}$ 
 are  topologically trivial.
\end{enumerate}
\end{prop}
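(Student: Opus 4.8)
The plan is to make the first integral completely explicit in Nag's global coordinates $(\tau,z_2)$ on $\mathcal T\!\!\!{\it or}_{1,2}$ (with the normalization $z_1=0$), and then to read off the three assertions directly. First I would substitute $n=2$, $z_1=0$ and $\alpha_2=-\alpha_1$ into the formulas of Proposition~\ref{P:popo}, which gives
\[
a_0=\frac{\alpha_1\,\Im{\rm m}(z_2)}{\Im{\rm m}(\tau)},\qquad
a_\infty=a_0\tau+\sum_i\alpha_iz_i=a_0\tau-\alpha_1z_2 .
\]
Dividing by $\alpha_1$, the rescaled integral $\Xi=\alpha_1^{-1}\xi^{\alpha_1}$, whose components I write $(s,t)$, becomes manifestly independent of $\alpha_1$, with
\[
s=\frac{\Im{\rm m}(z_2)}{\Im{\rm m}(\tau)}
\qquad\text{and}\qquad
t=s\tau-z_2 .
\]
By Proposition~\ref{P:popo} one has $a_\infty\in\mathbb R$, hence $t\in\mathbb R$; and rearranging the definition of $t$ yields the single identity on which everything rests,
\[
z_2=s\tau-t .
\]

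This identity says that $\Xi$ is inverted, at fixed $\tau$, by an \emph{affine} (hence holomorphic in $\tau$) expression. Point~(1) is then immediate: the map $\Pi\times\Xi\colon(\tau,z_2)\mapsto(\tau,s,t)$ has explicit inverse $(\tau,s,t)\mapsto(\tau,\,s\tau-t)$, and both maps are real-analytic --- note that $\Xi$ is genuinely only real-analytic, because of the imaginary parts. For the image, the sole condition cutting out $\mathcal T\!\!\!{\it or}_{1,2}$ inside $\mathbb H\times\mathbb C$ is the distinctness of the two marked points, i.e. $z_2\notin\mathbb Z\oplus\mathbb Z\tau$; under $z_2=s\tau-t$ this translates exactly into $(s,t)\notin\mathbb Z^2$, so $\Pi\times\Xi$ is an isomorphism onto $\mathbb H\times(\mathbb R^2\setminus\mathbb Z^2)$ (this also re-derives the value of ${\rm Im}(\Xi)$ recorded in Proposition~\ref{P:popo}).

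Points~(2) and~(3) then follow formally. By Proposition~\ref{P:popo} the leaves of $\mathcal F^{\alpha_1}$ are the level sets of $\xi^{\alpha_1}$, equivalently of $\Xi$; under the isomorphism of~(1) a level set $\{\Xi=(s_0,t_0)\}$ is carried to the horizontal slice $\mathbb H\times\{(s_0,t_0)\}$, which is~(2). For~(3), the leaf $\mathcal F_a^{\alpha_1}$ is exactly the graph $\{(\tau,\,s_0\tau-t_0):\tau\in\mathbb H\}$ of an affine function of $\tau$, hence a closed complex submanifold on which $\Pi$ restricts to the biholomorphism $\tau\mapsto(\tau,\,s_0\tau-t_0)$ with inverse $\Pi$ itself; since $\mathbb H$ is contractible, the leaf is topologically trivial.

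I expect no serious obstacle: the statement is a transparent corollary of the explicit Proposition~\ref{P:popo}. The only two points needing a little care are the bookkeeping producing the clean identity $z_2=s\tau-t$ --- it is precisely this holomorphic dependence on $\tau$ that upgrades the merely real-analytic global trivialization of~(1) to the \emph{bi}holomorphism of~(3) --- and the verification that the distinctness locus $z_2\notin\mathbb Z\oplus\mathbb Z\tau$ corresponds exactly to the removed lattice $\mathbb Z^2$ on the target.
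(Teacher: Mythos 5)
Your proposal is correct and takes essentially the same route as the paper: the paper states this proposition without a separate proof, treating it as immediate from the explicit formula $\Xi(\tau,z_2)=\bigl(\Im{\rm m}(z_2)/\Im{\rm m}(\tau),\,(\Im{\rm m}(z_2)/\Im{\rm m}(\tau))\cdot\tau-z_2\bigr)$ together with Proposition~\ref{P:FalphaGeneralities}, which is exactly the computation you carry out (the identity $z_2=s\tau-t$, the explicit inverse $(\tau,s,t)\mapsto(\tau,s\tau-t)$, and the matching of the distinctness locus $z_2\notin\mathbb Z\oplus\mathbb Z\tau$ with the removed lattice $\mathbb Z^2$). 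One purely verbal slip in your point~(3): the restriction of $\Pi$ to a leaf is the projection $(\tau,z_2)\mapsto\tau$ and its inverse is the parametrization $\tau\mapsto(\tau,\,s_0\tau-t_0)$, not the other way around; this has no mathematical effect.
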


Using this result, the description of the leaves of Veech's foliation $\mathscr F^{\alpha_1}$ on 
the moduli space $\mathscr M_{1,2}$ follows easily. 
For any leaf $\mathcal F^{\alpha_1}_a$ of $\mathcal F^{\alpha_1}$,   we denote by $\pi_a: \mathcal F_a^{\alpha_1}\rightarrow \mathscr F_a^{\alpha_1}$ the 
restriction to $\mathcal F_a^{\alpha_1}$ of the quotient map $\pi: \mathcal T\!\!\!{or}_{1,2}\rightarrow {\mathscr M}_{1,2}$. 
\begin{thm} 
\label{T:Main-n=2-Leaves}
For any leaf  $\;\mathscr F_a^{\alpha_1}$ in ${\mathscr M}_{1,2}$, 
 one of the following situations occurs: 
 \begin{enumerate}
 \item either 
  the quotient mapping $\pi_a$ 
  is trivial,  hence $\mathscr F_a^{\alpha_1}\simeq \mathbb H$; or \sk 
\item  the quotient mapping $\pi_a$ is isomorphic to that of  $\; \mathbb H$ by $\tau \mapsto \tau+1$, hence $\mathscr F_a^{\alpha_1}$ is conformally isomorphic to an infinite cylinder; or \sk
\item the leaf 
$\; \mathscr F_a^{\alpha_1}$ is algebraic. If $N$ stands for the smallest positive integer such that ${N} a\in \alpha_1 \mathbb Z^2$, 
 then $N\geq 2$ and $\, \mathscr F_a^{\alpha_1}$ coincides with  the image of 
\begin{align}
\label{E:AlgebraicLeaf-n=2}
\mathbb H_{\big/ \Gamma_1(N)} & \longrightarrow {\mathscr M}_{1,2}\\
\left(E_\tau,\left[\frac{1}{N}\right]\right) & \longmapsto 
\left(E_\tau,[0], \left[\frac{1}{N}\right]\right)\, ,  \nonumber
\end{align}
hence is  isomorphic to the modular curve $Y_1(N)=\mathbb H/ \Gamma_1(N)$. 
%
 \end{enumerate}
\end{thm}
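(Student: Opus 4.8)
The plan is to realize each leaf-image $\mathscr{F}_a^{\alpha_1}$ as a quotient of $\mathbb{H}$ by an explicit subgroup of $\mathrm{PSL}_2(\mathbb{Z})$, and then to classify those subgroups by a trace argument. First I would use Proposition~\ref{P:Main-n=2} to work in the coordinates $(\tau,b)$ on $\mathcal{T}or_{1,2}\cong\mathbb{H}\times(\mathbb{R}^2\setminus\mathbb{Z}^2)$ furnished by $\Pi\times\Xi$, so that the leaves of $\mathcal{F}^{\alpha_1}$ are exactly the horizontal slices $\mathbb{H}\times\{b\}$, each biholomorphic to $\mathbb{H}$ through $\Pi$. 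Writing $z=u+v\tau$ with $u,v\in\mathbb{R}$, one checks that $b=\Xi(\tau,z)=(v,-u)$, and the original index is $a=\alpha_1 b$. Since $\mathscr{F}_a^{\alpha_1}=\pi(\mathcal{F}_a^{\alpha_1})$ and $\pi$ is the quotient by $\mathrm{Sp}_{1,2}(\mathbb{Z})$, the map $\pi_a$ is precisely the quotient of the leaf by the subgroup of $\mathrm{Sp}_{1,2}(\mathbb{Z})$ stabilizing it.

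The first real step is thus to make the action of $\mathrm{Sp}_{1,2}(\mathbb{Z})\simeq\mathrm{SL}_2(\mathbb{Z})\ltimes\mathbb{Z}^2$ explicit in the coordinates $(\tau,b)$. The $\mathbb{Z}^2$-part (translating $z$ by periods) fixes $\tau$ and translates $b$ by an arbitrary integer vector, while the $\mathrm{SL}_2(\mathbb{Z})$-part acts on $\tau$ by the usual Möbius action $\tau\mapsto\gamma\tau$ and on $b$ by a linear map $M_\gamma\in\mathrm{SL}_2(\mathbb{Z})$ preserving $\mathbb{Z}^2$; a short computation from $z\mapsto z/(r\tau+s)$ gives $M_\gamma=\left(\begin{smallmatrix} s & r\\ q & p\end{smallmatrix}\right)$ for $\gamma=\left(\begin{smallmatrix} p & q\\ r & s\end{smallmatrix}\right)$. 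Consequently $(\gamma,w)$ stabilizes the leaf $b$ iff $M_\gamma\,b+w=b$; the integer vector $w$ is then forced, so the stabilizer injects into $\mathrm{SL}_2(\mathbb{Z})$ with image $H_b=\{\gamma:\,M_\gamma\bar b=\bar b\}$, the stabilizer of the class $\bar b\in\mathbb{T}^2=\mathbb{R}^2/\mathbb{Z}^2$, acting on the leaf $\mathbb{H}$ only through $\tau\mapsto\gamma\tau$. Hence $\mathscr{F}_a^{\alpha_1}\cong\mathbb{H}/\overline{H_b}$, where $\overline{H_b}$ is the image of $H_b$ in $\mathrm{PSL}_2(\mathbb{Z})$.

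It then remains to classify $H_b$, and this is where the trichotomy appears. The key observation is that for $\gamma$ with $\mathrm{tr}\,\gamma\neq 2$ one has $\det(M_\gamma-I)=2-\mathrm{tr}\,\gamma\neq 0$, so $M_\gamma-I$ is invertible over $\mathbb{Q}$ and the condition $(M_\gamma-I)b\in\mathbb{Z}^2$ forces $b\in\mathbb{Q}^2$. Therefore, if $b\notin\mathbb{Q}^2$, every nontrivial element of $H_b$ is a unipotent of trace $2$; since two parabolics with distinct fixed points on $\partial\mathbb{H}$ would generate a subgroup containing hyperbolic elements (excluded by the same trace computation), they all share one fixed cusp, and $\overline{H_b}$ is either trivial (case (1), $\mathscr{F}_a^{\alpha_1}\cong\mathbb{H}$) or infinite cyclic generated by a single parabolic, conjugate in $\mathrm{PSL}_2(\mathbb{R})$ to $\tau\mapsto\tau+1$ (case (2), an infinite cylinder). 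If instead $b\in\mathbb{Q}^2$, then $\bar b$ is a torsion point of exact order $N\geq 2$ (as $b\notin\mathbb{Z}^2$), $H_b$ has finite index, and the leaf is algebraic. To pin down $N$ and identify the curve I would use that $Na\in\alpha_1\mathbb{Z}^2$ iff $Nb\in\mathbb{Z}^2$, so $N$ is exactly the order of $\bar b$, together with the fact that $\mathrm{SL}_2(\mathbb{Z})$ acts transitively on primitive $N$-torsion points of $\mathbb{T}^2$ (surjectivity of $\mathrm{SL}_2(\mathbb{Z})\to\mathrm{SL}_2(\mathbb{Z}/N)$ and its transitivity on primitive vectors). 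Up to the $\mathrm{Sp}_{1,2}(\mathbb{Z})$-action, which leaves the image in $\mathscr{M}_{1,2}$ unchanged, I may replace $b$ by the standard point $\bar b=(0,-1/N)$, i.e. $z=1/N$; a direct resolution of $M_\gamma\,\bar b=\bar b$ then gives $H_b=\Gamma_1(N)$ exactly, acting by $\tau\mapsto\gamma\tau$, so the leaf is the image of $\tau\mapsto(E_\tau,[0],[1/N])$ and $\mathscr{F}_a^{\alpha_1}\cong\mathbb{H}/\Gamma_1(N)=Y_1(N)$.

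The main obstacle is the classification in the irrational case: one must show that a stabilizer consisting only of parabolics cannot contain two parabolics with distinct fixed points, forcing it to be cyclic. This rests on the standard fact that two parabolics with distinct fixed points generate a non-elementary subgroup (which contains hyperbolic elements), combined with the trace computation that excludes hyperbolic and elliptic elements from $H_b$ altogether once $b\notin\mathbb{Q}^2$. The remaining ingredients — the coordinate form of the $\mathrm{Sp}_{1,2}(\mathbb{Z})$-action and the identification $H_b=\Gamma_1(N)$ at the standard torsion point — are routine linear-algebra and congruence computations.
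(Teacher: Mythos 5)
Your proof is correct, and its skeleton --- identifying $\pi_a$ with the quotient of the leaf $\mathcal F_a^{\alpha_1}\simeq \mathbb H$ by the image $H_b$ of its stabilizer in ${\rm SL}_2(\mathbb Z)$, and then classifying $H_b$ --- is the same as the paper's (Proposition \ref{P:Stab(r)} together with Corollaries \ref{C:LeafFr-n=2} and \ref{C:AlgebraicLeavesg=1n=2}). Where you genuinely diverge is in the two classification steps. For $b\notin\mathbb Q^2$, the paper introduces the invariant $\delta(r)=\dim_{\mathbb Q}\langle r_0,r_\infty,1\rangle$ and solves the fixed-point equations explicitly: when $\delta=3$ the stabilizer is trivial, and when $\delta=2$ it exhibits an explicit generator ${\boldsymbol{1}}+\frac{\lambda}{u}X(u)$ and only afterwards observes that its trace equals $2$, hence that it is parabolic. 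You instead use the identity $\det(M_\gamma-{\rm I})=2-\mathrm{tr}\,\gamma$ to show at once that every nontrivial stabilizer element is unipotent, and then elementary Fuchsian group theory (two parabolics with distinct fixed points generate elements of trace $\neq 2$, which $H_b$ cannot contain) to force a common fixed cusp, so that $\overline{H_b}$ is trivial or infinite cyclic parabolic. This is softer and shorter, but it buys less: it does not tell you \emph{when} case (1) versus case (2) occurs, whereas the paper's $\delta$-invariant characterizes the dichotomy exactly (case (2) holds precisely when there is a single rational relation among $r_0,r_\infty,1$). For $b\in\mathbb Q^2$, the paper's normal form $[a]=[0,-1/N]$ (Proposition \ref{P:NormalFormForAinQ2}) is obtained by explicit orbit manipulations relying on a lemma of \cite{Mazzocco}; you replace this by transitivity of ${\rm SL}_2(\mathbb Z/N\mathbb Z)$ on points of exact order $N$ in $(\mathbb Z/N\mathbb Z)^2$ combined with surjectivity of the reduction ${\rm SL}_2(\mathbb Z)\to {\rm SL}_2(\mathbb Z/N\mathbb Z)$ --- a cleaner route to the same reduction. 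Your final computation, that the stabilizer of the standard point is exactly $\Gamma_1(N)$ acting by $\tau\mapsto\gamma\tau$, coincides with point (4) of Proposition \ref{P:Stab(r)}.
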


We thus have described the conformal types of the leaves of $\mathscr F^{\alpha_1}$ which are independent from $\alpha_1$. We now want to go further and describe Veech's 
complex hyperbolic structures  of the leaves and these  depend on $\alpha_1$. Of course, our main interest will be in  the algebraic leaves of Veech's foliation.


\subsubsection{}\hspace{-0.4cm}
\label{S:n=2ExplicitVeechA}
 Let $a=(a_0,a_\infty) \in{\rm Im}(\xi^{\alpha_1})$ be fixed. The leaf 
 $\mathcal F^{\alpha_1}_a$ 
   in ${\mathcal T}\!\!\!{\it or}_{1,2}$ is  cut out by the following affine equation in the variables $\tau$ and $z_2$ ({\it cf.}\;Proposition \ref{P:popo}.(3)) : 
   $$z_2=t_\tau=\frac{1}{\alpha_1}\big(a_0\tau-a_\infty\big)\, . $$

 For $\tau\in \mathbb H$, let $T_a^{\alpha_1}(\cdot , \tau)$ be the multivalued  holomorphic function defined by 
$$
T_a^{\alpha_1}(u, \tau)=\exp\big(2i\pi a_0 u\big) \frac{\theta(u,\tau)^{\alpha_1}}{\theta\left(u-t_\tau, \tau\right)^{\alpha_1}}\, , 
$$
for $u\in \mathbb C$ distinct from 0 and $t(\tau)$ modulo $\mathbb Z\oplus \tau\mathbb Z$.  \sk 

One considers the following two holomorphic functions of $\tau\in \mathbb H$: 
\begin{equation}
\label{E:F0Finfty}
 F_0(\tau)=  \int_{[0,1]}T_a^{\alpha_1}(u, \tau)du \qquad \mbox{ and }
  \qquad 
  F_\infty(\tau) = \int_{[0,\tau]}T_a^{\alpha_1}(u, \tau)du. 
  \end{equation}

Specializing the results of  \S\ref{S:VeechMapExplicit}, one obtains the 
\begin{prop} 
\label{P:InH}
There exists a fractional transformation $z\mapsto ({A z+B})/({Cz+D})$, and examples of such maps can be given explicitly (see \S\ref{S:NormalizationOfVeech'sMap-n=2}) so that   
\begin{equation}
\label{E:V-alpha1-a}
V^{\alpha_1}_a= \frac{A \cdot F_0+B \cdot F_\infty}{C \cdot F_0+D \cdot F_\infty}
 \, : \, \mathbb H
\longrightarrow \mathbb P^1, 
\end{equation}
  is a model of  the Veech map of  the leaf 
 $\mathcal F^{\alpha_1}_a\simeq \mathbb H$  which 
\begin{enumerate}
\item  
 takes values into the upper half-plane $\mathbb H$; 
\sk
\item is such that Veech's  complex hyperbolic structure of $\mathcal F^{\alpha_1}_a$ is the pull-back  by $V$ of the standard one of $\;\mathbb H$. 
\end{enumerate}
\end{prop}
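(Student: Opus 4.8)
The plan is to obtain the statement directly from Proposition \ref{P:VeechExplicit}, specialized to $n=2$, by determining the signature of Veech's hermitian form and then normalizing the target of the Veech map. Concretely, I first reduce everything to a normalization of the target. For $n=2$ the basis of twisted $1$-cycles reduces to $\boldsymbol{\gamma}_0$ and $\boldsymbol{\gamma}_\infty$, whose underlying simplices $\ell_0,\ell_\infty$ are the images in $E_{\tau,z}$ of $]0,1[$ and $]0,\tau[$; both are loops through the single other cone point $[0]$, along which the local exponent of $T_a^{\alpha_1}$ equals $\alpha_1\in{]0,1[}$. The integrand having then only integrable singularities, each regularized twisted integral agrees, up to one common nonzero constant independent of $\tau$, with the naive convergent integral $F_0(\tau)=\int_{[0,1]}$, resp. $F_\infty(\tau)=\int_{[0,\tau]}$, of \eqref{E:F0Finfty}. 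By Proposition \ref{P:VeechExplicit}.(1) the Veech map of the leaf is thus $[\,F_0:F_\infty\,]$, and since $\mathcal F_a^{\alpha_1}\simeq\mathbb H$ is simply connected (Proposition \ref{P:Main-n=2}.(3)) one may fix a global determination of $T_a^{\alpha_1}(\cdot,\tau)$ making $F_0,F_\infty$ single-valued holomorphic functions on $\mathbb H$, so that $[\,F_0:F_\infty\,]:\mathbb H\to\mathbb P^1$ is a genuine holomorphic map, étale by Veech's theorem (\S\ref{S:VeechIntro}). It therefore remains only to post-compose it with a suitable element of $\mathrm{PGL}_2(\mathbb C)$.

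\emph{The hermitian form and its negative cone.} In the case \eqref{E:g=1ComplexHyperbolicCase} with $n=2$, Veech's hermitian form $H_a^{\alpha_1}$ on $\mathbb C^2$ has signature $(1,1)$ (cf.\ \S\ref{S:VeechIntro}, where $(p,q)=(1,2g-3+n)$). By Proposition \ref{P:VeechExplicit}.(2), its Gram matrix in the coordinates $(F_0,F_\infty)$ is read off from the twisted intersection numbers $\boldsymbol{\gamma}_\bullet\boldsymbol{\cdot}\boldsymbol{\gamma}_\circ^{\vee}$ for $\bullet,\circ\in\{0,\infty\}$, which are explicit (\S\ref{S:IntersectionProduct}). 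The target of the Veech map is the projectivization of $\{H_a^{\alpha_1}<0\}\subset\mathbb C^2$; as the form has signature $(1,1)$, this negative cone projectivizes to a round disk in $\mathbb P^1$, a model of $\mathbb C\mathbb H^1$.

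\emph{Normalization and conclusion.} I then select $M=\left(\begin{smallmatrix}A&B\\ C&D\end{smallmatrix}\right)\in\mathrm{GL}_2(\mathbb C)$ conjugating the Gram matrix of $H_a^{\alpha_1}$ to a standard anti-diagonal form whose negative cone projectivizes exactly onto the upper half-plane $\mathbb H\subset\mathbb P^1$; equivalently $M$ induces a biholomorphism of the disk onto $\mathbb H$ which is an isometry for the two $\mathbb C\mathbb H^1$-structures. Setting $V=M\circ[\,F_0:F_\infty\,]=(A\,F_0+B\,F_\infty)/(C\,F_0+D\,F_\infty)$ then gives a holomorphic map with values in $\mathbb H$, which is claim (1). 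Moreover, since $M$ is an isometry, the pull-back under $V$ of the standard structure of $\mathbb H$ coincides with the pull-back under $[\,F_0:F_\infty\,]$ of the $\mathbb C\mathbb H^1$-structure of the disk, which is by definition Veech's complex hyperbolic structure of $\mathcal F_a^{\alpha_1}$; this is claim (2). An explicit such $M$ is produced in \S\ref{S:NormalizationOfVeech'sMap-n=2}.

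\emph{Main obstacle.} The real work lies in the intersection computation underlying the second step: one must fix precisely the determinations of $T_a^{\alpha_1}$ along $\ell_0,\ell_\infty$ and the regularization conventions so as to obtain the correct Gram matrix, and then check that it has signature $(1,1)$. Once this matrix is pinned down, constructing the normalizing $M$ of the last step is routine linear algebra.
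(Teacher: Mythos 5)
Your proposal is correct and follows essentially the same route as the paper: identify the Veech map of the leaf with $[F_0:F_\infty]$ via the elliptic hypergeometric integrals (Proposition \ref{P:VeechExplicit}, proved in the paper through the comparison with the Deligne--Mostow map), invoke the explicit Gram matrix $I\!\!H_{\rho_a}$ of Veech's signature-$(1,1)$ form coming from the twisted intersection numbers (Proposition \ref{P:VeechFormExplicit}), and then normalize by a linear map sending the projectivized negative cone onto $\mathbb H$, exactly as in \S\ref{SS:normalization} and \S\ref{S:NormalizationOfVeech'sMap-n=2}. You also correctly locate the real work in the intersection/Gram-matrix computation, which is precisely the content the paper establishes by its Stokes-type argument generalizing Looijenga's.
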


It follows that the Schwarzian  differential equation characterizing Veech's hyperbolic structure of $\mathcal F^{\alpha_1}_a$ can be obtained from the second-order differential equation $(E_a^{\alpha_1})$ on $\mathbb H$ satisfied by  $F_0$ and $F_\infty$.  The definition \eqref{E:F0Finfty} of these two functions  being explicit, one can compute $(E_a^{\alpha_1})$ explicitly. 
\sk 

In order to do so, we specialize some results of \cite{ManoWatanabe} and determine explicitly a certain Gau{\ss}-Manin connection 
 on $ \mathcal F_a^{\alpha_1}$.   Let $\mathcal E_a\rightarrow 
\mathcal F_a^{\alpha_1}\simeq  \mathbb H$ be the universal 2-punctured curve over $
\mathcal F_a^{\alpha_1}$ whose fiber at $\tau\in  \mathbb H$ is the punctured elliptic curve $E_{\tau,t_\tau}=E_\tau\setminus\{[0],[t_\tau]\}$. 
 There is a line bundle $L_a$ on $\mathcal E_a$  the restriction of which on any 
 fiber $E_{\tau,t_\tau}$ coincides with  the line bundle $L_\tau$ on the latter defined by the multivalued function $T_a^{\alpha_1}(\cdot,\tau)$.  The push-forward of $L_a$ onto $\mathcal F_a^{\alpha_1} $ is a local system of rank 2, denoted by $B_a$, whose  fiber  at $\tau$ is nothing else but the 
 first group of twisted cohomology $H^1(E_{\tau,t_\tau},L_\tau)$ considered  above in 
 \S\ref{S:VeechMapExplicit}.   \sk

 One sets $\mathcal B_a=B_a\otimes \mathcal O_{\mathbb H}$.
 We are interested in the Gau{\ss}-Manin connection    $ \nabla_a^{GM} :
 \mathcal B_a \rightarrow    \mathcal B_a\otimes \Omega^1_{\mathbb H}$
 whose flat sections are the sections of $B_a$.  
 Following  \cite{ManoWatanabe}, one defines two trivializing explicit global sections $[\varphi_0]$
 and $[\varphi_1]$ of $\mathcal B_a$.  
 
  \begin{prop} 
 \label{P:InH}
 \begin{enumerate}
  \item  
  In the basis $([\varphi_0], [\varphi_1])$, the action 
of    $\; \nabla_a^{GM} $ is  written 
 $$
 \nabla_{\!\!a}^{GM}\begin{pmatrix}
 [\varphi_0] \\
 [\varphi_1]
 \end{pmatrix} = 
 M_a\cdot \begin{pmatrix}
 [\varphi_0] \\ 
 [\varphi_1]
 \end{pmatrix}
 $$ 
 for a certain explicit matrix $M_a$ of holomorphic 1-forms on $\mathbb H$. \sk 
 \item 
 The differential equation on $\mathbb H$ with  $F_0, F_\infty$ as a basis of  solutions is written
\begin{equation*}
\label{E:???}
\big(E_a^{\alpha_1}\big) \qquad \qquad \qquad \qquad
\stackrel{\bullet\bullet}{F} -\big(2i\pi{a_0^2}/{\alpha_1}\big) \cdot    \stackrel{\bullet}{F}+
\varphi_a 
 \cdot F = 0 \, \qquad \qquad \qquad\qquad 
\end{equation*}
for an explicit global holomorphic function $\varphi_a$  on $\mathbb H$.
 \end{enumerate}
 \end{prop}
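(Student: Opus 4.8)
The plan is to first compute the Gauss--Manin connection matrix $M_a$ explicitly, and then to convert the resulting rank-$2$ first-order system into the scalar second-order equation $(E_a^{\alpha_1})$ satisfied by the periods $F_0$ and $F_\infty$.

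For point (1), I would represent each class $[\varphi_j]$ by the explicit theta-quotient $1$-form $\varphi_j$ provided by \cite{ManoWatanabe} and compute $\nabla^{GM}_{a,\partial_\tau}[\varphi_j]$ by differentiating $\varphi_j$ in $\tau$, then reducing the outcome modulo twisted coboundaries onto the basis $([\varphi_0],[\varphi_1])$ of $H^1(E_{\tau,t_\tau},L_\tau)$. The crucial tool is the heat equation $4i\pi\,\partial_\tau\theta=\partial_u^2\theta$, which turns every $\tau$-derivative of the representatives into combinations of $\partial_u\theta$ and $\partial_u^2\theta$ evaluated at $u$ and at $u-t_\tau$. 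Since the puncture $t_\tau=\tfrac{1}{\alpha_1}(a_0\tau-a_\infty)$ moves with $\tau$, differentiating $\theta(u-t_\tau,\tau)$ also produces a term proportional to $\dot t_\tau=a_0/\alpha_1$ times $\partial_u\theta(u-t_\tau,\tau)$; correctly incorporating this moving-puncture contribution is exactly what makes the final matrix depend on $a=(a_0,a_\infty)$. Writing $\theta''/\theta=(\theta'/\theta)'+(\theta'/\theta)^2$ and using that the twisted differential $\nabla f=\bigl(f'+f\,\partial_u\log T_a^{\alpha_1}\bigr)du$ of any meromorphic $0$-cochain is a coboundary, one rewrites these second-order $u$-derivatives as combinations of $[\varphi_0]$ and $[\varphi_1]$ with holomorphic coefficients; these coefficients are the entries of $M_a$.

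For point (2), I would use that the twisted cycles $\boldsymbol{\gamma}_0$ and $\boldsymbol{\gamma}_\infty$ (the regularizations of $[0,1]$ and $[0,\tau]$) are \emph{flat} for $\nabla_a^{GM}$, so that $\partial_\tau\langle\boldsymbol{\gamma}_\bullet,s\rangle=\langle\boldsymbol{\gamma}_\bullet,\nabla^{GM}_{a,\partial_\tau}s\rangle$ for every holomorphic section $s$ of $\mathcal{B}_a$. Applying this to $s=[du]$, whose periods $\langle\boldsymbol{\gamma}_\bullet,s\rangle=\int_{\boldsymbol{\gamma}_\bullet}T_a^{\alpha_1}(u,\tau)\,du$ are precisely $F_0$ and $F_\infty$ by \eqref{E:F0Finfty}, and expanding $s$, $\nabla^{GM}s$, $(\nabla^{GM})^2 s$ in the basis via $M_a$, one obtains a linear dependence relation $(\nabla^{GM})^2 s+p\,\nabla^{GM}s+q\,s=0$ inside the rank-$2$ bundle $\mathcal{B}_a$ (a cyclic-vector reduction). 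Pairing this relation against $\boldsymbol{\gamma}_0$ and $\boldsymbol{\gamma}_\infty$ yields $\ddot F_\bullet+p\,\dot F_\bullet+q\,F_\bullet=0$ with $q=\varphi_a$, which is $(E_a^{\alpha_1})$. The first-order coefficient satisfies $p=-\operatorname{tr}M_a$, since the trace of a connection is basis-independent and equals that of the companion matrix in the basis $(s,\nabla^{GM}s)$; equivalently, by Abel's identity $p$ is minus the logarithmic $\tau$-derivative of the Wronskian $F_0\dot F_\infty-\dot F_0F_\infty$. A direct evaluation of this trace, fed by the factor $2i\pi a_0$ in $\partial_u\log T_a^{\alpha_1}$ coming from $e^{2i\pi a_0u}$ together with the moving-puncture rate $a_0/\alpha_1$, gives $\operatorname{tr}M_a=2i\pi a_0^2/\alpha_1$, whence the stated coefficient $-2i\pi a_0^2/\alpha_1$ of $\dot F$.

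The main obstacle I anticipate lies in step (1): one must choose the representatives $\varphi_0,\varphi_1$ so that, after applying the heat equation, the resulting $\theta''/\theta$ and $(\theta'/\theta)^2$ terms (and their counterparts at $u-t_\tau$) reduce \emph{cleanly} onto the two-dimensional cohomology, and one must carry the moving-puncture term through the entire computation, as it is the source of the $a_0/\alpha_1$-dependence in both $M_a$ and $(E_a^{\alpha_1})$. Pinning down the \emph{exact} constant $2i\pi a_0^2/\alpha_1$, rather than merely its shape, is the delicate bookkeeping, but it is forced by the trace computation and is independently confirmed by Abel's formula for the Wronskian of $F_0$ and $F_\infty$.
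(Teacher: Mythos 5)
Your overall two-step plan is the one the paper itself follows in Appendix B (compute the connection matrix in the basis $([\varphi_0],[\varphi_1])$, then pass to the scalar equation via flat cycles and a trace argument: Theorem B.3.5 and Corollary B.3.5), and your step (2) is essentially the paper's argument, via Lemma A.2.2. The genuine gap is in step (1). The recipe ``differentiate the representative in $\tau$ (heat equation, moving puncture) and reduce modulo twisted coboundaries'' is not well defined on the family $\mathcal{E}_a\to\mathcal{F}_a\simeq\mathbb{H}$, because the naive horizontal field $\partial_\tau$ on $\mathbb{C}\times\mathbb{H}$ does not descend to the quotient by $T_\tau\colon(u,\tau)\mapsto(u+\tau,\tau)$: one has $dT_\tau(\partial_\tau)=\partial_\tau+\partial_u$. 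Concretely, the naive twisted derivative $(N_\tau+\Omega_\tau N)\,du$ of a relative form $N\,du$ fails to be $\mathbb{Z}_\tau$-periodic: since $\Omega_\tau(u+\tau)=\Omega_\tau(u)-\Omega_u(u)$, it changes by $-\Omega_u N\,du$ under $u\mapsto u+\tau$, hence it is not a rational $1$-form on $E_\tau$ and represents no class in $H^1(E_{\tau,t_\tau},L_\tau)$; there is nothing to ``reduce onto the basis''. Neither the heat equation nor the moving-puncture rate $\dot t_\tau=a_0/\alpha_1$ (which only enter the computation of $\Omega_\tau$) addresses this.

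The missing idea is the paper's invariant lift (B.3.1): one must contract with the $T_1$- and $T_\tau$-invariant vector field $\zeta_\tau=\partial_\tau-\frac{\rho}{2i\pi}\partial_u$, equivalently lift $N\,du$ to the invariant absolute form $N\,\big(du+\frac{\rho}{2i\pi}\,d\tau\big)$. This yields the corrected operator $\widetilde\nabla_\tau(N\,du)=N_\tau\,du+\Omega_\tau N\,du-\frac{1}{2i\pi}\nabla_{\mathcal{E}_a/\mathcal{F}_a}(\rho N)$, whose last term exactly restores $\mathbb{Z}_\tau$-invariance. The correction is not cosmetic: for $N=1$ it supplies the summand $-\frac{1}{2i\pi}\rho'(u)\,du$, without which the entry $M_{01}$ would not come out as the constant $(\alpha_1-1)/(2i\pi)$; and the constancy of $M_{01}$ is precisely what your own step (2) needs, since the trace of a connection matrix is not basis-independent in general (it shifts by $d\log\det g$ under a gauge change $g$), and here the gauge change from $([\varphi_0],[\varphi_1])$ to $([\varphi_0],\nabla^{GM}_a[\varphi_0])$ has determinant $M_{01}$. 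Two further ingredients of the reduction that you leave implicit are the cohomological relation $2i\pi a_0[\varphi_0]=\alpha_1[\varphi_2]$ (coming from $\nabla_\omega(1)=\omega$), which eliminates $[\varphi_2]$ and is the actual source of the constant $2i\pi a_0^2/\alpha_1$, and the ellipticity statements of Lemmas B.3.2.1 and B.3.2.2 (e.g.\ that $\mu(u)+\rho(u)\rho'(u)$ is elliptic), which legitimize discarding the coboundary terms. Finally, appealing to \cite{ManoWatanabe} for the representatives does not close the gap either: as noted in Remark B.3.1, their operator $\nabla_\tau$ is defined via a $\lambda$-deformation and does not specialize directly to the present case $\lambda=0$.
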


 The interest of this result lies in the fact that everything can be explicited. It will be  our main tool to 
study the $\mathbb C\mathbb H^1$-structures of the algebraic leaves of $\mathscr F^{\alpha_1}$ in 
$\mathscr M_{1,2}$.  
 
 
\subsubsection{}\hspace{-0.4cm}
\label{S:n=2ExplicitVeechN}
Let  $N\geq 2$ be fixed. For $(k,l)\in \mathbb Z^2\setminus N\mathbb Z^2$, let $\mathcal F^{\alpha_1}_{k,l}(N)$ be  the leaf of Veech's foliation   on ${\mathcal T}\!\!\!{\it or}_{1,2}$ cut out by $z_2=(k/N)\tau+l/N$.  It is isomorphic to $\mathbb H$ and  
its image  in $\mathscr M_{1,2}$ is precisely the image of the embedding \eqref{E:AlgebraicLeaf-n=2}.   When endowed with Veech's $\mathbb C\mathbb H^1$-structure, we denote the latter by $Y_1(N)^{\alpha_1}$  to emphasize the fact that it is $Y_1(N)$ but with a deformation of its usual hyperbolic structure. \sk

Under the assumption  that $\alpha_1$ is rational, it follows from our main result in \cite{GP}  that for any $N\geq 2$, Veech's hyperbolic structure of $Y_1(N)^{\alpha_1}$ extends as a conifold $\mathbb C\mathbb H^1$-structure to its 
metric completion $\overline{Y_1(N)}^{\alpha_1}$.   The point is that using Proposition \ref{P:InH}, one can recover this result without the  rationality assumption on $\alpha_1$ and precisely  characterize   this conifold 
 structure. \sk 

%

Let $X_1(N) $ be the compactification of $Y_1(N)$ obtained by adding to it its set of cusps $C_1(N)$. For $\mathfrak c\in C_1(N)$, two situations can occur: either $Y_1(N)^{\alpha_1}$ is metrically complete in the vicinity of $\mathfrak c$, either it is not. In the first case, $\mathfrak c$ is a cusp in the classical sense
\footnote{{\it I.e.}\;$(Y_1(N)^{\alpha_1},\mathfrak c)\simeq  (\mathbb H/(z\mapsto z+1), [i\infty])$ 
as germs of punctured hyperbolic surfaces.} and 
 will be called a  {\bf conifold point of angle 0}.
\sk 

To study the geometric structure of $Y_1(N)^{\alpha_1}$ near a  cusp $\mathfrak c\in C_1(N)$, our approach consists in looking at the Schwarzian differential equation associated to Veech's $\mathbb C\mathbb H^1$-structure  on a small punctured neighborhood  $U_{\mathfrak c}$ of $\mathfrak c$ in $Y_1(N)$. 

First, one verifies that there exist  
$k$ and $l$ such that $
  \mathcal F^{\alpha_1}_{k,l}(N)  
\rightarrow Y_1(N)^{\alpha_1}$ is a uniformization which sends $[i\infty]$ onto $\mathfrak c$.  Then, since the functions $F_0,F_\infty$ defined in \eqref{E:F0Finfty} (with the corresponding $a$, namely $a=\alpha_1( k/N, -l/N)$) are  components of the Veech map on $\mathcal F^{\alpha_1}_{k,l}(N) $, 
they can be viewed as the components of the developing map of Veech's hyperbolic structure of $Y_1(N)^{\alpha_1}$. 
So, looking at the asymptotic behavior of $(E_a^\alpha)$ when $\tau$ tends  to $ i\infty$ while belonging to a vertical strip of width equal to that of $\mathfrak c$, 
one obtains that the Schwarzian differential equation of the  $\mathbb C\mathbb H^1$-curve $Y_1(N)^{\alpha_1}$ is Fuchsian at $\mathfrak c$ and one can compute explicitly the  two characteristic exponents at this point. 


Then, since a cusp $\mathfrak c\in C_1(N)$ is a class modulo $\Gamma_1(N)$ of a rational element of the  
 boundary $\mathbb P_{\mathbb R}^1\simeq S^1$ of the 
 closure of $\mathbb H$ in $\mathbb P^1$ hence as such,  is written   $\mathfrak c=[ a/c]$ with $a/c\in \mathbb P^1_{\mathbb Q}$, one eventually gets the following result: 
\begin{thm}
\label{T:Main-n=2}
 For any parameter $\alpha_1\in ]0,1[$: 
\begin{enumerate}
\item Veech's  complex hyperbolic structure of $Y_1(N)^{\alpha_1}$ extends as a conifold structure of the same type to the compactification $X_1(N)$. 
  The latter,  when endowed with this 
   conifold structure,  will be denoted by $X_1(N)^{\alpha_1}$; 
\sk 
\item  the conifold angle of $X_1(N)^{\alpha_1}$ at the cusp $\mathfrak c=[ a/c]
 \in C_1(N) 
 $ is 
 $$
\theta_{\mathfrak c}=\theta(c)=2\pi   \frac{c'(N-c')}{N \cdot {\rm gcd}(c',N)}\cdot \alpha_1
$$
where $c'\in \{0,\ldots,N-1\}$ stands for the residue of $c$ modulo $N$.
\end{enumerate}
\end{thm}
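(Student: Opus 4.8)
The plan is to reduce the entire statement to a local indicial computation for the second order equation $(E_a^{\alpha_1})$ of Proposition \ref{P:InH} near each cusp. Indeed, by that proposition the developing map of Veech's $\mathbb{CH}^1$-structure on the leaf is, up to the fixed fractional transformation of \eqref{E:V-alpha1-a} (which is a $\mathbb{CH}^1$-isometry and thus irrelevant for cone angles), the ratio $F_\infty/F_0$ of a basis of solutions of $(E_a^{\alpha_1})$. The conifold type and the angle at a cusp $\mathfrak{c}$ depend only on the local behaviour of this ratio, so both assertions are local on $X_1(N)$.

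First I would fix $\mathfrak{c}=[a/c]\in C_1(N)$ and bring it to $i\infty$. Since the leaves $\mathcal{F}^{\alpha_1}_{k,l}(N)$ cut out by $z_2=(k\tau+l)/N$ all uniformize $Y_1(N)^{\alpha_1}$ via \eqref{E:AlgebraicLeaf-n=2} after a change of marking, and the $\mathrm{SL}_2(\mathbb{Z})$-part of $\mathrm{Sp}_{1,2}(\mathbb{Z})$ permutes these uniformizations, I would choose $(k,l)$ so that the end $\tau\to i\infty$ of $\mathcal{F}^{\alpha_1}_{k,l}(N)$ maps onto $\mathfrak{c}$; tracking the $N$-torsion marking shows $k\equiv\pm c'\pmod N$. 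The key elementary point here is that going once around this cusp is $\tau\mapsto\tau+w$, where $w$ is the smallest integer for which the translation by $kw/N$ fixes the marked torsion point $(k\tau+l)/N$ on $E_\tau$, namely $w=N/\gcd(k,N)=N/\gcd(c',N)$. This width is exactly where the denominator $\gcd(c',N)$ of the final formula comes from, and in the local coordinate $q_w=e^{2i\pi\tau/w}$ on $X_1(N)$ at $\mathfrak{c}$ one reads off the conifold angle as $2\pi$ times the width-adjusted difference of the characteristic exponents.

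The analytic heart is the asymptotic analysis of the two periods \eqref{E:F0Finfty} (equivalently, the indicial analysis of $(E_a^{\alpha_1})$) as $\tau\to i\infty$. Using the $q$-product expansion of $\theta$, one extracts for $u$ on the relevant segment the leading power of $q=e^{2i\pi\tau}$ carried by the integrand $T_a^{\alpha_1}(u,\tau)$, and a Laplace/steepest-descent estimate of the resulting integrals yields the two exponents. Parametrising $u=s\tau$ for $F_\infty$ and $u=s\in[0,1]$ for $F_0$, and writing $a_0=\alpha_1 k/N$ and $t_\tau=(k\tau+l)/N$, one finds that the integrand of $F_0$ carries the $s$-independent power $q^{\alpha_1 k/2N}$, so $\rho_0=\alpha_1 k/2N$, while that of $F_\infty$ carries $q^{\alpha_1 P(s)}$ with $P(s)=\tfrac{k}{N}s-\tfrac{s}{2}+\tfrac12\bigl|s-\tfrac{k}{N}\bigr|$, whose minimum over $[0,1]$ is attained at the interior corner $s=k/N$, giving $\rho_\infty=\alpha_1\bigl(k^2/N^2-k/2N\bigr)$. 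These exponents are real, confirming that $(E_a^{\alpha_1})$ is regular-singular (Fuchsian) at $\mathfrak{c}$, and their difference $|\rho_\infty-\rho_0|=\alpha_1\,k(N-k)/N^2$ is basis-independent, hence insensitive to the fractional transformation in \eqref{E:V-alpha1-a}.

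Combining the two computations and using $k\equiv\pm c'$ (which leaves $k(N-k)$ and $\gcd(k,N)$ unchanged) gives the conifold angle
\begin{align*}
\theta_{\mathfrak{c}}=2\pi\,w\,|\rho_\infty-\rho_0|
&=2\pi\cdot\frac{N}{\gcd(c',N)}\cdot\alpha_1\,\frac{k(N-k)}{N^2}\\
&=2\pi\,\frac{c'(N-c')}{N\,\gcd(c',N)}\,\alpha_1,
\end{align*}
which is assertion (2); assertion (1) then follows because the local developing map is of the form $q_w\mapsto q_w^{\theta_{\mathfrak{c}}/2\pi}$ up to a $\mathrm{PSL}(2,\mathbb{R})$-factor and with values in $\mathbb{H}$, which is precisely the local model of a $\mathbb{CH}^1$-conifold point (a genuine cusp, i.e. angle $0$, exactly when $c'\equiv0$, so that $\rho_\infty=\rho_0$). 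The main obstacle I expect is twofold: carrying out the steepest-descent estimate rigorously, in particular controlling the subleading factors and ruling out cancellation of the leading coefficient, which is responsible for the finitely many pathological cases alluded to after Theorem \ref{T:Main2}; and pinning down the arithmetic dictionary $(k,l)\leftrightarrow(a,c)$ together with the width $w=N/\gcd(c',N)$, since a wrong normalisation of the marking there would shift the combinatorial factor. Everything else is a determination of indicial exponents, which the explicitness of $(E_a^{\alpha_1})$ makes routine.
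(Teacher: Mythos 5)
Your proposal is correct and follows the same global architecture as the paper's proof: transfer each cusp $\mathfrak c=[a/c]$ to the point $[i\infty]$ of an auxiliary leaf $z_2=(k\tau+l)/N$ with $k\equiv \pm c' \pmod N$, perform a local indicial analysis there, and let the cusp width produce the factor $\gcd(c',N)$. The two executions differ in genuinely interesting ways, though. For the descent, the paper never works on $Y_1(N)$ directly: all the local analysis is done on $Y(N)$, where every cusp has width $N$ (\S\ref{S:AuxiliaryLeaves}, Proposition \ref{P:FN-vs-Fmn}, Corollary \ref{C:AnglesF(N)}), and only at the end does one divide by the covering $X(N)\to X_1(N)$, whose local degree at a cusp is computed in Lemma \ref{L:WidthCuspX1(N)}; your direct identification of the deck transformation of $Y_1(N)$ around $\mathfrak c$ as $\tau\mapsto\tau+w$ with $w=N/\gcd(k,N)$, the smallest translation fixing the marked torsion point, is equivalent bookkeeping and is correct (it agrees with the stabilizer computation of Proposition \ref{P:Stab(r)}). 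For the local analysis, the paper never estimates the periods $F_0,F_\infty$ themselves: it writes down the explicit Mano/Gau{\ss}-Manin system (\S\ref{SS:ManoDifferentialSystemAlgebraicLeaves}), checks the resulting second-order equation is invariant under $\tau\mapsto\tau+N$ so that it descends near the cusp, and reads the indicial exponents off Mano's limits of the theta quotients; Fuchsianity together with Proposition A.2.3 of Appendix A then yields the conifold extension, \emph{including} the delicate integer-index case, where Lemma A.2.3 excludes logarithmic developing maps. Your route --- corner-point Laplace asymptotics of the integrals --- produces the same exponents (your $P(s)$ and its minimum at $s=k/N$ are exactly right, and the factor $\tau$ coming from $du=\tau\,ds$ cancels the $1/\Im{\rm m}(\tau)$ produced by the kink minimum, so no spurious polynomial corrections arise), but it carries the analytic burden that the paper's route avoids: ``the exponents are real'' does not by itself confirm regular-singularity --- what does is moderate growth of a full basis of solutions in a sector --- and one must also verify nonvanishing of the leading coefficients and treat the integer-index case, points you honestly flag. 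In short, the paper's detour through the explicit ODE buys these analytic facts essentially for free, at the price of the Gau{\ss}-Manin machinery of Appendix B; your version is more self-contained and elementary, at the price of a steepest-descent argument that must be carried out with care.
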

According to a classical result going back to Poincar, a $\mathbb C\mathbb H^1$-conifold structure on a compact Riemann surface is completely characterized by its conifold points and the conifold angles at these points.
 Thus the preceding theorem completely characterizes 
$Y_1(N)^{\alpha_1}$ (or rather $X_1(N)^{\alpha_1}$) as 
a  complex 
 hyperbolic conifold. 
 It can be seen as the generalization,  to the genus 1 case, of the result by Schwarz on the hypergeometric equation, dating of 1873,  evoked in \S\ref{S:1.1.1}.\mk

Defining $N^*$ as the least common multiple of the integers $c'(N-c')/\gcd(c',N)$ when $c'$ ranges in $\{1,\ldots,N-1\}$, one deduces  immediately from 
above 
the
\begin{coro}
\label{C:Main-n=2}
A sufficient condition for $X_1(N)^{\alpha_1}$ to be an orbifold is that 
$$ \qquad  
\alpha_1=\frac{N}{\ell N^*} \qquad \mbox{ for some } \, \ell\in \mathbb N_{>0}\, . 
$$

In this case, the image $\boldsymbol{\Gamma}_1(N)^{\alpha_1}$ of the holonomy representation associated to 
Veech's $\mathbb C\mathbb H^1$-structure on $Y_1(N)^{\alpha_1}$ is 
   a non-cocompact lattice in $ 
     {\rm PSL}_2(\mathbb R)$. 
\end{coro}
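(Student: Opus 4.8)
The plan is to derive the corollary directly from Theorem~\ref{T:Main-n=2} by reading off when the conifold angles $\theta_{\mathfrak c}$ become \emph{sub-multiples of $2\pi$}, since a $\mathbb C\mathbb H^1$-conifold structure is an orbifold structure precisely when every conifold angle is of the form $2\pi/m$ for some integer $m\geq 1$ (or $0$, i.e.\ a genuine cusp). First I would recall from part (2) of the theorem that for a cusp $\mathfrak c=[a/c]\in C_1(N)$ the angle is
$$
\theta_{\mathfrak c}=2\pi\,\frac{c'(N-c')}{N\cdot\gcd(c',N)}\,\alpha_1,
$$
where $c'\in\{0,\ldots,N-1\}$ is the residue of $c$ modulo $N$. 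Writing $d_{c'}=c'(N-c')/\gcd(c',N)$ for the nonnegative integer appearing here (it is an integer because $\gcd(c',N)\mid c'$), the angle is $\theta_{\mathfrak c}=2\pi\,(d_{c'}/N)\,\alpha_1$. The cusps with $c'=0$ have $d_{c'}=0$, giving angle $0$ (genuine cusps), so they impose no condition. For the remaining cusps, $c'$ ranges over $\{1,\ldots,N-1\}$, and the relevant integers are exactly the $d_{c'}$.

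Next I would impose the orbifold condition cusp by cusp. The structure is an orbifold as soon as, for every cusp, $\theta_{\mathfrak c}=2\pi/m_{\mathfrak c}$ for some positive integer $m_{\mathfrak c}$; equivalently $(d_{c'}/N)\,\alpha_1=1/m_{\mathfrak c}$, i.e.\ $\alpha_1\,d_{c'}\,m_{\mathfrak c}=N$. Substituting the proposed value $\alpha_1=N/(\ell N^*)$ with $\ell\in\mathbb N_{>0}$ gives
$$
\theta_{\mathfrak c}=2\pi\,\frac{d_{c'}}{N}\cdot\frac{N}{\ell N^*}=\frac{2\pi\,d_{c'}}{\ell N^*}.
$$
By the very definition of $N^*$ as the least common multiple of the integers $d_{c'}=c'(N-c')/\gcd(c',N)$ for $c'\in\{1,\ldots,N-1\}$, each $d_{c'}$ divides $N^*$, so $N^*/d_{c'}$ is a positive integer and hence $\theta_{\mathfrak c}=2\pi/\bigl(\ell N^*/d_{c'}\bigr)=2\pi/m_{\mathfrak c}$ with $m_{\mathfrak c}=\ell N^*/d_{c'}\in\mathbb N_{>0}$. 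Thus every nonzero conifold angle is of the required form $2\pi/m_{\mathfrak c}$, while the cusps with $c'=0$ keep angle $0$; invoking the Poincar\'e-type characterization recalled just after Theorem~\ref{T:Main-n=2}, this shows $X_1(N)^{\alpha_1}$ is a $\mathbb C\mathbb H^1$-orbifold, which is the sufficiency claim.

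For the second assertion I would argue that the holonomy representation has image a non-cocompact lattice in ${\rm PSL}_2(\mathbb R)$. Non-cocompactness is immediate: the presence of at least one cusp of angle $0$ (the cusps $\mathfrak c=[a/c]$ with $c\equiv 0 \bmod N$, which exist in $C_1(N)$ for every $N\geq 2$) means the quotient orbifold is noncompact, so the lattice cannot be cocompact. That the image is a lattice follows from the fact that $X_1(N)^{\alpha_1}$ is now a \emph{complete} finite-volume hyperbolic orbifold: completeness comes from Theorem~\ref{T:Main-n=2}(1), which says the $\mathbb C\mathbb H^1$-structure extends as a conifold structure to the compact surface $X_1(N)$, and finiteness of volume from compactness of $X_1(N)$ together with the standard fact that cone points and cusps of a $\mathbb C\mathbb H^1$-conifold contribute finite volume. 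By the developing-map/holonomy dictionary for $(\mathbb C\mathbb H^1,{\rm PSL}_2(\mathbb R))$-orbifolds, a complete finite-volume such orbifold is uniformized by its holonomy image, which is therefore a lattice. The main point to be careful about --- and the only place where anything beyond bookkeeping is needed --- is justifying that the conifold structure genuinely is a \emph{developable} orbifold structure (so that the holonomy uniformizes it), rather than merely having orbifold-type angles; here one uses that the structure arises as the metric completion of a pulled-back $\mathbb C\mathbb H^1$-structure via the explicit Veech/developing map of Proposition~\ref{P:InH}, so the developing map is genuinely defined and the Poincar\'e characterization applies verbatim.
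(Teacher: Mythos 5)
Your proposal is correct and follows essentially the same route as the paper: read off the conifold angles from the explicit formula, observe that each integer $c'(N-c')/\gcd(c',N)$ divides $N^*$ by definition so that the substitution $\alpha_1=N/(\ell N^*)$ turns every nonzero angle into $2\pi/m$, and then invoke Poincar\'e uniformization for the orbifold to conclude that the holonomy image is a (non-cocompact, because of the angle-$0$ cusps) lattice. The only difference is that you spell out the finite-volume and developability points underlying the lattice claim, which the paper leaves implicit in its appeal to Poincar\'e's theorem.
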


 The $\boldsymbol{\Gamma}_1(N)^{\alpha_1}$'s with $\alpha_1\in ]0,1[$ form a real-analytic deformation of $\boldsymbol{\Gamma}_1(N)=\boldsymbol{\Gamma}_1(N)^{0}$ in ${\rm PSL}_2(\mathbb R)$. The problem of determining 
 which of its elements are lattices (or arithmetic lattices, etc.) is quite interesting but  does not seem easy.\sk
 
 An interesting case is when $N$ is equal to a prime number $p$.   It is well-known that $X_1(p)$ is a smooth curve of genus $(p-5)(p-7)/24$ with $p-1$ cusps.

\begin{coro} 
\label{C:Main-n=2-II}
\begin{enumerate}
\item For $k=1,\ldots,(p-1)/2$, the conifold angle of $X_1(p)^{\alpha_1}$ 
at  $[-p/k]$   is $2\pi k(1-k/p)\alpha_1$. The $(p-1)/2$ other conical angles  are 0. 
\mk
\item The hyperbolic volume (area) of $Y_1(p)^{\alpha_1}$ is finite and equal to 
\begin{equation}
\label{E:VolumeY1(p)Alpha1}
\qquad 
{\rm Vol}\big(Y_1(p)^{\alpha_1}\big)= \frac{ \pi  }{6} \big(1-\alpha_1 \big)
\big( p^2-1\big)\, . 
\end{equation}
\end{enumerate}
\end{coro}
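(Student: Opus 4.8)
The plan is to obtain both points as direct specializations of Theorem~\ref{T:Main-n=2} to the case $N=p$ prime, the only genuinely new inputs being the elementary arithmetic of the cusps of $X_1(p)$ and a Gauss--Bonnet computation. For point (1), I would first recall the classification of the cusps of $X_1(p)$. A cusp is a class $[a/c]$ with $\gcd(a,c)=1$, and since every $\gamma\in\Gamma_1(p)$ satisfies $\gamma\equiv\begin{pmatrix}1 & * \\ 0 & 1\end{pmatrix}\pmod p$, the pair $(a,c)\bmod p$ transforms under $\Gamma_1(p)$ only by $(a,c)\mapsto \pm(a+tc,\,c)$. When $p\nmid c$ the residue of $c$ is invertible, so $a$ can be normalized away and the cusp is determined by $c\bmod p$ up to sign; this yields exactly the $(p-1)/2$ classes $[-p/k]$, $k=1,\dots,(p-1)/2$ (here $-p/k$ is in lowest terms since $\gcd(p,k)=1$, and has $c$-residue $c'=k$). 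When $p\mid c$ one has $c'=0$, and these account for the remaining $(p-1)/2$ cusps.

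Feeding $c'=k$ (so $\gcd(c',p)=1$) and $c'=0$ (so $\gcd(c',p)=p$ and the numerator $c'(p-c')$ vanishes) into the angle formula of Theorem~\ref{T:Main-n=2}(2) gives respectively $\theta_{[-p/k]}=2\pi\frac{k(p-k)}{p}\alpha_1=2\pi k(1-k/p)\alpha_1$ and $0$, which is precisely point (1). The symmetry $c'\leftrightarrow p-c'$ of $c'(p-c')$ matches the sign identification $(a,c)\sim(-a,-c)$ of cusps, which is the consistency check confirming that the angle is well defined on each class.

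For point (2), I would apply the Gauss--Bonnet theorem for $\mathbb{C}\mathbb{H}^1$-conifolds to $X_1(p)^{\alpha_1}$, whose existence as a compact conifold is guaranteed by Theorem~\ref{T:Main-n=2}(1). Since removing the finitely many cusps and cone points does not change the area, one has $\mathrm{Vol}(Y_1(p)^{\alpha_1})=\mathrm{Vol}(X_1(p)^{\alpha_1})$, and finiteness is then immediate from compactness. With curvature $-1$, Gauss--Bonnet reads $\mathrm{Vol}=2\pi\big[(2g-2)+\sum_{\mathfrak c}(1-\theta_{\mathfrak c}/2\pi)\big]$, where $g=(p-5)(p-7)/24$ is the genus of $X_1(p)$ and the sum runs over its $p-1$ cusps, the angle-$0$ cusps contributing $1$ each on the same footing as the genuine cone points. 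Using point (1), $\sum_{\mathfrak c}(1-\theta_{\mathfrak c}/2\pi)=(p-1)-\frac{\alpha_1}{p}\sum_{k=1}^{(p-1)/2}k(p-k)$.

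The only computation to carry out is the closed form $\sum_{k=1}^{(p-1)/2}k(p-k)=p(p^2-1)/12$, an elementary evaluation of $\sum k$ and $\sum k^2$ at $m=(p-1)/2$. Substituting, and using the simplification $(2g-2)+(p-1)=(p^2-1)/12$, then yields $\mathrm{Vol}(Y_1(p)^{\alpha_1})=2\pi\cdot\frac{(p^2-1)(1-\alpha_1)}{12}=\frac{\pi}{6}(1-\alpha_1)(p^2-1)$, which is \eqref{E:VolumeY1(p)Alpha1}; the specialization $\alpha_1=0$ recovers the classical area $\pi(p^2-1)/6$ of $Y_1(p)$ as a sanity check. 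The main obstacle is not analytic but combinatorial bookkeeping: one must verify that the cusp count and the assignment of $c'$-residues (hence angles) to the representatives $[-p/k]$ are both correct and exhaustive, and that Gauss--Bonnet is applied with the convention treating the angle-$0$ cusps uniformly with the cone points.
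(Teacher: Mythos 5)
Your proposal is correct and follows essentially the same route as the paper: part (1) is the specialization of the general angle formula $\theta_N(c)=2\pi\,\frac{c(N-c)}{N\gcd(c,N)}\,\alpha_1$ of Theorem \ref{T:Main-n=2} (equivalently Corollary \ref{C:AnglesX1(N)alpha}) to the prime-level cusp arithmetic, and part (2) is exactly the paper's Gau{\ss}--Bonnet computation of \S\ref{S:Volumes}, with $2g_1(p)-2+(p-1)=(p^2-1)/12$ and $\sum_{k=1}^{(p-1)/2}k(p-k)=p(p^2-1)/12$. The only cosmetic difference is that you identify the cusps and their residues by direct $\bmod\ p$ arithmetic, where the paper (\S\ref{SS:CaseY1(p)ForpPrime}) phrases the same identification through the auxiliary leaves $\mathscr F_{0,k}$ and $\mathscr F_{k,0}$; the content is identical.
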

\sk

\subsubsection{}
\label{SS:Intro-Volume}
 The preceding corollary can be used
 to give a positive answer, in the case under scrutiny, to a conjecture made by Veech about the volume of $\mathscr M_{1,2}$ (see  Section E. in the introduction of \cite{Veech} or \S\ref{SS:InAddition} above). 
 \mk

 For every leaf $\mathcal F_a^{\alpha_1}$, we denote by $g_a^{\alpha_1}$ the Riemannian metric on $\mathbb H$ given by the pull-back  of the 
 standard hyperbolic metric on Poincar's upper half-plane $\mathbb H$ by the map \eqref{E:V-alpha1-a}.\footnote{Note that if the map  \eqref{E:V-alpha1-a} is not canonically defined, it is up to post-composition by an element of ${\rm PSL}_2(\mathbb R)={\rm Isom}^+(\mathbb H)$,  hence $g_a^{\alpha_1}$ is well-defined for any $a$.} 
 The $g_a^{\alpha_1}$'s depend analytically on $a$ hence  can be glued together to give rise to a smooth partial Riemannian  metric $g^{\alpha_1}$ on the product $\mathbb H\times (\mathbb R^2\setminus \mathbb Z^2)$ which is identified with $\mathcal T\!\!\!{or}_{\!1,2}$ by means of the isomorphism \eqref{E:Pi-Xi}. 

 {\vspace{0.2cm}}
  Let  $d\!s^2_{\rm Euc}$ be the euclidean metric on $\mathbb R^2$. Since $\mathbb R^2\setminus \mathbb Z^2$ can be considered as a global transversal to Veech's foliation 
$ \mathcal F^{\alpha_1}$ (again thanks to Proposition \ref{P:Main-n=2}),   the product $g^{\alpha_1}\otimes d\!s^2_{\rm Euc}$   defines a real analytic Riemmannian metric on the whole  Torelli space $ \mathcal T\!\!\!{or}_{1,2}$. The associated volume form will be denoted by $\Omega^{\alpha_1}$ and will be called {\bf Veech's volume form} on 
$ \mathcal T\!\!\!{or}_{\!1,2}$
  (associated to 
   $\alpha_1$). 
 \sk 
 
 As a particular case of a more general statement proved in \cite{Veech}, one gets  that $\Omega^{\alpha_1}$ is ${\rm Sp}_{1,2}(\mathbb Z)$-invariant hence can be pushed-forward 
 as a volume form\footnote{Strictly speaking, it is an {\it `orbifold volume form'} on 
 $\mathscr M_{1,2}$  but since this subtlety is irrelevant in what concerns volume computations, 
   we will not  dwell on this further in  what follows.} 
 to the moduli space $\mathscr M_{1,2}$, again denoted by $\Omega^{\alpha_1}$.  
 Then in the case under scrutiny, {\bf Veech's volume conjecture}  asserts  that the $\Omega^{\alpha_1}$-volume of   $\mathscr M_{1,2}$ is finite.  \mk 
 
 For any $N\geq 2$, denote by $\nu_{N}^{\alpha_1}$ the  measure on the algebraic leaf $Y_1(N)^{\alpha_1}$  of Veech's foliation on $\mathscr M_{1,2}$
  induced by  the associated hyperbolic structure.  Then one defines a measure $\delta_N^{\alpha_1}$ on $\mathscr M_{1,2}$, supported on $Y_1(N)^{\alpha_1}$, by setting  
  $$
 \delta_N^{\alpha_1}(A)=\nu_{N}^{\alpha_1}\big(A\cap Y_1(N)^{\alpha_1}\big)
 $$
  for any measurable subset $A\subset \mathscr M_{1,2}$. 
\sk 

For any $N>0$, let $\delta_N$ be the measure on $\mathbb R^2$ defined as the sum of the dirac masses at the points of the square lattice $({1}/{N})\mathbb Z^2$. As is well known, for any strictly increasing sequence $(N_k)_{ k\in \mathbb N}$, the measures 
$N_k^{-2}\delta_{N_k}$ strongly converge towards the Lebesgue measure on $\mathbb R^2$. 
 From this, one deduces that the measures ${p^{-2}}\delta_{p}^{\alpha_1}$ tend towards 
 the one associated  to $\Omega^{\alpha_1}$ 
on $\mathscr M_{1,2}$ 
when $p$ tends to infinity among primes.   From this, we deduce that Veech's volume conjecture indeed holds true in the case  under scrutiny. Better,  using 
Corollary \ref{C:Main-n=2-II}, we are able to give a simple closed formula  for the corresponding volume: 
 \begin{thm}
 \label{T:VolumeIsFinite}
For any $\alpha_1\in ]0,1[$, Veech's volume  of\;\,$\mathscr M_{1,2}$ is finite and  equal to 
$$
\qquad 
\int_{\mathscr M_{1,2}}
\!\!\!\!
\Omega^{\alpha_1} = 
 \lim_{
\substack{p\rightarrow +\infty\\ 
 \tiny{p \; \mbox{\it prime }}}}\; 
\frac{1}{p^2} {\rm Vol}\big( Y_1(p)^{\alpha_1}\big)
 =
 \frac{ \pi  }{6} \big(
1-\alpha_1 \big)\, .
$$
\end{thm}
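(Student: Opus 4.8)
The plan is to establish the stated formula by combining the explicit volume computation for the algebraic leaves $Y_1(p)^{\alpha_1}$ with a weak-convergence argument identifying $\Omega^{\alpha_1}$ as a limit of leafwise measures. First I would make precise the structure of $\Omega^{\alpha_1}$ as a product measure. By Proposition \ref{P:Main-n=2}, the map $\Pi\times\Xi$ identifies $\mathcal T\!\!\!{\it or}_{1,2}$ with $\mathbb H\times(\mathbb R^2\setminus\mathbb Z^2)$ so that Veech's foliation becomes the horizontal foliation; under this identification $\Omega^{\alpha_1}$ is the product of the leafwise hyperbolic area form $g^{\alpha_1}$ with the transverse Euclidean measure $d\!s^2_{\rm Euc}$ on $\mathbb R^2$. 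The key mechanism is that the sum of Dirac masses at the lattice $(1/N)\mathbb Z^2$, rescaled by $N^{-2}$, converges weakly to Lebesgue measure on $\mathbb R^2$; this is the standard equidistribution of a refining lattice, and it transfers via the product structure to give $p^{-2}\delta_p^{\alpha_1}\to\Omega^{\alpha_1}$ as $p\to\infty$ through primes.

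Next I would pin down which points of the transversal $\mathbb R^2\setminus\mathbb Z^2$ correspond to the prime-level algebraic leaf $Y_1(p)^{\alpha_1}$. For $N=p$ prime, the leaves $\mathcal F^{\alpha_1}_{k,l}(p)$ with $(k,l)\in\mathbb Z^2\setminus p\mathbb Z^2$ all descend to the single modular curve $Y_1(p)^{\alpha_1}$ in $\mathscr M_{1,2}$, and these correspond to the $p^2-1$ non-integral points of the lattice $(1/p)\mathbb Z^2$ lying in the fundamental transversal. The measure $\delta_p^{\alpha_1}$ is by definition the hyperbolic area measure $\nu_p^{\alpha_1}$ supported on $Y_1(p)^{\alpha_1}$; integrating the constant function $1$ against $p^{-2}\delta_p^{\alpha_1}$ yields $p^{-2}{\rm Vol}(Y_1(p)^{\alpha_1})$. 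Passing to the limit, the total mass converges to $\int_{\mathscr M_{1,2}}\Omega^{\alpha_1}$, which gives the first equality of the displayed formula.

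Finally I would evaluate the limit using Corollary \ref{C:Main-n=2-II}.(2), which provides the closed form ${\rm Vol}(Y_1(p)^{\alpha_1})=\frac{\pi}{6}(1-\alpha_1)(p^2-1)$. Dividing by $p^2$ and letting $p\to\infty$ gives
\begin{equation*}
\lim_{\substack{p\to+\infty\\ p\text{ prime}}}\frac{1}{p^2}\cdot\frac{\pi}{6}(1-\alpha_1)(p^2-1)=\frac{\pi}{6}(1-\alpha_1),
\end{equation*}
which is exactly the asserted value. The essential input—finiteness and the exact value of the leaf volume—has already been secured in Corollary \ref{C:Main-n=2-II}, so the remaining content is the limit identification.

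The hard part will be the justification that $p^{-2}\delta_p^{\alpha_1}$ genuinely converges to the measure associated to $\Omega^{\alpha_1}$, rather than merely the numerical bookkeeping. One must argue that integrating a test function against the discrete leafwise measures commutes with the refinement limit: the obstacle is that each $\delta_p^{\alpha_1}$ is a hyperbolic area measure living on a \emph{different} lower-dimensional leaf, so the convergence is not a literal instance of lattice equidistribution on $\mathbb R^2$ but a fibered version of it. I would address this by fixing a continuous compactly supported test function on $\mathscr M_{1,2}$, pulling it back to the product $\mathbb H\times(\mathbb R^2\setminus\mathbb Z^2)$, and applying Fubini: the transverse integral becomes a Riemann-type sum over the lattice points $(1/p)\mathbb Z^2$ of the leafwise hyperbolic integrals $\int_{Y_1(p)^{\alpha_1}}$, and one checks that the leafwise integrand varies continuously in the transverse parameter $a\in\mathbb R^2$ so that the rescaled lattice sum converges to the corresponding Lebesgue integral. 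The ${\rm Sp}_{1,2}(\mathbb Z)$-invariance of $\Omega^{\alpha_1}$ guarantees that this computation on the transversal descends correctly to the quotient $\mathscr M_{1,2}$, closing the argument.
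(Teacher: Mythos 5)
Your proposal runs on the same mechanism as the paper's proof: the product structure coming from $\Pi\times\Xi$, equidistribution of the rescaled lattices $\tfrac{1}{p}\mathbb Z^2$, the fact that all lattice points of exact denominator $p$ feed the single algebraic leaf $Y_1(p)^{\alpha_1}$, and the closed volume formula of Corollary \ref{C:Main-n=2-II}. Your ``hard part'' (compactly supported test functions, Fubini in foliated coordinates, continuity of the leafwise integrals in the transverse parameter) is essentially the chart-level computation that the paper carries out, and your numerical endgame is correct.

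The genuine gap is the sentence ``Passing to the limit, the total mass converges to $\int_{\mathscr M_{1,2}}\Omega^{\alpha_1}$.'' What your test-function argument can establish is weak convergence, i.e.\ convergence of integrals of \emph{compactly supported} continuous functions; on the non-compact space $\mathscr M_{1,2}$ this only yields, by the Portmanteau theorem, the lower semicontinuity
$$
\int_{\mathscr M_{1,2}}\Omega^{\alpha_1}\;\le\;\liminf_{\substack{p\rightarrow +\infty\\ p\ \text{prime}}}\;\frac{1}{p^2}\,{\rm Vol}\big(Y_1(p)^{\alpha_1}\big)\;=\;\frac{\pi}{6}\big(1-\alpha_1\big),
$$
which proves finiteness but not the equality asserted in the theorem. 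For the equality one must rule out escape of mass: a priori, a definite fraction of the hyperbolic area of $Y_1(p)^{\alpha_1}$ --- a curve whose $p-1$ cusps all run into the ends of $\mathscr M_{1,2}$ --- could concentrate, as $p\to\infty$, in shrinking neighborhoods of those ends, in which case the limit measure would have strictly smaller total mass than $\lim_p p^{-2}{\rm Vol}(Y_1(p)^{\alpha_1})$. Nothing in your proposal addresses this tightness issue. The paper's proof is structured precisely around it: $\mathscr M_{1,2}$ is covered, up to a null set, by countably many pairwise \emph{disjoint} foliated charts $U_i$; the chart-level identity ${\rm Vol}^{\alpha_1}(U_i)=\lim_p p^{-2}\nu_p^{\alpha_1}\big(U_i\cap Y_1(p)^{\alpha_1}\big)$ is proved first; finiteness is then deduced from the partial sums $\sum_{i\le I}$ together with the uniform bound $p^{-2}\nu_p^{\alpha_1}(Y_1(p)^{\alpha_1})\le\frac{\pi}{6}(1-\alpha_1)$; and only afterwards is the convergent sum over charts interchanged with the limit in $p$, this interchange over a countable disjoint family being the substitute for the tightness your global argument needs. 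To repair your write-up, either follow that route, or supply a uniform estimate: for every $\varepsilon>0$ there is a compact $K\subset\mathscr M_{1,2}$ with $p^{-2}\nu_p^{\alpha_1}\big(Y_1(p)^{\alpha_1}\setminus K\big)<\varepsilon$ for all primes $p$.
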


\mk


\subsection{\bf Organization of the paper}
${}^{}$
 Since this text is quite long, we think that stating what we will do and where in the paper could be helpful to the reader. 
 We then make a few general comments  which could also be of help.

\subsubsection{}  
 In the {\bf first section} of this paper (namely the present one), we first take some time
 in \S\ref{S:PreviousWorks} 
 to display some elements about the historical and mathematical background regarding  the problem we are interested in. We then present our results in {\bf \S\ref{S:Results}}. Finally in {\bf \S\ref{S=Notes&References}}, we indicate some of our sources and discuss other works to which the present one is related. 
 \sk 


In {\bf Section 2}, we introduce some classical material and  fix some notations. 
\mk

{\bf Section 3} is about one of the main tools we use in this paper, namely twisted (co-)homology on Riemann surfaces.  After sketching a general theory of what we call `{\it generalized hypergeometric integrals}',  we give a detailed treatment of some results  obtained by  Mano and Watanabe in \cite{ManoWatanabe} concerning the case of punctured elliptic curves. 
The single novelty here is the explicit computation of the twisted intersection product in {\bf \S\ref{S:IntersectionProduct}}.  Note also that what is for us the main hero of this text, namely the multivalued function \eqref{E:FonctionT},  is carefully introduced in {\bf \S\ref{SS:OnTori}} where some of its main properties are established.
\mk

We begin with two simple general remarks about some constructions  of \cite{Veech} in the  first subsection of {\bf Section 4}. One of them leads to the conclusion that Veech's foliation is more naturally defined on the corresponding Torelli space.
 The relevance of this point of view becomes clear when we start focusing on the genus 1 case  in  {\bf \S\ref{S:TheCaseOfEllipticCurves}}.  We then use an explicit description of 
$\mathcal T\!\!\!{\it or}_{1,n}$ obtained by Nag,  as well as an explicit formula (in terms of the function \eqref{E:FonctionT}) 
for a flat metric with conical singularity on an elliptic curve,  to make  Veech's foliation $\mathcal F^\alpha$ on the Torelli space completely explicit.  With that at hand, it is not difficult to obtain some of our main results about $\mathcal F^\alpha$,   such as Proposition \ref{P:Main}, Theorem \ref{T:Main} or 
Theorem \ref{T:Main2}. 
 Finally, in {\bf  \S\ref{S:AnalyticExpressionVeechMapg=1}}, we  turn to the study of  the Veech  map which is used to define the geometric structure (a complex hyperbolic structure under suitable assumptions on the considered conical  angles) 
on the leaves of Veech's foliation. We show that locally, Veech's map admits an analytic description  la Deligne-Mostow in terms of elliptic hypergeometric integrals and we relate Veech's form to the twisted intersection product considered in {\S\ref{S:IntersectionProduct}}. \sk 

We specialize  and  make the previous results more precise in {\bf Section 5} where we restrict ourselves to the case of flat elliptic curves with only two conical points.  In this case, 
one proves that the Torelli space is isomorphic to a product and that, up to this isomorphism,  Veech's foliation identifies with the horizontal foliation.   It is then not difficult to describe the possible conformal types of the leaves of Veech's foliation (Theorem \ref{T:Main-n=2-Leaves} above).

 Using some results of Mano and Watanabe \cite{ManoWatanabe} and of Mano \cite{Mano}, we use the explicit differential  system satisfied by the two  elliptic hypergeometric integrals which are the components of Veech's map 
in this case to look at Veech's $\mathbb C\mathbb H^1$-structure of an algebraic leaf 
$Y_1(N)^{\alpha_1}$
of Veech's foliation on the moduli  space $\mathscr M_{1,2}$ 
 in the vicinity of one of its cusps.   From an easy analysis, one deduces Theorem \ref{T:Main-n=2},  Corollary \ref{C:Main-n=2} and Corollary \ref{C:Main-n=2-II} stated 
  above.  \sk

In {\bf Section 6}, we eventually consider some particular questions or problems to which the results previously obtained naturally lead. In {\bf \S\ref{S:ExplicitDegen}}, one uses a result by Mano  \cite{ManoKumamoto} to give an explicit example of an  analytic degeneration  of some elliptic hypergeometric integrals towards usual hypergeometric functions.  For $N$ small (namely $N\leq 5$), the algebraic leaf $Y_1(N)^{\alpha_1}$ is of genus 0 with 3 or 4 punctures,  hence the associated elliptic hypergeometric integrals can be expressed in terms of classical (hypergeometric or Heun's) functions. This is quickly discussed in {\bf \S\ref{S:LinksWithClassicalHypergeometry}}.   The subsection {\bf \S\ref{S:HyperbolicHolonomy}}, which  constitutes  the main part of the sixth section, is devoted to the determination of the hyperbolic holonomy of the algebraic leaves $Y_1(N)$'s. More precisely, after having explained why we consider this problem as  particularly relevant, we use some connection formulae in twisted homology (due to Mano and presented at the very end of \S\ref{S:Twisted(Co)Homology}) to describe a general method to construct an explicit  representation  
$$\pi_1\big(Y_1(N)\big)=\Gamma_1(N)\rightarrow  {\rm PSL}_2(\mathbb R)$$ corresponding to the $\mathbb C\mathbb H^1$-holonomy of $Y_1(N)^{\alpha_1}$.  \sk

Finally in {\bf \S\ref{S:Volumes}}, we first establish formula \eqref{E:VolumeY1(p)Alpha1} then explain how to deduce from it a proof of Theorem \ref{T:VolumeIsFinite}.
\mk

Two appendices conclude this paper. \sk

{\bf Appendix A} introduces  the notion of {\it `complex hyperbolic conifold curve}'.  In the 1-dimensional case, everything is quite elementary.  Some classical links with the theory of Fuchsian second-order differential equations are recalled as well. \sk

The second appendix, {\bf Appendix B}, is considerably longer.  It offers a very detailed treatment of the Gau{\ss}-Manin connection which is relevant to construct the differential system satisfied by the elliptic hypergeometric integrals which are the components of Veech's map (see Proposition \ref{P:VeechExplicit}).  After recalling some general results about the theory in a twisted relative situation of dimension 1, we treat very explicitly the case of 2-punctured elliptic curves over a leaf 
 of Veech's foliation on $\mathcal T\!\!\!{\it or}_{1,2}$ following \cite{ManoWatanabe}. All 
 the results that we present are justified and made explicit. In the end, we use the Gau{\ss}-Manin connection to construct the second-order differential equation $(E_a^{\alpha_1})$ of Proposition \ref{P:InH}. 
 \vspace{-0.1cm}

\subsubsection{} 
%
We think 
that the length of this text and the originality of the results it offers are worth commenting.
\sk

 From our point of view, the two crucial technical results of this text on which all the others rely are,  first,  the 
explicit global expression \eqref{E:FonctionT}  in Proposition \ref{P:ExplicitFlatMetric} and,  secondly,    some explicit formulae,   first for Veech's map 
 by means of elliptic hypergeometric integrals,  then 
for  the differential equation $(E_a^{\alpha_1})$ satisfied by its components $F_0$ and $F_\infty$ when $n=2$ ({\it cf.}\;Proposition \ref{P:InH}). 

If the  first aforementioned result follows easily from a constructive proof of Troyanov's theorem ({\it cf.}\;the beginning of \S\ref{S:VeechIntro}) described by Kokotov in \cite[\S2.1]{Kokotov}, its use to make Veech's constructions of \cite{Veech} explicit in the genus 1 case is completely original although not difficult. 
 Once one has the explicit formula \eqref{E:FonctionT} at hand, it is rather easy to    obtain  the local expression for the Veech map in terms of elliptic hypergeometric integrals. As for the classical (genus 0) hypergeometric integrals, 
the relevant technology to study such integrals is that of twisted (co-)homology. 

In the case of  punctured elliptic curves, this theory has been worked out by Mano and Watanabe in \cite{ManoWatanabe} where they also give some explicit formulae for  the corresponding Gau{\ss}-Manin connection. It follows that, up to a few exceptions,  the material we present in {Section 3} and in {Appendix B} is not new and should be attributed to them.  So it would have been possible to replace these lengthy parts of the present paper by some references to \cite{ManoWatanabe}. \sk
 
 The reason why we have chosen to do otherwise is twofold. 
First, when we began to work on the subject of this paper, we were not very familiar with the modern twisted (co-)homological way to deal with hypergeometric functions.    In order to understand  
 this theory better, we began to write down detailed notes. Because 
these were helpful for our own understanding, we thought that they could be helpful to some readers as well and 
 decided to incorporate them in the text.  

The second reason  which prompted us to proceed that way is that the context in which  the results of \cite{ManoWatanabe} lie, namely the context of isomonodromic deformations of  linear differential systems with regular singular points on elliptic curves, is more general than ours.
More concretely, the authors in \cite{ManoWatanabe}  deal with a parameter $\lambda$ which corresponds to a certain line bundle $\mathcal O_\lambda$  of degree 0 on the considered elliptic curves.  Our case corresponds to the specialization  $\lambda=0$ which corresponds to $\mathcal O_\lambda$ being trivial.  If the case we are interested in is somehow the simplest one of \cite{ManoWatanabe},  some of the results of the latter,  those about the Gau{\ss}-Manin connection in particular, do not 
apply to the case $\lambda=0$ in a straightforward manner. In order to fill some details which were not explicitly mentioned in \cite{ManoWatanabe}, we worked  out this  case carefully and it leads to Appendix B. \sk 



\subsection{\bf Remarks, notes and references}
\label{S=Notes&References}
This text being already very long, we think  it is not
a problem to add a few lines mentioning  other mathematical works to which the present one is or could be linked.\sk


\subsubsection{}\!\!
As is well-known (or at least, as it must be clear after  reading \S\ref{S:PreviousWorks}),  the distinct approaches of Deligne-Mostow \cite{DeligneMostow} on the  one hand and of Thurston \cite{Thurston} on the other hand, lead to the same results.  
As already said before, Thurs\-ton's approach is more geometric than the hypergeometric one and basically relies on certain surgeries\footnote{By {\it `surgery'} we mean an operation which transforms a flat surface into a new one   which is obtained from the former by cutting along piecewise geodesic segments in it or by removing a part 
of it with  a piecewise geodesic boundary and then identifying certain isometric components of the boundary of the flat surface with geodesic boundary obtained after the cutting operation (see \cite[\S6]{GP} for more formal definitions).}   for flat surfaces (actually flat spheres).

In the present text, we extend the hypergeometric approach of Deligne and Mostow in order to handle the elliptic case.  The point is that Thurston's approach,  in terms of flat surfaces, can be generalized to  the genus 1 case as well. 

In the `non-identical twin' paper  \cite{GP}
\footnote{We use  this terminology since,  if \cite{GP} has the same parents 
and is born at the same time as the present text,  
both papers clearly do not share the same DNA hence are dizygotic twins.}, 
we introduce several surgeries for flat surfaces (some of which are natural generalizations of the one implicitly  used  by Thurston) which we use to generalize some statements of \cite{Thurston} to the case of flat tori with conical singularities. 
  \sk

 We believe that  the important fact highlighted by our work is that both Thur\-stonÕs
geometric approach and Deligne-MostowÕs hypergeometric one can be
generalized to the genus 1 case.  At the moment, we have written two separate papers, one for each of these two approaches.   In the genus 0 case, any one of these approaches suffices, but we believe that this is specific to this case. 
Our credo  is  that  the geometric
approach ( la Thurston) as well as the hypergeometric
one ( la Deligne-Mostow) are truly complementary. Each gives a different
light on the objects under study  and combining these two approaches should be
 powerful and even necessary in order  to better understand  the case $g=1$ with $n\geq 3$.   We plan to illustrate this in forthcoming papers. For the time being,
readers are just strongly encouraged to take a look at \cite{GP} and compare its 
methods and results to the ones of the present text.
 \vspace{-0.2cm}
 
  
\subsubsection{}  
The main mathematical objects studied in \cite{DeligneMostow} are the monodromy groups 
attached to the Appell-Lauricella hypergeometric functions which are the ones 
 admitting  an Eulerian integral representation of the following form 
\begin{equation}
\label{E:Appell-Lauricella-F}
F_{\boldsymbol{\gamma}}  (x)=
\int_{\boldsymbol{\gamma}} \prod_{i=1}^n(t-x_i)^{\alpha_i} dt
\end{equation}
with  $x\in \mathbb C^n$ and where  $\boldsymbol{\gamma}$ is a twisted 1-cycle supported  in $\mathbb P^1\setminus \{ x\}$ ({\it cf.}\;\S\ref{S:Appell-LauricellaHypergeometricFunctions}).

In the present text we are interested in the functions which admit an integral representation  of the following form ({\it cf.}\;\S\eqref{S:VeechMapExplicit} for some explanations) 
\begin{equation}
\label{E:Elliptic-Hypergeometric-Function}
F_{\boldsymbol{\gamma}}  (\tau,z)=
\int_{\boldsymbol{\gamma}} \exp\big(2i\pi a_0 u \big)\prod_{i=1}^n\theta(u-z_i, \tau)^{\alpha_i}
\end{equation}
with  $(\tau,z)\in \mathbb H\times \mathbb C^{n}$ and where  $\boldsymbol{\gamma}$ stands for a twisted 1-cycle supported  in the punctured elliptic curve $E_{\tau,z}$ ({\it cf.}\;\S\ref{S:Appell-LauricellaHypergeometricFunctions}). From our point of view, they are the direct generalization to the genus 1 case of the Appell-Lauricella functions \eqref{E:Appell-Lauricella-F}. For this reason, it seemed to us that the name {\bf elliptic hypergeometric function} 
was quite adequate to describe  them.

Here we have to mention that the Appell-Lauricella hypergeometric   functions \eqref{E:Appell-Lauricella-F} admit developpements in series similar to \eqref{HGF} ({\it cf.}\;\cite[(I')]{DeligneMostow} for instance). Taking this as their main feature and motivated by some questions arising in mathematical physics, several people have developed a theory of {\it `elliptic hypergeometric series}' which have been quickly named {\it `elliptic hypergeometric functions}' as well (see {\it e.g.} the survey paper \cite{Spiridonov}). These share  
several other similarities with the classical hypergeometric functions such as, for instance,  integral representations. We do not know if our elliptic hypergeometric functions are related  to the ones considered by these authors, but we doubt it. \sk 

Anyway, since it sounds very adequate and because we like it  too much, we have  decided to use the  expression {\it `elliptic hypergeometric function'}  in our paper as well. Note that this terminology  has already been used once in a context very similar to the one 
we are dealing with in this text, see \cite{Ito}. 

\subsubsection{}
Note also that in the papers \cite{Mano,ManoWatanabe},  which  we use in a crucial way in \S6, the authors  consider functions defined by integral representations of the form 
\begin{equation}
\label{E:Riemann-Wirtinger-Function}
F_{j,\boldsymbol{\gamma}}  (\tau,z,\lambda)=
\int_{\boldsymbol{\gamma}} \exp\big(2i\pi a_0 u \big)\prod_{i=1}^n\theta(u-z_i, \tau)^{\alpha_i} \mathfrak s(u-z_j,\lambda)
\end{equation}
for $(\tau,z)\in \mathcal T\!\!\!{\it or}_{1,n}$, $\lambda\in \mathbb C\setminus \mathbb Z_\tau$ and $j=1,\ldots,n$,  where $ \mathfrak s(\cdot,\lambda)$ stands for 
the  function $$\mathfrak s(u,\lambda)=\frac{\theta'(0)\theta(u-\lambda)}{\theta(u)\theta(\lambda)}\, .$$

 Such functions were previously  baptized  {\it `Riemann-Wirtinger integrals'} by Mano  in \cite{Mano}.  Since $\lambda \mathfrak s(u,\lambda)\rightarrow -1$  when $\lambda$ goes to 0, our elliptic hypergeometric functions \eqref{E:Elliptic-Hypergeometric-Function} can be seen as natural limits of renormalized  Riemann-Wirtinger integrals.  However, if 
 the functions \eqref{E:Riemann-Wirtinger-Function} for $j\in \{1,\ldots,n\}$ fixed can be seen as translation periods of a certain flat structure on $E_{\tau}$ (namely the one 
 defined by the square of the modulus of the integrand  in \eqref{E:Riemann-Wirtinger-Function}), the latter does not have finite volume hence is not of the type which is of interest to us.

 
\subsubsection{} 
One of the origins of the terminology  `{Riemann-Wirtinger integrals}' (see just above) can be found 
in the little known paper of Wirtinger \cite{Wirtinger1902},  dating of 
1902, in which he gives an explicit expression for the uniformization  of the hypergeometric function \eqref{HGF} to the upper-half plane 
$\mathbb H$.    
This paper has been followed  by a whole series of works  by several authors \cite{Wirtinger1903,Berger1906,Cermak,Pick1907,Graf1907,Graf1908a,Graf1908b,Petr,Graf1910} in which they study particular cases of what we call here `elliptic hypergeometric integrals' (see \cite{Kamp} for an exposition of some of the results obtained by these authors). 
\sk 

The `uniformized approach' 
to the study of the hypergeometric functions 
 initiated by Wirtinger does not seem to have generated much interest from 1910 until very recently. 
 Starting from 2007, Watanabe   begins to work on this subject again. In the series of papers \cite{Watanabe2007,WatanabeWirtinger,Watanabe2009,Watanabe2014}, he applies the modern  approach relying on twisted (co-)homology to the Wirtinger integral (see \cite{WatanabeWirtinger}
  or \S\ref{S:} below for details) and recovers several classical results about Gau{{\ss}} hypergeometric function.    
  It seems to  be  some overlap with several results contained in the former  papers just aforementioned (see Remark \ref{Rem:remrem}) but Watanabe was apparently  
 not aware of them since \cite{Wirtinger1902} is the only paper of that time he refers to.


\subsection{\bf Acknowledgments} 
First, we are very grateful to 
Toshiyuki  Mano
and Humihiko Watanabe  for correspondence and 
for their help to understand their work which proved to be crucial to the study undertaken here.  In particular, T. Mano provided  the second author (L. Pirio) 
 with many detailed explanations which were very helpful. 
 Franois Brunault has kindly answered several elementary questions concerning  modular curves and congruence subgroups, we would like to thank him for this. 
We are grateful to Frank Loray for  sharing with us some references and thoughts about the interplay between complex geometry and `classical hypergeometry'. We would like to thank Adrien Boulanger for the interesting discussions we had about the notion of holonomy.  
We are also thankful to Bertrand Deroin for the constant
support and deep interest he has shown in our work since its very beginning.

  Finally, the second author  thanks Brubru for her careful reading and her numerous corrections.

\section{\bf Notations and preliminary material}
We indicate below some notations for  the objects considered in this paper  as well as a few references.    We have chosen to present this material in telegraphic style: we believe that this presentation is the most useful to the reader.

\subsection{\bf Notations for punctured elliptic curves}
\begin{itemize}
\item $\mathbb H$ stands for Poincar's upper half-plane: 
$\mathbb H=\big\{  u\in \mathbb C \; \big\lvert  \; \Im{\rm m}(u)>0\, \big\}\, ;$
\sk
\item $\mathbb D$ denotes  the unit disk in the complex plane: 
$\mathbb D=\big\{  u \in \mathbb C \; \big\lvert  \; \lvert   u \lvert <1\, \big\}\, ;$
\sk 
\item  $\tau$ stands for an a priori arbitrary element of $\mathbb H$;
\sk 
\item $A_\tau=A+A\tau$ for any $\tau\in \mathbb H$ and any subset $A\subset \mathbb C$;
\sk 
\item $E_\tau=\mathbb C/\mathbb Z_\tau$ is the elliptic curve associated to the lattice $\mathbb Z_\tau$ for $\tau\in \mathbb H$;
\sk 
\item $[0,1[_\tau$ is the fundamental parallelogram of $E_\tau$;
\sk 
\item $z=(z_1,\ldots,z_n)$ denotes a  $n$-uplet of complex numbers: $
(z_i)_{i=1}^n
\in \mathbb C^n$;
\sk  
\item 
$[z_i]\in E_\tau$ stands for  the class of $z_i\in \mathbb C$ modulo $\mathbb Z_\tau$ when $\tau$ is given;
\sk 
\item Most of the time $z=
(z_i)_{i=1}^n
\in \mathbb C^n$ will be assumed to be 
\begin{itemize}
\item such that the $[z_i]$'s are pairwise distinct;
\item  normalized up to a translation, that is $z_1=0$;
\end{itemize}
\sk 
\item  $E_{\tau,z}$ is the $n$-punctured elliptic curve $E_\tau\setminus \big\{ [z_1],\ldots ,[z_n]\big\}$; 
\end{itemize}

\subsection{\bf Notations and formulae for theta functions}
\label{SS:NotationsTheta}
Our main reference concerning theta functions and related material is Chandrasekharan's book \cite{Chandrasekharan}. 
\begin{itemize}
\item  $q=\exp(i\pi\tau)\in \mathbb D$ for $\tau\in \mathbb H$;
\sk 
\item $\theta(\cdot )=\theta(\cdot , \tau)$ for $\tau\in \mathbb H$ (viewed as a fixed parameter) 
stands for Jacobi's theta function defined by, for every $u\in \mathbb C$: 
\begin{equation}
\label{E:ThetaFunction}
\theta(u)=\theta(u, \tau)=-i \sum_{n\in \mathbb Z} (-1)^n\exp\Big(i\pi \big(n+{1}/{2}
\big)^2\tau+2i\pi\big(n+{1}/{2}
\big) u
\Big)\, ; 
\end{equation}
\item   For  
 $\tau \in \mathbb H$, 
the following multiplicative  functional relations hold true: 
\begin{equation}
\label{E:ThetaQuasiPeriodicity}
 \theta(u+1)=-\theta(u) \qquad \mbox{ and }\qquad \theta(u+\tau)=-q^{-1}e^{-2i\pi u}\cdot \theta(u)\, ; 
 \end{equation}
\item $\theta'(u)$ and 
$\dt{\theta}(u)$ 
 stand for the derivative of $\theta$ w.r.t  $u$ and $\tau$ respectively;
\sk 
\item  Heat equation: for every $u\in \mathbb C$, one has: 
$
\dt{\theta}(u)=(4i\pi)^{-1}\theta''(u)
$
\sk 
\item By definition, the four Jabobi's theta functions $\theta_0,\ldots,\theta_3$ are 
\begin{align*}
 \theta_0(u)=& \theta(u,\tau)       &&  \theta_1(u)= -\theta\left(u-\frac{1}{2},\tau\right)
\\  
  \theta_2(u)=&\theta\left(u-\frac{\tau}{2},\tau\right)   i q^{\frac{1}{4}}e^{-i\pi z}   &&  \theta_3(u)= -  \theta\left(u-\frac{1+\tau}{2},\tau\right) q^{\frac{1}{4}}
  e^{-i\pi u}\, ; 
 \end{align*}
\item Functional equations for the $\theta_i$'s:  for every $(u,\tau)\in \mathbb C\times \mathbb H$, one has
\begin{align*}
\theta_1(u+1)= & -\theta_1(u) &&    \theta_1(u+\tau)= q^{-1}e^{-2i\pi u}\theta_1(u) \\
\theta_2(u+1)= & \theta_2(u) &&    \theta_2(u+\tau)= - q^{-1}e^{-2i\pi u}\theta_2(u) \\
\theta_3(u+1)= & \theta_3(u) &&    \theta_3(u+\tau)= q^{-1}e^{-2i\pi u}\theta_3(u) \, ; 
\end{align*}
\item $\rho$ denotes   the logarithmic derivative of $\theta$ w.r.t.\,$u$, {\it i.e.}    $\rho(u)=
 {\theta'(u)}/{\theta(u)}$; 
\sk 
\item  functional equation for $\rho$: for every $\tau\in \mathbb H$ and every $u\in \mathbb C\setminus \mathbb Z_\tau$, one has 
\begin{equation*}
\label{E:RhoFunctionalEquation}
\rho(u+1)= \rho(u)    \qquad \mbox{ and }\qquad   \rho(u+\tau)=  \rho(u)-2i\pi \, ; 
\end{equation*}
\item $\rho'(\cdot)$ is $\mathbb Z_\tau$-invariant hence can be  seen as a rational function on 
$E_\tau$.
\sk 
\end{itemize}

\subsection{\bf Modular curves }
A handy reference for the little  we use on modular curves 
is the nice book \cite{DS} by Diamond and Shurman. 

\begin{itemize}
\item $N$ stands for  a (fixed)  positive integer; 
\sk 
\item classical congruence subgroups of level $N$: 
\begin{align*}
\Gamma(N)=& \, 
\left\{
\begin{bmatrix}
a & b \\
c & d
\end{bmatrix}
\in {\rm SL}_2(\mathbb Z)\, \Big\lvert\, 
a \equiv d \equiv 1 \; \mbox{ and }\; c\equiv d\equiv 0 \; {\rm  mod}\; N\, 
\right\}\, ; 
\\
\Gamma_1(N)= & \, 
\left\{
\begin{bmatrix}
a & b \\
c & d
\end{bmatrix}
\in {\rm SL}_2(\mathbb Z)\, \Big\lvert\, 
a \equiv d \equiv 1 \; \mbox{ and }\; c\equiv 0 \; {\rm  mod}\; N\, 
\right\}\; ; 
\end{align*}
\item  $Y(\Gamma)=\mathbb H/\Gamma$ for $\Gamma$ a congruence subgroup of ${\rm SL}_2(\mathbb Z)$;
\sk
\item 
 $Y(N)=Y(\Gamma(N))$ and $Y_1(N)=Y(\Gamma_1(N))$; 
\sk 
\item  $\mathbb H^\star=\mathbb H\sqcup \mathbb P^1_{\mathbb Q}\subset \mathbb P^1$ is the extended upper-half plane; 
\sk 
\item  $X(\Gamma)=\mathbb H^\star/\Gamma$ is the compactified modular curve associated to $\Gamma$;
\sk 
\item  $X(N)=X(\Gamma(N))$ and $X_1(N)=X(\Gamma_1(N))$.
\sk 
\end{itemize}

\subsection{\bf Teichm\"uller material} 
There are many good books about Teichmller theory. A useful one considering 
what we are doing in this text is \cite{NagBook} by Nag.  
\sk 
\begin{itemize}
\item $g$ and $n$ stand for non-negative integers such that $2g-2+n>0$;
\sk
\item $S_g$ (or just $S$ for short)  is a fixed compact orientable surface of genus $g$; 
\sk 
\item $S_{g,n}$ (or $S^*$ for short) denotes either the $n$-punctured surface $S\setminus \{s_1,\ldots,s_n\}$ or the $n$-marked surface $(S,s)$
where $s=(s_i)_{i=1}^n$ stands for a fixed $n$-uplet of pairwise distinct points on $S$;
\sk 
\item  $\mathcal T\!\!\!{\it eich}_{g,n}$ is a shorthand for $\mathcal T\!\!\!{\it eich}(S_{g},s)$, the Teichm\"uller space of the $n$-marked surface of genus $g$ $S_{g,n}$; 
\sk 
\item ${\rm PMCG}_{g,n}$ denotes  the pure mapping class group; 
\sk 
\item  ${\rm Tor}_{g,n}$ is the Torelli group:  it is the kernel of the epimorphism of groups $
{\rm PMCG}_{g,n}\rightarrow  {\rm Aut}\big(H_1(S_{g,n},\mathbb Z), \cup \big)$ (where $\cup$ stands for the cup product);
\sk 
\item $\mathcal T\!\!\!{\it or}_{\!g,n}=\mathcal T\!\!\!{\it eich}_{g,n}/{\rm Tor}_{g,n}$ is the associated Torelli space; 
 \sk 
\item $\mathscr M_{g,n}=\mathcal T\!\!\!{\it eich}_{g,n}/{\rm PMCG}_{g,n}$ is the associated (Riemann) moduli space. 
\end{itemize}

\subsection{\bf Complex hyperbolic spaces} 
We will make practically no use of
  complex hyperbolic geometry in this text. However, viewed its conceptual importance to understand Veech's constructions, we settle basic definitions and facts below. 
  For a reference, the reader can consult \cite{Goldman}.\sk 
  
\begin{itemize}
\item  $\langle \cdot , \cdot \rangle=\langle \cdot , \cdot \rangle_{1,n}$ is the hermitian form of signature $(1,n)$ on $\mathbb C^{n+1}$: one has 
$$
\langle z , w \rangle= 
\langle z , w \rangle_{1,n}= z_0\overline{w}_0-\sum_{i=1}^n z_i\overline{w}_i
$$
 for $z=(z_0,\ldots,z_n)$ and $w=(w_0,\ldots,w_n)$  in $\mathbb C^{n+1}$;
\sk 
\item  $V_{1,n}^+=\{ z \in \mathbb C^{n+1}\, \lvert \, \langle z,z\rangle_{1,n}>0\, \} \subset \mathbb C^{n+1}$ is the set of $\langle\cdot,\cdot\rangle$-positive vectors;
\sk 
\item
 the complex hyperbolic space 
$\mathbb C\mathbb H^n$ is the projectivization of $V_{1,n}^+$: 
$$
\mathbb C\mathbb H^n = \mathbf P V_{1,n}^+ \subset \mathbb P^{n}\, ; 
$$
\item in the affine coordinates $(z_0=1,z_1,\ldots,z_n)$, $\mathbb C\mathbb H^n$ is the 
complex unit ball: 
  \begin{equation}
  \label{E:CHn-Ball}
  \mathbb C\mathbb H^n=\left\{ (z_i)_{i=1}^n\in \mathbb C^n\, \Big \lvert \, \sum_{i=1}^n \lvert z_i\lvert^2<1\, \right\}\, ;
  \end{equation}
\sk 
\item the complex hyperbolic metric is the Bergman metric of the unit complex ball
\eqref{E:CHn-Ball}. For $[z]\in\mathbb C\mathbb H^n$ with $z\in V_+$,    it is given explicitly  by  
$$
{g}^{hyp}_{[z]}=-\frac{4}{\langle z ,z \rangle^2}\det \begin{bmatrix}  \langle z ,z \rangle & \langle dz ,z \rangle \\
\langle z ,dz \rangle  & \langle dz ,dz \rangle    \end{bmatrix}\, ; 
$$
\sk
\item 
${\rm PU}(1,n) ={\rm PAut}(\mathbb C^{n+1},\langle\cdot , \cdot \rangle_{1,n})
< {\rm PGL}_{n+1}(\mathbb C)$ acts transitively on $\mathbb C\mathbb H^n$ and coincides with its group of biholomorphisms ${\rm Aut}(\mathbb C\mathbb H^n)$;
\sk
\item  being a Bergman metric,  $
{g}^{hyp}$ is invariant by ${\rm Aut}(\mathbb C\mathbb H^n)={\rm PU}(1,n)$; 
\sk
\item 
$(\mathbb C\mathbb H^n, {g}^{hyp})$ is a non-compact complete hermitian symmetric space of rank 1 with constant holomorphic sectional curvature; 
 \sk
\item  for $n=1$ and $(1,z)\in V_+$, one has 
$g^{hyp}_{[z]}={4  \big(1-\lvert z\lvert^2\big)^{-2}
    \lvert dz \lvert^2}
$,  
therefore $\mathbb C\mathbb H^1$ coincides with Poincar's hyperbolic disk $\mathbb D^{hyp}$ 
hence with the real hyperbolic plane  $\mathbb R\mathbb H^2$. In other terms, 
there are some identifications
$$
\mathbb C\mathbb H^1 \simeq \mathbb D^{hyp} \simeq \mathbb H \simeq \mathbb R\mathbb H^2
\quad 
\mbox{ and }
\quad 
{\rm Aut}\big(\mathbb C\mathbb H^1\big)={\rm PU}(1,1)\simeq {\rm PSL}_2(\mathbb R)\, .
$$

\end{itemize}

\section{\bf Twisted (co-)homology and  integrals of hypergeometric type}
\label{S:Twisted(Co)Homology}
It is now well-known that a rigorous and relevant framework to deal with (generalized) hypergeometric functions is the one of twisted (co-)homology. 

For the sake of completeness, we give below a short review of this theory in the simplest 1-dimensional case.  All this material and its link with the theory of hypergeometric functions is exposed 
 in many  modern references,  such as  \cite{DeligneMostow, KitaYoshida,Yoshida,AomotoKita},  to  which we refer for proofs and details. \mk 

After recalling some generalities, we focus on the case we will be interested in, namely the one of punctured elliptic curves. This case has been studied extensively by Mano and Watanabe. 
 Almost all the material presented below has been taken from 
\cite{ManoWatanabe}. The unique exception is Proposition \ref{P:GammaEll} in subsection \S\ref{S:IntersectionProduct}, where we compute explicitly the twisted intersection product.  If this result relies on simple computations, it is of importance for us since it will allow us to give an explicit expression of the Veech form 
({\it cf.}\;Proposition \ref{P:VeechFormExplicit}).


\subsection{\bf The case of Riemann surfaces: generalities}
 Interesting general referen\-ces in arbitrary dimension are \cite{AomotoKita,Vassiliev}.  The case corresponding  to the classical theory of hypergeometric functions is 
the one where the ambient variety is a punctured projective line. 
 It is treated in a very nice but  informal way in the fourth chapter of 
 Yoshida's love book \cite{Yoshida}. 
 A more detailed treatment is given in the second section of 
Deligne-Mostow's paper \cite{DeligneMostow}.    
 For arbitrary Riemann surfaces, the reader can consult \cite{Ito}.  

\subsubsection{}\hspace{-0.3cm}
\label{S:rho&T}
 Let $\mu$ be a {\bf multiplicative complex character} on the fundamental group of a (possibly non-closed) Riemann surface $X$, {\it i.e.}\;a group homomorphism $\mu: \pi_1(X)\rightarrow \mathbb C^*$.
  Since the target group is abelian, it factorizes through a homomorphism of abelian groups $\pi_1(X)^{\rm ab}=H_1(X,\mathbb Z)\rightarrow \mathbb C^*$, denoted abusively by the same notation $\mu$. 
 We will denote by  $ L_{\mu}$  or just $L$ `the' {\bf local system associated to} $\boldsymbol{\mu}$.  
  We  use the notation ${ L}_\mu^\vee$,  ${L}^\vee$ for short, 
   to designate the dual local system which is  the local system $  L_{\mu^{-1}}$ associated to the dual character $\mu^{-1}$.\sk 

Assume that $T$ is a  multivalued holomorphic function on  $X$  whose monodromy 
  is multiplicative and equal to $\mu^{-1}$: the analytic continuation along  any loop $\gamma: S^1\rightarrow X$ of a determination of $T$ at $\gamma(1)$  is $\mu(\gamma)^{-1}$ times this initial determination. Then $T$ can be seen as a global section of ${L}^\vee$.    
Conversely,  assuming that  $T$ does not vanish on $X$,  one can define ${ L}$ as the line bundle,  the stalk of which at any point $x$  of $X$ is the 1-dimensional complex vector space spanned by a (or equivalently by all the) determination(s) of $T^{-1}$ at $x$.
\sk 

Assuming that $T$ satisfies all the preceding assumptions, the logarithmic derivative $\omega=(d\log T)=
T^{-1}dT$ of $T$ 
 is a global  (univalued) holomorphic 1-form on $X$.  
 Then one can define $L$ more formally 
as  the local system formed by the solutions of the 
global differential equation $dh+\omega h=0$ on $X$.

\subsubsection{}\hspace{-0.3cm} For $k=0,1,2$,  a {\bf (${ L^\vee}$-)twisted $k$-simplex} is the data of a $k$-simplex $\alpha$ in $X$ together with a determination 
$T_\alpha$ 
of $T$ along $\alpha$. We will denote this object by  $\alpha\otimes T_\alpha$ or,  more succinctly,  by $\boldsymbol{\alpha}$.
 A {\bf twisted $k$-chain} is  a finite linear combination with complex coefficients of twisted $k$-simplices.  
 By taking the restriction of $T_\alpha$ to the corresponding facet of $\partial \alpha$, one defines a boundary operator $\partial$ on twisted $k$-simplices which extends to twisted  $k$-chains by linearity.  It satisfies $\partial ^2=0$,  which allows to define the {\bf twisted homology group} $H_k(X,{ L^\vee})$.\sk  
 
More generally, one defines a {\bf locally finite  twisted $k$-chain} as a  possibly infinite linear combination 
$
\sum_{i\in I}
c_i\cdot \boldsymbol{\alpha}_i $
  with complex coefficients of ${ L^\vee}$-twisted $k$-simplices $\boldsymbol{\alpha}_i$, 
but such that  there are only finitely many indices $i\in I$ such that $\alpha_i$ intersects  non-trivially any previously given compact subset  of $X$.  The boundary operator previously defined extends to such chains and allows to define  the {\bf groups of locally finite twisted homology} $H_k^{\rm lf}(X,{ L^\vee})$ for $k=0,1,2$.  

\subsubsection{}\hspace{-0.3cm} A {\bf (${ L}$-)twisted $k$-cochain} is a map which associates a  section of ${ L}$ over $\alpha$  to any $k$-simplex $\alpha$ (or equivalently,  it is a map which associates a complex number to any 
${L^\vee}$-twisted $k$-simplex $\alpha\otimes T_\alpha$). 
 The fact that such a section extends in a unique way to any 
 $(k+1)$-simplex admitting $\alpha$ as a face allows to define a coboundary operator. The latter will satisfy all the expected properties in order to construct the {\bf twisted cohomology groups} $H^k(X,{ L})$ for $k=0,1,2$.  Similarly, by considering twisted $k$-cochains with compact support, one constructs the   groups of 
{\bf twisted cohomology with compact support}  $H^k_c(X,{ L})$. 
 
The   (co)homology spaces considered above are dual to each other: for any  $k=0,1,2$, there are natural dualities 
\begin{equation}
\label{E:dualities}
H_k\big(X,{ L^\vee}\big)^{\!\lor}\simeq H^k\big(X,{ L}\big)
\qquad  
\mbox{and } \qquad H_k^{\rm lf}\big(X,{ L^\vee}\big)^{\!\lor}\simeq H^k_c\big(X,{ L}\big).\end{equation}

\subsubsection{}\hspace{-0.3cm}
A {twisted $k$-chain} being locally finite, there are natural maps 
$H_k (X,{ L^\vee})\rightarrow H_k^{\rm lf}(X,{ L^\vee})$.  We focus on the case when $k=1$ which is the only one to be of  interest for our purpose.  In many situations, when $\mu$ is sufficiently generic, it turns out  that the natural map $H_1 (X,{ L^\vee})\rightarrow H_1^{\rm lf}(X,{ L^\vee})$  actually is an isomorphism. In this case, one denotes the inverse map 
by 
\begin{equation}
\label{E:Reg}
{\rm Reg}: 
H_1 ^{\rm lf}\big(X,{ L^\vee}\big)\longrightarrow H_1\big(X,{ L^\vee}\big)
\end{equation}
 and call it  the {\bf regularization morphism}. Note that it is canonical. \sk

Assume that $\boldsymbol{\alpha}_1,\ldots,\boldsymbol{\alpha}_N$ are locally finite twisted 1-chains (or even better, twisted 1-simplices) in $X$ whose homology classes  generate $H_1 ^{\rm lf}(X,{ L^\vee})$. 

A {\bf regularization map} is a map ${\rm reg}: \boldsymbol{\alpha}_i\mapsto {\rm reg}(\boldsymbol{\alpha}_i) $ such that: 
\begin{enumerate}
\item[(1)] for every $i=1,\ldots,n$,  ${\rm reg}(\boldsymbol{\alpha}_i)$ is a ${ L^\vee}$-twisted  1-chain 
which is no longer locally finite but finite on $X$; 
\item[(2)] ${\rm reg}$ factors through the quotient and the induced map $H_1 ^{\rm lf}(X,{ L^\vee})\rightarrow H_1(X,{ L^\vee})$ coincides with  the regularization morphism \eqref{E:Reg}.
\end{enumerate}

\subsubsection{}\hspace{-0.4cm} 
\label{S:TwistedPoincarDuality}
{\bf Poincar duality} holds true for twisted (co)homology: for $i=0,1,2$, there are natural isomorphisms $H^i(X,{ L})\simeq 
H_{2-i}^{\rm lf}(X,{ L^\vee})$ ({\it cf.}\;\cite[Theorem 1.1\,p.\,218]{Vassiliev} or \cite[\S2.2.11]{AomotoKita} for instance). 
Combining the latter isomorphism with \eqref{E:dualities}, one obtains a non-degenerate bilinear pairing $H_1(X, { L^\vee})\otimes H_1^{\rm lf}(X, { L})\rightarrow \mathbb C$. When the regularization morphism \eqref{E:Reg} exists, it matches the induced pairing 
\begin{equation}
\label{E:pairing}
H_1\big(X, { L^\vee} \big)\otimes H_1\big(X, { L}\big)\longrightarrow \mathbb C
\end{equation}
which, in particular, is non-degenerate.

\subsubsection{}\hspace{-0.4cm} 
\label{S:GeneralTwistedIntersectionProduct}
Assume now that $\mu$ is unitary, {\it i.e.}\;${\rm Im}(\mu)\subset \mathbb U$. Then  $\mu^{-1}$ 
coincides with the conjugate morphism $\overline{\mu}$,  thus the twisted homology groups 
$H_1(X,{ L})$ and $H_1(X,{L}_{\overline{\mu}})$ are equal. 
On the other hand, the map $\alpha\otimes T_\alpha \rightarrow \alpha \otimes \overline{T_\alpha}$ defined on ${ L^\vee}$-twisted 1-simplices induces a complex conjugation  $\boldsymbol{\alpha}\mapsto \boldsymbol{\overline{\alpha}}$ from $ H_1(X,{ L^\vee})$ onto $ H_1(X, L_{\overline{\mu}})$. 

Using \S\ref{S:TwistedPoincarDuality}, one gets the following non-degenerate hermitian pairing
\begin{align}
\label{E:TwistedIntProduct}
H_1\big(X, { L^\vee}\big)^{ 2} & \longrightarrow \mathbb C\\
\big( \boldsymbol{\alpha}, \boldsymbol{\beta}   \big) & \longmapsto 
 \boldsymbol{\alpha}\cdot \overline{\boldsymbol{\beta}}
 \nonumber
\end{align}
which  in this situation is called the {\bf twisted intersection product}.

  \subsubsection{}\hspace{-0.4cm}
 \label{S:}
 Let $\eta$ be a holomorphic 1-form on $X$. 
By setting   
\begin{equation}
\label{E:IntTeta}
 \int_{\boldsymbol{\alpha}} T\cdot \eta= \int_{\alpha} T_\alpha\cdot \eta
 \end{equation}
  for any twisted 1-simplex $\boldsymbol{\alpha}=\alpha\otimes T_\alpha$, and by extending this map by linearity, one defines a complex linear map 
 $\int T\cdot \eta$ on the spaces of twisted 1-cycles.    The value \eqref{E:IntTeta} does not depend on $\boldsymbol{\alpha}$ but only on its (twisted) homology class.  Consequently, the preceding map factorizes and gives rise to a linear map
 \begin{align*}
 \int_{}\; \,  T\cdot \eta  \;  : \; 
 H_1\big(X,{ L^\vee}\big)
 & \longrightarrow \mathbb C\\
 [\boldsymbol{\alpha}] & \longmapsto  \int_{\boldsymbol{\alpha}} T\cdot \eta.
 \end{align*}
 
On the other hand, there is an  exact sequence of sheaves $0\rightarrow  L\rightarrow \Omega_X
  ^0( L)\stackrel{d
  }{\rightarrow}
 \Omega_X^1( L){\rightarrow}0$ on $X$ 
  whose hypercohomology groups are proved to be isomorphic to the simplicial ones $H^k(X, L)$.  Then for any  $\eta$ as above, it can be verified that \eqref{E:IntTeta}
  depends only on the associated class $[T\cdot \eta]$ in $H^1(X, L)$ and 
 its value on  $\boldsymbol{\alpha}$ is given by means of the pairing \eqref{E:pairing}:
 for $\eta$ and $\boldsymbol{\alpha}$ as above,  one has  
 \begin{equation*}
  \int_{\boldsymbol{\alpha}} T\cdot \eta=\left\langle \big[T\cdot \eta\big], [\boldsymbol{\alpha}]\right\rangle\, .
 \end{equation*}
  
From this, one deduces a precise cohomological definition for what we call a  {\bf generalized elliptic integral}, that is an integral of the form 
\begin{equation}
\label{E:HypergeomIntegral}
\int_{\boldsymbol{\gamma}} T\cdot \eta
\end{equation}
where $\eta$ is a 1-form on $X$ and $\boldsymbol{\gamma}$ a twisted 1-cycle (or a twisted homology class).     
 
 \subsubsection{}\hspace{-0.4cm}
 \label{S:TwistedDeRham}
  Assume that $T$ is a non-vanishing function as in \S\ref{S:rho&T}. 
 Then using $\omega=d\log(T)$, one defines a twisted covariant differential operator $\nabla_\omega$ on $\Omega^\bullet_X$ by setting $\nabla_\omega(\eta)=d\eta+\omega\wedge \eta$ for any holomorphic form $\eta$ on $X$.   In this way one gets a complex $(\Omega^\bullet_X,\nabla_\omega)$ called {\bf the twisted De Rham complex} of $X$.  
 
  There is an  exact sequence of sheaves on $X$
  $$0\longrightarrow  L\longrightarrow \Omega_X^0\stackrel{\nabla_\omega
  }{\longrightarrow}
 \Omega_X^1{\longrightarrow}0\, , $$ 
 from which it comes that $(\Omega^\bullet_X,\nabla_\omega)$ is a resolution of $ L$. Consequently (see {\it e.g.}\;\cite[\S2.4.3 and \S2.4.6]{AomotoKita}), the twisted simplicial cohomology groups of $X$ are naturally isomorphic to the twisted hypercohomology groups $H^k(\Omega^\bullet_X,\nabla_\omega)$ for $k=0,1,2$.  The main conceptual interest of using this twisted de Rham formalism is that it allows to construct what is called the associated {\bf Gau{{\ss}}-Manin connection} which in  turn can be used to construct (and  actually is essentially equivalent to) the linear differential system  satisfied by the hypergeometric integrals  
\eqref{E:HypergeomIntegral}.  We will return to this in Appendix B, where we will treat  the case of 2-punctured elliptic curves very explicitly.
\mk 

When  $X$ is affine  (a punctured compact Riemann surface for instance), 
the hypercohomology groups $H^k(\Omega^\bullet_X,\nabla_\omega)$ can be shown to be   isomorphic to some particular cohomology groups built from global holomorphic objects on $X$. 

For instance, in the affine case,  there are  natural  isomorphisms  
\begin{equation}
\label{E:popol}
H^1\big(X, L\big)\simeq 
H^1\big(\Omega^\bullet_X,\nabla_\omega\big)\simeq \frac{H^0\big(X,\Omega_X^1\big)}{
\nabla_\omega\big(H^0(X,{\mathcal O}_X)\big)}\, . 
\end{equation}

 \subsubsection{}\hspace{-0.4cm}
 \label{S:AlgebraicdeRhamComparisonTHM}
 Assume that $X$ is a punctured compact Riemann surface, {\it i.e.}\;$X=\overline{X}
 \setminus \Sigma$ where  $\Sigma$ is a non-empty finite subset of a compact Riemann surface $\overline{X}$.  If $\omega$ extends to a rational 1-form on $\overline{X}$ (with poles on $\Sigma$), then  one can consider the {\bf algebraic twisted de Rham complex}
  $(\Omega^\bullet_X(*\Sigma),\nabla_\omega)$.  It is the subcomplex of $(\Omega^\bullet_X,\nabla_\omega)$ formed by the restrictions to $X$  of the rational forms on $\overline{X}$ with poles supported exclusively on $\Sigma$.  The {\bf (twisted) algebraic de Rham comparison theorem}  ({\it cf.}\;\cite[\S2.4.7]{AomotoKita})   
  asserts  that these two resolutions of $ L$ are quasi-isomorphic, 
  {\it i.e.}\;their associated  hypercohomology groups $H^k(\Omega^\bullet_X(*\Sigma),\nabla_\omega)$ and 
  $H^k(\Omega^\bullet_X,\nabla_\omega)$ are isomorphic.   \sk 
 %
  \sk

Taking one step further, one gets that the singular $ L$-twisted cohomology of $X$ can be described by means of  global holomorphic objects on $X$ which actually are restrictions to $X$ of some rational forms on the compact Riemann surface $\overline{X}$. More precisely, there is a isomorphism
\begin{equation}
\label{E:IsomTutu}
H^1\big(X, L\big)\simeq 
 \frac{H^0\big(X,\Omega_X^1(*\Sigma)\big)}{
\nabla_\omega\big(H^0\big(X,{\mathcal O}_X(*\Sigma)\big)\big)}\, . 
\end{equation}

The interest of this isomorphism lies in the fact  that it allows to describe the twisted cohomology group $
H^1\big(X, L\big)$ by means of rational 1-forms on $\overline{X}$.  
For instance, this  is quite useful to simplify the computations involved in making  
the Gau{{\ss}}-Manin connection mentioned in \ref{S:TwistedDeRham} explicit. 
 Usually (for instance for classical hypergeometric functions, see \cite[\S2.5]{AomotoKita}), one even uses a (stricly proper) subcomplex of $(\Omega^\bullet_X(*\Sigma),\nabla_\omega)$ by considering  rational forms on $\overline{X}$ with logarithmic poles on $\Sigma$.  However, such a simplification is not always possible. An example is precisely the case of punctured elliptic curves we are interested in, for which it is necessary to consider rational 1-forms with poles of order 2 at (at least one of) the punctures 
 in order to get an isomorphism similar to  \eqref{E:IsomTutu}, see \S\ref{SS:TwistedCohomologyH1} below.


\subsection{\bf On punctured elliptic curves} 
\label{SS:OnTori}
We now specialize and make the theory described above  explicit  in the case of punctured elliptic curves. This  case has been treated very carefully in  \cite{ManoWatanabe}  
 to  which we refer for proofs and details.  For some particular cases with few punctures, the interested reader can consult \cite{WatanabeWirtinger,Ito2009,ManoKumamoto,ManoKyushu}.  
\sk 

In this subsection, $n\geq 2$ is an integer and $\alpha_0,\alpha_1,\alpha_2,\ldots,\alpha_n$ are fixed real parameters such that 
\begin{equation}
\label{E:alphai}
\alpha_i\in ]-1,\infty[ \;\;  \mbox{ for }\,  i=1,\ldots,n \qquad \mbox{ and  }\qquad 
\sum_{i=1}^n\alpha_i=0\, .
\end{equation}
 
Note that unlike the others $\alpha_i$'s, $\alpha_0$ is arbitrary. 

\subsubsection{\bf }\hspace{-0.4cm} For $\tau \in \mathbb H$ and $z=(z_1,z_2,\ldots,z_n)\in \mathbb C^n$, one denotes by $E_{\tau,z}$ the punctured elliptic curve $E_\tau\setminus \{[z_i]\, \lvert \, i=1,\ldots,n\}$, where $[z_i]$ stands for the class of $z_i$ in $E_\tau$.  We will always assume that the $[z_i]$'s are pairwise distinct and that 
$z$ has been normalized, meaning that $z_1=0$. 
\sk

For $\tau$ and $z$ as above, one considers the holomorphic multivalued function
\begin{equation}
\label{E:functionT}
T^\alpha(\cdot ,  \tau,z) \; : \; u\longmapsto 
T^\alpha(u; \tau,z)= 
\exp\big(2i\pi\alpha_0 u \big)\prod_{k=1}^n\theta\big(u-z_k\big)^{\alpha_k}\, , 
\end{equation}
of a complex variable $u$,  where $\theta$  stands for the theta function $ \theta(\cdot, \tau)$, {\it cf.}\;\eqref{E:ThetaFunction}. 

Since $\tau$, $z$ and the $\alpha_i$'s will stay  fixed in this section,   we will write $T(\cdot)$ instead of $T^\alpha(\cdot, \tau,z)$  to make the notations simpler.
\sk 

A straightforward computation gives 
$$\omega:=d\log T= 
\big( \partial \log T/{\partial u}\big) du
= 2i\pi\alpha_0 du+\sum_{k=1}^n  {\alpha_k}  \rho(u-z_k)du$$ where $\rho(\cdot)$ stands for the logarithmic derivative of $\theta(\cdot)$, see again \eqref{E:ThetaFunction}.  Using \eqref{E:alphai},  this can be rewritten  as 
\begin{equation}
\label{E:dLogT}
\omega 
=  2i\pi\alpha_0 du+\sum_{k=2}^n  {\alpha_k}  
\big( \rho(u-z_k)-\rho(u)\big) du\, .
\end{equation}
Starting from 2 instead of 1 in the summation above forces to subtract $\rho(u)$ at each step. The advantage is that the functions 
$( \rho(u-z_k)-\rho(u))$, $k=2,\ldots,n$ which appear in \eqref{E:dLogT} are all  
rational on $E_{\tau,z}$. This shows that $\omega$  is a logarithmic rational 1-form on $E_\tau$ with poles exactly at the $[z_i]$'s.  

Clearly,  $T$ is nothing else but  the pull back by the universal covering map $\mathbb C \rightarrow E_\tau$ of a solution of 
the differential operator 
\begin{align}
\label{E:nabla-omega}
\nabla_{-\omega} : 
{\mathcal O}_{E_{\tau,z}} & \longrightarrow {\Omega}^1_{E_{\tau,z}} \\ 
h & \longmapsto 
dh-\omega\cdot  h\, ,  \nonumber 
\end{align}
hence can be considered as a multivalued holomorphic function 
on 
 $E_{\tau,z}$.  \sk
 
 Since $\omega=d\log T$ is a rational 1-form,  the monodromy of $\log T$ is additive, hence that of $T$ is multiplicative. 
For this reason,  it is not necessary to refer to a base point  to specify the monodromy of $T$. Thus the latter can be encoded by means of a 
morphism 
\begin{equation}
\label{E:Rho}
\rho: H_1(E_{\tau,z},\mathbb Z)\longrightarrow \mathbb C^*
\end{equation}
that we are going to give explicitely below. \sk 

Let us define ${ L^\vee}$ as the kernel of the differential operator \eqref{E:nabla-omega}. It is the local system of $E_{\tau,z}$ the local sections of which are local determinations of $T$.


\subsubsection{\bf }\hspace{-0.4cm} 
\label{E:WhereRhoIsDefined}
Let $\epsilon>0$ be very small and set $\star=-\epsilon(1+i)\in \mathbb C$. Denote by $\beta_0$ (resp. by $\beta_\infty$) the image in $E_{\tau,z}$ of the rectilinear segment in $\mathbb C\setminus \cup_{i=1}^n(z_i+\mathbb Z_\tau)$ linking $\star$ to $\star+1$ (resp. $\star+\tau$).  For $i=1,\ldots,n$, let $\beta_i$ stand  for the image in $E_{\tau,z}$ of a circle centered at $z_i$ of radius $\epsilon/2$ and positively oriented (see Figure \ref{F:??}).

\begin{center}
\begin{figure}[!h]
\psfrag{E}[][][1.2]{$\;\;  {E_\tau}$}
\includegraphics[scale=0.7]{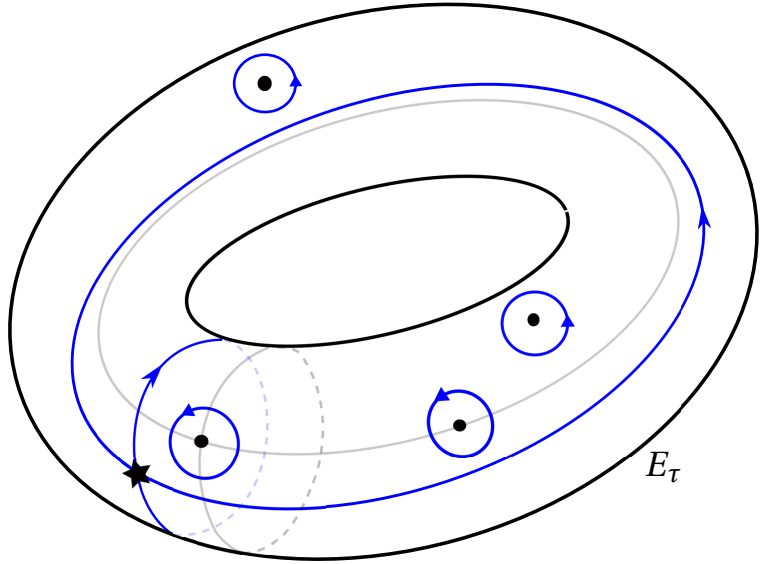}
\caption{In blue, the 1-cycles $\beta_\bullet$, $\bullet=0,1,\ldots,n,\infty$ 
(the two cycles in grey are the images in $E_\tau$ of the segments $[0,1]$ and $[0,\tau]$).}
\label{F:??}
\end{figure}
\end{center}

For $\bullet\in \{0,\infty,1,\ldots,n\}$, the analytic continuation of any determination $T_\star$ of $T$ at $\star$ along $\beta_\bullet$ is equal to  $T_\star$ times a complex number $\rho_\bullet=\rho(\beta_\bullet)$ which does not depend on $\epsilon$ or on the initially chosen determination $T_\star$.  Moreover, since $H_1(E_{\tau,z},\mathbb Z)$ is spanned by the homology classes of the 1-cycles $\beta_\bullet$ (which
do not depend on $\epsilon$ if the latter is sufficiently small), the $n+2$ values $\rho_\bullet$ completely characterize the monodromy morphism \eqref{E:Rho}. \sk 

For any $k=1,\ldots, n$, up to multiplication by a non-vanishing constant, one has $T(u)\sim (u-z_k)^{\alpha_k}$ for $u$ in the vicinity of $z_k$. It follows immediately  that 
$$\rho_k=\exp\big(2i\pi \alpha_k\big).$$

  It is necessary to use different kinds of arguments in order to determine the 
 values  $\rho_0$ and $\rho_\infty$ which account for the monodromy coming from the global topology of $E_\tau$.  We will deal only with the monodromy along $\beta_\infty$ since the determination of the monodromy along $\beta_0$  relies on similar (and actually simpler) computations. 
 For $u$ close to $\star$, using the functional equation \eqref{E:ThetaQuasiPeriodicity} satisfied by  $\theta$  and because $\sum_{i=1}^n \alpha_i=0$, the following equalities hold true:
\begin{align*} 
T(u+\tau)
= & e^{2i\pi \alpha_0( u+\tau)}\prod_{k=1}^n \Big(-q^\frac{1}{4} e^{-2i\pi  (u-z_k)   }\theta(u-z_k,\tau)\Big)^{\alpha_k}\\
=  & \, \big(-q^\frac{1}{4}\big)^{\sum_k \alpha_k} e^{2i\pi ( \alpha_0 \tau-\sum_{k } \alpha_k (u-z_k)) } T(u)\\
=  & \, e^{2i\pi ( \alpha_0 \tau+\sum_{k } \alpha_k z_k) }  T(u)\, . 
\end{align*}

Setting 
\begin{equation}
\label{E:alphaInfty}
\alpha_\infty= \alpha_0 \tau+\sum_{k =1}^n \alpha_k z_k\,  , 
\end{equation} the preceding
computation  shows that 
$$\rho_\infty= \exp\big(2i\pi \alpha_\infty\big)\, .$$
  
By similar computations, one proves that 
$\rho_0= \exp\big(2i\pi \alpha_0\big)$. \sk

All the above computations can be summed up  in the following
\begin{lemma} 
\label{L:rhoexplicit}
The monodromy of $T$ is multiplicative and the 
values $\rho_\bullet$ characterizing the monodromy morphism 
\eqref{E:Rho} are given by 
$$
\rho_{\!\bullet}=\exp\big( 2i\pi \alpha_\bullet   \big)
$$
for $\bullet \in \{ 0,1,\ldots,n,\infty\}$, where $\alpha_\infty$ is given by \eqref{E:alphaInfty}. 
\end{lemma}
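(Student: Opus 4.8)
The plan is to read off the monodromy directly from the product expression \eqref{E:functionT} for $T$, splitting the contributions into a local part coming from each puncture and a global part coming from the two periods of the lattice. First I would dispose of the multiplicativity claim as a formal consequence of \eqref{E:dLogT}: since $\omega = d\log T$ is a single-valued (indeed rational) $1$-form on $E_\tau$, a primitive $\log T$ can only pick up an \emph{additive} constant under analytic continuation around a loop, so $T$ itself is multiplied by a constant depending only on the homotopy class of the loop, not on the base point or on the initial determination. This justifies encoding the monodromy by a morphism \eqref{E:Rho} and reduces the problem to evaluating one multiplier $\rho_\bullet$ on each generator $\beta_\bullet$ of $H_1(E_{\tau,z},\mathbb Z)$ from \S\ref{E:WhereRhoIsDefined}.

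For the local multipliers $\rho_1,\dots,\rho_n$ I would use that $\theta$ has a simple zero at the origin while the remaining factors of $T$ are holomorphic and nonvanishing near $z_k$, so $T(u)\sim c_k\,(u-z_k)^{\alpha_k}$ with $c_k\neq 0$. Continuing $T$ once around the small positively oriented circle $\beta_k$ then multiplies it by $\exp(2i\pi\alpha_k)$, giving $\rho_k=\exp(2i\pi\alpha_k)$ immediately.

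For the two global generators the plan is to substitute $u\mapsto u+1$ and $u\mapsto u+\tau$ into \eqref{E:functionT} and simplify with the functional equations \eqref{E:ThetaQuasiPeriodicity}. Along $\beta_0$ each factor $\theta(u-z_k+1)=-\theta(u-z_k)$ contributes $(-1)^{\alpha_k}$, whose product is $(-1)^{\sum_k\alpha_k}=1$ by the normalization \eqref{E:alphai}, while $\exp(2i\pi\alpha_0(u+1))$ supplies the factor $\exp(2i\pi\alpha_0)$; hence $\rho_0=\exp(2i\pi\alpha_0)$. Along $\beta_\infty$ the quasi-periodicity $\theta(u-z_k+\tau)=-q^{-1}e^{-2i\pi(u-z_k)}\theta(u-z_k)$ gives the $k$-th factor $(-q^{-1})^{\alpha_k}\exp(-2i\pi\alpha_k(u-z_k))$. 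Taking the product over $k$, the purely $\tau$-dependent piece $\prod_k(-q^{-1})^{\alpha_k}=(-q^{-1})^{\sum_k\alpha_k}$ collapses to $1$, again by \eqref{E:alphai}, and the exponentials telescope to $\exp(2i\pi\sum_k\alpha_k z_k)$; together with $\exp(2i\pi\alpha_0(u+\tau))$ this yields $\rho_\infty=\exp\bigl(2i\pi(\alpha_0\tau+\sum_k\alpha_k z_k)\bigr)=\exp(2i\pi\alpha_\infty)$ with $\alpha_\infty$ as in \eqref{E:alphaInfty}.

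The only genuinely delicate point, and the one I would take care to spell out, is the consistent choice of branch for the non-integer powers $(-1)^{\alpha_k}$ and $(-q^{-1})^{\alpha_k}$, each of which is a priori defined only up to a root of unity. The key observation is that within a single product all these powers share one fixed complex base, so continuation along $\beta_0$ (resp.\ $\beta_\infty$) pins down one branch of $\log(-1)$ (resp.\ $\log(-q^{-1})$), and the combined exponent is $\bigl(\sum_k\alpha_k\bigr)$ times that logarithm, which vanishes precisely because $\sum_k\alpha_k=0$. Thus it is exactly the constraint \eqref{E:alphai} that kills the otherwise ambiguous global factor and leaves the clean closed forms. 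Since the classes of $\beta_0,\beta_\infty,\beta_1,\dots,\beta_n$ generate $H_1(E_{\tau,z},\mathbb Z)$, these multipliers determine the morphism \eqref{E:Rho} completely, which establishes the lemma.
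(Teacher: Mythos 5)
Your proof is correct and follows essentially the same route as the paper's: multiplicativity deduced from the fact that $\omega=d\log T$ is a rational (single-valued) $1$-form, the local expansion $T(u)\sim c_k\,(u-z_k)^{\alpha_k}$ giving $\rho_k=\exp(2i\pi\alpha_k)$, and the quasi-periodicity relations \eqref{E:ThetaQuasiPeriodicity} combined with $\sum_k\alpha_k=0$ to evaluate $\rho_0$ and $\rho_\infty$. Your explicit discussion of the branch choices for the non-integer powers is a point the paper passes over silently (it only writes out the $\beta_\infty$ computation and declares $\beta_0$ "similar"), but it does not alter the argument.
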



\subsubsection{\bf }\hspace{-0.4cm}
\label{SS:ellbullet}
 Let  $\tau \in \mathbb H$ and $z
 \in \mathbb C^n$ be as  above.  For $i=2,\ldots,n$,  let $\tilde z_i$ be the element of $z_i+\mathbb Z_\tau$ lying in the fundamental parallelogram $[0,1[_\tau\subset \mathbb C$ and denote by $\tilde \ell_i$ the image of 
  $]0,\tilde z_i[$ in $E_{\tau ,z}$. Then let us 
 modify the $\tilde \ell_i$'s, each  in its respective relative homotopy class,  in order to get locally finite 1-cycles $\ell_i$ in $E_{\tau,z}$ which do not intersect nor have non-trivial  self-intersection ({\it cf.}\;Figure \ref{F:lilo} below where, to simplify the notation, we have assumed that $z_i=\tilde z_i$ for $i=2,\ldots,n$).\footnote{
 \label{FTN:NotFormally}
 If the ${\ell}_{i}$'s are not formally defined as a locally finite linear combinations of twisted 1-simplices, 	a  natural way to see them like this  
is by  subdividing each segment $]0,\tilde z_i[$ into a countable union of 1-simplices overlapping only at their extremities. There is no canonical way to do this, but two locally finite twisted 1-chains obtained by this construction are clearly homotopically equivalent.}
\begin{center}
\begin{figure}[!h]
\psfrag{B}[][][1]{$\textcolor{red}{B} $}
\psfrag{1}[][][1]{$1 $}
\psfrag{0}[][][1]{$0 $}
\psfrag{z4}[][][1]{$z_2 $}
\psfrag{z3}[][][1]{$z_3 $}
\psfrag{z2}[][][1]{$z_4 $}
\psfrag{z1}[][][1]{$z_5 $}
\psfrag{t}[][][1]{$\tau $}
\psfrag{g0}[][][1]{$\ell_0 $}
\psfrag{g4}[][][1]{$\ell_{\!2} \, $}
\psfrag{g3}[][][1]{$\ell_3 $}
\psfrag{g2}[][][1]{$\ell_4 $}
\psfrag{g1}[][][1]{$\!\!\ell_{\!5} \; $}
\psfrag{gi}[][][1]{$\ell_{\!\!\infty} \, $}
\includegraphics[scale=0.85]{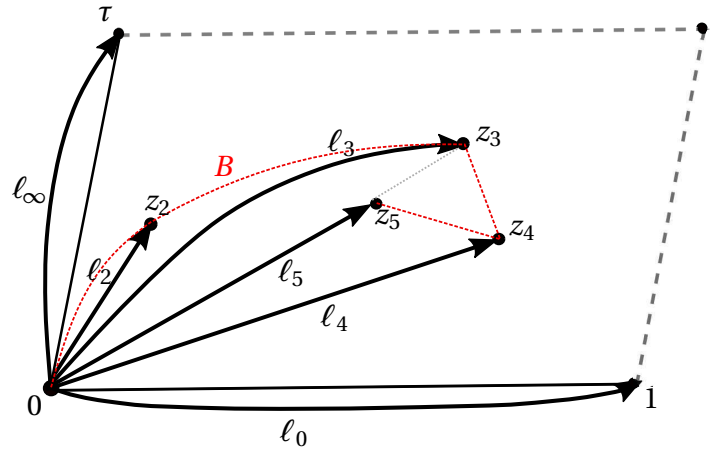}
\caption{The locally finite 1-cycles $\ell_0, \ell_2,\ldots,\ell_n $ and $\ell_{\!\!\infty}$. 
}
\label{F:lilo}
\end{figure}
\end{center}

Let $\varphi: [0,1]\rightarrow \mathbb R$ be a non-negative smooth function, such that $\varphi(0)=\varphi(1)=0$ and such that  $\varphi>0$ on $ ]0,1[$. Define $\tilde \ell_0$ as  the image of $]0,1[$ in $E_\tau$. Let $\ell_0$ be  the image in the latter tori of $f: t\mapsto t-i\epsilon \varphi(t)$   with  $\epsilon$ positive and sufficiently  small so that the bounded area delimited by the segment $[0,1]$ from above  and by the image of $f$ from below does not contain any element $\mathbb Z_\tau$-congruent to one of the $z_i$'s.  By a similar construction but starting from  the segment $]0,\tau[$, one constructs  a locally finite 1-chain $\ell_{\!\!\ \infty} $ in $E_{\tau,z}$ (see Figure \ref{F:lilo} above).   
 We prefer to consider small deformations of  the segments $]0,1[$ and $]0,\tau[$  to define $\ell_0$ and $\ell_{\!\infty}$ in order  to  avoid any ambiguity if  some of the $\tilde z_i$'s happen to be  located on one (or on both) of these segments.
\sk 

Let  $B$ be the branch cut in $E_\tau$ defined as the image of an embedding $[0,1]\rightarrow [0,1[_\tau$ sending 0 to $0$ and $1$ to $\tilde z_n$ which does not meet the $\ell_i$'s except at their extremities 
$\tilde z_i$'s which all belong to $B$ ({\it cf.}\;the curve in red in Figure \ref{F:lilo}).

Denote by $U$ the complement in $E_\tau$ of  
 the topological closure of the union of ${\ell}_{0}, \ell_{\infty}$ and $B$. Then $U$ is a simply connected open set which is naturally  identified to the  bounded open subset of $\mathbb C$, denoted by $V$,  the boundary of which is the union of $B$ with 
 the 1-chains 
  ${\ell}_{0}, \ell_{\infty}$ and their respective  horizontal and  
  `vertical'
  translations  $1+\ell_{\!\!\infty}$ and $\tau+\ell_{0}$.  
   
Thus it makes sense to speak of a (global) determination of the function $T$ defined in 
\eqref{E:functionT}
on $U$. Let $T_U$ stand for such  a global determination  on $U$. 
It extends continuously to the topological boundary  $\partial U$ of $U$ in $\mathbb C$ minus the $n-1+4$ points of 
$
\cup_{i=1}^n (z_i+\mathbb Z_\tau)$ lying on $\partial U$. 
 Then for any 
$\bullet\in  \{0,2,\ldots,n,\infty\}$, the restriction $T_{\!\bullet}$ of this continuous extension  to ${\ell_{\!\!\bullet}}$ is well defined and one defines 
 a locally finite ${ L^\vee}$-twisted 
1-chain ({\it cf.}\;the footnote of the preceding page) by setting 
$$\boldsymbol{\ell}_{\!\!\bullet}= {\ell}_{\!\!\bullet}\otimes T_{\!\bullet}\, .$$ 

The continuous extension of $T_U$ to $\overline{U}\setminus \cup_{i=1}^n (z_i+\mathbb Z_\tau)$ does not vanish. Hence for any $\bullet$ as above, one defines a locally ${ L}$-twisted 1-chain by setting: 
$$\boldsymbol{l}^{}_{\!\bullet}= {\ell}_{\!\!\bullet}\otimes (T_{\!\bullet})^{-1}.$$

We let the reader verify  that the $\boldsymbol{\ell}_{\!\!\bullet}$'s as well as the 
$\boldsymbol{l}^{}_{\!\bullet}$'s  actually are (twisted) 1-cycles. Therefore they  induce twisted locally finite homology classes, respectively  in $H_1^{\rm lf}(E_{\tau,z},{ L^\vee})$ and $H_1^{\rm lf}(E_{\tau,z},{ L})$. A bit abusively, we will denote these homology classes   by the same notation $\boldsymbol{\ell}_{\!\!\bullet}$ and 
$\boldsymbol{l}^{}_{\!\bullet}$. This will not cause any problem whatsoever.


\subsubsection{\bf }\hspace{-0.4cm}
\label{SS:VeryNice}
 Such as they are defined above, the locally finite twisted 1-cycles $ \boldsymbol{\ell}_0, \boldsymbol{\ell}_2,\ldots ,$ $ \boldsymbol{\ell}_n$ and $\boldsymbol{\ell}_\infty$ 
depend on some choices. 
 Indeed, except for $\boldsymbol{\ell}_0$ and $\boldsymbol{\ell}_\infty$, the way the supports $\ell_i$'s are chosen is anything but constructive. Less important issues are 
 the choices of a  branch cut $B$ and of a 
 determination of $T$ on $U$, which are  not specified.  \sk

There is a way to remedy to this lack of determination by considering specific $z_i$'s.  Let us say that these are in {\bf (very) nice position} if 
\begin{quote}
{\it for every $i=1,\ldots,n-1$, the principal argument  of $\tilde z_i$  is (stricly) bigger than that of 
$\tilde z_{i+1}$.
 } 
\end{quote}
\vspace{0.15cm}

Remark that when $n=2$, the $z_i$'s are always in very nice position.
\sk

When the $z_i$'s are in very nice position,  there is no need to modify the $\tilde \ell_i$'s considered above since  they already satisfy all the required properties.  For the branch cut $B$, we take the union of a small deformation of $[0,\tilde z_1]$ with 
the segments $[\tilde z_i,\tilde z_{i+1}]$ for $i=2,\ldots,n-1$ (see 
Figure \ref{F:VeryNicePosition} just below). 
\begin{center}
\begin{figure}[!h]
\psfrag{B}[][][1]{$\textcolor{red}{B} $}
\psfrag{1}[][][1]{$1 $}
\psfrag{0}[][][1]{$0 $}
\psfrag{4}[][][1]{$z_2 $}
\psfrag{3}[][][1]{$z_3 $}
\psfrag{2}[][][1]{$z_4 $}
\psfrag{11}[][][1]{$z_5 $}
\psfrag{t}[][][1]{$\tau $}
\psfrag{g0}[][][1]{$\ell_{0} $}
\psfrag{l4}[][][1]{$\ell_2\;\; $}
\psfrag{l3}[][][1]{$\ell_{\!3} \, \;\;\; $}
\psfrag{l2}[][][1]{$\; \ell_4 \; $}
\psfrag{l1}[][][1]{$\; \ell_5 \;$}
\psfrag{gi}[][][1]{$\ell_{\!\!\infty} $}
\includegraphics[scale=0.8]{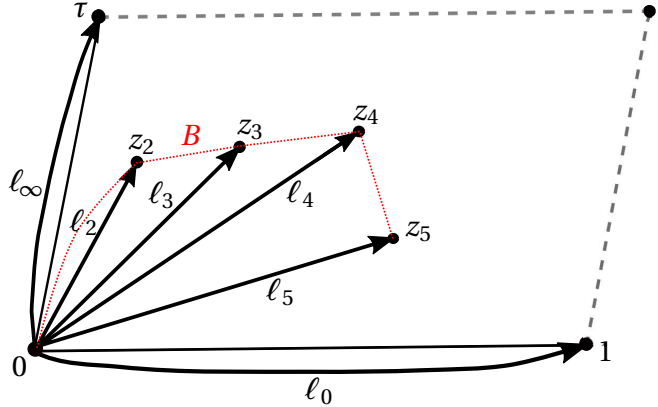}
\caption{The 1-cycles $\ell_{\!\!\bullet}$ for $\bullet=0,2,\ldots,n,\infty$ and the branch cut $B$,  for points $z_i$'s in very nice position.}
\label{F:VeryNicePosition}
\end{figure}\vspace{-0.3cm}
\end{center}

As to the choice of a determination of $T$ on $U$, let us remark that $\theta(\cdot,\tau)$ takes positive real values on $]0,1[$ for any purely imaginary modular parameter $\tau$. If   ${\rm Log}$ stands for the principal determination of the logarithm, one can define $\theta(u-z_i, \tau)^{\alpha_i}$ as $\exp(\alpha_i {\rm Log}\, \theta(u-z_i,\tau))$ on the intersection of the suitable translate of $V$ with a  disk centered at $z_i$ and of very small radius, for any $i=1,\ldots,n$ (remember the normalization $z_1=0$).  By analytic continuation, one gets a global determination of this function on $V$. Now,  
since $\tau$ varies in the upper half-plane which is simply connected, there is no problem to perform analytic continuation with respect to this parameter in order to obtain 
a determination of the $\theta(u-z_i,\tau)^{\alpha_i}$'s,  hence of $T$ on $U$  for any $\tau$ in 
 $\mathbb H$. 
\sk

The $\ell_{\!\bullet}$'s as well as the chosen determination $T_U$ on $U$ being perfectly well determined, the same holds true for the twisted 1-cycles $\boldsymbol{\ell}_{\!\!\bullet}$'s and,  by extension, for the $\boldsymbol{l}_{\!\bullet}$'s (hence for the corresponding   twisted homology classes as well).  \sk 

Finally,  by continuous deformation of the $\ell_i$'s ({\it cf.}\;\cite[Remark\,({\bf 3.6})]{DeligneMostow}), one constructs canonical  twisted 1-cycles (and associated homology classes) $\boldsymbol{\ell}_{\!\bullet}$ and $\boldsymbol{l}_{\!\bullet}$  for points $z_i$'s only  supposed to be in  nice position (see the two pictures below).
\mk 

\begin{tabular}{lr}
\hspace{-0.4cm}\psfrag{B}[][][1]{$\textcolor{red}{B} $}
\psfrag{1}[][][1]{$1 $}
\psfrag{0}[][][1]{$0 $}
\psfrag{4}[][][1]{$z_2 $}
\psfrag{3}[][][1]{$z_3 $}
\psfrag{2}[][][1]{$z_4 $}
\psfrag{11}[][][1]{$\; z_5 $}
\psfrag{t}[][][1]{$\tau $}
\psfrag{g0}[][][1]{$\ell_{0} $}
\psfrag{l4}[][][1]{$\ell_2\;\; $}
\psfrag{l3}[][][1]{$\ell_{\!3} \, \;\;\; $}
\psfrag{l2}[][][1]{$\; \ell_4 \; $}
\psfrag{l1}[][][1]{$\; \ell_5 \;$}
\psfrag{gi}[][][1]{$\ell_{\!\!\infty}\; $}
\includegraphics[scale=0.6]{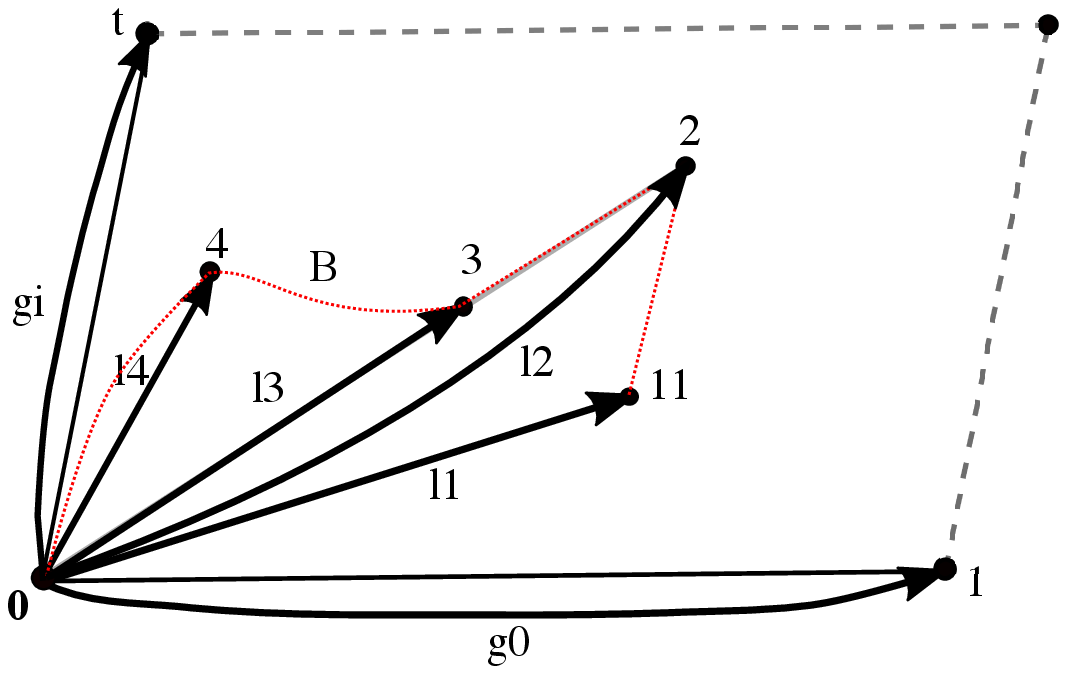}
& 
\psfrag{B}[][][1]{$\textcolor{red}{B} $}
\psfrag{1}[][][1]{$1 $}
\psfrag{0}[][][1]{$0 $}
\psfrag{4}[][][1]{$z_2 $}
\psfrag{3}[][][1]{$z_4 $}
\psfrag{2}[][][1]{$z_3 $}
\psfrag{11}[][][1]{$\; z_5 $}
\psfrag{t}[][][1]{$\tau $}
\psfrag{g0}[][][1]{$\ell_{0} $}
\psfrag{l4}[][][1]{$\ell_2\;\; $}
\psfrag{l3}[][][1]{$\ell_{\!3} \, \;\;\; $}
\psfrag{l2}[][][1]{$\; \ell_4 \; $}
\psfrag{l1}[][][1]{$\; \ell_5 \;$}
\psfrag{gi}[][][1]{$\ell_{\!\!\infty}\; $}
\includegraphics[scale=0.6]{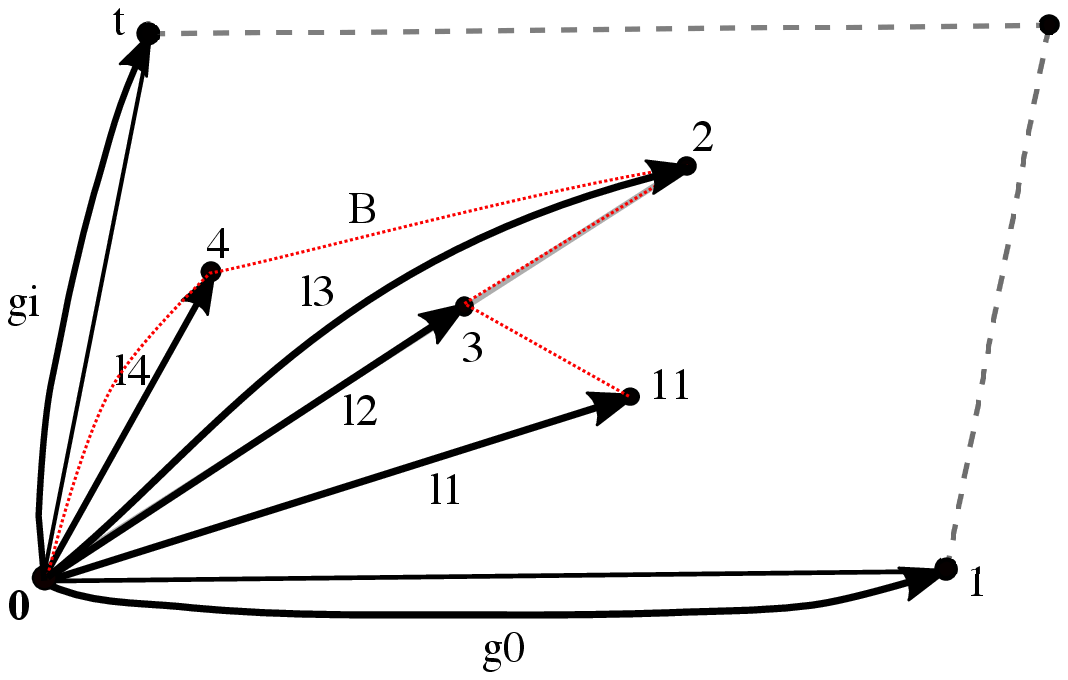}
\end{tabular}


\subsection{\bf Description of the first twisted (co)homology groups} 
In this subsection,   we follow \cite{ManoWatanabe} very closely and give  explicit descriptions of   the (co)homology groups ${H_1(E_{\tau,z},{ L}_\rho)}$ and ${H^1(E_{\tau,z},{ L}_\rho)}$.

In what follows, we assume that the  points $z_i$'s are in nice position.
 \mk

Recall that $\rho_{\!\bullet}=\exp(2i\pi\alpha_\bullet)$ for any $\bullet \in \{0,2,\ldots,n,\infty  \}$, with $\alpha_{\infty}$ given by \eqref{E:alphaInfty}.\footnote{It could be useful for  the reader to indicate here the relation between our $\alpha_\bullet$'s and the corresponding notations $c_\bullet $ used in \cite{Mano,ManoWatanabe}: one has 
$\alpha_j=c_j$ for $j=0,1,\ldots,n$ but $\alpha_\infty=-c_\infty$.} Given $m$ elements $\bullet_1,\ldots,\bullet_m$ of the set of indices   $\{0,2,\ldots,n,\infty  \}$, one sets: 
$$ 
\rho_{\bullet_1\ldots \bullet_m}= \rho_{\bullet_1}  \cdots \cdot \rho_{\bullet_m} \qquad \mbox{ and }
\qquad d_{\bullet_1\ldots \bullet_m}=\rho_{\bullet_1\ldots\bullet_m}-1\, .\sk 
$$


\subsubsection{\bf The first twisted homology group $\boldsymbol{H_1(E_{\tau,z},{ L}_{\rho})}$}

\paragraph{\bf }
Denote by $V$ the bounded simply connected open subset of $\mathbb C$ whose boundary is the topological closure of the union  of the $\ell_\bullet$'s for $\bullet$ in $\{0, 2,\ldots,n,\infty\}$ with the two translated cycles $1+\ell_0$ and $\tau+\ell_\infty$.   By analytic extension of the restriction 
of the determination $T_U$ of $T$ on $U$ 
in the vicinity of $1+\tau$, one gets a determination $T_V$ of $T$ on $V$. Considering now $V$ as 
an open subset of $E_{\tau ,z}$, one defines a locally-finite ${ L}_\rho$-twisted 2-chain\footnote{Strictly speaking, we do not define $\boldsymbol{\overline{V}}$ as a locally-finite 2-chain but there is a natural way to see it as such (by using similar arguments to the ones in footnote  \ref{FTN:NotFormally}  above).}
by setting
$$
\boldsymbol{\overline{V}}
={\overline{V}}\otimes T_V . 
$$

This is not a cycle: one verifies easily that the following relation holds true: 
\begin{align*}
\partial \boldsymbol{\overline{V}} = &\, {\boldsymbol{\ell}}_{0} +   \rho_0 {\boldsymbol{\ell}}_{\infty}-\rho_{\!\infty }{\boldsymbol{\ell}}_0-{\boldsymbol{\ell}}_{\infty}\\
 & + \, (\rho_n-1){\boldsymbol{\ell}}_n+\rho_n(\rho_{n-1}-1){\boldsymbol{\ell}}_{n-1}+\cdots+ 
 \rho_{3\cdots n}(\rho_2-1){\boldsymbol{\ell}}_2\, . 
\end{align*}

It follows that,  in $H_1^{\rm lf}(E_{\tau,z},{ L}_\rho)$, one has 
$$
-d_\infty\cdot  {\boldsymbol{\ell}}_{0}+d_0\cdot  {\boldsymbol{\ell}}_{\infty}+\sum_{k=2}^n    \frac{  d_k }{\rho_{1\cdots k}}\cdot  {\boldsymbol{\ell}}_k=0\, . 
$$

\paragraph{\bf }
\label{SS:NiceBasisTwistedHomologyClasses}
In order to construct a regularization map, we fix $\epsilon>0$. The constructions given below  are all  independent of $\epsilon$ (at the level of homology classes)    if the latter is supposed sufficiently small. Of course, we assume that it is the case in what follows. \sk 

For any $k=2,\ldots, n$,  let $\sigma_k: S^1\rightarrow \mathbb C$ be  a positively oriented parametrization of the circle  centered at $\tilde z_k$ and of radius $\epsilon$ such that the point $p_k=\sigma_k(1)$ is on the branch locus $B$.   The image of $]0,1[$ by $s_k= 
\sigma_k(\exp(2i\pi \cdot )) : [0,1]\rightarrow \mathbb C
$ is included in $U$,  hence $s_k^*(T_U)$ is well defined and extends continuously to the closure $[0,1]$.  Denoting  this extension by $T_k$, one defines a twisted 1-simplex in $E_{\tau,z}$ by setting 
$$
\boldsymbol{s}_k=[0,1]\otimes T_k\, .
$$

\paragraph{\bf }\hspace{-0.4cm}
Let $\varphi\in ]0,\pi[$ be the principal argument of $\tau$, set 
$I^0=[0,\varphi]$, 
$I^1=[\varphi,\pi]$, 
$I^2=[\pi,\pi+\varphi]$ and $I^3=[\varphi+\pi,2\pi]$ 
and  denote by $\sigma_0^\nu$ the restriction of 
$[0,2\pi]\rightarrow S^1, \, t\mapsto \epsilon \exp(it)$ to $I^\nu$ for $\nu=0,1,2,3$. 
We denote by $m^\nu$ the image of $\sigma_0^\nu$ viewed as a subset of $E_{\tau,z}$. 
These are  circular arcs the union of which  is a circle of radius $\epsilon$ centered at $0$ in $E_{\tau,z}$. 

In order to specify a determination of $T$ on each of the $m^\nu$, we are going to use the fact that each of them is also  the image in $E_\tau$ of a suitable translation of $\sigma_0^\nu$, the (interior of) the image of which is included in $U$.  More precisely, one sets $\widehat \sigma_0^\nu(\cdot)=\sigma_0^\nu(\cdot)+x^\nu$ for $\nu=1,\ldots,3$, with $x^1=1$, $x^2=1+\tau$ and $x^3=\tau$. 

For $\nu=1,2,3$, the image of the interior of $I^\nu$ by $\widehat \sigma_0^\nu$ is included in $U$. The restriction of $T_U$ to this image extends continuously to  $I^\nu$. Denoting these extensions by $T^\nu_U$, one defines twisted 1-simplices in $E_{\tau,z}$ by setting 
$$
\boldsymbol{m}^1=I^1 \otimes \big(\rho_0^{-1} T^1_U\big)\, , 
\qquad 
\boldsymbol{m}^2=I^2 \otimes \big(\rho_{0\infty}^{-1} T^2_U\big)\qquad 
\mbox{ and }
\qquad 
\boldsymbol{m}^3=I^3 \otimes \big(\rho_{\infty}^{-1} T^3_U\big). $$

Since the image of $\sigma_0^0$ meets the branch cut $B$, one cannot proceed as above in this particular case.  We use the fact that $p_0=\sigma_0^0(0)=\epsilon$ belongs to $U$. 
 Since $m^0\subset E_{\tau,z}$, the germ of $(\sigma_0^0)^*T_U$ at $0\in I^0$ extends to the  
  whole simplex $I^0$. Denoting  this extension by $T_U^0$, one defines a twisted 1-simplex in $E_{\tau,z}$ by setting 
$$
\boldsymbol{m}^0=I^0 \otimes T^0_U.  
$$

\paragraph{\bf }\hspace{-0.4cm}
For 
$k=2,\ldots,n$, let $\ell_{\!k}^\epsilon$ be the rectilinear segment  
linking $p_0$ to $p_k$ in $\mathbb C$: 
$\ell_{\!k}^\epsilon= [p_0, p_k]$. Setting $p_\infty=\sigma_0^0(\varphi)=\epsilon \tau$ and deforming the two segments $[p_0,1-p_0]=[\epsilon,1-\epsilon]$ and  $[p_\infty,\tau-p_\infty]=[\epsilon\tau,(1-\epsilon)\tau]$
by means of a function $\varphi$ as in \ref{SS:ellbullet}, one constructs two 1-simplices in $U$, denoted by $\ell_{\!0}^\epsilon$ and $\ell_{\!\infty}^\epsilon$ respectively.  
 
 For $\epsilon$ small enough,  the $\ell_{\!\!\bullet}$'s, $\bullet=0,2,\ldots,n,\infty$,  are pairwise disjoint and included in $U$,  hence  one defines  twisted 1-simplices in $E_{\tau,z}$ by setting 
$$
\boldsymbol{\ell}_{\!\bullet}^\epsilon=\ell_{\!\!\bullet}^\epsilon  \otimes \Big(T_U\lvert_{\ell_{\!\bullet}^\epsilon}\Big)
\,. 
$$

The 1-simplices $\ell_{\!\!\bullet}^\epsilon$ for $\bullet =0,2,\ldots,n,\infty$, $s_k$ for $k=2,\ldots,n$ and $m^\nu$ for $\nu=0,1,2,3$ are pictured in blue 
in Figure \ref{F:VeryNicePositionW} below (in the case when $n=3$).
\begin{center}
\begin{figure}[h]
\psfrag{U}[][][1]{$
\quad {U} $}
\psfrag{B}[][][1]{$\textcolor{red}{B} $}
\psfrag{1}[][][1]{$1 $}
\psfrag{0}[][][1]{$0 $}
\psfrag{3}[][][1]{$\textcolor{blue}{s_3} $}
\psfrag{2}[][][1]{$\textcolor{blue}{s_2} $}
\psfrag{t}[][][1]{$\tau $}
\psfrag{t1}[][][1]{$1+\tau $}
\psfrag{g0}[][][1]{$\textcolor{blue}{\ell_{\!0}^\epsilon} $}
\psfrag{K}[][][1]{$\textcolor{blue}{\ell_{\!2}^\epsilon }$}
\psfrag{L}[][][1]{$\textcolor{blue}{\ell_{\!3}^\epsilon} $}
\psfrag{gi}[][][1]{$\textcolor{blue}{\ell_{\!\infty}^\epsilon}\;\;  $}
\psfrag{pi}[][][1]{$\textcolor{blue}{p_\infty} \;\;\; $}
\psfrag{p0}[][][1]{$\textcolor{blue}{p_0}  $}
\psfrag{p2}[][][1]{$\textcolor{blue}{p_2} \;\;\;  $}
\psfrag{p3}[][][1]{$\textcolor{blue}{p_3}  \;\;  $}
\includegraphics[scale=0.77]{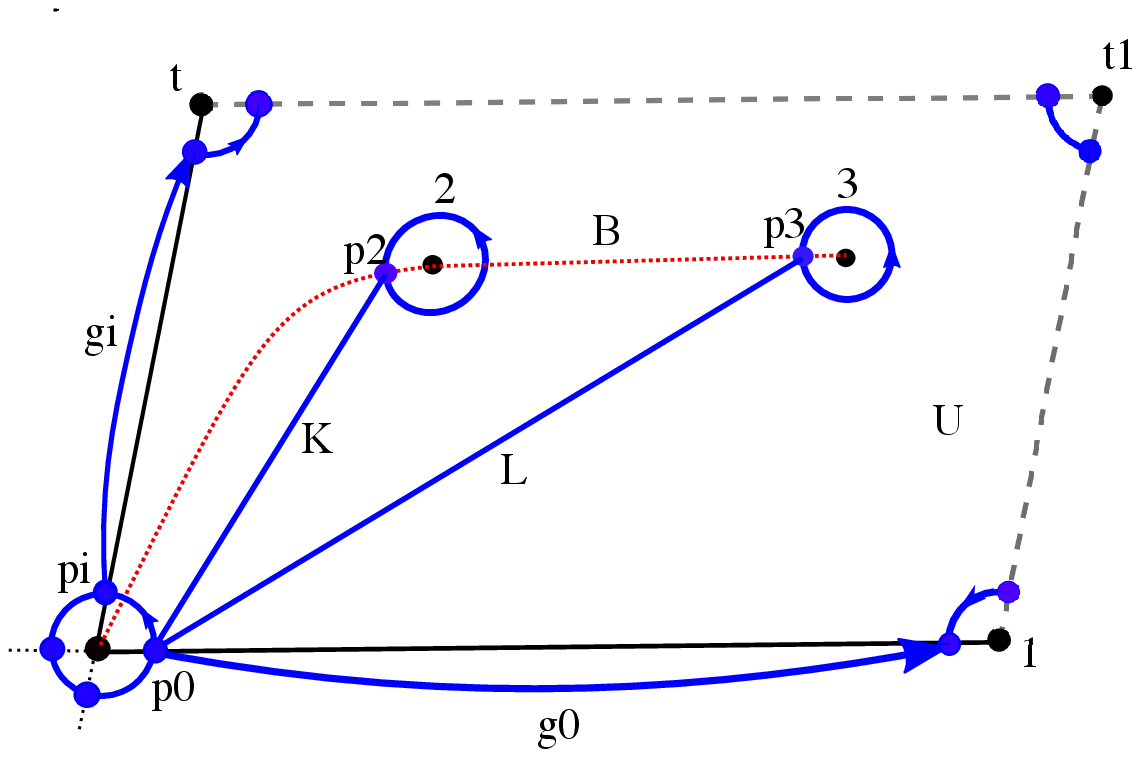}
\caption{}
\label{F:VeryNicePositionW}
\end{figure}
\end{center}

\paragraph{\bf }
Using the twisted 1-simplices constructed above, one defines ${ L}_\rho$-twisted 1-chains in $E_{\tau,z}$ by setting 
\begin{align}
\label{E:GammaBullet}
\boldsymbol{\gamma}_{\!\!\infty}=  & \,   \frac{1}{d_1}\Big[
\boldsymbol{m}^0+\boldsymbol{m}^1+\boldsymbol{m}^2+\boldsymbol{m}^3\Big]
 \hspace{-0.2cm}&& +  \boldsymbol{\ell}_{\!\infty}^\epsilon  &&  - \frac{\rho_\infty}{d_1}\Big[ \boldsymbol{m}^3+\boldsymbol{m}^0+\rho_1\big(\boldsymbol{m}^1+ \boldsymbol{m}^2\big)
\Big]\, , \nonumber 
\\
\boldsymbol{\gamma}_{\!\!0}=  & \, 
\frac{1}{d_1}\Big[
\boldsymbol{m}^0+\rho_1\big(\boldsymbol{m}^1+\boldsymbol{m}^2+\boldsymbol{m}^3\big)\Big]
\hspace{-0.2cm} && + \boldsymbol{\ell}_{\!0}^\epsilon && - \frac{\rho_0}{d_1}\Big[ \boldsymbol{m}^2+\boldsymbol{m}^3+\boldsymbol{m}^0+\rho_1 \boldsymbol{m}^1
\Big]\,\footnotemark \\
\mbox{and }\quad 
\boldsymbol{\gamma}_{\!\!k } = & \, \frac{1}{d_1}\Big[
\boldsymbol{m}^0+\rho_1\big(\boldsymbol{m}^1+\boldsymbol{m}^2+\boldsymbol{m}^3\big)\Big]
\hspace{-0.2cm} && +  \boldsymbol{\ell}_{\!k}^\epsilon  && - \frac{1}{d_k} \boldsymbol{s}_k
 \qquad \mbox{for }\,  k=2,\ldots,n. \nonumber
 \end{align}
\footnotetext{Note that here is a typo in the formula for $\boldsymbol{\gamma}_0$ in \cite{ManoWatanabe}. With the notation of the latter, 
the numerator of the coefficient of the term $(m_0+e^{2\pi\sqrt{-1}c_1}m_1)$ appearing in the definition of $\gamma_0$ page 3877 should be ${1-e^{2\pi\sqrt{-1}c_0}}$ and not 
${1-e^{-2\pi\sqrt{-1}c_0}}$.}
\sk 

By straightforward computations (similar to the one in \cite[Example 2.1]{AomotoKita} for instance), one verifies that the $\boldsymbol{\gamma}_{\!\!\bullet}$'s  actually are 1-cycles hence define twisted homology classes in $H_1(E_{\tau,z}, 
{ L}_\rho)$.   We will again use $\boldsymbol{\gamma}_{\!\!\bullet}$ 
to designate the corresponding twisted homology classes. It is quite clear that these 
 do not depend on $\epsilon$.   In particular,  this justifies that $\epsilon$ does not appear in the notation $\boldsymbol{\gamma}_{\!\!\bullet}$. 
\sk 

When the $z_i$'s are in nice position, the following proposition holds true: 
\begin{prop}[Mano-Watanabe \cite{ManoWatanabe}]
\label{P:BasisTwistedHomology} ${}^{}$
\begin{enumerate} 
\item  The map ${\rm reg}: \boldsymbol{\ell}_{\!\bullet}\mapsto  {\rm reg}(\boldsymbol{\ell}_{\!\bullet})=\boldsymbol{\gamma}_{\!\!\bullet}$ is a regularization map: at the homological level, it induces the 
isomorphism $H_1^{\rm lf}(E_{\tau,z},{ L}_\rho)\simeq H_1(E_{\tau,z},{L}_\rho)$.
\mk 
\item 
The homology classes $\boldsymbol{\gamma}_\infty,\boldsymbol{\gamma}_0,\boldsymbol{\gamma}_2,\ldots,\boldsymbol{\gamma}_n$  satisfy the following 
 relation: 
\begin{equation}
\label{E:LinearRelationGammai}
-d_\infty\, \boldsymbol{\gamma}_0+d_0\boldsymbol{\gamma} _\infty
+ \sum_{k=2}^n \frac{d_k}{\rho_{1\ldots k}}\, \boldsymbol{\gamma}_k=0\, .
\end{equation}
\item 
The twisted homology group 
$H_1(E_{\tau,z}, { L}_\rho)$ is of dimension $n$ and admits $$\boldsymbol{\gamma}=\big(\boldsymbol{\gamma}_\infty, \boldsymbol{\gamma}_0, \,  \boldsymbol{\gamma}_3, \ldots, \boldsymbol{\gamma}_n \big) $$ as a basis.
\end{enumerate} 
\end{prop}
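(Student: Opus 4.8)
The plan is to establish the three assertions in the order in which they depend on one another, deriving the relation~(2) and the basis statement~(3) from the regularization isomorphism~(1) together with the identity for $\partial\boldsymbol{\overline{V}}$ already computed just above the proposition. Throughout I would work under the standing genericity hypothesis that $\rho_\bullet\neq 1$ for every $\bullet\in\{0,1,\ldots,n,\infty\}$, equivalently $d_\bullet\neq 0$; this is exactly what makes the denominators $d_1$ and $d_k$ in~\eqref{E:GammaBullet} meaningful and what guarantees, by the general discussion surrounding~\eqref{E:Reg}, that the natural map $H_1(E_{\tau,z},{L}_\rho)\to H_1^{\rm lf}(E_{\tau,z},{L}_\rho)$ is an isomorphism.

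For part~(1) the first and main task is to verify that each chain $\boldsymbol{\gamma}_\bullet$ of~\eqref{E:GammaBullet} is a genuine finite twisted $1$-cycle, i.e. $\partial\boldsymbol{\gamma}_\bullet=0$. The governing mechanism is a local regularization identity at each puncture: the boundary of the twisted loop $\boldsymbol{s}_k$ encircling $[z_k]$ is $\partial\boldsymbol{s}_k=d_k\,(p_k\otimes T)$, since one full turn multiplies the determination of $T$ by $\rho_k$; hence $\partial(\boldsymbol{\ell}_{\!k}^\epsilon-\tfrac{1}{d_k}\boldsymbol{s}_k)$ loses its endpoint contribution at $p_k$ and retains only the base-point term at $p_0$. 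The role of the four circular arcs $\boldsymbol{m}^0,\boldsymbol{m}^1,\boldsymbol{m}^2,\boldsymbol{m}^3$ around the common endpoint $0=[z_1]$, weighted by $1/d_1$ and by the accumulated factors $\rho_0,\rho_\infty,\rho_1$, is precisely to cancel these residual base-point contributions once one tracks how $T$ is multiplied upon crossing the branch cut $B$ and the segments $\ell_0,\ell_\infty$. Carrying out this bookkeeping of monodromy factors along the four arcs is the technical heart of the argument and the step I expect to be the main obstacle; everything else is formal. With the cycle property in hand, I would then show that $\boldsymbol{\gamma}_\bullet$ represents the inverse image of $\boldsymbol{\ell}_\bullet$: the chain $\boldsymbol{\gamma}_\bullet$ differs from the locally finite cycle $\boldsymbol{\ell}_\bullet$ only through modifications supported in arbitrarily small neighborhoods of the punctures, which are locally finite boundaries there, so the two agree in $H_1^{\rm lf}$. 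Since the natural map $H_1\to H_1^{\rm lf}$ is an isomorphism, ${\rm reg}:\boldsymbol{\ell}_\bullet\mapsto\boldsymbol{\gamma}_\bullet$ is its (linear) inverse, which is assertion~(1).

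Part~(2) is then immediate: the relation $-d_\infty\,\boldsymbol{\ell}_0+d_0\,\boldsymbol{\ell}_\infty+\sum_{k=2}^n \tfrac{d_k}{\rho_{1\cdots k}}\boldsymbol{\ell}_k=0$ holds in $H_1^{\rm lf}(E_{\tau,z},{L}_\rho)$ by the computation of $\partial\boldsymbol{\overline{V}}$, and applying the linear isomorphism ${\rm reg}$, which sends each $\boldsymbol{\ell}_\bullet$ to $\boldsymbol{\gamma}_\bullet$, yields exactly~\eqref{E:LinearRelationGammai}.

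For part~(3) I would first pin down the dimension by an Euler characteristic count. The $n$-punctured torus satisfies $\chi(E_{\tau,z})=-n$; since $\rho$ is nontrivial one has $H_0(E_{\tau,z},{L}_\rho)=0$ (no monodromy-invariant sections) and $H_2=0$ (an open surface retracts onto a graph), whence $\dim H_1(E_{\tau,z},{L}_\rho)=-\chi=n$. The $n+1$ classes $\boldsymbol{\gamma}_\bullet$, $\bullet\in\{0,2,\ldots,n,\infty\}$, span $H_1$ because ${\rm reg}$ is an isomorphism and the $\boldsymbol{\ell}_\bullet$ generate $H_1^{\rm lf}$; as a spanning family of $n+1$ vectors in an $n$-dimensional space they admit a relation space of dimension exactly $1$, and~\eqref{E:LinearRelationGammai} furnishes a nonzero element of it. In that relation the coefficient $d_2/\rho_{12}$ of $\boldsymbol{\gamma}_2$ is nonzero (since $d_2\neq 0$), so one may delete $\boldsymbol{\gamma}_2$: the remaining $n$ classes $\boldsymbol{\gamma}=(\boldsymbol{\gamma}_\infty,\boldsymbol{\gamma}_0,\boldsymbol{\gamma}_3,\ldots,\boldsymbol{\gamma}_n)$ still span and, being $n$ in number, form a basis.
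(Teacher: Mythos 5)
The paper offers no proof of this proposition: it is attributed to Mano--Watanabe and quoted from \cite{ManoWatanabe}, so your argument must stand on its own, and it has one genuine gap. Parts (1) and (2) are architecturally sound: the local boundary computation for $\boldsymbol{s}_k$, the cancellation of the base-point contributions by the arcs $\boldsymbol{m}^\nu$, the fact that $\boldsymbol{\gamma}_{\!\!\bullet}-\boldsymbol{\ell}_{\!\!\bullet}$ is a locally finite boundary supported near the punctures, and the derivation of \eqref{E:LinearRelationGammai} by applying ${\rm reg}$ to the relation coming from $\partial\boldsymbol{\overline{V}}$ are all the right mechanisms (the cycle verification you defer is indeed a finite computation, as the paper itself notes). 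The gap is in part (3): you need the $n+1$ classes $\boldsymbol{\gamma}_{\!\!\bullet}$ to span $H_1(E_{\tau,z},L_\rho)$, and for this you assert that ``the $\boldsymbol{\ell}_{\!\!\bullet}$ generate $H_1^{\rm lf}$'' with no argument. This is not a formality: taking the CW structure on $E_\tau$ whose $0$-cells are the punctures, the locally finite twisted chain complex of $E_{\tau,z}$ has as $1$-cells not only $\ell_0,\ell_2,\ldots,\ell_n,\ell_\infty$ but also the segments of the branch cut $B$, and one must use the boundaries of the $2$-cells into which these curves decompose $E_\tau$ to eliminate the $B$-segments; alternatively one can prove directly that $\boldsymbol{\gamma}_\infty,\boldsymbol{\gamma}_0,\boldsymbol{\gamma}_3,\ldots,\boldsymbol{\gamma}_n$ are independent by pairing them against the dual locally finite cycles $\boldsymbol{l}_{\!\bullet}$ (the intersection numbers of \S\ref{S:IntersectionProduct}) or against the cohomology classes of Theorem \ref{T:DescriptionTwistedH1}, and checking nondegeneracy. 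Without one of these steps, your (correct) Euler-characteristic count $\dim H_1=-\chi=n$ only says that \emph{some} $n$-element subfamily could be a basis, not that yours is.

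Two further points. First, the isomorphism $H_1\simeq H_1^{\rm lf}$ is itself part of assertion (1), and you import it from the paper's informal remark around \eqref{E:Reg}, which is not a citable theorem; it should be proved by the standard local argument (excise small punctured disks around the $[z_k]$ and use that twisted $H_0$, $H_1$ and their locally finite analogues vanish on a punctured disk with nontrivial monodromy) --- the same local vanishing that justifies your claim that the corrections near the punctures are locally finite boundaries. Second, your standing hypothesis $\rho_\bullet\neq1$ for \emph{all} $\bullet\in\{0,1,\ldots,n,\infty\}$ is both stronger than needed and harmful: only the puncture monodromies $\rho_1,\ldots,\rho_n$ must be nontrivial (these are what enter the denominators of \eqref{E:GammaBullet}, the local vanishing, and the deletion of $\boldsymbol{\gamma}_2$, whose coefficient $d_2/\rho_{12}$ is nonzero), whereas $\rho_0=1$ or $\rho_\infty=1$ must be allowed: the relation \eqref{E:LinearRelationGammai} remains nonzero in that case, and the paper applies the proposition precisely in such situations (e.g.\ the normalization $\rho_0=1$ of \S\ref{SS:normalization}, which occurs on the algebraic leaves studied in Section 5).
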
 

%

Since $T$ does not vanish on $E_{\tau,z}$, all the preceding constructions can be done with replacing $T$ by its inverse $T^{-1}$.  The regularizations 
$\boldsymbol{\gamma}_\bullet^{\vee}={\rm reg}(\boldsymbol{l}_{\!\bullet})$ 
of  the ${ L}^\vee_{\rho}$-twisted 1-cycles $\boldsymbol{l}_{\!\bullet}$ defined at the end of \S\ref{SS:ellbullet}  are defined by the same formulae than \eqref{E:GammaBullet} but  with 
 replacing $\rho_\bullet$ by $\rho_\bullet^{-1}$ for  $\bullet=0,2,\ldots,n,\infty $. 
 Then 
 $$
 \boldsymbol{\gamma}^{\vee}=\big(\boldsymbol{\gamma}_\infty^{\vee}, \boldsymbol{\gamma}_0^{\vee}, \,  \boldsymbol{\gamma}_3^{\vee}, \ldots, \boldsymbol{\gamma}_n^{\vee} \big)
$$ is a basis of $H_1(E_{\tau,z},{ L}_{\rho}^\vee)$.


\subsubsection{\bf The first cohomology group $\boldsymbol{H^1(E_{\tau,z},{ L}_{\rho})}$}
\label{SS:TwistedCohomologyH1}
In \cite{ManoWatanabe}, the authors give a  very detailed treatment  of the material described above in \S\ref{S:AlgebraicdeRhamComparisonTHM} in the case of a punctured elliptic curve. In particular, they show that in this case, it is not possible to use only logarithmic differential forms to describe $H^1(E_{\tau,z},{ L}_{\rho})$. 
\mk 

We continue to use the previous notation. In \cite[Proposition 2.4]{ManoWatanabe}, the authors prove that 
$H^1(E_{\tau,z},{ L}_{\rho})$ is isomorphic to the quotient of $H^0(E_{\tau,z},\Omega^1_{E_{\tau,z}})$ by the image by $\nabla_{\omega}$ of the space of  holomorphic functions on $E_{\tau,z}$ ({\it cf.}\;\eqref{E:popol}).  Then they give a direct proof of the twisted algebraic de Rham comparison theorem  
({\it cf.}\;\cite[Proposition 2.5]{ManoWatanabe}, see also \S\ref{S:AlgebraicdeRhamComparisonTHM} above) which asserts that one can 
 consider only rational objects on $E_\tau$ (but with poles at the $[z_i]$'s).  
\sk 

Viewing   $Z=\sum_{i=1}^n[z_i]$  as a divisor on  $E_\tau$, one has 
\begin{equation}
\label{E:gogo}
H^1\big(E_{\tau,z},{L}_{\rho}\big)\simeq 
\frac{H^0\big(E_{\tau},\Omega^1_{E_{\tau}}(*Z)\big)}
{\nabla_\omega \big(H^0\big(E_{\tau},\mathcal O_{E_{\tau}}(*Z)\big)\big)}
\end{equation}
(recall that,  with our notations, $H^0(E_{\tau},\mathcal O_{E_{\tau}}(*Z))$ 
 (resp.\;$H^0(E_{\tau},\Omega^1_{E_{\tau}}(*Z))$) stands for the space of rational functions (resp.\;1-forms) on $E_\tau$ with poles  only at the $z_i$'s.\sk 

We now consider the non-reduced  divisor $Z'=Z+[0]=2[0]+\sum_{k=2}^n[z_k]$.  There is a natural map 
from  the space 
$H^0(E_{\tau},\Omega^1_{E_{\tau}}(Z'))$ 
of rational 1-forms on $E_\tau$ with poles at most $Z'$ to the right hand  quotient space of \eqref{E:gogo}:
\begin{equation}
\label{E:gogoo}
H^0\big(E_{\tau},\Omega^1_{E_{\tau}}\big(Z'\big) \big)\longrightarrow 
\frac{H^0\big(E_{\tau},\Omega^1_{E_{\tau}}(*Z)\big)}
{\nabla_\omega \big(H^0\big(E_{\tau},\mathcal O_{E_{\tau}}(*Z)\big)\big)}\; . 
\end{equation}

One of the main results of \cite{ManoWatanabe} is Theorem 2.7 which says that the preceding map is surjective with a kernel of dimension 1.  \sk 

It is not difficult to see that, as a 
 vector space, $H^0\big(E_{\tau},\Omega^1_{E_{\tau}}\big(Z'\big) \big)$ is spanned  by 
\begin{align*}
\varphi_0= & \, du\, ,  \\
\varphi_1= & \, \rho'(u) du \\
\mbox{and }\quad \varphi_j= & \, 
\big(   \rho(u-z_j)-\rho(u)  \big)
\, du\quad \quad \mbox{for }\, j=2,\ldots,n. 
\end{align*}

Remark that all these forms are logarithmic on $E_\tau$ ({\it i.e.}\;have at most poles of the first order), at the exception of  $\varphi_1$ which has a pole of order 2 at the origin. \sk 

On the other hand, $1$ is holomorphic  on $E_\tau$ and,  according to \eqref{E:dLogT}, one has: 
\begin{equation}
\label{E:RELATION}
\nabla_\omega(1)=\omega=2i\pi\alpha_0\cdot \varphi_0+\sum_{j=2}^n\alpha_j\cdot \varphi_j\, .
\footnotemark
\end{equation}
\footnotetext{Even if we are interested only in the case when $\lambda=0$, we  mention here that the general formula given in \cite[Remark 2.8]{ManoWatanabe} is not correct.  Setting, as in \cite{ManoWatanabe}, $\mathfrak s(u)=\mathfrak s(u;\lambda)=\theta'\theta(u-\lambda)/(\theta(u)\theta(\lambda))$ 
for $u, \lambda\in \mathbb C\setminus \mathbb Z_\tau$, 
 the correct formula when $\lambda\neq 0$ is \vspace{-0.2cm}
$$ \nabla_\omega(1)=
\Big[2i\pi \alpha_0  -\alpha_1\rho(\lambda)+ \sum_{j=2}^n \alpha_j\big(    
\mathfrak s(z_j)-\rho(z_j)\big)
 \Big] \cdot \varphi_0+\lambda (\alpha_1-1)\cdot \varphi_1-\lambda\sum_{j=2}^n \alpha_j \, \mathfrak s(z_j)\cdot \varphi_j\, .$$}
 
One then deduces the following description of the cohomology group we are interested in: 
\begin{thm} 
\label{T:DescriptionTwistedH1}
\begin{enumerate}
\item Up to the isomorphisms \eqref{E:gogo} and \eqref{E:gogoo}, the space 
$H^1(E_{\tau,z},{ L}_{\rho})$ is identified with the space  spanned by the rational 1-forms  $\varphi_m $ for  $m=0,1,2,\ldots,n$, 
modulo the relation 
$
0= 2i\pi \alpha_0 \, {\varphi}_0+\sum_{k=2}\alpha_k \,{\varphi}_k
$, {\it i.e.}: 
$$
H^1\big(E_{\tau,z},{ L}_{\rho}\big)\simeq \frac{  \bigoplus_{m=0}^n  \mathbb C \cdot   {\varphi}_m}{\big\langle 
2i\pi\alpha_0  \, {\varphi}_0+\sum_{k=2}\alpha_k \, {\varphi}_k
\big\rangle}\, . 
$$

\item In particular when $n=2$, up to the preceding isomorphism, the respective classes $[{\varphi}_0]$ and $[{\varphi}_1 ]  $ of $d\!u$ and $\rho'(u)d\!u   $ form a basis of 
$
H^1(E_{\tau,z},{ L}_{\rho})$. 
\end{enumerate}
\end{thm}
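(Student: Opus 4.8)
The plan is to transport the problem, via the isomorphism \eqref{E:gogo} together with the map \eqref{E:gogoo}, into a purely linear-algebraic statement inside the finite-dimensional space $H^0(E_\tau,\Omega^1_{E_\tau}(Z'))$, and then to read off the defining relation from \eqref{E:RELATION}. The only genuinely analytic input is the Mano--Watanabe result (Theorem~2.7 of \cite{ManoWatanabe}, recalled just above), which asserts that \eqref{E:gogoo} is surjective with a one-dimensional kernel; I would simply invoke it and spend my effort identifying that kernel explicitly.

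First I would confirm that $\varphi_0,\dots,\varphi_n$ form a basis of $H^0(E_\tau,\Omega^1_{E_\tau}(Z'))$. Since the canonical bundle of $E_\tau$ is trivial and $\deg Z'=n+1>0$, Riemann--Roch gives $h^0\big(E_\tau,\Omega^1_{E_\tau}(Z')\big)=\deg Z'=n+1$, exactly the number of forms listed. For independence I would examine polar divisors: for each $k\in\{2,\dots,n\}$ the form $\varphi_k=(\rho(u-z_k)-\rho(u))\,du$ is the unique member of the family with a pole at $z_k$ (simple, of residue $1$), so in any vanishing linear combination the coefficients of $\varphi_2,\dots,\varphi_n$ must vanish; the surviving combination $c_0\varphi_0+c_1\varphi_1$ then forces $c_1=0$, because $\varphi_1=\rho'(u)\,du$ has a double pole at the origin while $\varphi_0=du$ is holomorphic, and finally $c_0=0$. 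Hence these $n+1$ forms are a basis.

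Next I would pin down the kernel of \eqref{E:gogoo}. By \eqref{E:RELATION} one has $\nabla_\omega(1)=\omega=2i\pi\alpha_0\,\varphi_0+\sum_{j=2}^n\alpha_j\,\varphi_j$, and since $1\in H^0(E_\tau,\mathcal O_{E_\tau}(*Z))$ this class lies in the subspace we quotient by; thus the element $2i\pi\alpha_0\,\varphi_0+\sum_{j=2}^n\alpha_j\,\varphi_j$ of $H^0(E_\tau,\Omega^1_{E_\tau}(Z'))$ maps to zero under \eqref{E:gogoo}. In the nondegenerate situation (where the exponents $2i\pi\alpha_0,\alpha_2,\dots,\alpha_n$ do not all vanish, so the monodromy is nontrivial) this element is nonzero, hence spans the one-dimensional kernel furnished by Theorem~2.7. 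Combining the surjectivity of \eqref{E:gogoo} with \eqref{E:gogo}, the first isomorphism theorem yields exactly the presentation claimed in part~(1). As a consistency check, the right-hand side then has dimension $(n+1)-1=n$, matching $\dim H_1(E_{\tau,z},{L}_\rho)=n$ from Proposition~\ref{P:BasisTwistedHomology} via the duality \eqref{E:dualities}.

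For part~(2) I would specialize to $n=2$, where $\alpha_2=-\alpha_1\neq 0$. The single relation reads $2i\pi\alpha_0\,\varphi_0+\alpha_2\,\varphi_2=0$, allowing me to solve $[\varphi_2]=-(2i\pi\alpha_0/\alpha_2)[\varphi_0]$ in cohomology, so that $H^1$ is generated by $[\varphi_0]$ and $[\varphi_1]$. Their independence follows from the same polar-divisor bookkeeping: if $a\varphi_0+b\varphi_1$ lies in the line spanned by $2i\pi\alpha_0\varphi_0+\alpha_2\varphi_2$, then comparing coefficients in the basis $(\varphi_0,\varphi_1,\varphi_2)$ forces $b=0$ and, because $\alpha_2\neq 0$, also $a=0$. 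I do not expect any serious obstacle: once Theorem~2.7 is granted, the whole argument is downstream bookkeeping, and the one point requiring a word of care is verifying that the kernel generator is nonzero, i.e.\ that the monodromy exponents are not all trivial.
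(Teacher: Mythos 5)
Your proposal is correct and follows essentially the same route as the paper: both rest on the Mano--Watanabe results (the isomorphism \eqref{E:gogo}, the surjectivity of \eqref{E:gogoo} with one-dimensional kernel from their Theorem~2.7) together with the relation \eqref{E:RELATION} identifying $\nabla_\omega(1)=2i\pi\alpha_0\,\varphi_0+\sum_{j\geq 2}\alpha_j\,\varphi_j$ as the kernel generator. The paper states this deduction without detail, whereas you supply the routine verifications it leaves implicit (the Riemann--Roch dimension count and polar-divisor independence of the $\varphi_m$, the non-vanishing of the kernel element under the standing non-resonance assumption, and the coefficient comparison for part~(2)), all of which are sound.
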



\subsection{\bf The twisted intersection product}
\label{S:IntersectionProduct}
It follows from  Lemma \ref{L:rhoexplicit} that 
the monodromy character $\rho$ is unitary if and only if the quantity $\alpha_\infty$ defined in \eqref{E:alphaInfty} is real.  Starting from now on, we assume that it is indeed the case.


Since $\rho$ is unitary, 
 the constructions of \S\ref{S:GeneralTwistedIntersectionProduct} apply. We want to make them completely explicit.  More precisely, we want to express the intersection 
 product \eqref{E:TwistedIntProduct} in the basis $\boldsymbol{\gamma}$, {\it i.e.}\;we want to compute the coefficients of the following intersection matrix: 
 $$
\qquad I\!\!I_\rho=\big(  \boldsymbol{\gamma}_{\!\!\circ}\cdot  \overline{\boldsymbol{\gamma}}_{\!\!\bullet}\big)_{\circ,\bullet=\infty,0,3,\ldots,n}\, . 
$$

 Since $\rho$ is unitary,  $\rho^{-1}=\overline{\rho}$,  hence for any $\bullet$,  
 $\overline{\boldsymbol{\gamma}}_{\!\!\bullet}$ is the regularization of the locally finite $ L$-twisted 1-cycle $\boldsymbol{l}_\bullet$ and consequently 
$\boldsymbol{\gamma}_{\!\!\circ}\cdot  \overline{\boldsymbol{\gamma}}_{\!\!\bullet}=
\boldsymbol{\gamma}_{\!\!\circ}\cdot  {\boldsymbol{l}}_{\!\bullet}$ for every $
\circ,\bullet$ in $\{\infty,0,2,3,\ldots,n\}$.   Using the method explained in \cite{KitaYoshida}, it is just a computational task to determine these twisted  intersection  numbers. 
\sk 

Assuming that the $z_i$'s are in nice position, one has the 
\begin{prop} 
\label{P:GammaEll}
For $i=2,\ldots,n$, $j=2,\ldots,j-1$ and $k=i+1,\ldots,n$,  one has: 
\begin{align*}
\boldsymbol{\gamma}_\infty \cdot \boldsymbol{l}_\infty=   \, & 
\frac{d_\infty d_{1\infty}}{d_1\rho_\infty}
        && 
        \boldsymbol{\gamma}_i\cdot \boldsymbol{l}_\infty =
-\frac{\rho_1d_\infty}{\rho_\infty d_1}
         \\ \mk
\boldsymbol{\gamma}_\infty \cdot \boldsymbol{l}_0= \, &   
\frac{1-\rho_0+\rho_{0\infty}-\rho_{1\infty}}{ \rho_0 d_1}  && 
\boldsymbol{\gamma}_i\cdot \boldsymbol{l}_0 = 
 -\frac{\rho_1 d_0}{\rho_0 d_1}
\\ \mk
\boldsymbol{\gamma}_\infty   \cdot \boldsymbol{ l}_i= & \ 
\frac{d_\infty}{d_1}
 &&  
 \boldsymbol{\gamma}_j\cdot \boldsymbol{l}_i=  - \frac{\rho_{1}}{d_1}    
 \\  \mk 
\boldsymbol{\gamma}_0\cdot \boldsymbol{l}_\infty=& \,
\frac{\rho_1-\rho_{1\infty}     -\rho_0+\rho_{01\infty} }{\rho_\infty d_1} && \boldsymbol{\gamma}_i\cdot \boldsymbol{l}_i=
- \frac{d_{1i}}{d_1d_i}
 \\ \mk
\boldsymbol{\gamma}_0\cdot \boldsymbol{l}_0=& \,
\frac{d_0}{d_1}\left(1-\frac{\rho_1}{\rho_0}\right)
 &&  \boldsymbol{\gamma}_k\cdot \boldsymbol{l}_i=   - \frac{1}{d_1}\\ \mk
\boldsymbol{\gamma}_0\cdot \boldsymbol{l}_i=& \,    \frac{d_0}{d_1} \, . 
\end{align*}
\end{prop}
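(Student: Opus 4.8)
The plan is to compute the twisted intersection numbers $\boldsymbol{\gamma}_{\!\!\circ}\cdot\boldsymbol{l}_{\!\bullet}$ directly, following the recipe of Kita--Yoshida \cite{KitaYoshida}. The key principle is that when two twisted $1$-cycles are in general position, their intersection product is a finite sum over the geometric intersection points of the underlying supports, with each contribution being a signed product of the local twist factors (determinations of $T$ and of $T^{-1}$) carried by the two chains at that point. Since the $\boldsymbol{\gamma}_{\!\!\bullet}$ are the explicit regularizations \eqref{E:GammaBullet} built from the $1$-simplices $\boldsymbol{\ell}_{\!\bullet}^\epsilon$, $\boldsymbol{s}_k$ and the arcs $\boldsymbol{m}^\nu$ around the origin, and the $\boldsymbol{l}_{\!\bullet}$ are their $T^{-1}$-twisted counterparts, the whole computation reduces to a careful bookkeeping of where these supports meet and what determination factors $\rho_\bullet^{\pm 1}$ are accumulated along the way.

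First I would fix, once and for all, the picture of the supports in the fundamental domain (Figures \ref{F:VeryNicePositionW} and \ref{F:lilo}), under the standing assumption that the $z_i$'s are in nice position. For each ordered pair $(\circ,\bullet)$ I would list the geometric intersection points of the support of $\boldsymbol{\gamma}_{\!\!\circ}$ with that of $\boldsymbol{l}_{\!\bullet}$. The essential cases split naturally: intersections occurring at or near the origin (where the arcs $\boldsymbol{m}^0,\ldots,\boldsymbol{m}^3$ live, and where the factors $\rho_0^{-1},\rho_{0\infty}^{-1},\rho_\infty^{-1}$ in \eqref{E:GammaBullet} enter), intersections occurring near a puncture $z_k$ (where the loop $\boldsymbol{s}_k$ contributes the factor $1/d_k$), and the transverse crossings of the long segments $\boldsymbol{\ell}_{\!0}^\epsilon$, $\boldsymbol{\ell}_{\!\infty}^\epsilon$, $\boldsymbol{\ell}_{\!k}^\epsilon$ with one another. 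At each such point I would record the local intersection sign and the ratio of determinations of $T$ dictated by which translate of the fundamental domain one uses to compare the two branches; the quasi-periodicity $T(u+1)=\rho_0 T(u)$ and $T(u+\tau)=\rho_\infty T(u)$ (Lemma \ref{L:rhoexplicit}) is what produces the factors $\rho_0,\rho_\infty$ and the denominators $d_1=\rho_1-1$, $d_\infty$, etc.

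The systematic way to organize this is to first compute the \emph{self-intersection} numbers $\boldsymbol{\gamma}_\infty\cdot\boldsymbol{l}_\infty$ and $\boldsymbol{\gamma}_0\cdot\boldsymbol{l}_0$ and the ``diagonal'' numbers $\boldsymbol{\gamma}_i\cdot\boldsymbol{l}_i$, since for these the relevant crossings are localized near a single vertex (the origin for the first two, near $0$ and $z_i$ for the third) and the twist factors can be read off from the local structure of the regularized loop; this is precisely the content of the worked example \cite[Example 2.1]{AomotoKita}. Then the off-diagonal entries follow by the same point-counting, now involving crossings between a loop around one vertex and a segment emanating from another. Throughout, I would use the unitarity hypothesis $\alpha_\infty\in\mathbb R$, i.e.\ $\rho^{-1}=\overline{\rho}$, only at the very end to identify $\boldsymbol{\gamma}_{\!\!\circ}\cdot\overline{\boldsymbol{\gamma}}_{\!\!\bullet}$ with $\boldsymbol{\gamma}_{\!\!\circ}\cdot\boldsymbol{l}_{\!\bullet}$, as already noted before the proposition; the intersection numbers themselves are purely combinatorial-homological and do not require unitarity.

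The main obstacle I expect is neither conceptual nor analytic but \emph{bookkeeping}: correctly tracking the determination factors $\rho_\bullet^{\pm 1}$ through the regularizing arcs near the origin, where three of the four $\boldsymbol{m}^\nu$ carry distinct prefactors and the branch cut $B$ forces a discontinuity in the chosen global determination $T_U$. A sign or a single misplaced factor of $\rho_1$ propagates into every entry, which is exactly the kind of error flagged in the footnote to \eqref{E:GammaBullet} regarding the typo in \cite{ManoWatanabe}. To guard against this I would impose two independent consistency checks: the intersection matrix must be compatible with the homological relation \eqref{E:LinearRelationGammai} (pairing that relation against each $\boldsymbol{l}_{\!\bullet}$ must give $0$), and the resulting hermitian form, after the substitution $\rho_\bullet\mapsto\overline{\rho}_\bullet=\rho_\bullet^{-1}$, must come out hermitian. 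These two checks pin down the normalizations and confirm the closed-form expressions displayed in the statement.
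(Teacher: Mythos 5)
Your proposal follows essentially the same route as the paper's own proof: both reduce each entry to Kita--Yoshida's formula $\boldsymbol{\alpha}\cdot\boldsymbol{a}=\sum_x \langle\alpha,a\rangle_x\, T_\alpha(x)T_a(x)^{-1}$ over the transverse crossings of the explicit supports from \eqref{E:GammaBullet}, and then carry out the bookkeeping of topological signs and determination ratios (the paper details only the case $\boldsymbol{\gamma}_\infty\cdot\boldsymbol{l}_\infty$ and leaves the rest as analogous computations). Your two consistency checks (pairing the relation \eqref{E:LinearRelationGammai} against each $\boldsymbol{l}_{\!\bullet}$, and hermitianity after $\rho_\bullet\mapsto\rho_\bullet^{-1}$) are a sensible addition but do not change the method.
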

\begin{proof} Let  ${\boldsymbol{\alpha}}$ and ${\boldsymbol{a}}$ 
stand for the classes,  in $H_1(E_{\tau,z},{ L^\vee})$ and $H_1^{\rm lf}(E_{\tau,z},{ L})$ respectively, of two  twisted $1$-simplices denoted somewhat abusively by the same notation.  Denote respectively  by $\alpha$ and $a$ the supports of these twisted cycles and let 
$T_\alpha$ and $T_a$  be  the two determinations of $T$  along $\alpha$ and  $a$ respectively, such that 
 $${\boldsymbol{\alpha}}=\alpha\otimes T_\alpha
\qquad \mbox{ and }\qquad  
 {\boldsymbol{a}}=a\otimes T_a^{-1}.$$ 

Since the intersection number ${\boldsymbol{\alpha}}\cdot {\boldsymbol{a}}$ depends only on the associated twisted homotopy classes and because $\alpha$ is a compact subset of $E_{\tau,z}$,  one can assume that the topological 1-cycles  $\alpha$  and $a$ are smooth and  intersect transversally in a finite number of points. 
As explained in \cite{KitaYoshida}, ${\boldsymbol{\alpha}}
\cdot  
{\boldsymbol{a}} $ is equal to the sum of the local intersection numbers at the intersection points of the supports $\alpha$ and $a$ of the two considered twisted 1-simplices. In other terms, 
one has 
$${\boldsymbol{\alpha}}
\cdot  
{\boldsymbol{a}} =\sum_{x\in \alpha\cap a} \langle {\boldsymbol{\alpha}}
\cdot  
{\boldsymbol{a}}\rangle_x$$
 where for any intersection point $x$ of $\alpha$ and $a$, the twisted local intersection number  
 $\langle 
{\boldsymbol{\alpha}}, {\boldsymbol{a}} \rangle_x$ 
is defined as the  product of the usual topological local intersection number $\langle \alpha,a\rangle_x \in \mathbb Z$ with the complex ratio $T_\alpha(x)/T_a(x)$, {\it i.e.}  
 $$
 \left\langle 
{\boldsymbol{\alpha}}, {\boldsymbol{a}} \right\rangle_x=
 \left\langle 
\alpha\otimes T_\alpha, a\otimes T_a^{-1}
 \right\rangle_x
 =\big\langle \alpha,a\big\rangle_x\cdot 
T_\alpha(x)T_a(x)^{-1} 
 \in \mathbb C.
 $$

With the preceding result at hand, 
determining all the intersection numbers of the proposition is just a computational task.  We will detail only one case below. The others can be computed in a similar way.\footnote{Some of these computations can be considered as classical since they  already appear  in the existing literature, such as the one of $\boldsymbol{\gamma}_i\cdot \boldsymbol{l}_i$ for $i=2,\ldots,n $, which follows immediately from the computations given p. 294 of \cite{KitaYoshida} (see also \cite[\S2.3.3]{AomotoKita}).}. 
\mk 

As an example, let us detail the  computation of  $\boldsymbol{\gamma}_\infty \cdot \boldsymbol{l}_\infty$. The picture below is helpful for this. On it, the 1-cycle 
$\boldsymbol{\gamma}_{\!\!\infty}$ has been drawn in blue whereas the locally finite 1-cycle 
$\boldsymbol{l}_{\infty}$ is pictured in green. 

\begin{center}
\begin{figure}[!h]
\psfrag{U}[][][1]{$
\quad {U} $}
\psfrag{B}[][][1]{$\textcolor{red}{B} $}
\psfrag{Bt}[][][1]{$\;\, \quad \textcolor{red}{B+\tau} $}
\psfrag{1}[][][1]{$1 $}
\psfrag{t}[][][1]{$\tau $}
\psfrag{x1}[][][1]{$\textcolor{green}{x_1} \; $}
\psfrag{x2}[][][1]{$\textcolor{green}{x_2}\; \,  $}
\psfrag{y1}[][][1]{$\textcolor{green}{y_1} \;\;  $}
\psfrag{y2}[][][1]{$\textcolor{green}{y_2}\;  $}
\psfrag{linf}[][][1]{$\textcolor{green}{l'_{\infty}}\;\;\;\;  $}
\psfrag{GI}[][][1]{$\;\;\;\textcolor{blue}{\gamma_{\infty}^i} $}
\psfrag{GE}[][][1]{$\textcolor{blue}{\gamma_{\infty}^\epsilon}  $}
\psfrag{GF}[][][1]{$\textcolor{blue}{\gamma_{\infty}^f} $}
\includegraphics[scale=0.77]{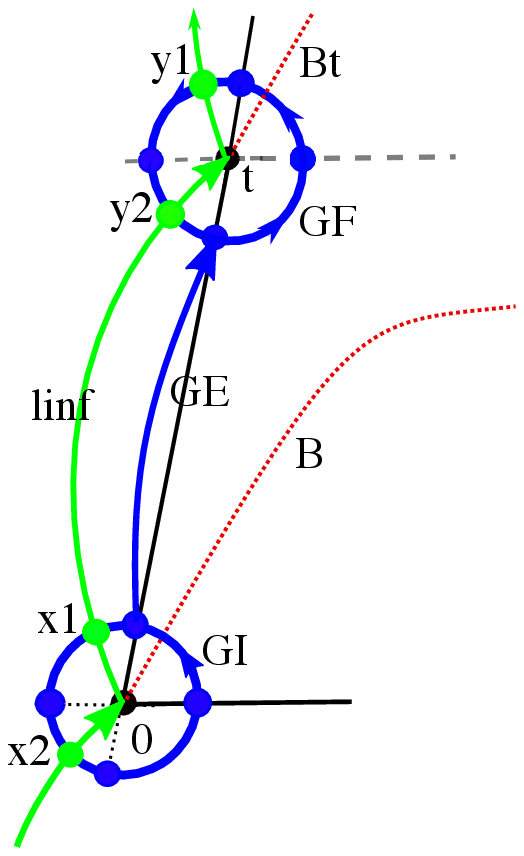}
\end{figure}
\end{center}
The picture shows that ${l}_{\infty}$ does not meet $\gamma_\infty^\epsilon$ and 
intersects $\gamma_\infty^i$ and $\gamma_\infty^i$ at the points $x_1,x_2$ and $y_1,y_2$ respectively.  

It follows that 
\begin{align}
\label{E:powpowpow}
\boldsymbol{\gamma}_\infty \cdot \boldsymbol{l}_\infty
=  &  \boldsymbol{\gamma}_\infty^i \cdot \boldsymbol{l}_\infty+
\boldsymbol{\gamma}_\infty^f \cdot \boldsymbol{l}_\infty
\nonumber
\\
= & \sum_{k=1}^2 \big\langle \boldsymbol{\gamma}_\infty^i \cdot \boldsymbol{l}_\infty
\big\rangle_{x_k}+ \sum_{k=1}^2 \big\langle \boldsymbol{\gamma}_\infty^f \cdot \boldsymbol{l}_\infty
\big\rangle_{y_k}
\nonumber
\\
= & \frac{1}{d_1} 
\sum_{k=1}^2
 \big\langle \boldsymbol{m}^k \cdot \boldsymbol{l}_\infty
\big\rangle_{x_k} -
\frac{\rho_{1\infty}}{d_1} 
\sum_{k=1}^2
 \big\langle \boldsymbol{m}^k \cdot \boldsymbol{l}_\infty
\big\rangle_{y_k}\, , 
\end{align}
the last equality coming from the formula for $\boldsymbol{\gamma}_\infty^i$ and $\boldsymbol{\gamma}_\infty^f$ and from 
the fact that $x_k,y_k\in m^k$ for $k=1,2$. 

The topological intersection numbers are the following: 
$$
\big\langle {m}^1 \cdot {l}_\infty
\big\rangle_{x_1}=\big\langle {m}^1 \cdot {l}_\infty
\big\rangle_{y_1}=-1
\quad \mbox{ and }\quad 
\big\langle {m}^2 \cdot {l}_\infty
\big\rangle_{x_2}=\big\langle {m}^2 \cdot {l}_\infty
\big\rangle_{y_2}=1\; . 
$$

It is then easy to compute the four  intersection numbers appearing in 
\eqref{E:powpowpow}: 
\begin{itemize}
\item let $\zeta_1$ stand for $x_1$ or $y_1$. The determination of $T$ associated to $\boldsymbol{l}_\infty$ at $\zeta_1$ is the same as the one associated to $\boldsymbol{m}^1$ at this point. It follows that 
$$\langle \boldsymbol{m}^1 \cdot \boldsymbol{l}_\infty \rangle_{\zeta_1}=
\langle {m}^1 \cdot {l}_\infty
\big\rangle_{\zeta_1}=-1\; ;
$$
\item 
let $\zeta_2$ stand for $x_2$ or $y_2$.
The determination of $T$ associated to $\boldsymbol{l}_\infty$ at $\zeta_2$ is 
$\rho_{\infty}$ times  the one associated to $\boldsymbol{m}^2$ at this point. It follows that 
$$\langle \boldsymbol{m}^2 \cdot \boldsymbol{l}_\infty \rangle_{\zeta_2}=
\langle {m}^2 \cdot {l}_\infty
\big\rangle_{\zeta_2} \cdot \rho_{\infty}^{-1}=  \rho_{\infty}^{-1}\, . 
$$
\end{itemize}

From all the preceding considerations, it comes that 
\begin{equation*}
\label{E:powpowpow}
\boldsymbol{\gamma}_\infty \cdot \boldsymbol{l}_\infty
=  \frac{1}{d_1} \bigg[  -1+\rho_\infty^{-1}  \bigg]
 -
\frac{\rho_{1\infty}}{d_1} 
\bigg[  -1+\rho_\infty^{-1}  \bigg]
 =\frac{d_\infty d_{1\infty}}{\rho_\infty d_1}\, . 
 \qedhere
 \end{equation*}
\end{proof}



\subsection{\bf The particular case $n=2$}
In this case, the complete intersection matrix is
\begin{equation*}
I_\rho= \big( 
\boldsymbol{\gamma}_\circ \cdot {\boldsymbol{l}}_\bullet
\big)_{\circ,\bullet=0,\infty,2}
=
\begin{bmatrix} 
\frac{d_{\infty}d_{1\infty}}{d_1\rho_\infty }&  
\boldsymbol{\gamma}_{\!\!\infty} \cdot {\boldsymbol{l}}_0     &   \frac{d_\infty}{d_1} \\
\boldsymbol{\gamma}_0 \cdot {\boldsymbol{l}}_\infty  &  \frac{d_0}{d_1}\big( 1- \frac{\rho_1}{\rho_0}  \big)   & \frac{d_0}{d_1} \\
 -\frac{\rho_1 d_\infty }{\rho_\infty d_1} &-\frac{\rho_1 d_0 }{\rho_0 d_1}      &  0
\end{bmatrix}
\end{equation*}
with 
\begin{align*}
\boldsymbol{\gamma}_\infty \cdot {\boldsymbol{l}}_0  = \, &   
-\frac{1}{d_1}+\frac{\rho_0^{-1}}{d_1} +\frac{\rho_\infty}{d_1}-\frac{\rho_1\rho_\infty\rho_0^{-1}}{d_1}\\ 
\mbox{ and }\quad 
\boldsymbol{\gamma}_0 \cdot {\boldsymbol{l}}_\infty  =& \,
-\frac{\rho_1}{d_1}+\frac{\rho_0\rho_1}{d_1}
+ \frac{\rho_1\rho_\infty^{-1}}{d_1}
-\frac{\rho_0\rho_\infty^{-1}}{d_1}\; .
\end{align*}

The linear relation between the  twisted 1-cycles ${\boldsymbol{\gamma}}_0, {\boldsymbol{\gamma}}_\infty $ and ${\boldsymbol{\gamma}}_2$ is 
$$
(1-\rho_\infty){\boldsymbol{\gamma}}_0-(1-\rho_0){\boldsymbol{\gamma}}_\infty= (1-\rho_2){\boldsymbol{\gamma}}_2\; . 
$$
Thus, since $\rho_2\neq 1$, one can express ${\boldsymbol{\gamma}}_2$ in function 
of $ {\boldsymbol{\gamma}}_0$ and ${\boldsymbol{\gamma}}_\infty$
as follows: 
\begin{equation}
\label{E:gamma2}
{\boldsymbol{\gamma}}_2=    -\rho_1\frac{d_\infty}{d_1}\, {\boldsymbol{\gamma}}_0
+\rho_1\frac{d_0}{d_1}\, {\boldsymbol{\gamma}}_\infty\, . 
\end{equation}

The intersection matrix relative to the basis $(\boldsymbol{\gamma}_0, \boldsymbol{\gamma}_\infty)$ and
$({\boldsymbol{l}\, }_0, {\boldsymbol{l}}_\infty)$ is 
\begin{equation*}
I\!\!I_\rho=
\begin{bmatrix}  
\boldsymbol{\gamma}_\infty \\
  \boldsymbol{\gamma}_0
\end{bmatrix}\cdot 
\begin{bmatrix}
{\boldsymbol{l}}_\infty & 
{\boldsymbol{l}}_0
\end{bmatrix}
=
\begin{bmatrix}    \frac{d_\infty d_{1\infty}}{d_1\rho_\infty}
& 
 \frac{-1+\rho_0^{-1} +\rho_\infty-\rho_1\rho_\infty\rho_0^{-1}}{d_1}
  \\ 
  \frac{-\rho_1+\rho_0\rho_1+\rho_1\rho_\infty^{-1}
-\rho_0\rho_\infty^{-1}}{d_1}  
  &   
   \frac{d_0}{d_1}\Big(
   1-\frac{\rho_1}{\rho_0}
  \Big)   
\end{bmatrix}.\sk
\end{equation*}

By a direct computation, one verifies that the determinant of this  anti-hermitian matrix is always  equal to 1,  hence this matrix is  invertible. 
 Then one can consider 
 \begin{equation}
  \label{E:Hrho}
I\!\!H_\rho=\big({2i}I\!\!I_\rho\big)^{-1}=\frac{1}{2i}  
\begin{bmatrix}   \frac{d_0}{d_1}\Big(
   1-\frac{\rho_1}{\rho_0}
  \Big)
  &  
 \frac{\rho_0-1 -\rho_{0}\rho_{\infty}+\rho_{1}\rho_{\infty}}{\rho_0 d_1}
  \sk  \\ 
    \frac{\rho_0-\rho_1-\rho_0\rho_1\rho_\infty
+\rho_1\rho_\infty}{\rho_\infty d_1}  
   &  
    \frac{d_\infty d_{1\infty}}{d_1\rho_\infty} 
    \end{bmatrix}\, . 
\end{equation}

This matrix is hermitian and its determinant is $-1/4<0$. It follows that the signature of the  hermitian form associated to  $I\!\!H_\rho$ is $(1,1)$, as expected.

\subsubsection{\bf Some connection formulae}
\label{SS:ConnectionFormulae}
We now let the parameters $\tau$ and $z$ vary. More precisely, let 
$f: [0,1]\rightarrow  \mathbb H\times \mathbb C^n, \,  s\mapsto (\tau(s),z(s))$ be a smooth path in the corresponding parameter space:  for every $s\in [0,1]$,  $z_1(s)=0$ and  $z_1(s),\ldots,z_n(s)$ are pairwise distinct modulo $\mathbb Z_{\tau(s)}$.  
For $s\in [0,1]$, let  $\rho_s$ be the  corresponding monodromy morphism (namely, the one corresponding to the monodromy of  $T(\cdot,\tau(s),z(s))$) 
and denote by $ L_s=L_{\rho_s}$ the 
associated local system on $E_{\tau(s),z(s)}$.\sk 

Since the $E_{\tau(s),z(s)}$'s form a topologically trivial family of $n$-punctured elliptic curves over $[0,1]$, the corresponding twisted homology groups 
$H_1(E_{\tau(s),z(s)},L_s)$ organize themselves into a local system over $[0,1]$, which is necessarily trivial. 
If in addition
 the $z_i(0)$'s are in very nice position, then the  twisted 1-cycles $\boldsymbol{\gamma}_{\!\!\bullet}^0$ (for $\bullet=\infty,0,3,\ldots,n$) are 
well-defined and can be smoothly  deformed along $f$.  One obtains a  deformation 
parametrized by $s\in [0,1]$
  $$
 \qquad 
 \boldsymbol{\gamma}^s=
  \big(\boldsymbol{\gamma}_{\!\! \bullet}^s\big)_{\bullet=\infty,0,3,\ldots,n}
$$
  of the initial $\boldsymbol{\gamma}^0$'s,  such that the map $\boldsymbol{\gamma}_{\!\!\bullet}^0\mapsto \boldsymbol{\gamma}_{\!\!\bullet}^1$ 
 induces an isomorphism denoted by $f_*$ between the corresponding twisted homology spaces  $H_1(E_{\tau(0),z(0)},L_0)$ and $H_1(E_{\tau(1),z(1)},L_1)$.  
 It only   
   depends on the homotopy class of $f$. 
    Similarly, one constructs an analytic deformation $
  \boldsymbol{l}^s=
 (\boldsymbol{l}_{\infty}^s, \ldots, \boldsymbol{l}_{n}^s\big)
 $, $s\in [0,1]$.\sk 
 
  Let us suppose furthermore that the $z_i(1)$'s also are in very nice position. 
 Then 
let $\boldsymbol{\gamma}'=(\boldsymbol{\gamma}'_{\!\!\bullet})_{\bullet=\infty,0,3,\ldots,n}$ be the nice basis of $H_1(E_{\tau(1),z(1)}, L_{1})$ constructed in \S\ref{SS:NiceBasisTwistedHomologyClasses}.
The matrix of  $f_*: H_1(E_{\tau(0),z(0)}, L_{0})\simeq 
H_1(E_{\tau(1),z(1)}, L_{1})$ expressed in the nice bases $\boldsymbol{\gamma}^0$ and $\boldsymbol{\gamma}'$  is nothing else but  the $ n \times n$ matrix  $M_f$ such that 
\begin{equation}
\label{E:ConnectionFormula}
{}^{t}\! \boldsymbol{\gamma}'=M_f \cdot {}^{t}\! \boldsymbol{\gamma}^1\, . 
\end{equation}
 
 Such a relation is called a {\bf connection formula}.  In the particular case when $f$ is a loop, one has $(\tau',z')=(\tau,z)$ and such a formula appears as nothing else but a monodromy formula.\mk   
 
One verifies easily that the twisted intersection product is constant up to small deformations.  In particular, for any $\bullet, \circ$ in $\{\infty,0,2,\ldots,n\}$, the twisted intersection number $\boldsymbol{\gamma}_{\!\!\bullet}^s\cdot \boldsymbol{l}_{\!\circ}^s$ does not depend on 
$s\in [0,1]$, thus ${}^{t}\! \boldsymbol{\gamma}^1\cdot \boldsymbol{l}^1= I\!\!I_{\rho_0}$. Combining this  with  
 \eqref{E:ConnectionFormula}, it comes that the following matricial relation holds true:
 $$
 I\!\!I_{\rho_1}
 =M_f \cdot  
 I\!\!I_{\rho_0} 
 \cdot {}^{t} \overline{M}_{\!f}\, .\sk
 $$
 
 In what follows, we give several natural connection formulae in the case when $n=2$.  
  All these are particular cases  of the formulae given in  \cite[\S6]{Mano} 
 for $n\geq 2$ arbitrary.  Note that the reader will not find rigorous proofs of these formulae in \cite{Mano} but rather some pictures explaining what is going on.   However, with the help of these pictures and using similar arguments than  those of \cite[Proposition (9.2)]{DeligneMostow},  it is not too difficult to give rigorous proofs of the formulae below. Since it is rather long, it is left to the motivated reader.
 \mk 
 
 In what follows, the modular parameter $\tau \in \mathbb H$ is fixed as well as  $z=(z_1,z_2)$ which is a pair of points of $\mathbb C$ which are not congruent modulo $\mathbb Z_\tau$. As remarked before, 
 $z_1,z_2$ are in very nice position, hence  the twisted 1-cycles $\boldsymbol{\gamma}_{\!\!\bullet}$, $\bullet=\infty,0,2$ are well-defined. 
 To remain close to \cite{Mano}, we will not write that  $z_1$ is $0$ in the lines below, even if 
one can suppose that $z_1$ as been normalized in this way. 
\mk

\paragraph{\bf Half-Twist formula ``$\boldsymbol{z_1\longleftrightarrow
 z_2}$''} 
 We first deal with the connection formula associated to the (homotopy class of a) half-twist   exchanging $z_1$ and $z_2$ with $z_2$ passing above $z_1$ as pictured in red in Figure \ref{F:HTwist-Figure} below. This case is the one treated at the bottom of p. 15 of \cite{Mano}.\footnote{Note that there is a typo in the formula for the half-twist in page 15 of \cite{Mano}. With the formula given there, relation 
\eqref{E:VerifHalfTwist} does not hold true.} 
\sk

Setting  $z'=(z_2,z_1)$, there is a  linear isomorphism ${\rm HTwist}_\rho$ from $H_1(E_{\tau,z}, L_\rho)$ onto 
$H_1(E_{\tau,z'}, L_{\rho'})$ with $\rho'=R_{\rm HTwist}(\rho)$ where 
\begin{align*}
R_{\rm HTwist}: (\rho_\infty,\rho_0,\rho_1)\longmapsto \big(\rho_\infty, \rho_0, \rho_1^{-1}\big)\, . \end{align*}

Setting ${}^t\!\boldsymbol{\gamma}={}^t\!(\boldsymbol{\gamma}_\infty,\boldsymbol{\gamma}_0,\boldsymbol{\gamma}_2)$ and 
${\boldsymbol{l}}=({\boldsymbol{l}}_\infty, {\boldsymbol{l}}_0,{\boldsymbol{l}}_2)$
 with analogous notations 
for $\boldsymbol{\gamma}'$ and ${\boldsymbol{l}}'$
one has  
${}^t\!\boldsymbol{\gamma}'={\rm HTwist}_\rho \cdot {}^t\!\boldsymbol{\gamma}
$ and $
{\boldsymbol{l}}\, '=
{\boldsymbol{l}} \cdot {}^t \overline{{\rm HTwist}_\rho} $
 with 
\begin{equation}
\label{E:HalfTwist}
{\rm HTwist}_\rho= \begin{bmatrix} 
1  &    0   &  \frac{d_\infty}{\rho_1} \\
0  &  1     &  \frac{d_0}{\rho_1} \\
0 &  0  &    -\frac{1}{\rho_1}
\end{bmatrix}.
\end{equation}
\sk

\begin{center}
\begin{figure}[!h]
\psfrag{0}[][][1]{$z_2'=z_1 \qquad $}
\psfrag{z}[][][1]{$\qquad \quad z_2=z_1'  $}
\includegraphics[scale=0.77]{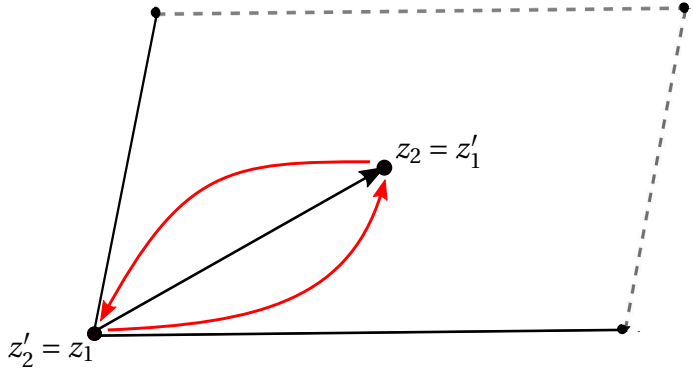}
\caption{Half-twist in the direct sense exchanging $z_1$ and $z_2$.}
\label{F:HTwist-Figure}
\end{figure}
\end{center}

Verification: one should have $I_{\rho'}=\boldsymbol{\gamma}'\cdot{\boldsymbol{l}}\,'= 
{\rm HTwist}_\rho\cdot \boldsymbol{\gamma} \cdot 
\boldsymbol{l} \cdot {}^t 
\overline{{\rm HTwist}_\rho}
= {\rm HTwist}_\rho\cdot I_\rho\cdot {}^t 
\overline{{\rm HTwist}_\rho}
 $ and, indeed,  
one verifies that the following relation  holds true: 
\begin{equation}
\label{E:VerifHalfTwist}
I_{\rho'}= {\rm HTwist}_\rho\cdot I_\rho\cdot {}^t 
\overline{{\rm HTwist}_\rho}
\, .
\end{equation}


\paragraph{\bf First horizontal translation  formula ``$\boldsymbol{z_1\longrightarrow z_1+1}$''} 
We now consider the connection formula associated to the path 
$$ 
f_{\rm HTrans1} : 
[0,1] \longrightarrow \mathbb H\times \mathbb C^2, \; 
 s\longmapsto \big(\tau, z_1+s,z_2\big)\, . 
$$

We  define $\widetilde \rho=R_{{\rm HTrans1}}(\rho)$ with 
\begin{align*}
R_{\rm HTrans1}: (\rho_\infty,\rho_0,\rho_1)\longmapsto (\rho_\infty\rho_1^{-1}, \rho_0,\rho_1)\; .
\end{align*}

We set  $\widetilde z=(z_1+1,z_2)=f_{\rm HTrans1}(1)$. The 
 path $f_{\rm HTrans1} $ gives us a  linear isomorphism  from $H_1(E_{\tau,z},  L_\rho)$ onto 
$H_1(E_{\tau,\widetilde z}, L_{\widetilde \rho})$ which will be denoted by ${\rm HTrans1}_\rho$. 

The corresponding connection matrix is
\begin{equation*}
{\rm HTrans1}_\rho= \begin{bmatrix} 
\frac{1}{\rho_1}  &    -\frac{d_\infty}{\rho_0\rho_1}   &  0 \\
0  &  \frac{1}{\rho_0}     &  0 \\
0 & \frac{1}{\rho_0}  &   \frac{1}{\rho_1}
\end{bmatrix}.
\end{equation*}

One verifies that  the following relation is satisfied: 
\begin{equation}
\label{E:HTwistTransfo}
I_{\widetilde\rho}={\rm HTrans1}_\rho\cdot I_\rho\cdot {}^t\overline{{\rm HTrans1}_\rho}.
\end{equation}



\paragraph{\bf Second horizontal translation  formula ``$\boldsymbol{z_2\longrightarrow z_2+1}$''} 
We now consider the connection formula associated to the path 
$$ 
f_{\rm HTrans2} : 
[0,1] \longrightarrow \mathbb H\times \mathbb C^2, \; 
 s\longmapsto \big(\tau, z_1,z_2+s\big)\, . 
$$

We  define 
$ \rho''=     R_{\rm HTwist} \circ R_{\rm HTrans}   \circ R_{\rm HTwist}    (\rho)$, 
 that is 
\begin{align*}
\rho''= \big(\rho_\infty'',\rho_0'',\rho_1''\big)= \Big(\rho_\infty\rho_1, \rho_0,\rho_1\Big)\, .
\end{align*}

We set  $z''=(z_1,z_2+1)=f_{\rm HTrans2}(1)$. The  map $f_{\rm HTrans2} $ gives us a  linear isomorphism  from $H_1(E_{\tau,z}, L_\rho)$ onto 
$H_1(E_{\tau,z''}, L_{\rho''})$, which will be denoted by ${\rm HTrans2}_\rho$. 

The corresponding connection matrix is
\begin{equation*}
{\rm HTrans2}_\rho= 
{\rm HTwist}_{\tilde \rho'}
\cdot
{\rm HTrans1}_{\rho'}
\cdot 
{\rm HTwist}_{\rho}
\end{equation*}
with $\tilde{\rho}'= R_{\rm HTrans}\circ R_{\rm HTwist}(\rho)$. 
 Explicitly,  one has 
\begin{equation*} 
{\rm HTrans2}_\rho = 
\begin{bmatrix} 
 \rho_1 &  \frac{\rho_{1\infty}d_1}{\rho_0}
       &  
       - \frac{d_1(\rho_{01\infty}+\rho_\infty-\rho_0)}{\rho_0}
        \\
 0 &   \frac{1+d_0\rho_1}{}       &         -\frac{d_0d_1(\rho_{01}+1)}{\rho_0\rho_1} \\
    0  &   -\frac{\rho_1}{\rho_0}    &     \frac{\rho_0d_1+1}{\rho_0} \\
\end{bmatrix} . 
\end{equation*}

We verify that  the following relation is satisfied: 
\begin{equation*}
\label{E:HTwistTransfo}
I_{\rho''}= {\rm HTrans2}_\rho\cdot I_\rho\cdot {}^t\overline{{\rm HTrans2}_\rho}.
\end{equation*}

The matrix ${\rm HT2}$ of the isomorphism ${\rm HTrans2}_\rho$ expressed in the basis $(\boldsymbol{\gamma}_\infty,\boldsymbol{\gamma}_0)$ and $(\boldsymbol{\gamma}''_\infty,\boldsymbol{\gamma}''_0)$ is more involved. 
 But since this formula will be used later (in Lemma \ref{L:ConnectionFormulaeTranslation}),  we give it explicitly below: 
\begin{equation}
\label{E:HTrans2rho22}
 {\rm HT2}_\rho= 
\begin{bmatrix} {\frac { \left( \rho_{{01\infty}}-\rho_0\rho_{{01\infty}}+\rho_{{\infty}}-\rho_{{0\infty}}+{\rho_{{0}}}^{2} \right) \rho_{{1}}}{\rho_{{0}}}}&
{\frac { \left( \rho_{{10\infty}}{\rho_{{\infty}}}-\rho_{{01\infty}}+\rho_{{1\infty}}-2\,\rho_{{\infty}}+\rho_{{0}}+{\rho_{{\infty}}}^{2}-\rho_{{0\infty}} \right) \rho_{{1}}}{\rho_{{0}}}}\mk
\\ 
-{\frac { (\rho_0-1) ^{2} \left( \rho_{{01}}+1 \right) }{\rho_{{0}}}}
&
{\frac {-\rho_{{01\infty}}+2\,\rho_{{01}}-\rho_{{1}}+
\rho_{{0}}
\rho_{{01\infty}}-{\rho_{{0}}}\rho_{{01}}-\rho_{{\infty}}+\rho_{{0\infty}}+2-\rho_{{0}}}{\rho_{{0}}}} \end{bmatrix} .
\end{equation}

This matrix satisfies the following relation: $I\!\!I_{\rho''}={\rm HT2}_\rho\cdot I\!\!I_\rho\cdot {}^t\overline{{\rm HT2}}_\rho$.



\sk 

\paragraph{{\bf First vertical translation  formula ``$\boldsymbol{z_1\longrightarrow z_1+\tau}$''}}
We now consider the connection formula associated to the path 
$$ 
f_{\rm VTrans1} : 
[0,1] \longrightarrow \mathbb H\times \mathbb C^2, \; 
 s\longmapsto \big(\tau, z_1+s\tau ,z_2\big)\, . 
$$

We define $\widehat \rho=R_{{\rm VTrans1}}(\rho)$ where $R_{{\rm VTrans1}}$ stands for the following  map: 
\begin{align*}
R_{\rm VTrans1}: (\rho_\infty,\rho_0,\rho_1)\longmapsto (\rho_\infty, \rho_0\rho_1,\rho_1)\; .
\end{align*}

We set  $\widehat z=(z_1+\tau,z_2)=f_{\rm VTrans1}(1)$. The map   $f_{\rm VTrans1} $ gives us a  linear isomorphism from $H_1(E_{\tau,z}, L_\rho)$ onto 
$H_1(E_{\tau,\widehat z}, L_{\widehat \rho})$ which will be denoted by  ${\rm VTrans1}_\rho$. 

The corresponding connection matrix is
\begin{equation*}
{\rm VTrans1}_\rho= \begin{bmatrix} 
\frac{1}{\rho_\infty}  &   0   &  0 \\
-\frac{d_0\rho_1}{\rho_\infty}  & {\rho_1}     &  0 \\
\frac{\rho_1}{\rho_\infty} &0  &   {\rho_1}
\end{bmatrix}.
\end{equation*}
\mk 

One verifies that  the following relation is satisfied: 
\begin{equation}
\label{E:HTwistTransfo}
I_{\widehat \rho}={\rm VTrans1}_\rho\cdot I_\rho\cdot {}^t\overline{{\rm VTrans1}_\rho}\, .
\end{equation}
\sk 


\paragraph{\bf Second vertical translation formula ``$\boldsymbol{z_2\rightarrow z_2+\tau}$''}
We  finally consider the connection formula associated to the path 
$$ 
f_{\rm VTrans2} : 
[0,1] \longrightarrow \mathbb H\times \mathbb C^2, \; 
 s\longmapsto \big(\tau, z_1, z_2+s\tau\big)\, . 
$$

We  define $ \rho^*=R_{{\rm HTwist}}\circ  R_{{\rm VTrans}}\circ R_{{\rm HTwist}}\circ    (\rho)$, that is 
$$
 \rho^*=\big(  \rho^*_\infty,  \rho^*_0,  \rho^*_1
 \big) = \Big(
 \rho_\infty , \rho_0\rho_1^{-1} , \rho_1 \Big)\, .
$$

We set  $z^*=(z_1, z_2+\tau)=f_{\rm VTrans2}(1)$.  The map  $ 
f_{\rm VTrans2} $ gives us a linear isomorphism ${\rm VTrans2}_\rho$ from $H_1(E_{\tau,z}, L_\rho)$ onto 
$H_1(E_{\tau,z^*}, L_{\rho^*})$. 

The corresponding connection matrix is
\begin{equation*}
{\rm VTrans2}_\rho= \begin{bmatrix} 
1  &   0   &  0 \\
-\frac{d_1}{\rho_1 \rho_\infty}  & \frac{1}{\rho_1}     &   \frac{d_1}{\rho_1^2\rho_\infty}  \\
- \frac{1}{\rho_\infty} &0  &   \frac{1}{\rho_1\rho_\infty}
\end{bmatrix}\, . 
\end{equation*}
\mk 

One verifies that  the following relation is satisfied: 
\begin{equation}
\label{E:VTrans2Verif}
I_{\rho^*}= {\rm VTrans2}_\rho\cdot I_\rho\cdot {}^t\overline{{\rm VTrans2}_\rho}.\sk 
\end{equation}

The matrix  $ {\rm VT2}_\rho$ of the isomorphism ${\rm VTrans2}_\rho$ expressed in the basis $(\boldsymbol{\gamma}_\infty,\boldsymbol{\gamma}_0)$ and $(\boldsymbol{\gamma}^*_\infty,\boldsymbol{\gamma}^*_0)$ is quite simple compared to \eqref{E:HTrans2rho22}. 
It is 
\begin{equation}
\label{E:VTrans2rho22}
{\rm VT2}_\rho=\begin{bmatrix} 
 1  &  0   
\\
 \frac{\rho_0-\rho_1}{\rho_1\rho_\infty}   &   \frac{1}{\rho_1\rho_\infty}
\end{bmatrix} .
\end{equation}

This matrix satisfies the following relation:  $I\!\!I_{\rho^*}= {\rm VT2}_\rho\cdot I\!\!I_\rho\cdot {}^t\overline{{\rm VT2}}_\rho$. 


\subsubsection{\bf Normalization in the case when $\boldsymbol{\rho_0=1}$}
\label{SS:normalization}
If we assume that   $\rho_0=1$,  then   \eqref{E:Hrho} simplifies and one has: 
\begin{equation*}
  I\!\!H_\rho=
(2i)^{-1}
\begin{bmatrix}
 0   &  
 \rho_\infty \mk   \\ 
   -\frac{1}{\rho_\infty}
   &   \frac{d_\infty d_{1\infty}}{d_1\rho_\infty}  \end{bmatrix}\, . 
\end{equation*}

%
%
Then setting 
$$
Z_\rho=\sqrt{2}\begin{bmatrix}
\rho_\infty  &  - \frac{d_{1\infty}}{d_1}   
  \\ 
  0   &   1  \end{bmatrix}, 
$$
one verifies that 
$$
\boldsymbol{H} =
\begin{bmatrix}
0  &   -i 
  \\ 
  i &   0  \end{bmatrix}=
{}^t  \overline{Z_\rho}\cdot I\!\!\!H_\rho 
\cdot 
Z_\rho\, . 
$$

The interest of considering $\boldsymbol{H}$ instead of 
$ I\!\!H_\rho$  is clear:  the automorphism group of the former is 
${\rm SL}_2(\mathbb R)$,  hence, in particular, does not depend on  $\rho$. \sk

This  normalization will be used later in \S\ref{S:HolonomyY1(N)alpha1}.



\section{\bf An explicit expression for Veech's map and some consequences}

\subsection{\bf Some general considerations about Veech's foliation} 
\label{S:GeneralConsiderationsAboutVeech'sFoliation}
In this subsection, we make general remarks about  Veech's foliation in the general non-resonant case. 
Hence
$g$ and $n$ are arbitrary positive integers such that $2g-2+n>0$ and  $\alpha=(\alpha_k)_{k=1}^n\in ]-1,\infty[^n$ is supposed to be non-resonant, {\it i.e.} none of the $\alpha_k$'s is an integer.  
\sk 

In \cite{Veech}, Veech defines the isoholonomic foliation $\mathcal F^\alpha$ on the Teichmller space by means of a real analytic
map ${H}_{g,n}^\alpha: \mathcal T\!\!\!{\it eich}_{g,n}\rightarrow \mathbb U^{2g}$. The point is that this map descends to the Torelli space $ \mathcal T\!\!\!{\it or}_{g,n}$ and  even on this quotient, it is probably not a primitive integral of the foliations formed by its level-sets (this is proved below only when $g=1$).  It is what we explain below. 

\subsubsection{} \!\!\! Let
$(S,(s_k)_{k=1}^n)$ be 
 a reference model  for $n$-marked surfaces of genus $g$.  We fix a base point $s_0\in S^*=S\setminus \{s_k \}_{k=1}^n$. One can find a natural `symplectic basis' $(A_i, B_i, C_k)$, $i=1,\ldots,g$, $k=1,\ldots,n$ of $\pi_{1}(S^*,s_0) $ such that the latter group is isomorphic to  
$$
 \pi_{1}(g,n)=\left\langle 
 A_1,\ldots,A_g, B_1,\ldots,B_g,C_1,\ldots , C_n \; \Big\lvert \prod_{i=1}^g [A_i,B_i]=C_n\cdots C_1
 \right\rangle,  $$
see the picture just below (case $g=n=2$):  
\begin{center}
\begin{figure}[!h]
\psfrag{S}[][][1]{$S $}
\psfrag{A1}[][][0.8]{$\textcolor{blue}{A_1} $}
\psfrag{A2}[][][0.8]{$\textcolor{blue}{A_2 }$}
\psfrag{B1}[][][0.8]{$\textcolor{green}{B_1} $}
\psfrag{B2}[][][0.8]{$\textcolor{green}{B_2} $}
\psfrag{C1}[][][0.8]{$\;\,  \textcolor{red}{C_1} $}
\psfrag{C2}[][][0.8]{$\textcolor{red}{C_2} \;\;\,   $}
\includegraphics[scale=1.2]{{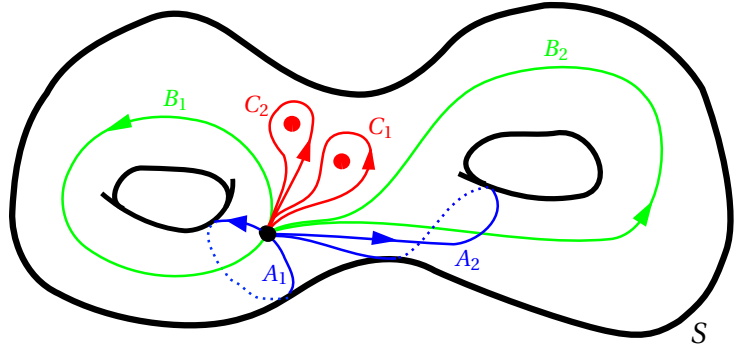}}
\caption{
The base-point $s_0$ is the black dot, 
 $s_1,s_2$ are  the red ones.}
\end{figure}
\end{center}
\subsubsection{} {\hspace{-0.6cm}} We recall Veech's definition of the space $\mathcal E_{g,n}^\alpha$: it is the space of isotopy classes of flat structures on $S$ with conical singularity of type $\lvert u^{\alpha_k}du\lvert^2$ (or equivalently, 
with cone angle $2\pi(1+\alpha_k)$) 
at $s_k$ for $k=1,\ldots,n$.  Since a flat structure of this type  induces a natural conformal structure on $S$, there is a natural map 
\begin{equation}
\label{E:Egnalpha->Teichgn}
\mathcal E_{g,n}^\alpha \longrightarrow \mathcal T\!\!\!{\it eich}_{g,n}
\end{equation}
which turns out to be a real analytic\footnote{The space $\mathcal E_{g,n}^\alpha$ carries a natural intrinsic  real analytic structure, {\it cf.}\,\cite{Veech}.} diffeomorphism.
We need to describe the inverse map of \eqref{E:Egnalpha->Teichgn}.
 To this end we are going to use a somehow old-fashioned definition of the Teichm\"uller space that will be useful for our purpose.  \sk 

Let $(X,x)=(X,(x_1,\ldots,x_n))$ be a $n$-marked Riemann surface of genus $g$.  Considering a point over it in 
 $\mathcal T\!\!\!{\it eich}_{g,n} $ amounts  to specify  a marking of its fundamental group, that is a class, up to inner automorphisms, of isomorphisms $\psi: \pi_1(g,n) \simeq \pi_1(X^*,x_0) $ for any  
 $x_0\in X^*=X\setminus \{x_k \}_{k=1}^n$ (see\;{\it e.g.}\;\cite[\S2]{TeichmullerTranslation} or
 \cite{Ahlfors,Weil}\footnote{The definition of a point of 
 the Teichmller space (of a closed surface  and without marked points) by means of a marking of the fundamental group follows from a result attributed to Dehn by Weil in \cite{Weil}, whereas in  \cite{TeichmullerTranslation},   Teichmller refers for this to the paper \cite{Mangler} by Mangler.}).   Finally, we denote by $ m_{X,x}^\alpha$ the unique 
  flat metric of area 1 in the conformal class corresponding to the complex structure of $X$, with a conical singularity of exponent $\alpha_k$ at $x_k$ for every $k$ ({\it cf.}\;Troyanov's theorem mentioned in {\S\ref{S:VeechIntro}}).
 
With these notations,  the inverse  of \eqref{E:Egnalpha->Teichgn} is written  
 $$
 \big(X,x, \psi \big) \longmapsto  \big(X,x, \psi , m_{X,x}^\alpha \big) \, . 
 $$
  
\subsubsection{} {\hspace{-0.2cm}} 
\label{SS:VeechLinearHolonomyMap}
Since $m_{X,x}^\alpha$ induces a smooth flat structure on $X^*$, 
its linear holonomy along any loop 
$\gamma$ in $X^*$ is a complex number of modulus 1, noted by ${\rm hol}^\alpha_{X,x}(\gamma)$.  Of course, this  number actually  only depends on the homotopy class of $\gamma$ in $X^*$.  With this formalism at hand, it is   easy to describe the map constructed  in \cite{Veech} to define the foliation $\mathcal F^\alpha$ on the Teichmller space: 
it is the map which associates to 
$(X,x,\psi)$ the holonomy character induced by $m_{X,x}^\alpha$. 

Note that,  since the conical angles are fixed, for every $k=1,\ldots,n$, one has 
 $$\qquad {\rm hol}^\alpha_{X,x}\big(\psi(C_k)\big)=\exp\big(2i\pi \alpha_k\big)
 \in \mathbb U\, . $$
  Consequently,  there is  a  well-defined map
\begin{align}
\label{E:Chi-g-n-alpha}
{\chi}^\alpha_{g,n}:
\mathcal T\!\!\!{\it eich}_{g,n} & \longrightarrow {\rm Hom}^\alpha\left( \pi_1(g,n), \mathbb U\right)\\ 
\big(X,x,\psi\big) & \longmapsto   {\rm hol}^\alpha_{X,x}\circ \psi\, , 
\nonumber
\end{align}
the exponent $\alpha$ in the formula of the target space meaning  that one considers only unitary characters on $ \pi_1(g,n)$ which map $C_k$ to $\exp(2i\pi \alpha_k)$ for every $k$. 

\subsubsection{}{\hspace{-0.4cm}}  
\label{SS:H1(g,n)}
Let 
$H_1(g,n)$ be the abelianization of $\pi_1(g,n)$: it is the $\mathbb Z$-module generated by the $A_i$'s, the  $B_j$'s and the $C_k$'s up to the relation $\sum_{k}  C_k=0$. 
We denote by $a_i,b_i$ and $c_k$ the corresponding homology classes.
 We take $H_1(g,n)$ as  a model for the first homology  group of $n$-punctured genus $g$ Riemann surfaces. 
\begin{center}
\begin{figure}[!h]
\psfrag{S}[][][1]{$S $}
\psfrag{A1}[][][0.8]{$\textcolor{blue}{a_1} $}
\psfrag{A2}[][][0.8]{$\textcolor{blue}{a_2 }$}
\psfrag{B1}[][][0.8]{$\textcolor{green}{b_1} $}
\psfrag{B2}[][][0.8]{$\textcolor{green}{b_2} $}
\psfrag{C1}[][][0.8]{$\;\,  \textcolor{red}{c_1} $}
\psfrag{C2}[][][0.8]{$\textcolor{red}{c_2} \;\;\,   $}
\includegraphics[scale=1.2]{{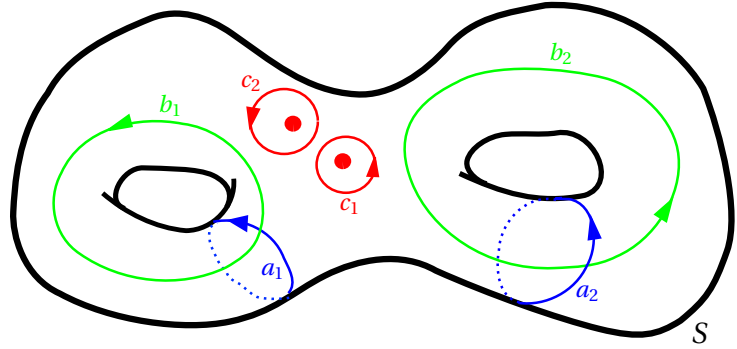}}
\caption{A model for the homology of the punctured surface $S^*$.}
\end{figure}
\end{center}

The Torelli space $\mathcal T\!\!\!{\it or}_{g,n}$ can be defined as the set of triples $(X,x,\phi)$ where $(X,x)$ is a marked Riemann surface as above and $\phi$ an isomorphism from $H_1(g,n)$ onto $H_1(X^*,\mathbb Z)$. Moreover, the projection from the Teichmller space onto the Torelli space is given by 
\begin{align*}
p_{g,n}:  \mathcal T\!\!\!{\it eich}_{g,n} & \longrightarrow  \mathcal T\!\!\!{\it or}_{g,n}
\\ 
\big(X,x,\psi\big) & \longmapsto  \big(X,x,[\psi]\big)
\end{align*}
where $[\psi]$ stands for the isomorphism in   homology induced by 
$\psi$. 

\subsubsection{} {\hspace{-0.6cm}} Now, the key (but obvious) point  is that the holonomy ${\rm hol}^\alpha_{(X,x)}(\gamma)$ for $\gamma\in \pi_1(X^*)$ does not depend on the base point but only on the (base-point) free homology class $[\gamma]\in H_1(X^*,\mathbb Z)$.  Since $\mathbb U$ is commutative, 
any unitary representation of $\pi_1(g,n)$ factors trough $\pi_1(g,n)^{\rm ab}=H_1(g,n)$,  thus 
there is a natural map 
$ {\rm Hom}^\alpha\left( \pi_1(g,n), \mathbb U\right)\rightarrow 
 {\rm Hom}^\alpha\left( H_1(g,n), \mathbb U\right)$.   Since  $\mu\in    {\rm Hom}^\alpha( \pi_1(g,n), \mathbb U)$  is completely determined by its values on the $A_i$'s and the $B_i$'s (it verifies $\mu(C_k)=\exp(2i\pi\alpha_k)$ for $k=1,\ldots,n$), 
  the space $  {\rm Hom}^\alpha( \pi_1(g,n), \mathbb U)$ is naturally  isomorphic to $\mathbb U^{2g}$.  This applies verbatim to 
  $  {\rm Hom}^\alpha( H_1(g,n), \mathbb U)$ as well.  It follows that these two spaces of unitary characters are both   naturally isomorphic to $\mathbb U^{2g}$. \sk 
  
  From the preceding discussion, it comes that one can define a map 
$
\mathcal T\!\!\!{\it or}_{g,n}  \rightarrow {\rm Hom}^\alpha( H_1(g,n), \mathbb U)$ 
 which makes the following  square 
  diagram commutative: 
    \begin{equation}
  \label{D:SquareCommutative}
    \xymatrix@R=1cm@C=0.15cm{  
    \mathcal T\!\!\!{\it eich}_{g,n}    
     \ar@{->}[rrrr]^{\chi^\alpha_{g,n}\qquad } 
      \ar@{->}[d]_{p_{g,n}}  &&&&  {\rm Hom}^\alpha\left( \pi_1(g,n), \mathbb U\right)  \eq[r] \ar@{->}[d]^{}& \; {\mathbb U^{2g}}   \ar@{=}[d]^{} \\
  \mathcal   T\!\!\!{\it or}_{g,n} \ar@{->}[rrrr]
  &&&&
    {\rm Hom}^\alpha\left( H_1(g,n), \mathbb U\right)    \eq[r]    &  \;  \mathbb U^{2g} . 
    }
        \end{equation}
        
   Both maps with values into $\mathbb U^{2g}$ given by the two lines   of the preceding diagram will be called the {\bf linear holonomy maps}. 
   We will use the (a bit abusive) notation
   $H^\alpha_{g,n}:   \mathcal T\!\!\!{\it eich}_{g,n} \rightarrow \mathbb U^{2g}$ for the first and the second will be denoted by 
    \begin{equation}
\label{E:VeechMapTorelli}
{h}_{g,n}^\alpha : \, 
\mathcal T\!\!\!{\it or}_{g,n}  \longrightarrow  \mathbb U^{2g}\, .
\end{equation}
  
  Since $p_{g,n}$ is the universal covering map of the Torelli space which is a complex manifold,   the maps  $H^\alpha_{g,n}$ 
  and $h_{g,n}^\alpha $  enjoy the same local analytic properties. 
  Then from  \cite[Theorem 0.3]{Veech}, one deduces immediately the 
\begin{coro} If  $\alpha$ is non-resonant,  the linear holonomy map
$ h_{g,n}^\alpha$
  is a real analytic submersion. Its level sets are  complex submanifolds of the Torelli space $\mathcal T\!\!\!{\it or}_{g,n}$, 
 of complex dimension $2g-3+n$.\footnote{Actually, the statement is valid for any $\alpha$ but on the complement of the preimage by 
the linear holonomy map   of the trivial character on $H_1(g,n)$.  Note that the latter does not belong to ${\rm Im}({h}_{g,n}^\alpha)$ (so its preimage is empty) as soon as at least one of the $\alpha_k$'s is not an integer.}  
\end{coro}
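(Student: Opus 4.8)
The plan is to deduce the statement from Veech's theorem \cite[Theorem 0.3]{Veech} by a routine descent along the covering map $p_{g,n}$. The first step is to make explicit the factorization encoded in the commutative diagram \eqref{D:SquareCommutative}: once both spaces of unitary characters have been identified with $\mathbb U^{2g}$ as explained above, the two rows of the diagram say precisely that
\begin{equation*}
H^\alpha_{g,n}={h}^\alpha_{g,n}\circ p_{g,n}.
\end{equation*}
Since the Torelli group ${\rm Tor}_{g,n}$ acts freely and properly discontinuously by biholomorphisms on the contractible complex manifold $\mathcal T\!\!\!{\it eich}_{g,n}$, with quotient $\mathcal T\!\!\!{\it or}_{g,n}$, the projection $p_{g,n}$ is a holomorphic (universal) covering map, hence in particular a local biholomorphism.

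The second step is to invoke Veech's result on the Teichm\"uller side. By \cite[Theorem 0.3]{Veech}, the map $H^\alpha_{g,n}$ is a real analytic submersion, and for every \emph{nontrivial} character $\rho\in{\rm Im}(H^\alpha_{g,n})$ the level set $(H^\alpha_{g,n})^{-1}(\rho)$ is a complex submanifold of $\mathcal T\!\!\!{\it eich}_{g,n}$ of complex dimension $2g-3+n$. Here I would use the non-resonance hypothesis in a mild but decisive way: every character $\mu$ in the relevant space sends $C_k$ to $\exp(2i\pi\alpha_k)$, and since at least one of these values differs from $1$, the trivial character never lies in the image of the holonomy map. Thus the exceptional locus appearing in Veech's statement (the preimage of the trivial character) is empty, and the conclusions hold at every point of $\mathcal T\!\!\!{\it eich}_{g,n}$ and over every $\rho$ in the image.

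The final step is the descent itself, where the local biholomorphism property of $p_{g,n}$ does all the work. Being a submersion is a pointwise condition on the differential, so the identity $H^\alpha_{g,n}={h}^\alpha_{g,n}\circ p_{g,n}$ together with the fact that $dp_{g,n}$ is everywhere an isomorphism forces ${h}^\alpha_{g,n}$ to be a submersion wherever $H^\alpha_{g,n}$ is, that is, everywhere. For the level sets one writes
\begin{equation*}
p_{g,n}^{-1}\big(({h}^\alpha_{g,n})^{-1}(\rho)\big)=(H^\alpha_{g,n})^{-1}(\rho),
\end{equation*}
and observes that $p_{g,n}$ restricts to a covering map from the right-hand side onto $({h}^\alpha_{g,n})^{-1}(\rho)$. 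Because $p_{g,n}$ is a local biholomorphism, it transports the complex submanifold structure downward: locally $({h}^\alpha_{g,n})^{-1}(\rho)$ is biholomorphic to an open piece of $(H^\alpha_{g,n})^{-1}(\rho)$, hence is a complex submanifold of $\mathcal T\!\!\!{\it or}_{g,n}$ of the same dimension $2g-3+n$.

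I do not expect a genuine obstacle here. All of the substantive analytic content --- in particular the nonobvious fact that the merely real-analytic fibers of the holonomy map are honest complex submanifolds --- is already contained in Veech's theorem, and what remains is the formal transfer of these properties through a local biholomorphism. The only point requiring a little care is the bookkeeping around the trivial character, which the non-resonance assumption disposes of cleanly, exactly as recorded in the footnote to the statement.
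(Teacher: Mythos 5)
Your proposal is correct and follows essentially the same route as the paper: the paper's own justification is precisely that $p_{g,n}$ is the universal covering of the Torelli space (the Torelli group acting freely, properly discontinuously and holomorphically on the contractible Teichm\"uller space), so that $H^\alpha_{g,n}=h^\alpha_{g,n}\circ p_{g,n}$ and the two maps share all local analytic properties, after which everything follows from \cite[Theorem 0.3]{Veech}; your treatment of the trivial character via non-resonance is exactly the content of the paper's footnote. You have merely written out in full the descent argument that the paper compresses into two sentences.
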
 
     This implies that the foliation constructed by Veech on $ \mathcal T\!\!\!{\it eich}_{g,n} $ in \cite{Veech} actually is the pull-back of a foliation defined on $\mathcal T\!\!\!{\it or}_{g,n}$. We will also  call the latter Veech's foliation and will denote it the same way, that is by  $\mathcal F^\alpha$.

\subsubsection{}{\hspace{-0.4cm}} We now explain that Veech's  linear holonomy map $\chi^\alpha_{g,n}:   \mathcal T\!\!\!{\it eich}_{g,n} \rightarrow \mathbb U^{2g}$ actually  admits a canonical lift to $\mathbb R^{2g}$.  To this end, we use  elementary arguments (which can be found in  \cite{MasurSmillie}, p. 488-489).

Let $(X,x)$ be as above and consider  $\gamma$, a smooth simple curve in $X^*$. 
If $\ell $ stands for its length for the flat metric $m_{X,x}^\alpha$, there exists a 
$\ell$-periodic smooth map $g: \mathbb R \rightarrow  X^*$ which induces a isomorphism of flat circles $\mathbb R/\ell \mathbb Z\simeq \gamma$ ({\it i.e.}\;the pull-back of $m_{X,x}^\alpha$ by $g$ coincides with the Euclidean metric on $\mathbb R$).  For any $t$, $g'(t)$ is a unit tangent vector at $g(t)\in \gamma$,  thus there exists a unique 
other tangent vector at this point, noted by 
 $g(t)^\bot$, such that $(g'(t),g(t)^\bot)$ form a direct orthonormal basis of $T_{g(t)} X^*$. 
Then there exists a smooth function $w: [0,\ell]\rightarrow \mathbb R$ such that $g''(t)=w(t)\cdot g(t)^\bot$ for any $t\in [0,\ell]$ and 
one defines the {\bf total angular curvature of the loop $\gamma$ in the flat surface $(X,m_{X,x}^\alpha) $} as the real number
$$
\kappa(\gamma)=
\kappa_{X,x}^\alpha(\gamma)=\int_0^\ell w(t)dt\, .
$$

There is a nice geometric interpretation of this number as a sum 
of the oriented interior angles of the triangles of a given Delaunay triangulation of $X$ which meet $\gamma$ (see \cite[\S6]{MasurSmillie}). In particular, one obtains that  $\kappa(\gamma)$ only depends on the free isotopy class of $\gamma$ and that $\exp(2i\pi \kappa(\gamma))$ is nothing  else but the linear holonomy of $(X,m_{X,x}^\alpha)$ along $\gamma$, that is: 
\begin{equation}
\label{E:Exp2iPiKappaGamma}
\exp\big(2i\pi \kappa(\gamma)\big)={\rm hol}^{\alpha}_{X,x}(\gamma)\, . 
\end{equation}

Let $\widetilde \gamma$ be another simple curve in the free homotopy class $\langle \gamma\rangle$ of $\gamma$. According to a classical result of the theory of surfaces (see \cite{Epstein}), $\widetilde \gamma$ and $\gamma$ actually are isotopic,  hence $\kappa(\gamma)=\kappa(\widetilde \gamma)$. Consequently,  the following definition makes sense: 
\begin{equation}
\label{E:KappaHomologyClass}
\kappa_{X,x}^\alpha\big(\langle \gamma\rangle \big)=
\kappa_{X,x}^\alpha(\gamma)\, .
\end{equation}
 
\subsubsection{}{\hspace{-0.4cm}}  
\label{SS:LiftedHolonomyMap}
 Now assume that a base point $x_0\in X^*$ has been fixed. By the preceding construction, one can attached a real number 
 $\kappa_{X,x}(\langle \gamma\rangle )$ 
 to each element $[\gamma]\in \pi_1(X^*,x_0)$ which is representable by a simple loop $\gamma$.  If $\eta$ stands for an inner automorphism of $\pi_1(X^*,x_0)$, a classical result of the topology of surfaces ensures that $ \gamma$ and $\eta(\gamma)$ are freely homotopic, {\it i.e.}\;$\langle \gamma\rangle=\langle \eta(\gamma) \rangle$. 
 \sk

 We now have explicited everything needed to construct a lift of Veech's linear holonomy map. Let $(X,x,\psi)\simeq (X,x,m_{X,x}^\alpha,\psi)$ be a point of $\mathcal T\!\!\!{\it eich}_{g,n} \simeq \mathcal E_{g,n}^\alpha$. Then for any 
 element $D$ of  $\{A_k, B_k\, \lvert \, k=1,\ldots,g\}\subset \pi_1(g,n)$, its `image' $D^\psi$ by $\psi$ can be seen as the conjugacy class of the homotopy class of a simple curve  in $X^*$. By the preceding discussion, it comes that the  map  
 \begin{align}
\label{E:TildeHolAlpha}
\widetilde{H}^{\alpha}_{g,n} \, : \; \mathcal T\!\!\!{\it eich}_{g,n} & \longrightarrow \mathbb R^{2g}\\
\big(X,x,\psi\big)  & \longmapsto \Big(  \kappa\big(A_1^\psi\big), \ldots,\kappa\big(A_g^\psi \big),\kappa\big(B_1^\psi\big), \ldots,\kappa\big(B_g^\psi\big)\Big) \nonumber
\end{align}
is well-defined.   This map is named the {\bf lifted holonomy map}.
\sk

 It is easy (left to the reader) to verify that it enjoys the following properties: 
\begin{enumerate}
\item  it is a lift of ${H}^{\alpha}_{g,n}$  to $\mathbb R^{2g}$:
if $e: \mathbb R^{2g}
\rightarrow \mathbb U^{2g}$ is  the universal covering,  then 
 $$ {H}^{\alpha}_{g,n}= e\circ \widetilde{H}^{\alpha}_{g,n}\, ; $$
\item  it is real analytic. 
\end{enumerate}
The first point follows at once from \eqref{E:Exp2iPiKappaGamma} and the second 
is an immediate consequence of the first combined with   the obvious fact that $\widetilde{H}^{\alpha}_{g,n}$ is continuous.

\subsubsection{} \hspace{-0.4cm} 
\label{SS:VeechLeavesAreNotCoonected}
From the lines above, it comes that $\widetilde{H}^{\alpha}_{g,n}$ is a real analytic first integral for Veech's foliation on $\mathcal T\!\!\!{\it or}_{g,n}$ which could  enjoy better properties than ${H}^{\alpha}_{g,n}$.  Note that,  among the lifts of the latter which are continuous, it is unique up to translation by an element of $2\pi\mathbb Z^{2g}$. \sk 

For $a\in \mathbb R^{2g}$, one defines   $\mathcal F_a^\alpha$ as the inverse image   of $a$ by 
the lifted holonomy map in 
the Teichmller space. 
In particular, for 
$\rho=e(a) \in \mathbb U^{2g}$, one has 
\begin{equation}
\label{EE}
\mathcal F^\alpha_{\rho}=\big({H}^{\alpha}_{g,n}\big)^{-1}(\rho)= 
\bigcup_{{\boldsymbol{m}} \in \mathbb Z^{2g}} \mathcal F^\alpha_{a+2\pi \boldsymbol{m}}
\end{equation}
hence it is natural to expect that any  level-set $\mathcal F^\alpha_{\rho}$ has a countable set of connected components.  
We will prove below that it is indeed the case when $g=1$, by completely expliciting the lifted holonomy map (see Remark \ref{R:ImageHolonomy&VeechFoliationCC}.(2)).  We conjecture that it is also true when $g\geq 2$ but it is not proved yet. 
\mk 

To conclude this subsection, we would like to warn the reader that the  terminology 
`{\it lifted holonomy}'  that 
we use to designate $\widetilde{H}^{\alpha}_{g,n}$ can be  misleading.   Indeed, the latter map is not a holonomy in a natural sense.  This will make be clear below when considering the genus 1 case, see Example \ref{Ex:KiteStuf} below.

\subsection{\bf An explicit description of Veech's foliation when $\boldsymbol{g=1}$}
\label{S:TheCaseOfEllipticCurves}
We now focus on the case when  $g=1$,  with $n\geq 2$ arbitrary. \sk 

The first particular and interesting feature of this special case is that it is possible to define a `lifted holonomy map' on the Torelli space $\mathcal T\!\!\!{\it or}_{1,n}$. \mk 

Then, when dealing with elliptic curves, all the rather abstract considerations of the preceding subsection can be made completely explicit.  The reason for this is twofold:  first, there is a nice explicit description of the Torelli space 
$\mathcal T\!\!\!{\it or}_{1,n}$ due to  Nag; second, on tori, one can give an explicit formula for a metric inducing a flat structure with conical singularities in terms of theta functions. 

\subsubsection{} \hspace{-0.4cm} 
Our goal here is to construct abstractly a lift to $\mathbb R^2$ of the map  
$h_{1,n}^{\alpha}: \, {\mathcal T}\!\!\!{\it or}_{1,n}\rightarrow \mathbb U^2$.  Our construction  is based on the following crucial result: 

\begin{lemma}
 On a punctured torus, two simple closed 
curves which are homologous are actually  isotopic.
\end{lemma}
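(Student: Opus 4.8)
The plan is to establish the statement as a purely topological fact about the punctured torus, independent of any flat structure, and to reduce it to the classical classification of simple closed curves on surfaces. First I would set up the relevant surface: a punctured torus $S^*=S_{1,1}=T^2\setminus\{s\}$ (or more generally an $n$-punctured torus, though the case of one puncture already carries the essential content, since a curve avoiding several punctures in particular avoids one). The key facts I would invoke are (i) the classification of isotopy classes of essential simple closed curves, and (ii) the change-of-coordinates principle, both standard in the theory of mapping class groups (see Farb--Margalit). The goal is to show: if two essential simple closed curves $\gamma,\gamma'$ on $S^*$ satisfy $[\gamma]=[\gamma']$ in $H_1(S^*,\mathbb{Z})$, then $\gamma$ and $\gamma'$ are isotopic.

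The main step is to understand simple closed curves on $S^*$ up to isotopy. On the closed torus $T^2$, isotopy classes of essential simple closed curves are in bijection with primitive homology classes $\pm(p,q)\in H_1(T^2,\mathbb Z)\cong\mathbb Z^2$ with $\gcd(p,q)=1$; this is the classical result that the homology class (up to sign) is a complete isotopy invariant for simple closed curves on the torus. For the \emph{punctured} torus the same conclusion holds for curves that are nonseparating: an essential nonperipheral simple closed curve in $S^*$ is determined up to isotopy by its image in $H_1(T^2,\mathbb Z)$, and this image is a primitive class. I would phrase the argument as follows: given $\gamma$ with primitive class $(p,q)$, the change-of-coordinates principle produces a homeomorphism of $S^*$ taking $\gamma$ to a standard curve representing $(p,q)$; since two curves with the same primitive class are both taken to the same standard representative, they are isotopic. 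The separating (peripheral) curves bound a punctured disk and are nullhomologous in $S^*$, a degenerate case excluded once we assume the curves are essential, and in any event homologous peripheral curves are isotopic as well.

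The subtle point — and the main obstacle I anticipate — is the passage from $H_1(T^2)$ back to $H_1(S^*)$ and the role of the puncture. The homology $H_1(S^*,\mathbb Z)$ is free of rank $2g + (n-1)=n+1$ for the $n$-punctured genus-one surface; the map $H_1(S^*)\to H_1(T^2)$ kills the peripheral classes $c_k$. Thus ``homologous in $S^*$'' is \emph{stronger} than ``homologous in $T^2$'': two simple closed curves with equal class in $H_1(S^*)$ a fortiori have equal class in $H_1(T^2)$, so the torus argument gives an isotopy in $T^2$; one must then upgrade this to an isotopy in $S^*$, i.e. an isotopy that can be taken to avoid the punctures throughout. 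This is where I would invoke Epstein's theorem (the reference \cite{Epstein} cited later in the excerpt, used at \eqref{E:KappaHomologyClass}): two disjoint essential simple closed curves on a surface that are homotopic are in fact isotopic, and freely homotopic simple closed curves on surfaces are isotopic. I would check that equality of classes in $H_1(S^*)$ forces the two curves to be freely homotopic in $S^*$ (using that $S^*$ has free, hence torsion-free and in low rank well-controlled, fundamental group, so that for simple closed curves the free homotopy class is detected by homology together with primitivity), and then Epstein upgrades free homotopy to isotopy. The delicate bookkeeping is ensuring that the stronger $H_1(S^*)$ hypothesis genuinely pins down the free homotopy class rather than only the $T^2$-class; I would handle this by noting that the kernel of $H_1(S^*)\to H_1(T^2)$ is spanned by peripheral loops, and an essential nonperipheral simple closed curve cannot differ from another by a peripheral class without changing its geometric intersection pattern, which a careful case analysis on the punctured torus rules out.
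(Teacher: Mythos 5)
Your proposal correctly isolates the hard point of the lemma -- upgrading an isotopy in the closed torus $T$ to an isotopy in the punctured torus $T^*$ -- but it does not actually resolve it, and the two tools you offer for it do not work. First, the opening reduction to the once-punctured case is invalid: if $a$ and $b$ are homologous in the $n$-punctured torus, the one-puncture statement only yields an isotopy avoiding one chosen puncture, and that isotopy may sweep across the other $n-1$ punctures; since the paper applies the lemma on the Torelli space of $n$-punctured curves for arbitrary $n$, this case cannot be discarded. Second, and more seriously, the principle you invoke to pin down the free homotopy class -- ``for simple closed curves the free homotopy class is detected by homology together with primitivity'' -- is false as a general statement about surfaces (on a closed genus-$2$ surface, two disjoint non-separating curves whose union bounds a genus-$1$ subsurface are homologous, both primitive, yet not freely homotopic), so no soft property of the free group $\pi_1(S^*)$ can be cited for it; and on the punctured torus this claim \emph{is} the lemma (modulo Epstein), so invoking it is circular. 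The closing appeal to ``a careful case analysis on the punctured torus'' is exactly the step that needs a proof.

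The missing mechanism is what the paper's own argument supplies. It starts as you do (homologous in $T$, hence homotopic since $\pi_1(T)=H_1(T,\mathbb Z)$, hence isotopic in $T$ by Epstein), but then chooses the isotopy $I:[0,1]\times S^1\to T$ so that the number $m$ of points of $I^{-1}(\Sigma)$ is \emph{minimal}, and counts crossings: each transverse crossing of a puncture $s_i$ changes the class in $H_1(T^*,\mathbb Z)$ by $\epsilon(i)[\delta_{s_i}]$ with $\epsilon(i)=\pm1$, so $[b]=[a]+\sum_i\epsilon(i)[\delta_{s_i}]$. Minimality forces all crossings at a common puncture to carry the same sign (two opposite crossings could be cancelled), so the hypothesis $[a]=[b]$ together with the fact that the \emph{only} relation among the peripheral classes is $\sum_{s\in\Sigma}[\delta_s]=0$ shows the crossing data equals $k\sum_{s}[\delta_s]$ for a single integer $k$. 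That multiple is then killed geometrically: a curve essential in $T$ cuts $T$ into a cylinder, and sweeping the curve once around this cylinder is an isotopy crossing every puncture exactly once with the same sign; post-composing $I$ with $k$ such sweeps produces an isotopy entirely inside $T^*$. Your proposal has no analogue of the crossing count, the minimality/sign argument, or the sweep isotopy, and without them the upgrade from $T$ to $T^*$ is not established. (A side remark: this sweep step is also why one must restrict to curves essential in $T$; your claim that ``homologous peripheral curves are isotopic as well'' is false for $n\geq2$ -- on the twice-punctured torus the loop around $s_1$ and the orientation-reversed loop around $s_2$ are homologous in $T^*$ but not isotopic -- which is harmless for the paper's application to the non-separating classes $a_1,b_1$, but should not be waved away.)
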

\begin{proof} 
Let $\Sigma$ stand for a finite subset of $T=\mathbb R^2/\mathbb Z^2$. 
  We consider 
 two simple closed curves $a$ and $b$ in $T^*=T\setminus \Sigma $,  assumed to be  homologous.\sk
 
 We need first to treat the case when  $\Sigma$ is empty. Since 
 $\pi_1(T)=\mathbb Z^2$ is abelian, it coincides with its abelianization, namely $H_1(T,\mathbb Z)$. From this, it follows that  $a$ and $b$ are homotopic. 
 Then a classical result from the theory of surfaces \cite{Epstein} allows to conclude that these two curves are isotopic. \sk 
 
We now consider the case when $\Sigma $ is not empty which is the one of interest for us.   
The hypothesis implies that $a$ and $b$ are  {\it a fortiori} homologous in $T$.  From above, it follows that  they are isotopic through an isotopy $I : [0,1] \times S^1 \rightarrow T$. We can assume that this isotopy is minimal in the sense that the number $m$ of couples $(t,\theta) \in [0,1] \times S^1$ such that $I(t,\theta)  \in \Sigma$ is minimal. \sk 

We denote by  $(t_1,\theta_1),\ldots, (t_m,\theta_m)$ the elements of  $I^{-1}(\Sigma)$.  For any $i=1,\ldots,m$,  we set 
$s_i=I(t_i,\theta_i)\in \Sigma$ and define $\epsilon(i)\in \{\pm 1\}$ as follows: $\epsilon(i)=1$ if  $({\partial I}/{\partial \theta}, {\partial I}/{\partial t})$ form an direct basis of the tangent space of $T$ at $s_i$;   otherwise, we set $\epsilon(i)=-1$.

 Therefore we have 
$$ [b] = [a] + \sum_{i=1}^m {\epsilon(i) [\delta_{s_i}]} $$ 
 in $H_1(T^*, \mathbb Z)$,  where $\delta_{s}$ stands for a small circle turning around $s$ counterclockwise for any $s\in \Sigma$.   If $i$ and $i'$ are two indices such that $s_{i} = s_{i'}$  then $\epsilon(i)$ and $\epsilon(i')$ must be equal. Indeed otherwise these two crossings could be 
  cancelled what would contradict the minimality of $m$. Since  
 $ [b] = [a] $ by assumption, we have $\sum_{i=1}^m{\epsilon(i) [\delta_{s_i}]} = 0$.  This relation is necessarily an integer multiple of  $\sum_{s\in \Sigma} [\delta_{\!s}]$.  Remark that the latter 
 can be  be cancelled by modifying $I$: a simple closed curve on $T^*$ cuts $T$ into a cylinder and we can find an isotopy consisting of going along this cylinder crossing every puncture once in the same direction. Post-composing $I$ by such an isotopy the appropriate number of time allows to cancel all the remaining crossings. The lemma follows. 
 \end{proof}
 \mk 

With this lemma at hand, one can proceed as in 
\S\ref{SS:LiftedHolonomyMap} and construct a real analytic map $\widetilde{h}_{1,n}^{\alpha}: \, {\mathcal T}\!\!\!{\it or}_{1,n}\rightarrow \mathbb R^2$ 
making  the following diagram commutative: 
$$
    \xymatrix@R=0.5cm@C=3cm{  
  \mathcal   T\!\!\!{\it or}_{1,n} \ar@{->}[r]^{\quad \widetilde{h}^\alpha_{1,n} }    
  \ar@{->}[rd]_{\quad {h}^\alpha_{1,n} }  
  &    \;  \mathbb R^2  \ar@{->}[d]^{e(\cdot) }  \\
  &     \;\mathbb U^2  \, . 
    }
$$

Note that the lift  of ${h}^\alpha_{1,n}$ to $\mathbb R^2$ 
is unique,  up translation by an element of $2\pi\mathbb Z^2$, as soon as one demands that it is continuous. We have proved above that such  a {\it `lifted holonomy'} exists on the Torelli space of punctured elliptic curves. We will give  an explicit and particularly simple expression for it  in \S\ref{SS:FlatMetricsOnEllipticCurves} below. 
\sk

To conclude these generalities, we would like to warn the reader that the  terminology 
{\it `lifted holonomy}'  we use to designate $\widetilde{h}^{\alpha}_{1,n}$ is misleading.   Let $E_\tau^*$ be  a $n$-punctured elliptic curve.  With the notations of 
\S\ref{SS:H1(g,n)}, the homology classes  $a_1,b_1$ and $c_1,\ldots,c_{n-1}$ are representable by closed simple curves and  span freely $H_1(E_\tau^*,\mathbb Z)$. Using \eqref{E:KappaHomologyClass}, one can  construct a lift $
 \mathcal T\!\!\!{\it or}_{1,n}\rightarrow {\rm Hom}(H_1(1,n),\mathbb R)$ of 
 the map $\mathcal T\!\!\!{\it or}_{1,n}\rightarrow {\rm Hom}^\alpha(H_1(1,n),\mathbb U) $  in  \eqref{D:SquareCommutative}. If the latter is  a genuine  (linear)  holonomy map, 
 this is not the case for the former.  Indeed,  this additive character  on $H_1(1,n)$ 
  is not geometric in a meaningful  sense:  as the example below shows, its value on 
$c_1+\cdots+c_{n-1}$ a priori differs from  $-\kappa(c_n)$.  

\begin{ex} 
\label{Ex:KiteStuf}
{\rm Let $\tau\in \mathbb H$  be arbitrary. Consider a disk $D $ in $]0,1[_\tau\subset E_\tau$ and a kite $K \subset D$, whose exterior angles are $\vartheta_1, \theta_2,\theta_3\in ]0,2\pi[$ (see the picture below). 
\begin{center}
\begin{figure}[!h]
\psfrag{0}[][][1]{$0$}
\psfrag{1}[][][1]{$1$}
\psfrag{t}[][][1.1]{$\tau$}
\psfrag{D}[][][1.1]{$D$}
\psfrag{K}[][][1.1]{$\!\!\!K$}
\psfrag{t1}[][][0.8]{$\vartheta_1 $}
\psfrag{t2}[][][0.8]{$\theta_2 $}
\psfrag{t3}[][][0.8]{$\theta_3 $}
\includegraphics[scale=1]{{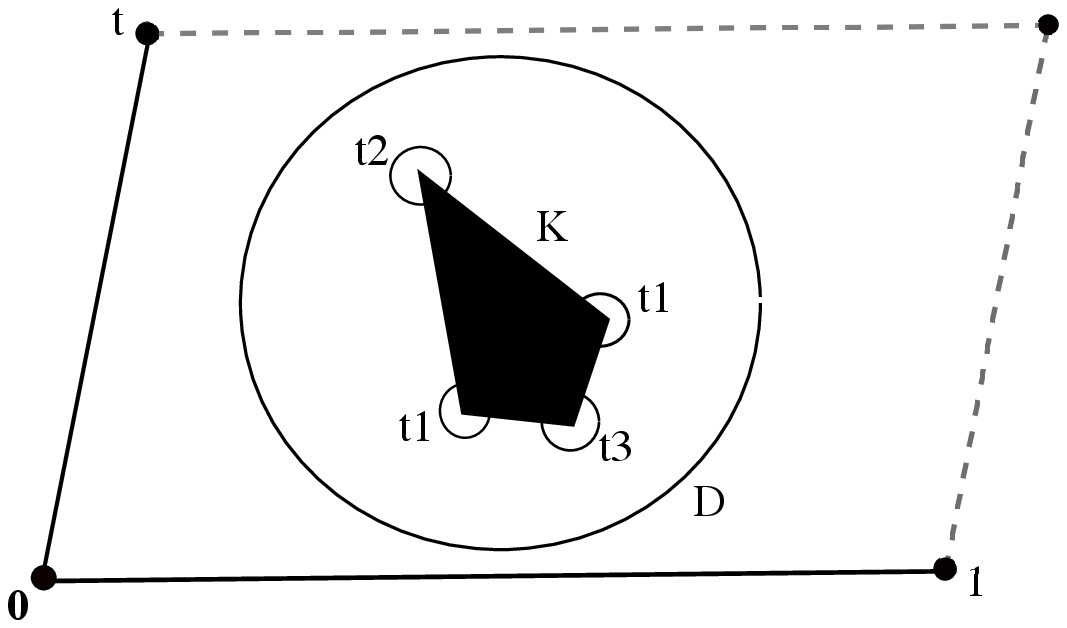}}
\end{figure}
\end{center}

Consider $E_\tau$ with its non-singular  canonical flat structure.  
Removing the interior of $K$ and gluing pairwise the edges of its boundary which are 
of the same length, one ends up  with a flat tori $E_{\tau,K}$ with three conical singularities, of cone angles $\theta_1=2\vartheta_1\in ]0,4\pi[$ and $\theta_2,\theta_3\in ]0,2\pi[$ (in the language of \cite[\S6]{GP}, we have performed a `Kite surgery' on the flat torus 
 $E_\tau$ in order to construct $E_{\tau,K}$).
\sk 

Let $a_1$ and $b_1$ be the loops in $E_{\tau,K}$ which correspond   to the images in $E_\tau$ of the two segments $[0,1]$ and   $[0,\tau]$ respectively.  With the same meaning for $c_1,c_2$ and $c_3$  as  above, one can see $(E_{\tau,K}, a_1,b_1,c_1,c_2,c_3)$ as a point in $\mathcal T\!\!\!{\it or}_{1,3}$.  If $c$ stands for the loop given by the boundary of $D$ oriented in the direct order, then $c=c_1+c_2+c_3$,  hence $c=0$  in $H_1(E_{\tau,K}^*,\mathbb Z)$. But clearly, computing the total angular curvature of $c$ depends only on  the flat geometry  along $\partial D$,  hence can be performed in the flat tori $E_\tau$.  One gets $\kappa(c)=2\pi\neq 0$ although $c$ is trivial in homology.  
This shows that $\kappa$ does not induce a real character on $H_1(1,3)=\pi_1(1,3)^{ab}$ in a natural way.
 }
\end{ex}

To summarize the discussion above: 
 what we have constructed is a natural lift to $\mathbb R^{2}$ of the map ${h}^{\alpha}_{1,n}: \mathcal T\!\!\!{\it or}_{1,n}\rightarrow \mathbb U^{2}$  but not a lift to $ {\rm Hom}(H_1(1,n),\mathbb R)$
of the genuine holonomy map 
 $\mathcal T\!\!\!{\it or}_{1,n}\rightarrow {\rm Hom}^\alpha(H_1(1,n),\mathbb U)$ in diagram \eqref{D:SquareCommutative}. \mk 
 

\subsubsection{\bf The Torelli space of punctured elliptic curves} 
\label{S:TorelliNag}

For $(g,n)$ arbitrary, the Torelli group ${\rm Tor}_{g,n}$  is defined as the subgroup of the pure mapping class group ${\rm PMCG}_{g,n}$ which acts trivially on the first homology 
group of fixed   $n$-punctured model surface $S_{g,n}$.\footnote{Beware that several kinds of Torelli groups have been considered in the literature, especially in geometric topology 
(see {\it e.g.}\;\cite{Putman} where this is carefully explained). 
 The Torelli group we are considering in this text is known as the {\it `small Torelli group'}.} It is known that it 
 acts holomorphically, properly, discontinuously and without any fixed point on the Teichm\"uller space 
  ({\it cf.}\;\S2.8.3 in \cite{NagBook}).      Consequently, the Torelli space  $\mathcal{T}\!\!\!{\it or}_{1,n}=\mathcal{T}\!\!\!{\it eich}_{1,n}/{\rm Tor}_{1,n}$
 is a smooth complex variety (in particular, it has no orbifold point). 
 \sk 
 
 In  \cite{Nag}, the author shows that, setting $z_1=0$,  one has an identification 
 \begin{align*}
 \mathcal{T}\!\!\!{\it or}_{1,n}=& \, \left\{ (\tau,z_2,\ldots,z_n)\in \mathbb H\times \mathbb C^{n-1}\, \Big\lvert \;  z_i-z_j\not \in \mathbb Z_\tau\; \mbox{ for } i,j=1,\ldots,n, \, i\neq j
 \right\}. 
 \end{align*}
 
 Moreover there is a universal curve  
 $$
 \mathcal E_{1,n}\longrightarrow  \mathcal{T}\!\!\!{\it or}_{1,n}
 $$
 whose  fiber over $(\tau,z)=(\tau,(z_2,\ldots,z_n))$ is the elliptic curve $E_\tau=\mathbb C/\mathbb Z_\tau$.  It comes with $n$ global sections 
 $\sigma_i$
     defined by 
 $\sigma_i(\tau,z)=[z_i]\in E_\tau$ for $i=1,\ldots,n$. 
 \sk
 
The action of the pure mapping class group ${\rm PMCG}_{1,n}$ on the Torelli space is not effective and its kernel is precisely the Torelli group.  We denote by  ${\rm Sp}_{1,n}(\mathbb Z)$ the quotient ${\rm PMCG}_{1,n}/{\rm Tor}_{1,n}$. It is isomorphic to the group of automorphisms of the first homology group of a $n$-punctured genus $g$ surface which leaves  the cup-product invariant.\footnote{We are not aware of any other proof of this result than the one given in the unpublished thesis \cite{VdB}.} 

From \cite{Nag}, one deduces that there is  an isomorphism
$$
{\rm Sp}_{1,n}(\mathbb Z) \simeq {\rm SL}_2(\mathbb Z)\ltimes \big(\mathbb Z^2\big)^{n-1}
$$
where the semi-direct product is given by   
$$
\Big(M',\big(\boldsymbol{k}', \boldsymbol{l}'\big)\Big)\cdot 
\Big(M,\big(\boldsymbol{k}, \boldsymbol{l}\big)\Big)
= \Big(M'M, \rho(M)\cdot \big(\boldsymbol{k}', \boldsymbol{l}'\big)+\big(\boldsymbol{k}, \boldsymbol{l}\big)\Big)
$$
for 
$M,M'\in {\rm SL}_2(\mathbb Z)$ and $(\boldsymbol{k}, \boldsymbol{l})=\big((k_i,l_i)\big)_{i=2}^n , (\boldsymbol{k}', \boldsymbol{l}')=\big((k'_i,l'_i)\big)_{i=2}^n
\in \big(\mathbb Z^2\big)^{n-1}$, with
\begin{equation}
\label{E:RhoM}
\rho\left(M \right)= 
\begin{bmatrix} d & b \\
c & a \end{bmatrix}
\quad \mbox{ and }
\quad 
M\cdot  
\big(\boldsymbol{k}, \boldsymbol{l}\big)
=
\Big(\big(ak_i+bl_i,ck_i+dl_i \big)\Big)_{i=2}^n
\end{equation}
if  
$M= \tiny{\big[\!\!
\begin{tabular}{cc}
$a$ \!\!&\!\! \!\!\!\!$b$\vspace{-0.1cm}\\
$c$ \!\!&\!\! \!\!\!\! $d$
\end{tabular}\!\!
\big]}\in {\rm SL}_2(\mathbb Z)
$. 

Moreover, one has 
$$\big(M, (\boldsymbol{k}, \boldsymbol{l})\big)^{-1}= 
\bigg( M^{-1}, \Big(   \big(a k_i-bl_i, -ck_i+dl_i\big)_{i=2}^n\Big)\bigg)
.$$

The action of 
$\big(M,(\boldsymbol{k}, \boldsymbol{l})\big)
\in {\rm SL}_2(\mathbb Z)\ltimes \big(\mathbb Z^2\big)^{n-1}$ 
on the Torelli space is given by 
\begin{equation}
\label{E:SP1nACTIONonTor1n}
  \Big( M,   \big(\boldsymbol{k}, \boldsymbol{l}\big)\Big)
\cdot (\tau , z) =\left(
\frac{a\tau+b}{c\tau+d},  \frac{z_2+k_2+l_2\tau}{c\tau+d}, \ldots, 
\frac{z_n+k_n+l_n\tau}{c\tau+d}
\right)
\end{equation}
for  $(\tau,z)=(\tau , z_2,\ldots,z_n)\in \mathcal T\!\!\!{\it or}_{1,n}$.
\sk

The epimorphism of groups ${\rm SL}_2(\mathbb Z)\ltimes \big(\mathbb Z^2\big)^{n-1}\rightarrow {\rm SL}_2(\mathbb Z)$ is compatible with the 
natural projection 
$$\mu=\mu_{1,n} \, : \;  {\mathcal T\!\!\!{\it or}_{1,n}}\longrightarrow {\mathcal T\!\!\!{\it or}_{1,1}}=\mathbb H\, .$$

  In other terms:    there is a surjective morphism in the category of analytic $G$-spaces:  
\begin{equation}
\label{E:MorphismGspace}
    \xymatrix@R=0.4cm@C=1cm{  
  {\mathcal T\!\!\!{\it or}_{1,n}}
  \ar@(dl,dr) \ar[rr]^{\mu}    &      &       \mathbb H    \ar@(dl,dr)   \\
  {\tiny{  {\rm SL}_2(\mathbb Z)\ltimes\big(\mathbb Z^2\big)^{n-1} } } \ar[rr]  &   &  {\rm SL}_2(\mathbb Z)\, . 
}
\end{equation}

\subsubsection{\bf Flat metrics with conical singularities on elliptic curves} 
\label{SS:FlatMetricsOnEllipticCurves}
As in the genus 0 case, there is a general explicit formula for the flat metrics on elliptic curves we are interested in.  We fix $n>1$ and  $\alpha=(\alpha_i)_{i=1}^n\in ]-1,\infty[$ such that $\sum_{i} \alpha_i=0$.
\sk 

For any elliptic curve $E_\tau=\mathbb C/\mathbb Z_\tau$, we will denote by $u$ the usual complex coordinates on $\mathbb C=\widetilde{E_\tau}$.  
 Let $\tau$ be fixed in $ \mathbb H$ and assume that $z=(z_1,\ldots  z_n)$ is a $n$-uplet of  pairwise distinct points on $\mathbb C$. If one defines $a_0$  as the real number
 \begin{equation}
 \label{E:alpha0}
 a_0=- \frac{{\Im}{\rm m}\big( \sum_{i=1}^n \alpha_i z_i  \big)}
 {{\Im}{\rm m}(   \tau)},  
 \end{equation}
 then   ${\Im}{\rm m}(a_0\tau+\sum_i \alpha_i z_i )=0$,  hence the constant  
\begin{equation}
 \label{E:alphaInfinity}
a_\infty =a_0\tau+\sum_{i=1}^n \alpha_i z_i
\end{equation} is a real number as well.\sk

Considering $(\tau,z) \in \mathcal T\!\!\!{\it or}_{1,n}$ as a fixed parameter,  we recall the definition of  the function $T^\alpha$ introduced in \S\ref{SS:OnTori} above: it is the function of the variable 
 $u$ defined by 
$$
T^\alpha_{\tau,z}(u)=
T^\alpha(u, \tau ,z)=e^{2i\pi a_0 u}\prod_{i=1}^n \theta(u-z_i,\tau)^{\alpha_i}. 
$$
We see this  function as a holomorphic multivalued function on 
the $n$-punctured elliptic curves $E_{\tau,z}$. 
From Lemma \ref{L:rhoexplicit}, we know that the monodromy of $T^\alpha_{\tau,z}$ is multiplicative and is given by the character $\rho$ whose characteristic values are 
$$
\rho_0=e^{2i\pi a_0}, \qquad  \rho_k=e^{2i\pi \alpha_k}\quad \mbox{for} \, k=1,\ldots,n
\qquad \mbox{ and }\qquad 
\rho_\infty=e^{2i\pi a_\infty}\, .
$$

Since  $a_0$, $a_\infty$ and the $\alpha_k$'s are real, $\rho$ is unitary. Thus for 
any $(\tau,z)\in \mathcal T\!\!\!{\it or}_{1,n}$, 
$$
m^\alpha_{\tau,z}=\big\lvert T^\alpha_{\tau,z}(u) du\big\lvert ^2
$$
defines a flat metric on $E_{\tau,z}$.  Moreover, the theta function $\theta(\cdot)$, viewed as a section of a line bundle on $E_\tau$,  has a single zero 
at the origin,  which is simple. 

This implies that  up to multiplication by a positive constant, one has 
$$m^\alpha_{\tau,z} \sim \big\lvert (u-z_k)^{\alpha_k}du\big\lvert^2 $$ on a neighborhood of $z_k$,  for  $k=1,\ldots,n$.  This shows that the flat structure induced by $
m^\alpha_{\tau,z}$ on $E_{\tau,z}$ has conical singularities with exponent $\alpha_k$ at $[z_k]$ for every $k=1,\ldots,n$.  We recall the reader that  assuming that $(\tau,z)\in \mathcal T\!\!\!{\it or}$ implies that $z=(z_1,\ldots,z_n)\in \mathbb C^n$ has been normalized such that $z_1=0$. 
\sk 

It follows from Troyanov's theorem that 
considering $\mathcal T\!\!\! {\it or}_{1,n}$ as the quotient of  
the space 
$\mathcal E_{1,n}^\alpha$ of isotopy classes of flat tori  
by the Torelli group ${\rm Tor}_{1,n}$ amounts to associate the triplet $(\tau,z,m^\alpha_{\tau,z})$ to any $(\tau,z)\in \mathcal T\!\!\! {\it or}_{1,n}$.

 Any element of $(\tau,z)\in \mathcal T\!\!\! {\it or}_{1,n}$ comes with a well-defined system of generators of $H_1(E_{\tau,z},\mathbb Z)$. Let $\epsilon>0$ be very small. For $k=1,\ldots,n$, let $\gamma_k^\epsilon$ be a positively oriented small circle centered at $[z_k]$ in $E_\tau$, of radius $\epsilon$.  Let $\gamma_0^\epsilon$ (resp.\;$\gamma_\infty^\epsilon$) be the loop  in $E_{\tau,z}$ defined as the image  of $[0,1]\ni t\mapsto \epsilon(1+i)+t$ 
(resp.\;of $[0,1]\ni t\mapsto \epsilon(1+i)+t\cdot \tau$) by the canonical projection.  For $\epsilon$ sufficiently small, the  homology  classes of 
the  $\gamma_\bullet^\epsilon$'s for $\bullet=0,1,\ldots,n,\infty$  do not depend on $\epsilon$.  We just denote by $\gamma_\bullet$ the associated homology classes. These generate  $H_1(E_{\tau,z},\mathbb Z)$ and $\sum_{k=1}^n \gamma_n=0$ is 
 the unique linear relation they satisfy  (see Figure \ref{Fi:totuti} below). 
\sk

At this point, it is quite obvious 
 that 
 the linear holonomies of the flat surface $(E_{\tau,z},m^\alpha_{\tau,z})$ along $\gamma_0$ and $\gamma_\infty$ are respectively  
$$\rho_0=\rho_0(\tau,z)=\exp\big(2i\pi a_0\big)
\qquad \mbox{ and }\qquad 
\rho_\infty=\rho_\infty(\tau,z)=\exp\big(2i\pi a_\infty\big)\, .$$

 \begin{center}
\begin{figure}[!h]
\psfrag{1}[][][1]{$1 $}
\psfrag{t}[][][1]{$\tau $}
\psfrag{0}[][][1]{$\;  \gamma_1$}
\psfrag{t1}[][][1]{$\;\;  \gamma_4 $}
\psfrag{t2}[][][1]{$\gamma_2 \; $}
\psfrag{t3}[][][1]{$\;\; \gamma_3 $}
\psfrag{G0}[][][1]{$\gamma_0   $}
\psfrag{GI}[][][1]{$\gamma_\infty \; \;  $}
\includegraphics[scale=0.8]{{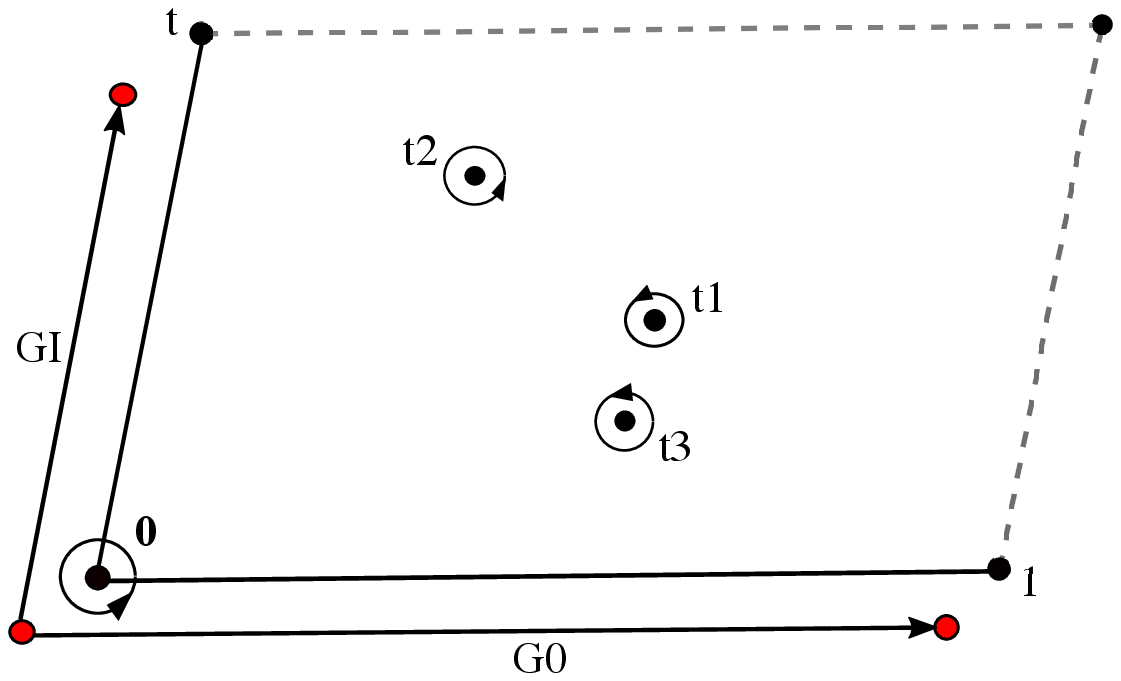}}
\caption{}\label{Fi:totuti}
\end{figure}
\end{center}

 It follows that one has the following explicit expression for the 
 linear holonomy map 
  \eqref{E:VeechMapTorelli} (to make the notations simpler, we do not specify the subscripts $1,n$ starting from now): 
 \begin{align*}
{h}
^\alpha : \,  \mathcal T\!\!\! {\it or}_{1,n} & \longrightarrow \mathbb U^2\\
 (\tau,z) & \longmapsto \big(\rho_0(\tau,z), \rho_\infty(\tau,z)\big) \, .
 \end{align*}

This map is the composition with the exponential map  $e(\cdot)=\exp(2i\pi\cdot)$ of  \begin{align}
\label{E:xiPrim}
\xi
^\alpha : \,  \mathcal T\!\!\! {\it or}_{1,n} & \longrightarrow \mathbb R^2\\
 (\tau,z) & \longmapsto \big(a_0(\tau,z), a_\infty(\tau,z)\big) \, .\nonumber 
 \end{align}
where $a_0(\tau,z)$ and $a_\infty(\tau,z)$ are 
 defined in 
  \eqref{E:alpha0} and \eqref{E:alphaInfinity} respectively. 
\sk 

\begin{rem}
\label{E:XiAlphaTildeHolAlpha}
{\rm  The map $\xi^\alpha$ defined above is then a real-analytic lift of ${h}^\alpha$ to $\mathbb R^2$.  We do not know if it coincides with the lifted holonomy map $\widetilde{h}^\alpha$ constructed in \S\ref{SS:LiftedHolonomyMap}. But since the former differs from the latter 
up to  translation by an element of $2\pi\mathbb Z^2$, this will be irrelevant for our purpose.}
\end{rem}

With the material introduced so far, one can make   some of the general results obtained by Veech in \cite{Veech} in the case when $g=1$ completely  explicit.

Since we are mainly interested in the case when the leaves of Veech's foliation carry a complex hyperbolic structure, we will assume from now on that  \eqref{E:g=1ComplexHyperbolicCase} holds true. Moreover, there is no loss of generality by assuming that the$\alpha_i$'s are presented in decreasing order. 
Hence,  from now on, one assumes that
\begin{equation}
\label{E:AssumptionAlpha}
-1 < \alpha_n\leq \alpha_{n-1}\leq \cdots \leq \alpha_2 <0 < \alpha_1 < 1. 
\end{equation}

\begin{prop}[Explicit description of  $\mathcal F^\alpha$
on the Torelli space
]${}^{}$
\label{P:FalphaGeneralities}

\begin{enumerate}
\item[$(i)$] The map 
$\xi^\alpha$ 
 is a primitive first integral of Veech's foliation on 
$ \mathcal T\!\!\!{\it or}_{1,n}$. \mk 
\item[$(i\!i)$]  For any $a=(a_0,a_\infty)\in {\rm Im}(\xi^\alpha)$, the leaf $\mathcal F^\alpha_a=(\xi^\alpha)^{-1}(a)$ is the complex subvariety of 
$ \mathcal T\!\!\!{\it or}_{1,n}$
cut out by the affine equation
\begin{equation}
\label{E:EqLeaf}
a_0\tau +\sum_{k=2}^n \alpha_k z_k=a_\infty\, .
\end{equation}
\sk
\item[$(i\!i\!i)$] The image of $\xi
^\alpha$ is $\mathbb R^2$ if $n\geq 3$ and $\mathbb R^2\setminus \alpha_1 \mathbb Z^2$ if $n=2$: 
\begin{equation*}
{\rm Im}\big(\xi^\alpha\big)= \begin{cases}
\mathbb R^2\setminus \alpha_1 \mathbb Z^2 
\quad \mbox{ if }\, n=2; \\
\mathbb R^2
\hspace{1.55cm} \mbox{ if }\, n\geq 3\, .
\end{cases}
\end{equation*}
\mk 
\item[$(i\!v)$] Veech's foliation $\mathcal F^\alpha$ is invariant by ${\rm Sp}_{1,n}(\mathbb Z)\simeq 
{\rm SL}_2(\mathbb Z)\ltimes \big(\mathbb Z^2\big)^{n-1}$. \sk 

More precisely, one has
 $$g^{-1}\big(\mathcal F^\alpha_a\big)=\mathcal F^\alpha_{g\bullet a}$$ 
 for any 
 $a=(a_0,a_\infty)\in {\rm Im}(\xi^\alpha)$
 and any $g=\big(M,  (\boldsymbol{k}, \boldsymbol{l})\big)\in 
{\rm SL}_2(\mathbb Z)\ltimes (\mathbb Z^2)^{n-1}$, for a certain action  $\bullet$ of this group on $\mathbb R^2$ given explicitly by 
\begin{equation}
\label{E:ActionOnHolonomy}
\Big(M , \big(\boldsymbol{k}, \boldsymbol{l}\big)
\Big) \bullet \big(a_0,a_\infty\big)= 
\left(a_0 a-a_\infty c  +\sum_{i=2}^n \alpha_i l_i    \, , \, -a_0 b+ a_\infty
d -\sum_{i=2}^n \alpha_i k_i 
\right)
\end{equation}
if $M= \tiny{\big[\!\!
\begin{tabular}{cc}
$a$ \!\!&\!\! \!\!\!\!$b$\vspace{-0.1cm}\\
$c$ \!\!&\!\! \!\!\!\! $d$
\end{tabular}\!\!
\big]}\in {\rm SL}_2(\mathbb Z)
$ and 
$(\boldsymbol{k}, \boldsymbol{l})=\big((k_i,l_i)\big)_{i=2}^n\in \big(\mathbb Z^2\big)^{n-2}$.
\end{enumerate}
\end{prop}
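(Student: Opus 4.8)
The plan is to treat the four assertions in the order stated, deriving everything from the two defining relations $a_0\,\mathrm{Im}(\tau)+\mathrm{Im}(S)=0$ and $a_\infty=a_0\tau+S$, where I abbreviate $S=\sum_{i=1}^n\alpha_i z_i=\sum_{i=2}^n\alpha_i z_i$ (using $z_1=0$). For $(i)$ and $(ii)$ together, the key observation is that fixing the \emph{real} pair $(a_0,a_\infty)$ is equivalent to the \emph{single holomorphic} equation \eqref{E:EqLeaf}. Indeed, if $(\tau,z)$ satisfies $a_0\tau+\sum_{k=2}^n\alpha_k z_k=a_\infty$ with $a_0,a_\infty$ the prescribed real numbers, taking imaginary parts (and using that $a_\infty$ is real) recovers $a_0=-\mathrm{Im}(S)/\mathrm{Im}(\tau)$, i.e. $a_0(\tau,z)=a_0$, after which the equation itself gives $a_\infty(\tau,z)=a_\infty$; the converse is immediate. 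Hence $(\xi^\alpha)^{-1}(a)$ is exactly the zero locus inside $\mathcal T\!\!\!{\it or}_{1,n}$ of the affine holomorphic function $F(\tau,z)=a_0\tau+\sum_{k=2}^n\alpha_k z_k-a_\infty$. Since $\alpha_2\neq0$ by \eqref{E:AssumptionAlpha}, $F$ is nonconstant, so its zero locus is a smooth complex hypersurface of dimension $n-1=2g-3+n$, matching the dimension in the Corollary of \S\ref{S:GeneralConsiderationsAboutVeech'sFoliation}. As $e\circ\xi^\alpha=h^\alpha$ with $e$ a covering, the level sets of $\xi^\alpha$ refine those of $h^\alpha$, so each is a union of leaves of $\mathcal F^\alpha$; being connected (a complex affine hyperplane minus the locally finite family of complex diagonal hypersurfaces $\{z_i\equiv z_j\bmod\mathbb Z_\tau\}$ is still connected) and of leaf dimension, each is a single leaf. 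This establishes $(i)$ and $(ii)$ simultaneously.

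For $(iii)$ I would fix a target $(a_0,a_\infty)$ and ask when \eqref{E:EqLeaf} admits a solution in $\mathcal T\!\!\!{\it or}_{1,n}$. When $n\ge3$, pick any $\tau\in\mathbb H$; the equation $\sum_{k=2}^n\alpha_k z_k=a_\infty-a_0\tau$ defines a complex affine subspace of positive dimension $n-2$ in the $z$-variables, and removing the countably many diagonal hyperplanes leaves a nonempty set, so $\mathrm{Im}(\xi^\alpha)=\mathbb R^2$. When $n=2$ the only variable is $z_2$, and the equation forces $z_2=(a_0\tau-a_\infty)/\alpha_1$ (recall $\alpha_2=-\alpha_1$); this lies in $\mathbb Z_\tau$ for \emph{every} $\tau$ exactly when $(a_0,a_\infty)\in\alpha_1\mathbb Z^2$, whereas if $(a_0,a_\infty)\notin\alpha_1\mathbb Z^2$ only countably many $\tau$ are excluded and a valid $\tau$ remains. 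This yields $\mathrm{Im}(\xi^\alpha)=\mathbb R^2\setminus\alpha_1\mathbb Z^2$.

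For $(iv)$ the efficient device is to rewrite the two defining relations as
\[
a_0=\frac{S-\bar S}{\bar\tau-\tau},\qquad a_\infty=\frac{S\bar\tau-\bar S\tau}{\bar\tau-\tau}.
\]
Under $g=\big(M,(\boldsymbol k,\boldsymbol l)\big)$ with $M=\left[\begin{smallmatrix}a&b\\c&d\end{smallmatrix}\right]$ one has $\tau\mapsto\tau'=(a\tau+b)/(c\tau+d)$ and, since $z_1=0$ is preserved, $S\mapsto S'=(S+K+L\tau)/(c\tau+d)$ with $K=\sum_{i=2}^n\alpha_i k_i$ and $L=\sum_{i=2}^n\alpha_i l_i$. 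A direct substitution using $\det M=1$ gives $\bar\tau'-\tau'=(\bar\tau-\tau)/|c\tau+d|^2$, so the common factor $|c\tau+d|^{-2}$ cancels in both quotients above and produces an affine transformation of $(a_0,a_\infty)$ with linear part $\left[\begin{smallmatrix}d&c\\b&a\end{smallmatrix}\right]$ and translation part built from $K,L$. Passing to $g^{-1}$ (the statement reads $g^{-1}(\mathcal F^\alpha_a)=\mathcal F^\alpha_{g\bullet a}$) and inserting the inverse and semidirect-product formulae of \S\ref{S:TorelliNag} turns the linear part into $\left[\begin{smallmatrix}a&-c\\-b&d\end{smallmatrix}\right]$ and yields exactly \eqref{E:ActionOnHolonomy}; the invariance $g^{-1}(\mathcal F^\alpha_a)=\mathcal F^\alpha_{g\bullet a}$ then follows because $\xi^\alpha$ is a primitive first integral.

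I expect the main obstacle to be the sign bookkeeping in $(iv)$: reconciling the computed affine map $\xi^\alpha\circ g$ with the prescribed action $\bullet$ requires tracking the $g$-versus-$g^{-1}$ passage together with the precise conventions in \eqref{E:SP1nACTIONonTor1n}, \eqref{E:RhoM} and the inverse formula, where the translation signs are delicate (indeed one must use the group law to get the correct sign of the inverse's translation part). The connectedness point invoked in $(i)$ is routine but should be stated explicitly, as it is precisely what upgrades ``first integral'' to ``primitive first integral''.
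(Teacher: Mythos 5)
Your proposal is correct and follows essentially the same route as the paper's proof: (i)--(ii) are read off from the defining formulae \eqref{E:alpha0} and \eqref{E:alphaInfinity} together with the connectedness of the affine slices (which is exactly what the paper invokes to upgrade ``first integral'' to ``primitive''), (iii) rests on the same sign constraint coming from \eqref{E:AssumptionAlpha}, and (iv) is the direct computation of $\xi^\alpha\circ g$ from \eqref{E:SP1nACTIONonTor1n} followed by inversion of the resulting affine map, which is precisely the ``direct and explicit computation'' the paper prescribes and which indeed yields \eqref{E:ActionOnHolonomy}. The only step you should make explicit in (iii) for $n\geq 3$ is that no diagonal hypersurface $z_i-z_j=p+q\tau$ can \emph{contain} the slice $\sum_{k=2}^n\alpha_k z_k=a_\infty-a_0\tau$ (otherwise ``removing countably many hyperplanes'' could empty it); this is exactly where the paper uses that all the $\alpha_k$ with $k\geq 2$ are negative, so that the linear parts can never be proportional.
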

\begin{proof}
The fact that $\xi^\alpha$ is a first integral for $\mathcal F^\alpha$ has been established in the discussion preceding the proposition. The fact that it is primitive follows from $(i\!i)$ since any equation 
of the form \eqref{E:EqLeaf} cuts out a connected subset of $\mathcal T\!\!\!{\it or}_{1,n}$. 
\sk 

Considering the formulae \eqref{E:alpha0} and    \eqref{E:alphaInfinity}, the proof of $(i\!i)$ is straightforward. 
\sk 

To prove $(i\!i\!i)$,  remark that 
$\mathcal T\!\!\!{\it or}_{1,n}$
 is nothing else  but $\mathbb H\times \mathbb C^{n-1}$  minus the union of the complex hypersurfaces $\Sigma_{i,j}^{p,q}$
 cut out by 
\begin{equation}
\label{E:Sigmaijpq}
z_i-z_j+p+q\tau=0\, , 
\end{equation}
for $(p,q)\in \mathbb Z^2$ and $i,j$ such that $1\leq i<j\leq  n$ (remember that $z_1=0$ according to our normalization).   For $(a_0,a_\infty)\in \mathbb R^2$,  \eqref{E:EqLeaf} has no solution in $\mathcal T\!\!\!{\it or}_{1,n}$ if and only if it cuts out one of the hypersurfaces  $\Sigma_{i,j}^{p,q}$. As a consequence,  the linear parts (in $(\tau,z)$) of the affine equations  \eqref{E:EqLeaf} and 
 \eqref{E:Sigmaijpq} should be proportional. Since all the $\alpha_i$'s are negative for $i\geq 2 $ according to our assumption \eqref{E:AssumptionAlpha}, this is clearly impossible if $n\geq 3$. Consequently, one has ${\rm Im}(\xi^\alpha)=\mathbb R^2$ when $n\geq 3$. 

When $n=2$, the equations \eqref{E:AssumptionAlpha} reduce to the following one: 
$q\tau+z_2+p=0$ with $(p,q)\in \mathbb Z^2$. Such an equation is proportional to an equation of the form $a_0\tau+\alpha_2z_2-a_\infty=0$ if and only if $(a_0,a_\infty)\in \alpha_2 \mathbb Z^2$. Since $\alpha_1=-\alpha_2$ when $n=2$, one obtains that 
${\rm Im}(\xi^\alpha)=\mathbb R^2\setminus \alpha_1\mathbb Z^2$ in this case. 
\sk 

Finally, the fact that $\mathcal F^\alpha$ is invariant by the suitable quotient of the pure mapping class group has been proved in greater generality by Veech.  In the particular case we are considering, this can be verified by direct and explicit computations  by using the material of \S\ref{S:TorelliNag}.  In particular, formula \eqref{E:ActionOnHolonomy} for the action of ${\rm SL}_2(\mathbb Z)\ltimes (\mathbb Z^2)^{n-1}$ on $\mathbb R^2$ follows easily from \eqref{E:SP1nACTIONonTor1n}.
\end{proof}

\begin{rem}
\label{R:ImageHolonomy&VeechFoliationCC}
{\rm  (1). The description of the image of the 
 map $\xi^\alpha$ given in $(i\!i\!i)$ allows to answer (in the particular case when $g=1$) a question raised implicitly  by Veech (see the sentence just after Proposition 7.10 in \cite{Veech}).}\sk
 
\noindent {\rm  (2). From 
Remark \ref{E:XiAlphaTildeHolAlpha} and from the proposition above, it follows that the 
phenomenon evoked at the end of \S\ref{SS:VeechLeavesAreNotCoonected}  occurs indeed when $g=1$: in this case, any level subset $\mathcal F_\rho^\alpha$ 
of the linear holonomy map  \eqref{E:VeechMapTorelli}, which is called a `leaf' by Veech in \cite{Veech}, actually has a countable set of connected components, {\it cf.} \eqref{EE}.} 
\end{rem}

From the explicit and elementary description of Veech's foliation on $\mathcal T\!\!\!{\it or}_{1,n}$ given above, one deduces easily the following results. 
\begin{coro}
\label{C:FAlphaDoesNotDependOnAlpha}
Veech's foliation $\mathcal F^\alpha$ depends only on  $[\alpha_1:\alpha_2:\cdots :\alpha_n]\in \mathbb P(\mathbb R^n)$. \sk 

In particular, when $n=2$, Veech's foliation does not depend on $\alpha$.\end{coro}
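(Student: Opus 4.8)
The plan is to read off the conclusion directly from the explicit first integral already established. Recall from Proposition \ref{P:FalphaGeneralities}$(i)$ that $\xi^\alpha$ is a primitive first integral of $\mathcal F^\alpha$, and from part $(i\!i)$ that the leaves are precisely the subvarieties cut out by the affine equation \eqref{E:EqLeaf}, namely
\begin{equation*}
a_0\tau+\sum_{k=2}^n\alpha_k z_k=a_\infty.
\end{equation*}
The key observation is that the foliation $\mathcal F^\alpha$ is entirely determined by the family of linear forms appearing on the left-hand side of these equations, as $(a_0,a_\infty)$ ranges over $\mathrm{Im}(\xi^\alpha)$. Two leaves belong to the same foliation direction exactly when their defining linear parts in the variables $(\tau,z_2,\ldots,z_n)$ are proportional.

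First I would make precise what it means for the foliation to ``depend only on $[\alpha]$''. The leaves of $\mathcal F^\alpha$ are the level sets of the linear form $\ell_\alpha(\tau,z)=a_0\tau+\sum_{k=2}^n\alpha_k z_k$ on $\mathbb H\times\mathbb C^{n-1}$, restricted to the Torelli space; more intrinsically, the foliation is given by the (common) kernel distribution of the differential $d(\sum_{k=2}^n\alpha_k z_k)$ together with $d\tau$, since $a_0$ and $a_\infty$ are the two independent real components of $\xi^\alpha$. The tangent distribution to the leaves at a point is therefore the kernel of the two real-linear functionals $(\tau,z)\mapsto a_0(\tau,z)$ and $(\tau,z)\mapsto a_\infty(\tau,z)$; from \eqref{E:alpha0} and \eqref{E:alphaInfinity} these depend on $\alpha$ only through the combination $\sum_{k=2}^n\alpha_k z_k$ (equivalently through the coefficient vector $(\alpha_2,\ldots,\alpha_n)$, since $\alpha_1$ multiplies $z_1=0$).

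Next I would carry out the substitution $\alpha\mapsto\lambda\alpha$ for $\lambda\neq 0$ and check invariance. Replacing $\alpha$ by $\lambda\alpha$ scales both $a_0$ and $a_\infty$ by $\lambda$: indeed \eqref{E:alpha0} gives $a_0(\lambda\alpha)=\lambda\, a_0(\alpha)$ and \eqref{E:alphaInfinity} gives $a_\infty(\lambda\alpha)=\lambda\, a_\infty(\alpha)$, so that $\xi^{\lambda\alpha}=\lambda\,\xi^\alpha$. Hence the level sets are unchanged: the leaf $(\xi^{\lambda\alpha})^{-1}(\lambda a)$ coincides with $(\xi^\alpha)^{-1}(a)$, and the two foliations $\mathcal F^{\lambda\alpha}$ and $\mathcal F^\alpha$ have exactly the same leaves. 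Equivalently, the defining equation \eqref{E:EqLeaf} for $\mathcal F^{\lambda\alpha}$ is obtained from that for $\mathcal F^\alpha$ by multiplying through by $\lambda$, which does not change the locus it cuts out. This proves that $\mathcal F^\alpha$ depends only on the class $[\alpha_1:\cdots:\alpha_n]\in\mathbb P(\mathbb R^n)$.

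The case $n=2$ is then an immediate corollary, and indeed there is no real obstacle here: when $n=2$ the constraint $\sum_i\alpha_i=0$ forces $\alpha_2=-\alpha_1$, so the projective class $[\alpha_1:\alpha_2]=[\alpha_1:-\alpha_1]=[1:-1]$ is a single point of $\mathbb P(\mathbb R^2)$ independent of $\alpha_1$. By the first part of the corollary, $\mathcal F^{\alpha}$ depends only on this class and hence is independent of $\alpha_1$. The only point requiring a word of care is to confirm that the scaling argument respects the domain: since $\lambda\neq 0$, the admissibility conditions \eqref{E:AssumptionAlpha} are preserved up to reordering and the Torelli space itself is unaffected, so no leaves are lost or gained. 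The argument is thus essentially a one-line consequence of the homogeneity of degree one of $\xi^\alpha$ in $\alpha$.
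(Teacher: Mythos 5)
Your proof is correct and is essentially the argument the paper intends: it reads the corollary off Proposition \ref{P:FalphaGeneralities} by noting that $\xi^{\lambda\alpha}=\lambda\,\xi^\alpha$ (homogeneity of \eqref{E:alpha0} and \eqref{E:alphaInfinity} in $\alpha$), so the affine equations \eqref{E:EqLeaf} cut out the same loci and the two foliations have identical leaves, with the $n=2$ case following since $[\alpha_1:-\alpha_1]=[1:-1]$ is a single point of $\mathbb P(\mathbb R^2)$. The paper leaves this deduction implicit ("one deduces easily"), and your write-up simply fills in that same one-line scaling argument.
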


The preceding statement concerns only $\mathcal F^\alpha$ viewed as a real-analytic foliation of $\mathcal T\!\!\!{\it or}_{1,n}$. If its leaves do depend only on $\alpha$ up to a scaling factor,  it does not apply 
to the complex hyperbolic structures they carry: they do not  depend only on $[\alpha]$ but on $\alpha$ as well (see Theorem \ref{T:Main-n=2} when  $n=2$  for instance). 

\begin{prop} 
\label{P:Pi1ofaLeafInjectsInT1n}
For any leaf $\mathcal F_a^\alpha$ of Veech's foliation on the Torelli space: 
\begin{enumerate}
\item   the inclusion $\mathcal F_a^\alpha\subset 
\mathcal T\!\!\!{\it or}_{1,n}$ induces an injective morphism of the corresponding fundamental groups;  
\sk 
\item any connected component  of the preimage of $\mathcal F_a^\alpha$ in 
$\mathcal T\!\!\!{\it eich}_{1,n}$ is simply connected.
\end{enumerate}
\end{prop}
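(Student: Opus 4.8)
The plan is to deduce the second assertion from the first by a routine covering-space argument, and to concentrate all the real work on the $\pi_1$-injectivity in (1).

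First, (2) from (1). The map $p=p_{1,n}\colon\mathcal T\!\!\!{\it eich}_{1,n}\to\mathcal T\!\!\!{\it or}_{1,n}$ is the universal covering (as recalled in \S\ref{S:GeneralConsiderationsAboutVeech'sFoliation}), so $\mathcal T\!\!\!{\it eich}_{1,n}$ is simply connected and $\pi_1(\mathcal T\!\!\!{\it or}_{1,n})\simeq{\rm Tor}_{1,n}$. Fix a leaf $L=\mathcal F_a^\alpha\subset\mathcal T\!\!\!{\it or}_{1,n}$, a base point $x_0\in L$ and a lift $\tilde x_0$. By the standard theory of covers restricted to a subspace, and because the ambient cover is universal (hence corresponds to the trivial subgroup), the component $\tilde L$ of $p^{-1}(L)$ through $\tilde x_0$ is the covering of $L$ associated to $\ker\big(i_*\colon\pi_1(L,x_0)\to\pi_1(\mathcal T\!\!\!{\it or}_{1,n},x_0)\big)$. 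Thus $\pi_1(\tilde L)\simeq\ker i_*$; if (1) holds then $\ker i_*=1$, i.e. $\tilde L$ is simply connected, which is (2).

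Now (1). By Proposition~\ref{P:FalphaGeneralities}$(ii)$ the leaf $L$ is cut out in the Nag coordinates $(\tau,z_2,\dots,z_n)$ (\S\ref{S:TorelliNag}) by the single affine equation \eqref{E:EqLeaf}, and solving it for $z_2$ exhibits $L$ as the graph $z_2=g(\tau,z_3,\dots,z_n)$ of an affine function; equivalently $L$ is a smooth fibre of the holomorphic submersion $f_a\colon(\tau,z)\mapsto a_0\tau+\sum_k\alpha_k z_k-a_\infty$ onto the contractible base $\mathbb C$. I would then bring in the forgetful ``drop the point $z_2$'' map $\Phi\colon\mathcal T\!\!\!{\it or}_{1,n}\to\mathcal T\!\!\!{\it or}_{1,n-1}$, a Fadell--Neuwirth type locally trivial fibration with fibre $\mathbb C$ minus a discrete, $\mathbb Z_\tau$-periodic set. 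With respect to $\Phi$ the leaf is exactly the graph of a holomorphic section $s$ over the open subset $W=\Phi(L)\subset\mathcal T\!\!\!{\it or}_{1,n-1}$ (the locus where $g$ is congruent to none of $0,z_3,\dots,z_n$ modulo $\mathbb Z_\tau$). Since a section of a fibration is always $\pi_1$-injective, $s$ identifies $\pi_1(L)\simeq\pi_1(W)$ with a subgroup of $\pi_1\big(\Phi^{-1}(W)\big)$.

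The main obstacle is to upgrade this to injectivity into the \emph{full} Torelli space. The open inclusion $\Phi^{-1}(W)\hookrightarrow\mathcal T\!\!\!{\it or}_{1,n}$ is only $\pi_1$-surjective — its kernel is the normal closure of the meridians of the removed collision divisors $\{g\equiv 0\}$ and $\{g\equiv z_j\}$ — so one must verify that the subgroup carried by the section meets this kernel trivially. This is genuinely delicate: working fibrewise over $\mathbb H$ (via $\mu$) one is confronted with the inclusion of a hyperplane section $F_\tau\cap\{f_a=\mathrm{cst}\}$ into a configuration-type arrangement complement $F_\tau$, and \emph{generic} hyperplane sections of arrangement complements are typically not $\pi_1$-injective; the argument must therefore use the specific normal direction $(\alpha_2,\dots,\alpha_n)$ of the leaf. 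I expect to settle it by induction on $n$ along the Fadell--Neuwirth tower, tracking how the loops of $L$ (which encircle the divisors $z_i-z_j\in\mathbb Z_\tau$) are expressed through the braid-type generators of $\pi_1(F_\tau)$, the explicit monodromy/connection formulae of \S\ref{SS:ConnectionFormulae} supplying the needed bookkeeping. An alternative route is to prove directly that $f_a$ is a locally trivial fibration near $0$ by constructing a complete Ehresmann lift that dodges the discrete periodic family of removed hypersurfaces, exploiting the non-compactness of the $z$-coordinates to integrate the flow for all time; there the completeness of the lift is the crux.
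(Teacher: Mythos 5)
Your reduction of (2) to (1) is exactly the paper's (the projection $\mathcal T\!\!\!{\it eich}_{1,n}\to\mathcal T\!\!\!{\it or}_{1,n}$ is the universal cover, so a component of the preimage of the leaf is the cover of the leaf associated to $\ker i_*$), and your setup for (1) — the leaf as the graph of a holomorphic section of the forgetful fibration $\Phi$, whence $\pi_1(\mathcal F_a^\alpha)$ split-injects into $\pi_1(\Phi^{-1}(W))$ — is sound. But the proposal stops precisely where the content of the proposition lies. You correctly observe that the inclusion $\Phi^{-1}(W)\hookrightarrow\mathcal T\!\!\!{\it or}_{1,n}$ has kernel normally generated by the meridians of the removed collision divisors, and that one must show the section subgroup meets this kernel trivially; you then offer only strategies (``I expect to settle it by induction\dots'', ``there the completeness of the lift is the crux'') rather than an argument. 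Since, as you yourself note, generic hyperplane sections of arrangement complements are typically not $\pi_1$-injective, this step is the whole difficulty and cannot be waved through: as written, the proposal does not prove (1).

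For comparison, the paper's proof avoids the arrangement-theoretic obstacle by a different decomposition. Projecting the leaf to $\tau\in\mathbb H$ gives a trivial bundle over a contractible base, so $\pi_1(\mathcal F_a^\alpha)$ is identified with $\pi_1$ of a single fiber $\mathcal F_a^\alpha(\tau)$; in the $n=3$ case (the general case being similar) this fiber is an affine line $\xi\mapsto(\tau,z_2^*+\alpha_3\xi,z_3^*-\alpha_2\xi)$ minus a countable discrete set $C_\eta$, each removed point $c\in C_\eta$ being the intersection of the leaf with one collision divisor $D_c(p,q)$ of the Torelli space (an equation $z_i-(p+q\tau)=0$ or $z_2-z_3-(p+q\tau)=0$). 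The free generators $\gamma_c$ of $\pi_1(\mathbb C\setminus C_\eta)$ are thus meridians of these divisors, and the paper shows their classes are already nontrivial in the homology of $\mathcal T\!\!\!{\it or}_{1,n}$, concluding injectivity from there. This is exactly the geometric input your plan is missing: the specific reason the leaf's loops survive in the Torelli space is that they are meridian loops around the collision divisors, and they can be detected homologically rather than by braid-type bookkeeping along the Fadell--Neuwirth tower. (One may remark that the paper's final deduction, from nontriviality of the generators' images to injectivity of the whole map, is itself stated tersely; but the meridian/homology observation is the essential step, and it is absent from your proposal.)
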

\begin{proof}     
 In the case when $n=2$, any leaf $\mathcal F_r^\alpha$ is isomorphic to $\mathbb H$ (see \S \ref{SS:VeechFoliationg=1n=2} below for some details) thus is simply connected hence there is nothing to prove. \sk 
 
We sketch a proof of the proposition in the case when $n=3$. The proof in the general case is similar and left to the reader.  Let $r\in \mathbb R^2$ be fixed. One verifies that the linear projection 
 $\mathcal T\!\!{\it or}_{1,n}\rightarrow \mathbb H$ induces  a trivial topological bundle $\mathcal F_r^\alpha\rightarrow \mathbb H$ by restriction. Let $\tau\in \mathbb H$ be fixed and denote by $\mathcal F_r^\alpha(\tau)$ the fiber over this point.  Since $\mathbb H$ is simply connected, the inclusion of $\mathcal F_r^\alpha(\tau)$ into $\mathcal F_{r}^\alpha$ induces an identification of the corresponding fundamental groups. 

Let $\eta=(\tau,z_2^*,z_3^*)$ be an arbitrary element of $\mathcal F_r^\alpha(\tau)$ and consider the map from $\mathbb C$ into $\{\tau\}\times \mathbb C^2$ that associates  the 3-uplet 
$ (\tau, z_2^*+\alpha_3 \xi ,z_3^*-\alpha_2\xi)
$ to any $\xi\in \mathbb C$.  One promptly verifies that there exists a discrete  countable subset $C_\eta=C_{r,\eta}^\alpha\subset \mathbb C$ such that, by  restriction, the preceding injective affine map induces an isomorphism: $i: \mathbb C\setminus C_\eta\simeq \mathcal F_r^\alpha(\tau)$.  Moreover, for $\eta$ sufficiently generic in $\mathcal F_r^\alpha(\tau)$, the segment $]0, c  [$ does not meet $C_\eta$, this  for any $c\in C_\eta$.  
Then for any such $c$, let $\gamma_c $ be the homotopy class of a path 
in $(\mathbb C\setminus C_\eta, 0)$
consisting of the concatenation of $]0, (1-\epsilon_c) c[$ with $0<\epsilon_c<<1$, then of a circular loop in the direct send,  of center $c$, with radius 
$\epsilon_c$ starting and finishing at $(1-\epsilon_c)c$ then going back to $0$ along the segment 
$] (1-\epsilon_c) c,0[$.  The classes $\gamma_c$ for $c\in C_\eta$ freely generate $\pi_1(\mathbb C\setminus C_\eta, 0)\simeq \pi_1(\mathcal F_r^\alpha,\eta)$,  hence to prove the proposition, it suffices to prove that the class of $i_*(\gamma_c)$ is not trivial in $\pi_1(\mathcal T\!\!\! or_{1,n},\eta)$ for every $c\in C_\eta$.   

For $c\in C_\eta$ arbitrary, there is a  divisor $D_c(p,q)$ cut out by an equation of the form 
$z_2-(p+q\tau)=0$, $z_3-(p+q\tau)=0$ or $z_2-z_3-(p+q\tau)=0$ for some integers $p,q\in \mathbb Z$ such that $c$ is the intersection point of $\mathcal F_r^\alpha$ with $D_c(p,q)$.
\sk

Fact: for any $c\in C_\eta$, the homology class of $\gamma_c$ in $\mathcal T\!\!\! or_{1,n}$ is not trivial. In particular, this implies that $\gamma_c$, viewed as an element  of $\pi_1(\mathcal T\!\!\! or_{1,n})$,  is not trivial. Then the natural map $\pi_1(\mathcal F_r^\alpha)\rightarrow \pi_1(
\mathcal T\!\!\! or_{1,n})$ is injective which proves (1).
\sk 

Finally, the second point of the proposition follows  at once from the first since the projection $\mathcal T\!\!\!{\it eich}_{1,n}\rightarrow \mathcal T\!\!\! {\it or}_{1,n}$ is nothing else but the universal covering map of the Torelli space. 
 \end{proof}

\subsubsection{\bf Algebraic leaves of Veech's foliation}
\label{SS:AlgebraicLeavesg=1n>1} 
Using Proposition \ref{P:FalphaGeneralities}, it is easy to determine and to describe the algebraic leaves of Veech's foliation on $\mathscr M_{1,n}$. 
\sk 

Since the case $n=2$ is particular and because we are going to focus on it in the sequel, we left it aside until 
\S\ref{SS:VeechFoliationg=1n=2} 
 and assume $n\geq 3$ in the lines below.\mk

\paragraph{}\hspace{-0.3cm} For   $a=(a_0,a_\infty)\in \mathbb R^2$, let  
$\mathscr F_a^\alpha$ be the corresponding leaf of Veech's foliation 
$\mathscr F^\alpha$ 
on $\mathscr M_{1,n}$: it is the image of $\mathcal F_a^\alpha\subset \mathcal T\!\!\!{\it or}_{1,n}$ by the action of ${\rm SL}_2(\mathbb Z)\ltimes (\mathbb Z^2)^{n-1}$.  The question we are interested in it twofold:  first, we want  to determine the lifted holonomies $a$'s such that $\mathscr F_a^\alpha$ is an algebraic subvariety of the moduli space; secondly, we  would like to give a 
description of such leaves.\sk 

A preliminary remark is in order: 
 on  the moduli space $\mathscr M_{1,n}$,  Veech's foliation is not truly a foliation but  an orbi-foliation. Consequently, from a rigorous point of view,  the algebraic leaves of $\mathscr F^\alpha$, if any,  are a priori algebraic sub-orbifolds of $\mathscr M_{1,n}$. However, this subtlety, if important for what concerns the complex hyperbolic structure on the algebraic leaves, is not really relevant for what interests us  here, namely their topological/geometric description.  For this reason, we will not consider this point further  
 and will abusively  speak only of subvarieties and not of  sub-orbifolds 
 in the lines below. \mk 
 
\paragraph{}\hspace{-0.3cm}
For $a=(a_0,a_\infty)\in \mathbb R^2$, 
we denote its orbit
 under  ${\rm SL}_2(\mathbb Z)\ltimes (\mathbb Z^2)^{n-1}$ by: 
$$
	[  a ]= \big[a_0,a_\infty\big]=
	\left(
	{\rm SL}_2(\mathbb Z)\ltimes \big(\mathbb Z^2\big)^{n-1} \right) \bullet a\subset \mathbb R^2\, .
$$

According to a classical result of the theory of foliations (see \cite[p.51]{CamachoLinsNeto} for instance) a necessary  and sufficient condition for the leaf $\mathscr F^\alpha_a$ to be an  (analytic) subvariety of $\mathscr M_{1,n}$ is that 
$[a]$  be a discrete subset of $\mathbb R^2$.  \sk 

From \eqref{E:ActionOnHolonomy}, one gets easily $ {\rm Id}\ltimes(\mathbb Z^2)^{n-1}\bullet a=a+ \mathbb Z(\alpha )^2$ where $\mathbb Z(\alpha )$ stands for the $\mathbb Z$-submodule of $\mathbb R$ spanned by the $\alpha_i$'s, {\it i.e.} $\mathbb Z(\alpha )=\sum_{i=1}^n \alpha_i \mathbb Z$.   Thus a necessary condition for  $[a]$ to be discrete  is that   the $\alpha_i$'s  all are commensurable, {\it i.e.}\;there exists a non-zero real constant $\lambda $ such that $\lambda \alpha_i\in \mathbb Q$ for $i=1,\ldots,n$. \sk 

Assuming that $\alpha$ is commensurable, let $\lambda$ be the  positive real number such that $\mathbb Z(\alpha )=\lambda \mathbb Z$.  Thus one has 
  \begin{equation}
  \label{E:Z2n-1ActingOnHolonomy}
\left(\small{\bigg[\!\!
\begin{tabular}{cc}
$1$ \!\!&\!\! \!\!\!\!$0$\vspace{-0.1cm}\\
$0$ \!\!&\!\! \!\!\!\! $1$
\end{tabular}\!\!
\bigg]}  \ltimes   \Big(\mathbb Z^2\Big)^{n-1}\right)\bullet a=a+ \lambda  \mathbb Z^2
\end{equation}

We denote by $\mathbb Z^{n-1}_{\boldsymbol{l}}$  the subgroup of 
$(\mathbb Z^{n-1})^2$ formed by pairs $(\boldsymbol{k},\boldsymbol{l})\in (\mathbb Z^{n-1})^2$ with $\boldsymbol{k}=0$.  Setting 
$a=(a_0,a_\infty)$, it follows immediately from \eqref{E:ActionOnHolonomy} that 
  \begin{equation*}
 { \left(\small{\bigg[\!\!
\begin{tabular}{cc}
$1$ \!\!&\!\! \!\!\!\!$1$\vspace{-0.1cm}\\
$\mathbb Z$ \!\!&\!\! \!\!\!\! $0$
\end{tabular}\!\!
\bigg]},  \mathbb Z^{n-1}_{\boldsymbol{l}} \right)}\bullet a= \, \Big(a_0+a_\infty \mathbb Z+ \mathbb Z(\alpha) 
, a_\infty\Big).
  \end{equation*}

It comes that if $[a] $ is discrete then $a_\infty \mathbb Z+\mathbb Z(\alpha)=a_\infty \mathbb Z+\lambda \mathbb Z$ is discrete in $\mathbb R$ 
which implies that $a_\infty \in \lambda \mathbb Q$.  Using a similar argument, one obtains that  $a_0 \in \lambda \mathbb Q$ is also a necessary  condition for the orbit $[a]$ to be discrete in $\mathbb R^2$. 
 \sk 

At this point, we have proved that  in order for  the leaf $\mathscr F_a^\alpha$ to be a closed analytic subvariety of $\mathscr M_{1,n}$, it is necessary that $ (\alpha,a)=(\alpha_1,\ldots,\alpha_n,a_0,a_\infty)$ be commensurable.
We are going to see that this condition  is also sufficient and  actually implies the algebraicity of the considered leaf. 
\mk 

\paragraph{}\hspace{-0.3cm} 
We assume that $ (\alpha,a)$ is commensurable. Our goal now is to prove that the leaf $\mathscr F_a^\alpha$ is an  algebraic subvariety of $\mathscr M_{1,n}$.  We will give a detailed proof of this fact only in the case when $n=3$. We claim that  the general case when $n \geq  3$ can   be treated 
 in the exact same way 
but  let the verification of that to the reader. 
\sk

As above,  let $\lambda>0$ be such that $\lambda \mathbb Z=\mathbb Z(\alpha)$ (note that $\lambda$ is uniquely characterized by this  equality).   Since 
the two foliations 
$\mathcal F^\alpha$ and $\mathcal F^{\alpha/\lambda}$ 
coincide (more precisely, from \eqref{E:EqLeaf},  it comes that 
$\mathcal F^\alpha_b=\mathcal F^{\alpha/\lambda}_{b/\lambda}
$ for every $b\in \mathbb R^2$), there is no loss in generality by assuming that $\lambda=1$ or equivalently, that 
\begin{equation}
\label{E:HypothesisLambda=1}
\begin{tabular}{l}
$\bullet$ 
one has $\alpha_i=-p_i$  for $i=1,\ldots,n$,  
for some positive integers \\ {\hspace{0.3cm}}$p_1,\ldots,p_n$
 such that $p_1+\cdots+p_n=0$ and  $\gcd(p_2,\ldots,p_n)=1$;\mk  \\
$\bullet$  $a$ is rational, {\it i.e.} $ a \in \mathbb Q^2$.
\end{tabular}
\end{equation}

In what follows, we assume that these 	assumptions hold true.\mk

\paragraph{}\hspace{-0.3cm} 
To show that $[a]$ is discrete when $a$ is rational, we first determine  a normal form for a representative of such an orbit.  
\begin{prop}
\label{P:NormalFormForAinQ2}
${}^{}$

 \begin{enumerate}
 \item  For $a\in \mathbb Q^2$,  let $N$ be the smallest positive integer such that $N a \in \mathbb Z^2$.  
\begin{enumerate}
\item One has  $[a]=\big[0,  -1 /N\big]$. \sk
\item 
If $N=1$ (that is, if $a\in \mathbb Z^2$), then $[a]=\big[0,0\big]$.
\end{enumerate}
\sk 
\item  The orbit $[a]$ is discrete in \,$\mathbb R^2$ if and only if $(\alpha,a)$ is commensurable.
\sk 

\end{enumerate}
\end{prop}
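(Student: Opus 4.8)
The plan is to first show that, after relabelling matrices, the $G$-action of \eqref{E:ActionOnHolonomy} on $\mathbb R^2$ is nothing but the standard affine action of ${\rm SL}_2(\mathbb Z)\ltimes\mathbb Z^2$, and then to reduce everything to classical transitivity properties of the latter. First I would record that the elements $(\mathrm{Id},(\boldsymbol k,\boldsymbol l))$ act by the translations $(a_0,a_\infty)\mapsto\bigl(a_0+\sum_{i=2}^n\alpha_i l_i,\;a_\infty-\sum_{i=2}^n\alpha_i k_i\bigr)$; under the standing normalization \eqref{E:HypothesisLambda=1} one has $\sum_{i=2}^n\alpha_i\mathbb Z=\gcd(p_2,\dots,p_n)\mathbb Z=\mathbb Z$, so these elements realize exactly the translations by $\mathbb Z^2$, the two coordinates being adjustable independently. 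Next I would observe that the elements $(M,0)$ act linearly through $A_M=\left(\begin{smallmatrix} a & -c\\ -b & d\end{smallmatrix}\right)$ (reading \eqref{E:ActionOnHolonomy} on column vectors $(a_0,a_\infty)^{t}$), that $A_M=D\,({}^tM)\,D$ with $D=\mathrm{diag}(1,-1)$, and that since transposition and conjugation by $D$ are automorphisms of ${\rm SL}_2(\mathbb Z)$, the map $M\mapsto A_M$ is onto ${\rm SL}_2(\mathbb Z)$. Consequently the image of $G$ in $\mathrm{Aff}(\mathbb R^2)$ is precisely ${\rm SL}_2(\mathbb Z)\ltimes\mathbb Z^2$ acting in the usual way, and $[a]$ is the orbit of $a$ under this standard action.

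To prove (1), I would pass to the quotient $\mathbb R^2\to(\mathbb R/\mathbb Z)^2$. Writing $Na=(s,t)\in\mathbb Z^2$, the minimality of $N$ amounts to $\gcd(s,t,N)=1$, and a short computation (using $\mathrm{lcm}(N/\gcd(N,s),N/\gcd(N,t))=N/\gcd(N,s,t)$) shows that this is exactly the condition that the class $\bar a$ has order $N$ in $(\mathbb Q/\mathbb Z)^2$, i.e.\ that $\bar a$ is a primitive vector of the $N$-torsion subgroup $(\tfrac1N\mathbb Z/\mathbb Z)^2\cong(\mathbb Z/N\mathbb Z)^2$. Since the translation part of the action is all of $\mathbb Z^2$, the orbit of $\bar a$ in $(\mathbb R/\mathbb Z)^2$ is just its ${\rm SL}_2(\mathbb Z)$-orbit, which factors through the reduction ${\rm SL}_2(\mathbb Z)\twoheadrightarrow{\rm SL}_2(\mathbb Z/N\mathbb Z)$. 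I would then invoke the classical fact (see \cite{DS}) that ${\rm SL}_2(\mathbb Z/N\mathbb Z)$ acts transitively on the primitive vectors of $(\mathbb Z/N\mathbb Z)^2$, which allows one to carry $\bar a$ to the class of $(0,-1/N)$ (itself primitive of order $N$, as $\gcd(0,1,N)=1$). Lifting the transformation to ${\rm SL}_2(\mathbb Z)$ and following it by a suitable integer translation then places the representative exactly at $(0,-1/N)$, so $(0,-1/N)\in[a]$ and hence $[a]=[0,-1/N]$; the case $N=1$ means $a\in\mathbb Z^2$, which translates to $(0,0)$, giving $[a]=[0,0]$. This settles (1)(a) and (1)(b).

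For (2), the sufficiency is then immediate: under the normalization $\alpha$ is integral with $\mathbb Z(\alpha)=\mathbb Z$, so commensurability of $(\alpha,a)$ is equivalent to $a\in\mathbb Q^2$ (taking $\lambda=1$, resp.\ using that $\lambda\alpha_i\in\mathbb Q$ forces $\lambda\in\mathbb Q$), and by (1) the whole orbit $[a]=[0,-1/N]$ sits inside the lattice $(\tfrac1N\mathbb Z)^2$, hence is discrete. The necessity has essentially been obtained in the paragraphs preceding the proposition, where discreteness of $[a]$ was shown to force $a_0,a_\infty\in\lambda\mathbb Q$; alternatively I would argue by contraposition: if, say, $a_0\notin\mathbb Q$, then the parabolic matrices $\left(\begin{smallmatrix}1&m\\0&1\end{smallmatrix}\right)$ together with vertical integer translations produce the points $\bigl(a_0,\,a_\infty+\mathbb Z a_0+\mathbb Z\bigr)$, whose second coordinates are dense in $\mathbb R$ by Kronecker's theorem, so $[a]$ is not discrete; the case $a_\infty\notin\mathbb Q$ is symmetric, using $\left(\begin{smallmatrix}1&0\\m&1\end{smallmatrix}\right)$.

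The routine verifications (that $M\mapsto A_M$ exhausts ${\rm SL}_2(\mathbb Z)$, the gcd/order computation, and the Kronecker density) are elementary, and the only non-trivial external input is the transitivity of ${\rm SL}_2(\mathbb Z/N\mathbb Z)$ on primitive vectors. I expect the main point requiring care to be the clean identification of the semidirect-product action with the standard affine one, so that this classical transitivity applies verbatim, together with the bookkeeping that guarantees that, after the ${\rm SL}_2$-move, an integer translation lands the orbit exactly on the normal form $(0,-1/N)$ and not merely on its class modulo $\mathbb Z^2$.
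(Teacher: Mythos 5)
Your proof is correct, but it takes a genuinely different route from the paper's. The paper proves (1)(a) by hand: writing $a=(p_0/q,p_\infty/q)$ with $\gcd(p_0,p_\infty,q)=1$, it invokes a lemma of Mazzocco on $\Gamma(2)$-orbits to land on one of $(p/q,0)$, $(p/q,p/q)$, $(0,p/q)$ with $p=\gcd(p_0,p_\infty)$, then performs explicit matrix manipulations in ${\rm SL}_2(\mathbb Z)\ltimes(\mathbb Z^2)^{n-1}$ (using B\'ezout coefficients $dp-kq=1$) to reach $(0,\pm 1/q)$, and finally runs a separate gcd argument to show $q=N$. You instead identify the $\bullet$-action with the standard affine action of ${\rm SL}_2(\mathbb Z)\ltimes\mathbb Z^2$ (via $A_M=D\,{}^tM\,D$ — note that transposition is an anti-automorphism, not an automorphism, but only surjectivity of $M\mapsto A_M$ matters for orbits, so this is harmless), reduce modulo $\mathbb Z^2$, observe that minimality of $N$ is exactly the statement that $\bar a$ has order $N$ in $(\mathbb Z/N\mathbb Z)^2$, and invoke transitivity of ${\rm SL}_2(\mathbb Z/N\mathbb Z)$ on order-$N$ vectors. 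This is cleaner and more conceptual: it makes the meaning of $N$ transparent (no separate $q=N$ step), replaces the external Mazzocco reference by a standard fact the paper itself uses later when parametrizing the cusps of $X(N)$ by order-$N$ points of $(\mathbb Z/N\mathbb Z)^2$, and your Kronecker-density contrapositive gives a self-contained proof of the necessity half of (2), where the paper (like you, as your primary option) defers to the discussion preceding the proposition. What the paper's approach buys in exchange is complete explicitness — every group element used is written down — which is in the spirit of its computational treatment of the ${\rm Sp}_{1,n}(\mathbb Z)$-action.
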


\begin{proof}
 For $a\in \mathbb Q^2$, one can write 
 $a_0=p_0/q$ and $a_\infty=p_\infty/q$ for some integers 
 $p_0,p_\infty$ and $q>0$ such that $\gcd(p_0,p_\infty,q)=1$. 
 Let $p$ be the greatest common divisor of $p_0$ and $p_\infty$: $p=\gcd(p_0,p_\infty)$.  
 
 From the proof of Lemma 3 in \cite{Mazzocco}, it comes that 
  $\Gamma(2)\bullet a$,  hence $[a]$ contains one of the three following elements: 
  $( {p}/{q},0),   ( {p}/{q}, {p}/{q})$ or $(0, {p}/{q})$. 
  
  Since 
  $$
    \begin{bmatrix}
0 & -1 \\
1  &  0 
\end{bmatrix}
\bullet    \left( \frac{p}{q}, 0\right) =
  \begin{bmatrix}
1 & 0 \\
-1  & 1
\end{bmatrix}
\bullet    \left( \frac{p}{q}, \frac{p}{q}\right) = \left( 0, \frac{p}{q}\right)\, ,   $$
  it comes that $(0, {p}/{q}) \in {\rm SL}_2(\mathbb Z)\bullet a \subset [a]$.
  \sk 
  
   Because $\gcd(p_0,p_\infty,q)=\gcd(p,q)=1$, there exist two integers $d $ and $k$ such that   $dp-kq=1$.    From the relation 
     $$
   \Bigg(  \tiny{\begin{bmatrix}
 1 &  1-d \\
 -1 &   d\end{bmatrix}}
, \big(  k ,0\big) 
\Bigg)
\bullet    \left( 0, \frac{p}{q}\right) =  
\left( \frac{p}{q}, \frac{dp-kq}{q}\right)=\left( \frac{p}{q}, \frac{1}{q}\right)\, , 
$$
   one deduces that $(p/q,1/q)\in [a]$.  Since $(p,q,1)=1$, it follows from the arguments above that 
   $(0,1/q)\in {\rm SL}_2(\mathbb Z)\bullet (p/q,1/q)$. This implies  that $(0,1/q)$  belongs to $[a]$,  hence the same holds true for $(0,-1/q)=(-{\rm Id})\bullet (0,1/q)$.  \sk
   
   Since $qa=(p_0,p_\infty)\in \mathbb Z^2$ one has $N\leq q$ where $N$ stands for the integer defined in the statement of the lemma.  On the other hand, since $\gcd(p_0,p_\infty,q)=1$, there exists $u_0,u_\infty,v\in \mathbb Z$ such that $u_0p_0+u_\infty p_\infty +vq=1$. 
  Since $Na\in \mathbb Z^2$, one has $u_0 Na_0+u_\infty N a_\infty=
   N(1-vq)/q\in \mathbb Z$, which  implies that $q$ divides $N$. 
   This shows that $q=N$,  thus 
   that the  first point of (1) holds true.\sk

   When $a\in \mathbb Z^2$,
 the fact that $(0,0)\in [a]$ follows immediately from 
\eqref{E:Z2n-1ActingOnHolonomy} (recall that we have assumed that $\lambda=1$), which proves (b).
\mk 

Finally, using \eqref{E:ActionOnHolonomy}, it is easy to verify that all the orbits $[0,0]$ and  $[0,-1/N]$ with $N\geq 2$  are discrete subsets of $\mathbb R^2$.   Assertion (2) follows immediately.\end{proof}
\mk 

 From the preceding proposition, it follows that the leaves of Veech's foliation 
$\mathscr F^\alpha$ which are closed analytic subvarieties of 
$\mathscr M_{1,3}$ are exactly the one associated to the following `lifted holonomies'
 \begin{equation}
 \label{E:NormalFormLiftedHolonomy}
\qquad (0,0) 
\qquad \mbox{ and }
\qquad  (0,-1/N) \quad 
 \mbox{with }\, N
 \geq 2
 \, .  
\end{equation}

 We will use the following notations for the corresponding leaves:
\begin{equation}
\label{E:LeavesFNinM1n}
\mathscr F^\alpha_0=\mathscr F^\alpha_{(0,0)} 
\qquad \mbox{ and }
\qquad  \mathscr F^\alpha_N=\mathscr F^\alpha_{(0,-1/N)}
 \, .  
\end{equation}

Let $p_1,p_2$ and $p_3$ be the positive integers such that $\alpha_i=-p_i$ for $i=1,2,3$ (remember our simplifying assumption \eqref{E:HypothesisLambda=1}).  The leaves in the Torelli space 
 which correspond to the  `lifted holonomies' \eqref{E:NormalFormLiftedHolonomy} 
 are  the following: 
\begin{align}
\mathcal F_{0}^\alpha =\mathcal F_{(0,0)}^\alpha= & \, \Big\{ \big(\tau,z_2,z_3\big)\in \mathcal T\!\!\!{\it or}_{1,3}\; \big\lvert \;  p_2z_2+p_3z_3=0\, 
\Big\} \nonumber
\\ 
\label{E:EquationF(N)}
\mbox{and}\quad 
\mathcal F_{N}^\alpha=  \mathcal F_{(0,1/N)}^\alpha= & \,  \, \left\{ \big(\tau,z_2,z_3\big)\in \mathcal T\!\!\!{\it or}_{1,3}\; \big\lvert \;  p_2z_2+p_3z_3=\frac{1}{N}\, 
\right\}\, . 
\end{align}

Note that the preceding leaves are (possibly orbifold) coverings of the leaves 
\eqref{E:LeavesFNinM1n}: for any $N\neq 1$, the image of 
$\mathcal F_{N}^\alpha$ by the quotient map $\mathcal T\!\!\!{\it or}_{1,3}\rightarrow \mathscr M_{1,3}$ is $\mathscr F_{N}^\alpha$. 
\mk 

\paragraph{}\hspace{-0.3cm}
\label{S:LEAF F0Alpha}
 We are going to consider carefully the case of the leaf $\mathscr F_{0}^\alpha$. We will deal with the case of $\mathscr F_{N}^\alpha$ with $N\geq 2$  more succinctly in the next subsection. 
\sk 

In what follows, we set $p=(p_1,p_2,p_3)$. 
Remember that $p_2$ and $p_3$ determine $p_1$ since the latter is the sum of the two former: $p_1=p_2+p_3$. Note that according to 
\eqref{E:HypothesisLambda=1}, one has  $\gcd(p_1,p_2,p_3)=\gcd(p_2,p_3)=1$. 
\mk 
%

Consider the affine map from $\mathbb H\times  \mathbb C$ to $\mathbb H\times \mathbb C^2$ defined 
 for any $(\tau,\xi) \in \mathbb H\times \mathbb C$ 
by
\begin{align*}
U_0(\tau,\xi)=\big(\tau, p_3 \xi,-p_2\xi\big)\, . 
\end{align*}  

 Let $\mathcal U_p$ be  the inverse image
 of $\mathcal T\!\!\!{\it or}_{1,3}\subset \mathbb H\times \mathbb C^2$
  by  $U_0$. 
 One verifies easily that 
\begin{equation}
\label{E:Up2p3}
\mathcal U_p= 
\left\{ (\tau,\xi)\in \mathbb H\times \mathbb C \; \Big\lvert \; 
\xi \not \in \bigg(\frac{1}{p_1}\mathbb Z_\tau  \cup  \frac{1}{p_2}\mathbb Z_\tau
\cup  \frac{1}{p_3}\mathbb Z_\tau
\bigg)
\right\}
\end{equation}
and,  by restriction, $U_0$ induces a global holomorphic isomorphism
\begin{equation}
\label{E:Up2p3SimeqF0alpha}
U_0 : \mathcal U_p 
\simeq 
\mathcal F_0^\alpha \subset \mathcal T\!\!\!{\it or}_{1,3}\, . 
\end{equation}

Let ${\rm Fix}(0)$ be the subgroup of ${\rm Sp}_{1,3}(\mathbb Z)$ which leaves $\mathcal F_{0}^\alpha$ globally invariant. It is nothing else but   the subgroup of $g\in {\rm SL}_2(\mathbb Z)\ltimes (\mathbb Z^2)^2$ such that $g\bullet (0,0)=(0,0)$. From \eqref{E:ActionOnHolonomy}, it is clear that   $g=(M, (k_2,l_2),(k_3,l_3))$ is of this kind if and only if 
 $p_2l_2+p_3l_3=p_2k_2+p_3k_3=0$. 
  It follows that 
$$
 {\rm SL}_2(\mathbb Z)\ltimes \mathbb Z^2
 \simeq 
{\rm Fix}(0)
\; ,  
$$
where the injection $\mathbb Z^2\hookrightarrow \big(\mathbb Z^2\big)^2$ is given by $(k,l)\mapsto  \big( p_3(k,l), -p_2(k,l)\big)$. \sk 

By pull-back by $U_0$, one obtains immediately that the corresponding action of $ {\rm SL}_2(\mathbb Z)\ltimes \mathbb Z^2$ on $\mathcal U_p$ is given by 
\begin{equation}
   \label{E:gogigu}
\left(
\small{\bigg[\!\!
\begin{tabular}{cc}
$a$ \!\!&\!\! \!\!\!\!$b$\vspace{-0.1cm}\\
$c$ \!\!&\!\! \!\!\!\! $d$
\end{tabular}\!\!
\bigg]} , \big( k,l\big) \right) \cdot \big(\tau,\xi\big)=\left(
\frac{a\tau+b}{c\tau+d} , \frac{\xi+k+l\tau }{c\tau+d}
\right)\, .
\end{equation}

For any subgroup $\Gamma$ in $\mathrm{SL}_{2}(\mathbb Z)$, one sets 
$$
\mathscr M_{1,3}(\Gamma):= \mathcal T\!\!\!{\it or}_{1,3}\big/ \Gamma\ltimes \big( \mathbb Z^2\big)^2\, .
$$
It is an orbifold covering of $\mathscr M_{1,3}$ which is finite hence algebraic if $\Gamma$ has finite index in ${\rm SL}_2(\mathbb Z)$.  In this 
 case, the image  $\mathscr F_0^{\alpha}(\Gamma)$  of $\mathcal F_0^\alpha$ in $\mathscr M_{1,3}(\Gamma)$ is algebraic if  and only if $\mathscr  F_0^\alpha$ is an algebraic subvariety of $
\mathscr M_{1,3}$. 
We are going to use this equivalence for  
	a group  $\Gamma_{\!\!p}$ 
	which satisfies the following properties:
	\begin{enumerate}
	\item[(P1).] 
it 	is a subgroup of finite index of $\Gamma\big({\rm lcm}(p_1,p_2,p_3)\big) $; 
	and \sk
	\item[(P2).]	
  it acts without fixed point on $\mathbb H$.
	\end{enumerate}
	
For instance, setting  $$
M_p=
 	\begin{cases}
\, 4 
 \hspace{3cm} \mbox{ if } p_2=p_3;
\\ 
\, {\rm lcm}(p_1,p_2,p_3)
\hspace{0.64cm}  \mbox{ otherwise,  }
	\end{cases}
	$$
	 one can take  for $\Gamma_{\!\!p}$ 
the congruence subgroup of level $M_p$: 	 
$$ \Gamma_{\!\!p}	 =\Gamma\big(M_p\big)$$ 
(the case when $p_2=p_3$ is particular: this equality  implies that  $p_2=p_3=1$ since $\gcd(p_2,p_3)=1$. 
	%
	   Consequently
	   $\Gamma({\rm lcm}(p_1,p_2,p_3))=\Gamma(2)$ and this group   contains $-{\rm Id}$ hence  does not act effectively on $\mathbb H$). 
\sk

	Since $M=M_p\geq 3$ in every case, $\Gamma_{\!\!p}$ satisfies the properties (P1) and (P2) stated above. Consequently,  
	  the quotient of $\mathbb H\times \mathbb C$ by  the action  \eqref{E:gogigu} is the total space of the (non-compact) 
	 modular elliptic surface of level $M$:\footnote{See  \cite{Shioda} for a reference. Note that  we do not use the most basic  construction of the theory 
	 of modular elliptic surfaces, namely  that 	 \eqref{E:Ep} can be compactified over $X(M_p)$ by adding 
	 as fibers over the cusps some generalized elliptic curves  ({\it cf.} also \cite[\S8]{Kodaira}).}
\begin{equation}
\label{E:Ep}
	 \mathcal E_p:=\mathcal E( M_p)\longrightarrow Y(M_p)\, . 
	 \end{equation}

	According to \cite[\S5]{Shioda}, $\mathcal E_p$ comes with $M^2$ sections of $M$-torsion forming an abelian  group $S(\mathcal E_p)$ 
	isomorphic to $(\mathbb Z/ M \mathbb Z)^2$. For any divisor $m$ of $ M $, one denotes by $\mathcal E_p[m]$ the union of the images of the  elements of order $m$ of $S(\mathcal E_p)$: 
	$$
	\mathcal E_p[m] = \bigcup_{\substack{ \sigma\in S(\mathcal E_p)    \\   m\cdot \sigma=0  }} \sigma\big( Y(M_p)\big)\subset \mathcal E_p\, . 
	$$

	We are almost ready to state our result about the leaf $\mathscr F_0^\alpha$.
	To simplify the notations, we denote respectively by $
\mathscr M_{1,3}(p)$ and $\mathscr F_0^{\alpha}(p)$ the 
intermediary moduli space $
\mathscr M_{1,3}(\Gamma_{\!\!p})$ and the image of the leaf $\mathcal F_0^\alpha$ in it. \sk

The map  $U_0$ induces an isomorphism 
$$
 \mathcal U_p\big/\big( 
\Gamma_{\!\! p }
\ltimes \mathbb Z^2\big)
\simeq
\mathscr F_0^\alpha (p)\, .
$$

Using \eqref{E:Up2p3} and  \eqref{E:gogigu}, it is then 
easy  to deduce the 
\begin{prop} 
\label{P:JustHere}
The map \eqref{E:Up2p3SimeqF0alpha} induces an embedding 
$$
\mathcal E_{p}\setminus \Big( \mathcal E_{p}[p_1] \cup 
\mathcal E_{p}[p_2]\cup \mathcal E_{p}[p_3]\Big) \longhookrightarrow 
\mathscr M_{1,3}(p)
$$ which is algebraic and whose image is 
the leaf\;\,$\mathscr F_0^\alpha (p)$.
\end{prop}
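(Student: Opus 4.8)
The plan is to unwind the three isomorphisms already constructed and track how the torsion locus transforms. Recall from \eqref{E:Up2p3SimeqF0alpha} that $U_0$ realizes an isomorphism $\mathcal U_p\simeq \mathcal F_0^\alpha$, and that by \eqref{E:gogigu} the group $\Gamma_{\!\!p}\ltimes \mathbb Z^2$ acts on $\mathcal U_p$ exactly as $\mathrm{SL}_2(\mathbb Z)\ltimes \mathbb Z^2$ acts on $\mathbb H\times\mathbb C$ by the standard modular action restricted to $\Gamma_{\!\!p}$. First I would pass to quotients: since $U_0$ is equivariant for the identification $\mathrm{Fix}(0)\simeq \mathrm{SL}_2(\mathbb Z)\ltimes\mathbb Z^2$, it descends to an isomorphism
$$
\mathcal U_p\big/\big(\Gamma_{\!\!p}\ltimes\mathbb Z^2\big)\simeq \mathscr F_0^\alpha(p),
$$
as already asserted just before the proposition. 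So the content of the statement reduces to identifying the left-hand quotient with $\mathcal E_p\setminus(\mathcal E_p[p_1]\cup\mathcal E_p[p_2]\cup\mathcal E_p[p_3])$ as an algebraic variety.

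The key step is to describe $\mathcal U_p/(\Gamma_{\!\!p}\ltimes\mathbb Z^2)$ in terms of the modular elliptic surface. The quotient of the unrestricted space $\mathbb H\times\mathbb C$ by the action \eqref{E:gogigu} is by definition $\mathcal E_p=\mathcal E(M_p)\to Y(M_p)$, because properties (P1) and (P2) guarantee that $\Gamma_{\!\!p}$ acts freely on $\mathbb H$ and the $\mathbb Z^2$-factor quotients each fiber $\{\tau\}\times\mathbb C$ to $E_\tau$. Thus I would argue that removing the locus \eqref{E:Up2p3} before quotienting corresponds to removing its image in $\mathcal E_p$ after quotienting. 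Concretely, the excluded set in $\mathcal U_p$ is $\bigcup_{j=1}^3\{\xi\in \tfrac{1}{p_j}\mathbb Z_\tau\}$; fiberwise over $\tau$, the condition $p_j\xi\in\mathbb Z_\tau$ says precisely that the point $[\xi]\in E_\tau$ is killed by multiplication by $p_j$, i.e.\ it lies in the $p_j$-torsion $E_\tau[p_j]$. Hence the image of this locus in $\mathcal E_p$ is the union of the torsion sections of order dividing $p_j$, which is exactly $\mathcal E_p[p_1]\cup\mathcal E_p[p_2]\cup\mathcal E_p[p_3]$ in the notation fixed above (using that $S(\mathcal E_p)\simeq(\mathbb Z/M_p\mathbb Z)^2$ and $p_j\mid M_p$, so all $p_j$-torsion sections are among the $M_p$-torsion sections parametrized by $S(\mathcal E_p)$). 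I would check compatibility of the two descriptions of the torsion: a point of $\mathcal E_p[p_j]$ is the image of a section $\sigma$ with $p_j\sigma=0$, and pulling back along $U_0$ this is exactly $\{p_j\xi\in\mathbb Z_\tau\}$, matching \eqref{E:Up2p3}.

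To conclude I would assemble these identifications: the isomorphism $\mathcal U_p\simeq\mathcal F_0^\alpha$ descends to the quotient, the complement $\mathbb H\times\mathbb C\setminus\mathcal U_p$ maps onto the torsion locus, and therefore $U_0$ induces an open embedding of $\mathcal E_p\setminus(\mathcal E_p[p_1]\cup\mathcal E_p[p_2]\cup\mathcal E_p[p_3])$ into $\mathscr M_{1,3}(p)$ with image $\mathscr F_0^\alpha(p)$. Algebraicity follows because $\mathcal E_p$ is a (quasi-projective) algebraic surface and the removed torsion multisections are algebraic, so the complement is algebraic, while the target $\mathscr M_{1,3}(p)=\mathcal T\!\!\!{\it or}_{1,3}/\Gamma_{\!\!p}\ltimes(\mathbb Z^2)^2$ is a finite covering of $\mathscr M_{1,3}$ and hence algebraic by property (P1). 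The main obstacle, and the point deserving the most care, is the precise bookkeeping of the torsion: one must verify that the fiberwise condition $p_j\xi\in\mathbb Z_\tau$ corresponds under the quotient exactly to the group $\mathcal E_p[p_j]$ of $p_j$-torsion sections — in particular that no section is over- or under-counted and that $\gcd(p_2,p_3)=1$ together with $p_1=p_2+p_3$ does not force unexpected coincidences among the three torsion loci. Establishing this correspondence rigorously, rather than merely heuristically, is where the real work lies.
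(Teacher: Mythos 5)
Your proposal is correct and follows essentially the same route the paper takes: it descends the isomorphism $U_0:\mathcal U_p\simeq\mathcal F_0^\alpha$ to the quotient by $\Gamma_{\!\!p}\ltimes\mathbb Z^2$, identifies $(\mathbb H\times\mathbb C)/(\Gamma_{\!\!p}\ltimes\mathbb Z^2)$ with $\mathcal E_p$ via \eqref{E:gogigu}, and matches the excluded locus of \eqref{E:Up2p3} fiberwise with the torsion multisections $\mathcal E_p[p_j]$ — precisely the deduction the paper leaves to the reader after establishing $\mathcal U_p/(\Gamma_{\!\!p}\ltimes\mathbb Z^2)\simeq\mathscr F_0^\alpha(p)$. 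The torsion bookkeeping you flag as the delicate point is in fact immediate (the three conditions $p_j\xi\in\mathbb Z_\tau$ correspond exactly to the three pairwise-coincidence conditions $z_i\equiv z_j$, and overlaps among the $\mathcal E_p[p_j]$ are harmless since one takes a union), so no further work is needed there.
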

In short, this result says that the inverse image of $\mathscr  F_0^\alpha$ in  a certain finite covering of $\mathscr M_{1,3}$  is an elliptic modular surface 
from which the images of some torsion sections have been removed. 
There is no difficulty to deduce from it a description of  $\mathscr  F_0^\alpha$ itself.  
But since ${\rm SL}_2(\mathbb Z)$ has elliptic points and contains $-{\rm Id}$, the latter is not as nice as the description of $\mathscr F_0^\alpha (p)$ given above. 
\begin{coro} 
\begin{enumerate}
\item The  projection $\mathcal T\!\!\!{\it or}_{1,3}\rightarrow \mathbb H$ induces a dominant rational map $ \mathscr F_0^\alpha\rightarrow Y(1)\simeq \mathbb C$  whose fibers are punctured projective lines. 
\sk 
\item  The fiber over  any $j(\tau)$ distinct from $0$ and $1728$ (the two elliptic points of $Y(1)=\mathbb C$) is the  quotient of the punctured elliptic 
curve $E_\tau\setminus \big(  E_\tau[p_1]\cup E_\tau[p_2] \cup E_\tau[p_3]\big)$  by the elliptic involution. 
\sk 
\item Both the description of the fibers over 0 and $1728$ are similar but more involved. 
\end{enumerate}
\end{coro}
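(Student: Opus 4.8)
The plan is to read off the three assertions directly from the identification $\mathscr F_0^\alpha\simeq \mathcal U_p\big/\big({\rm SL}_2(\mathbb Z)\ltimes\mathbb Z^2\big)$ provided by the isomorphism \eqref{E:Up2p3SimeqF0alpha} together with the explicit formula \eqref{E:gogigu} for the action, rather than through the finite cover of Proposition \ref{P:JustHere}. First I would note that the linear projection $\mathcal T\!\!\!{\it or}_{1,3}\to\mathbb H$, $(\tau,z)\mapsto\tau$, reads in the coordinates $(\tau,\xi)$ on $\mathcal U_p$ simply as $(\tau,\xi)\mapsto\tau$, and that it is equivariant for \eqref{E:gogigu}: the factor $\mathbb Z^2$ fixes $\tau$, while $\big(M,(k,l)\big)$ sends $\tau$ to $(a\tau+b)/(c\tau+d)$. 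Hence the projection descends to a map $\mathscr F_0^\alpha\to\mathbb H\big/{\rm SL}_2(\mathbb Z)=Y(1)$, which is plainly dominant; identifying $Y(1)$ with $\mathbb C$ by the $j$-invariant yields the map of point (1).

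To analyse the fibres I would quotient in two stages. Quotienting $\mathcal U_p$ first by the normal subgroup ${\rm Id}\ltimes\mathbb Z^2$, whose action $\xi\mapsto\xi+k+l\tau$ is translation by the lattice $\mathbb Z_\tau$, produces a family over $\mathbb H$ whose fibre over $\tau$ is, in view of \eqref{E:Up2p3} and of the equality $\frac{1}{p_i}\mathbb Z_\tau\big/\mathbb Z_\tau=E_\tau[p_i]$, exactly the punctured elliptic curve $E_\tau\setminus\big(E_\tau[p_1]\cup E_\tau[p_2]\cup E_\tau[p_3]\big)$. It then remains to quotient by the residual ${\rm SL}_2(\mathbb Z)$-action. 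When $\tau$ is not ${\rm SL}_2(\mathbb Z)$-equivalent to $i$ or to $e^{2i\pi/3}$, its stabiliser in ${\rm SL}_2(\mathbb Z)$ is $\{\pm{\rm Id}\}$, and $-{\rm Id}$ (take $M=-{\rm Id}$, $(k,l)=(0,0)$ in \eqref{E:gogigu}) acts on the fibre by $\xi\mapsto-\xi$, that is, by the elliptic involution; this gives point (2).

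Next I would invoke the classical isomorphism $E_\tau/\{\pm1\}\simeq\mathbb P^1$ (quotient by the elliptic involution, branched over the four $2$-torsion points), so that the fibre of point (2) is $\mathbb P^1$ deprived of the finitely many images of the torsion points $\bigcup_i E_\tau[p_i]$: this establishes the ``punctured projective line'' claim of point (1), and since any further finite quotient of a rational curve is again rational, the same conclusion persists over the special fibres. For point (3), over the elliptic points $j=0$ and $j=1728$, corresponding to $\tau=e^{2i\pi/3}$ and $\tau=i$, the stabiliser of $\tau$ in ${\rm SL}_2(\mathbb Z)$ is cyclic of order $6$, resp.\ $4$, acting on the fibre through the full automorphism group ${\rm Aut}(E_\tau)$ of the corresponding CM curve (generated by $\xi\mapsto e^{i\pi/3}\xi$, resp.\ $\xi\mapsto i\xi$, as one checks from \eqref{E:gogigu}); the fibre is then the quotient of $E_\tau\setminus\bigcup_i E_\tau[p_i]$ by this larger group, describable but more involved.

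The main obstacle will be the careful bookkeeping around the orbifold and rational nature of $\mathscr F_0^\alpha\to Y(1)$: justifying that it is a genuine dominant rational map despite the elliptic points of $Y(1)$ and the presence of $-{\rm Id}$ in every stabiliser, and verifying that removing the images of $\bigcup_i E_\tau[p_i]$ — some of which may be $2$-torsion, hence fixed by the involution — still leaves $\mathbb P^1$ minus a finite set rather than introducing branch-point pathologies. Once these points are settled, assertions (1)--(3) follow from the elementary quotient analysis above.
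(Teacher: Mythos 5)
Your proposal is correct and is essentially the argument the paper leaves implicit: the corollary is read off from the identification \eqref{E:Up2p3SimeqF0alpha} together with the explicit action \eqref{E:gogigu}, the key points being that the generic stabilizer of $\tau$ in ${\rm SL}_2(\mathbb Z)$ is $\{\pm{\rm Id}\}$ and that $-{\rm Id}$ acts fiberwise as $\xi\mapsto-\xi$, i.e.\ as the elliptic involution, while the larger cyclic stabilizers over $j=0$ and $j=1728$ account for point (3). The only organizational difference is that the paper deduces this by descent from the finite-cover picture of Proposition \ref{P:JustHere}, whereas you quotient $\mathcal U_p$ directly in two stages (by $\mathbb Z^2$, then by ${\rm SL}_2(\mathbb Z)$); the underlying computations are the same, and the algebraicity needed for ``dominant rational map'' is supplied by the already-established algebraicity of the leaf.
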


To end this subsection, we would like to make  the particular case when $p_2=p_3=1$ more explicit (note that this condition is equivalent to $\alpha_2=\alpha_3$). 
 It is more convenient to describe the inverse image 
$\mathscr F_0^\alpha(\Gamma(2))$
 of 
$\mathscr F_0^\alpha$ in $\mathscr M_{1,3}(\Gamma(2))$:  it is the bundle over $Y(2)=\mathbb P^1\setminus\{0, 1,\infty\}$,  the fiber of which over $\lambda\in \mathbb C\setminus \{0,1\}$ is the 4-punctured projective line $\mathbb P^1\setminus \{0,1,\lambda,\infty\}$.   As an algebraic variety, 
$\mathscr F_0^\alpha(\Gamma(2))$ 
 is then isomorphic to the moduli space $\mathscr M_{0,5}$. Actually, there is more: in \S\ref{S:TheCaseN=2LinkedWithClassicalHypergeometry}, we will see  that,   endowed with 
 Veech's complex hyperbolic structure,  $\mathscr F_0^\alpha(\Gamma(2))$
can be identified with  a Picard/Deligne-Mostow/Thurston moduli space. 
\mk

\paragraph{}\hspace{-0.3cm}
\label{P:FNalpha}
 We now consider  succinctly the case of the leaf $\mathscr F_{N}^\alpha$ when $N$ is a fixed integer bigger than or equal to 2. One  proceeds as for $\mathcal F_0^\alpha$. 
\sk 

Since $\mathscr F_{N}^\alpha$ is cut  out  by $p_2z_2+p_3z_3=1/N$  in the Torelli space (see \eqref{E:EquationF(N)}), one gets that, by restriction,  
the affine map 
$$\xi\mapsto \left(p_3\xi+\frac{1}{Np_2},-p_2\xi\right)$$ 
induces a global holomorphic  parametrization of $\mathscr F_{N}^\alpha$ by an open subset $\mathcal U_{p,N}$ of $\mathbb H\times \mathbb C$ which is not difficult to describe explicitly.  
\sk

The stabilizer ${\rm Fix}(N)$ of $(0,-1/N)$ for the action $\bullet$ is easily seen to be  the image of the injective morphism of groups 
\begin{align*}
\Gamma_1(N)\ltimes \mathbb Z^2 & \longmapsto {\rm SL}_2(\mathbb Z)\ltimes \big(   \mathbb Z^2 \big)^2\\
\left(
{\tiny{\begin{bmatrix}
a & b\\
c & d\end{bmatrix}}} , \big(k, l\big)
\right) & \longmapsto 
\left(
{\tiny{\begin{bmatrix}
a & b\\
c & d\end{bmatrix}}} , \big(k_2, l_2\big),  \big(k_3, l_3\big)
\right)
\end{align*}
with 
$$
\big(k_2, l_2\big)=q_2\left(\frac{d-1}{N},\frac{c}{N}\right)+p_3\big(k,l\big)
\quad \mbox{ and }\quad 
  \big(k_3, l_3\big)=q_3\left(\frac{d-1}{N},\frac{c}{N}\right)-p_2\big(k,l\big)
$$
where $(q_2,q_3)$ stands for a (fixed) pair of integers such that $q_2p_2+q_3p_3=1$.
\sk

Embedding $\Gamma_1(Np_2)\ltimes \mathbb Z^2$ into 
  ${\rm Fix}(N)$   by  setting
  $$
  \big(k_2, l_2\big)=\left(\frac{d-1}{Np_2},\frac{c}{Np_2}\right)+p_3\big(k,l\big)
\quad \mbox{ and }\quad 
  \big(k_3, l_3\big)=-p_2\big(k,l\big)\, 
  $$
  one verifies that the induced action of  $\Gamma_1(Np_2)\ltimes \mathbb Z^2$ on $(\tau,\xi)\in \mathcal  U_{p,N}$ is  the usual one ({\it i.e.} is given by 
 \eqref{E:gogigu}).  Consequently, when $Np_2\geq 3$\footnote{The case when $p_2=p_3=1$ and $N=2$ is particular and must be treated separately.}, the inclusion $\mathcal U_{p,N}\subset \mathbb H\times \mathbb C$ induces an algebraic embedding 
$$
\mathscr F^\alpha_N\big(  \Gamma_1(Np_2)  \big)\simeq 
\mathcal  U_{p,N}\big/_{\Gamma_1(Np_2)\ltimes \mathbb Z^2}
\subset  \mathcal E_1(Np_2)\rightarrow Y_1(Np_2)
$$
where $\mathcal E_1(Np_2)$ stands for the total space of the elliptic modular surface associated to $\Gamma_1(Np_2)$.  Moreover, it can be easily seen that the complement of $\mathscr F^\alpha_N\big(  \Gamma_1(Np_2)  \big)$ in $\mathcal E_1(Np_2)$ is the union of certain torsion sections.

\begin{prop}
\begin{enumerate}
\item  For a certain congruence group $\Gamma_{\!\! p,N}$ (which can be explicited), the inverse image of\hspace{0.17cm}$\mathscr F_N^\alpha$ in the  intermediary moduli space\hspace{0.17cm}$\mathscr M_{1,3}(\Gamma_{\!\! p,N})$ is algebraic and isomorphic to the total space of  the modular elliptic surface $\mathcal E(\Gamma_{\!\! p,N})$ 
 from which   the union of some  torsion sections have been removed.\sk 
\item  For $N\geq 3$, the leaf\hspace{0.17cm}$\mathscr F_N^\alpha$ is an algebraic subvariety of\hspace{0.17cm}$\mathscr M_{1,3}$  isomorphic to the 
total space of  the modular elliptic surface $\mathcal E_1(N)\rightarrow Y_1(N)$ from which the union of certain torsion multi-sections have been removed.\sk  
\item  The leaf\hspace{0.17cm}$\mathscr F_2^\alpha$ is a bundle in punctured projective lines over $Y_1(2)$.\end{enumerate}
\end{prop}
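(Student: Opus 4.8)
The plan is to mirror the analysis already carried out for the leaf $\mathscr F_0^\alpha$ in \S\ref{S:LEAF F0Alpha}, transporting everything through the affine parametrization of $\mathcal F_N^\alpha$. First I would make the global holomorphic parametrization explicit: the affine map
\begin{equation*}
U_N : (\tau,\xi)\longmapsto \left(\tau,\; p_3\xi+\frac{1}{Np_2},\; -p_2\xi\right)
\end{equation*}
sends an open subset $\mathcal U_{p,N}\subset \mathbb H\times \mathbb C$ isomorphically onto $\mathcal F_N^\alpha$, exactly as $U_0$ did for $\mathcal F_0^\alpha$ via \eqref{E:Up2p3SimeqF0alpha}. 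The set $\mathcal U_{p,N}$ is the preimage of $\mathcal T\!\!\!{\it or}_{1,3}$ and is a union of copies of $\mathbb H\times\mathbb C$ minus a countable family of torsion translates of the lattice, and I would describe it in analogy with \eqref{E:Up2p3}. This identifies the leaf before quotienting with an explicit open subset of the universal family over $\mathbb H$.

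Next I would compute the stabilizer of the lifted holonomy $(0,-1/N)$ under the action $\bullet$ given by \eqref{E:ActionOnHolonomy}. The computation has already been sketched in the paragraph preceding the statement: solving $g\bullet(0,-1/N)=(0,-1/N)$ for $g=(M,(k_2,l_2),(k_3,l_3))$ forces the $\mathrm{SL}_2(\mathbb Z)$-component $M=\tiny{\big[\begin{smallmatrix}a&b\\c&d\end{smallmatrix}\big]}$ to satisfy the congruence conditions defining $\Gamma_1(N)$ (namely $a\equiv d\equiv 1$ and $c\equiv 0 \bmod N$), with the translation components constrained linearly through $p_2,p_3$. This yields the injection of $\Gamma_1(Np_2)\ltimes \mathbb Z^2$ into ${\rm Fix}(N)$ displayed above the statement, and I would verify that the induced action on $\mathcal U_{p,N}$ coincides with the standard action \eqref{E:gogigu}.

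With these two ingredients in place, the three assertions follow by invoking the theory of modular elliptic surfaces exactly as for $\mathscr F_0^\alpha$. For (1), choosing $\Gamma_{\!\!p,N}$ to be a suitable congruence subgroup of finite index in $\Gamma_1(Np_2)\cap\Gamma(M_p)$ acting freely on $\mathbb H$, the quotient $\mathcal U_{p,N}/(\Gamma_{\!\!p,N}\ltimes \mathbb Z^2)$ is a Zariski-open subset of the total space of the elliptic modular surface $\mathcal E(\Gamma_{\!\!p,N})$, the complement being the union of the torsion sections hit by the punctures $\frac{1}{p_i}\mathbb Z_\tau$; one argues as in Proposition \ref{P:JustHere}. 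For (2), when $N\geq 3$ the group $\Gamma_1(N)$ already acts freely on $\mathbb H$ (it contains no elliptic elements and not $-{\rm Id}$), so no auxiliary covering is needed and the leaf itself is $\mathcal E_1(N)\to Y_1(N)$ minus torsion multi-sections. For (3), when $N=2$ and $p_2=p_3=1$ the group $\Gamma_1(2)$ still contains $-{\rm Id}$ and has an elliptic structure making the elliptic fibers degenerate to quotients by the elliptic involution, so the leaf becomes a bundle in punctured projective lines over $Y_1(2)$, paralleling the $\mathscr F_0^\alpha(\Gamma(2))$ discussion.

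The main obstacle will be the bookkeeping in (2): one must check that the subtle torsion conditions $\mathcal E_1(N)[p_i]$ to be removed are genuinely torsion \emph{multi}-sections (rather than honest sections) and identify which ones arise, because the parametrizing $\xi$ enters the three punctures $\frac{1}{p_i}\mathbb Z_\tau$ with different multiplicities $p_1,p_2,p_3$; verifying freeness of the $\Gamma_1(N)$-action and that $U_N$ descends to an isomorphism rather than merely a quotient map requires care about the $\pm{\rm Id}$ ambiguity and the interplay between the level $N$ and the weights $p_i$. The remaining verifications are routine and parallel to \S\ref{S:LEAF F0Alpha}, so I would leave the detailed case $N=2$ and the precise determination of the removed multi-sections to the reader, as the authors do.
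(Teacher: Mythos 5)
Your proposal follows exactly the paper's route: the affine parametrization of $\mathcal F_N^\alpha$ by $\mathcal U_{p,N}$, the identification ${\rm Fix}(N)\simeq \Gamma_1(N)\ltimes\mathbb Z^2$, the embedding of $\Gamma_1(Np_2)\ltimes\mathbb Z^2$ acting on $\mathcal U_{p,N}$ by the standard action \eqref{E:gogigu}, and the appeal to elliptic modular surfaces as in the treatment of $\mathscr F_0^\alpha$. There is, however, a genuine flaw in your justification of part (2). You assert that for $N\geq 3$ the group $\Gamma_1(N)$ ``contains no elliptic elements'' and therefore acts freely on $\mathbb H$. This is false for $N=3$: the matrix $\big[\begin{smallmatrix}1&-1\\ 3&-2\end{smallmatrix}\big]$ lies in $\Gamma_1(3)$ and has order $3$, and accordingly $Y_1(3)$ has an orbifold point of order $3$ --- a fact the paper itself records in \S\ref{S:ExplicitExamples}. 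Moreover, freeness of the action on $\mathbb H$ is not the relevant criterion for the dichotomy between (2) and (3). What decides whether the fibers of $\mathcal U_{p,N}/{\rm Fix}(N)\to Y_1(N)$ are punctured elliptic curves or punctured projective lines is whether $-{\rm Id}$ belongs to $\Gamma_1(N)$: an element $(-{\rm Id},(k,l))$ acts on every fiber by a translate of the elliptic involution $\xi\mapsto-\xi$ (as one reads off \eqref{E:gogigu}) and collapses it to a punctured $\mathbb P^1$, whereas the order-$3$ elements of $\Gamma_1(3)$ only produce an orbifold point on the base, without making the generic fiber rational. This is precisely the criterion the paper invokes to separate the two cases ($-{\rm Id}\in\Gamma_1(2)$, while $-{\rm Id}\notin\Gamma_1(N)$ for $N\geq 3$). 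As written, your argument establishes (2) only for $N\geq 4$; the case $N=3$ requires the $-{\rm Id}$ criterion, together with the same implicit tolerance for the special fiber over the orbifold point that the paper allows itself.

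A second, more minor slip: in part (3) you restrict to ``$N=2$ and $p_2=p_3=1$''. Statement (3) concerns $\mathscr F_2^\alpha$ for every admissible weight vector $p$, and your own mechanism proves it in that generality, since $-{\rm Id}\in\Gamma_1(2)$ independently of $p$. The condition $N=2$, $p_2=p_3=1$ is only the exceptional case $Np_2<3$ of the paper's footnote, i.e.\ the case where the auxiliary embedding of $\mathscr F_N^\alpha(\Gamma_1(Np_2))$ into $\mathcal E_1(Np_2)$ used for (1) is unavailable; it is not the hypothesis under which the fibers become projective lines, and keeping it leaves (3) unproven for all other $p$. (Relatedly, in (1) the removed loci are cut out by conditions such as $p_3\xi+\tfrac{1}{Np_2}\in\mathbb Z_\tau$, so they are \emph{translated} torsion loci rather than the sets $\tfrac{1}{p_i}\mathbb Z_\tau$ of the $\mathscr F_0^\alpha$ case; this shift is exactly what produces torsion sections, resp.\ multi-sections, different from those of Proposition \ref{P:JustHere}.) With these corrections your proposal coincides with the paper's proof.
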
 

Here again, the dichotomy between the cases when $N=2$ and $N\geq 3$ comes from the fact that $-{\rm Id}$ belongs to $\Gamma_1(2)$ whereas it is not the case for $N\geq 3$. 
\mk

\paragraph{}\hspace{-0.3cm} We think that considering an explicit example will be  quite enlightening. \sk 

We assume that $p_2=p_3=1$ (which is equivalent to $\alpha_2=\alpha_3$) and we fix $N\geq 2$.  The preimage 
$\mathscr F_{N}(2N)$
of $\mathscr F_{N}$ in $\mathscr M_{1,2}(\Gamma(2N))$ admits a  nice description. 

Let $\mathcal E(2N)\rightarrow Y(2N)$ be the modular elliptic curve associated to $\Gamma(2N)$.  We fix a `base point'  $\tau_0\in \mathbb H$. 
Then for any integers $k,l$, $(k+l\tau_0)/2N$ defines a point of $2N$-torsion of $E_{\tau_0}$ which belongs to exactly one of the $(2N)^2$ $2N$-torsion  sections of $\mathcal E(2N)\rightarrow Y(2N)$. We denote the latter section 
 by $[(k+l\tau)/2N]$.  
 \sk
 
 Then $\mathscr F_{N}(2N)$ is isomorphic to the complement in $\mathcal E(2N)$ of the union of 
  $[0]$ and $[1/N]$ with the translation by $[1/2N]$ of the four sections of 2-torsion: 
 $$
 \mathscr F_{N}(2N)\simeq \mathcal E(2N)\setminus \left(
 \big[0\big]\cup \Big[\frac{1}{N}
 \Big] 
\cup \Bigg( \bigcup_{k,l=0,1}
\Big[\frac{1}{2N}+\frac{k+l\tau}{2}
 \Big]
\Bigg)
 \right)\, . 
 $$
(See also Figure \ref{F:FN(2N)} below). 
\mk

\paragraph{}\hspace{-0.3cm} 
To finish our uncomplete study of the algebraic leaves of Veech's foliation on $\mathscr M_{1,n}$ when $n\geq 3$, we state the following result which follows easily from all what has been said before (we do not assume that 
\eqref{E:HypothesisLambda=1} 
 holds true anymore): 
\begin{coro} Let $a\in \mathbb R^2$.  The three  following assertions are equivalent: 
\begin{enumerate}
\item $ (\alpha,a)$ is commensurable, {\it i.e.} $[\alpha_1: \cdots:\alpha_n:a_0:a_\infty]\in \mathbb P(\mathbb Q^{n+2})$;
\sk 
\item  the leaf\hspace{0.17cm}$\mathscr F_a^\alpha$ is a closed analytic subvariety of\hspace{0.17cm}$\mathscr  M_{1,n}$; 
\sk 
\item the leaf\hspace{0.17cm}$\mathscr F_a^\alpha$ is a closed algebraic subvariety of\hspace{0.17cm}$\mathscr M_{1,n}$. 
\end{enumerate}
\end{coro}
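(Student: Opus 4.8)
The plan is to prove the cycle of implications by combining the classical discreteness criterion for leaves of a foliation with the explicit computations already carried out above. The pivot of the whole argument is the fact, recalled from \cite{CamachoLinsNeto}, that a leaf $\mathscr F_a^\alpha$ is a closed analytic subvariety of $\mathscr M_{1,n}$ if and only if its orbit $[a]=\big({\rm SL}_2(\mathbb Z)\ltimes (\mathbb Z^2)^{n-1}\big)\bullet a$ is a discrete subset of $\mathbb R^2$, the action $\bullet$ being the explicit one of \eqref{E:ActionOnHolonomy}. I would therefore organize everything around the equivalence between $(2)$ and the discreteness of $[a]$, and establish the chain $(1)\Rightarrow (3)\Rightarrow (2)\Rightarrow (1)$.

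The implication $(3)\Rightarrow (2)$ is immediate, since a closed algebraic subvariety of the quasi-projective variety $\mathscr M_{1,n}$ is in particular a closed analytic one. For $(2)\Rightarrow (1)$ I would reuse the necessity argument of the preceding pages: restricting $\bullet$ to the subgroup $\{{\rm Id}\}\ltimes(\mathbb Z^2)^{n-1}$ shows that $[a]$ contains $a+\mathbb Z(\alpha)^2$ with $\mathbb Z(\alpha)=\sum_i\alpha_i\mathbb Z$, so discreteness of $[a]$ forces $\mathbb Z(\alpha)$ to be discrete in $\mathbb R$, hence $\alpha$ commensurable, say $\mathbb Z(\alpha)=\lambda\mathbb Z$. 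Feeding in the element inducing $\tau\mapsto\tau+1$ together with the lattice translations then makes $a_\infty\mathbb Z+\lambda\mathbb Z$, and symmetrically $a_0\mathbb Z+\lambda\mathbb Z$, discrete, whence $a_0,a_\infty\in\lambda\mathbb Q$ and $(\alpha,a)$ is commensurable.

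Finally, $(1)\Rightarrow (3)$ is where the real content sits. Using Corollary \ref{C:FAlphaDoesNotDependOnAlpha} and the identity $\mathcal F_b^\alpha=\mathcal F_{b/\lambda}^{\alpha/\lambda}$, I would first rescale $\alpha$ to reduce to the normalized situation \eqref{E:HypothesisLambda=1}, so that $\alpha_i=-p_i\in\mathbb Z$ with $\gcd(p_2,\dots,p_n)=1$ and $a\in\mathbb Q^2$. Proposition \ref{P:NormalFormForAinQ2} then identifies $[a]$ with $[0,0]$ or with $[0,-1/N]$ for some $N\geq 2$, and in particular shows it is discrete; thus $\mathscr F_a^\alpha$ coincides with one of the leaves $\mathscr F_0^\alpha$ or $\mathscr F_N^\alpha$, and it remains to exhibit these as algebraic. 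For $n=3$ this is precisely the content of Proposition \ref{P:JustHere} and its analogue for $\mathscr F_N^\alpha$, which realize the leaf, after passing to a suitable finite modular cover, as a modular elliptic surface with finitely many torsion (multi-)sections removed, an algebraic variety. For general $n\geq 3$ the same parametrization by an open subset of a modular elliptic surface goes through, the affine map $U_0$ being replaced by its evident higher-dimensional analogue.

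The main obstacle is exactly this last step: it is not a soft deduction but rests on the explicit modular description of the leaves, and for $n>3$ one must verify that the normal-form reduction of Proposition \ref{P:NormalFormForAinQ2} and the elliptic-modular-surface parametrization extend without change. I expect this to cause no genuine difficulty, since both the group action \eqref{E:ActionOnHolonomy} on the lifted holonomy and the defining equation \eqref{E:EqLeaf} of the leaf have the same shape for every $n\geq 3$; once the equivalence $(2)\Leftrightarrow[a]\text{ discrete}$ is in hand, the corollary assembles from the pieces already proved.
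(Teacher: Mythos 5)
Your proposal is correct and follows essentially the same route as the paper: the equivalence of (2) with discreteness of the orbit $[a]$ via \cite{CamachoLinsNeto}, the necessity computation using the translation subgroup and the elements acting on $(a_0,a_\infty)$, and then the rescaling to \eqref{E:HypothesisLambda=1} followed by Proposition \ref{P:NormalFormForAinQ2} and the modular elliptic surface description (Proposition \ref{P:JustHere} and its analogue for $\mathscr F_N^\alpha$) to get algebraicity. Even your caveat about extending the $n=3$ argument to general $n$ mirrors the paper, which carries out the details only for $n=3$ and leaves the verification of the general case to the reader.
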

\begin{center}
\begin{figure}[!h]
\psfrag{1}[][][1]{$1 $}
\psfrag{t}[][][1]{$\tau $}
\psfrag{T}[][][1]{$[\tau] $}
\psfrag{Y}[][][1]{$Y(2N) $}
\psfrag{E}[][][1]{$E_\tau $}
\psfrag{0}[][][0.9]{$\;\;  \big[ 0\big] $}
\psfrag{2N}[][][0.9]{$\quad \;  \textcolor{Pin}{\big[ \frac{1}{2N}\big] }$}
\psfrag{122N}[][][0.9]{$\quad \textcolor{Pin}{\big[ \frac{1+N}{2N}\big] }$}
\psfrag{t22N}[][][0.9]{$\quad \textcolor{Pin}{\big[ \frac{1+N\tau}{2N}\big] }$}
\psfrag{1t22N}[][][0.9]{$\quad \textcolor{Pin}{\qquad \big[ \frac{1+N(1+\tau)}{2N}\big] }$}
\psfrag{N}[][][0.9]{$ \textcolor{blue}{\qquad \big[ \frac{1}{N}\big] }\quad $}
\includegraphics[scale=0.6]{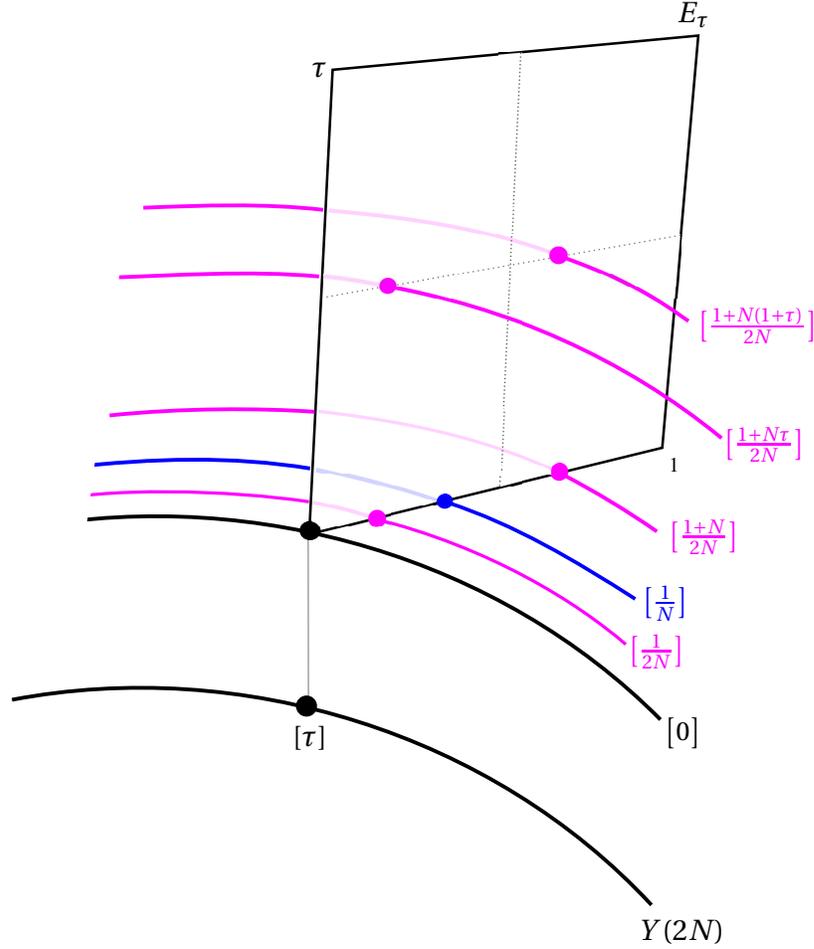}
\caption{The covering $ \mathscr F_{N}(2N)$ of the leaf $ \mathscr F_{N}$ is the total space of the modular surface 
$\mathcal E(2N)\rightarrow Y(2N)$ 
with the six sections $[0]$, $[1/N]$ and $[(1+N(k+l\tau))/2N]$ for $k,l=0,1$ removed.}
\label{F:FN(2N)}
\end{figure}
\end{center}

\subsubsection{\bf Some algebraic leaves in $\mathscr M_{1,3}$ related with some  Picard/Deligne-Mos\-tow's  orbifolds} 
\label{SS:nolose}
In the $n=3$ case,  assume that $\alpha=(\alpha_1,\alpha_2,\alpha_3)$ is such that $\alpha_2=\alpha_3=-\alpha_1/2$.  Then the leaf $\mathscr F_0^\alpha$ formed of flat tori with 3 conical singularities 
 is related to a moduli space of flat spheres with five cone points. 
\sk 

Indeed, the equation of $\mathcal F_{0}^\alpha$ in the Torelli space is 
$ z_2+z_3=0$. It follows that an element $E_{\tau,z}$ of this leaf is 
a flat structure on $E_\tau$ with a conical point of angle $\theta_1=2\pi(\alpha_1+1)$ at the origin  and two equal conical angles $
\theta_2=\theta_3=\pi(2-\alpha_1)$ at $[z_2]$ and $[z_3]=[-z_2]$.  This 
flat structure is invariant by the elliptic involution $i: [z]\mapsto [-z]=-[z]$ on $E_\tau$ hence can be pushed-forward by 
$\wp: E_\tau\rightarrow E_\tau/\langle i \rangle\simeq \mathbb P^1$.  
 The flat structure one obtains on $\mathbb P^1$ has three cone points of angle $\pi$ at the image of the three 2-torsion points of $E_\tau$ by $\wp$, one cone point of angle $\theta_1/2=\pi(\alpha_1+1)$ at $\wp(0)=\infty$ and one cone point of angle $\pi(2-\alpha_1)$ at $\wp(z_2)=\wp(z_3)$. \sk 
 
 At a more global level, this shows that when $\alpha_2=\alpha_3$, the leaf $\mathscr F_0^\alpha\subset \mathscr M_{1,3}$ admits a special automorphism of order 2 which induces a biholomorphism
   $$
 \mathscr F_0^\alpha\longrightarrow \mathscr M_{0,\theta(\alpha)}
 $$ onto the moduli space of flat spheres with five conical points $\mathscr M_{0,\theta(\alpha)}$ with associated angle datum  $$
\theta(\alpha)=\big(\pi,\pi,\pi, \pi(1+\alpha_1), \pi(2-\alpha_1) \big)  \, .
$$  

Moreover, it is easy to verify that the preceding map is compatible with the $\mathbb C\mathbb H^2$-structures carried by each of these two moduli spaces of flat surfaces. \sk 

The  5-uplet $\mu(\alpha)=(\mu_1(\alpha),\ldots,\mu_5(\alpha))\in ]0,1[^5$ 
corresponding to the angle datum $\theta(\alpha)$ 
in Deligne-Mostow's notation of \cite{DeligneMostow} is given by 
$$
\mu(\alpha)=\Big(\frac{1}{2},\frac{1}{2},\frac{1}{2}, 
\frac{1-\alpha_1}{2}, \frac{\alpha_1}{2} \Big)  \, . 
$$

Then looking at the table page 86 in \cite{DeligneMostow}, it comes easily that the metric completion of $\mathscr M_{0,\theta(\alpha)}$ is a complex hyperbolic orbifold exactly for two values of $\alpha_1$, namely $\alpha_1=1/3$ and $\alpha_1=2/3$.   It follows that the image of the holonomy of Veech's $\mathbb C\mathbb H^2$-structure of the leaf $\mathscr F_0^\alpha$ is a lattice in ${\rm PU}(1,2)$ exactly when 
$\alpha_1$ is equal to one of these two values.  Note that the two corresponding lattices are isomorphic,   arithmetic and not cocompact.

\subsubsection{\bf Towards a description of the metric completion of an algebraic leaf of Veech's foliation}  
\label{SS:Towards}
We consider here how to describe the metric completion of an algebraic leaf of Veech's foliation when it is endowed with the metric associated to the complex hyperbolic structure it carries. This is a natural question in view of Thurston's results \cite{Thurston} in the genus 0 case.   \sk  

\paragraph{}\hspace{-0.3cm} In \cite{GP}, we have  generalized   the approach initiated by Thurston which relies on surgeries on flat surfaces to the genus 1 case. 
 From our main result in this paper, it comes that, when $\alpha$ is assumed to be rational, then  
  the metric completion $\overline{\mathscr F_N}$ of an algebraic leaf 
${\mathscr F_N}$ of Veech's  foliation 
on $\mathscr M_{1,n}$: 
\begin{enumerate}
\item  carries a complex hyperbolic conifold structure of finite volume which extends Veech's hyperbolic structure of ${\mathscr F_N}$;
\sk 
\item  $\overline{\mathscr F_N}$ is obtained by adding to ${\mathscr F_N}$
some (covering of some) strata of flat tori and of flat spheres obtained as particular degenerations of flat tori whose moduli space is ${\mathscr F_N}$. 
\end{enumerate}
The strata mentioned in (2) which parametrize flat tori with $n'<n$ cone points are obtained by making several cone points collide hence are called  {\bf $\boldsymbol{C}$-strata} ($C$ stands here for {\it `colliding'}). 
The others  parametrizing  flat spheres with $n''\geq n+1$ cone points are obtained by pinching an essential simple curve on some element of $\mathscr F_N$  hence are called  {\bf $\boldsymbol{P}$-strata} ($P$ stands here for {\it `pinching'}).\footnote{See \cite[\S10.1]{GP} where this terminology is introduced.} \sk 

Actually, the main result of \cite{GP} concerns the algebraic `leaves' in $\mathscr M_{1,n}$ in the sense  of  Veech \cite{Veech}. In our notation, such a leaf 
$\mathscr F_{\rho}^\alpha$ is the image in $\mathscr M_{1,n}$ of a level-subset 
$\mathcal F_\rho^\alpha=({\chi}_{1,n}^{\alpha})^{-1}(\rho)
\subset \mathcal T\!\!\!{\it eich}_{1,n}$ by Veech's linear holonomy map ${\chi}_{1,n}^{\alpha}$ of   some element $\rho$ of $ {\rm Hom}^\alpha(\pi_1(1,n),\mathbb U)\simeq \mathbb U^2$ 
 (see \S\ref{SS:VeechLeavesAreNotCoonected}).  The point is that it is not completely clear yet which are the connected components of such a `leaf' 
$\mathscr F_{\rho}^\alpha$ in terms of the irreducible leaves $\mathscr F_{\!N}^\alpha$ considered in the present paper (for instance, the answer depends on $\alpha$ already in the $n=2$ case, see \S\ref{SS:FThetaNNotConnected} below). 

 It follows that the methods developed in \cite{GP} to list the strata which must be added to $\mathscr F_{\rho}^\alpha$ in order to get its metric completion do not apply in an effective way to any of the irreducible leaves $\mathscr F_{\!N}$'s considered    here.  An interesting feature of the analytic approach to Veech's foliation developed above is that it suggests an explicit and effective way to describe 
$\overline{\mathscr F_{\!N}}$ for any $N$ given.  
\mk 

\paragraph{}\hspace{-0.3cm} In the $n=2$ case, one can give a complete and explicit description of the metric completion 
of any leaf $\mathscr F_N\subset \mathscr M_{1,2}$, see  \S\ref{SS:MetricCompletionOfY1(N)} further. In particular, using  the results of \cite{GP}, it comes that the metric completion of a leaf $\mathscr F_N$ is obtained by adjoining to it a finite number of $P$-strata
 which, in this case,  are moduli spaces of flat spheres with three cone points $\mathscr M_{0,\theta}$ for some angle data $\theta=(\theta_1,\theta_2,\theta_3)\in ]0,2\pi[^3$ which can be explicitly given.
\mk 

\paragraph{}\hspace{-0.3cm}  We now say a few words about the $n=3$ case. We take $N\geq 4$ in order to avoid any pathological case. Let $\mathcal E_1(N)\rightarrow Y_1(N)$ be the modular elliptic surface associated to $\Gamma_1(N)$.  Then, as proven above in \S\ref{P:FNalpha},   there exists a finite number of torsion multi-sections $\Sigma(1),\ldots,\Sigma(s)\subset  \mathcal E_1(N)$ such that  $\mathscr F_N$ is isomorphic to $\mathcal E_1(N)\setminus \Sigma$ with 
 $\Sigma=
 \Sigma(1)\cup \ldots\cup \Sigma(s)$. 
   By restriction, one gets a surjective map 
 $$\nu_N: \mathscr F_N= \mathcal E_1(N)\setminus \Sigma\rightarrow  Y_1(N)$$
  which is relevant to describe the first strata (namely the ones of complex codimension 1) which must be attached  to $\mathscr F_N$ along the  inductive process described in \cite[\S7.1]{GP} giving $\overline{\mathscr F_N}$ at the end. \sk

Indeed, by elementary analytic considerations, it is not difficult to see that 
the $C$-strata of codimension 1 which must be added to $\mathscr F_N$ are precisely the multi-sections $\Sigma(i)$ for $i=1,\ldots,s$, which then appear as being horizontal for  $\nu_N$. It is then rather easy  to see that each $\Sigma(i)$ is a non-ramified cover of a certain leaf algebraic leaf $\mathscr F(i)=\mathscr F_{N(i)}^{\alpha(i)}$ of Veech's foliation 
on $\mathscr M_{1,2}$, for a certain integer $N(i)\geq 0$ and a certain 2-uplet $\alpha(i)=(\alpha_1(i),-\alpha_1(i))$ with $\alpha_1(i)\in ]0,1[$.  Moreover, all these objects (namely $N(i)$, $\alpha(i)$ as well as the cover $\Sigma(i)\rightarrow \mathscr F(i)$) can be determined explicitly. 
\sk 

{At the moment,  we do not have as nice and precise descriptions of the 
 $P$-strata of codimension 1 which must be added to $\mathscr F_N$ as the one we have for the $C$-strata}. What seems  likely is  that these $P$-strata, which are (coverings of) moduli spaces of flat spheres with 4  cone  points,  are vertical with respect to $\nu_N$. More precisely, we believe that they are vertical fibers 
 at  some cusps  of a certain extension  of $\nu_N$ over 
 a partial completion of $Y_1(N)$ contained in $X_1(N)$.  
 \sk 
 
Thanks to some classical works by Kodaira and Shioda \cite{Kodaira,Shioda}, it is known that $\nu_N: \mathcal E_1(N)\rightarrow Y_1(N)$ admits a compactification 
 $\overline{\nu}_N: \overline{\mathcal E_1(N)}\rightarrow X_1(N)$ obtained by gluing some generalized elliptic curves as vertical fibers over the cusps of $Y_1(N)$. 
Note that  such compactified modular surfaces (but for the 
level $N$ congruence group $\Gamma(N)$) 
have been used by Livn\'e in his thesis \cite{Livne} (see also \cite[\S16]{DeligneMostowBook}) to construct some non-arithmetic lattices in ${\rm PU}(1,2)$. This fact prompts us to believe that  it might be possible to construct the metric completion of $\mathscr F_N$ from $\overline{\mathcal E_1(N)}$ by means of geometric operations similar to the ones used by Livn\'e. 
 This could provide a nice way to investigate further the topology and the complex analytic  geometry of the $\mathbb C\mathbb H^2$-conifold 
 $ \overline{\mathscr F_N}$. \sk
 
  We hope to return on this in some future works.

\subsection{\bf Veech's foliation for flat tori with two conical singularities}
\label{SS:VeechFoliationg=1n=2}
We now specialize in the special case when $g=1$ and $n=2$.  

In this case, the 2-uplet   
  $\alpha=(\alpha_1,\alpha_2)\in \mathbb R^2$ is necessarily such that 
\begin{equation}
\label{E:alpha n=2}
\alpha_1=-\alpha_2\in ]0,1[.
\end{equation}

Since Veech's foliation does depend only on $[\alpha_1:\alpha_2]$ and in view of  
our hypothesis \eqref{E:alpha n=2}, one obtains that $\mathcal F^\alpha$ does not depend on $\alpha$ (Corollary \ref{C:FAlphaDoesNotDependOnAlpha}).  
 \mk 

\paragraph{}\hspace{-0.3cm}  In  the case under study,   it is relevant   to consider the rescaled first integral 
$$\Xi=\big(\alpha_1\big)^{-1} \xi^\alpha \, : \, 
\mathcal T\!\!\!{\it or}_{1,2} \longrightarrow \mathbb R^2
$$ which is independent of $\alpha$. Indeed,   for $(\tau,z_2)\in \mathcal T\!\!\!{\it or}_{1,2}$, one has
$$
\Xi(\tau,z_2)=\left( \frac{\Im{\rm m} (z_2)}{\Im{\rm m}(\tau)} ,   \frac{\Im{\rm m} (z_2)}{\Im{\rm m}(\tau)}\cdot \tau-z_2
\right).$$

Moreover, it follows immediately from the third point of  Proposition \ref{P:FalphaGeneralities}
that  the image of $\Xi$ is exactly $\mathbb R^2\setminus \mathbb Z^2$.  
We denote by  $\Pi_1$ the restriction to $\mathcal T\!\!\!{\it or}_{1,2}$ of the linear projection $\mathbb H\times \mathbb C\rightarrow \mathbb H$ onto the first factor.
\begin{prop} 
\begin{enumerate}
\item The following map   is a real analytic diffeomorphism 
\begin{equation}
\label{E:Tor12IsomProduct}
\Pi_1\times \Xi : \mathcal T\!\!\!{\it or}_{1,2} \longrightarrow \mathbb H\times \big( \mathbb R^2\setminus \mathbb Z^2\big) .
\end{equation}
\item 
 The push-forward of Veech's foliation $\mathcal F^\alpha$ 
by this map is  the horizontal foliation 
on  the product $\mathbb H\times ( \mathbb R^2\setminus \mathbb Z^2) $. 
\sk 
\item By restriction, the projection $\Pi_1$ induces a biholomorphism between any leaf of $\mathcal F^\alpha$ and Poincar upper half-plane $\mathbb H$.  In particular, the leaves of Veech's foliation on 
$ \mathcal T\!\!\!{\it or}_{1,2}$
are  topologically trivial.
\end{enumerate}
\end{prop}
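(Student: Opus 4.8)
The plan is to exhibit an explicit real-analytic inverse to the map \eqref{E:Tor12IsomProduct} and then to read off the remaining two assertions from it. Write $\Xi=(\Xi_0,\Xi_\infty)$ for the two components of the rescaled first integral, so that $\Xi_0(\tau,z_2)=\Im{\rm m}(z_2)/\Im{\rm m}(\tau)$ and $\Xi_\infty(\tau,z_2)=\Xi_0(\tau,z_2)\cdot\tau-z_2$. The first thing to record is that $\Xi_\infty$ is indeed \emph{real}-valued: since $\Im{\rm m}(\Xi_0\tau-z_2)=\Xi_0\,\Im{\rm m}(\tau)-\Im{\rm m}(z_2)=0$, the quantity $\Xi_\infty$ lies in $\mathbb R$, in accordance with \eqref{E:alphaInfinity}. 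Hence $\Pi_1\times\Xi$ really does take its values in $\mathbb H\times\mathbb R^2$.

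First I would treat point (1). Given $(\tau,s,w)\in\mathbb H\times\mathbb R^2$, set $z_2=s\tau-w$. A direct computation shows that this is a two-sided inverse at the level of the ambient spaces $\mathbb H\times\mathbb C$ and $\mathbb H\times\mathbb R^2$: indeed $\Im{\rm m}(z_2)=s\,\Im{\rm m}(\tau)$ (as $w$ is real) gives back $\Xi_0=s$, and then $\Xi_\infty=s\tau-z_2=w$. It then remains to identify the image of the Torelli space, for which the key elementary fact is the equivalence
\begin{equation*}
s\tau-w\in\mathbb Z_\tau\iff(s,w)\in\mathbb Z^2,
\end{equation*}
obtained at once by taking imaginary parts: if $s\tau-w=m+k\tau$ with $m,k\in\mathbb Z$, then $(s-k)\Im{\rm m}(\tau)=0$ forces $s=k\in\mathbb Z$ and hence $w=-m\in\mathbb Z$; the converse is clear. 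Since, by Nag's description, $\mathcal T\!\!\!{\it or}_{1,2}$ is exactly the locus $\{z_2\notin\mathbb Z_\tau\}$, this equivalence shows that $(\tau,s,w)\mapsto(\tau,s\tau-w)$ maps $\mathbb H\times(\mathbb R^2\setminus\mathbb Z^2)$ bijectively onto $\mathcal T\!\!\!{\it or}_{1,2}$ and is inverse to $\Pi_1\times\Xi$. Both maps are manifestly real-analytic (note that $\Xi_0$ involves $\Im{\rm m}$, so these are genuinely real-analytic and not holomorphic), which proves that \eqref{E:Tor12IsomProduct} is a real-analytic diffeomorphism.

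For point (2), I would invoke Proposition \ref{P:FalphaGeneralities}: the map $\xi^\alpha$, hence its rescaling $\Xi$, is a primitive first integral of $\mathcal F^\alpha$, so the leaves are precisely the (connected) fibers of $\Xi$. Under the diffeomorphism of point (1), such a fiber $\Xi^{-1}(s,w)$ corresponds to the slice $\mathbb H\times\{(s,w)\}$, and therefore the push-forward of $\mathcal F^\alpha$ is the horizontal foliation of the product. Finally, for point (3), fix a value $(s,w)=a/\alpha_1$ with $a\in{\rm Im}(\xi^\alpha)=\mathbb R^2\setminus\alpha_1\mathbb Z^2$, so that $(s,w)\notin\mathbb Z^2$. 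The inverse map restricts on the corresponding leaf to
\begin{equation*}
\mathbb H\longrightarrow\mathcal T\!\!\!{\it or}_{1,2},\qquad\tau\longmapsto\big(\tau,\,s\tau-w\big),
\end{equation*}
which, for $s,w$ \emph{fixed real} constants, is holomorphic in $\tau$; moreover the exclusion $(s,w)\notin\mathbb Z^2$ guarantees, via the equivalence displayed above, that this point never meets the removed divisor $\{z_2\in\mathbb Z_\tau\}$, so the parametrization is defined on all of $\mathbb H$ and its image is the whole leaf. Since $\Pi_1$ restricts on the leaf to the inverse of this holomorphic embedding, it induces a biholomorphism from the leaf onto $\mathbb H$; in particular every leaf is topologically (indeed holomorphically) trivial.

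There is no serious obstacle here: the statement becomes elementary once the explicit inverse is written down. The only points that require a little care are the reality of $\Xi_\infty$ and the lattice-membership equivalence above, together with the observation — needed for point (3) — that the holomorphic parametrization of a genuine leaf avoids the punctures precisely because $a$ lies in ${\rm Im}(\xi^\alpha)=\mathbb R^2\setminus\alpha_1\mathbb Z^2$.
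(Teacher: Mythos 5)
Your proof is correct, and it is essentially the argument the paper intends: the paper states this proposition without proof, treating it as immediate from the explicit formula for $\Xi$ together with Proposition \ref{P:FalphaGeneralities}. Your write-up simply supplies the details being taken for granted there — the explicit inverse $(\tau,s,w)\mapsto(\tau,\,s\tau-w)$, the equivalence $s\tau-w\in\mathbb Z_\tau\Leftrightarrow(s,w)\in\mathbb Z^2$ obtained by taking imaginary parts, and the observation that each leaf is the graph of the holomorphic map $\tau\mapsto s\tau-w$, so that $\Pi_1$ restricts to a biholomorphism onto $\mathbb H$.
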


Since Veech's foliation does not depend on $\alpha$, we will drop the subscript  $\alpha$ in the notations of the rest of this section. We will also identify $\mathcal T\!\!\!{\it or}_{1,2}$ with $
\mathbb H\times ( \mathbb R^2\setminus \mathbb Z^2) $. The 
corresponding action of  
${\rm SL}_2(\mathbb Z)\ltimes \mathbb Z^2$   is given by 
\begin{equation*}
\Big(
M, \big(k,l\big)\Big)
 \boldsymbol{\circ}\Big( \tau,  \big(r_0,r_\infty\big)\Big) =
\Big( M\cdot \tau,  \big(r_0+l , r_\infty-k\big)\cdot 
\rho(M) \Big) 
\end{equation*}
for any $\big(M,(k,l)\big)\in {\rm SL}_2(\mathbb Z)
\ltimes \mathbb Z^2$  and any $\big(\tau,(r_0,r_\infty)\big)\in \mathbb H\times (\mathbb R^2\setminus \mathbb Z^2)$.\footnote{We remind the reader that $\rho(M)$ is the matrix obtained from $M$ by exchanging the antidiagonal coefficients, see \eqref{E:RhoM}.}
\mk 
%
%
%
%


The preceding  proposition shows that, on the Torelli space,  Veech's foliation  is trivial from a topological point of view. It is really  more interesting to look at $\mathscr F$, which is by definition the push-forward  of $\mathcal F$ 
by the natural quotient map
\begin{equation}
\label{E:QuotientMu}
\mu: \mathcal T\!\!\!{\it or}_{1,2} \longrightarrow {\mathscr M}_{1,2}. 
\end{equation}

  For a `rescaled lifted holonomy' $r=(r_0,r_\infty) \in \mathbb R^2\setminus \mathbb Z^2$, one sets
\begin{align*}
 [r]= & \, \big({\rm SL}_2(\mathbb Z)\ltimes \mathbb Z^2\big)\bullet r\subset \mathbb R^2\, , \\
\mathcal F_{\!\!r}=& \, \Xi^{-1}(r)= \Big\{ \big(\tau,z_2\big)\in \mathcal T\!\!\!{\it or}_{1,2}\; \big\lvert \; r_0\tau-
 z_2=r_\infty\Big\}
\subset \mathcal T\!\!\!{\it or}_{1,2}
 \\
\mbox{ and }\quad \mathscr F_{\!\!r}= & \,  \mu\big( \mathcal F_{r}\big)\subset \mathscr M_{1,2}\, .
\end{align*}
 (Note that the correspondence  with the notations of 
  \S\ref{SS:AlgebraicLeavesg=1n>1}  are obtained via $ r\leftrightarrow a$ with $a=\alpha_1 r$, {\it i.e.} $	a_0=\alpha_1 r_0$ and $a_\infty=\alpha_1 r_\infty$). \mk

\paragraph{}\hspace{-0.3cm}
It turns out that when $g=1$ and $n=2$, 
one can give a complete and explicit description of all the leaves of 
 Veech's foliation on ${\mathscr M}_{1,2}$ and in particular of the algebraic ones. \mk

For $r\in \mathbb R^2$,  one denotes by ${\rm Stab}(r)$  its stabilizer for the action $\bullet$: 
$${\rm Stab}(r)= \Big\{ g\in {\rm SL}_2(\mathbb Z)\ltimes \mathbb Z^2 \, \big\lvert \, g\bullet r=r\, \Big\}$$
and one sets: 
$$
\delta(r)=\dim_{\mathbb Q} \big\langle 
r_0\, , \, r_\infty\, ,\, 1
\big\rangle  \in \big\{1 ,2, 3\big\}. \mk
$$

The following facts are easy to establish:  
\begin{itemize}
\item   $\delta(r)$  does depend only on $[r]$, {\it i.e.} $\delta(r)=\delta(r')$ if $r'\in [r]$; in fact
$$
\delta(r)=\dim_{\mathbb Q} \big\langle [r]\big\rangle; 
$$
\item one has $\delta(r)=1$ if and only if $r\in \mathbb Q^2$; 
\sk 
\item one has $\delta(r)=2$ if and only if there exists a triplet  $(u_0,u_\infty,u_1)\in \mathbb Z^3$, unique up to multiplication by $-1$, such that 
\begin{equation}
\label{E:u(r)}
u_0r_0+u_\infty r_\infty=u_1 \qquad \mbox{ and }\qquad 
\gcd(u_0,u_\infty, u_1)=1. 
\end{equation}
\end{itemize}

\begin{prop}${}^{}$ 
\label{P:Stab(r)}
Let $r$ be an element of\;\,$ \mathbb R^2$. 
\begin{enumerate}
\item If \,$\delta(r)=3$   then ${\rm Stab}(r)$ is trivial.
\sk 
\item If \,$\delta(r)=2$ then  ${\rm Stab}(r)$ is isomorphic to $\mathbb Z$. 
\sk 
\item If \,$\delta(r)=1$   then  $r\in \mathbb Q^2$ and ${\rm Stab}(r)$ is isomorphic to the congruence subgroup 
$\Gamma_1(N)$ where $N$ is the smallest integer such that $Nr\in \mathbb Z^2$.
\sk 
\item For any positive integer $N$, the stabilizer  of $(0,1/N)$  in 
${\rm SL}_2(\mathbb Z)\ltimes \mathbb Z^2$ 
is the image of the following injective morphism of groups
\begin{align*}
\Gamma_1(N)  & \longrightarrow {\rm SL}_2(\mathbb Z)\ltimes \mathbb Z^2 \\
\small{\begin{bmatrix}
a & b\\
c & d
\end{bmatrix} }& \longmapsto \left(  \begin{bmatrix}
a & b\\
c & d
\end{bmatrix} ,  \Big(  \frac{d-1}{N} \, , \, \frac{c}{N}   \Big)  \right).
\end{align*}
\end{enumerate}
\end{prop}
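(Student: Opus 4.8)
The plan is to prove the four claims about the stabilizer group $\mathrm{Stab}(r)$ by working directly with the explicit action $\bullet$ of ${\rm SL}_2(\mathbb Z)\ltimes \mathbb Z^2$ on $\mathbb R^2$ given in \eqref{E:ActionOnHolonomy} (specialized to $n=2$). First I would record the concrete form of this action: for $M=\left[\begin{smallmatrix} a & b\\ c& d\end{smallmatrix}\right]$ and $(k,l)\in\mathbb Z^2$, the element $g=(M,(k,l))$ sends $r=(r_0,r_\infty)$ to
\begin{equation*}
g\bullet r = \big(\,a r_0 - c r_\infty + l\,,\; -b r_0 + d r_\infty - k\,\big),
\end{equation*}
where I have absorbed the factor $\alpha_1$ (since $\alpha_2=-\alpha_1$ and we rescale by $\alpha_1^{-1}$, the coefficients $\alpha_i$ become $\mp 1$). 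The fixed-point equation $g\bullet r=r$ then reads
\begin{align*}
(a-1)r_0 - c\, r_\infty &= -l\,,\\
-b\, r_0 + (d-1) r_\infty &= k\,.
\end{align*}
The key observation is that, given $M\in{\rm SL}_2(\mathbb Z)$, there exists a translation part $(k,l)\in\mathbb Z^2$ making $g$ fix $r$ \emph{if and only if} the two real numbers $(a-1)r_0-c\,r_\infty$ and $-b\,r_0+(d-1)r_\infty$ are both integers; moreover when such $(k,l)$ exists it is unique. Thus the projection $\mathrm{Stab}(r)\to{\rm SL}_2(\mathbb Z)$, $g\mapsto M$, is \emph{injective}, and its image is the set of $M$ satisfying the integrality condition just stated. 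This reduces everything to understanding this image inside ${\rm SL}_2(\mathbb Z)$.

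Next I would analyze the integrality condition according to the value of $\delta(r)=\dim_{\mathbb Q}\langle r_0,r_\infty,1\rangle$. For point (1), when $\delta(r)=3$ the numbers $1,r_0,r_\infty$ are $\mathbb Q$-linearly independent, so the two equations $(a-1)r_0-c\,r_\infty\in\mathbb Z$ and $-b\,r_0+(d-1)r_\infty\in\mathbb Z$ force $a-1=c=b=d-1=0$, hence $M=\mathrm{Id}$ and then $(k,l)=(0,0)$; the stabilizer is trivial. For point (3), when $\delta(r)=1$ we have $r\in\mathbb Q^2$ and the integrality conditions become congruence conditions modulo the denominators. Writing $N$ for the least positive integer with $Nr\in\mathbb Z^2$, I would show by a direct congruence computation that the image of $\mathrm{Stab}(r)$ in ${\rm SL}_2(\mathbb Z)$ is exactly $\Gamma_1(N)$; this is cleanest if I first reduce to the normal form $r=(0,1/N)$, which is legitimate since by Proposition~\ref{P:NormalFormForAinQ2} the orbit $[r]$ equals $[0,-1/N]$ and conjugating by the element carrying $r$ to $(0,1/N)$ carries $\mathrm{Stab}(r)$ isomorphically onto $\mathrm{Stab}(0,1/N)$.

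For point (4), which is really the explicit heart of point (3), I would plug $r=(0,1/N)$ into the fixed-point equations. These become $-c/N=-l$ and $(d-1)/N=k$, i.e. the integrality condition is precisely $c\equiv 0$ and $d\equiv 1\pmod N$, which (together with $ad-bc=1$, forcing also $a\equiv 1\pmod N$) is exactly the definition of $\Gamma_1(N)$. The unique translation part is then $(k,l)=\big(\tfrac{d-1}{N},\tfrac{c}{N}\big)$, giving the stated injective morphism, and one checks it is a group homomorphism by a short matrix computation using the semidirect product law \eqref{E:RhoM}. Point (2), the case $\delta(r)=2$, is the one I expect to be the main obstacle. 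Here there is a unique primitive relation $u_0 r_0+u_\infty r_\infty = u_1$ as in \eqref{E:u(r)}, and the integrality conditions cut out a subgroup of ${\rm SL}_2(\mathbb Z)$ which should turn out to be infinite cyclic. My plan is to interpret the relation \eqref{E:u(r)} as saying that the line $\mathbb R\cdot(r_0,r_\infty)$ together with the rational structure determines a point, or rather a primitive rational direction, fixed by a one-parameter family; concretely, the matrices $M$ in the image must preserve the $\mathbb Q$-line spanned by a suitable rational vector built from $(u_0,u_\infty)$, so the image lands in the stabilizer of a rational point of $\mathbb P^1(\mathbb Q)$, which is conjugate in ${\rm SL}_2(\mathbb Z)$ to the group of upper-triangular unipotent-times-$\{\pm\mathrm{Id}\}$ matrices. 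I would then show that the extra integrality constraint coming from the irrational ratio $r_0/r_\infty$ (which is irrational precisely because $\delta(r)=2$ rules out $r\in\mathbb Q^2$) kills the $\pm\mathrm{Id}$ and the torsion, leaving exactly an infinite cyclic group generated by a single parabolic element. Establishing that the generator exists (nontriviality) and that no proper power relations occur is the delicate step, and I would handle it by an explicit generator constructed from $(u_0,u_\infty,u_1)$ together with Bézout's identity, mirroring the computation already used in Proposition~\ref{P:NormalFormForAinQ2}.
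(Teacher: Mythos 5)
Your handling of points (1), (3) and (4) is essentially the paper's own proof: it too starts from the fixed-point equation \eqref{E:equiv}, disposes of (1) by $\mathbb Q$-linear independence of $1,r_0,r_\infty$, reduces (3) to (4) through the normal form $[r]=[0,1/N]$ of Proposition \ref{P:NormalFormForAinQ2}, and reads (4) off the relations $c=lN$, $d-1=kN$. The only real divergence is in point (2). The paper argues by direct parametrization: primitivity of $u(r)$ forces $(a-1,-c,-l)$ and $(-b,d-1,k)$ to be integer multiples of $u(r)$, whence every stabilizing element equals ${\boldsymbol{1}}+\frac{\lambda}{u}X(u)$ for some $\lambda\in\mathbb Z$, with $X(u)$ as in \eqref{E:X(u)}; it then verifies by hand (a computation it calls laborious and leaves to the reader) that $\lambda\mapsto{\boldsymbol{1}}+\frac{\lambda}{u}X(u)$ is a group isomorphism. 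Your variant --- inject ${\rm Stab}(r)$ into ${\rm SL}_2(\mathbb Z)$ via the uniqueness of the translation part, then trap the image inside the stabilizer of the rational point $[u_0:-u_\infty]\in\mathbb P^1(\mathbb Q)$ --- buys you exactly that verification for free: once torsion is excluded, the image is a subgroup of an infinite cyclic group, hence cyclic, and only the existence of one nontrivial element remains to be checked. Both routes, however, rest on the same key fact, which you should state explicitly: since $\gcd(u_0,u_\infty,u_1)=1$, every integer relation among $(r_0,r_\infty,1)$ is an \emph{integer} multiple of $u(r)$; this is what makes $M-{\rm Id}$ a rank-one matrix with image in $\mathbb R\cdot{}^t(u_0,-u_\infty)$, i.e.\ what puts the image of ${\rm Stab}(r)$ inside a parabolic stabilizer in the first place.

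One justification in your sketch is wrong, although the conclusion survives: $\delta(r)=2$ does \emph{not} make $r_0/r_\infty$ irrational (take $r=(\sqrt{2},2\sqrt{2})$, for which $\delta(r)=2$ while the ratio is $1/2$), so irrationality of the ratio cannot be what excludes $-{\rm Id}$ and torsion from the image. What excludes them is the determinant condition: writing $a-1=mu_0$, $c=-mu_\infty$, $b=-m'u_0$, $d-1=m'u_\infty$ with $m,m'\in\mathbb Z$, one computes $\det M=1+mu_0+m'u_\infty$, so $\det M=1$ forces $mu_0+m'u_\infty=0$ and hence ${\rm tr}\,M=2$; every element of the image is therefore unipotent. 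Nontriviality is then immediate from the explicit element with $(m,m')=(u_\infty/u,-u_0/u)$ (this is $\lambda=1$ in the paper's notation): all its matrix entries and its translation part are integers simply because $u=\gcd(u_0,u_\infty)$ divides $u_0$ and $u_\infty$ --- no B\'ezout identity is needed --- and it differs from the identity since $(u_0,u_\infty)\neq(0,0)$.
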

\begin{proof} For $r=(r_0,r_\infty)\in \mathbb R^2$ and 
$g=\big(\tiny{\big[\!\!
\begin{tabular}{cc}
$a$ \!\!&\!\! \!\!\!\!$b$\vspace{-0.1cm}\\
$c$ \!\!&\!\! \!\!\!\! $d$
\end{tabular}\!\!
\big]}, (k,l)\big)\in {\rm SL}_2(\mathbb Z)\ltimes \mathbb Z^2$, 
one has 
 \begin{align}
 \label{E:equiv}
g
\bullet r=r
\quad \Longleftrightarrow \quad
 \begin{bmatrix}
a-1 & -c\\
-b& d-1
\end{bmatrix}\cdot 
 \begin{bmatrix}
r_0\\
r_\infty 
\end{bmatrix}= \begin{bmatrix}
-l \\k
\end{bmatrix}. 
\end{align}

We first consider the case when $r\not \in \mathbb Q^2$. 
If $\delta(r)=3$ it is easy to deduce from the preceding equivalence that ${\rm Stab}(r)$ is trivial. 
Assume that  $\delta(r)=2$ and  let 
$u(r)=(u_0,u_\infty ,u_1)\in \mathbb Z^3$  be such that 
\eqref{E:u(r)} holds true.
  We  denote by $u$ the greatest common divisor of $u_0$ and $u_\infty$: $u=\gcd(u_0,u_\infty)\in \mathbb N_{>0}$. \sk 

The equality  $g\bullet r=r$ is equivalent to the fact that $(a-1,-c,-l)$ and $(-b,d-1,k)$ are integer multiples of $u(r)$. From this remark, one deduces easily that in this case, any  $g\in  {\rm Stab}(r)$ is written $g={\boldsymbol{1}}+\frac{\lambda}{u} X(u)$  for some $\lambda\in \mathbb Z$, where $X(u)$ stands for the following element of ${\rm SL}_2(\mathbb Z)\ltimes \mathbb Z^2$: 
\begin{equation}
\label{E:X(u)}
X(u)=\Bigg( \begin{bmatrix}
{u_0u_\infty} & {u_0^2}\\
- {u_\infty^2} &  -{u_0u_\infty}
\end{bmatrix}, \big( {u_0 u_1}, {u_\infty u_1}\big)\Bigg)\, . 
\end{equation}

Then a short (but a bit laborious hence left to the reader) computation shows that, 
if ${\boldsymbol{1}}$ stands for the unity  $( {\rm Id}, (0,0))$ of ${\rm SL}_2(\mathbb Z)\ltimes \mathbb Z^2$, then  the map 
\begin{align*}
\mathbb Z & \longrightarrow {\rm Stab}(r)\\
\lambda & \longmapsto {\boldsymbol{1}}+\frac{\lambda}{u} X(u). 
\end{align*}
is an isomorphism of group.
 \sk 

Finally, we consider the case when $r\in \mathbb Q^2$. Let $N$ be the integer as in the statement of the proposition.  Then $[r]=[0,1/N]$ according to 
Proposition \ref{P:NormalFormForAinQ2},  hence (3) follows from (4). 
 For $r=(0,1/N)$,  \eqref{E:equiv} is equivalent to the fact that the integers $c,d,k$ and $l$ verify $c=lN$ and $d-1=kN$. The fourth point of the proposition follows then easily. 
\end{proof}\mk

\paragraph{}\hspace{-0.3cm}  With the preceding proposition at hand, it is then not  difficult to determine the conformal types of the leaves of Veech's foliation\,\;$\mathscr F$ on\,\;${\mathscr M}_{1,2}$ (as abstract complex orbifolds of dimension 1, not as embedded subsets of ${\mathscr M}_{1,2}$). 
\begin{coro}
\label{C:LeafFr-n=2}
 Let $r$ be an element of    \hspace{0.01cm} ${\rm Im}(\Xi)= \mathbb R^2\setminus 
\mathbb Z^2$. 
\begin{enumerate}
\item If  \,$\delta(r)=3$ then $\mu$ induces  an isomorphism 
$\mathcal F_{\!r}=\mathbb H\simeq \mathscr F_{\!r}$.\sk 
\item If  \,$\delta(r)=2$ then the leaf \,$ \mathscr F_{\!r}$ is isomorphic to the infinite cylinder 
$\mathbb C/\mathbb Z$.\sk 
\item If  \, $\delta(r)=1$ then the leaf \,$ \mathscr F_{\!r}$ is isomorphic 
(as a complex orbicurve) to the modular curve $Y_1(N)=\mathbb H/{\Gamma_1(N)}$ where $N$ is  the smallest positive integer such that $Nr\in \mathbb Z^2$. 
\end{enumerate}
\end{coro}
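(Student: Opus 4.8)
The plan is to identify each leaf $\mathscr F_{\!\!r}$ with a quotient of $\mathbb H$ by the image in ${\rm SL}_2(\mathbb Z)$ of the stabilizer ${\rm Stab}(r)$, and then to read off the three cases directly from Proposition \ref{P:Stab(r)}.

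First I would set up the reduction. Via the isomorphism \eqref{E:Tor12IsomProduct}, the leaf $\mathcal F_{\!\!r}=\Xi^{-1}(r)$ is carried by $\Pi_1$ biholomorphically onto $\mathbb H$. In these coordinates the element $g=\big(M,(k,l)\big)$ of ${\rm SL}_2(\mathbb Z)\ltimes \mathbb Z^2$ sends $\mathcal F_{\!\!r}$ onto $\mathcal F_{\!\!g\bullet r}$ and acts on the $\tau$-coordinate by $\tau \mapsto M\cdot \tau$. Since the leaves are pairwise disjoint, such a $g$ preserves $\mathcal F_{\!\!r}$ (as a subset) if and only if $g\bullet r=r$, that is $g\in {\rm Stab}(r)$; and as $\mu$ is the quotient by the whole group, two points of $\mathcal F_{\!\!r}$ have the same image under $\mu$ exactly when they differ by an element of ${\rm Stab}(r)$. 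The action of ${\rm SL}_2(\mathbb Z)\ltimes \mathbb Z^2$ on $\mathcal T\!\!\!{\it or}_{1,2}$ being properly discontinuous (see \S\ref{S:TorelliNag}), I obtain an isomorphism of complex orbicurves $\mathscr F_{\!\!r}\simeq \mathcal F_{\!\!r}/{\rm Stab}(r)\simeq \mathbb H/S_r$, where $S_r\subset {\rm SL}_2(\mathbb Z)$ is the image of ${\rm Stab}(r)$ under the projection to the matrix part, acting on $\mathbb H$ in the usual way. As $\mathscr F_{\!\!r}$ depends only on the orbit $[r]$ and stabilizers of orbit-equivalent points are conjugate, I may freely replace $r$ by any convenient representative.

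Then I would treat the three cases using Proposition \ref{P:Stab(r)}. If $\delta(r)=3$ the stabilizer is trivial, hence $S_r$ is trivial and $\mu$ restricts to an isomorphism $\mathcal F_{\!\!r}=\mathbb H\simeq \mathscr F_{\!\!r}$. If $\delta(r)=2$ then ${\rm Stab}(r)\simeq \mathbb Z$ is generated by $g_1=\boldsymbol{1}+u^{-1}X(u)$; writing $u_0=ua_0$ and $u_\infty=ua_\infty$ with $\gcd(a_0,a_\infty)=1$, a one-line computation shows that the matrix part of $g_1$ has trace $2$ and is not the identity, so it is a nontrivial parabolic. Thus $S_r$ is infinite cyclic generated by a parabolic, acts freely on $\mathbb H$, and $\mathbb H/S_r$ is conformally an infinite cylinder (a punctured disc), as claimed. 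Finally, if $\delta(r)=1$ then $r\in \mathbb Q^2$ and $[r]=[0,1/N]$ by Proposition \ref{P:NormalFormForAinQ2}, so I may take $r=(0,1/N)$; by Proposition \ref{P:Stab(r)}.(4) the stabilizer is the image of the injective morphism $\Gamma_1(N)\to {\rm SL}_2(\mathbb Z)\ltimes \mathbb Z^2$, $M\mapsto \big(M,((d-1)/N,c/N)\big)$, whose composition with the projection to ${\rm SL}_2(\mathbb Z)$ is the identity of $\Gamma_1(N)$. Hence $S_r=\Gamma_1(N)$ and $\mathscr F_{\!\!r}\simeq \mathbb H/\Gamma_1(N)=Y_1(N)$ as complex orbicurves.

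The step requiring the most care is the reduction in the first paragraph: one must check that the restriction of the orbifold quotient $\mu$ to a single leaf is again a quotient, by the setwise stabilizer acting only through its matrix part, and that this identification is compatible with the complex (orbifold) structures. This rests on proper discontinuity together with the equivariance of the assignment $r\mapsto \mathcal F_{\!\!r}$ for the actions $\bullet$ and $\boldsymbol{\circ}$. The only genuinely computational points are the verification that the generator is a nontrivial parabolic when $\delta(r)=2$, and that the projection to the matrix part is faithful on ${\rm Stab}(r)$ when $\delta(r)\in\{1,2\}$ (so that $S_r\simeq {\rm Stab}(r)$ and the orbicurve structure is not collapsed); both are immediate from the explicit generators of Proposition \ref{P:Stab(r)}. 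I would also note, for small $N$ in the case $\delta(r)=1$, that $Y_1(N)=\mathbb H/\Gamma_1(N)$ is read as a complex orbicurve, in accordance with the statement.
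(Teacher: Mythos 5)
Your proof is correct and follows essentially the same route as the paper: identify $\mathscr F_{\!r}\simeq \mathcal F_{\!r}/{\rm Stab}(r)\simeq \mathbb H/S(r)$ and then read off the three cases from Proposition \ref{P:Stab(r)} (trivial stabilizer; nontrivial parabolic generator via the trace-$2$ computation, using the normal form $(0,1/N)$ and point (4) of that proposition when $\delta(r)=1$). The only difference is cosmetic: you justify the reduction via disjointness of leaves and proper discontinuity, whereas the paper invokes simple connectedness of $\mathcal F_{\!r}$ and the equivariant morphism \eqref{E:MorphismGspace}; both amount to the same identification.
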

\begin{proof}
Since any leaf $\mathcal F_r$ is simply connected, one has 
$\mathscr F_{\!r}=\mu(\mathcal F_r)\simeq \mathcal F_r/_{{\rm Stab}(r)}$.
(the latter being a isomorphism of  orbifolds if ${\rm Stab}(r)$ has fixed points on $\mathcal F_r$). \sk 

Let $S(r)$ be the image of ${\rm Stab}(r)$ by the epimorphism ${\rm SL}_2(\mathbb Z)\ltimes \mathbb Z^2\rightarrow {\rm SL}_2(\mathbb Z)$.   The restriction of \eqref{E:MorphismGspace}
 to $\mathcal F_r$ gives an isomorphism of $G$-analytic spaces
\begin{equation}
\label{E:FrF[r]}
    \xymatrix@R=0.4cm@C=1.3cm{  
 {\mathcal F}_r
  \ar@(dl,dr) \ar[r]         &       \mathbb H    \ar@(dl,dr)   \\
 {\rm Stab}(r)  \ar[r]     &  S(r)\, .
}
\end{equation}
This implies   that (as complex orbifolds if $S(r)$ has fixed points on $\mathbb H$) one has 
$$\mathscr F_{\!r}\simeq \mathbb H/S(r).$$ 

If $\delta(r)=3$ then $S(r)$ is trivial by Proposition \ref{P:Stab(r)},  hence (1) follows.\sk  

If $\delta(r)=2$,  it comes from the second point 
of Proposition \ref{P:Stab(r)} that $S(r)$ coincides with the infinite cyclic group spanned by ${\rm Id}+M(u)\in {\rm SL}_2(\mathbb Z)$ where $M(u)$ stands for the matrix component of the 
element $X(u)$ defined in \eqref{E:X(u)}. Since ${\rm Tr}({\rm Id}+M(u))=2+ {\rm Tr}(M(u))=2$, this generator is parabolic,  hence the action of $S(r)$ on the upper half-plane is conjugated with the one spanned by the translation $\tau\mapsto \tau+1$.  It follows that $\mathscr F_{\!r}$ is isomorphic to the infinite cylinder. \sk

The fact that $\delta(r)=1$ means that $r\in \mathbb Q^2\setminus \mathbb Z^2$. In this case, let $N$ be  the integer defined in the third point of the proposition. Then $(0,1/N)\in [r]$ according to Proposition \ref{P:NormalFormForAinQ2},   hence $\mathscr F_{\!r}=\mathscr  F_{(0,1/N)}$.  From  the fourth point of Proposition \ref{P:Stab(r)},  it follows that 
$S(0,1/N)=\Gamma_1(N)$. Consequently, one has 
$\mathscr F_{\!r}\simeq \mathbb H/\Gamma_1(N)=Y_1(N)$. 
\end{proof}\mk 

For any integer $ N$ greater than or equal to 2, one sets 
\begin{equation}
\label{E:LeafFN-g=1-n=2}
\mathscr F_N=\mathscr F_{(0,1/N)} 
\subset {\mathscr M}_{1,2}\, . 
\end{equation}

From the preceding results, we deduce the following very precise description 
of the algebraic leaves of  Veech's foliation on\,\;${\mathscr M}_{1,2}$: 
{\begin{coro}  
\label{C:AlgebraicLeavesg=1n=2}
${}^{}$
\begin{enumerate}
\item 
The  leaves  of\,\;$\mathscr F$ which are closed analytic sub-orbifolds of\,\;${\mathscr M}_{1,2}$ are  exactly  the\,\;$\mathscr F_{N}$'s for $N\in \mathbb N_{\geq 2}$;\sk 
\item 
For any integer $N\geq 2$, the leaf\,\;$\mathscr F_{N}$ is algebraic,  isomorphic to the modular curve $Y_1(N)$ and is the image of the following embedding: 
\begin{align*}
Y_1(N) & \longhookrightarrow {\mathscr M}_{1,2}  \\
\big[ \big(E_\tau,[1/N]\big)\big] & \longmapsto \big[ \big(E_\tau,[0],[1/N]\big)\big]\, .
\end{align*}
\end{enumerate}
\end{coro}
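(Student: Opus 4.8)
The plan is to combine the conformal classification of the leaves already obtained in Corollary~\ref{C:LeafFr-n=2} with the standard criterion of foliation theory characterizing closed leaves. Recall that a leaf $\mathscr F_{\!r}=\mu(\mathcal F_{\!r})$ is a closed analytic sub-orbifold of $\mathscr M_{1,2}$ if and only if its orbit $[r]=\big({\rm SL}_2(\mathbb Z)\ltimes\mathbb Z^2\big)\bullet r$ is a discrete subset of $\mathbb R^2$ (see \cite[p.\,51]{CamachoLinsNeto}). Hence the whole of assertion~(1) reduces to deciding, for $r\in{\rm Im}(\Xi)=\mathbb R^2\setminus\mathbb Z^2$, exactly when $[r]$ is discrete, and then reading off the corresponding leaves.

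First I would record that, in the coordinates of the isomorphism \eqref{E:Tor12IsomProduct}, the group ${\rm SL}_2(\mathbb Z)\ltimes\mathbb Z^2$ acts on the $r=(r_0,r_\infty)$ factor as the standard affine group $\mathbb Z^2\rtimes{\rm SL}_2(\mathbb Z)$ preserving the lattice $\mathbb Z^2$ (this is precisely the action $\boldsymbol\circ$ written just before Proposition~\ref{P:Stab(r)}). For such an action $[r]$ is discrete exactly when $r\in\mathbb Q^2$, that is when $\delta(r)=1$: the ``if'' part, together with the normal form $[r]=[(0,1/N)]$ where $N$ is the least positive integer with $Nr\in\mathbb Z^2$, is furnished by Proposition~\ref{P:NormalFormForAinQ2}. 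For the ``only if'' part I would argue directly, as in \S\ref{SS:AlgebraicLeavesg=1n>1}: the unipotent matrices $\left[\begin{smallmatrix}1&0\\c&1\end{smallmatrix}\right]$ ($c\in\mathbb Z$) together with the integer translations keep $r_\infty$ fixed while moving $r_0$ throughout $r_\infty\mathbb Z+\mathbb Z$, so discreteness of $[r]$ forces $r_\infty\mathbb Z+\mathbb Z$ to be discrete in $\mathbb R$, whence $r_\infty\in\mathbb Q$; the symmetric argument gives $r_0\in\mathbb Q$. Since $r\in\mathbb R^2\setminus\mathbb Z^2$ we always have $N\geq 2$, so the closed leaves are exactly the $\mathscr F_{\!r}$ with $[r]=[(0,1/N)]$, i.e. the $\mathscr F_N$ of \eqref{E:LeafFN-g=1-n=2} with $N\geq 2$; this proves~(1).

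For~(2), the identification $\mathscr F_N\simeq Y_1(N)$ as complex orbicurves is precisely Corollary~\ref{C:LeafFr-n=2}(3). To obtain the displayed embedding I would work upstairs on the Torelli space: by the description of $\mathcal F_{\!r}$ recalled in \S\ref{SS:VeechFoliationg=1n=2}, the leaf $\mathcal F_{(0,1/N)}$ is cut out by $z_2=-1/N$, hence is the graph $\tau\mapsto(\tau,-1/N)$ and is biholomorphic to $\mathbb H$ via the projection $\Pi_1$; by Proposition~\ref{P:Stab(r)}(4) its stabilizer in ${\rm Sp}_{1,2}(\mathbb Z)$ is $\Gamma_1(N)$ acting on $\mathbb H$ in the usual way, so $\mathscr F_N\simeq\mathbb H/\Gamma_1(N)=Y_1(N)$, the quotient map sending $\tau$ to the class of the $2$-marked curve $\big(E_\tau,[0],[-1/N]\big)$. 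Applying the elliptic involution $[u]\mapsto[-u]$, which fixes $[0]$ and exchanges $[-1/N]$ with $[1/N]$, identifies this class with $\big[(E_\tau,[0],[1/N])\big]$ in $\mathscr M_{1,2}$ and likewise identifies the point $\big[(E_\tau,[-1/N])\big]$ with $\big[(E_\tau,[1/N])\big]$ of $Y_1(N)$, yielding exactly the map in the statement. Algebraicity is then clear from the moduli interpretation: this map is the morphism of (coarse, quasi-projective) moduli spaces that adjoins the origin as second marked point to a pair $(E,P)$ with $P$ of exact order $N$, and its image is the leaf $\mathscr F_N$, which is closed by~(1); thus $\mathscr F_N$ is a closed algebraic subvariety isomorphic to $Y_1(N)$.

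The only genuinely substantive point is the ``only if'' direction of the discreteness dichotomy, namely that $[r]$ ceases to be discrete as soon as $r$ has an irrational coordinate; everything else is either a citation of the already-established Corollary~\ref{C:LeafFr-n=2} and Propositions~\ref{P:Stab(r)}--\ref{P:NormalFormForAinQ2}, or a routine moduli-theoretic verification. The main care needed is to check that the elliptic involution correctly matches the Torelli normalization $z_2=-1/N$ with the level structure $[1/N]$ used to describe $Y_1(N)$, so that the embedding is literally the one displayed in the statement.
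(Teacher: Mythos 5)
Your proof is correct and follows essentially the same route as the paper: the corollary is obtained by assembling Corollary \ref{C:LeafFr-n=2}, Propositions \ref{P:Stab(r)} and \ref{P:NormalFormForAinQ2}, and the discreteness criterion of \cite{CamachoLinsNeto}, exactly as intended (your unipotent argument for the ``only if'' direction of the discreteness dichotomy merely re-derives assertion (2) of Proposition \ref{P:NormalFormForAinQ2}). Your use of the elliptic involution to reconcile the Torelli normalization $z_2=-1/N$ of $\mathcal F_{(0,1/N)}$ with the marked point $[1/N]$ is a correct substitute for the paper's implicit choice of the other orbit representative $(0,-1/N)$, whose Torelli leaf is cut out by $z_2=+1/N$ and yields the displayed embedding directly.
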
}

Note that the modular curve $Y_1(N)$ (hence the leaf \,\;$\mathscr F_{N}$)  has orbifold points only for $N=2,3$ ({\it cf.}\,\cite[Figure 3.3]{DS}). \mk

The (orbi-)leaves $\mathscr F_{N}$ of Veech's foliation on  ${\mathscr M}_{1,2}$ are clearly the most interesting ones. Thus is true at    the topological level already. In the next sections we will go further by taking into account the parameter $\alpha$. Our goal will be to study Veech's hyperbolic structures of the algebraic leaf  $\mathscr F_{N}$ for any $N\geq 2$  
 as far  as possible. 

\subsubsection{\bf A remark about the `leaves' $\mathscr F_{\theta}(M)$ considered in \cite{GP}}
\label{SS:FThetaNNotConnected}
From the description  of the leaves of Veech's foliation given above, it is possible to give an explicit example of the non-connectedness phenomenon mentioned in  \cite{GP}. \sk

\paragraph{} We first recall  some notations of \cite{GP} for ($g=1$ and) $n\geq 1$ arbitrary. 
We consider a fixed $\alpha=(\alpha_1,\ldots,\alpha_n)$ satisfying  \eqref{E:AssumptionAlpha}  and we denote by $\theta=(\theta_1,\ldots,\theta_n)$ the associated angles datum.
For $\rho\in {\rm Hom}(\pi_1(1,n),\mathbb U)$, let   $\mathcal F_{\!\rho}\subset \mathcal E_{1,n}^\alpha \simeq \mathcal T\!\!\!{\it eich}_{1,n}$ be the preimage of $\rho$ by Veech's linear holonomy map $\chi_{1,n}^\alpha$ (see \S\ref{SS:VeechLinearHolonomyMap} above). 

If $[\rho]$ stands for the orbit of $\rho$ under the action of ${\rm PMCG}_{1,n}$, then $\mathscr F_{[\rho]}$ is the notation used in \cite{GP} for the image of $\mathcal F_{\!\rho}$ into $\mathscr M_{1,n}$. 
\sk

We now assume that the image ${\rm Im}(\rho)$ of $\rho$ in $\mathbb U$ is finite. From our results above, it comes that $\mathscr F_{[\rho]}$ is an algebraic subvariety of 
$\mathscr M_{1,n}$.  Moreover,  
$\alpha$ is necessarily rational,  hence $G_\theta=\langle e^{i\theta_1},\ldots, e^{i\theta_n}\rangle$ is a finite subgroup of the group $\mathbb U_\infty$ of roots of unity. 
 If $\omega_\rho$ stands for a  generator of ${\rm Im}(\rho)\subset 
\mathbb U_\infty$, one denotes by $M=M_\theta(\rho)$ the smallest positive integer such that $G_\theta=\langle \omega_\rho^M\rangle$.  Now in \cite[\S10]{GP}, it is proved that the leaf $\mathscr F_{[\rho]}$ is uniquely determined by this integer $M$ (remember that $\theta$ is fixed) and  the corresponding notation for it is $\mathscr F_\theta(M)$.  
\mk 

In \cite{GP}, using certain surgery for flat surfaces, one proves several results 
about   the geometric structure of an arbitrary algebraic leaf $\mathscr F_\theta(M)$.
 However, the geometrical methods used to establish these results, if quite relevant 
to answer some questions,  
  do not allow to answer some basic other questions such as the 
  connectedness   of a given   leaf $\mathscr F_\theta(M)$. \sk  

  Using our results presented in the preceding subsections, it is easy to 
   answer this question
   when  $n=2$. We stick to this case  in what follows. 
\mk 


\paragraph{} 
Assume that  $\alpha_1\in ]0,1[$  is rational and let $N$ be an integer bigger or equal to 2. 
The following statement follows from the very definition of the algebraic leaf $\mathscr F_N^{\alpha_1}\simeq Y_1(N)$ given in the preceding subsection: 

\begin{lemma} The image of the linear holonomy of any flat surface belonging to the  leaf\;\,$\mathscr F_N^{\alpha_1}$ is the finite subgroup 
of\;\,$\mathbb U$ generated by $\exp(2i\pi\alpha_1 / N)$.
\end{lemma}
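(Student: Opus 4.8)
The plan is to unwind the definitions and reduce the statement to the explicit monodromy formula of Lemma \ref{L:rhoexplicit}. Recall that the leaf $\mathscr F_N^{\alpha_1}$ is by definition (see Corollary \ref{C:AlgebraicLeavesg=1n=2} and \eqref{E:LeafFN-g=1-n=2}) the image of the embedding sending $(E_\tau,[1/N])$ to the $2$-marked flat torus $(E_\tau,[0],[1/N])$, equipped with the flat metric $m^\alpha_{\tau,z}$ of Proposition \ref{P:ExplicitFlatMetric}. So a flat surface in $\mathscr F_N^{\alpha_1}$ is, up to the action of the Torelli/mapping class group, one of the form $(E_{\tau,z}, m^{\alpha_1}_{\tau,z})$ with $z=(z_1,z_2)=(0,1/N)$, i.e. $t_\tau = z_2 = 1/N$. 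The linear holonomy of such a flat surface is exactly the unitary character $\rho\colon H_1(E_{\tau,z},\mathbb Z)\to\mathbb U$ attached to the multivalued function $T^{\alpha}_{\tau,z}$, and its image in $\mathbb U$ is the subgroup generated by the characteristic values $\rho_0,\rho_1,\rho_2,\rho_\infty$.

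\textbf{Carrying out the evaluation.} First I would compute these four values from Lemma \ref{L:rhoexplicit}, specializing to $n=2$, $\alpha=(\alpha_1,-\alpha_1)$, and $z=(0,1/N)$. One has immediately $\rho_1=\exp(2i\pi\alpha_1)$ and $\rho_2=\exp(2i\pi\alpha_2)=\exp(-2i\pi\alpha_1)=\rho_1^{-1}$. For $\rho_0$ and $\rho_\infty$ I would use the formulas $a_0=-\Im(\sum_i\alpha_i z_i)/\Im(\tau)$ and $a_\infty=a_0\tau+\sum_i\alpha_i z_i$ from Proposition \ref{P:ExplicitFlatMetric}. Since $z_1=0$ and $z_2=1/N$ is real, one gets $\sum_i\alpha_i z_i=\alpha_2\cdot(1/N)=-\alpha_1/N\in\mathbb R$, hence $\Im(\sum_i\alpha_i z_i)=0$ and therefore $a_0=0$, giving $\rho_0=\exp(2i\pi a_0)=1$. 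Consequently $a_\infty = 0\cdot\tau + (-\alpha_1/N)=-\alpha_1/N$, so
\[
\rho_\infty=\exp(2i\pi a_\infty)=\exp\!\big(-2i\pi\alpha_1/N\big).
\]

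\textbf{Identifying the image.} The image of the holonomy is the subgroup of $\mathbb U$ generated by $\{\rho_0,\rho_1,\rho_2,\rho_\infty\}=\{1,\,e^{2i\pi\alpha_1},\,e^{-2i\pi\alpha_1},\,e^{-2i\pi\alpha_1/N}\}$. Writing $\zeta=\exp(2i\pi\alpha_1/N)$, this set is $\{1,\zeta^{N},\zeta^{-N},\zeta^{-1}\}$, so the generated subgroup equals $\langle\zeta^{N},\zeta^{-1}\rangle=\langle\zeta\rangle$, because $\zeta^{-1}$ already generates $\zeta$ and $\zeta^N$ is a power of it. Thus the image is exactly the cyclic subgroup generated by $\exp(2i\pi\alpha_1/N)$, as claimed. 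The remaining point is to justify the phrase ``any flat surface belonging to the leaf'': since the linear holonomy is invariant under the mapping class group action and constant along a leaf (the leaf being a level set of the linear holonomy map $h^\alpha$, by Proposition \ref{P:FalphaGeneralities}), every flat surface in $\mathscr F_N^{\alpha_1}$ has the same holonomy image, and it suffices to evaluate it at the single representative $(E_\tau,[0],[1/N])$ computed above.

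\textbf{Main obstacle.} I expect no genuine difficulty here; the statement is essentially a direct specialization of Lemma \ref{L:rhoexplicit}. The only point requiring a little care is bookkeeping the normalization: one must check that the chosen representative $z=(0,1/N)$ indeed lies on the leaf $\mathcal F_{(0,-\alpha_1/N)}^{\alpha_1}$ cut out by \eqref{E:EquationLinearieFeuille} (equivalently $z_2=(1/\alpha_1)(a_0\tau-a_\infty)=1/N$ with $a=(0,-\alpha_1/N)=\alpha_1(0,-1/N)$), and that this $a$ is the one associated to $\mathscr F_N^{\alpha_1}$ under the correspondence $a=\alpha_1 r$ with $r=(0,1/N)$ fixed in \S\ref{SS:VeechFoliationg=1n=2}. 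This is a one-line verification, so the lemma follows.
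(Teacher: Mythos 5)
Your proof is correct and follows the paper's (implicit) approach: the paper offers no separate argument, merely asserting that the lemma ``follows from the very definition'' of $\mathscr F_N^{\alpha_1}$, and your computation---specializing Lemma \ref{L:rhoexplicit} and the formulae for $a_0,a_\infty$ to the representative $z=(0,1/N)$ to get $\rho_0=1$, $\rho_1=\rho_2^{-1}=e^{2i\pi\alpha_1}$, $\rho_\infty=e^{\mp 2i\pi\alpha_1/N}$, whence the image is $\langle e^{2i\pi\alpha_1/N}\rangle$---is exactly the intended unwinding. The sign ambiguity you flag between $r=(0,1/N)$ (the convention of \S\ref{SS:VeechFoliationg=1n=2}) and $r=(0,-1/N)$ (the convention of Section 5) occurs in the paper itself and is harmless, since the two parameters lie in the same $\mathrm{SL}_2(\mathbb Z)\ltimes\mathbb Z^2$-orbit and both choices generate the same subgroup of $\mathbb U$.
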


Assume that $\alpha_1=p/q$ where $p,q$ are two  relatively prime positive integers.   In this case, the corresponding angles datum is 
$$\theta=\big(2\pi(1-\alpha_1), 2\pi(1+\alpha_1)\big)
= \left( 2\pi\Big(\frac{q-p}{q}\Big)
, 2\pi\Big(\frac{q+p}{q}\Big)\right).
$$ 

From the preceding  lemma, one immediately deduces that 
for any $M>0$, the connected components of the leaf\;$\mathscr F_\theta(M)$ considered in \cite{GP} are exactly the leaves $\mathscr F_N^{\alpha_1}$'s  
considered above in \eqref{E:LeafFN-g=1-n=2}, 
 for any integer  $N\geq  2$ such that $$M=\frac{N}{\gcd(N,p)}\, .$$ 
 
One first deduces that $\mathscr F_\theta(M)$ is actually empty  if and only if $p=M=1$. 
For convenience, we set $\mathscr F_0=\mathscr F_0^{\alpha_1}=Y_1(0)=\emptyset$ in the lines below. 

Then from the discussion above, one  immediately deduces the : 
%
%

\begin{coro} 
\label{C:CComp-Ftheta(M)}
Let  $M$ be a fixed positive integer.   
\begin{enumerate}
\item  In full generality, one has : 
\begin{equation*} 
\label{E:CComp-Ftheta(M)}
\mathscr F_\theta(M)=\bigsqcup_{\substack{k \in \mathbb N^*, \; k \lvert  p   \\ \gcd(p/k,M)=1}} \mathscr F_{kM}^{\alpha_1}\, . 
\end{equation*}
\item  In the generic case when   $\gcd(p,M)=1$, one has  $\mathscr F_\theta(M)
=\sqcup_{ k \lvert  p} \mathscr F_{kM}^{\alpha_1}$.
\sk 
\item On the contrary, if $p$ divides $M$ then $\mathscr F_\theta(M)
=\mathscr F_{pM}^{\alpha_1}\simeq Y_1(pM)$. 
\end{enumerate}
\end{coro}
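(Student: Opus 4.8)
The plan is to establish Corollary~\ref{C:CComp-Ftheta(M)} by combining the explicit description of the leaves $\mathscr F_N^{\alpha_1}$ with the characterization of linear holonomy images provided by the preceding Lemma. First I would fix $\alpha_1 = p/q$ with $\gcd(p,q)=1$ and recall that the angles datum is $\theta = \bigl(2\pi(q-p)/q, 2\pi(q+p)/q\bigr)$. From the Lemma, the image of the linear holonomy of any flat surface in $\mathscr F_N^{\alpha_1}$ is the cyclic group $\langle \exp(2i\pi\alpha_1/N)\rangle = \langle \exp(2i\pi p/(qN))\rangle$. The group $G_\theta = \langle e^{i\theta_1}, e^{i\theta_2}\rangle$ is generated by $\exp(2i\pi p/q)$. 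Writing $\omega_\rho$ for a generator of the holonomy image ${\rm Im}(\rho)$, I would compute the order of $\omega_\rho$ and the integer $M = M_\theta(\rho)$ defined as the smallest positive integer with $G_\theta = \langle \omega_\rho^M\rangle$.

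The key arithmetic step is to determine, for each $N \geq 2$, the value of $M$ attached to the leaf $\mathscr F_N^{\alpha_1}$. Since $\omega_\rho = \exp(2i\pi p/(qN))$ has order $qN/\gcd(p,qN) = qN/\gcd(p,N)$ (using $\gcd(p,q)=1$), and $G_\theta$ is cyclic of order $q/\gcd(p,q) = q$, the smallest $M$ with $\omega_\rho^M$ generating $G_\theta$ is obtained by matching orders: $\omega_\rho^M$ has order $(qN/\gcd(p,N))/\gcd(M, qN/\gcd(p,N))$, and this must equal $q$. A direct divisibility computation then yields $M = N/\gcd(N,p)$, which is precisely the relation asserted in the text. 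This identifies the set of leaves $\mathscr F_N^{\alpha_1}$ mapping to a given $\mathscr F_\theta(M)$ as those $N \geq 2$ solving $M = N/\gcd(N,p)$.

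Next I would solve this equation for $N$ in terms of $M$. Writing $d = \gcd(N,p)$, so $N = dM$ with $d \mid p$, the constraint $\gcd(N,p) = d$ becomes $\gcd(dM, p) = d$, equivalently $\gcd(M, p/d) = 1$ after dividing through by $d$. Setting $k = d$ (so $k \mid p$ and $\gcd(p/k, M) = 1$), this gives the disjoint union $\mathscr F_\theta(M) = \bigsqcup \mathscr F_{kM}^{\alpha_1}$ over $k \mid p$ with $\gcd(p/k, M) = 1$, proving part (1). The disjointness follows because distinct values of $N = kM$ give distinct modular curves $Y_1(N)$, and the components $\mathscr F_N^{\alpha_1}$ are precisely the connected components of $\mathscr F_\theta(M)$ by the result quoted from \cite{GP} together with Corollary~\ref{C:AlgebraicLeavesg=1n=2}.

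Parts (2) and (3) are then immediate specializations. When $\gcd(p,M)=1$, the condition $\gcd(p/k,M)=1$ holds automatically for every $k \mid p$ (since $p/k \mid p$ and any common factor of $p/k$ and $M$ would divide $\gcd(p,M)=1$), giving $\mathscr F_\theta(M) = \bigsqcup_{k\mid p}\mathscr F_{kM}^{\alpha_1}$. When $p \mid M$, the only $k \mid p$ with $\gcd(p/k,M)=1$ is $k = p$ (any smaller divisor $k$ leaves $p/k > 1$ sharing a prime factor with $M$, since $p \mid M$), yielding the single component $\mathscr F_{pM}^{\alpha_1} \simeq Y_1(pM)$. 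The emptiness statement $\mathscr F_\theta(M) = \emptyset \iff p = M = 1$ follows by noting that $N = kM \geq 2$ is required for a genuine leaf, and the convention $\mathscr F_0 = \emptyset$ handles the degenerate index. The main obstacle I anticipate is purely bookkeeping: carrying out the order computation for $\omega_\rho$ and the gcd manipulations cleanly, being careful that $\gcd(p,q)=1$ is used correctly throughout so that the order of $G_\theta$ is exactly $q$ and the formula $M = N/\gcd(N,p)$ emerges without spurious factors.
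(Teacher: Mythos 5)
Your proposal is correct and follows essentially the same route as the paper: it uses the lemma on the holonomy image of $\mathscr F_N^{\alpha_1}$ to derive the relation $M=N/\gcd(N,p)$ and then solves this equation for $N=kM$ with $k\mid p$ and $\gcd(p/k,M)=1$, exactly the deduction the paper labels as ``immediate.'' Your write-up merely fills in the cyclic-group order computations and gcd manipulations that the paper leaves implicit, and these are carried out correctly.
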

This result gives an explicit description of the connected components of the algebraic leaves $\mathscr F_\theta(M)$ for any positive integer $M$. Note that it depends in a subtle way of the arithmetic properties of the parameters 
$\alpha_1$ and $M$.  We illustrate this fact with two concrete examples below.
\bk

\paragraph{} 
We first deal with the case when $\alpha_1=1/q$ for an integer  $q{\geq 2}$. 
In this case, one has $p=1$, $\theta=(2\pi(q-1)/q, 2\pi(q+1)/q)$,  hence $\mathscr F_\theta(1)=\emptyset$ 
and the third point of the  preceding corollary gives us that for any $M\geq 2$, one has
$$
\mathscr F_\theta(M)=\mathscr F_M^{\alpha_1}\simeq Y_1(M)\,.
$$

\paragraph{} We then consider the case when $\alpha_1=2/q$ where $q\geq 3$ stands for an odd integer. In this case $p=2$ and  $\theta=2\pi((q-2)/q,(q+2)/q)$.  

From Corollary \ref{C:CComp-Ftheta(M)}, one gets that for any $M\geq 1$, one has 
$$
\mathscr F_\theta(M)=\begin{cases}\; 
\mathscr F_{2}^{\alpha_1}\simeq Y_1(2)  \hspace{3.5cm} \mbox{ if }\; M=1;\sk \\
\; \mathscr F_{2M}^{\alpha_1}\simeq Y_1(2M) \hspace{3cm} \mbox{ if }\; M\; \mbox{ is even};\sk \\
\; \mathscr F_{M}^{\alpha_1}\sqcup 
\mathscr F_{2M}^{\alpha_1} \simeq Y_1(M) \sqcup Y_1(2M) 
 \hspace{0.5cm} \mbox{ if }\; M>1\, \mbox{ is odd}\, .
\end{cases}
$$
\sk 


To conclude this subsection, we mention that 
we find the question of determining  g{eometricall}y  the which $\mathscr F_N$'s are the connected components of a given `leaf' $\mathscr F_\theta(M)$ quite interesting. 
As already said above, the answer  depends on the arithmetic of $\theta$ in a rather subtle way. 
 Hence  it could be difficult to discriminate these connected components solely   by means of geometrical methods.

  \subsubsection{\bf An aside:  a connection with Painlev theory}
The leaves of Veech's foliation on the Torelli space  are cut out by the
affine equations \eqref{E:EqLeaf}. The fact that  the latter   have real coefficients reflects the fact that 
Veech's foliation is transversely  real analytic (but not holomorphic) on $\mathcal T\!\!\!{\it or}_{1,n}$. A natural way to get a holomorphic object is by allowing the coefficients $a_0$ and $a_\infty$ to take any complex value (the $\alpha_i$'s staying fixed).  Performing this complexification, 
 one obtains a 2-dimensional complex family of hypersurfaces in $\mathcal T\!\!\!{\it or}_{1,n}$ that are nothing else but the solutions of the second-order linear differential system 
\begin{equation}
\label{E:DiffSystem}
\frac{\partial ^2 \tau}{\partial z_i^2}=0 \quad \mbox{ and }\quad 
\alpha_i\frac{\partial  \tau}{\partial z_j}
-\alpha_j\frac{\partial  \tau}{\partial z_i}
=0\qquad \mbox{for }\, i,j=2,\ldots,n, \, i\neq j.
\end{equation}

Using the explicit formula given above, it is not difficult to verify that \eqref{E:DiffSystem} is invariant by the action of the mapping class group. This implies that it can be pushed-down onto 
${\mathscr M}_{1,n}$ and give rise to a (no more linear) second-order holomorphic differential system on this moduli space, denoted by $\mathscr D^\alpha$.   The integral varieties of $\mathscr D^\alpha$ form a complex 2-dimensional family of   1-codimensional locally analytic subsets of ${\mathscr M}_{1,n}$ which can be seen as a  kind of complexification of Veech's foliation. 
\sk 

If the preceding construction seems natural, one could have some doubt concerning its interest. 
What shows that it is actually relevant is the consideration of the simplest case when $n=2$.  
In this situation, \eqref{E:DiffSystem} reduces to the second order differential equation $d^2\tau/dz_2^2=0$.  In order to avoid considering orbifold points, it is more convenient to look at the push-forward modulo  the action of  $\Gamma(2)\ltimes \mathbb Z^2< {\rm Sp}_{1,n}(\mathbb Z)$.  
It is known that 
$$
{\mathcal T\!\!\!{\it or}_{1,2}}/_{\Gamma(2)\ltimes \mathbb Z^2}\simeq \big(\mathbb P^1\setminus \{0,1,\infty\}\big)\times \mathbb C$$ 
and that the quotient map is given
by 
\begin{align*}
\nu \, : \, \mathcal T\!\!\!{\it or}_{1,2} & \longrightarrow \big(\mathbb P^1\setminus \{0,1,\infty\}\big)\times \mathbb C\\
(\tau,z_2) & \longmapsto \left(\lambda(\tau), \frac{\wp(z_2)-e_1}{e_2-e_1}\right).
\end{align*}
Here $\wp: E_\tau\rightarrow \mathbb P^1$ is the Weirerstrass $\wp$-function associated to the lattice $\mathbb Z_\tau$,  one has 
$e_i=\wp(\omega_i)$ for $i=1,2,3$  where   $\omega_1=1/2$, $\omega_2=\tau/2$ and $\omega_3=\omega_1+\omega_2=(1+\tau)/2$ and where 
$\lambda:\tau\mapsto (e_3-e_1)/(e_2-e_1)$
 stands for the classical elliptic modular lambda function (the usual Hauptmodul\footnote{We recall here  the definition of what is a {\bf Hauptmodul} for a genus 0 congruence group $\Gamma\subset {\rm SL}_2(\mathbb Z)$:  it is a $\Gamma$-modular function on $\mathbb H$ which induces a generator of the field of rational functions on $X_\Gamma=\overline{\mathbb H/\Gamma}\simeq \mathbb P^1$  with a pole of order 1 with  residue $1$ at the cusp $[i\infty]\in X_\Gamma$.} for $\Gamma(2)$).\sk
 
As already known by Picard  \cite[Chap.\,V, \S17]{Picard} (see  \cite{Manin} for a recent proof), the push-forward of $d^2\tau/dz_2^2=0$ by $\nu$ is the following non-linear second order differential equation
\begin{align*}
\hspace{-1.5cm}
{\rm (PPVI)}\qquad \quad 
\frac{d^2X}{d\lambda^2}= & 
\frac{1}{2}\left(
\frac{1}{X}  +\frac{1}{X-1}+\frac{1}{X-\lambda}
\right)\left(\frac{dX}{d\lambda}\right)^2 \\  & -
\left(
\frac{1}{\lambda}+\frac{1}{\lambda-1}+\frac{1}{X-\lambda}
\right)\cdot \frac{dX}{d\lambda}+
\frac{1}{2}\frac{X(X-1)}{\lambda(\lambda-1)(X-\lambda)}.
\end{align*}

This equation, now known as {\bf Painlev-Picard equation},  was 
first considered by Picard in \cite{Picard}. 
The name of Painlev is associated to it since it is a particular case (actually the simplest case) of the sixth-Painlev equation.\footnote{Actually, the sixth Painlev equation has not been discovered by Painlev himself (due to some mistakes in some of its computations) but by his student R. Fuchs in 1905.} 
\sk

For $r=(r_0,r_\infty)\in \mathbb R^2\setminus \mathbb Z^2$,  
the image of the leaf $\mathcal F_r$ in $(\mathbb P^1\setminus \{0,1,\infty\})\times \mathbb C$ 
is parametrized by $
\tau \mapsto ( \lambda(\tau), (\wp(r_0\tau-r_\infty)-e_1)/(e_2-e_1))
$,  hence the leaves of Veech's foliation can be considered as particular solutions of (PPVI).  The leaves $\mathscr F_{r}$ with  $r\in \mathbb Q^2 $ are precisely the algebraic solutions of (PPVI), a fact already known to  Picard (see \cite[p. 300]{Picard}).
\mk 
 
The existence of this  link between the theory of Veech's foliations and the theory of Painlev equations is not so surprising. Indeed, both domains are related to the notion of isomonodromic (or isoholonomic) deformation.   But this leads to interesting questions such as the following ones: 
\begin{enumerate}
\item is there a geometric characterization of the leaves $\mathscr F_{N}$, $N\in \mathbb N_{\geq 2}$ of Veech's foliation among the solutions of (PPVI)?
\sk
\item given $\alpha=(\alpha_1,\alpha_2)$ as in \eqref{E:alpha n=2}, is it possible to obtain  the hyperbolic structures constructed by Veech on the leaves of $\mathscr F^\alpha$ within the framework of (PPVI)? Moreover, does the general solution of (PPVI) carry a hyperbolic structure which specializes to Veech' one on a leaf $\mathscr F^\alpha_{N}$?
\sk
\item for $n\geq 2$ arbitrary, is there a nice formula for the push-forward $\mathscr D^\alpha$ of the differential system \eqref{E:DiffSystem} onto a suitable quotient of $\mathcal T\!\!\!{\it or}_{1,n}$? If yes, does such a  push-forward enjoy a generalization for differential systems in several variables  of the Painlev property?
\end{enumerate}




\subsection{\bf An analytic expression for the Veech map when $\boldsymbol{g=1}$}
\label{S:AnalyticExpressionVeechMapg=1}
In this section, we begin with dealing with the general case when ($g=1$ and) $n\geq 2$. 
Our goal here is to get an explicit local analytic expression for the Veech map. 
\sk

After having recalled the definition of this map, we define another map by adapting/generalizing to our context the approach developed in the genus 0 case by Deligne and Mostow.  We show that,  after some identifications, these two maps are identical. 
Although all this is not really necessary to our purpose (which is to study 
 Veech's hyperbolic structure on a leaf $\mathcal F_{\!a}^\alpha\subset {\mathscr M}_{1,n}$), we believe that it is worth considering, since it shows how  the constructions of the famous papers \cite{Veech} and \cite{DeligneMostow} are related in the genus 1 case. 
\sk

Our aim here is to study Veech's hyperbolic structure on a leaf 
$\mathcal F^\alpha_{\!a} $  
of Veech's foliation   in the Torelli space $\mathcal T\!\!{\it or}_{1,n}$
as  extensively as possible.   We denote by  $\widetilde{\mathcal F}_{\!a}^\alpha$ its  preimage in the Teichmller space $\mathcal T\!\!\!{\it eich}_{1,n}$. Then Veech constructs a holomorphic map 
\begin{equation}
\label{E:VeechMapOnTheLEafInTeichmullerSpace}
\widetilde{V}_r^\alpha : \widetilde{\mathcal F}_{\!a}^\alpha\longrightarrow \mathbb C\mathbb H^{n-1}
\end{equation}
and the hyperbolic structure he considers on $\widetilde{\mathcal F}_{\!a}^\alpha$ is just the one obtained by pull-back by this map (which is tale  according to \cite[Section 10]{Veech}).   To study this 
hyperbolic structure,  we are going to give an explicit analytic expression for  $\widetilde{V}_{\!a}^\alpha $, or more precisely,  for its push-forward by the (restriction to $\widetilde{\mathcal F}_{\!a}^\alpha$ of the) 
projection 
$
\mathcal T\!\!\!{\it eich}_{1,n}
\rightarrow
\mathcal T\!\!\!{\it or}_{1,n}
$ which is a multivalued holomorphic function on $\mathcal F_{\!a}^\alpha$ that  will be denoted by $V_{\!a}^\alpha$.  \mk 

We recall and fix some notations that will be used in the sequel. 

In what follows, $a=(a_0,a_\infty)$ stands for a fixed element of  $\mathbb R^2\setminus \alpha_1 \mathbb Z^2$ if $n=2$ or of $\mathbb R^2$ if $n\geq 3$. 
We denote by $\rho_a$ the linear holonomy map shared by the elements of 
$\mathcal F_{\!a}^\alpha \subset \mathcal T\!\!\!{\it or}_{1,n}$: we recall that  
it is the element of $H^1(E_{\tau,z},\mathbb U)$ defined for any $n$-punctured torus  $E_{\tau,z}$ by the following relations: 
\begin{equation}
\label{E:RhoA}
\rho_{a,0}=e^{2i\pi a_0}\, , \quad \rho_{a,k}=e^{2i\pi \alpha_k}\; \mbox{ for }\, 
k=1,\ldots,n\quad \mbox{and }\quad \rho_{a,\infty}=e^{2i\pi a_\infty}
\end{equation}
(see \S\ref{E:WhereRhoIsDefined} where all the corresponding notations are fixed).\sk 

Note that $\rho_a$ can also be seen as an element of ${\rm Hom}^\alpha(\pi_1(E_{\tau,z}),\mathbb U)$.  Considering the latter  as a  $\mathbb C$-valued morphism, one gets a  1-dimensional  $\pi_1(E_{\tau,z})$-module  that will be denoted by  $\mathbb C_a^\alpha$.  Finally, for any $(\tau,z)\in \mathcal F_{\! a}^\alpha$, the function $T^\alpha_{\tau,z}(\cdot)=T^\alpha(\cdot, \tau,z)$ defined in   \eqref{E:functionT}
 is a multivalued function on $E_{\tau,z}$ whose monodromy is multiplicative and given by \eqref{E:RhoA}.  One denotes by $L^\alpha_{\tau,z}$ the local system on $E_{\tau,z}$ defined as the kernel of 
the connexion  \eqref{E:nabla-omega} on $\mathcal O_{E_{\tau,z}}$.  Note that the representation of $\pi_1(E_{\tau,z})$ associated to $L^\alpha_{\tau,z}$ is precisely $\mathbb C_{a}^\alpha$.

\subsubsection{\bf The original Veech's map}
We first recall Veech's abstract definition of 
\eqref{E:VeechMapOnTheLEafInTeichmullerSpace}.  We refer to 
the ninth and tenth sections 
of  \cite{Veech} for proofs and details. \sk 

\paragraph{}
\hspace{-0.2cm}
Let $(E_{\tau,z},m_{\tau,z}^\alpha,\psi)$ be a point of $\mathcal E_{1,n}^\alpha\simeq \mathcal T\!\!\!{\it eich}_{1,n}$ (see \S\ref{S:GeneralConsiderationsAboutVeech'sFoliation}). We fix $x\in E_{\tau,z}$ as well as a determination $D$ at $x$ of the developing map on $E_{\tau,z}$ associated with the flat structure induced by $m_{\tau,z}^\alpha$.  For any loop $c$ centered at $x$ in $E_{\tau,z}$, one denotes by $ M^c(D)$ the germ at $x$ obtained after the analytic continuation of $D$ along $c$. Then one has 
$
M^c(D)=\rho(c)\, D+\mu_{D}(c)
$
with $\rho(c)\in \mathbb U$ and $\mu_{D}(c)\in \mathbb C$.  

One verifies that the affine map $m^c:z\mapsto \rho(c) z+\mu_{D}(c)$ only depends on the pointed homotopy class of $c$ and one gets this way the {\bf (complete) holonomy representation} of the flat surface $(E_{\tau,z},m_{\tau,z}^\alpha)$: 
\begin{align*}
\pi_1\big(E_{\tau,z},x\big) &  \longrightarrow {\rm Isom}^+(\mathbb C)\simeq \mathbb U\ltimes \mathbb C\\
[c] & \longmapsto m^c: z\mapsto \rho(c)z+\mu_{D}(c)\, .
\end{align*}

Now  assume that $(E_{\tau,z},m_{\tau,z}^\alpha,\psi)$ belongs to $\widetilde{\mathcal F}_{\! a}^\alpha$. Then the map $[c]\mapsto \rho(c)$ is nothing else than the linear holonomy map $\chi_{1,n}^\alpha(E_{\tau,z},m_{\tau,z}^\alpha,\psi)$ ({\it cf.}\;\eqref{E:Chi-g-n-alpha}) and will be denoted by $\rho_{\!a}$ in what follows. 
 Let $\mu_a$ be  the complex-valued map on $\pi_1(E_{\tau,z},x)$ which 
associates  $1-\rho_{\!a}(c)$ to any homology class $[c]$. The translation part of the holonomy $[c]\mapsto \mu_{D}(c)$ 
can be seen as a complex  map  on the fundamental group of the punctured torus $E_{\tau,z}$ at $x$  which satisfies the following properties: 
\begin{enumerate}
\item for any two homotopy classes $[c_1],[c_2]\in \pi_1(E_{\tau,z},x)$, one has: 
\begin{align*}
\mu_{D}(c_1c_2)= & \rho(c_1)\mu_{D}(c_2)+\mu_{D}(c_1)\\
\mbox{and}\quad  \mu_{D}(c_1c_2c_1^{-1})= & 
\rho_{\!a}(c_1)\mu_{D}(c_2)+\mu_D(c_1)\mu_{\!a}(c_2)\, ; 
\end{align*}
\item if $\widetilde D=\kappa D+\ell$ is another germ of the developing map at $x$, with $u\in \mathbb C^*$ and $v\in \mathbb C$, then one has the following relation between $\mu_D$ and $\mu_{\widetilde D}$: 
$$
\mu_{\widetilde D}=\kappa \mu_D+\ell \mu_{\!a}\; .
$$
\end{enumerate}

Now remember that considering $(E_{\tau,z},m_{\tau,z}^\alpha, \psi)$  as a point of $\mathcal T\!\!\!{\it eich}_{1,n}$ means that  the third component $\psi$ stands for an isomorphism $\pi_1(1,n)\simeq \pi_1(E_{\tau,z},x)$ well defined up to inner  automorphisms.   Consider then the composition $\mu_{\tau,z}=\mu_{D}\circ \psi: \pi_1(1,n)\rightarrow \mathbb C$. It is an element of the following space of 1-cocycles for the 
 $\pi_1(1,n)$-module, denoted by $\mathbb C_{a}^\alpha$,   associated to the unitary character  $\rho_a$: 
$$
Z^1\big(\pi_1(1,n),\mathbb C_{\rho_a}\big)= \Big\{
\mu: \pi_1(1,n)\rightarrow \mathbb C\; \big\lvert 
\; \mu(\gamma\gamma ')= \rho_{\!a}(\gamma)\mu(\gamma')+\mu(\gamma) \; \forall \gamma,\gamma'
\Big\}\, . 
$$

One denotes again by $\mu_a$ the map $\gamma \mapsto 1-\rho_a(\gamma)$ on $\pi_1(1,n)$. From the properties (1) and (2) satisfied by $\mu_D$ and from  the fact that $\psi$ is canonically defined up to inner isomorphisms, it follows that the class of $\mu_{\tau,z}$ in the projectivization of  the first group of group cohomology 
$$
H^1\big( \pi_1(1,n), \mathbb C_{a}^\alpha \big)= Z^1\big(\pi_1(1,n),\mathbb C_{a}^\alpha\big)\big/ \mathbb C \mu_a\, , 
$$
is well defined. Then one defines the {\bf Veech map} $ \boldsymbol{\widetilde V_{\! a}^\alpha} $ as the map 
\begin{equation}
\label{E:VeechMapAbstract}
 \xymatrix@R=0.1cm@C=1cm{  
 \widetilde{\mathcal F}_{\! a}^\alpha 
  \ar@{->}[r]^{} &  {\mathbf P} H^1\big( \pi_1(1,n), \mathbb C_{a}^\alpha\big)\\ 
 \hspace{-1cm}\big(E_{\tau,z},m_{\tau,z}^\alpha,\psi\big) 
\ar@{|->}[r]
 & \big[ \mu_{\tau,z}     \big]\, .  \qquad \qquad\quad 
 }
 \end{equation}
 
 In \cite[\S10]{Veech}, Veech proves the following result: 
 \begin{thm}
 The map $ \widetilde V_{\! a}^\alpha $ 
  is a local biholomorphism.
 \end{thm}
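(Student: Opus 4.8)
The plan is to show that $\widetilde V_{\!a}^\alpha$ is a local biholomorphism by identifying it with the map given by integrating the multivalued form $T_{\tau,z}^\alpha(u)\,du$ against a basis of twisted cycles, and then invoking the first-cohomology machinery developed in Section 3. The essential observation is that the translation part $\mu_D$ of the complete holonomy is exactly a period map: for a loop $c$ on $E_{\tau,z}$, one has $\mu_D(c)=\int_c T_{\tau,z}^\alpha(u)\,du$ after choosing the determination $D(z)=\int^z T_{\tau,z}^\alpha(u)\,du$ of the developing map (since the flat metric is $|T_{\tau,z}^\alpha(u)\,du|^2$, by Proposition \ref{P:ExplicitFlatMetric}). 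So the class $[\mu_{\tau,z}]\in \mathbf P H^1(\pi_1(1,n),\mathbb C_a^\alpha)$ coincides, under the canonical duality between group cohomology with coefficients in $\mathbb C_a^\alpha$ and twisted homology $H_1(E_{\tau,z},L_{\rho_a}^\vee)$, with the projectivized period vector $\big[\int_{\boldsymbol\gamma_\bullet}T_{\tau,z}^\alpha(u)\,du\big]_\bullet$ relative to the basis $\boldsymbol\gamma$ of Proposition \ref{P:BasisTwistedHomology}.

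First I would make precise the dictionary between the three objects: Veech's cocycle module $Z^1(\pi_1(1,n),\mathbb C_a^\alpha)$ modulo the coboundary $\mathbb C\mu_a$, the twisted cohomology group $H^1(E_{\tau,z},L_{\rho_a})$, and its dual $H_1(E_{\tau,z},L_{\rho_a}^\vee)$. The identification $H^1(\pi_1(1,n),\mathbb C_a^\alpha)\simeq H^1(E_{\tau,z},L_{\rho_a})$ is the standard comparison between group cohomology of the fundamental group and sheaf cohomology of the associated local system on the aspherical space $E_{\tau,z}$ (a $K(\pi,1)$). Under this identification, the coboundary subspace $\mathbb C\mu_a$ that one quotients by is precisely the line spanned by the class $[\nabla_\omega(1)]=[\omega]$, reflecting the fact that changing the base determination of $D$ shifts $\mu_D$ by a multiple of $\mu_a$; this matches the relation \eqref{E:RELATION} in the cohomological description of Theorem \ref{T:DescriptionTwistedH1}. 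Thus Veech's target space $\mathbf P H^1(\pi_1(1,n),\mathbb C_a^\alpha)$ is canonically $\mathbf P H^1(E_{\tau,z},L_{\rho_a})\simeq \mathbf P\big(H_1(E_{\tau,z},L_{\rho_a}^\vee)^\vee\big)$, and Veech's map becomes the projectivized period map $[\mu_{\tau,z}]=\big[\,\boldsymbol\gamma\mapsto\int_{\boldsymbol\gamma}T_{\tau,z}^\alpha\,du\,\big]$.

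Next I would establish holomorphy and the local biholomorphism property. Holomorphy is immediate from differentiating under the integral sign: the integrand $T_{\tau,z}^\alpha(u)\,du$ depends holomorphically on the parameters $(\tau,z)$, and the cycles $\boldsymbol\gamma_\bullet$ can be continuously (indeed holomorphically) transported as $(\tau,z)$ varies in $\mathcal F_{\!a}^\alpha$ by the local triviality of the twisted homology local system, so each component $F_\bullet(\tau,z)=\int_{\boldsymbol\gamma_\bullet}T_{\tau,z}^\alpha\,du$ is holomorphic. For the local biholomorphism statement I would compute the differential of the period map using the Gau{\ss}-Manin connection of \S\ref{S:TwistedDeRham}: the derivatives $\partial F_\bullet/\partial\tau$ and $\partial F_\bullet/\partial z_j$ are again periods, of the forms obtained by applying $\nabla^{GM}$, and these generating forms span $H^1(E_{\tau,z},L_{\rho_a})$. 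Since the dimension $2g-3+n=n-1$ of the leaf equals $\dim \mathbf P H^1=n-1$, non-degeneracy of the twisted intersection pairing (\S\ref{S:IntersectionProduct}) shows the differential is an isomorphism onto the tangent space of projective space, giving that the map is étale.

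The main obstacle I anticipate is the tangent-space computation at the heart of the étale claim: one must verify that the span of the infinitesimal variations $\{\partial/\partial\tau,\ \partial/\partial z_j\}$ of the period vector, taken modulo the scaling direction coming from projectivization, surjects onto $T_{[\mu]}\mathbf P H^1$. This requires knowing that the cohomology classes $[\partial_\tau(T\,du)/T]$ and $[\partial_{z_j}(T\,du)/T]$, viewed via Gau{\ss}-Manin as elements of $H^1(E_{\tau,z},L_{\rho_a})$, are linearly independent modulo the coboundary line $\mathbb C[\omega]$ and fill out the whole cohomology. Rather than computing these classes by hand, I would reduce the rank statement to the explicit Gau{\ss}-Manin matrix $M_a$ (for $n=2$, Proposition \ref{P:InH}, obtained in Appendix B; in general the Mano-Watanabe connection), whose off-diagonal entries are nonvanishing holomorphic forms, thereby certifying that the variation of periods is of maximal rank. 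This is the step where Veech's original global signature computation is replaced in our setting by the concrete, checkable non-degeneracy of the twisted period pairing, and is where the genuine work lies.
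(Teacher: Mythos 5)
Your bridging observation is sound: the translation part $\mu_D$ of the complete holonomy is indeed a twisted period, $\mu_D(c)=\int_c T_{\tau,z}^\alpha(u)\,du$, since the developing map is a primitive of $T^\alpha_{\tau,z}(u)\,du$, and the dictionary between $H^1(\pi_1(1,n),\mathbb C_a^\alpha)$ and $H^1(E_{\tau,z},L_{\rho_a})$ is legitimate because $E_{\tau,z}$ is a $K(\pi,1)$. This part of your proposal is essentially Proposition \ref{P:VDM=VVeech} of the paper, obtained by a different (and non-circular) route; note that the paper itself does not reprove the theorem you were asked about, but quotes it from \cite[\S 10]{Veech}, where the proof is flat-geometric: the edge integrals $\xi_j=\int_{e_j}T^\alpha_{\tau,z}(u)\,du$ over a Delaunay-type decomposition furnish local projective coordinates on the leaf, so that the map is tautologically a local biholomorphism in these coordinates.

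The genuine gap is in your \'etaleness step, and it is twofold. First, the inference ``the twisted intersection pairing of \S\ref{S:IntersectionProduct} is non-degenerate, and $\dim\mathcal F_a^\alpha=\dim\mathbf P H^1=n-1$, hence the differential is an isomorphism'' is a non-sequitur: the intersection form lives on the fixed vector space $H^1(E_{\tau,z},L_{\rho_a})$ and says nothing about the rank of the \emph{variation} of the class $[T^\alpha_{\tau,z}\,du]$ as $(\tau,z)$ moves in the leaf. What must be proved is that the map $v\mapsto \nabla^{GM}_v\big[T^\alpha_{\tau,z}\,du\big] \bmod \mathbb C\big[T^\alpha_{\tau,z}\,du\big]$, from $T_{(\tau,z)}\mathcal F_a^\alpha$ to $H^1/\mathbb C[T^\alpha_{\tau,z}\,du]$, is injective at \emph{every} point; this is exactly the content of the theorem and does not follow from non-degeneracy of any pairing. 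Second, your fallback --- reading the rank off the explicit Gau{\ss}--Manin matrix --- only exists in the paper for $n=2$ (Proposition \ref{P:InH} and Appendix B), where it does work: there the single derivative $\nabla^{GM}_\tau[\varphi_0]$ has $[\varphi_1]$-coefficient $(\alpha_1-1)/(2i\pi)\neq 0$, equivalently the projectivized map is a ratio of two independent solutions of a second-order ODE and its derivative is a nonvanishing multiple of the Wronskian. For $n\geq 3$ no such explicit matrix is computed in the paper, and ``the off-diagonal entries are nonvanishing'' is not a statement you can currently certify, let alone one that formally implies that the $n-1$ classes $\nabla^{GM}_\tau[\varphi_0],\nabla^{GM}_{z_j}[\varphi_0]$ (restricted to leaf directions) are independent modulo $\mathbb C[\varphi_0]$. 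To complete your route you would need to carry out this cohomological rank computation in general --- the genus-one analogue of Deligne--Mostow's local computation in \cite[\S 3]{DeligneMostow} --- or else revert to Veech's triangulation coordinates, which bypass the issue entirely.
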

 
 Actually, Veech proves more. Under the assumption that 
 at least one of the $\alpha_i$'s is not an integer, there is a projective bundle $\mathbf P\mathscr H^1$ over ${\rm Hom}^\alpha(\pi_1(1,n),\mathbb U)$,  the 
 fiber of which at  $\rho$ is  ${\mathbf P} H^1( \pi_1(1,n), \mathbb C_{\rho})$. Then Veech proves that the $ \widetilde V_{\! a}^\alpha $'s considered above are just the restrictions of a global real-analytic immersion 
 $$
\widetilde{V}^\alpha \; : \; \mathcal T\!\!\!{\it eich}_{1,n} \longrightarrow 
\mathbf P\mathscr H^1\, . 
 $$
 
 An algebraic-geometry inclined reader may see the preceding result as a kind of local Torelli theorem for flat surfaces: {\it once the conical angles 
have been fixed, a flat surfaces is locally determined by its complete holonomy representation}.\sk 
 
 An differential geometer may rather this preceding result as 
 a particular occurrence  of the Ehresmann-Thurston's theorem which asserts essentially the same thing but in the more general context of 
 geometric structures on manifolds (see \cite{GoldmanICM} for a nice general account of this point of view).


\subsubsection{\bf Deligne-Mostow's version} We now  adapt and apply  the constructions and results of the third section of 
 \cite{DeligneMostow}
   to the genus 1 case we are considering here. \mk 

Let $$\pi: \mathcal E_{1,n}\longrightarrow \mathcal T\!\!\!{\it or}_{1,n}$$ be the universal elliptic curve: 
for $(\tau,z)=(\tau,z_2,\ldots,z_n)\in \mathcal T\!\!\!{\it or}_{1,n}$, the fiber of $\pi$ over $(\tau,z)$ is nothing else but the 
 $n$-punctured elliptic curve $E_{\tau,z}$.
\sk

For any  $(\tau,z)\in \mathcal F_{\! a}^\alpha$, 
recall the local system $L^\alpha_{\tau,z}$ on $E_{\tau,z}$ which admits 
 the multivalued holomorphic function 
$T^\alpha_{\tau,z}(u)=T^\alpha(u,\tau,z)$ defined in  \eqref{E:functionT} as   a section.
 All these local systems can be glued together over the leaf $\mathcal F_a^\alpha$: there exists a local system $L_a^\alpha$ over  $\mathcal E_a^\alpha=\pi^{-1}(\mathcal F_{\!a}^\alpha)\subset \mathcal E_{1,n}$ whose restriction along $E_{\tau,z}$ is  $L^\alpha_{\tau,z}$  for any $(\tau,z)\in \mathcal F_{\!a}^\alpha$ (see \S B.2 in Appendix B for a detailed proof). 
 
 Since the restriction  of $\pi$ to $\mathcal E_a^\alpha$ is a topologically locally trivial fibration, the  spaces of twisted cohomology $H^1(E_{\tau,z}, L^\alpha_{\tau,z})$'s organize themselves into a local system $R^1 \pi_* (L_a^\alpha)$ on $\mathcal F^\alpha_{\! a}$.  
  We will be interested in  its projectivization: 
 $$
 B_a^\alpha={\mathbf P}R^1 \pi_*\big( L_a^\alpha\big)\,. 
 $$
 
 It is a flat projective bundle whose fiber 
 $B^\alpha_{\tau,z}$ 
 at any point
    $(\tau,z) $ of $ \mathcal F_{\! a}^\alpha$ is just ${\mathbf P} H^1(E_{\tau,z},L_{\tau,z}^\alpha)$. Its flat structure is of course the one induced by the local system $R^1\pi_* (L_a^\alpha)$. 
%
For $(\tau,z)\in \mathcal F^{\alpha}_a$, let $\omega^\alpha_{\tau,z}$ be the (projectivization of the) twisted cohomology class 
defined by    $T_{\tau,z}^\alpha(u)du$ in cohomology: 
$$
\omega^\alpha_{\tau,z}= \big[ T_{\tau,z}^\alpha(u)du  \big] \in
B^\alpha_{\tau,z}
\, . 
$$
  As in the genus 0 case (see \cite[Lemma {\bf (3.5)}]{DeligneMostow}, 
  it can be proved that 
  the class $
\omega^\alpha_{\tau,z}$ is never trivial hence induces  a global holomorphic  section 
$\omega_a^\alpha$ of $B_a^\alpha$ over the leaf $\mathcal F_a^\alpha$. 
 Since the inclusion  of the latter into 
$
 \mathcal T\!\!\!{\it or}_{1,n}$  induces an injection 
of the corresponding fundamental groups (see Proposition \ref{P:Pi1ofaLeafInjectsInT1n}), any connected component of $\widetilde{\mathcal F}_{\! a }^\alpha$  (quite abusively denoted by the same notation in what follows) is simply connected hence can be seen as the universal cover of the leaf $\mathcal F_{\! a}^\alpha$. It follows that the pull-back $\widetilde{B}_a^\alpha$ of $B_a^\alpha$ by  
$
  \widetilde{\mathcal F}_{\! a }^\alpha \rightarrow {\mathcal F}_{\! a}^\alpha$ 
 can be  
  trivialized:
   the choice of any base-point in $\widetilde{\mathcal F}_{\!a}^\alpha$ over 
   $(\tau_0,z_0)\in {\mathcal F}_{\!\! a}^\alpha$ gives rise to  an isomorphism 
    \begin{equation}
   \label{E:IsomTildeBaalpha}
   \widetilde{B}_a^\alpha \simeq  \widetilde{\mathcal F}_{\!a}^\alpha\times B^\alpha_{\tau_0,z_0}\, . 
   \end{equation}
   
It follows that the section $\omega_a^\alpha$ of $B_a^\alpha$ on $\mathcal F_{\!a}^\alpha$ gives rise to a holomorphic map 
\begin{equation}
\label{E:VDM}
\widetilde{V}_a^{\alpha,DM} : \widetilde{\mathcal F}_{\! a}^\alpha\longrightarrow B^\alpha_{\tau_0,z_0}
={\mathbf P} H^1\big(E_{\tau_0,z_0}, L^\alpha_{\tau_0,z_0}\big)\, .
\end{equation}

We remark now that the results of \cite[p. 23]{DeligneMostow} generalize verbatim to the genus 1 case which we are considering here.  In particular, for any (local) horizontal basis 
$(\boldsymbol{C}_i)_{i=1}^n$
of  the twisted homology with coefficients in $L_a^\alpha$ on $\mathcal F_{\! a}^\alpha$, $(\int_{\boldsymbol{C}_i} \cdot )_{i=1}^n$ forms a local horizontal system of projective coordinates on $B_a^\alpha$.  
Generalizing  the first paragraph of the proof of \cite[Lemma (3.5)]{DeligneMostow} to our case, it comes that 
$$
\big(\boldsymbol{\gamma}_0,\boldsymbol{\gamma}_3,\ldots,\boldsymbol{\gamma}_n,\boldsymbol{\gamma}_\infty\big)$$
 is  such a basis, where the $\boldsymbol{\gamma}_{\!\bullet}$'s are the twisted cycles 
defined in \eqref{E:GammaBullet}. \sk 

As a direct consequence, it follows that the  push-forward  ${V}_a^{\alpha,DM}$ of $\widetilde{V}_a^{\alpha,DM}$ onto the leaf $\mathcal F_{\! a }^\alpha$ in the Torelli space  $ \mathcal T\!\!\!{\it or}_{1,n}$ 
(which is a multivalued holomorphic function) 
 admits the following local analytic expression whose components are expressed in  terms of elliptic hypergeometric  integrals: 
%
\begin{align}
 V_r^{\alpha,DM}: \mathcal F_{\! a}^\alpha  & \longrightarrow \mathbb P^{n-1} 
\nonumber \\
\label{E:VeechMapDMExplicit}
(\tau,z) & \longmapsto \left[
\int_{\boldsymbol{\gamma}_\bullet} T^\alpha(u ,\tau,z) du \right]_{\bullet=0,3,\ldots,n,\infty}.  
\end{align}

\subsubsection{\bf Comparison of $\widetilde{V}_a^\alpha$ and $\widetilde{V}_a^{\alpha,DM}$}
\label{SS:ComparisonVeechDM}
We intend here  to prove that Veech's and Deligne-Mostow's maps coincide, up to some natural identifications.\sk

\paragraph{}
\hspace{-0.2cm}
 At first sight, the two abstractly defined maps \eqref{E:VeechMapAbstract} and 
\eqref{E:VDM} do not seem to have the same target space.  It turns out that 
they actually do, but up to some natural isomorphisms. 

Indeed, for any  $(\tau,z)\in {\mathcal F}_{\!a}^\alpha$, 
since $L^{\alpha}_{\tau,z}$ is `the' local system on $E_{\tau,z}$ associated to the 
$\pi_1(E_{\tau,z})$-module $\mathbb C_a^\alpha$,  there is a natural morphism 
\begin{equation}
\label{E:pupuc}
H^1\big(\pi_1(E_{\tau,z}), \mathbb C_a^\alpha\big)\longrightarrow 
H^1\big(E_{\tau,z}, L_{\tau,z}^{\alpha}\big)\, . 
\end{equation}
Since $E_{\tau,z}$ is uniformized by the unit disk $\mathbb D$ which is contractible, it  follows that the preceding map is an isomorphism (see \cite[\S2.1]{Goldman} for instance). 
\sk 

On the other hand,  for any  $(E_{\tau,z},m^\alpha_{\tau,z},\psi) \in \widetilde{\mathcal F}_{\!a}^\alpha$,  the (class of) map(s) $\psi: \pi_1(1,n)\simeq   \pi_1(E_{\tau,z})$ induces a well defined isomorphism 
$[\psi^*]: H^1(\pi_1(E_{\tau,z}) , \mathbb C_a^\alpha)\simeq H^1(\pi_1(1,n), \mathbb C_a^\alpha)$. Then, up to the isomorphism \eqref{E:pupuc}, one can see 
the lift
of $ (\tau,z)\mapsto  [\mu_{\tau,z}]\circ  [\psi^*]$  as a global section of 
$\widetilde{B}_{\!a}^\alpha$ over  $\widetilde{\mathcal F}_{\!a}^\alpha$.   Then using \eqref{E:IsomTildeBaalpha}, one eventually obtains that Veech's map $\widetilde{V}_a^\alpha$ can be seen as a map with the same source and target spaces than Deligne-Mostow's map $\widetilde{V}_a^{\alpha,DM}$.\mk 

\paragraph{}
\hspace{-0.2cm} Comparing the two maps \eqref{E:VeechMapAbstract} and \eqref{E:VDM} is not difficult and relies on some arguments elaborated by Veech.  
In \cite{Veech},  to prove that \eqref{E:VeechMapAbstract} is indeed a holomorphic immersion, 
 he explains how to get a local analytic expression for this map. It is then easy to relate this expression to \eqref{E:VeechMapDMExplicit} and  eventually get  the 
\begin{prop}
\label{P:VDM=VVeech}
The two maps $\widetilde{V}_a^\alpha$ and $\widetilde{V}_a^{\alpha,DM}$  coincide.  In particular, 
\eqref{E:VeechMapDMExplicit} is also a local analytic expression for the push-forward $V_a^\alpha$ of Veech's map on $\mathcal F_{\! a}^\alpha$. 
\end{prop}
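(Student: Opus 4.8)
The plan is to establish the identification of the two maps $\widetilde{V}_a^\alpha$ and $\widetilde{V}_a^{\alpha,DM}$ by showing that they assign, to each point of the leaf, the same class once the target spaces have been identified as explained in \S\ref{SS:ComparisonVeechDM}. Both maps are already known to be holomorphic; Veech's map is a local biholomorphism by his theorem, and the Deligne--Mostow map is constructed from the section $\omega_a^\alpha$. So the content of the proposition is purely an \emph{equality of two explicitly given maps}, and the natural strategy is to trace through the identifications made in the two paragraphs of \S\ref{SS:ComparisonVeechDM} and verify that they are compatible with the evaluation pairings.

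First I would fix a point $(E_{\tau,z},m_{\tau,z}^\alpha,\psi)\in\widetilde{\mathcal F}_{\!a}^\alpha$ and recall that the developing map $D$ of the flat structure associated to $m_{\tau,z}^\alpha$ satisfies $D'(u)\,du = T^\alpha_{\tau,z}(u)\,du$, since by Proposition \ref{P:ExplicitFlatMetric} the metric is $\lvert T^\alpha_{\tau,z}(u)\,du\rvert^2$ and $z\mapsto\int^z T^\alpha_{\tau,z}(u)\,du$ is the developing map (this is exactly the observation recorded in \S\ref{S:VeechMapExplicit}). The key point is then to compute the translation cocycle $\mu_D$. For a loop $c$ based at $x$, analytic continuation of $D$ along $c$ gives $M^c(D)=\rho_a(c)D+\mu_D(c)$, and integrating the relation $D'=T^\alpha_{\tau,z}$ along $c$ shows that $\mu_D(c)$ is precisely $\int_{\boldsymbol c}T^\alpha_{\tau,z}(u)\,du$, where $\boldsymbol c$ denotes the twisted $1$-cycle determined by $c$ together with the chosen determination of $T^\alpha_{\tau,z}$. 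Thus the cocycle $\mu_{\tau,z}=\mu_D\circ\psi$ representing $[\mu_{\tau,z}]\in\mathbf P H^1(\pi_1(1,n),\mathbb C_a^\alpha)$ is, under the isomorphism \eqref{E:pupuc} and the duality between twisted homology and twisted cohomology of \S\ref{S:Twisted(Co)Homology}, nothing but the class $\omega^\alpha_{\tau,z}=[T^\alpha_{\tau,z}(u)\,du]$ paired against twisted cycles. This is the crux: the abstract group cohomology class built by Veech out of the translation part of the holonomy is identified, via the standard de Rham--type pairing $[\boldsymbol\alpha]\mapsto\int_{\boldsymbol\alpha}T\cdot\eta$ of \S\ref{S:}, with the twisted cohomology class used by Deligne and Mostow.

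Concretely, I would evaluate both sides in the horizontal projective coordinate system $(\int_{\boldsymbol\gamma_\bullet}\cdot)_{\bullet=0,3,\dots,n,\infty}$ provided by the basis $(\boldsymbol\gamma_0,\boldsymbol\gamma_3,\dots,\boldsymbol\gamma_n,\boldsymbol\gamma_\infty)$ of twisted homology (which is a basis by Proposition \ref{P:BasisTwistedHomology} and whose horizontality along the leaf comes from \S\ref{SS:ConnectionFormulae}). On the Deligne--Mostow side the components are, by \eqref{E:VeechMapDMExplicit}, exactly $\int_{\boldsymbol\gamma_\bullet}T^\alpha(u,\tau,z)\,du$; on the Veech side, the computation of $\mu_D$ above shows that $[\mu_{\tau,z}]$ evaluated against $\boldsymbol\gamma_\bullet$ yields the very same integrals. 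Since the projective coordinates agree componentwise, the two maps coincide, and the final assertion that \eqref{E:VeechMapDMExplicit} is a local analytic expression for $V_a^\alpha$ follows by pushing forward from $\widetilde{\mathcal F}_{\!a}^\alpha$ to $\mathcal F_{\!a}^\alpha$.

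The main obstacle I anticipate is bookkeeping rather than conceptual: one must check carefully that the isomorphism \eqref{E:pupuc} between group cohomology and twisted sheaf cohomology intertwines Veech's pairing of $\mu_D$ with homotopy classes of loops and the Deligne--Mostow pairing of $\omega^\alpha_{\tau,z}$ with twisted $1$-cycles, and in particular that the base-point and determination choices entering \eqref{E:IsomTildeBaalpha} and the definition of the $\boldsymbol\gamma_\bullet$ are matched consistently (so that no spurious scalar or monodromy factor appears). Here I would lean on the arguments Veech himself uses in \cite[\S10]{Veech} to produce a local analytic expression for $\widetilde V_a^\alpha$, comparing his expression term by term with \eqref{E:VeechMapDMExplicit}; once the dictionary between his translation cocycle and our twisted integrals is set up, the identification is immediate and the proof reduces to citing the compatibility of these pairings.
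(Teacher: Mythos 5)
Your proposal is correct, but it takes a genuinely different route from the paper's proof, even though both hinge on the very same key observation, namely that the developing map of the flat structure $m^\alpha_{\tau,z}$ is a primitive of the multivalued $1$-form $T^\alpha_{\tau,z}(u)\,du$. The paper never computes the translation cocycle on loops: it instead invokes Veech's own local analytic expression for $\widetilde V_a^\alpha$ from \cite[\S10--11]{Veech}, obtained from a geodesic polygonation of the flat torus (cutting along a graph $\Gamma_{\tau,z}$ to get a flat disk $\overline Q_{\tau,z}$), so that the components of Veech's map become side vectors $\xi_j=\zeta_{j+1}-\zeta_j$ of the developed polygon; the primitive observation then identifies each $\xi_j$ with a twisted integral $\int_{\boldsymbol e_j}T^\alpha(u,\tau,z)\,du$ over a regularized edge, and one concludes by rigidity of horizontal projective coordinates (two horizontal systems, $(\int_{\boldsymbol e_j})_{j\in J}$ and $(\int_{\boldsymbol\gamma_\bullet})$, differ by a constant projective transformation, via \cite[Remark {\bf (3.6)}]{DeligneMostow}). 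You instead work with Veech's abstract cohomological definition \eqref{E:VeechMapAbstract}: you identify the translation cocycle $\mu_D$ with the period cocycle $c\mapsto\int_c T_c\,du$ of the primitive, hence the class $[\mu_{\tau,z}]$ with $[T^\alpha_{\tau,z}(u)\,du]$ under \eqref{E:pupuc}, and then evaluate both sections in the horizontal basis $(\boldsymbol\gamma_\bullet)$. What the paper's route buys is that all the analytic content is delegated to results Veech already proved (his $\xi_j$-coordinates and their holomorphy), and the regularization issues are concrete (edges run between punctures, whence the cycles $\boldsymbol e_j={\rm reg}(e_j)$); what your route buys is that it bypasses the triangulation and polygon combinatorics entirely and makes the conceptual identity ``affine holonomy $=$ twisted periods'' explicit. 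Two caveats on your write-up, neither a genuine gap: first, a loop $c$ endowed with a determination $T_c$ is \emph{not} in general a twisted $1$-cycle (its twisted boundary is $(\rho_a(c)-1)$ times the base point), so your phrase ``$\int_{\boldsymbol c}T\,du$ where $\boldsymbol c$ is the twisted cycle determined by $c$'' must be read at the level of cocycles, the homological pairing only becoming available after passing to the classes $[\mu_{\tau,z}]=[T\,du]$ and pairing with genuine cycles such as $\boldsymbol\gamma_\bullet$; second, your componentwise equality requires not only the pointwise identification of classes but also that the trivialization \eqref{E:IsomTildeBaalpha} by Gau{\ss}-Manin flatness matches the constancy of Veech's target through the markings $\psi$ --- this is exactly the ``bookkeeping'' you flag, and it is legitimate to call it standard since the relevant parameter spaces are simply connected, but it is the place where a careless treatment could introduce the ``spurious scalar or monodromy factor'' you mention.
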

\begin{proof} We first review some material from the tenth and eleventh sections of 
\cite{Veech} to which the reader may refer for some details and proofs. 
\sk 

Remember that for $(\tau,z)$ in $\mathcal F_{\! a}^\alpha$, one sees $E_{\tau,z}$ 
 as a flat torus with $n$ conical singularities, the flat structure being the one induced by 
  the singular metric 
 $
 m_a^\alpha(\tau,z)=
 \lvert T^\alpha(u,\tau,z)du\lvert^2$.  
 Given such an element $(\tau',z')$,  there exists a geodesic polygonation $\mathcal T=\mathcal T_{\tau',z'}$ of $E_{\tau',z'}$ whose set of  vertices is exactly the set of conical singularities of this flat surface (for instance, one can consider  its Delaunay decomposition\footnote{The {\it `Delaunay decomposition}' of a compact flat surface is a canonical polygonation of it   by Euclidean polygons inscribed in circles (see \cite[\S4]{MasurSmillie} or \cite{Bowditch} for some details).}). Moreover, the set of points $(\tau,z)\in   \mathcal F_{\! a}^\alpha$ such that the associated flat surface admits a geodesic triangulation $\mathcal T_{\tau,z}$ combinatorially equivalent to $\mathcal T$ is open (according to \cite[\S5]{Veech}),  hence contains an open ball  $U_\mathcal T\subset \mathcal F_{\! a}^\alpha$ to which  the considered base-point $(\tau',z')$ belongs.   \sk 
 
As explained in \cite[\S10]{Veech}, by removing the interior of some edges (the same edges 
 for every point $(\tau,z)$ in $U_{\mathcal T}$), one obtains a piecewise geodesic graph $\Gamma_{\!\tau,z}\subset E_{\tau,z}$ formed by $n+1$ edges of $\mathcal T_{\tau,z}$   such that $Q_{\tau,z}=E_{\tau}\setminus \Gamma_{\!\tau,z}$ is homeomorphic to the open disk $\mathbb D\subset \mathbb C$.  
  Then one considers   the length  metric on  $Q_{\tau,z}$ associated to the restriction of the flat structure of $E_{\tau,z}$.  The metric completion $\overline{Q}_{\tau,z}$ for this  intrinsic metric is isomorphic to the closed disk $\overline{\mathbb D}$. Moreover, the latter 
carries a flat structure with (geodesic) boundary, whose singularities are  $2n+2$ conical points $v_1,\ldots,v_{2n+2}$ located on the boundary circle $\partial \mathbb D$.  One can and will assume that the $v_i$'s are cyclically  enumerated in the trigonometric order, $v_1$   being chosen arbitrarily. For $i=1,\ldots,2n+2$, let $I_i$ be the circular arc on $\partial \mathbb D$ whose endpoints are $v_i$ and $v_{i+1}$ (with $v_{2n+3}=v_1$ by convention). \sk 

The developing map $D_{\tau,z}$ of the flat structure on $Q_{\tau,z}\simeq \mathbb D$ extends continuously to 
$\overline{Q}_{\tau,z}\simeq \overline{\mathbb D}$. For every $i$, this  extension maps $I_i$ onto the segment $[\zeta_i,\zeta_{i+1}]$ in the Euclidean plane $\mathbb E^2\simeq \mathbb C$, where we have set 
for $i=1,\ldots,2n+1$: 
$$\qquad
\qquad
 \zeta_i=\zeta_i(\tau,z)=D_{\tau,z}(v_i)\, . 
$$ 

There exists an involution $\theta$ without fixed point on the set $\{1,\ldots,2n+2\}$ such that the flat torus $E_{\tau,z}$ is obtained from the flat closed disk $\overline{\mathbb D}\simeq 
\overline{Q}_{\tau,z}
$ by gluing isometrically the `flat arcs' $I_i\simeq [\zeta_i,\zeta_{i+1}]$ and 
$I_{\theta i}\simeq [\zeta_{\theta i},\zeta_{\theta (i+1)}]$.   Let $J$ be a subset of $\{1,\ldots,2n+2\}$  such that $J\cap \theta J=\emptyset$. Then $J$ has cardinality $n+1$ and if one sets 
$$\xi_j=\xi_j(\tau,z)= \zeta_{j+1}-\zeta_j$$ for every $j\in J$, then these  complex numbers satisfy a linear relation which  depends only on $\mathcal T, \theta$ and on the linear holonomy $\rho_a$ ({\it cf.}\;\cite[\S11]{Veech}).
 
 Consequently the $\xi_j$'s are the components of  a map 
\begin{equation}
\label{E:VeechLocal}
U_\mathcal T\rightarrow \mathbb P^{n-1} : (\tau,z)\mapsto \big[\xi_j(\tau,z)
\big]_{j\in J}
\end{equation}
which  it is nothing else but a local holomorphic expression of $V_{\! a}^\alpha$ on $U_\mathcal T$ (see \cite[\S10]{Veech}). \sk

It it then easy to relate \eqref{E:VeechLocal} to \eqref{E:VeechMapDMExplicit}. Indeed $T^\alpha_{\tau,z}$ admits a global determination on  the complement of $\Gamma_{\!\tau,z}$ since the latter is simply connected. The crucial but easy point is  that the developing map $D_{\tau,z}$ considered above is a primitive of the global holomorphic 1-form $T^\alpha_{\tau,z}(u)du$ on $Q_{\tau,z}$.  Once one is aware of this, it comes at once  that 
for every  $j\in J$,  $\xi_j(\tau,z)$ can be written as  $\int_{e_j} T^\alpha(u,\tau,z)du$
where $e_j$ stands for  the edge of $\mathcal T_{\tau,z}$ in $E_{\tau}$ which corresponds to the `flat circular arc' $I_j$.   In other terms: $\xi_j(\tau,z)$ is equal to the integral along $e_j$ of a determination of the multivalued 1-form $T^\alpha(u,\tau,z)du$. 
 
Then for every $j\in J$, 
setting $\boldsymbol{e}_j
={\rm reg}(e_j)\in H_1(E_{\tau,z},L_{\tau,z}^\alpha)$ where ${\rm reg}$ is the regularization map considered in \S\ref{S:}, one obtains: 
$$
\xi_j(\tau,z)=\int_{\boldsymbol{e}_j} T^\alpha(u,\tau,z)d\,u
.$$ 

It is not difficult to see that $(\boldsymbol{e}_j)_{j\in J}$ is a basis of $H_1(E_{\tau,z},L^\alpha_{\tau,z})$ for every $(\tau,z)\in U_\mathcal T$. Even better, it follows from \cite[{\it Remark} ${\boldsymbol{(3.6)}}$]{DeligneMostow} that $(\int_{\boldsymbol{e}_j}\cdot )_{j\in J}$ constitutes a horizontal system of projective coordinates on $B_a^\alpha$ over $U_\mathcal T$.  Since two such systems of projective coordinates are related by a constant projective transformation when both are horizontal,  it follows that \eqref{E:VeechLocal} coincides with \eqref{E:VeechMapDMExplicit} up to a constant projective transformation.  The proposition is proved.
\end{proof}\mk

By definition, Veech's hyperbolic structure on $\mathcal F_{\! a}^\alpha$ is obtained by pull-back by $V_a^\alpha$ of the natural one on the target space $\mathbb C\mathbb H^{n-1}\subset \mathbb P^{n-1}$. 
What makes   the elliptic-hypergeometric definition of 
$V_a^\alpha$  la Deligne-Mostow   interesting is that it  allows to make everything explicit. 
 Indeed, in addition to the local explicit expression \eqref{E:VeechMapDMExplicit} obtained above, the use of twisted-(co)homology also  allows  to give an explicit expression for Veech's hyperbolic hermitian form on the target space. \sk 

\subsubsection{\bf An explicit expression for the Veech form}
\label{SS:VeechFormEXplicited}
Since we are going to  focus only on the $n=2$ case in the sequel, 
we only consider this case 
in the next result.   However, the proof given hereafter generalizes in a straightforward way to the general case when $n\geq 2$. 
 \begin{prop}
\label{P:VeechFormExplicit}
The hermitian form of signature $(1,1)$ on the target space 
 of \eqref{E:VeechMapDMExplicit} which corresponds to Veech's  form is  the one 
given by 
$$
Z\longmapsto \overline{Z} \cdot I\!\!H_{\rho_a} \cdot {}^t\! Z
$$
for $Z=(z_\infty,z_0)\in \mathbb C^{2}$,  where $I\!\!H_{\rho_a}$ stands for the matrix  defined in \eqref{E:Hrho}.
\end{prop}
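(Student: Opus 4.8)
The plan is to identify Veech's hermitian form with the twisted intersection product via the general theory of \S\ref{SS:VeechFormEXplicited}, and then merely transport the already-computed matrix $I\!\!H_{\rho_a}$ of \eqref{E:Hrho} through the duality that matches the projective coordinates $(F_\infty,F_0)=(\int_{\boldsymbol{\gamma}_\infty}T\,du,\int_{\boldsymbol{\gamma}_0}T\,du)$ to the basis $(\boldsymbol{\gamma}_\infty,\boldsymbol{\gamma}_0)$ of $H_1(E_{\tau,z},L_\rho)$. Concretely, I would first recall that, by Proposition \ref{P:VDM=VVeech}, Veech's map on $\mathcal F_a^\alpha$ is given in the coordinates $[\int_{\boldsymbol{\gamma}_\bullet}T\,du]$, so a cohomology class is recorded by the vector of its periods $Z=(z_\infty,z_0)$ against the homology basis $(\boldsymbol{\gamma}_\infty,\boldsymbol{\gamma}_0)$. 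The essential structural input is that Veech's hermitian form is, up to a real scalar, the one induced on cohomology by the cup product, which under Poincar\'e duality (\S\ref{S:TwistedPoincar�Duality}) and the pairing \eqref{E:pairing} is exactly the twisted intersection product \eqref{E:TwistedIntProduct} on $H_1(E_{\tau,z},L_\rho)$.

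Next I would carry out the bookkeeping between periods and intersection numbers. Veech's form evaluated on two flat-structure classes is, by his construction (area, or equivalently the imaginary part of a period integral), expressible as a bilinear combination $\sum \overline{z}_\bullet z_\circ\,(\boldsymbol{\gamma}_\bullet\cdot\overline{\boldsymbol{\gamma}}_\circ)$ of the twisted intersection numbers of the basis cycles. Since $\rho$ is unitary (which holds precisely because $a_\infty$ is real, the standing assumption of \S\ref{S:IntersectionProduct}), we have $\overline{\boldsymbol{\gamma}}_\circ=\boldsymbol{l}_\circ$ and the relevant intersection matrix is $I\!\!I_\rho=(\boldsymbol{\gamma}_\circ\cdot\boldsymbol{l}_\bullet)$ computed in the $n=2$ case. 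The matrix of the resulting hermitian form in the period coordinates is then, after the standard normalization by the factor $2i$ that turns the anti-hermitian intersection matrix into a hermitian one, precisely $(2i\,I\!\!I_\rho)^{-1}=I\!\!H_{\rho_a}$ as defined in \eqref{E:Hrho}. I would verify the convention (which of $I\!\!I_\rho$, ${}^t I\!\!I_\rho$, or its inverse appears) by checking that the resulting form has signature $(1,1)$ and that $\det I\!\!H_\rho=-1/4<0$, both already recorded after \eqref{E:Hrho}.

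I expect the main obstacle to be pinning down the precise normalization and the duality convention that relates the \emph{cohomological} pairing (in which Veech's form is naturally defined, on the target $\mathbf P H^1$) to the \emph{homological} intersection product matrix $I\!\!I_\rho$ (in which the explicit computation of Proposition \ref{P:GammaEll} lives). The point is that the Veech map's components are the periods $\int_{\boldsymbol{\gamma}_\bullet}T\,du=\langle[T\,du],[\boldsymbol{\gamma}_\bullet]\rangle$, so a cohomology class $[\eta]$ is represented by its period vector, and the induced hermitian form on period vectors is the \emph{inverse-transpose} of the intersection form on the dual homology basis; getting the factor of $2i$ and the conjugation placed correctly is where care is needed. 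I would handle this by arguing that the two forms agree up to a positive real constant (both being $\mathrm{PU}(1,1)$-invariant forms pulled back by the same \'etale map, hence proportional by rigidity of the hyperbolic structure), and then fix the constant by the explicit normalization already performed when passing from $I\!\!I_\rho$ to $I\!\!H_\rho=(2i\,I\!\!I_\rho)^{-1}$.

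Finally, I would conclude by writing the pulled-back form explicitly: for $Z=(z_\infty,z_0)\in\mathbb C^2$ the period vector of a class, Veech's form is $\overline{Z}\cdot I\!\!H_{\rho_a}\cdot{}^tZ$, which is exactly the asserted statement. Since $I\!\!H_{\rho_a}$ was shown to have signature $(1,1)$, this simultaneously re-confirms that $V_a^{\alpha}$ lands in the appropriate $\mathbb C\mathbb H^1$ inside $\mathbb P^1$, consistent with Proposition \ref{P:InH}.
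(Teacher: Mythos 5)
The central step of your proposal is unproved, and the device you invoke to close it is precisely the one that fails in this setting. Your plan rests on two assertions: (a) Veech's form is the cup-product pairing of the cohomology class $[T\,du]$ with its conjugate, and (b) a ``twisted Riemann bilinear relation'' expressing that pairing in the period coordinates $(F_\infty,F_0)$ as $\overline{Z}\,(2i\,I\!\!I_\rho)^{-1}\,{}^t\!Z=\overline{Z}\cdot I\!\!H_{\rho_a}\cdot{}^t\!Z$. Assertion (b) is the entire content of the proposition, and you do not prove it: you propose instead that Veech's form and the form of \eqref{E:Hrho} ``agree up to a positive real constant, both being $\mathrm{PU}(1,1)$-invariant forms pulled back by the same \'etale map, hence proportional by rigidity.'' Invariance-plus-irreducibility is exactly the argument the paper explicitly rules out here: for generic $a$ the leaf $\mathcal F_a^{\alpha}$ is biholomorphic to $\mathbb H$ and its image in $\mathscr M_{1,2}$ is simply connected (Corollary \ref{C:LeafFr-n=2}), so the holonomy of Veech's hyperbolic structure and the monodromy of the flat bundle in which both forms live are \emph{trivial}. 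On a fixed $\mathbb C^2$, every hermitian form is invariant under the trivial group, and two signature-$(1,1)$ forms need not be proportional; checking that $I\!\!H_{\rho_a}$ has signature $(1,1)$ and determinant $-1/4$ cannot single out Veech's form among them. (This is why the soft argument does work in the genus $0$ hypergeometric case, where the monodromy is irreducible, and why it cannot be transplanted here.)

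What the paper does instead is a direct computation generalizing Proposition 1.11 of \cite{Looijenga}: by \cite[\S12]{Veech}, Veech's form at $V_a^\alpha(\tau,z)$ equals $-\tfrac{i}{2}\int_{E_\tau}\lvert T^\alpha_{\tau,z}\rvert^2\,du\wedge d\overline u$; one cuts $E_{\tau,z}$ along the supports of $\boldsymbol{\gamma}_0,\boldsymbol{\gamma}_2,\boldsymbol{\gamma}_\infty$ into a disk $\overline Q_{\tau,z}$, takes a primitive $\Phi$ of $\omega=T\,du$ there, applies Stokes to $\Phi\cdot\overline\omega$, and tracks how $\omega$ and $\Phi$ transform across the identified boundary arcs (the factors $\rho_0,\rho_2,\rho_\infty$); the boundary contributions assemble into ${}^t\overline F\cdot H\cdot F$ with $F={}^t(F_\infty,F_0)$, and an explicit simplification gives $H=I\!\!H_{\rho_a}$. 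That Stokes computation \emph{is} the twisted bilinear relation your step (b) takes for granted. If you wanted to keep a softer route, you would have to (i) prove irreducibility of the holonomy for a class of leaves where it is nontrivial (e.g.\ the algebraic ones), (ii) extend the resulting proportionality to all $a$ by real-analytic dependence, and (iii) still fix the scalar by an explicit evaluation --- at which point you have not avoided the computation, only relocated it.
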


Note that the arguments of \cite[Chap. IV, \S7]{Yoshida} apply to our situation. Consequently, the hermitian form  associated to $ I\!\!H_{\rho_a}$
 is invariant by the hyperbolic holonomy of the corresponding leaf 
 $\mathscr F_a^{\alpha}$ of Veech's foliation. 
  In the classical hypergeometric case ({\it i.e.}\;when $g=0$),  this is sufficient to characterize the Veech form and get the corresponding result. However, in the genus 1 case, since some leaves  of Veech's foliation $\mathscr F^\alpha$ on the moduli space ${\mathscr M}_{1,2}$ (such as the generic ones, see Corollary \ref{C:LeafFr-n=2}) are simply connected, there is no  holonomy  whatsoever to consider hence such a proof is not possible.\sk 

 The proof of Proposition \ref{P:VeechFormExplicit} which  we give below  is a direct generalization  of the one of Proposition 1.11 in \cite{Looijenga} to our case. Remark that  although elementary, this proof is long and computational. 
It would be interesting to give a more conceptual proof of this result.
 \mk 

\begin{proof} 
We continue to use the notations introduced in the proof of Proposition \ref{P:VDM=VVeech}. 
Let $\nu$ be  the hermitian form on the target space $\mathbb P^{n-1}$ of \eqref{E:VeechMapDMExplicit} which corresponds to the one considered by Veech in \cite{Veech}.  
\mk 

For $(\tau,z)\in \mathcal F_{\!a}^\alpha$, 
 the wedge-product 
$$
\eta_{\tau,z}=\omega_{\tau,z}^\alpha\wedge 
\overline{\omega^\alpha_{\tau,z}}= \big\lvert T^\alpha_{\tau,z}(u)\big\lvert^2 du\wedge d\overline{u}
$$ does not depend on the determination of $T^\alpha_{\tau,z}( u)$ and extends to an integrable  positive  2-form on $E_{\tau,z}$. Moreover,  according to \cite[\S12]{Veech}, one has (up to multiplication by $-1$):
$$
\nu\big(V_r^\alpha(\tau,z)\big)= -\frac{i}{2}\int_{E_\tau} \eta_{\tau,z}
<0.
$$

The complementary set ${Q}_{\tau,z}=E_{\tau,z}\setminus {\gamma}_{\tau,z}$ of the union of the supports of the three 1-cycles $\gamma_0,\gamma_2$ and $\gamma_\infty$  in $E_\tau$  is homeomorphic to a disk.  Its  boundary in the metric completion $\overline{Q}_{\tau,z}$ 
(defined as in the proof of Proposition \ref{P:VDM=VVeech})  is 
$$\partial \overline{Q}_{\tau,z}=\overline{\gamma}_0+\overline{\gamma}_\infty'-\overline{\gamma}_0'-\overline{\gamma}_\infty+\overline{\gamma}_2'-\overline{\gamma}_2$$ where the six elements in this sum are the boundary segments defined in  the figure below.\sk


\begin{center}
\begin{figure}[!h]
\psfrag{v1}[][][1]{$v_1 $}
\psfrag{v2}[][][1]{$v_2 $}
\psfrag{v3}[][][1]{$v_3 $}
\psfrag{v4}[][][1]{$v_4 $}
\psfrag{v5}[][][1]{$v_5 $}
\psfrag{v6}[][][1]{$v_6 $}
\psfrag{g0}[][][1]{${\overline{\gamma}_0} $}
\psfrag{gi}[][][1]{${\overline{\gamma}_\infty} $}
\psfrag{g0p}[][][1]{${\overline{\gamma}_0'} $}
\psfrag{g2}[][][1]{${\overline{\gamma}_2} $}
\psfrag{gip}[][][1]{${\overline{\gamma}_\infty'} $}
\psfrag{g2p}[][][1]{${\overline{\gamma}_2'} $}
\includegraphics[scale=0.6]{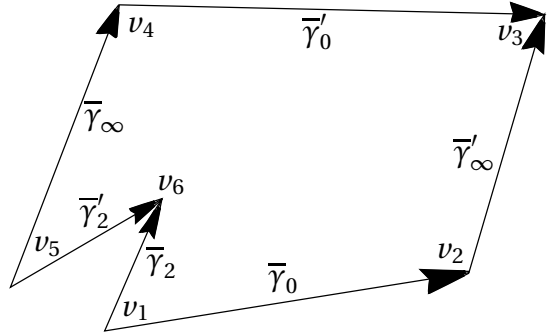}
\caption{The closed disk $\overline{Q}_{\tau,z}$ and its boundary}
\end{figure}
\end{center}

Let $\Phi=\Phi_{\tau,z}$ be a primitive of $\omega =\omega^\alpha_{\tau,z}$ on $\overline{Q}_{\tau,z}$. 
 For any symbol $\bullet\in \{0, 2,\infty\}$ we will denote by $\omega_\bullet$ and $\omega_\bullet'$ the restriction  of $\omega$ on $\overline{\gamma}_\bullet$ and $\overline{\gamma}_\bullet'$ respectively. We will use similar notations  for $\overline{\omega}$ and for $\Phi$ as well, being aware that  $\Phi_\bullet'
$ has nothing to do with a derivative but refers to the restriction of $\Phi$ on $\overline{\gamma}_\bullet'$. \sk
 
 Since $d(\Phi\cdot \overline{\omega})=\eta_{\tau,z}$, it follows from Stokes theorem  that 
  \begin{align}
  \label{E:terms}
- {2}{i}\nu\big(V_r^\alpha(\tau,z)\big)   =\int_{
\partial \overline{\gamma}_{\tau,z}^c} \Phi\cdot \overline{\omega}
= & \, 
 \int_{\gamma_0}\Phi_{0}\cdot \overline{\omega}_{0}- 
  \int_{\gamma_0'}\Phi_0' \cdot \overline{\omega}_{0}'\\
\, + &    \int_{\gamma_\infty'}\Phi_{\infty}'\cdot \overline{\omega}_{\infty}'- 
  \int_{\gamma_\infty}\Phi_{\infty}\cdot \overline{\omega}_{\infty} \nonumber\\
   + &    \int_{\gamma_2'}\Phi_{2}'\cdot \overline{\omega}_{2}'- 
\int_{\gamma_2}\Phi_{2}\cdot \overline{\omega}_{2}\, . \nonumber
\end{align}

For any $\bullet \in \{0,2,\infty\}$, both segments $\overline{\gamma}_\bullet$ and $\overline{\gamma}_\bullet'$ identify to $\gamma_\bullet\subset E_{\tau,z}$,  hence there is a natural identification between them. For  $\zeta\in \overline{\gamma}_\bullet$, we will denote by $\zeta'$ the corresponding point 
on $ \overline{\gamma}_\bullet'$.  Up to these correspondences, one has 
$$
\omega_0'=\rho_\infty\omega_0, \qquad
 \omega_2'=\rho_2 \omega_2\qquad  \mbox{ and } \qquad 
 \omega_\infty'=\rho_0\omega_\infty.
$$

  It follows that for 
for every $\zeta'$ in $ \overline{\gamma}_0'$, in $ \overline{\gamma}_2'$ and  in 
$ \overline{\gamma}_\infty'$, 
 one has respectively 
\begin{align}
\label{E:fifi}
 \Phi'_0(\zeta')=  & \, F^0 +\rho_0 F^\infty-\rho_\infty F^0+ 
 \int_{v_4}^{\zeta'} \omega_{0}' 
  =(1-\rho_\infty ) F^0 +\rho_0 F^\infty+ 
\rho_\infty  \int_{v_1}^{\zeta} \omega_{0}\, , 
\nonumber \\
\Phi'_2(\zeta')=   & \,F^2-\rho_2F^2+\int_{v_5}^{\zeta'} \omega_2'=
(1-\rho_2)F^2+\rho_2\int_{v_1}^{\zeta} \omega_2
\\
\qquad \quad \mbox{ and}\quad \Phi'_\infty(\zeta')= & \,
 F^0+ 
\int_{v_2}^{\zeta'} \omega_{\infty}'= F^0+ 
\rho_0 \int_{v_5}^{\zeta} \omega_{\infty}
\nonumber
 \end{align}
   with 
\begin{align}
\label{E:F0F2Finfinity}
F^0=& \, \Phi(v_2)- \Phi(v_1)=\int_{v_1}^{v_2} \omega
= \int_{\boldsymbol{\gamma}_0} 
T^\alpha(u,\tau,z)du\, ;  \nonumber
 \\
F^2=& \, \Phi(v_6)- \Phi(v_1)=\int_{v_1}^{v_6} \omega= \int_{\boldsymbol{\gamma}_2} 
T^\alpha(u,\tau,z)du
\\
\mbox{ and}
\quad F^\infty= & \, \Phi(v_4)-\Phi(v_5)=\int_{v_4}^{v_5} \omega= \int_{\boldsymbol{\gamma}_\infty} 
T^\alpha(u,\tau,z)du  \, .\qquad \nonumber
\end{align}

  Since $\overline{\rho_0}=\rho_0^{-1}$,  it follows from \eqref{E:fifi}
 that for every $\zeta\in \overline{\gamma}_0$, one has
 \begin{align*}
\big(\Phi_0\cdot \overline{\omega}_0\big)(\zeta)-
\big(\Phi_0'\cdot\overline{\omega}_0'\big)(\zeta')=& \, 
 \Phi(\zeta) \cdot \overline{\omega}_0
-  \Big[ (1-\rho_\infty ) F_0 +\rho_0 F_\infty+ 
\rho_\infty  \Phi({\zeta})
\Big]\cdot \rho_\infty^{-1} \overline{ \omega}_0
\\
=  & \, \left[  \frac{d_\infty}{\rho_\infty} F_0 -\frac{\rho_0}{\rho_\infty} F_\infty \right]\cdot  \overline{ \omega}_0(\zeta)\, .
\end{align*}

Similarly,   since $\overline{\rho_2}=\rho_2^{-1}$,  it follows from \eqref{E:fifi}
 that for every $\zeta\in \overline{\gamma}_2$, one has
 \begin{align*}
\big(\Phi_2\cdot \overline{\omega}_2\big)(\zeta)-
\big(\Phi_2'\cdot\overline{\omega}_2'\big)(\zeta')=& \, 
 \Phi(\zeta) \cdot \overline{\omega}_2
-  \Big[ (1-\rho_2 ) F_2 + 
\rho_2 \Phi({\zeta})
\Big]\cdot \rho_2^{-1} \overline{ \omega}_2
\\ = & \,  \left[  \frac{d_2}{\rho_2} F_2  \right]\cdot  \overline{ \omega}_2(\zeta)\, .
\end{align*}
Finally,   since $\overline{\rho_\infty}=\rho_\infty^{-1}$,  it follows from \eqref{E:fifi}
 that for every $\zeta\in \overline{\gamma}_\infty$, one has
 \begin{align*}
 \big(\Phi_\infty\cdot \overline{\omega}_\infty\big)(\zeta)-
\big(\Phi_\infty'\cdot\overline{\omega}_\infty '\big)(\zeta')=& \, 
 \Phi(\zeta) \cdot \overline{\omega}_\infty
-  \left[ F_0+\rho_0\int_{v_5}^\zeta \omega_\infty
\right]\cdot \rho_0^{-1} \overline{ \omega}_\infty
\\
=& \, 
\left[ (1-\rho_2)F_2+\int_{v_5}^\zeta \omega_\infty
\right]  \cdot \overline{\omega}_\infty
-  \left[ F_0+\rho_0\int_{v_5}^\zeta \omega_\infty
\right]\cdot \rho_0^{-1} \overline{ \omega}_\infty
\\
=  & \, \left[  (1-\rho_2) F_2 -\frac{1}{\rho_0} F_0 \right]\cdot  \overline{ \omega}_\infty(\zeta). 
\end{align*}

From the three relations above, one deduces that
\begin{equation}
\int_{\overline{\gamma}_\bullet } \Phi \cdot \overline{\omega}-
\int_{\overline{\gamma}_\bullet'} \Phi \cdot \overline{\omega}
=
\begin{cases}
  {d_\infty}{\rho_\infty^{-1}} F_0{\overline{F}_0}   -{\rho_0}{\rho_\infty^{-1}} F_\infty
  {\overline{F}_0}  
   \hspace{0.4cm}\mbox{for }\, \bullet=0; \mk \\
    {d_2}{\rho_2^{-1}} F_2\overline{F}_2   \hspace{3cm}    \mbox{for }\, \bullet=2; \mk \\ 
     
      -d_2 F_2 \overline{F}_\infty-{\rho_0^{-1}} F_0\overline{F}_\infty
              \hspace{1.2cm}           \mbox{for }\, \bullet=\infty. 
\end{cases}
\end{equation}

Injecting these computation in \eqref{E:terms} and using the relations
$$ d_2 F_2={d_\infty} F_0-{d_0}F_\infty
\qquad 
\mbox{and}
\qquad 
\frac{1}{\rho_2} \overline{F}_2=\frac{d_\infty}{\rho_\infty d_2} \overline{F}_0-\frac{d_0}{\rho_0d_2}\overline{F}_\infty
$$
one gets
 \begin{align*}
  \label{E:terms}
{2}{i}\nu\big(V_r^\alpha(\tau,z)\big)   =  & \, \frac{d_\infty}{\rho_\infty} F_0{\overline{F}_0}   -\frac{\rho_0}{\rho_\infty} F_\infty
  {\overline{F}_0}    +d_2 F_2 \overline{F}_\infty+\frac{1}{\rho_0} F_0\overline{F}_\infty
   -   \frac{d_2}{\rho_2} F_2\overline{F}_2 \\
    =  & \, \frac{d_\infty}{\rho_\infty} F_0{\overline{F}_0}   -\frac{\rho_0}{\rho_\infty} F_\infty
  {\overline{F}_0}    +\Big({d_\infty} F_0-{d_0}F_\infty
  \Big)
   \overline{F}_\infty  +\frac{1}{\rho_0} F_0\overline{F}_\infty  \\  &\,
   -   
\Big( d_\infty F_0-d_0F_\infty
   \Big)\Big(  \frac{d_\infty}{\rho_\infty d_2} \overline{F}_0-\frac{d_0}{\rho_0d_2}\overline{F}_\infty \Big) 
   \\
  = &
 2i \,   {}^t \overline{F} \cdot 
H\cdot   F\, , 
      \end{align*}
where $F$ and $H$ stand respectively for the matrices 
\begin{align*} 
F=\begin{bmatrix} 
   F_\infty  
  \\
F_0  
\end{bmatrix}\qquad 
\mbox{ and }
\qquad 
H= \, \frac{1}{2i}\begin{bmatrix}
  -d_0\Big(  1+
    \frac{d_0}{\rho_0d_2}
    \Big)
&  
  d_\infty  +\frac{1}{\rho_0}
   +\frac{d_\infty d_0}{\rho_0d_2}
 \\ 
   -\frac{\rho_0}{\rho_\infty}+\frac{d_0d_\infty}{\rho_\infty d_2} 
    &    \frac{d_\infty}{\rho_\infty}  
\Big(    1  -\frac{d_\infty}{ d_2}\Big) 
\end{bmatrix}\, .\end{align*}

Because $\rho_2=\rho_1^{-1}$, one verifies that 
\begin{align*}
H=
& \, \frac{1}{2i}\begin{bmatrix}
 \frac{d_0}{d_1}\Big(  1-
    \frac{\rho_1}{\rho_0}
    \Big) &  
\frac{1-\rho_0^{-1}-\rho_\infty+\rho_1\rho_\infty\rho_0^{-1}}{d_1}
  \\ 
  \frac{\rho_1-\rho_0\rho_1-\rho_1\rho_\infty^{-1}+\rho_0\rho_\infty^{-1}}{d_1}
    &   
    \frac{d_\infty d_{1\infty}}{\rho_\infty d_1}
\end{bmatrix}= I\!\!\!H_\rho\, .
\end{align*}

Finally,  it follows from \eqref{E:F0F2Finfinity} that $F_\infty$ and $F_0$ are nothing else but the components of the map \eqref{E:VeechMapDMExplicit} and the proposition follows. \end{proof}

\subsubsection{\bf A normalized version of Veech's map (when $\boldsymbol{n=2}$ and 
$\boldsymbol{\rho_0=1}$)}
\label{S:NormalizationOfVeech'sMap-n=2}${}^{}$

According to \S\ref{SS:normalization},  
when $n=2$ and $\rho_0=1$, 
setting 
$
X=\begin{bmatrix}
- \frac{d_{1\infty}}{d_1}   &   1 
  \\ 
  {\rho_\infty}
   &   0  \end{bmatrix}\, 
$, 
one has
$$
\overline{X}\cdot I\!\!H_\rho \cdot {}^t\! X=
\begin{bmatrix}  0  & \frac{i}{2}\\
-\frac{i}{2} & 0\end{bmatrix}\, .$$ 

Consequently, setting  $\boldsymbol{F}=(F_\infty,F_0)$ and 
$$
\boldsymbol{G}=\big( G_\infty, G_0  \big)= \big( F_\infty, F_0  \big)\cdot X^{-1}=
 \Big( F_0  , \frac{1}{\rho_\infty} F_\infty +\frac{d_{1\infty}}{\rho_\infty d_1}F_0
 \Big)\, ,  $$

one obtains that   
$$\Im{\rm m}\big( \overline{G_\infty} G_0\, \big)
=   \overline{\boldsymbol{G}}
\cdot 
\begin{bmatrix}  0  &   \frac{i}{2}\\
- \frac{i}{2} & 0\end{bmatrix}\cdot {}^t\! \boldsymbol{G}  
=  \overline{\boldsymbol{F}}\cdot
I\!\!H_\rho 
 \cdot{}^t\! \boldsymbol{F}  =\nu <0,$$ 
 which implies that the imaginary part of the ratio $G_0/G_\infty$ is negative. \sk 
 
 It follows that the map
 $$
- \frac{G_0}{G_\infty}= -   \bigg(   \frac{1}{\rho_\infty}  \frac{F_\infty}{F_0}+
 \frac{d_{1\infty}}{\rho_\infty d_1}
 \bigg)
 $$
 is a normalized version of Veech's map, with values into the  upper half-plane.


\section{\bf 
Flat tori with two conical points} 
From now on, we focus  on the first nontrivial case, namely $g=1$ and $n=2$.  We fix 
$\alpha=(\alpha_1,\alpha_2)$ with $
\alpha_1=-\alpha_2\in ]0,1[$.    Since it is fixed, we will often omit the subscript $\alpha$ or $\alpha_1$ in the notations. 
 We want to study 
the hyperbolic structure on the leaves of Veech's foliation at the level of the moduli space 
${\mathscr M}_{1,\alpha}\simeq {\mathscr M}_{1,2}$.  
We will only  consider the most interesting leaves, namely the algebraic ones.

\subsection{\bf Some  notations
}
In what follows,  $N$ stands for an integer bigger than 1.

\subsubsection{}\hspace{-0.4cm}  For any  $(a_0,a_\infty)
 \in \mathbb R^2\setminus \alpha_1\mathbb Z^2$, we set 
$$
r=\big(r_0,r_\infty\big)=\frac{1}{\alpha_1} \big(a_0,a_\infty\big )\in \mathbb R^2\setminus \mathbb Z^2
$$
and 
we  denote by $\mathcal F_r^{\alpha_1}$ (or just by $\mathcal F_r$ for short)  the leaf $(\xi^\alpha)^{-1}(a_0,a_\infty
)=\Xi^{-1}(r)$ of Veech's foliation in the Torelli space.  This  is the subset of  $ \mathcal T\!\!\!{\it or}_{1,2}$ cut out by any one of the following two (equivalent) equations: 
$$
a_0\tau+\alpha_2 z_2=a_\infty
\qquad \mbox{ or }
\qquad 
z_2=
r_0\tau-r_\infty.
$$

We remind the reader that  $\mathscr F_{[r]}^{\alpha_1}$ (or just 
$\mathscr F_{\!r}$  for short)  stands for the corresponding leaf in the moduli space of elliptic curves with two marked points: 
$$\mathscr F_{r}=\mathscr F_{[r]}=p_{1,2}\big(\mathcal F_r\big)
\subset {\mathscr M}_{1,2}.$$

\subsubsection{}\hspace{-0.4cm} From a geometric point of view,   the most interesting leaves  clearly 
are  the leaves $\mathcal F_r$ with $r\in \mathbb Q^2\setminus \mathbb Z^2$. Indeed, these are exactly the ones which are algebraic subvarieties (we should say `suborbifolds') of ${\mathscr M}_{1,2}$   and  there is such a leaf for each integer $N\geq 2$ (see Corollary \ref{C:AlgebraicLeavesg=1n=2}), which is 
$$
\mathscr F_N=\mathscr F^{\alpha_1}_N=\mathscr F_{(0,-1/N)}\, .
$$

An equation of the corresponding leaf $\mathcal F_N=\mathcal F_{(0,-1/N)}$ in $\mathcal T\!\!\!{\it or}_{1,2}$ is
\begin{equation}
\label{E:Z21N}
z_2=\frac{1}{N}\, . 
\end{equation}
It  induces a natural  identification $\mathbb H\simeq \mathcal F_{N}$ which is compatible with the action of $\Gamma_1(N)\simeq {\rm Stab}(\mathcal F_{(0,-1/N)})$ (see \eqref{E:FrF[r]}) hence induces an 
 identification 
\begin{equation}
\label{E:Y1(N)F01}
Y_1(N) \simeq  \mathscr  F_N .
\end{equation}

For computational reasons, it will be useful later to consider   other identifications between $ \mathscr F_N$ and the modular curve $ Y_1(N)$ (see \S \ref{S:} just below). However \eqref{E:Y1(N)F01} will be the privileged one. For 
this reason, we will use the (somewhat abusive) notations  
$$ Y_1(N)=\mathscr F_N  \qquad \big(\mbox{and }\;  Y_1(N)^{\alpha_1}=\mathscr F_N^{\alpha_1}\big)
$$ (the second one to emphasize the fact that $Y_1(N)$ is endowed with the $\mathbb C\mathbb H^1$-structure corresponding to Veech's one on $\mathscr F_N^{\alpha_1}$)  to distinguish  \eqref{E:Y1(N)F01} from the other identifications between $Y_1(N)$ and $\mathscr F_N$ that we will consider below. \sk 

\subsubsection{}\hspace{-0.4cm}  
For any $c \in \mathbb P^1(\mathbb Q)=\mathbb Q\cup \{ i \infty \}\subset \partial \mathbb H$, one denotes by $[c]$ the associated cusp of $Y_1(N)=\mathscr F_N$. Then the set of cusps 
$$ C_1(N)=\big\{ \; [c] \, \big\lvert \, c\in \mathbb P^1(\mathbb Q)\, \big\}$$ is finite and $$X_1(N)=Y_1(N)\sqcup  C_1(N)$$
is a compact smooth algebraic curve (see \cite[Chapter I]{DS} for instance). 
\sk 

Our goal in this section is to study the hyperbolic structure, denoted by  $\boldsymbol{{\bf hyp}_{1,N}^{\alpha_1}}$,  of the leaf $Y_1(N)^{\alpha_1}=\mathscr F_N^{\alpha_1}$ of Veech's foliation $\mathscr  F^{\alpha_1}$ on ${\mathscr M}_{1,2}$ in the vicinity of any one of its cusps. More precisely, we want  to prove  that ${\rm hyp}_{1,N}^{\alpha_1}$ extends as a conifold $\mathbb C\mathbb H^1$-structure at such a cusp $\mathfrak c$ and give a closed formula for the associated conical angle 
which will be denoted by  
\footnotetext{By convention, a {\bf complex hyperbolic conifold point of conical angle $\boldsymbol{0}$} is nothing else but 
an  usual cusp for a hyperbolic surface (see A.1.1.\;in Appendix A for more details).} 
$$\theta_N(\mathfrak c)\in \big[0,+\infty\big[\footnotemark\, .$$

\subsubsection{}\hspace{-0.4cm} With this aim in mind, it will be more convenient to deal with the ramified cover $Y(N)$ over $Y_1(N)$ associated to the principal congruence subgroup $\Gamma(N)$. 
   In order to do so, we consider the subgroup 
$$
G(N)=\Gamma(N)\rtimes \mathbb Z^2\lhd  {\rm SL}_2(\mathbb Z)\rtimes \mathbb Z^2
$$
and we denote the associated quotient map by 
\begin{equation}
\label{E:M12N}
p_{1,2}^N: \mathcal T\!\!\!{\it or}_{1,2}\rightarrow {\mathcal T\!\!\!{\it or}_{1,2}}{/ G(N)}=:{\mathscr M}_{1,2}(N)\,.
\end{equation}

Then from \eqref{E:Z21N}, one deduces an identification between the `level $N$ modular curve $Y(N)=\mathbb H/\Gamma(N)$' and 
the corresponding leaf in ${\mathscr M}_{1,2}(N)$: 
\begin{equation}
\label{E:Y(N)F01}
Y(N) \simeq  p_{1,2}^N(\mathcal F_N) =: F_N.
\end{equation}

As above, we will consider this identification as the privileged one and for this reason,  it will be indicated by means of the equality symbol. 
In other terms, we have fixed identifications 
\begin{align*}
Y_1(N)^{\alpha_1}=& \, \mathscr  F^{ \alpha_1}_N  \quad \mbox{  (or } \, Y_1(N)=\mathscr F_N\; \mbox{  for short);} \\
\mbox{ and }\; Y(N)^{\alpha_1}= & \, F^{ \alpha_1}_N \quad \mbox{  (or } \, Y(N)=  F_N\; \mbox{  for short).}
 \end{align*}

One denotes by $C(N)$ the set of cusps of $Y(N)$. Then 
$$X(N)=Y(N)\sqcup C(N)$$
is a compact smooth algebraic curve.  The complex hyperbolic structure on $Y(N)$ corresponding to Veech's one  (up to the identification \eqref{E:Y(N)F01})
will be denoted by 
$\boldsymbol{{\bf hyp}_{N}^{\alpha_1}}$. Under the assumption that it extends as a conifold structure at $\mathfrak c \in  C(N)$, we will denote by 
$ \vartheta_N(\mathfrak c)$ the associated conical angle. 

\subsubsection{}\hspace{-0.4cm} 
\label{SS:Y(N)-and-Y1(N)}
The natural quotient map $Y(N)\rightarrow Y_1(N)$ (coming from the fact that $\Gamma(N)$ is a subgroup of $\Gamma_1(N)$) induces an algebraic cover  
\begin{equation}
\label{E:X(N)->X1(N)}
X(N)\longrightarrow X_1(N)
\end{equation}
 which is ramified at the cusps of $X_1(N)$. More precisely, at a cusp $\mathfrak  c\in C(N)$, a local analytic model for this cover is $z\mapsto z^{N/w_{\mathfrak c}}$ where $w_{\mathfrak c}$ stands for the {\bf width}
of $\mathfrak c$, the latter being now seen as a cusp of $X_1(N)$.\footnote{We recall that, $w_{\mathfrak c}$ divides $N$ for any $\mathfrak c\in C(N)$ hence the map $z\mapsto z^{N/w_{\mathfrak c}}$ is holomorphic.} \sk 

It follows that, for any $\mathfrak c\in C(N)$,  ${\rm hyp}_{1,N}^{\alpha_1}$  extends as a $\mathbb C\mathbb H^1$-conifold structure at $\mathfrak c$ now considered as a cusp of $Y_1(N)^{\alpha_1}$ if and only if the same holds true,   at $\mathfrak c$, for the corresponding complex hyperbolic structure ${\rm hyp}_{N}^{\alpha_1}$ on $X(N)^{\alpha_1}$. 
 In this case, one has the following relation between the corresponding cone angles: 
\begin{equation}
\label{E:RelationAnglesX(N)-X1(N)}
\theta_N(\mathfrak c)=\frac{w_{\mathfrak c}}{N}\vartheta_N(\mathfrak c)\,.
\end{equation}

In order to get explicit results, it is necessary to have a closed explicit formula for the width of a cusp. 
\begin{lemma}
\label{L:WidthCuspX1(N)}
Assume that $\mathfrak c=[-a'/c']\in  C_1(N)$ with  $a',c'\in \mathbb Z$  are coprime. Then
$$
w_{\mathfrak c}=\frac{N}{{\rm gcd}(c',N)}.
%
$$
\end{lemma}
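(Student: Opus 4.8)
The plan is to compute $w_{\mathfrak c}$ directly from its definition as the ramification index of $\mathfrak c$ under $\mathbb H^\star\to X_1(N)$, equivalently as the least positive integer $h$ such that a generator of the parabolic stabilizer of $\mathfrak c$, raised to the power $h$, lands in $\{\pm I\}\,\Gamma_1(N)$ (see \cite[\S3]{DS}). First I would produce a scaling matrix: since $\gcd(a',c')=1$, there exist $b',d'\in\mathbb Z$ with $-a'd'-b'c'=1$, so that $\sigma=\begin{pmatrix} -a' & b' \\ c' & d'\end{pmatrix}\in{\rm SL}_2(\mathbb Z)$ satisfies $\sigma(i\infty)=-a'/c'$. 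The parabolic stabilizer of $i\infty$ in ${\rm SL}_2(\mathbb Z)$ is generated by $\pm T$ with $T=\begin{pmatrix} 1 & 1 \\ 0 & 1\end{pmatrix}$, hence the stabilizer of $\mathfrak c$ is generated by $\pm\sigma T\sigma^{-1}$, and it is this element whose powers I would test for membership in $\Gamma_1(N)$.

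The key computation is to conjugate $T^h$ by $\sigma$. A direct multiplication gives $\sigma T^h\sigma^{-1}=\begin{pmatrix} 1+a'c'h & a'^2 h \\ -c'^2 h & 1-a'c'h\end{pmatrix}$. I would then read off the defining conditions for $\Gamma_1(N)$: the lower-left entry must be $\equiv 0$ and both diagonal entries must be $\equiv 1$ modulo $N$, i.e.\ $N\mid c'^2 h$ together with $N\mid a'c'h$. Because $\gcd(a',c')=1$, the greatest common divisor of $c'^2 h$ and $a'c'h$ is $c'h$, so the pair of congruences collapses to the single divisibility $N\mid c'h$; and the least positive $h$ with $N\mid c'h$ is $h=N/\gcd(c',N)$, obtained by dividing through by $\gcd(c',N)$ and using that $N/\gcd(c',N)$ and $c'/\gcd(c',N)$ are coprime.

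It remains only to dispose of the sign ambiguity from the $\{\pm I\}$ coset. The matrix $-\sigma T^h\sigma^{-1}$ has diagonal entries $\equiv -1\pmod N$, so it can lie in $\Gamma_1(N)$ only when $-1\equiv 1\pmod N$, that is $N\mid 2$; for $N\geq 3$ this contributes nothing and the formula $w_{\mathfrak c}=N/\gcd(c',N)$ follows at once, while for $N=2$ one simply checks the finitely many cusps of $X_1(2)$ by hand and verifies agreement. The argument is elementary throughout; the one genuinely delicate point — which I would treat most carefully — is the reduction of the off-diagonal and diagonal congruences to the single condition $N\mid c'h$ together with the bookkeeping of the $\pm I$ coset, since it is precisely there that the coprimality hypothesis $\gcd(a',c')=1$ and the specific shape of $\Gamma_1(N)$ (in contrast to $\Gamma(N)$, where all three entries would be constrained and the width would change) enter the computation.
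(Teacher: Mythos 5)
Your route is genuinely different from the paper's. The paper's proof is a two-line citation argument: it identifies the cusps with $\pm$-classes of order-$N$ points of $(\mathbb Z/N\mathbb Z)^2$, quotes Ogg for the value $\gcd(c,N)$ of the ramification of $X(N)\to X_1(N)$ at the cusp, and uses that every cusp of $X(N)$ has width $N$, so that $w_{\mathfrak c}\cdot\gcd(c,N)=N$. You instead compute the width directly from the parabolic stabilizer, and your central computation is correct: one indeed gets
$$
\sigma T^h\sigma^{-1}=\begin{bmatrix} 1+a'c'h & a'^2h\\ -c'^2h & 1-a'c'h\end{bmatrix},
$$
membership in $\Gamma_1(N)$ amounts to the pair of conditions $N\mid a'c'h$ and $N\mid c'^2h$, these collapse to $N\mid c'h$ by your gcd argument, and the smallest positive solution is $h=N/\gcd(c',N)$. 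If one defines $w_{\mathfrak c}$ by strict membership $\sigma T^h\sigma^{-1}\in\Gamma_1(N)$ (no $\pm I$ coset), this is already a complete, self-contained proof of the lemma for every $N$, and it has the virtue of not resting on an external citation.

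The genuine gap is the sign bookkeeping — exactly the point you flagged as delicate. The diagonal entries of $-\sigma T^h\sigma^{-1}$ are $-1-a'c'h$ and $-1+a'c'h$; they are $\equiv -1\pmod{N}$ only if $N\mid a'c'h$, which you may not assume at this stage. The correct conditions for $-\sigma T^h\sigma^{-1}\in\Gamma_1(N)$ are $a'c'h\equiv 2$ and $a'c'h\equiv -2\pmod{N}$ (together with $N\mid c'^2h$), so the minus sign is excluded only when $N\nmid 4$, not when $N\nmid 2$. For $N=4$ the exceptional case really occurs: take $\mathfrak c=[-1/2]$, i.e.\ $a'=1$, $c'=2$, and $\sigma=\bigl(\begin{smallmatrix}-1 & 0\\ 2 & -1\end{smallmatrix}\bigr)$; then $-\sigma T\sigma^{-1}=\bigl(\begin{smallmatrix}-3 & -1\\ 4 & 1\end{smallmatrix}\bigr)\in\Gamma_1(4)$. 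This is the classical irregular cusp of $X_1(4)$: under your $\{\pm I\}$-coset definition its width is $1$, while the formula asserts $2$, so with that definition the statement you set out to prove is actually false at this cusp and no repair of the sign discussion can succeed. Your options are either to drop the $\pm I$ coset from the definition (then your first computation finishes the proof and the last paragraph should be deleted), or to record $N=4$, $c'\equiv 2\pmod 4$ as a genuine exception; for $N=2$ nothing is needed since $-I\in\Gamma_1(2)$ makes the coset issue vacuous. Note finally that the same subtlety is silently present in the paper's own proof: the fibre of $X(4)\to X_1(4)$ over this cusp is the single cusp $\pm(1,2)$ of $X(4)$, so the true ramification there is $4$ and not $\gcd(2,4)=2$ as the quoted formula would give. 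The lemma is unambiguously correct only for the ${\rm SL}_2$-width, i.e.\ with the reading under which your matrix computation alone already constitutes the proof.
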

\begin{proof} 
The set of cusps of $X(N)$ can be identified with the set of classes $\pm{\tiny{[ \!\! \!\!  \begin{tabular}{c}
$a$\vspace{-0.15cm}\\ $c$
\end{tabular}}  \!\!  \!\!  ]}$ 
of the points ${\tiny{[\!\! \!\!  \begin{tabular}{c}
$a$\vspace{-0.15cm}\\ $c$
\end{tabular}}  \!\!  \!\!  ]}\in (\mathbb Z/N\mathbb Z)^2$ of order $N$. 
To $\mathfrak c=[-a'/c']$ is associated $\pm{\tiny{[ \!\! \!\!  \begin{tabular}{c}
$a$\vspace{-0.15cm}\\ $c$
\end{tabular}}  \!\!  \!\!  ]}$ where $a$ and $c$ stand for the residu modulo $N$ of $a'$ and $c'$ respectively  ({\it cf.}\;\cite[\S3.8]{DS}).  It follows that ${\rm gcd}(c',N)={\rm gcd}(c,N)$. 

On the other hand, according to \cite[\S1]{Ogg}, the ramification degree of the covering $X(N)\rightarrow X_1(N)$  at  $\pm{\tiny{[ \!\! \!\!  \begin{tabular}{c}
$a$\vspace{-0.15cm}\\ $c$
\end{tabular}}  \!\!  \!\!  ]}$  viewed as a cusp of $X_1(N)$ is ${\rm gcd}(c,N)$. 
Since the width of any cusp of $X(N)$ is $N$, it follows that $w_{\mathfrak c}={N}/{{\rm gcd}(c',N)}$. \end{proof}


\subsection{\bf Auxiliary leaves}
\label{S:AuxiliaryLeaves}
\sk 

For any  $(m,n)\in \mathbb Z^2\setminus N\mathbb Z^2$ with ${\rm gcd}(m,n,N)=1$, one sets
$$\mathcal F_{m,n}
 =\mathcal F_{({m}/{N},-{n}/{N})}\, .
 $$ 
 
This is leaf of Veech's foliation on  $\mathcal T\!\!\!{\it or}_{1,2}$ cut out by  
$$
z_2=
\tau {m}/{N}+{n}/{N}\, .
$$

The latter equation induces a natural  identification 
\begin{align}
\label{E:HcalFmn}
\mathbb H & \stackrel{\sim}{\longrightarrow}\;  \mathcal F_{m,n}\subset \mathcal T\!\!\!{\it or}_{1,2} \\
\tau & \longmapsto \left( \tau, 
\frac{m}{N}\tau+\frac{n}{N}
\right)\nonumber
\end{align}
 which is compatible with the action of $\Gamma(N)\lhd \Gamma_1(N)\simeq  {\rm Stab}(\mathcal F_{m,n})$ ({\it cf.}\;\eqref{E:FrF[r]}),  hence induces a well-defined identification 
\begin{equation}
\label{E:Y(N)Fmn}
 Y(N)\simeq p_{1,2}^N(\mathcal F_{m,n})=:F_{m,n} 
\subset {\mathscr M}_{1,2}(N). 
\end{equation}

For any $s\in \mathbb P^1(\mathbb Q)=\mathbb Q\cup \{ i \infty \}\subset \partial \mathbb H$, one denotes by $[s]$ the associated cusp of $Y(N)$ and by $[s]_{m,n}$ the  corresponding cusp for 
$F_{m,n}$ relatively to the  identification \eqref{E:Y(N)Fmn}.  Then if one denotes by 
$$ C_{m,n}=\Big\{ [s]_{m,n}\, \lvert \, s\in \mathbb P^1(\mathbb Q)\, \Big\}$$
 the set of cusps of  $F_{m,n}\simeq Y(N)$, one gets a compactification
$$
X(N) \simeq X_{m,n}:=F_{m,n}\sqcup C_{m,n} 
$$
where the identification with $X(N)$ is the natural extension of \eqref{E:Y(N)Fmn}.  One will denote by $\boldsymbol{{\bf hyp}_{m,n}^{\alpha_1}}$ the complex hyperbolic structure on $X(N)$ corresponding to Veech's one of $F_{m,n}$ up to the preceding identification.
\sk

Since $(0,-1/N)$ is a representative  for  the orbit of $(m/N,-n/N)$ under the action of ${\rm SL}_2(\mathbb Z)\rtimes \mathbb Z^2$ ({\it cf.}\;Proposition \ref{P:NormalFormForAinQ2}), all the leaves $ F_{m,n}$ are isomorphic to  
$$ F_{0,1}=F_N=Y(N) $$
(where the first equality comes from the very definition of $F_{0,1}$ whereas the second one  refers to the privileged identification 
\eqref{E:Y(N)F01}). 
\sk

What makes considering the whole bunch of leaves $ F_{m,n}$ interesting for us
 is that 
the natural identifications \eqref{E:Y(N)Fmn} do depend on $(m,n)$ (even if $F_{m,n}$ coincides with 
$F_{0,1}$ as a subset of ${\mathscr M}_{1,2}^N$,  as it can happen). 
 Thus,  we will see that,  for any 
cusp $[s]=[s]_{0,1}$ of $Y(N)=F_{0,1}$, there is  a leaf $F_{m,n}$ such that 
$$[s]_{0,1}=\big[i\infty\big]_{m,n}.$$
 
  Since the  hyperbolic structures of $F_{0,1}$ and of $F_{m,n}$ coincide, this implies that 
\begin{quote}
{\it the study of the hyperbolic structure ${\rm hyp}_{N}^{\alpha_1}={\rm hyp}_{0,1}^{\alpha_1}$ of $Y(N)=F_{0,1}$ in the vicinity of its cusps amounts to   the study of the hyperbolic structures  
${\rm hyp}_{m,n}^{\alpha_1}$
of  the leaves  $F_{m,n}$,  only in the vicinity of the cusp $[i\infty]_{m,n}$.}
\end{quote}\mk

We want to make  the above considerations as explicit as possible. Let
$\mathfrak c$  be a cusp of $F_N$ distinct from $[i\infty]$. There exist $a',c'\in \mathbb Z$ with $c'\neq 0$ and ${\rm gcd}(a',c')=1$ such that 
$$
{\mathfrak c}=\big[-a'/c'\big]=\big[-a'/c'\big]_{0,1}.
$$

Since $a'$ and $c'$ are coprime, there exist $d'$ and $b'$ in $\mathbb Z$ such that $a'd'-b'c'=1$. 
Then  one considers the following element of ${\rm SL}_2(\mathbb Z)\rtimes \mathbb Z^2$: 
\begin{equation}
M_{a',c'}=\left(
\begin{bmatrix}
d' &      b'\\
c' &    a'
\end{bmatrix}\, , \, 
\Big(
-\lfloor a'/N\rfloor, -\lfloor c'/N\rfloor
\Big)
\right).
\end{equation}
It induces an automorphism of the intermediary moduli space ${\mathscr M}_{1,2}(N)$ (defined above in \eqref{E:M12N}), denoted somewhat abusively 
by  the same notation $M_{a',c'}$. This automorphism leaves the corresponding intermediary Veech's foliation invariant and is compatible with  the   hyperbolic structure on the leaves. \sk

Setting $a=a'-\lfloor a'/N\rfloor$ and $c=c'-\lfloor c'/N\rfloor$
, one verifies that 
$$
M_{a',c'} \bullet \left( 0, -\frac{1}{N}\right)=\left(\frac{c}{N},-\frac{a}{N}
\right)\, , 
$$
where $\bullet$ stands for the action \eqref{E:ActionOnHolonomy}.  Thus $M_{a',c'}$ induces an isomorphism $Y(N)=F_{0,1}\stackrel{\sim}{\rightarrow} F_{c,a}$ 
which extends to an isomorphism between the compactifications  $X(N)=X_{0,1}\stackrel{\sim}{\rightarrow}X_{c,a}$,  such that 
$$M_{a',c'}\Big( \big[-a'/c'\big]\Big)=\big[i\infty\big]_{c,a}\, .$$

Moreover, $M_{a',c'}$  induces an isomorphism between the $\mathbb C\mathbb H^1$-structures 
  ${\rm hyp}_{0,1}^{\alpha_1}$ and ${\rm hyp}_{c,a}^{\alpha_1}$.  In particular, one deduces the following result:
  
  \begin{prop} 
  \label{P:FN-vs-Fmn}  
  Let $a'$ and $c'$ be two coprime integers  and denote respectively by $a$ and $c$ 
  their residues modulo $N$ in $\{0,\ldots,N-1\}$. 
  \begin{enumerate}
  \item 
There is an isomorphism of pointed 
  curves carrying a $\mathbb C\mathbb H^1$-structure
  $$
  \Big( 
  Y(N), \big[-a'/c'\big]
  \Big)\simeq   \Big( 
  F_{c,a}, \big[i\infty\big]_{c,a}
  \Big)\,.
  $$
  \item  The two following assertions are equivalent: 
  \begin{itemize}\sk 
\item   
      ${\rm hyp}^{\alpha_1}_N$ extends as a 
    conifold $\mathbb C\mathbb H^1$-structure to   $X(N)$; 
    \sk 
        \item  for every $a,c\in \{0,\ldots,N-1\}$ with ${\rm gcd}(a,c,N)=1$,  ${\rm hyp}^{\alpha_1}_{c,a}$ extends as a conifold 
      $\mathbb C\mathbb H^1$-structure in the vicinity of the cusp $[i\infty]_{c,a}$ of $F_{c,a}$. 
      \end{itemize}\sk 
\item  When the two equivalent assertions of (2) are satisfied, the conifold angle $\vartheta(-a'/c')$ 
      of ${\rm hyp}^\alpha_N$ 
      at the cusp $[-a'/c']$ of $Y(N)$ is equal to the conifold angle $\vartheta_{c,a}(i\infty)$ of\;\,${\rm hyp}^{\alpha_1}_{c,a}$ at the cusp $[i\infty]$ of $F_{c,a}$.
      \end{enumerate}
  \end{prop}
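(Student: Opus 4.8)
The plan is to prove Proposition \ref{P:FN-vs-Fmn} by exhibiting a single group element $M_{a',c'}\in {\rm SL}_2(\mathbb Z)\rtimes \mathbb Z^2$ that realizes all three assertions simultaneously, exploiting that its action descends to an automorphism of the intermediary moduli space ${\mathscr M}_{1,2}(N)$ that preserves both Veech's foliation and the complex hyperbolic structures on its leaves. First I would carefully set up the element $M_{a',c'}$ and verify the arithmetic: given coprime $a',c'$ choose $b',d'$ with $a'd'-b'c'=1$, so that the matrix part lies in ${\rm SL}_2(\mathbb Z)$; then compute, using formula \eqref{E:ActionOnHolonomy} for the action $\bullet$ on $\mathbb R^2$, that $M_{a',c'}\bullet(0,-1/N)=(c/N,-a/N)$ where $a,c$ are the residues of $a',c'$ modulo $N$. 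This is a direct (if slightly fiddly) computation with the semidirect product structure described in \S\ref{S:TorelliNag}, keeping track of the floor-function translation terms $(-\lfloor a'/N\rfloor,-\lfloor c'/N\rfloor)$ which are exactly what is needed to land in the range $\{0,\ldots,N-1\}$.

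Next I would observe that $M_{a',c'}$ normalizes the subgroup $G(N)=\Gamma(N)\rtimes \mathbb Z^2$ (since $\Gamma(N)$ is normal in ${\rm SL}_2(\mathbb Z)$ and the translation lattice is ${\rm SL}_2(\mathbb Z)$-stable), hence descends to a well-defined biholomorphism of ${\mathscr M}_{1,2}(N)=\mathcal T\!\!\!{\it or}_{1,2}/G(N)$. By the invariance of Veech's foliation under ${\rm Sp}_{1,2}(\mathbb Z)$ (Proposition \ref{P:FalphaGeneralities}.(iv)) and the fact that the pure mapping class group preserves the complex hyperbolic structures on the leaves (recalled in \S\ref{SS:InAddition}), this automorphism carries the leaf $F_{0,1}=Y(N)$ isomorphically onto $F_{c,a}$, respecting the $\mathbb C\mathbb H^1$-structures; that is, it identifies ${\rm hyp}^{\alpha_1}_{0,1}$ with ${\rm hyp}^{\alpha_1}_{c,a}$. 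To pin down assertion (1) I would then check that $M_{a',c'}$ sends the cusp $[-a'/c']$ to $[i\infty]_{c,a}$: on the level of $\mathbb H$ the matrix $\big[\begin{smallmatrix} d' & b'\\ c' & a'\end{smallmatrix}\big]$ sends $-a'/c'$ to $i\infty$ (one computes $(d'\cdot(-a'/c')+b')/(c'\cdot(-a'/c')+a')$ and uses $a'd'-b'c'=1$ to see the denominator vanishes), and this is compatible with the privileged identifications \eqref{E:Y(N)F01} and \eqref{E:Y(N)Fmn} of $Y(N)$ with $F_{0,1}$ and $F_{c,a}$.

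Assertions (2) and (3) then follow almost formally. Since $M_{a',c'}$ is a biholomorphism of pointed curves carrying identified $\mathbb C\mathbb H^1$-structures, the germ of ${\rm hyp}^{\alpha_1}_N$ at the cusp $[-a'/c']$ is isomorphic to the germ of ${\rm hyp}^{\alpha_1}_{c,a}$ at $[i\infty]_{c,a}$; extendability as a conifold structure is a local analytic property preserved under such an isomorphism, giving the equivalence in (2), and when both sides extend the conifold angles must coincide, which is (3). The main obstacle I anticipate is not conceptual but bookkeeping: getting the translation parts and the residue reductions exactly right so that $M_{a',c'}\bullet(0,-1/N)=(c/N,-a/N)$ holds with $a,c$ genuinely in $\{0,\ldots,N-1\}$, and confirming that the two distinct privileged identifications \eqref{E:Y(N)F01} and \eqref{E:Y(N)Fmn}—which differ precisely because the parametrizations \eqref{E:HcalFmn} depend on $(m,n)$—are intertwined correctly by the descended automorphism so that cusps match as claimed. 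Everything else reduces to the invariance properties already established by Veech and recorded in Proposition \ref{P:FalphaGeneralities}.
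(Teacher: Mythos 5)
Your proposal is correct and follows essentially the same route as the paper: the authors also introduce the element $M_{a',c'}=\big(\big[\begin{smallmatrix} d' & b'\\ c' & a'\end{smallmatrix}\big],(-\lfloor a'/N\rfloor,-\lfloor c'/N\rfloor)\big)$, note that it descends to an automorphism of $\mathscr M_{1,2}(N)$ preserving Veech's foliation and the $\mathbb C\mathbb H^1$-structures on the leaves, compute $M_{a',c'}\bullet(0,-1/N)=(c/N,-a/N)$, and check that the induced isomorphism $X(N)\simeq X_{c,a}$ sends $[-a'/c']$ to $[i\infty]_{c,a}$, from which (2) and (3) follow formally. Your additional explicit verifications (that $M_{a',c'}$ normalizes $G(N)$, and the Möbius computation sending $-a'/c'$ to $i\infty$) are details the paper leaves implicit, so nothing is missing.
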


\subsection{\bf Mano's differential system for  algebraic leaves}
\label{SS:ManoDifferentialSystemAlgebraicLeaves}
We will now focus on the auxiliary algebraic  leaves  of Veech's foliation  considered in Section \ref{S:AuxiliaryLeaves}.   The arguments and results used below are taken from \cite{ManoWatanabe,Mano} (see also Appendix B). 
 
\subsubsection{}\hspace{-0.4cm} 
 We fix $m,n\in \{0,\ldots,N-1\}$ such that $(m,n)\neq 0$.  For any  $\tau\in \mathbb H$, one  sets: 
 $$
t=t_\tau= \frac{m}{N}\tau+\frac{n}{N}\, .
$$

Hence,  correspondingly, one has 
$$a_0= \frac{m}{N}\, \alpha_1\; \,,  \quad \qquad a_\infty=- \frac{n}{N}\, \alpha_1$$ 
and 
$$T(u)=T^\alpha_{}(u,\tau)=
 e^{2i\pi\frac{m}{N}\alpha_1  u }
 \left(\frac{\theta(u)}{\theta\big(u-t_\tau\big)}\right)^{\alpha_1}\, .
$$

 In order to to make the connection with the results in \cite{Mano}, we recall the following notations introduced there: 
 $$
 \theta_{m,n}(u)= \theta_{m,n}(u,  \tau)=
 e^{-i\pi \frac{m^2}{N^2} \tau-2i\pi\frac{m}{N}\big( u+\frac{n}{N} \big)}\theta\left(u+\frac{m}{N}\tau+\frac{n}{N}, \tau
\right).
 $$

 Then setting 
 $$
 T_{m,n}(u)
 = \left(    \frac{\theta(u)}{\theta_{m,n}(u)}
 \right)^{\alpha_1}\, , 
 $$ 
 one verifies that, when the   determinations of $T(u)$ and of $T_{m,n}(u)$ are fixed, then up to the change of variable $u\rightarrow -u$, these two  functions coincide up   to multiplication by a non-vanishing complex function of $\tau$.   
 This can be written a little abusively
\begin{equation}
\label{E:T(u)Tmn(u)}
 T(u)=\lambda(\tau) T_{m,n}(-u)
 \end{equation}
 where $\lambda$ stands for the aforementioned  holomorphic function which
 depends only on $\tau$ (and on the integers $m,n$ and $N$) but not  on $u$.
 
 \subsubsection{}\hspace{-0.4cm} Since it has values in a projective space,  the Veech map stays unchanged if all its components
 are multiplied by the same non-vanishing function of $\tau$.  
From \eqref{E:T(u)Tmn(u)} and in view of  the local expression 
 \eqref{E:VeechMapDMExplicit} for the Veech map in terms of
 elliptic hypergeometric integrals, 
  it follows that the holomorphic map \begin{align*}
V=V_{m,n}\, : \; \mathcal F_{m,n}^{\alpha_1} \simeq \mathbb H  
 & \longrightarrow \mathbb P^1
 \\
\tau & \longmapsto 
\big[ 
V_0(\tau)\, :\,  
V_\infty(\tau)
\big] 
\end{align*}
whose two components are given by (for $\tau\in \mathbb H$) 
\begin{align*}
\qquad 
V_0(\tau)= \int_{\boldsymbol{\gamma}_0} T_{m,n}( u)du 
\qquad \mbox{ and }
\qquad  
V_\infty(\tau)= \int_{\boldsymbol{\gamma}_\infty} T_{m,n}(u)du\, ,  \qquad
\end{align*}
is nothing else but another expression for the Veech map on $\mathcal F_{m,n}\simeq \mathbb H$.  
\mk 

We introduce two other holomorphic functions of $\tau\in \mathbb H$  defined by 
\begin{align*}
\qquad 
W_0(\tau)= \int_{\boldsymbol{\gamma}_0} T_{m,n}( u)\rho'(u)du 
\qquad \mbox{ and }
\qquad  
W_\infty(\tau)= \int_{\boldsymbol{\gamma}_\infty} T_{m,n}(u)\rho'(u)du\,
\end{align*}
(we recall that $\rho$ denotes the logarithmic derivative of $\theta$ w.r.t.\,$u$, {\it cf.}\;\S\ref{SS:NotationsTheta}).\sk

The two maps  $ \tau\mapsto \boldsymbol{\gamma}_\bullet$  for $\bullet=0,\infty$ 
form a basis of the space of local sections   of the  local system over $\mathcal F_{m,n}$  whose fibers are the  twisted 
 homology 
groups $H_1(E_{\tau,t},  L_{\tau,t})$'s  (see B.3 in Appendix B). 
Then it follows  from \cite{ManoWatanabe,Mano} (see also B.3.5 below) that the functions $V_0,V_\infty, W_0$ and $W_\infty$ satisfy the following differential system  
\begin{align*}
\frac{d}{d\tau}
\begin{bmatrix}
V_0 &   V_\infty \\
 W_0 & W_\infty 
\end{bmatrix}=
M_{m,n} 
\begin{bmatrix}
V_0 & V_\infty  \\
W_0 & W_\infty 
\end{bmatrix}
\end{align*}
on $\mathbb H\simeq \mathcal F_{m,n}$, with 
\begin{align}
\label{E:MatrixABCDmn}
M_{m,n}=
\begin{bmatrix}
A_{m,n} &  B_{m,n}\\
C_{m,n} & D_{m,n}
\end{bmatrix}=
\begin{bmatrix}
\alpha_1 \left( \frac{\stackrel{\bullet}{\theta}_{m,n}}{\theta_{m,n}}
-\frac{\stackrel{\bullet}{\theta}{}'}{\theta'}
\right)
& \frac{\alpha_1-1}{2i\pi} \\
2i\pi\alpha_1 
\left(
\frac{\stackrel{\bullet\bullet}{\theta}_{m,n}}{\theta_{m,n}}
- \Big(\frac{\stackrel{\bullet}{\theta}_{m,n}}{\theta_{m,n}}\Big)^2
-
 \frac{\stackrel{\bullet\bullet}{\theta}{}'}{\theta'}
+ \Big(\frac{\stackrel{\bullet}{\theta}{}'}{\theta'}\Big)^2
\right)
&-\alpha_1 \left( \frac{\stackrel{\bullet}{\theta}_{m,n}}{\theta_{m,n}}
-\frac{\stackrel{\bullet}{\theta}{}'}{\theta'}
\right)
\end{bmatrix}\, .
\end{align}

The trace of this matrix vanishes and the upper-left coefficient $B_{m,n}$ is constant.
Consequently, it follows from a classical result of the theory of linear differential equations 
({\it cf.}\;Lemma 6.1.1 of \cite[\S3.6.1]{Gauss2Painlev} for instance or Lemma A.2.2 in Appendix A) that $V_0$ and $V_{\infty}$ form a basis 
of the space of solutions of 
the  associated second order linear differential equation

$$
{V}^{\bullet\bullet}+ \Big( \det\big(M_{m,n}\big)-A_{m,n}^{\bullet}\Big) \, V=0
$$
where  the superscript ${}^{\bullet}$ indicates the derivative with respect to the variable $\tau$. 
\sk 

Since 
\begin{align*}
A_{m,n}^{\bullet}= & \, \alpha_1\left[
 \frac{\stackrel{\bullet\bullet}{\theta}_{m,n}}{\theta_{m,n}}
 - \left(\frac{\stackrel{\bullet}{\theta}_{m,n}}{\theta_{m,n}}\right)^2
-\frac{\stackrel{\bullet\bullet}{\theta}{}'}{\theta'}
+\left(\frac{\stackrel{\bullet}{\theta}{}'}{\theta'}\right)^2
\right]
\end{align*} and 
\begin{align*}
\det\big(M_{m,n}\big)= & \, 
-(\alpha_1)^2\left[ \frac{\stackrel{\bullet}{\theta}_{m,n}}{\theta_{m,n}}
-\frac{\stackrel{\bullet}{\theta}{}'}{\theta'}
\right]^2-\alpha_1(\alpha_1-1)\left[
 \frac{\stackrel{\bullet\bullet}{\theta}_{m,n}}{\theta_{m,n}}
 - \left(\frac{\stackrel{\bullet}{\theta}_{m,n}}{\theta_{m,n}}\right)^2
-\frac{\stackrel{\bullet\bullet}{\theta}{}'}{\theta'}
+\left(\frac{\stackrel{\bullet}{\theta}{}'}{\theta'}\right)^2
\right]\, , 
\end{align*}
this differential equation can be written more explicitly 
\begin{equation}
\label{E:Vtautau}
{V}^{\bullet\bullet}-
(\alpha_1)^2\left[
\left( \frac{\stackrel{\bullet}{\theta}_{m,n}}{\theta_{m,n}}
-\frac{\stackrel{\bullet}{\theta}{}'}{\theta'}
\right)^2+
\frac{\stackrel{\bullet\bullet}{\theta}_{m,n}}{\theta_{m,n}}
- \Bigg(\frac{\stackrel{\bullet}{\theta}_{m,n}}{\theta_{m,n}}\Bigg)^2
-
 \frac{\stackrel{\bullet\bullet}{\theta}{}'}{\theta'}
+ \Bigg(\frac{\stackrel{\bullet}{\theta}{}'}{\theta'}\Bigg)^2
\right]\cdot  V=0\, . 
\end{equation}

\subsubsection{}\hspace{-0.4cm} By a direct computation (left to the reader), one  verifies that the matrix \eqref{E:MatrixABCDmn}
 and consequently the coefficients of the preceding second order differential equation are invariant by 
 the translation $\tau\mapsto \tau+N$.   It follows that the restriction of \eqref{E:Vtautau} to 
 the vertical band of width $N$
 $$H_N=\Big\{ \,  \tau \in \mathbb H\; \big \lvert   \; 0\leq {\rm Re}(\tau)< N \;  \Big\}$$  can be pushed-forward onto a differential equation of the same type on a punctured open  neighborhood $U^*$ of the cusp $[i\infty]$ in  $Y(N)$.  
 
 Let $x$ be  the local holomorphic coordinate on $Y(N)$ centered at $[i\infty]$ and  related to the variable $\tau$ through the formula 
 $$x=\exp\big(2i\pi \tau/N\big).$$
 
      Then  $v(x)=V(\tau(x))$ satisfies a second order differential equation 
 \begin{equation}
 \label{E:v(x)}
 v''(x)+P_{m,n}(x)\cdot v'(x)+Q_{m,n}(x)\cdot v(x)=0
 \end{equation}
 whose coefficients $P_{m,n}$ and $Q_{m,n}$ are holomorphic on $(\mathbb C^*,0)$. 
\mk 

In \cite{Mano}, Mano establishes the following limits when $\tau\in H_N$ tends to $i\infty$: 
\begin{align*}
& \frac{\stackrel{\bullet}{\theta}{}'}{\theta'}  \longrightarrow \frac{i\pi}{4}\; 
&&   \frac{\stackrel{\bullet\bullet}{\theta}{}'}{\theta'}-\left(
 \frac{\stackrel{\bullet}{\theta}{}'}{\theta'}
\right) \longrightarrow 0 \;  \\
 &\frac{\stackrel{\bullet}{\theta}_{m,n}}{\theta_{m,n}} 
 \longrightarrow  
 i\pi\left(  \frac{m}{N}-\frac{1}{2}   \right)^2\; 
  && \frac{\stackrel{\bullet\bullet}{\theta}_{m,n}}{\theta_{m,n}}
- \Bigg(\frac{\stackrel{\bullet}{\theta}_{m,n}}{\theta_{m,n}}\Bigg)^2
\longrightarrow  0\, . 
\end{align*}

It follows that the coefficient of $V$ in  \eqref{E:Vtautau} tends to 
$$- \left[ \alpha_1 \cdot  i\pi\left(  \frac{m^2}{N^2}-\frac{m}{N}
   \right)  \right]^2$$  as $\tau$ goes to $ i\infty$  in $H_N$.  This implies that 
   the functions $P_{m,n}$ and $Q_{m,n}$ in \eqref{E:v(x)} actually extend meromorphically through the origin.  \sk 
   
   Since for   $x\in \mathbb C\setminus [0, +\infty[$ sufficiently close to the origin, one has 
   \begin{align*}
v'(x)= & \,
 V^\bullet\big(\tau(x)\big)\cdot \left(   \frac{N}{2i\pi x}\right)\\
\mbox{and} \quad v''(x)= & \,
  V^{\bullet\bullet}\big(\tau(x)\big)\cdot \left(   \frac{N}{2i\pi x}\right)^2-
V^\bullet\big(\tau(x)\big)\left(   \frac{N}{2i\pi x^2}\right)\, , 
\end{align*}
one gets that the asymptotic expansion of \eqref{E:v(x)} at the origin is written 
\begin{align*}
v''(x) +\frac{1}{x}\cdot v'(x)
+\left( -
\left[  \frac{m(m-N)}{2N} \alpha_1  \right]^2\cdot \frac{1}{x^2}   +O\bigg( \frac{1}{x} \bigg) \right)\cdot v(x)=0\, .
\end{align*}

In particular, this shows that the origin is a regular singular point for \eqref{E:v(x)} and    the associated characteristic (or indicial) equation is 
$$s(s-1)+s-\left[  \frac{m(m-N)}{2N} \alpha_1  \right]^2
= s^2-\left[  \frac{m(m-N)}{2N} \alpha_1  \right]^2=0
\, .$$ 

Thus the two associated characteristic exponents are 
$$
s_+= \frac{m(N-m)}{2N} \alpha_1
\qquad \mbox{ and }
\qquad 
 s_{-}=   \frac{m(m-N)}{2N} \alpha_1=-s_+\, , $$
hence  the corresponding index is 
$$
\nu=\nu_{m,n}^N=s_+-s_-=2s_+=\frac{m(N-m)}{N} \alpha_1\, . 
$$

We now have everything in hand to get the result we were looking for. 

\begin{prop} 
\label{P:anglez2=}
Veech's $\mathbb C\mathbb H^1$-structure on   $F_{m,n}$ extends to a conifold 
complex hyperbolic structure at the cusp $[i\infty]_{m,n}$. The associated conifold angle is 
$$
\vartheta_N\big([i\infty]_{m,n}\big)=2\pi m \Big(     1-\frac{m}{N} \Big) \alpha_1\, .
$$

In particular, $[i\infty]_{m,n}$ is a cusp for Veech hyperbolic structure on $F_{m,n}$ (that is, the associated conifold angle is equal to $0$) if and only 
if $m=0$. 
\end{prop}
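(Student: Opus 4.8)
The plan is to extract, from the local analysis of Mano's differential equation already carried out in the text, the precise statement about the conifold angle. Almost all the work has been done: the preceding paragraphs establish that \eqref{E:v(x)} is a second order ODE with a regular singular point at $x=0$ (where $x=\exp(2i\pi\tau/N)$ is the local coordinate at $[i\infty]_{m,n}$), and they compute the two characteristic exponents $s_\pm=\pm \tfrac{m(N-m)}{2N}\alpha_1$ together with the index $\nu=s_+-s_-=\tfrac{m(N-m)}{N}\alpha_1$. The remaining step is to translate this indicial data into a statement about the $\mathbb C\mathbb H^1$-conifold structure, invoking the general correspondence between Fuchsian second order equations and conifold/cone angles recalled in Appendix A.

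First I would recall that, by Proposition \ref{P:InH} (or more directly by the discussion of \S\ref{S:NormalizationOfVeech'sMap-n=2} and Proposition \ref{P:VeechFormExplicit}), the Veech map $V_{m,n}=[V_0:V_\infty]$ is a developing map for Veech's $\mathbb C\mathbb H^1$-structure on $F_{m,n}\simeq \mathbb H$, and its two components $V_0,V_\infty$ form a basis of solutions of the Fuchsian equation \eqref{E:v(x)} near $x=0$. The key general fact, which I would cite from Appendix A, is that the local model of a $\mathbb C\mathbb H^1$-structure near a regular singular point of its developing Schwarzian equation is a cone point whose cone angle equals $2\pi$ times the absolute value of the index $\nu$ (the difference of the characteristic exponents), provided the two exponents do not differ by a nonzero integer giving a logarithmic/parabolic degeneration; when $\nu=0$ one gets a genuine cusp, i.e. a conifold point of angle $0$ by the convention fixed just before the statement.

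Then I would simply compute $\vartheta_N([i\infty]_{m,n})=2\pi\,\lvert\nu\rvert=2\pi\, \tfrac{m(N-m)}{N}\alpha_1=2\pi m\bigl(1-\tfrac{m}{N}\bigr)\alpha_1$, using $0\le m\le N-1$ and $\alpha_1\in\,]0,1[$ to see that $\nu\ge 0$ so the absolute value is harmless. The vanishing criterion is immediate: since $\alpha_1\neq 0$ and $0\le m<N$, the product $m(N-m)$ vanishes precisely when $m=0$, which gives exactly the last assertion of the proposition. I would also remark that the case $m=0$ is consistent with the earlier identification $F_{0,1}=Y(N)$ carrying the honest modular cusp at $[i\infty]$.

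The main obstacle, and the point I would be most careful about, is justifying that the conifold angle is genuinely read off as $2\pi|\nu|$ rather than being spoiled by a resonance. Here $s_+-s_-=\nu=\tfrac{m(N-m)}{N}\alpha_1$, and for general irrational $\alpha_1$ this is non-integral so no logarithmic terms arise and the local structure is a standard cone; but one must check that even when $\nu$ happens to be a positive integer the structure is still a cone of angle $2\pi\nu$ (no logarithmic obstruction), which follows because the monodromy of \eqref{E:v(x)} around $x=0$ is, by the explicit form of $T_{m,n}$ and Lemma \ref{L:rhoexplicit}, conjugate to a rotation $\exp(2i\pi\nu)$ acting on the target $\mathbb P^1$, hence elliptic (or parabolic/identity when $\nu\in\mathbb Z$) and never a nontrivial unipotent. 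Thus the local holonomy is the rotation by angle $2\pi\nu$, which is exactly what characterizes a $\mathbb C\mathbb H^1$-cone point of that angle, and the value of $\vartheta_N([i\infty]_{m,n})$ is confirmed. I would phrase this monodromy check as the one genuinely new verification, everything else being assembly of results already proved above.
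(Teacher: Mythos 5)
Your overall route is the paper's route: the local analysis of \eqref{E:v(x)} at $x=0$ gives the exponents $s_\pm=\pm\frac{m(N-m)}{2N}\alpha_1$, and one then invokes the correspondence of Appendix A between Fuchsian second-order equations and $\mathbb C\mathbb H^1$-conifold points to read off the angle $2\pi\nu=2\pi m(1-\frac{m}{N})\alpha_1$; the paper's own proof is literally ``immediate consequence of Proposition A.2.3''. The problem is the step you single out as ``the one genuinely new verification'': your exclusion of the logarithmic degeneration when $\nu\in\mathbb N^*$ does not work as written, and this case is not vacuous (e.g.\ $N=5$, $m=2$, $\alpha_1=5/6$ gives $\nu=1$, and the proposition is claimed for \emph{all} $\alpha_1\in\,]0,1[$). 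You justify the claim that the local monodromy of \eqref{E:v(x)} around $x=0$ is ``conjugate to a rotation $\exp(2i\pi\nu)$'' by Lemma \ref{L:rhoexplicit}, but that lemma computes the multiplicative monodromy of $T(\cdot,\tau,z)$ in the variable $u$ on the punctured elliptic curve, i.e.\ the linear holonomy of the flat metric; it says nothing about the monodromy of $V_0,V_\infty$ as functions of $\tau$ under the deck transformation $\tau\mapsto\tau+N$ at the cusp, which is a connection matrix (a product of matrices of the types \eqref{E:Tr} and \eqref{E:HTrans2rho22}, cf.\ \S\ref{SS:ExplicitHolonomyyy}). Worse, the assertion ``never a nontrivial unipotent'' is simply false for these monodromies: when $m=0$ the local monodromy at $[i\infty]$ \emph{is} a nontrivial parabolic (this is exactly the computation $\Lambda(T)=\bigl[\begin{smallmatrix}1&1\\0&1\end{smallmatrix}\bigr]$ in \S\ref{Parag:2ExplicitComputations}), which is why that point is a cusp. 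Since parabolics do occur in the holonomy group inside ${\rm PU}(1,1)$, ``elliptic versus unipotent'' cannot be decided by such general considerations, and your own parenthetical ``(or parabolic/identity when $\nu\in\mathbb Z$)'' concedes the point — thereby undoing the exclusion you need.

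The correct way to close this gap — and what the paper does — is to quote Proposition A.2.3 in full: its proof handles the resonant case not by ruling out parabolic monodromy, but via Lemma A.2.3, which shows that a multivalued map of logarithmic normal form $g(y^{-n}+\log y)$ can never be the developing map of \emph{any} $\mathbb C\mathbb H^1$-structure (its parabolic monodromy fixes $g(0)$, which would be an interior point of the image of a punctured neighborhood, whereas a parabolic automorphism of a disk model fixes only boundary points). Since Veech's structure on $F_{m,n}$ is a $\mathbb C\mathbb H^1$-structure whose Schwarzian equation at the cusp is Fuchsian of index $\nu$, Proposition A.2.3 and its proof directly give the conifold extension with angle $2\pi\nu$, including in the resonant case. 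So your proof becomes correct (and coincides with the paper's) if you delete the monodromy ``verification'' and cite Proposition A.2.3 instead; as it stands, that step is a genuine gap.
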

\begin{proof} In view of  the results and computations above, this is an immediate  
consequence of  Proposition A.2.3.\,of Appendix A.
\end{proof}\sk

Combining the preceding result with Proposition \ref{P:FN-vs-Fmn}, one gets the  
\begin{coro} 
\label{C:AnglesF(N)}
Veech's  hyperbolic structure ${\rm hyp}_N^{\alpha_1}$ on   $Y(N)^{\alpha_1}=F_N=F_{0,1}$ extends to a\;$\mathbb C\mathbb H^1$-conifold 
structure  
on the modular compactification $X(N)^{\alpha_1}$. \sk 

For any coprime $a',c'\in \mathbb Z$, the conifold angle at  $\mathfrak{c}=[-a'/c']\in C(N)$ is equal to 
$$
\vartheta_N\big({\mathfrak{c}}\big)=\vartheta_N\big(c\big)=2\pi c \Big(     1-\frac{c}{N} \Big) \alpha_1
$$
where $c$ stands for the residue of $c'$ modulo $N$: 
$
c=c'-\left\lfloor \frac{c'}{N}\right\rfloor\in \big\{0,\ldots,N-1\big\}
$.
\end{coro}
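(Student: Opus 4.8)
The plan is to deduce Corollary \ref{C:AnglesF(N)} from the two main ingredients already established: the local analysis of the auxiliary leaves in Proposition \ref{P:anglez2=}, and the transfer principle of Proposition \ref{P:FN-vs-Fmn}. The statement has two parts: first, that Veech's structure ${\rm hyp}_N^{\alpha_1}$ on $Y(N)^{\alpha_1}=F_{0,1}$ extends to a $\mathbb C\mathbb H^1$-conifold structure on the whole compactification $X(N)^{\alpha_1}$; second, the explicit closed formula $\vartheta_N(\mathfrak c)=2\pi c(1-c/N)\alpha_1$ for the conifold angle at each cusp.

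First I would reduce the extension question at an arbitrary cusp $\mathfrak c\in C(N)$ to the already-solved situation at the cusp $[i\infty]$ of an auxiliary leaf. Given $\mathfrak c=[-a'/c']$ with $\gcd(a',c')=1$, Proposition \ref{P:FN-vs-Fmn}(1) produces an isomorphism of pointed curves carrying a $\mathbb C\mathbb H^1$-structure
$$\big(Y(N),[-a'/c']\big)\simeq \big(F_{c,a},[i\infty]_{c,a}\big),$$
where $a,c$ are the residues of $a',c'$ modulo $N$. Since the isomorphism $M_{a',c'}$ identifies ${\rm hyp}^{\alpha_1}_{0,1}$ with ${\rm hyp}^{\alpha_1}_{c,a}$ near the relevant cusps, the extension of ${\rm hyp}_N^{\alpha_1}$ across $\mathfrak c$ is equivalent to the extension of ${\rm hyp}^{\alpha_1}_{c,a}$ across $[i\infty]_{c,a}$, by part (2) of that proposition. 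The latter is precisely what Proposition \ref{P:anglez2=} provides: Veech's $\mathbb C\mathbb H^1$-structure on $F_{m,n}$ always extends to a conifold structure at $[i\infty]_{m,n}$. Running this over every cusp $\mathfrak c\in C(N)$ — of which there are finitely many — yields the extension of ${\rm hyp}_N^{\alpha_1}$ to all of $X(N)^{\alpha_1}$, proving the first assertion.

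For the angle formula I would simply transport the value computed in Proposition \ref{P:anglez2=}. By Proposition \ref{P:FN-vs-Fmn}(3), the conifold angle $\vartheta_N(\mathfrak c)=\vartheta_N(-a'/c')$ at the cusp $[-a'/c']$ of $Y(N)$ equals the conifold angle $\vartheta_{c,a}(i\infty)$ of ${\rm hyp}^{\alpha_1}_{c,a}$ at $[i\infty]$ of $F_{c,a}$. Applying Proposition \ref{P:anglez2=} with $(m,n)=(c,a)$ gives directly
$$\vartheta_{c,a}\big([i\infty]_{c,a}\big)=2\pi\, c\Big(1-\frac{c}{N}\Big)\alpha_1,$$
so that $\vartheta_N(\mathfrak c)=\vartheta_N(c)=2\pi c(1-c/N)\alpha_1$, which depends only on the residue $c$ of $c'$ modulo $N$, exactly as claimed. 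The cusp $[i\infty]=[0,1]$ itself corresponds to $c=0$, giving angle $0$, consistently with the last sentence of Proposition \ref{P:anglez2=}.

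The only genuine subtlety — and the step I expect to require the most care — is bookkeeping the correspondence between a cusp of $Y(N)$ written as $[-a'/c']$ and the auxiliary leaf $F_{c,a}$ to which it is matched, i.e.\ verifying that the residues $(c,a)$ produced by $M_{a',c'}\bullet(0,-1/N)=(c/N,-a/N)$ are the correct parameters and that every cusp of $X(N)$ is reached this way. This is essentially the content already packaged into Proposition \ref{P:FN-vs-Fmn}, so no new computation is needed; I would only need to check that the indexing is coherent and that the case distinctions (such as $c'=0$, giving the cusp $[i\infty]$ handled separately) are accounted for. Everything else is a direct citation of the two preceding propositions, so the corollary follows immediately once these identifications are in place.
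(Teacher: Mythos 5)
Your proposal is correct and follows essentially the same route as the paper, which derives the corollary by exactly this combination: Proposition \ref{P:FN-vs-Fmn} transfers the question at an arbitrary cusp $[-a'/c']$ of $Y(N)$ to the cusp $[i\infty]_{c,a}$ of the auxiliary leaf $F_{c,a}$, and Proposition \ref{P:anglez2=} with $(m,n)=(c,a)$ supplies both the conifold extension and the angle $2\pi c(1-c/N)\alpha_1$. The bookkeeping you flag (that $M_{a',c'}\bullet(0,-1/N)=(c/N,-a/N)$ matches each cusp to the right auxiliary leaf) is indeed the content already packaged in Proposition \ref{P:FN-vs-Fmn}, so no further argument is needed.
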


\subsubsection{}\hspace{-0.4cm} 
\label{SS:MetricCompletionOfY1(N)}
We remind the reader of the following classical description of the cusps of $F_{0,1}=Y(N)$ ({\it cf.}\,\cite[\S3.8]{DS} for instance): the  set of cusps $C(N)$ of $Y(N)$ is in bijection with the set of $N$-order points of the additive group $\big(\mathbb Z/N\mathbb Z\big)^2$, up to multiplication by $-1$, the bijection being given by 
\begin{align*}
\big(\mathbb Z/N\mathbb Z\big)^2[N]_{/\pm} & \longrightarrow C(N)\\ 
\pm \big( a ,  c
\big) & \longmapsto \big[  -a'/c'
\big]
\end{align*}
where $a'$ and $c'$ are pairwise prime and  congruent to $a$ and $c$ modulo $N$ respectively.  From the preceding corollary, it comes that 
the conifold angle associated to  the cusp corresponding to 
$\pm(a,c)$  with $c\in \{0,\ldots,N-1\}$ is 
$$
\vartheta_N\big(\pm(a,c)\big)=\vartheta_N\big(c\big)=
2\pi \frac{c(N-c)}{N}
\alpha_1\, . 
$$

Note that since $-(a,c)=(N-a,N-c)$ in $\big(\mathbb Z/N\mathbb Z)^2$ and because $\vartheta_N(c)=\vartheta_N(N-c)$, this formula makes sense.
\mk 

Using the preceding corollary, it is then easy to  describe the metric completion $\overline{Y(N)}$ of  $Y(N)= F_{0,1}$, the latter being endowed with Veech's $\mathbb C\mathbb H^1$-structure. 
From the preceding results, it comes that this metric completion 
 is the union of the  intermediary leaf $Y(N)= F_{0,1}$ with the subset of its cusps 
of the form $[-a'/c']$ with $c'$ not a multiple of $N$.  Such cusps correspond to classes $\pm  (a,c)\subset \big(\mathbb Z/N\mathbb Z\big)^2[N]$ with $c\in \{1,\ldots,N-1\}$.  
The cusps $[-a'/c']$ with $c'\in N\mathbb Z$ are cusps in the classical sense for  the $\mathbb C\mathbb H^1$-conifold $\overline{Y(N)}$. The number of these genuine  cusps is then equal to $\phi(N)$\footnote{Here $\phi$ stands for 
Euler's totient function.}. 
\sk 

At this point, it is easy to give an explicit description of the metric completion of the  leaf $Y_1(N)^{\alpha_1}=\mathscr F_N^{\alpha_1}$ of Veech's foliation on $\mathscr M_{1,2}$: 
\begin{coro} 
\label{C:AnglesX1(N)alpha}
Veech's $\mathbb C\mathbb H^1$-structure on   $Y_1(N)^{\alpha_1}$ extends to a conifold 
complex hyperbolic structure on the modular compactification $X_1(N)^{\alpha_1}$.\sk 
 
For any coprime $a',c'\in \mathbb Z$, the conifold angle at  $\mathfrak{c}=[-a'/c']\in C_1(N)$ is equal~to 
\begin{equation}
\label{E:ConifoldAnglesY1(N)}
\theta_N\big({\mathfrak{c}}\big)=\theta_N\big(c\big)=2\pi  
 \frac{c(N-c)}{N  \, {\rm gcd}(c,N)}  
   \alpha_1
\end{equation}
where $c\in \big\{0,\ldots,N-1\big\}$ stands for the residue of $c'$ modulo $N$.
\end{coro}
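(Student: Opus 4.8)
The goal is Corollary \ref{C:AnglesX1(N)alpha}, which transfers the conifold description obtained for $Y(N)^{\alpha_1}=F_N$ in Corollary \ref{C:AnglesF(N)} down to the quotient curve $Y_1(N)^{\alpha_1}=\mathscr F_N^{\alpha_1}$. The plan is to exploit the ramified covering \eqref{E:X(N)->X1(N)}, namely $X(N)\to X_1(N)$, together with the general principle on how a $\mathbb C\mathbb H^1$-conifold structure behaves under a ramified cover. Everything needed is already assembled in \S\ref{SS:Y(N)-and-Y1(N)}: the relation \eqref{E:RelationAnglesX(N)-X1(N)} between conifold angles upstairs and downstairs, and Lemma \ref{L:WidthCuspX1(N)} giving the width of a cusp in closed form.

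First I would invoke Corollary \ref{C:AnglesF(N)} to assert that ${\rm hyp}_N^{\alpha_1}$ extends as a $\mathbb C\mathbb H^1$-conifold structure on $X(N)^{\alpha_1}$, with conifold angle at $\mathfrak c=[-a'/c']\in C(N)$ equal to $\vartheta_N(c)=2\pi c(1-c/N)\alpha_1$, where $c$ is the residue of $c'$ modulo $N$. Next, since the covering $X(N)\to X_1(N)$ is locally modelled near a cusp $\mathfrak c$ by $z\mapsto z^{N/w_{\mathfrak c}}$ (as recalled in \S\ref{SS:Y(N)-and-Y1(N)}), and since the conifold structure ${\rm hyp}_{1,N}^{\alpha_1}$ on $X_1(N)$ pulls back to ${\rm hyp}_N^{\alpha_1}$ on $X(N)$, the extension property passes up and down the cover: ${\rm hyp}_{1,N}^{\alpha_1}$ extends as a conifold structure at $\mathfrak c$ viewed as a cusp of $X_1(N)$ if and only if ${\rm hyp}_N^{\alpha_1}$ does so at $\mathfrak c$ in $X(N)$, which we have just established. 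This gives the first assertion of the corollary.

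Then I would compute the conifold angle downstairs by combining \eqref{E:RelationAnglesX(N)-X1(N)}, which reads $\theta_N(\mathfrak c)=(w_{\mathfrak c}/N)\,\vartheta_N(\mathfrak c)$, with the explicit width from Lemma \ref{L:WidthCuspX1(N)}, namely $w_{\mathfrak c}=N/{\rm gcd}(c',N)$. Observing that ${\rm gcd}(c',N)={\rm gcd}(c,N)$ since $c$ is the residue of $c'$ modulo $N$, the substitution yields
$$
\theta_N(\mathfrak c)=\frac{w_{\mathfrak c}}{N}\,\vartheta_N(c)=\frac{1}{{\rm gcd}(c,N)}\cdot 2\pi \frac{c(N-c)}{N}\alpha_1=2\pi\frac{c(N-c)}{N\,{\rm gcd}(c,N)}\alpha_1,
$$
which is exactly \eqref{E:ConifoldAnglesY1(N)}. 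I should take care with the degenerate and symmetric cases: when $c=0$ the angle vanishes and the cusp remains a genuine cusp (conifold angle $0$), and the formula is consistent under $c\mapsto N-c$ since $\vartheta_N(c)=\vartheta_N(N-c)$ and ${\rm gcd}(c,N)={\rm gcd}(N-c,N)$, matching the identification $\pm(a,c)$ of cusps recalled in \S\ref{SS:MetricCompletionOfY1(N)}.

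The main obstacle, such as it is, is not computational but conceptual: one must justify rigorously that extension of a conifold $\mathbb C\mathbb H^1$-structure is compatible with a ramified cover of the stated local form $z\mapsto z^{N/w_{\mathfrak c}}$, and that the angle multiplies by exactly the ramification factor $w_{\mathfrak c}/N$. This is precisely the content encapsulated in \eqref{E:RelationAnglesX(N)-X1(N)}, whose underlying justification rests on the elementary local analysis of Fuchsian equations and conifold angles carried out in Appendix A; so the real work has already been done there and in Corollary \ref{C:AnglesF(N)}. Consequently the proof of the present corollary reduces to the bookkeeping of widths and residues described above, and I would keep it short, referring back to Lemma \ref{L:WidthCuspX1(N)} and Corollary \ref{C:AnglesF(N)} for the two inputs.
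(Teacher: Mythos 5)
Your proposal is correct and follows exactly the paper's own argument: the paper proves this corollary in one line by combining \eqref{E:RelationAnglesX(N)-X1(N)}, Lemma \ref{L:WidthCuspX1(N)} and Corollary \ref{C:AnglesF(N)}, which are precisely the three inputs you assemble. The extra care you take over the transfer of the extension property across the cover and the $c\mapsto N-c$ consistency check is sound but not needed beyond what \S\ref{SS:Y(N)-and-Y1(N)} already establishes.
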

\begin{proof}
This follows at once from \eqref{E:RelationAnglesX(N)-X1(N)}, Lemma \ref{L:WidthCuspX1(N)} and Corollary 
\ref{C:AnglesF(N)}.
\end{proof}
\mk 

To conclude this section, we would like to add two remarks. \sk

First, it is to be understood that the preceding corollary completely characterizes  
Veech's  complex hyperbolic structure of the leaf $Y_1(N)^{\alpha_1}=\mathscr F_N^{\alpha_1}$ since the latter  is completely determined by 
the conformal structure of $\mathscr F_N^{\alpha_1}$ (which is the one of $Y_1(N)$) together with 
the cone angles at the conifold points,  as it follows from a classical result ({\it cf.}\;Picard-Heins' theorem  in Appendix A).  

Since the conifold angles 
\eqref{E:ConifoldAnglesY1(N)}  depend linearly on $\alpha_1$, the family of $Y_1(N)^{\alpha_1}$'s for $\alpha_1\in [0,1[$ is a real-analytic deformation of the usual modular curve $Y_1(N)$, if one sees it as $Y_1(N)^0$ (as it is natural to do so).
\mk

Finally, note that the preceding results refine and generalize 
 (the specialization to the case when $g=1$ and $n=2$ of) the main result of \cite{GP}. In this article, we prove that when $\alpha$ is rational,  Veech's hyperbolic structure of an algebraic leaf of Veech's foliation extends as a conifold structure of the same type to the metric completion of this leaf. 
 Not only our results above  give a more precise and explicit version of this result 
  in the case under scrutiny but they also show that the same statements  
 hold true even without assuming that $\alpha$ is rational.  This assumption appears  to be crucial  to make effective  the geometric methods  la Thurston  used in \cite{GP}.


\subsection{\bf Some explicit examples}
\label{S:ExplicitExamples}
To illustrate the results obtained above, we first treat explicitly  the cases of $Y_1(N)$ for $N=2,3,4$. 
 These are related to `classical hypergeometry' and this will be investigated further  in Section \ref{S:LinksWithClassicalHypergeometry}. Then we consider the case of $Y_1(5)$ then eventually  the one of 
 $Y_1(p)$ for $p$ an arbitrary prime number bigger than or equal to 5. 
\sk 

In what follows, the parameter $\alpha_1\in ]0,1[$ is fixed once and for all. A useful reference for  the elementary results on modular curves used below is \cite{DS}, especially sections \S3.7 and \S3.8 therein. 

\subsubsection{\bf The case of $\boldsymbol{Y_1(2)}$}
\label{S:CaseOfY1(2)}
 The congruence subgroup $\Gamma_1(2)$  has two cusps (namely $[i\infty]$ and $[0]$)  and one elliptic point of order 2. Consequently, $Y_1(2)^{\alpha_1}
$ is a genuine  orbi-leaf of Veech (orbi-)foliation on ${\mathscr M}_{1,2}$.  It is the Riemann sphere  punctured at two points, say $0$ and $\infty$, with one orbifold point of weight 2, say at 1.   The corresponding conifold angles of the associated Veech's $\mathbb C\mathbb H^1$-structure are given in Table \ref{Ta:Y1(2)angles} below in which the cusps are seen as points of $X_1(2)=\mathbb P^1$: 
\begin{table}[h!]
\begin{tabular}{|c|c|c|c|}
\hline 
Cusps of $Y_1(2)^{\alpha_1}$  & 
 0  & 1 
&  $\infty$ 
  \\ \hline
Conifold angles  &  $\pi\alpha_1$  &  $\pi$  &  0  \\
\hline 
\end{tabular}\bk
\caption{The cusps and the associated conifold angles of $Y_1(2)^{\alpha_1}$.}
\label{Ta:Y1(2)angles}
\vspace{-0.5cm}
\end{table}
\subsubsection{\bf The case of $\boldsymbol{Y_1(3)}$}  This case is very similar to the preceding one:  $\Gamma_1(3)$  has two cusps   and one elliptic point, but of order 3. Hence $Y_1(3)^{\alpha_1}
$ is an orbi-leaf of Veech's (orbi-)foliation.  It is $\mathbb P^1$  punctured at three points, say $0$, $1$ and $\infty$, with one orbifold point of weight 3, say at 1.   The corresponding conifold angles are given in the table below.
\begin{table}[h!]
\begin{tabular}{|c|c|c|c|}
\hline 
Cusps of $Y_1(3)^{\alpha_1}$  & 
 0  & 1 
&  $\infty$ 
  \\ \hline
Conifold angles  &  $({4\pi}/{3})\alpha_1 $  &  ${2\pi}/{3}$  &  0  \\
\hline
\end{tabular}\bk
\caption{The cusps and the associated conifold angles of $Y_1(3)^{\alpha_1}$.}
\end{table}

\subsubsection{\bf The case of $\boldsymbol{Y_1(4)}$}  The group  $\Gamma_1(4)$  has three cusps   and no elliptic point. Hence $Y_1(4)^{\alpha_1}
$ is the trice-punctured sphere $\mathbb P^1\setminus \{0,1,\infty\}$.  The corresponding conifold angles are given in the table below.
\begin{table}[h!]
\begin{tabular}{|c|c|c|c|}
\hline 
Cusps of $Y_1(4)^{\alpha_1}$  & 
 0  & 1 
&  $\infty$ 
  \\ \hline
Conifold angles  &  $({3\pi}/{2})\alpha_1 $  &  ${\pi}\alpha_1$  &  0  \\
\hline
\end{tabular}\bk
\caption{The cusps and the associated conifold angles of $Y_1(4)^{\alpha_1}$.}
\vspace{-0.3cm}
\end{table}

The three cases considered above are very particular since they are the only algebraic leaves 
 of Veech's foliation 
  on ${\mathscr M}_{1,2}$  which are isomorphic to the thrice-punctured sphere,  hence   whose hyperbolic structure can be uniformized by means of Gau{\ss} hypergeometric functions. 
 We will return to this later on.

\subsubsection{\bf The case of $\boldsymbol{Y_1(5)}$}

The modular curve $Y(5)$ is of genus 0 and has  twelve cusps. Some representatives  
 of these cusps are given in the first row of Table  \ref{T:Y(5)angles} below, the associated conifold angles of $Y(5)^{\alpha_1}=F_{0,1}^{\alpha_1}$ (we use here the notation of 
\S\ref{S:AuxiliaryLeaves})  are given in the second row. 
 \begin{table}[!h]
\begin{tabular}{|c
|c|c|c|c|c|c|c|c|c|c|c|c|}
\hline 
Cusp  $\mathfrak c$& 
$ i \infty $  & 0
&  1
&  -1 
&  2
& -2
& $\frac{1}{2}$
& $-\frac{1}{2}$
& $\frac{3}{2}$
& $-\frac{3}{2}$
& $-\frac{5}{2}$
& $-\frac{2}{5}$  \\ \hline
$\frac{\vartheta_5(\mathfrak c)}{2\pi\alpha_1}{}$ & 
0 &   $\frac{4}{5} $  & 
 $\frac{4}{5} $  &   $\frac{4}{5} $  &   $\frac{4}{5} $  & $\frac{4}{5} $  &
  $\frac{6}{5} $  &
   $\frac{6}{5} $  &  $\frac{6}{5} $  &  $\frac{6}{5} $  &  $\frac{6}{5} $  
& 0  \\
\hline
\end{tabular}\bk
 \caption{The cusps and the associated conifold angles of $Y(5)^{\alpha_1}$.}
 \label{T:Y(5)angles}
 \vspace{-0.3cm}
\end{table}
\sk

Since $\Gamma_1(5)$ is generated by adjoining $\tau\mapsto \tau+1$ to $\Gamma(5)$, it comes that  among those of Table \ref{T:Y(5)angles}, $0$, $1/2$, $2/5$ and $i\infty$ form a complete set of representatives of the cusps of $\Gamma_1(5)$.  Moreover, it can be verified that the quotient map 
$$
X(5)\longmapsto X_1(5)\, , 
$$
which is a ramified covering of degree $5$,  ramifies at order 5 at the two cusps $[2/5]_{X(5)}$ and $[i\infty]_{X(5)}$  and is \'etale at the ten others cusps of $X(5)$. It follows that the conifold angles of Veech's complex hyperbolic structure on $Y_1(5)^{\alpha_1}$ 
are 0,   0, $\frac{12}{5}\pi\alpha_1$  and    $\frac{8}{5}\pi \alpha_1$ at $[i\infty]_{X_1(5)}$,  $[2/5]_{X_1(5)}$, 
$[1/2]_{X_1(5)}$
 and $[0]_{X_1(5)}$ respectively. 
 
The map $
 \mathbb H^\star  \rightarrow \mathbb P^1, \, 
  \tau \mapsto q^{-1}\prod_{n\geq 1}(1-q^n)^{-5 \big( \! \frac{n}{5}\! \big) }$ 
 (with $q=e^{{2i\pi\tau}}$ and where $( \! \frac{\, }{\ }\!)$ stands for Legendre's symbol) is known to be a Hauptmodul for $\Gamma_1(5)$ which sends $i\infty$, $2/5$, $0$ and  $1/2$ onto 
 $0$, $\infty$, $(11-5\sqrt{5})/2$ and $(11+5\sqrt{5})/2$ respectively. 
 
 \begin{table}[!h]
\begin{tabular}{|l|c|c|c|c|}
\hline 
\; Cusp  in $\mathbb H^\star$    & 
$ i \infty $  & 0  &   $\frac{1}{2}$
& $\frac{2}{5}$     \\ \hline
\;   Cusp in $X_1(5)\simeq \mathbb P^1$ & 0    & $\frac{11-5\sqrt{5}}{2}$  & $\frac{11+5\sqrt{5}}{2}$  &  $\infty$
   \\ \hline
\; Conifold angle  &   0  &  $\frac{8}{5}\pi \alpha_1$ &  $\frac{12}{5}\pi \alpha_1$ & 0  \\
\hline
\end{tabular}\bk
 \caption{The cusps and the associated conifold angles of $Y_1(5)^{\alpha_1}$.}
  \label{Table:Y1(5)angles}
  \vspace{-0.4cm}
\end{table}

\subsubsection{\bf The case of $\boldsymbol{Y_1(p)}$ with $\boldsymbol{p}$ prime}
\label{SS:CaseY1(p)ForpPrime}
Now let $p$ be a prime  bigger than 3.   \sk 

It is known that $\Gamma_1(p)$  has genus $\frac{1}{24}(p-5)(p-7)$, no elliptic point and 
$p-1$ cusps, among which $(p-1)/2$ have width 1, the $(p-1)/2$ other ones  having width $p$. Combining the formalism of Section \ref{S:AuxiliaryLeaves} with Proposition \ref{P:anglez2=}, 
one easily verifies that  the two following assertions  hold true: 
\begin{itemize}
\item  the cusps of width 1 of $Y_1(p)^{\alpha_1}= \mathscr F_{0,1}$ correspond to the cusps $[i\infty]_{0,k}$ of the leaves
$\mathscr F_{0,k}$ (associated to the equation $z_2={k}/{p}$ in $\mathcal T\!\!{\it or}_{1,2}$) for $k=1,\ldots,(p-1)/2$, thus  the associated conifold angles are all 0; \mk 
\item 
the  cusps of width $p$ correspond  to the cusps $[i\infty]_{k,0}$ of the leaves
$\mathscr F_{k,0}$ (associated to the equation $z_2={k}\tau/{p}$ in the Torelli space)  for  $k=1,\ldots,$ $(p-1)/2$.  For any such $k$, the associated cusp is $[-p/k]$ and the associated conifold angle 
is $2\pi k (1-k/p)\alpha_1$. 
\end{itemize}

\section{\bf Miscellanea}

\subsection{\bf Examples of explicit degenerations towards flat spheres with three conical singularities}
\label{S:ExplicitDegen}
The main question investigated above is that of the metric completion of the closed (actually algebraic) leaves
of Veech's foliation on ${\mathscr M}_{1,2}$.

 In \cite{GP}, under the supplementary hypothesis that $\alpha$ is rational (but then in arbitrary dimension), the same question has been  investigated by geometrical methods.  In the particular case when Veech's foliation $\mathscr F^{\alpha_1}$ 
is of (complex) dimension 1, our results in \cite{GP} show that, in terms of (equivalence classes of) flat surfaces,  the metric completion of an algebraic leaf $\mathscr F_N^{\alpha_1}$ in  ${\mathscr M}_{1,2}$ is obtained by attaching to it a finite number of points which correspond to flat spheres with three conical singularities, whose associated cone angles can be determined by  geometric arguments. 
\mk 

Using some formulae of  \cite{ManoKumamoto}, one can recover the result just mentioned but in an explicit analytic form. 
 We treat succinctly  below  the case of the cusp $[i\infty]$ of the leaf $\mathscr F_{(1/N,0)}\simeq Y_1(N)$ associated to the equation $z_2={\tau}/{N}$ in $\mathcal T\!\!{\it or}_{1,2}$, for any integer $N$ bigger than or equal to $2$. 
 Details are left to the reader in this particular case  as well as in the general case ({\it i.e.}\;at any other cusp of $Y_1(N)$).\sk

Let $\alpha_1$ be fixed in $]0,1[$. We consider the flat metric $m_\tau=m_\tau^{\alpha_1}=\lvert \omega_\tau \lvert^2$  on $E_{\tau,\tau/N}=E_\tau\setminus \{[0], [\tau/N]   \}$ with conical singularities at $[0]$ and at $[\tau/N]$ where for any $\tau\in \mathbb H$, $\omega_\tau$ stands for the following (multivalued) 1-form on $E_{\tau,\tau/N}$: 
$$
\omega_\tau
(u)=e^{\frac{2i\pi\alpha_1}{N}u }\left[\frac{\theta(u)}{\theta\big(u-{\tau}/{N}\big)}
\right]^{\alpha_1} du\, .
$$

The map which associates the class of 
the flat tori $(E_{\tau,\tau/N}, m_\tau)$ 
in  ${\mathscr M}_{1,2}^\alpha$ 
 to  any $\tau$ in Poincar\'e's upper half-plane   uniformizes  the  leaf $\mathscr F_{(1/N,0)}$ of Veech's foliation. Studying the latter in the vicinity of the cusp $[i\infty]$ is equivalent to studying the $(E_{\tau,\tau/N}, m_\tau)$'s when $\tau$ goes to $i\infty$ in a vertical band of width 1 of $\mathbb H$.  
\sk

First, we perform the change of variables $u-\tau/N=-v$. Then up to a non-zero constant which does not depend on $v$, we have
$$
\omega_\tau(v)=e^{-{\frac{2i\pi\alpha_1}{N}v}}\left[\frac{\theta\big(-v+{\tau}/{N}\big)}{\theta(v)}
\right]^{\alpha_1} dv\, . 
$$

We want to look at  the degeneration of $m_\tau$ when $\tau\rightarrow i\infty\in \partial \mathbb H$.  To this end, one sets  $q=\exp(2i\pi \tau)$.  Then using the natural isomorphism $E_\tau=\mathbb C^*/q^{\mathbb Z}$ induced by   $v\mapsto x=\exp(2i\pi v)$,  one sees that  $E_\tau$ converges towards the degenerated elliptic curve 
$\mathbb C^*/0^{\mathbb Z}=\mathbb C^*$  as $\tau$ goes to $i\infty$. Moreover, for any fixed $\tau\in \mathbb H$, since $dv=(2i\pi x)^{-1}dx$, the 1-form 
$\omega_\tau$ writes as follows in the variable $x$: 
$$
\omega_\tau(x)=(2i\pi)^{-1} x^{-\frac{\alpha_1}{N}-1}
\theta\big({\tau}/{N}-v\big)^{\alpha_1} \theta(v)^{-\alpha_1}
dx\, .
$$

Then, from the classical formula 
$$
\theta(v,\tau)=2\, \sin(\pi v )\cdot q^{1/8}\prod_{n=1}^{+\infty}\big(1-q^n\big)\big(1-xq^n\big)\big(1-x^{-1}q^n\big)
$$
it follows that
\begin{align*}
 \frac{\theta(\tau/N-v)}{\theta(v)}=
 \frac{\sin\left(\pi 
  {\tau}/{N}-\pi v \right)}{\sin(\pi v)}
\cdot 
 \Theta_N(x, q)
\end{align*}
with 
$$
\Theta_N(x, q)= \prod_{n=1}^{+\infty}
\frac{\big(1-x^{-1}q^{n+1/N}\big)\big(1-xq^{n-1/N}\big)}{
\big(1-xq^{n}\big)\big(1-x^{-1}q^{n}\big)
}\,.
$$

An important fact concerning the latter function is that as a function of the variable $x$, $\Theta_N(\cdot,  q)$ tends uniformly towards 1  on any compact set as $q\rightarrow 0$, that is as $\tau$ goes to $i\infty$, for $\tau$ varying in any fixed vertical band.
\sk 

On the other hand, we have
\begin{align*}
 \frac{\sin\left(\pi 
  {\tau}/{N}-\pi v \right)}{\sin(\pi v)}
 = & \frac{e^{i\pi(\tau/N-v)}-  e^{-i\pi(\tau/N-v)}   }{e^{i\pi v}-  e^{-i\pi v}}\\
 = & 
 \frac{q^{\frac{1}{2N}}x^{-{1}/{2}}-   q^{-\frac{1}{2N}}x^{{1}/{2}}  }{x^{1/2}-  x^{-1/2}}
  =   q^{-\frac{1}{2N}} 
  \frac{q^{\frac{1}{N}}- x }{x-  1}\end{align*}
hence, up to multiplication by a nonzero constant that does not depend on $x$ and `up to multi-valuedness', one has 
\begin{align*}
 \omega_\tau(x)= &  x^{\frac{-\alpha_1}{N}-1}
\left[\frac{x-q^{1/N}}{x-1}
\right]^{\alpha_1} \Theta_N(x,q)^{\alpha_1}dx \, . 
\end{align*}

For $\tau \rightarrow i\infty $, one obtains  as limit the following multivalued 1-form  
\begin{equation}
 \omega_{i\infty}(x)= 
 x^{\alpha_1 \frac{N-1}{N}-1} \big(x-1\big)^{-\alpha_1} dx
\end{equation}
on $\mathbb P^1\setminus \{0,1,\infty\}$. 
The associated flat metric $m_{i\infty}=\lvert \omega_{i\infty} \lvert^2$ defines a singular flat structure  of bounded area on $\mathbb P^1$, with 3 conical singular  points at $0,1$ and $\infty$, the
conical angles of which are respectively 
\begin{equation}
\label{E:AnglesP1degeneration}
\theta_0=2\pi \left( 1-\frac{1}{N}\right)\alpha_1\, , \qquad \theta_1=2\pi(1-\alpha_1)
\qquad \mbox{ and }\qquad 
\theta_\infty=\frac{2\pi}{N}\alpha_1. 
\end{equation} 

We verify that there exists a positive function $\lambda(\tau)$ such that 
$$
\lim_{\tau\rightarrow i\infty}
\int_{E_\tau}  {\lambda(\tau)} \lvert \omega_\tau \lvert^2 
= \int_{\mathbb P^1} \lvert \omega_{i\infty} \lvert^2>0. $$

This shows that the $\mathbb C\mathbb H^1$-structure of $\mathscr  F_{(1/N,0)}$ is  not complete at the cusp $[i\infty]$ and that  the (equivalence class of the) flat sphere with three conical singularities of  angles as in \eqref{E:AnglesP1degeneration} belongs to the metric completion of the considered leaf in the vicinity of this cusp.  When $\alpha_1$ is assumed to be in $\mathbb Q$, the metric completion at this cusp is obtained by adding this flat sphere and nothing else, as it is proved in \cite{GP}.  
The previous analytical considerations show in an explicit manner  that this  still holds true even  without assuming $\alpha_1$ to be rational. 
\mk 

%
More generally, let $\mathfrak c=[-a/c]\in X_1(N)$ be a cusp which must be added to $Y_1(N)^{\alpha_1}$ in order to get the metric completion.  From \cite{GP}, we know that when $\alpha_1$ is rational, this cusp can be interpreted geometrically as a moduli space $\mathscr M_{0,3}(\theta)$ of flat structures on $\mathbb P^1$ with three cone points. The associated angle datum $\theta
 \in ]0,2\pi[^3$  depends on $\alpha_1$, $N$  and on $\mathfrak c$. It would be interesting to get a general explicit  formula for  $\theta$ in function of $N,a,c$ and $\alpha_1$ and to verify that this geometric interpretation still holds true without assuming that $\alpha_1$ is rational.  
 
 If we have answered to a particular case of this question above,  
 the latter is still open in full generality.



\subsection{\bf The cases when $N$ is small:  relations with classical special  functions}
\label{S:LinksWithClassicalHypergeometry}
Exactly three of the  leaves of Veech's foliation on ${\mathscr M}_{1,2}$
give rise to a hyperbolic conifold of genus 0 with 3 conifold points: the leaves $\mathscr F_N=Y_1(N)^{\alpha_1}$ for $N=2,3,4$, see 
\S\ref{S:ExplicitExamples} above.  Since any such hyperbolic conifold can be uniformized by a classical hypergeometric differential equation 
(as it is known from  the fundamental work of Schwarz recalled in the Introduction), there must be some formulae 
expressing the Veech map of any one of these three leaves in terms of classical hypergeometric functions.  We consider only the case when $N=2$ in \S\ref{S:TheCaseN=2LinkedWithClassicalHypergeometry}. 
\sk

Since both 
$Y_1(5)$  and $Y_1(6)$  are $\mathbb P^1$ with four cusps, the leaves 
$\mathscr F_5$ and $\mathscr F_6$ correspond to $\mathbb C\mathbb H^1$-conifold structures with four cone points on the sphere.  Since any such structure can be uniformized by means of a Heun's differential equation,  one deduces that the Veech map of these two leaves can be expressed in terms of Heun functions.  We just say a few words about this in \S\ref{SS:Heun}.

\subsubsection{\bf The leaf $\boldsymbol{Y_1(2)^{\alpha_1}}$ and classical hypergeometric functions}
\label{S:TheCaseN=2LinkedWithClassicalHypergeometry}
It turns out that the case when $N=2$ can be handled very  explicitly by specializing a classical result of Wirtinger. 
The modular lambda function 
$\lambda: \tau\mapsto 
\theta_1(\tau)^4/\theta_3(\tau)^4$ 
 is a  Hauptmodul for $\Gamma(2)$: it corresponds to the quotient $\mathbb H \rightarrow Y(2)=\mathbb P^1\setminus\{0,1,\infty\}$. Furthermore, it induces a correspondence between the cusps $[i\infty]$, $[0]$, $[1]$ and the points 
$0,1$ and $\infty$ of $X(2)=\mathbb P^1$ respectively ({\it cf.}\;\cite[Chap.VII.\S8]{Chandrasekharan} if needed).
\sk 

\paragraph{} \hspace{-0.3cm} Specializing a formula obtained by Wirtinger in  \cite{Wirtinger1902}  (see also  (1.3) in 
 \cite{Watanabe2007} with $\alpha={(\alpha_1+1)}/{2}, \beta={1}/{2}$ and $\gamma=1$), one obtains the following formula: 
\begin{equation}
\label{E:N=2Wirtinger}
F\left(\frac{\alpha_1+1}{2} ,  \frac{1}{2}, 1; \lambda(\tau)\right)
= 
\frac{2
\cos\left(\frac{\pi  \alpha_1 
}{2}\right) \theta_3(\tau)^2 \big(1-\lambda(\tau)\big)^{-\frac{\alpha_1}{4}}}{(1-e^{2i\pi\alpha_1})({1-e^{-2i\pi\alpha_1}})}
 \int_{\mathcal P(0,\frac{1}{2})}
\frac{
\theta(u,\tau)^{\alpha_1}}{
\theta_1(u,\tau)^{\alpha_1}}
 du
\end{equation}
where for any $\tau\in \mathbb H$, the integration in the left hand-side is performed along the Pochammer cycle 
$\mathcal P(0,{1}/{2})$   constructed from the segment $[0,1/2]$ in $E_\tau$ (see Figure \ref{F:Pochammer} below).

\begin{center}
\begin{figure}[h]
\psfrag{0}[][][1]{$ 0$}
\psfrag{t}[][][1]{$\tau$}
\psfrag{1}[][][1]{$1$}
\psfrag{Etau}[][][1]{$E_\tau $}
\psfrag{12}[][][1]{$1/2$}
\includegraphics[scale=0.9]{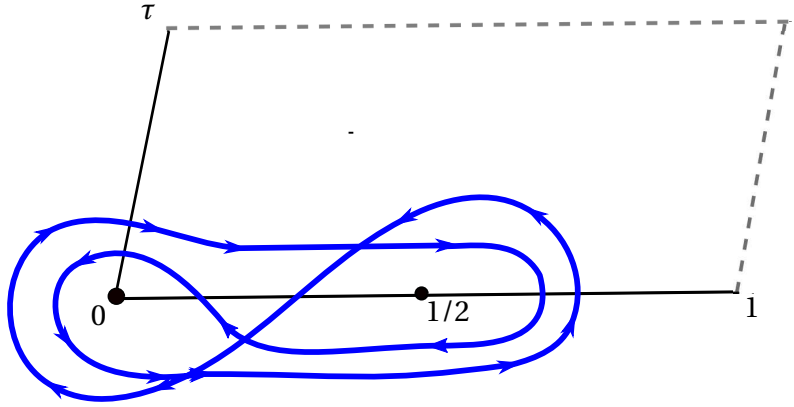}
\caption{The Pochammer contour $\mathcal P(0,{1}/{2})$ in $E_\tau$ (in blue)}
\label{F:Pochammer}
\end{figure}
\end{center} 

For every $\tau$, the flat metric on $E_\tau$ with conical points at $[0]$ and $[1/2]$ associated to the corresponding points of the leaf of equation $z_2=1/2$ in $\mathcal T\!\!\!{\it or}_{1,2}$ is given by $\lvert T^{\alpha_1}(u, \tau)du\lvert^2$ up to normalization,  with 
$$T^{\alpha_1}(u,\tau)=\big( -\theta(u)/\theta(u-1/2)\big)^{\alpha_1}= 
\big( \theta(u)/\theta_1(u)\big)^{\alpha_1}\, .$$

 Equation  \eqref{E:N=2Wirtinger}
can be written out 
\begin{equation}
F\left(\frac{\alpha_1+1}{2} ,  \frac{1}{2}, 1; \lambda(\tau)\right)
=  
\Lambda^\alpha_1( \tau )  \int_{\boldsymbol{\gamma}_2}
T^{\alpha_1}(u, \tau)du
\end{equation}
where $\Lambda^\alpha_1(\tau )$ is a function of $\tau$ and $\alpha_1$ (easy to make explicit with the help of 
 \eqref{E:N=2Wirtinger}) and where 
 $\boldsymbol{\gamma}_2$ stands for the element of the twisted homology group 
 $H_1(E_{\tau,1/2},L_{\tau,1/2})$ obtained after regularizing the twisted 1-simplex  $\boldsymbol{\ell}_2$ defined in \S\ref{SS:ellbullet}.\sk

More generally, for any $\tau$ and any twisted cycle $\boldsymbol{\gamma}$, there is a formula 
\begin{equation}
\label{E:HGF-VF}
F_{\boldsymbol{\gamma}}\left(\frac{\alpha_1+1}{2} ,  \frac{1}{2}, 1; \lambda(\tau)\right)
=  
\Lambda^{\alpha_1}( \tau ) \int_{\boldsymbol{\gamma}}
T^{\alpha_1}(u,\tau)du
\end{equation}
where $F_{\boldsymbol{\gamma}}({(\alpha_1+1)}/{2} ,  {1}/{2}, 1;\cdot )$ is a solution of the hypergeometric differential equation \eqref{HGE} for the corresponding parameters. An important point is that the function 
$\Lambda^\alpha_1( \tau )$ in such a formula is independent of the considered twisted cycle. It follows that 
the map 
\begin{align*}
\tau & \longmapsto \Bigg[ F\left(\frac{\alpha_1+1}{2} ,  \frac{1}{2}, 1; \lambda(\tau)\right) 
: \frac{d}{d\varepsilon} x^{\varepsilon}F\left(\frac{\alpha_1+1}{2}+\varepsilon ,  \frac{1}{2}+\varepsilon, 1+\varepsilon; \lambda(\tau)\right) \bigg\lvert_{\varepsilon=0}
\Bigg]\end{align*}
(whose components form a basis of the associated hypergeometric differential equation, see  \cite[Chap.III\S3]{Yoshida})  is nothing else than an expression of the Veech map $V^{\alpha_1}_{0,1/2}:\mathbb H   \simeq \mathcal F_2^{\alpha_1}\rightarrow  \mathbb P^1$ in terms of classical hypergeometric functions.   
 As an immediate consequence,  one gets  that the corresponding conifold structure on $\mathbb P^1$ is given by the `classical hypergeometric Schwarz's map' $S({(\alpha_1+1)}/{2},{1}/{2},1; \cdot)$. It follows that   the conical angles at the  cusps $0, 1$ and $\infty$ of $X(2)=
\mathbb P^1$ are respectively 
$0$, $\pi\alpha_1$ and $\pi\alpha_1$. 
\sk

Note that this is consistent with our results in 
 \S\ref{S:CaseOfY1(2)}: the lambda modular function 
satisfies $\lambda(\tau+1)={\lambda(\tau)}/({\lambda(\tau)-1})$ for every $\tau \in \mathbb H$ ({\it cf.}\;\cite{Chandrasekharan}). Thus 
$\mu=\mu(\lambda)= 4 ({\lambda-1})/{\lambda^2}$ is  invariant by $\Gamma(2)$ and by $\tau\mapsto \tau+1$,  hence is a Hauptmodul for $\Gamma_1(2)$.  Veech's hyperbolic conifold structure on $X_1(2)$ is the push-forward by $\mu$ of the one just considered on $X(2)$.  Moreover, $\mu$ is \'etale at $0, 1$ and $\infty$, ramifies at the order 2 at $\lambda=2$ and one has $
\mu(0)= \infty$, $\mu(1)=\mu(\infty)=0$ and $\mu(2)=1$.  It follows  that the conifold angles at the cusps $0$, $1$ and $\infty$ of $\mathscr F_{2}^{\alpha_1}=X_1(2)^{\alpha_1}\simeq \mathbb P^1$ are $\pi\alpha_1$,   $\pi$   and $0$ in perfect accordance with the results given in Table \ref{Ta:Y1(2)angles}.   \mk 

 \paragraph{} \hspace{-0.3cm}
 Actually, there is a slightly less explicit but much more geometric approach of the $N=2$ case. 
Indeed, for every $\tau\in \mathbb H$, the flat metric $m^{\alpha_1}_\tau$ on $E_\tau$ with conical points at $[0]$ and $[1/2]$ of respective angles $2\pi(\alpha_1+1)$ and $2\pi(\alpha_1-1)$ is invariant by the elliptic involution (the metric 
$\lvert T^{\alpha_1}(u,\tau)du\lvert^2$ is easily seen to be  invariant by  $u\mapsto -u$). Consequently, $m^{\alpha_1}_\tau$  can be pushed-forward by the quotient map $\nu: E_\tau\rightarrow E_{\tau}/\iota\simeq \mathbb P^1$ and gives rise 
 to a flat metric on the Riemann sphere.   In the variable $u$, for the map $\nu$, it is convenient to take the map induced by 
\begin{align*}
u & \longmapsto \frac{\wp(1/2)-\wp(\tau/2)}{\wp(u)-\wp(\tau/2)}\, .
\end{align*}

Since $\nu$ ramifies at the second order exactly at the 2-torsion points of $E_\tau$, it follows that the push-forward metric $\nu_*(m^{\alpha_1}_\tau)$ is `the' flat metric on 
$\mathbb P^1$ with four conical points at  $0, 1, \infty$ and $\lambda(\tau)^{-1}=\nu((1+\tau)/2)$ whose associated cone angles are respectively $\pi(1+\alpha_1)$, $\pi(1-\alpha_1)$, $\pi$ and $\pi$. 

Consequently, 
in the usual affine coordinate $x$ on $\mathbb P^1$, one has 
\begin{equation}
\label{E:popu}
\nu_*\big(m^{\alpha_1}_\tau\big)= \epsilon^{\alpha_1}(\tau) \,
\Big\lvert  
 x ^{\frac{\alpha_1-1}{2}}
(1-x)^{-\frac{\alpha_1+1}{2}} \big(1-\lambda(\tau)x\big)^{-\frac{1}{2}}
dx
\Big\lvert^2
\end{equation}
for some positive  function $\epsilon^{\alpha_1}$ which does not depend on $x$ but only on $\tau$. 
From \eqref{E:popu}, one deduces immediately that a  formula such as \eqref{E:HGF-VF} holds true 
for any twisted cycle $\boldsymbol{\gamma}$ on $E_\tau$.  Then one can conclude in the same way as at the end of the preceding paragraph.  

\subsubsection{\bf About the  $\boldsymbol{N=3}$ case}
The hyperbolic conifold  $Y_1(3)^{\alpha_1}$ is $\mathbb P^1$ with  three cone points 
 whose  conifold angles are $2\pi (2\alpha_1/3), 2\pi/3$ and $0$.  This $\mathbb C\mathbb H^1$-structure is induced by the hypergeometric equation  \eqref{HGE} with $a=({1+\alpha_1})/{3}$, $b=({1-\alpha_1})/{3} $  and $ c=1$. 
 On the other hand, the Veech map of $\mathcal F_{3}^{\alpha_1}\simeq \mathbb H$ admits as its  components the 
   hypergeometric integrals $
\int_{\boldsymbol{\gamma}_{\bullet}} {\theta(u)}^{\alpha_1}{\theta(u-{1}/{3})}^{-\alpha_1}du
$ with $\bullet=0,\infty$. 

Since  $\delta	(\tau)=(\eta(\tau)/\eta(3\tau))^{12}$ is a Hauptmodul for $\Gamma_1(3)$ (see case 3B in Table 3 of \cite
 {ConwayNorton}), it comes that there exists a function $\Delta^{\alpha_1}( \tau )$ depending only on $\alpha_1$ and on $\tau$, as well as a  twisted cycle $\boldsymbol{\beta}$ on $E_\tau$  such that a formula of the form 
\begin{equation}
F\left(\frac{1+\alpha_1}{3} ,  \frac{1-\alpha_1}{3}, 1; \delta(\tau)\right)
=  
\Delta^{\alpha_1}( \tau ) \int_{\boldsymbol{\beta}}
  \frac{\theta(u,\tau)^{\alpha_1}}{\theta\big(u-\frac{1}{3},\tau\big)^{\alpha_1}}
du
\end{equation}
holds true for every $\tau\in \mathbb H$ and every $\alpha_1\in ]0,1[$ (compare with \eqref{E:N=2Wirtinger}).  \sk 

It would be nice to give explicit formulae for $\Delta^{\alpha_1}$ and $\boldsymbol{\beta}$. Note that a similar question  can be asked in the case when $N=4$.

\subsubsection{\bf  A few words about the case when $\boldsymbol{N=5}$}  
\label{SS:Heun}
  Since $Y_1(5)^{\alpha_1}$ is a four punctured sphere, its
$\mathbb C\mathbb H^1$-structure  can be recovered by means of the famous Heun equation ${\rm Heun}\,(c, \theta_1,\theta_2,\theta_3,\theta_4,p)$. 
As  is well-known, it is a Fuchsian second-order linear differential equation with four simple poles on $\mathbb P^1$. It depends on 6 parameters: the first,  $c$,  is the cross-ratio of the four singularities; the next 4 parameters $\theta_1,\ldots,\theta_4$   
are the angles corresponding to the exponents of the considered equation at the singular points; 
finally, $p$ is the so-called {\it `accessory parameter'} which is the most mysterious one. \sk 

In the case of $Y_1(5)^{\alpha_1}$, $c$ is equal to $\omega=
({11-5\sqrt{5}})/({11+5\sqrt{5}})$,  hence only depends on  the conformal type of $Y_1(5)$. The angles $\theta_i$'s are precisely the conifold angles $\theta_i^{\alpha_1}$ 
of Veech's hyperbolic structure on $Y_1(5)$ ({\it cf.}\;Table \ref{Table:Y1(5)angles}). 

It would be interesting to find an expression for the accessory parameter $p^{\alpha_1}$ of the Heun equation associated to $Y_1(5)^{\alpha_1}$ in terms of $\alpha_1$.  Indeed, in this case it might be possible to express the Schwarz map associated to any Heun equation  of the form $$
{\rm Heun}\,\big(\omega, \theta_1^{\alpha_1},\theta_2^{\alpha_1},\theta_3^{\alpha_1},\theta_4^{\alpha_1},p^{\alpha_1}\big)$$ 
as the ratio of two elliptic hypergeometric integrals. 
 Since the monodromy of such integrals can be explicitly determined (cf.\;\S\ref{S:HyperbolicHolonomy} below), this could be a way to determine explicitly the monodromy of a new class of 
Heun equations. 
\mk

To conclude, note  that this approach should also work 
when $N=6$  since 
 $Y_1(6)$  is also of genus 0 with four cusps.


\subsection{\bf Holonomy of the algebraic leaves}
\label{S:HyperbolicHolonomy}
We fix an integer  $N\geq 2$. \sk 

\subsubsection{}\!\!\!\! By definition, for $\alpha_1\in ]0,1[$, the {\bf holonomy} of the leaf $Y_1(N)^{\alpha_1}$ 
is the holonomy of the  complex hyperbolic structure it carries.  It is a  morphism of groups (well defined up to conjugation) which will be denoted by 
\begin{equation}
\label{E:HolonomyRepresentation}
{\rm H}_N^{\alpha_1} \, : \; 
\Gamma_1(N) \simeq 
\pi_1\big(Y_1(N)^{\alpha_1}\big) \longrightarrow  {\rm PSL}_2(\mathbb R)
\simeq {\rm PU}(1,1)
\, .\footnotemark
\end{equation}
\footnotetext{
\label{ulm}
Since $Y_1(N)$ has orbifold points when $N=2,3$, it is necessary to consider instead the orbifold fundamental group $\pi_1(Y_1(N)^{\alpha_1})^{\rm orb}$ in these two cases. We will keep this in mind in what follows and will commit the abuse  to speak always of the usual fundamental group.}
\hspace{0.4cm}Its image 
  will be denoted by 
$$
\boldsymbol{\Gamma}_{\!1}(N)^{\alpha_1}
=
{\rm Im}\big( {\rm H}_N^{\alpha_1}   \big) \subset 
 {\rm PSL}_2(\mathbb R)\, .\sk
$$ and will be called the {\bf holonomy group of} $\boldsymbol{Y_1(N)^{\alpha_1}}$. 
\sk 

It follows from  some results in \cite{ManoKyushu} that the $\mathbb C\mathbb H^1$-structure of $Y_1(N)^{\alpha_1}$ is induced by a Fuchsian second-order differential equation\footnote{To be precise, the differential equation considered in Theorem 3.1 of \cite{ManoKyushu}  is defined on the cover $Y(N)$ of $Y_1(N)$ but it is easily seen that it can be pushed forward onto $Y_1(N)$.}. 
      This  directly links our work 
to very classical ones about the monodromy of Fuchsian differential equations. In our situation, the general problem 
considered by Poincar at the very beginning of \cite{Poincare1884} is twofold and can be stated as follows: 
\begin{enumerate}
\item[({\bf P1})] for $\alpha_1$ and $N$ given, determine the holonomy group $\boldsymbol{\Gamma}_1(N)^{\alpha_1}$;
\sk
\item[({\bf P2})]  find all the parameters $\alpha_1$ and $N$ such that $\boldsymbol{\Gamma}_1(N)^{\alpha_1}$ 
is  Fuchsian.\mk 
\end{enumerate}  

To these two problems, we would like to add a third one, namely 
 \begin{enumerate}
\item[({\bf P3})]  among  the parameters $\alpha_1$ and $N$ such that $\boldsymbol{\Gamma}_1(N)^{\alpha_1}$ 
is Fuchsian, determine the ones for which this group is arithmetic. 
\end{enumerate}
\mk

\subsubsection{} We say a few words about the reason why 
 we believe that the three problems (P1), (P2) and (P3) are important. 
 For this purpose, we recall briefly below  the general strategy followed by Deligne and Mostow in \cite{DeligneMostow,Mostow} and by Thurston in \cite{Thurston} to find new non-arithmetic lattices in ${\rm PU}(1,n-1)$, which is nowadays one of the main problems in complex hyperbolic geometry.
 \sk 
 
  For any manifold $M$ carrying a $\mathbb C\mathbb H^n$-structure, one denotes by $\boldsymbol{\Gamma}(M)$ its holonomy group, namely the image of the associated holonomy representation $\pi_1(M)\rightarrow {\rm PU}(1,n-1)$. 
  \mk 
 
\paragraph{} For  $n\geq 4$, let  $\theta=(\theta_i)_{i=1}^n$ be a $n$-uplet of angles  $\theta_i\in ]0,2\pi[$. 
The moduli space ${\mathscr M}_{0,\theta}$ of flat spheres with $n$ conical points of angles $\theta_1,\ldots,\theta_n$  identifies to ${\mathscr M}_{0,n}$ and moreover carries
 a natural complex hyperbolic structure ({\it cf.}\;\S\ref{S:MultidimContext}).
 
   For some $\theta$'s, which   have been completely determined, the associated holonomy group $\boldsymbol{\Gamma}_\theta=\boldsymbol{\Gamma} ({\mathscr M}_{0,\theta})$  is a lattice $ {\rm PU}(1,n-3)$ and some of them are not arithmetic. 
 This has been obtained via the following approach: 
 the metric completion of ${\mathscr M}_{0,\theta}$ is obtained by adding to it several strata that are themselves 
 moduli spaces ${\mathscr M}_{0,\theta'}$ for some angle-data $\theta'$ deduced explicitly from  $\theta$. Furthermore,  
 the discreteness of the holonomy is also  hereditary: if $\boldsymbol{\Gamma}_\theta$ is discrete,  then all the $\boldsymbol{\Gamma}_{\theta'}$'s corresponding to the  $\theta'$'s associated to the strata  appearing in the metric completion of ${\mathscr M}_{0,\theta}$ must be discrete as well.  
  One ends up with  the case when $n=4$: in this situation, 
  the corresponding holonomy groups $\boldsymbol{\Gamma}_{\theta''}$'s are triangle subgroups of ${\rm PSL}_2(\mathbb R)$  and the $\theta''$'s corresponding to discrete subgroups are known. This allows to find an explicit finite list of  original $\theta$'s for which $\boldsymbol{\Gamma}_{\theta}$  may be discrete.    At this point, there is still some work to do  to verify that these angle-data give indeed complex hyperbolic lattices and to determine the 
  arithmetic ones but this was achieved in \cite{Mostow}. 
 \mk 
 
 \paragraph{} 
Our results proven in \cite{GP} show that  a very similar picture occurs for the metric completion 
 of an algebraic leaf of Veech's foliation in  $\mathscr M_{1,n}$ for any $n\geq 2$.   Hence 
 a strategy similar to  the one outlined above is possible when looking at algebraic leaves with discrete holonomy group in ${\rm PU}(1,n-1)$.
  \sk

Let ${\mathscr F}_{\! N}^\alpha$ be an algebraic leaf of Veech's foliation 
on $\mathscr M_{1,n}$.  
  As already mentioned in  \S\ref{SS:Towards},   it is proven in  \cite{GP} that the metric completion of $\mathscr F_{\!\!N}^\alpha$ can be inductively constructed by adjoining strata $\mathscr S'$ 
  which are covers of moduli spaces of flat surfaces $\mathscr S$. These moduli spaces 
     can be of two different kinds:  either  $\mathscr S$ is  an algebraic leaf of Veech's foliation $\mathscr F^{\alpha'}$ of $\mathscr M_{1,n'}$ with $n'<n$,    for some particular $n'$-uplet $\alpha'$ constructed from $N$ and $\alpha$;  or $\mathscr S$ is a moduli space of flat spheres $\mathscr M_{0,\tilde \theta}$ for a  $\tilde n$-uplet $\tilde \theta$ (with $\tilde n\leq n+1$) which also depends only on $N$ and $\alpha$. 
\sk

In this situation, the property of having a discrete holonomy group happens to be hereditary as well. Consequently,  a necessary condition for   $\boldsymbol{\Gamma}({\mathscr F}_{\! N}^\alpha)$ to be discrete is 
  that $\boldsymbol{\Gamma}(\mathscr S')=\boldsymbol{\Gamma}(\mathscr S)$ be discrete as well, this for any stratum $\mathscr S'$ appearing in the metric completion of ${\mathscr F}_{\! N}^\alpha$.    Since all the genus 0 holonomy groups $\boldsymbol{\Gamma}_\theta$ which are discrete are known, 
we only have to   
  consider the  groups 
  $\boldsymbol{\Gamma}(\mathscr S)$ for the 
   genus 1 strata $\mathscr S$.    Arguing inductively, one  ends up,  as in the genus 0 case,  to considering 
  the holonomy groups of the 1-dimensional strata of $
  \overline{{\mathscr F}_{\! N}^\alpha}$ which are of genus 1.  \sk 
  
  The preceding discussion shows that in order that 
   a strategy similar to the one used in the genus 0 case 
    succeeds in the genus 1 case that we are considering in this paper, it is crucial to have a perfect understanding of the holonomy groups  
  of the algebraic leaves of Veech's foliation on $\mathscr M_{1,2}$. 
  From this perspective, the  questions ({\bf P1}),({\bf P2}) and ({\bf P3})  appear  to be    particularly relevant. \sk 
    
\subsubsection{} It is easy to deduce from the results obtained above a vast class of parameters $\alpha_1$ for which $\boldsymbol{\Gamma}_1(N)^{\alpha_1}$ is a discrete subgroup of  ${\rm PSL}_2(\mathbb R)$. \sk 

\paragraph{} It follows  from  Poincar's uniformization theorem that 
 the  holonomy group $\boldsymbol{\Gamma}_1(N)^{\alpha_1}$ is Fuchsian as soon as $Y_1(N)^{\alpha_1}$ is an orbifold, that is as soon as 
for any cusp $\mathfrak c$ of $Y_1(N)^{\alpha_1}$, the associated conifold angle $\theta_N^{\alpha_1}(\mathfrak c)$ is an integral part of $2\pi$. 

Now it has been shown above (see \eqref{E:ConifoldAnglesY1(N)}) that  for such a cusp $\mathfrak c$, one has 
$$\theta_N^{\alpha_1}(\mathfrak c)= 2\pi  \frac{c(c-N)}{N\gcd(c,N)}    \alpha_1$$
 for a certain integer $c\in \{0,\ldots,N-1\}$ depending on ${\mathfrak c}$. 
  Then, setting $N'$ as the least common multiple of 
 the integers $\frac{c(N-c)}{\gcd(c,N)}$ for $c=1,\ldots,N-1$, we get the 
  \begin{coro} 
  \label{P:alpha1orbifold}
 If $\alpha_1=\frac{N}{N' \ell}$ with $\ell\in \mathbb N_{>0}$, then $\boldsymbol{\Gamma}_1(N)^{\alpha_1}$ is Fuchsian.
  \end{coro}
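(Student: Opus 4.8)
The plan is to uniformize via Poincaré's theorem on hyperbolic orbifold structures. The corollary is essentially immediate once one assembles the pieces already established in the excerpt, so the proof is short and the only real work is bookkeeping on the conifold angles. First I would recall the criterion: a $\mathbb C\mathbb H^1$-conifold structure on a compact Riemann surface arises from a Fuchsian group (i.e. has discrete holonomy contained in $\mathrm{PSL}_2(\mathbb R)$) precisely when it is an \emph{orbifold} structure, which means that each conifold angle is of the form $2\pi/k$ for some positive integer $k$, or equivalently is an integral sub-multiple of $2\pi$. This is exactly the content invoked in the statement and is the classical uniformization result going back to Poincaré (recalled in Appendix A of the paper). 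So the entire task reduces to checking that the hypothesis $\alpha_1 = \tfrac{N}{N'\ell}$ forces every conifold angle of $Y_1(N)^{\alpha_1}$ to be such a sub-multiple of $2\pi$.

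Second I would recall the explicit angle formula. By Corollary \ref{C:AnglesX1(N)alpha} (equation \eqref{E:ConifoldAnglesY1(N)}), for a cusp $\mathfrak c = [-a'/c']\in C_1(N)$ with residue $c\in\{0,\ldots,N-1\}$ of $c'$ modulo $N$, the conifold angle is
\begin{equation*}
\theta_N(\mathfrak c) = 2\pi\,\frac{c(N-c)}{N\,\gcd(c,N)}\,\alpha_1.
\end{equation*}
When $c=0$ the angle is $0$, which is by convention an honest cusp and poses no obstruction to being an orbifold point. For $c\in\{1,\ldots,N-1\}$, substituting $\alpha_1 = N/(N'\ell)$ gives
\begin{equation*}
\theta_N(\mathfrak c) = 2\pi\,\frac{c(N-c)}{N\,\gcd(c,N)}\cdot\frac{N}{N'\ell}
= 2\pi\,\frac{1}{\ell}\cdot\frac{1}{N'}\cdot\frac{c(N-c)}{\gcd(c,N)}.
\end{equation*}
The key step is the definition of $N'$: it is the least common multiple of the integers $\frac{c(N-c)}{\gcd(c,N)}$ as $c$ ranges over $\{1,\ldots,N-1\}$. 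Hence for each such $c$ the quantity $\frac{c(N-c)}{\gcd(c,N)}$ divides $N'$, so $\frac{1}{N'}\cdot\frac{c(N-c)}{\gcd(c,N)} = \frac{1}{m_c}$ for a positive integer $m_c = N'\gcd(c,N)/\big(c(N-c)\big)$. Therefore $\theta_N(\mathfrak c) = 2\pi/(m_c\ell)$, an integral sub-multiple of $2\pi$, as required.

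Third I would conclude: every conifold point of $Y_1(N)^{\alpha_1}$ carries an angle of the form $2\pi/k$ (with $k=m_c\ell$), or is a genuine cusp (angle $0$). Together with the fact, established in Corollary \ref{C:AnglesX1(N)alpha}, that Veech's structure does extend to a $\mathbb C\mathbb H^1$-conifold structure on the compact curve $X_1(N)$, this shows $Y_1(N)^{\alpha_1}$ is a hyperbolic orbifold in the classical sense. Applying Poincaré's uniformization theorem then yields that the holonomy representation $\mathrm H_N^{\alpha_1}\colon\Gamma_1(N)\to\mathrm{PSL}_2(\mathbb R)$ has discrete image, i.e. $\boldsymbol\Gamma_1(N)^{\alpha_1}$ is Fuchsian.

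I do not anticipate a genuine obstacle here, since all the analytic input (extension to a conifold structure, the closed formula for the angles) has already been done in the preceding corollaries. The only point requiring a little care is the orbifold-point subtlety at $N=2,3$ flagged in footnote \ref{ulm}: there $Y_1(N)$ already has orbifold points coming from $\Gamma_1(N)$ itself, so one must interpret $\pi_1$ as the orbifold fundamental group and check that the extra elliptic points (of orders $2$ and $3$ respectively) are compatible with a Fuchsian structure — which they are, being themselves sub-multiples of $2\pi$. Thus the statement should be phrased, as it is, as a corollary, and the mild care at small $N$ is the only thing worth remarking on.
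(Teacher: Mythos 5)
Your proof is correct and follows the paper's own argument exactly: Poincar\'e uniformization reduces the claim to checking that every conifold angle of $X_1(N)^{\alpha_1}$ is an integral sub-multiple of $2\pi$, which is immediate from the angle formula \eqref{E:ConifoldAnglesY1(N)} together with the definition of $N'$ as the least common multiple of the integers $c(N-c)/\gcd(c,N)$. One small correction to your write-up: the opening criterion should not be stated as an equivalence (``precisely when''), since only the direction orbifold $\Rightarrow$ Fuchsian holonomy is true --- and it is the only direction you use --- whereas the paper observes immediately after this corollary that non-orbifold $\mathbb C\mathbb H^1$-conifold structures (e.g.\ certain triangle groups) can nonetheless have Fuchsian holonomy.
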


\paragraph{} 
It is more than likely that the preceding result only gives a partial answer to 
({P2}). 
Indeed,  it is well-known that there exist  triangle subgroups of ${\rm PSL}_2(\mathbb R)$ which are not of orbifold type ({\it i.e.}\;not all the angles of `the' corresponding hyperbolic triangle
  are integral parts of  $\pi$) but such that the $\mathbb C\mathbb H^1$-holonomy of the associated $\mathbb P^1$ with three conifold points is Fuchsian, see \cite[p. 572]{Greenberg}, \cite[Theorem 2.3]{Knapp} or \cite[Theorem 3.7]{Mostow}\footnote{Beware that  two cases have been forgotten in reference \cite{Mostow}.}. 
 
 The situation is  certainly similar for the holonomy groups $\boldsymbol{\Gamma}_1(N)^{\alpha_1}$: for $N$ fixed,  there are certainly more parameters $\alpha_1$ whose associated holonomy group is discrete than the ones given by Corollary \ref{P:alpha1orbifold} which correspond to 
 the cases when Veech's $\mathbb C\mathbb H^1$-structure on  $X_1(N)^{\alpha_1}$ actually is of orbifold type. 
\sk

A complete  answer to (P2) would be very interesting but,  for the moment, we do not see how this problem can be attacked in full generality. A  difficulty inherent to this problem is that there is no known explicit finite type representation 
of $\Gamma_1(N)$ as a group  for $N$ arbitrary, except when $N=p$ for a  prime number $p$ and,  even in this case, the known set of generators  
of $\Gamma_1(p)$ is quite complicated, see \cite{Frasch}\footnote{Actually the results contained in \cite{Frasch}  concern the $\Gamma(p)$'s for $p$ prime but they straightforwardly apply to the $\Gamma_1(p)$'s since $\Gamma_1(N)=\langle \Gamma(N)\, , \, 
\tau\mapsto \tau+1  \rangle$ for any $N$).}. Note that this is in sharp contrast with the corresponding situation in the genus 0 case, where the ambient space  is always $\mathscr M_{0,4}\simeq \mathbb P^1\setminus \{0,1,\infty\}$ whose topology, if not trivial, is particularly simple. 
\mk 

\subsubsection{} According to a well-known result of Takeuchi \cite[Theorem 3]{Takeuchi}, there only exist a finite number of triangle subgroups of ${\rm PU}(1,1)$ which are arithmetic.  It is natural to expect that a similar situation does occur among the groups $\boldsymbol{\Gamma}_1(N)^{\alpha_1}$  which are discrete. 
 However,  we are not aware of any conceptual approaches to tackles a question such as (P3) for the moment.   For instance, determining the holonomy groups of  Corollary \ref{P:alpha1orbifold} which are arithmetic when $N\geq 5$ seems out of reach for now.\footnote{Note that since $\boldsymbol{\Gamma}_1(N)^{\alpha_1}$ are triangle subgroups of ${\rm PSL}_2(\mathbb R)$ for $N=2,3,4$ (see \S\ref{S:LinksWithClassicalHypergeometry}), the three problems (P1),(P2) and (P3) can 
 be completely solved in these cases.}
Here again, the main reason being the  inherent complexity of the congruences subgroups $\Gamma_1(N)$ 
 in their whole.  \sk 

The situation is not as bad for (P1), at least if one considers the problem for a fixed $N$ and from a computational perspective.  Indeed, we have now at disposal integral representions for the components of the developing map of $Y_1(N)^{\alpha_1}$ and this can be used, as in the classical hypergeometric case (see  for instance \cite[Chap.IV.\,\S5]{Yoshida}), to determine explicitly the corresponding holonomy group $\boldsymbol{\Gamma}_1(N)^{\alpha_1}$.   More precisely, it follows from our results in \S\ref{S:AnalyticExpressionVeechMapg=1} that,  setting $T_N(u,\tau)=\theta(u,\tau)^{\alpha_1}/\theta(u-1/N,\tau)^{\alpha_1}$,   the map 
\begin{equation}
\label{E:FNlalala}
F_N : \tau \longmapsto 
\begin{bmatrix}
F_N^\infty(\tau)\vspace{0.1cm}
\\ F_N^0(\tau)
\end{bmatrix}=
\begin{bmatrix}
 \int_{\boldsymbol{\gamma}_\infty}   T_N(u,\tau)du\vspace{0.1cm}
\\ \int_{\boldsymbol{\gamma}_0}   T_N(u,\tau)du
\end{bmatrix} \nonumber
\end{equation}
is the developing map of the lift of Veech's hyperbolic structure on $Y_1(N)^{\alpha_1}$ to its universal covering $\mathcal F_N\simeq \mathbb H$. 
 Consequently, to any projective transform $\widehat \tau=(a\tau+b)/(c\tau+d)$ corresponding to an element $g=\tiny{\big[\!\!
\begin{tabular}{cc}
$a$ \!\!&\!\! \!\!\!\!$b$\vspace{-0.1cm}\\
$c$ \!\!&\!\! \!\!\!\! $d$
\end{tabular}\!\!
\big]}\in \Gamma_1(N)$ will correspond a matrix  $M(g)$ such that 
$F_N(\widehat \tau)=M(g)\cdot F_N(\tau)$ for every $\tau\in \mathbb H$. 
Since $\Gamma_1(N)\simeq \pi_1(Y_1(N)^{\alpha_1})$, 
 it comes that the map 
$g\mapsto M(g)$ induces the holonomy representation 
\eqref{E:HolonomyRepresentation} (up to a suitable conjugation which can be determined explicitly from \S\ref{SS:normalization}, see also \S\ref{S:NormalizationOfVeech'sMap-n=2}).\sk 

Using Mano's connexions formulae presented in \S\ref{SS:ConnectionFormulae},  it is essentially  a computational task to determine explicitly $M(g)$ from $g$ if the latter is given. This is what we explain in \S\ref{SS:ExplicitHolonomyyy} just below. 
Then, once a finite set of explicit generators $g_1,\ldots,g_\ell$ of $\Gamma_1(N)$ is known, one can compute  the matrices $M(g_1),\ldots,M(g_\ell)$ which generate   $\boldsymbol{\Gamma}_1(N)^{\alpha_1}$   (modulo conjugation) and then 
study this group, for instance  by means of algebraic methods.
\mk

\subsubsection{\bf Some explicit connection formulae}
\label{SS:ExplicitHolonomyyy}
Let  $\alpha_1\in ]0, 1[$ be fixed.   
Below, we use the formulae of \S\ref{SS:ConnectionFormulae}
 to obtain some lemmata which can be used to  compute explicitly  the image of 
 a given element of $ \Gamma_1(N)$ in $\boldsymbol{\Gamma}_1(N)^{\alpha_1}$. We end by illustrating our method with  two explicit computations in \S\ref{Parag:2ExplicitComputations}
\mk 

\paragraph{}
For $a=(a_0,a_\infty)\in \mathbb R^2 \setminus \alpha_1\mathbb Z^2$, one sets $r=\alpha_1^{-1}(a_0,a_\infty)\in \mathbb R^2 \setminus \mathbb Z^2$
and 
\begin{equation}
\label{E:mrutau}
 \omega_a(u, \tau)=\exp\big(2i\pi a_0 u \big)\,  \theta(u,\tau)^{\alpha_1}\, \theta\big(u-(r_0\tau-r_\infty), \tau\big)^{-\alpha_1} du. 
\end{equation}

As seen before, the map 
\begin{align*}
\label{E:Fa}
F_a \; : \;  \mathbb H & \longrightarrow \mathbb C^2
\vspace{-0.1cm}\\
\tau & \longmapsto F_a(\tau)=
\begin{bmatrix}
F_a^\infty(\tau) \\
F_a^0(\tau)
\end{bmatrix}= 
\begin{bmatrix}
 \int_{\boldsymbol{\gamma}_\infty}    \omega_a(u,\tau)\vspace{0.1cm}
\\
\int_{\boldsymbol{\gamma}_0}    \omega_a(u, \tau)
\end{bmatrix} \nonumber
\end{align*}
can be seen as an affine lift of the Veech map 
$V_a^{\alpha_1}: \mathcal F_a\rightarrow \mathbb C\mathbb H^1$ of the leaf 
$$\mathcal F_a
=\big\{
(\tau,z_2)\in \mathcal T\!\!{\it or}_{1,2}\, \big\lvert \, a_0\tau-\alpha_1 z_2=a_\infty
\big\}\simeq \mathbb H
$$ of Veech's foliation on the Torelli space $\mathcal T\!\!{\it or}_{1,2}$. \sk

In order to determine the hyperbolic holonomy of an algebraic leaf $Y_1(N)^{\alpha_1}$ it is necessary to establish some connection formulae for the function $F_a$. By this, we mean two 
very slightly distinct things: 
\begin{itemize}
\item first, given a modular transformation $\widehat \tau=(m\tau+n)/(p\tau+q)$, we want to express $F_a( \widehat \tau)$ in terms of $F_{\widehat a}(\tau)$ for a certain $\widehat a$ (which is easy to determine explicitly);  
\sk 
\item second, we want to relate $F_a(\tau)$ and $F_{a''}(\tau)$ for any $\tau$, when  $a$ and $a''$ are congruent modulo $\alpha_1\mathbb Z^2$.  
\end{itemize}

Connection formulae of the first type will be said of {\bf modular type}
whereas those of the second type will be said of {\bf translation type}. 
 \mk


\paragraph{\bf Connection formulae of modular type}
Consider the following two elements of ${\rm SL}_2(\mathbb Z)$
whose classes generate the  modular group: 
$$
T=\begin{bmatrix}
1  &   1  \\
0  &   1
\end{bmatrix}
\qquad \mbox{ and }
\qquad 
S=\begin{bmatrix}
0  &  1  \\
-1  &   0
\end{bmatrix}\, . $$

For $a=(a_0,a_\infty)\in \mathbb R^2 \setminus \alpha_1\mathbb Z^2$, one sets 
 $$a'=\big(a_0,a_\infty-a_0\big)\qquad \mbox{ and }\qquad  
 \widetilde a=\big(a_\infty, -a_0\big)\, . $$  
 Then 
 the restriction of $T$ (resp.\;of  $S$) to $\mathcal F_{a'}$ (resp.\;to 
  $\mathcal F_{\widetilde a}$)
  induces an isomorphism from this leaf onto $\mathcal F_{a}$.    Moreover, it follows from \cite[Theorem\,0.7]{Veech} that both isomorphisms are compatible with the $\mathbb C\mathbb H^1$-structures of these leaves.      The point is that in order  to determine the holonomy of any leaf $Y_1(N)^{\alpha_1}$, we need to make this completely explicit.  \sk

Each of the   two matrices $T$ and $S$ induces an automorphism of the Torelli space ${\mathcal T}\!\!{\it or}_{1,n}$ that will be designated slightly  abusively by the same letter.  
In the natural affine coordinates $(\tau,z_2)$ on the Torelli space (see \S\ref{S:TorelliNag} above), these two automorphisms are written 
\begin{equation*}
T(\tau,z_2)=\big(\tau+1,z_2\big) \qquad \mbox{ and }
\qquad 
S(\tau,z_2)=\big(-{1}/{\tau},-{z_2}/{\tau}\big)\, .
\end{equation*}

   We recall that $\rho=\rho(a)$ stands for 
   $$(\rho_0,\rho_\infty)=\big(\rho_0(a),\rho_\infty(a)\big)=\big(\exp({2i\pi a_0}), \exp({2i\pi a_\infty}\big)$$ with corresponding notations for $\rho'$ and $\widetilde \rho$, that is   
  \begin{align*}
  \rho'= & \rho(a')=\big(\rho_0',\rho_\infty'\big)=\big(\rho_0, \rho_\infty\rho_0^{-1}\big) \\  
\mbox{    and} \quad 
\widetilde \rho= & \rho(\widetilde a)= \big(\widetilde \rho_0,\widetilde \rho_\infty\big)=\big(\rho_\infty, \rho_0^{-1}\big) .
\end{align*}
 
 To save space, we will denote   by $I\!\!H_a$ the matrix $I\!\!H_{\rho(a)}$ 
 ({\it cf.}\;\eqref{E:Hrho}) below. We recall that it is the matrix of Veech's form on the target space of the map $F_a$.  \mk

  To state our result concerning 
  connection formulae of modular type, we will assume that 
  \begin{equation}
  \label{E:rNormal}
    \big(a_0,-a_\infty\big) \in \alpha_1 \, \big[ 0,1\big[^2\, . 
   \end{equation}
   
   This condition can be interpreted geometrically as follows: \eqref{E:rNormal} is equivalent to the fact that, for any $\tau\in \mathbb H$,  the point $(a_0\tau-a_\infty)/\alpha_1$,  which  is a singular point of the multivalued holomorphic 1-form $m_r(u,\tau)$, see   \eqref{E:mrutau}, belongs to the standard fundamental domain $[0,1[_\tau=[0,1[ +[0,1[\tau\subset \mathbb C$ of $E_\tau$.  
   
   Remark that this condition is not really restrictive. Indeed, considering the action \eqref{E:ActionOnHolonomy}, it comes easily that   for any $a\in \mathbb R^2\setminus \alpha_1\mathbb Z^2$, there exists $a^*$ in the same $({\rm SL}_2(\mathbb Z)\ltimes \mathbb Z^2)$-orbit (hence such that 
   $\mathcal F_a\simeq \mathcal F_{a^*}$) 
   which satisfies 
    \eqref{E:rNormal}.
     \begin{lemma}    
\label{L:modularTandS}
Assume that condition \eqref{E:rNormal} holds true.\sk

\begin{enumerate}
\item 
 For  every $\tau\in \mathbb H$, one has $$F_{a}(\tau+1)=  T_{a}\cdot F_{a'}(\tau) $$ with 
 \begin{equation}
\label{E:Tr}
T_{a}= \, 
\begin{bmatrix}
1  &    \rho_\infty/\rho_0 \\
0  &   1
\end{bmatrix}\, .
\end{equation}\mk 
\item  There exists a function $\tau\mapsto \sigma_{a}(\tau)$ such that for every $\tau\in \mathbb H$, one has 
\begin{equation*}
\label{E:Fab}
F_{a}(-1/\tau)=  \sigma_{a}(\tau) \,  \big( S_{a}\cdot F_{\widetilde a}(\tau) \big)
\end{equation*}
with \begin{equation}
\label{E:Sr}
S_{a}=  \, 
   \begin{bmatrix}
1-\rho_\infty &  \rho_0^{-1}
 \\ 
-\rho_0 &  0
 \end{bmatrix} .
\end{equation}

\end{enumerate}
\end{lemma}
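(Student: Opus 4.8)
The plan is to establish both connection formulae by directly transforming the integral representation of $F_a$ under the relevant change of modular parameter, exploiting the quasi-periodicity of $\theta$ together with the explicit action of $T$ and $S$ on the Torelli space. The key observation is that both $T$ and $S$ act on the pair $(\tau, z_2)$ by the explicit formulae recalled just above the lemma, and that the leaves $\mathcal F_{a'}$ and $\mathcal F_{\widetilde a}$ are mapped isomorphically onto $\mathcal F_a$; consequently the integrand $\omega_a(u,\widehat\tau)$ can be rewritten, after the substitution dictated by the modular action on $u$, as a nonvanishing scalar times $\omega_{a'}(u,\tau)$ (resp. $\omega_{\widetilde a}(u,\tau)$). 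The matrices $T_a$ and $S_a$ then record how the basis $(\boldsymbol\gamma_\infty, \boldsymbol\gamma_0)$ of twisted homology is permuted and recombined under these substitutions, and this is exactly where the connection formulae of \S\ref{SS:ConnectionFormulae} enter.

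For the translation part $T$, I would first note that $T(\tau,z_2)=(\tau+1,z_2)$ fixes $z_2$, so the substitution on $u$ is trivial and the only effect comes from the quasi-periodicity relation $\theta(u,\tau+1)$ versus $\theta(u,\tau)$ combined with the change in $a_\infty$ (recording $a' = (a_0, a_\infty - a_0)$). The cycles $\boldsymbol\gamma_\infty$ and $\boldsymbol\gamma_0$ are carried into integer recombinations governed by the horizontal-translation connection matrix; restricting the relevant $3\times 3$ formula of \S\ref{SS:ConnectionFormulae} (the ``$z_1\to z_1+1$'' or the corresponding vertical formula) to the $(\boldsymbol\gamma_\infty,\boldsymbol\gamma_0)$-subspace, and simplifying using $\rho_1 = e^{2i\pi\alpha_1}$ and $\rho_2=\rho_1^{-1}$, should yield precisely the upper-triangular matrix $T_a = \bigl[\begin{smallmatrix} 1 & \rho_\infty/\rho_0 \\ 0 & 1\end{smallmatrix}\bigr]$. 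The appearance of no scalar factor here reflects that $\theta(u,\tau+1)=\theta(u,\tau)$ up to a root of unity that cancels in the projective normalization but must be tracked to get the affine statement.

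For the inversion $S$, the situation is more delicate: $S(\tau,z_2)=(-1/\tau,-z_2/\tau)$ involves a genuine rescaling of the elliptic curve, so the substitution $u \mapsto u/\tau$ (or its inverse) produces, via the modular transformation law of $\theta$ under $\tau\mapsto -1/\tau$, a nontrivial Jacobian-type factor and a Gaussian exponential factor. This is the source of the scalar function $\sigma_a(\tau)$, which I would isolate by carefully applying the classical transformation formula for $\theta(u,-1/\tau)$ in terms of $\theta(u,\tau)$ and checking that all $u$-dependence collapses into the integrand $\omega_{\widetilde a}$ up to this common factor (independent of the cycle, exactly as the function $\Lambda^{\alpha_1}$ appears in \S\ref{S:TheCaseN=2LinkedWithClassicalHypergeometry}). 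The homological recombination is then read off from the ``half-twist'' and translation formulae, combined to realize the effect of $S$ on the twisted cycles.

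\textbf{The main obstacle} will be the bookkeeping in the $S$ case: one must correctly compose the elementary connection matrices of \S\ref{SS:ConnectionFormulae} to reproduce the action of $S$ on homology, while simultaneously controlling the scalar $\sigma_a$ and verifying that condition \eqref{E:rNormal} guarantees the singular point $(a_0\tau-a_\infty)/\alpha_1$ stays in the standard fundamental domain throughout the deformation (so that the nice bases $\boldsymbol\gamma^0$ and $\boldsymbol\gamma'$ remain well-defined and the intersection products are preserved). As a consistency check I would verify the compatibility relation $I\!\!H_a = {}^t\overline{T_a}\cdot I\!\!H_{a'}\cdot T_a$ and the analogous identity for $S_a$, using the invariance of the twisted intersection product under small deformations established in \S\ref{SS:ConnectionFormulae}; this both confirms the matrices and pins down the normalization.
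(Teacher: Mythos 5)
Your treatment of the integrand is correct and coincides with the paper's first step: for $\tau\mapsto\tau+1$ the factors $e^{i\pi/4}$ coming from $\theta(u,\tau+1)=e^{i\pi/4}\theta(u,\tau)$ cancel between the exponents $\alpha_1$ and $-\alpha_1$, giving $\omega_a(u,\tau+1)=\omega_{a'}(u,\tau)$ exactly (no scalar), while for $\tau\mapsto-1/\tau$ the classical formula $\theta(-u/\tau,-1/\tau)=i\sqrt{\tau/i}\,e^{i\pi u^2/\tau}\,\theta(u,\tau)$ produces precisely the cycle-independent scalar $\sigma_a(\tau)$.

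The genuine gap is in your homological bookkeeping. You propose to obtain the recombination of $(\boldsymbol{\gamma}_\infty,\boldsymbol{\gamma}_0)$ by restricting and composing the connection formulae of \S\ref{SS:ConnectionFormulae} (half-twist, $z_i\mapsto z_i+1$, $z_i\mapsto z_i+\tau$). Those formulae describe the monodromy of twisted homology along paths that move the \emph{punctures} with $\tau$ fixed; they account for the $(\mathbb Z^2)$-part (and braid part) of ${\rm Sp}_{1,2}(\mathbb Z)\simeq {\rm SL}_2(\mathbb Z)\ltimes\mathbb Z^2$, not for the ${\rm SL}_2(\mathbb Z)$-part. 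The transformations $T$ and $S$ of the lemma leave the punctures alone (or rescale them) and change the \emph{lattice marking} of $E_\tau$; their action on the cycles $\boldsymbol{\gamma}_0,\boldsymbol{\gamma}_\infty$ (regularizations of $]0,1[$ and $]0,\tau[$) is of Dehn-twist type and is not a composition of puncture translations and half-twists. Concretely, restricting ${\rm HTrans1}_\rho$ to the $(\boldsymbol{\gamma}_\infty,\boldsymbol{\gamma}_0)$-block gives $\bigl[\begin{smallmatrix}1/\rho_1 & -d_\infty/(\rho_0\rho_1)\\ 0 & 1/\rho_0\end{smallmatrix}\bigr]$, which is not, even projectively, equal to $T_a=\bigl[\begin{smallmatrix}1 & \rho_\infty/\rho_0\\ 0 & 1\end{smallmatrix}\bigr]$, so your recipe already yields a wrong matrix in case (1). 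What is needed instead --- and what the paper does --- is a direct geometric computation: since $\mathbb Z_{\tau+1}=\mathbb Z_\tau$, the punctured curve and the integrand are unchanged, and one decomposes the segment $]0,\tau+1[$ inside $E_{\tau,z}$ into $]0,\tau[$ followed by a translate of $]0,1[$, tracking the determination of the multivalued function; after regularization this gives $\boldsymbol{\gamma}_\infty(\tau+1)=\boldsymbol{\gamma}'_\infty(\tau)+\rho'_\infty\boldsymbol{\gamma}'_0(\tau)$ and $\boldsymbol{\gamma}_0(\tau+1)=\boldsymbol{\gamma}'_0(\tau)$, hence $T_a$. For case (2) the paper writes $S=T\cdot\bigl[\begin{smallmatrix}1&0\\-1&1\end{smallmatrix}\bigr]\cdot T$ in ${\rm SL}_2(\mathbb Z)$, handles the middle unipotent factor by a computation symmetric to case (1), and multiplies the three $2\times 2$ matrices to obtain $S_a$; the formulae of \S\ref{SS:ConnectionFormulae} never enter. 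Your consistency checks via the hermitian matrices $I\!\!H_a$ are sound (the paper performs them too), but they only confirm a candidate matrix and cannot replace its derivation.
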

    The following notations will be convenient in the proof below: for $\bullet=0,\infty$ and $\tau\in \mathbb H$, we denote by $\boldsymbol{\gamma}_{\!\bullet}(\tau)$ the twisted 1-cycle constructed in \S3 with   $\tau$ seen as a point of $\mathcal F_{a}$:  the ambient torus is $E_{\tau}$  which is punctured  at $[0]$ and 
$[z_2]$ with $z_2=\alpha_1^{-1}(a_0\tau-a_\infty)$.  And for any superscript ${}^\star$, 
we will write $\boldsymbol{\gamma}^\star_{\!\bullet}(\tau)$ for the corresponding cycles but when $a$ has been replaced by $a^\star$.  
For instance,  $\boldsymbol{\gamma}_{0}'(\tau)$ is the twisted cycle on the torus $E_\tau$,  punctured at $[0]$ and $[z_2']$ with $z_2'=(a_0'\tau-a_\infty')/\alpha_1=r_0\tau+(r_0-r_\infty)$,  obtained by the regularization of  $]0,1[$. 
 \sk

\begin{proof}
%
We first treat the  case of the transformation $\tau\mapsto \tau+1$ that will be used to deal with the second one after.\sk

On the one hand, one has 
\begin{equation}
\label{E:ploc}
\omega_{a}(u,\tau+1)=\omega_{a'}(u, \tau).
\end{equation}

  On the other hand, the map associated to 
$E_{\tau}\rightarrow E_{\tau+1}$ lifts to the identity in the variable $u$. Consequently, one has (see Figure \ref{F:TransfoT} below)
\begin{equation}
\label{E:gogogo}
\boldsymbol{\gamma}_\infty(\tau+1)= \boldsymbol{\gamma}_\infty'(\tau)+\rho'_{\infty} \boldsymbol{\gamma}_0'(\tau)
\qquad \mbox{ and }\qquad 
\boldsymbol{\gamma}_0(\tau+1)=\boldsymbol{\gamma}_0'(\tau)\, .
\end{equation}
\begin{center}
\begin{figure}[h]
\psfrag{z2}[][][0.8]{$ z_2$}
\psfrag{z2'}[][][0.8]{$ \textcolor{Griz}{z_2'}$}
\psfrag{0}[][][1]{$ 0$}
\psfrag{t}[][][1]{$\tau$}
\psfrag{1+t}[][][1]{$1+\tau$}
\psfrag{L0}[][][1]{$\qquad \qquad  \boldsymbol{\ell}_0(\tau+1)=\textcolor{Griz}{\boldsymbol{\ell}_0'(\tau)}$}
\psfrag{pL0}[][][1]{$\qquad \quad  \textcolor{Griz}{ \rho_\infty' \boldsymbol{\ell}_0'(\tau)}$}
\psfrag{Li}[][][1][28]{$\qquad   \boldsymbol{\ell}_\infty(\tau+1)$}
\psfrag{pLi}[][][1][77]{$\textcolor{Griz}{ \boldsymbol{\ell}_\infty'(\tau)}$}
\includegraphics[scale=0.9]{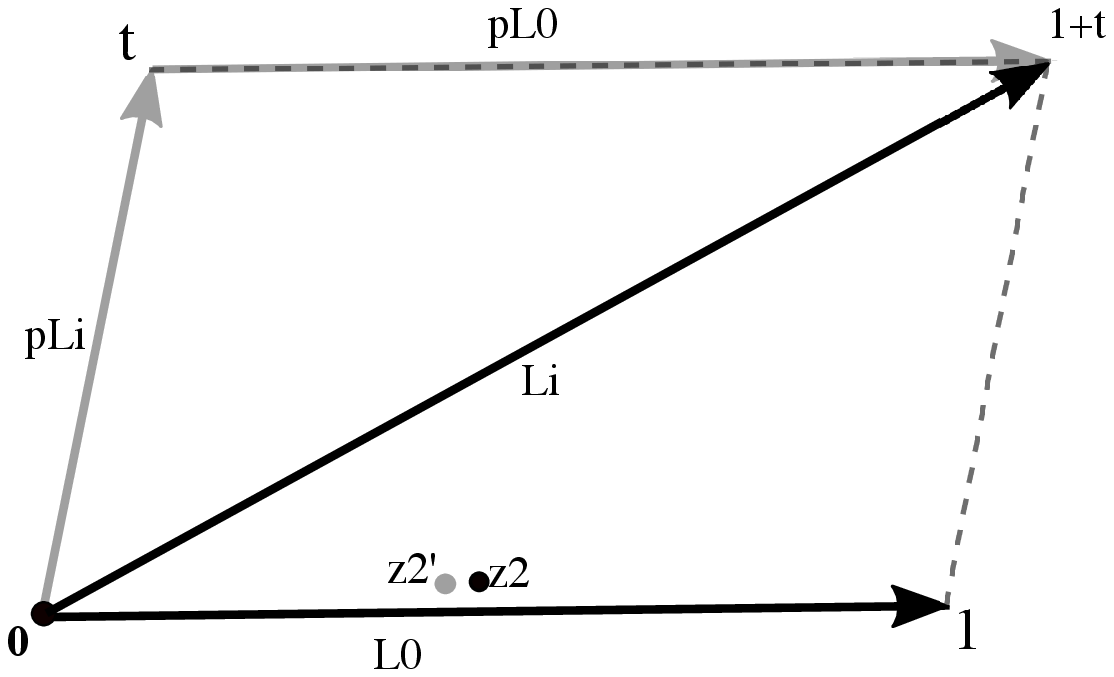}
\caption{Relations between the locally-finite twisted 1-simplices 
$\boldsymbol{\ell}_0(\tau+1), \boldsymbol{\ell}_\infty(\tau+1), { \boldsymbol{\ell}_0'(\tau)}$ and ${ \boldsymbol{\ell}_\infty'(\tau)}$ 
which give  \eqref{E:gogogo} after regularization  ({\it cf.}\;\S\ref{SS:NiceBasisTwistedHomologyClasses}).  The point $z_2$ has been assumed to be of the form $\epsilon\tau+1/N$ with $N=2$ and $\epsilon>0$ small ({\it i.e.}\;the pictured case corresponds to $a=(\epsilon, -1/2)$).}
\label{F:TransfoT}
\end{figure}
\end{center} 

The two looked forward relations 
$ 
F^\infty_{a}(\tau+1)=F^\infty_{a'}(\tau)+ 
(\rho_\infty/\rho_0)\cdot F^0_{a'}(\tau)
$ and 
$F^0_{a}(\tau+1)=F^0_{a'}(\tau)
$  then  follow immediately from \eqref{E:ploc} and    \eqref{E:gogogo}.

 Note that the following relation holds true
$$
I\!\!H_{a'}= 
{}^t\overline{T_{a}}\cdot     I\!\!H_{a}\cdot   T_{a}\, ,   
 $$
which is coherent with the fact that $T$ induces an isomorphism between the 
$\mathbb C\mathbb H^1$-structure of the leaves $\mathcal F_{a'}$ and $\mathcal F_{a}$ of Veech's foliation on the Torelli space.\sk 

We now turn to the  case of $S$. 
Considering the principal determination of the square root, one has 
$$
\theta\Big(-\frac{u}{\tau},-\frac{1}{\tau}\Big)
=
i\cdot  \sqrt{\frac{\tau}{i}} \cdot 
e^{\frac{i\pi u^2}{\tau}}\cdot 
\theta(u,\tau) 
$$ 
for every $\tau\in \mathbb H$ and every $u\in \mathbb C$ (see \cite[(8.5)]{Chandrasekharan}). 
\sk 

By a direct computation (see also \cite[Prop. 6.1]{Mano}), one gets 
\begin{align}
\label{E:mu(-1/tau)}
\omega_{a}\big(-u/\tau, -1/\tau\big)=  & \, 
\sigma_{a}(\tau)\cdot 
\omega_{\widetilde a}(u,  \tau)
\end{align}
with $\sigma_{a}(\tau)=-{\tau}^{-1}
\exp\big(      -{i\pi }(a_0+a_\infty \tau)^2/(\alpha_1\tau)\big)$. 
\sk 

Now one needs to
determine the action of $S_*$   on the twisted 1-cycles $\widetilde{ \boldsymbol{\gamma}}_\infty$ and 
$\widetilde {\boldsymbol{\gamma}}_0$. 
 To this end, one remarks that 
the following decomposition holds true
$$
\begin{bmatrix}
0 & 1\\
-1 & 0
\end{bmatrix}= 
\begin{bmatrix}
1 & 1\\
 0& 1
\end{bmatrix}
\begin{bmatrix}
1 & 0\\
-1 & 1
\end{bmatrix}
\begin{bmatrix}
1 & 1\\
0 & 1
\end{bmatrix}\, .
$$
This  relation will allow us to express the action of $S_*$ as a composition of three maps of the type of the one previously determined. More explicitly, 
from the preceding matricial decomposition we get that
 the isomorphism 
 between ${\mathcal F_{\widetilde a}}$ and ${\mathcal F_{a}}$
  induced 
  by the restriction of $S$ factorizes as follows
 \begin{equation}
 \label{E:factoS}
    \xymatrix@R=1cm@C=1.7cm{  
      \mathcal F_{\widetilde a}       
      \ar@/_2pc/[rrr]_{
        \tiny{
\big[\!\!\!\!
 \begin{tabular}{cc}
0 \!\!\!\!& 1
\vspace{-0.1cm}
\\
-1 \!\!\!\!& 0
\end{tabular}\!\!\!\!
\big]
} 
      }
          \ar@{->}[r]^{
      \tiny{
\big[\!\!\!\!
 \begin{tabular}{cc}
1 \!\!\!\!& 1
\vspace{-0.1cm}
\\
0 \!\!\!\!& 1
\end{tabular}\!\!\!\!
\big]
} 
       }  
  & 
   \mathcal F_{ a^\star}           \ar@{->}[r]^{
       \tiny{
\big[\!\!\!\!
 \begin{tabular}{cc}
1 \!\!\!\!& 0
\vspace{-0.1cm}
\\
-1 \!\!\!\!& 1
\end{tabular}\!\!\!\!
\big]
} 
}  
  &  
    \mathcal F_{a'}           \ar@{->}[r]^{
    \tiny{
\big[\!\!\!\!
 \begin{tabular}{cc}
1 \!\!\!\!& 1
\vspace{-0.1cm}
\\
0 \!\!\!\!& 1
\end{tabular}\!\!\!\!
\big]
} 
}  
  &   \mathcal F_{a}           }\, , 
        \end{equation}
where  $a^\star=(a_\infty, a_\infty-a_0)$.   \sk

From what has been done before, we get that, in the suitable corresponding basis, the matrices of the linear transformations on the twisted 1-cycles associated to the first and the last isomorphisms in  \eqref{E:factoS} are $T_a$ and $T_{a^\star}$ respectively.

By an analysis  completely similar and symmetric to the one we did to treat the first case, one gets that the isomorphism between 
$ \mathcal F_{a^\star}$ and $ \mathcal F_{a'}$ in the middle of \eqref{E:factoS}  induces the following transformation for the corresponding twisted 1-cycles: 
$$
\begin{bmatrix}
\boldsymbol{\gamma}'_\infty\big(  \frac{\tau}{1-\tau} \big)
\vspace{0.1cm}
 \\ 
 \boldsymbol{\gamma}'_0\big(  \frac{\tau}{1-\tau} \big)
 \end{bmatrix}= 
  \begin{bmatrix}
1 &  0
 \\  
-\rho_0' &  1
 \end{bmatrix}
 \begin{bmatrix}
\boldsymbol{\gamma}^*_\infty(\tau)
 \\ 
 \boldsymbol{\gamma}^*_0(\tau)
 \end{bmatrix}\, .
$$

From \eqref{E:factoS} and what follows, it comes that for every $\tau \in \mathbb H\simeq \mathcal F_{\widetilde a}$, the action of $S$ on twisted 1-cycles is given by 
$$ \begin{bmatrix}
\boldsymbol{\gamma}_\infty(-1/\tau)
 \\ 
 \boldsymbol{\gamma}_0(-1/\tau)
 \end{bmatrix}
= 
   \begin{bmatrix}
1 &  \frac{\rho_\infty}{\rho_0}
 \\ 
0 & 1
 \end{bmatrix}
  \begin{bmatrix}
1 & 0
 \\  
-\rho_0 &  1
 \end{bmatrix}
  \begin{bmatrix}
1 & \frac{\rho_\infty^*}{\rho_0^*}
 \\  
0 & 1
 \end{bmatrix}
 \begin{bmatrix}
\widetilde{\boldsymbol{\gamma}}_\infty(\tau)
 \\ 
\widetilde{\boldsymbol{\gamma}}_0(\tau)
 \end{bmatrix}$$
that is by
\begin{align}
\label{E:explicitS}
 \begin{bmatrix}
\boldsymbol{\gamma}_\infty(-1/\tau)
 \\ 
 \boldsymbol{\gamma}_0(-1/\tau)
 \end{bmatrix}
=
   \begin{bmatrix}
1-\rho_\infty &  \rho_0^{-1}
 \\ 
-\rho_0 &  0
 \end{bmatrix}
 \begin{bmatrix}
\widetilde{\boldsymbol{\gamma}}_\infty(\tau)
 \\ 
\widetilde{\boldsymbol{\gamma}}_0(\tau)
 \end{bmatrix}.
\end{align}

The second part of the lemma thus follows  by combining 
\eqref{E:mu(-1/tau)} with \eqref{E:explicitS}. 
\end{proof}
\mk


\paragraph{\bf Connection formulae of translation type}

Since our main interest is in the algebraic leaves of $\mathcal F^{\alpha_1}$, we will
establish connection formulae of translation type only for the maps $F_a$'s associated to such leaves. The general case 
does not present more difficulty but is not of interest
  to  us.
\mk 

Let $N$ be a fixed integer strictly bigger than 1. One sets 
$$
\mu=e^{  \frac{2i\pi}{N} \alpha_1  }. 
$$

For $m,n\in \mathbb Z^2$ with $(m,n,N)=1$, 
remember the notation  $\mathcal F_{m,n}$ of Section \ref{S:AuxiliaryLeaves}: 
$$
\mathcal F_{m,n}=\mathcal F_{(m/N,-n/N)}=\bigg\{
\big(\tau,z_2\big)\in {\mathcal{ T}}\!\!\!{\it or}_{1,2}
\; \big\lvert \; 
z_2=(m/N)\tau+n/N
\bigg\}\, .$$
The  lifted holonomy  associated to this leaf of Veech's foliation is 
$$
	a^{m,n}=\left(  \frac{ m }{N}\alpha_1 , -\frac{ n }{N}\alpha_1      \right)
$$
whose  associated linear  holonomy is given by 
$$\rho^{m,n}=\big(\rho_0^{m,n}, \rho_1^{m,n}, \rho_\infty^{m,n}\big)=
\left(\mu^m,\mu^N,\mu^{-n}\right)=
\left(e^{  \frac{2i\pi m}{N} \alpha_1  },
e^{ {2i\pi} \alpha_1  } , 
e^{  -\frac{2i\pi n}{N} \alpha_1  }
\right)
 \, .
$$

From now on, we use the notations $\omega_{m,n}$ and $F_{m,n}$ for $\omega_{a^{m,n}}$ and $F_{a^{m,n}}$ respectively: for $\tau\in \mathbb H$, one has 
\begin{align}
\label{E:VeechFab}
F_{m,n}(\tau)=\begin{bmatrix}
 \int_{\boldsymbol{\gamma}_\infty}  
 \omega_{m,n}(u,\tau)\\ 
 \int_{\boldsymbol{\gamma}_0}  
 \omega_{m,n}(u,\tau)
 \end{bmatrix}
 \quad 
 \mbox{ with }
 \quad 
  \omega_{m,n}(u, \tau)=    
  \frac{e^{ \frac{2i\pi  m\alpha_1}{N}} \theta(u,\tau)^{\alpha_1}}{\theta\left(u-
\big(\frac{m\tau+n}{N}\big), \tau\right)^{\alpha_1}}
du\,.
\end{align}
%

 Then, using the notations of Section \ref{S:IntersectionProduct}, one sets (see  \eqref{E:Hrho}): 
$$
I\!\!H_{m,n}=I\!\!H_{\rho^{m,n}}= \frac{1}{2i}
\begin{bmatrix} 
  \frac{
    (\mu^m-1)(1-\mu^{N-m})}{\mu^{N}-1}
&
   \frac{1-\mu^{-m}-\mu^{-n}+\mu^{N-m-n}}{\mu^N-1}  \\
    \frac{\mu^{N}-\mu^{N+m}-\mu^{N+n}     + \mu^{m+n}}{\mu^N-1}  &
  \frac{
    (\mu^n-1)(1-\mu^{N-n})}{\mu^{N}-1}
       \end{bmatrix} .$$

It  is the matrix of Veech's hermitian form in the basis $(F_{m,n}^\infty, F_{m,n}^0)$.

\sk

In the lemma below, we use the notations of \S\ref{SS:ConnectionFormulae}.
\begin{lemma} 
\label{L:ConnectionFormulaeTranslation}
\begin{enumerate}
\item 
For any $\tau\in \mathbb H$, one has 
\begin{equation*}
\label{E:Fab}
F_{m,n-N}(\tau)=  B_{m,n}\cdot F_{m,n}(\tau)
 \end{equation*}
with $B_{m,n}=\mu^{-\frac{N}{2}}\cdot  {\rm HT2}_{\rho^{m,n}}$; \mk 
 \item 
 There exists a function $\tau\mapsto \eta_{m,n}(\tau)$ such that  for
 every $\tau\in \mathbb H$, one has: 
 \begin{equation*}
 F_{m-N,n}(\tau)= 
 \eta_{m,n}(\tau)\, 
A_{m,n}  \cdot 
 F_{m,n}(\tau)
 \end{equation*}
with $A_{m,n}=  \mu^{-\frac{N}{2}-n} \cdot 
{\rm VT2}_{\rho^{m,n}}$.
\end{enumerate}
\end{lemma}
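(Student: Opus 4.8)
The statement asserts two connection formulae of "translation type" relating $F_{m,n}$ to $F_{m,n-N}$ and to $F_{m-N,n}$. The plan is to treat these as the specialization, to the algebraic-leaf parameters $a^{m,n}$, of the general vertical-translation connection formulae established in \S\ref{SS:ConnectionFormulae}, namely the second horizontal and second vertical translation formulae whose $2\times2$ reduced matrices are $\mathrm{HT2}_\rho$ (see \eqref{E:HTrans2rho22}) and $\mathrm{VT2}_\rho$ (see \eqref{E:VTrans2rho22}). The key observation is that a shift $n\mapsto n-N$ (resp. $m\mapsto m-N$) in the integer data corresponds, at the level of the puncture $z_2=(m\tau+n)/N$, exactly to the translation $z_2\mapsto z_2-1$ (resp. $z_2\mapsto z_2-\tau$) on $E_\tau$, i.e.\ to the paths $f_{\mathrm{HTrans2}}$ and $f_{\mathrm{VTrans2}}$ run in the reverse direction. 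So each formula should follow by tracking how the twisted $1$-cycles $\boldsymbol{\gamma}_\infty,\boldsymbol{\gamma}_0$ transform under the corresponding monodromy isomorphism, together with the transformation of the integrand $\omega_{m,n}$.

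First I would record the linear-holonomy data: for the leaf $\mathcal F_{m,n}$ one has $\rho^{m,n}=(\rho_0,\rho_1,\rho_\infty)=(\mu^m,\mu^N,\mu^{-n})$ with $\mu=e^{2i\pi\alpha_1/N}$, so that $n\mapsto n-N$ sends $\rho_\infty=\mu^{-n}\mapsto\mu^{-n+N}=\rho_\infty\rho_1$, which is precisely the action $R_{\mathrm{HTrans2}}:(\rho_\infty,\rho_0,\rho_1)\mapsto(\rho_\infty\rho_1,\rho_0,\rho_1)$ of the second horizontal translation; similarly $m\mapsto m-N$ sends $\rho_0=\mu^m\mapsto\mu^{m-N}=\rho_0\rho_1^{-1}$, matching $R_{\mathrm{VTrans2}}:(\rho_\infty,\rho_0,\rho_1)\mapsto(\rho_\infty,\rho_0\rho_1^{-1},\rho_1)$. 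This confirms that the relevant cycle-transformation matrices, expressed in the basis $(\boldsymbol{\gamma}_\infty,\boldsymbol{\gamma}_0)$, are exactly $\mathrm{HT2}_{\rho^{m,n}}$ and $\mathrm{VT2}_{\rho^{m,n}}$ (up to the direction of the path, which only affects scalar prefactors). I would then compute the scalar factor coming from the integrand: using the theta functional equations \eqref{E:ThetaQuasiPeriodicity} one checks that $\omega_{m,n-N}(u,\tau)$ equals $\omega_{m,n}(u,\tau)$ up to a $u$-independent constant, yielding the prefactor $\mu^{-N/2}$ in $B_{m,n}=\mu^{-N/2}\,\mathrm{HT2}_{\rho^{m,n}}$; for the vertical case the quasi-periodicity in the $\tau$-direction produces a genuinely $\tau$-dependent factor $\eta_{m,n}(\tau)$ (as in \eqref{E:mu(-1/tau)} for the $S$ case), together with the constant $\mu^{-N/2-n}$ multiplying $\mathrm{VT2}_{\rho^{m,n}}$.

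Concretely, the steps in order are: (i) verify the holonomy shifts above; (ii) derive the scalar relations between $\omega_{m,n}$ and $\omega_{m,n-N}$, resp.\ $\omega_{m-N,n}$, from \eqref{E:ThetaQuasiPeriodicity}, isolating the $u$-independent prefactors and (in the vertical case) the $\tau$-dependent function $\eta_{m,n}$; (iii) transport the basis $(\boldsymbol{\gamma}_\infty,\boldsymbol{\gamma}_0)$ along the reversed paths $f_{\mathrm{HTrans2}}$ and $f_{\mathrm{VTrans2}}$, reading off the cycle-connection matrices as $\mathrm{HT2}_{\rho^{m,n}}$ and $\mathrm{VT2}_{\rho^{m,n}}$ from \eqref{E:HTrans2rho22} and \eqref{E:VTrans2rho22}; and (iv) combine (ii) and (iii), integrating $\omega$ against the transported cycles to obtain the two displayed identities. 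As a consistency check I would confirm that each formula is compatible with Veech's hermitian form, i.e.\ that $I\!\!H_{m,n-N}={}^t\overline{B_{m,n}}\cdot I\!\!H_{m,n}\cdot B_{m,n}$ and the analogous relation for $A_{m,n}$, which follows from the invariance $I\!\!I_{\rho''}=\mathrm{HT2}_\rho\cdot I\!\!I_\rho\cdot{}^t\overline{\mathrm{HT2}}_\rho$ already recorded in \S\ref{SS:ConnectionFormulae} (the $\mu^{-N/2}$, being a phase since $|\mu|=1$, drops out of this conjugation).

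The main obstacle I anticipate is bookkeeping rather than conceptual: pinning down the scalar prefactors ($\mu^{-N/2}$ and $\mu^{-N/2-n}$) and the function $\eta_{m,n}$ precisely, since these depend on a careful choice of determination of $\theta^{\alpha_1}$ along the cycles and on the orientation conventions for the translation paths. One must take care that the formulae in \cite{Mano} on which \S\ref{SS:ConnectionFormulae} relies are stated with only pictorial justifications, so the delicate point is to match the direction in which $f_{\mathrm{HTrans2}}$ and $f_{\mathrm{VTrans2}}$ are traversed (here \emph{decreasing} $n$ and $m$, i.e.\ the inverse paths) with the corresponding inverse connection matrices, and to verify that the inversion is already absorbed into the explicit forms of $\mathrm{HT2}$ and $\mathrm{VT2}$. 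Once the orientation conventions are fixed consistently, the remainder is a direct substitution of $\rho^{m,n}=(\mu^m,\mu^N,\mu^{-n})$ into \eqref{E:HTrans2rho22} and \eqref{E:VTrans2rho22}.
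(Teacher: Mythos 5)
Your proposal is correct and follows essentially the same route as the paper's proof: the scalar prefactors $\mu^{-N/2}$ and $\mu^{-\frac{N}{2}-n}\,\eta_{m,n}(\tau)$ are extracted from the quasi-periodicity \eqref{E:ThetaQuasiPeriodicity} of $\theta$ applied to the integrands $\omega_{m,n}$, and the cycle transformations are exactly the matrices ${\rm HT2}_{\rho^{m,n}}$ and ${\rm VT2}_{\rho^{m,n}}$ imported from \S\ref{SS:ConnectionFormulae}, then paired with the transformed integrands. The orientation subtlety you flag in fact dissolves under the paper's sign conventions for $\rho_\infty$: the shifts $n\mapsto n-N$ and $m\mapsto m-N$ match the forward actions $R_{\rm HTrans2}$ and $R_{\rm VTrans2}$ directly, so the matrices apply as stated without inversion, which is precisely how the paper uses them.
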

\begin{proof}
The relation $ \omega_{m,n-N}=\mu^{-N/2} \omega_{m,n}$ follows easily from  the quasi-periodicity property \eqref{E:ThetaQuasiPeriodicity} of  $\theta$.
  On the other hand, one can 
 write 
 $F_{m,n-N}=\langle \omega_{m,n-N}, \boldsymbol{\gamma}_{m,n-N}\rangle$ with 
${}^t\! \boldsymbol{\gamma}_{m,n-N}=(\boldsymbol{\gamma}^\infty_{m,n-N}, \boldsymbol{\gamma}^0_{m,n-N})$. From \S \ref{SS:ConnectionFormulae}, it comes that 
 $ \boldsymbol{\gamma}_{m,n-N}=  
  {\rm HT2}_{\rho^{m,n}} \cdot   
  \boldsymbol{\gamma}_{m,n}$ 
  where ${\rm HT2}_{\rho^{m,n}}$ stands for the $2\times 2$ matrix ${\rm HT2}_{\rho}$  
 defined in 
 \eqref{E:HTrans2rho22}
 with $\rho=\rho^{m,n}$.  The first connection formula follows immediately. \sk

 From   \eqref{E:ThetaQuasiPeriodicity} again, one deduces that the following relation holds true:   $ \omega_{m-N,n}=\mu^{-\frac{N}{2}-n} \exp({i\pi \tau \alpha_1(1-2m/N)})
  \omega_{m,n}$. 
On the other hand, one has $ \boldsymbol{\gamma}_{m-N,n}= {\rm VT2}_{\rho^{m,n}}  \cdot  \boldsymbol{\gamma}_{m,n}$.  Setting 
$\eta_{m,n}(\tau)=e^{i\pi \tau \alpha_1(1-2m/N)})$, 
the second formula follows. \mk
  \end{proof}

Note that what is actually interesting in the preceding lemma is that the matrices $B_{m,n}$ and $A_{m,n}$ can be explicited.

 Indeed, one has 
$B_{m,n}= (\mu^{-\frac{N}{2}} \beta_{m,n}^{i,j})_{i,j=1}^2$ with 
 \begin{align*}
 \beta_{m,n}^{1,1}=&
   {\mu}^{2\,N-n}-{\mu}^{2\,N+m-n}+{\mu}^{N+m}+{\mu}^{N-m-n}-{\mu}^{N-n} \, ,  
 \\
\beta_{m,n}^{1,2}=&
 {\mu}^{2\,N-m-n}+{\mu}^{2\,N-2\,n}-{\mu}^{2\,N-n}-2\,{\mu}^{N-m-n}+{\mu}^{N}+{\mu}^{N-m-2\,n}-{\mu}^{N-n}\, , 
  \\
 \beta_{m,n}^{2,1}=&
  -{\mu}^{N+2\,m}-{\mu}^{m}+2\,{\mu}^{N+m}+2-{\mu}^{N}-{\mu}^{-m}
  \\
 \mbox{and }\;  \beta_{m,n}^{2,2}= &
  -{\mu}^{N+m}-{\mu}^{N-m}+{\mu}^{N+m-n}-{\mu}^{N-n}+2\,{\mu}^{N}+2\,{\mu}^{-m}+{\mu}^{-n}-{\mu}^{-m-n}-1   
 \, . 
 \end{align*}
 
(Verification: the following relation holds true:  $
 {}^t\!\overline{B_{m,n}}\cdot I\!\!H_{m,n-N}\cdot  {B_{m,n}}= I\!\!H_{m,n}$). 

The matrix $A_{m,n}$ is considerably simpler. One has: 
$$A_{m,n}=\mu^{-\frac{N}{2}-n}
\begin{bmatrix}
1 &   0
  \\
  \mu^n(\mu^{m-N}-1)  & \mu^{n-N}
\end{bmatrix}.$$
(Verification: the following relation holds true:  $
  {}^t\! \overline{A_{m,n}}\cdot I\!\!H_{m-N,n}\cdot {A_{m,n}}= I\!\!H_{m,n}$). 

\subsubsection{\bf Effective computation of the holonomy of $Y_1(N)^{\alpha_1}$}
\label{S:HolonomyY1(N)alpha1}
We now explain how the connection formulae 
that we have just established can be used to compute  the  holonomy group 
$\boldsymbol{\Gamma}_1(N)^{\alpha_1}$ of  $Y_1(N)^{\alpha_1}$ in an effective way. 
\sk

\paragraph{} As a concrete model for this  `hyperbolic conicurve', we choose the quotient of the leaf 
$\mathcal F_{0,1}$ (cut out by $z_2=1/N$  in the Torelli space) by its stabilizer. 
 Actually, we will use the natural isomorphism $\mathbb H\simeq \mathcal F_{0,1}$ to 
 see  $Y_1(N)^{\alpha_1}$ as the standard modular curve $\mathbb H/\Gamma_1(N)$.  From this point of view,  the developing map of 
 the associated $\mathbb C\mathbb H^1$-conifold structure 
is nothing else but  the map $F_{0,1}$ considered above ({\it cf.}\;\eqref{E:VeechFab} with $m=0$ and $n=1$). 
 It follows that the $\mathbb C\mathbb H^1$-holonomy of $Y_1(N)^{\alpha_1}$ 
 can be determined through the connection formulae satisfied by $F_{0,1}$.  
 Note that the corresponding hermitian matrix $I\!\!H_{0,1}$ simplifies and has a relatively simple expression ({\it cf.}\;also \S\ref{SS:normalization}):
 $$
I\!\!H_{0,1}= \frac{1}{2i}
\begin{bmatrix} 
0
&
   \mu^{-1}  \\
  -\mu  &
  \frac{
    (\mu-1)(1-\mu^{N-1})}{\mu^{N}-1}
       \end{bmatrix} .$$

\paragraph{}  Let $g\cdot \tau=(p\tau+q)/(r\tau+s)$ be the image of $\tau\in \mathbb H$ by an element 
 $g=\big[\!\!\!\!\tiny{\begin{tabular}{cc}
 $p$\!\!\!\! &$q$\vspace{-0.1cm}\\
 $r$\!\!\!\!& $s$ \end{tabular} 
}\!\!\!\!\big]$ of $ \Gamma_1(N)$.  From the two lemmata proved above, it comes that there exists  
a matrix $\Lambda'(g)\in {\rm Aut}(I\!\!H_{0,1}) $   
as well as a non-vanishing function $\lambda_g(\tau)$ such that 
\begin{equation}
\label{E:F01(gtau)}
F_{0,1}(g\cdot \tau)=\lambda_g(\tau)\, \Lambda'(g) \cdot F_{0,1}(\tau)\, . 
\end{equation}

Moreover, one can ask  $\Lambda'(g)$  to have coefficients in  $\mathbb R(\mu)$. The map $ \Lambda': g\mapsto \Lambda'(g)$ is a representation of $
\Gamma_1(N)$  in  ${\rm Aut}(I\!\!H_{0,1}) \cap {\rm PSL}_2(\mathbb R(\mu))$. 

Then,  conjugating this representation by the matrix 
$$
Z=\sqrt{2}
\begin{bmatrix}
 \mu^{-1} & - \frac{\mu^{N-1}-1}{\mu^N-1}      \\ 
 0 &   1  
\end{bmatrix} 
$$
({\it cf.}\;\S\ref{SS:normalization}), one gets a normalized representation of $\Gamma_1(N)$ in ${\rm PSL}_2(\mathbb R)$ 
\begin{align}
\label{E:holonomyF01}
\Lambda=\Lambda_N^{\alpha_1} : 
\Gamma_1(N)    & \longrightarrow     {\rm PSL}_2(\mathbb R) \\
g & \longmapsto \Lambda(g)=Z^{-1}\cdot \Lambda'(g)\cdot Z
 \nonumber
\end{align}    
for the considered $\mathbb C\mathbb H^1$-holonomy. It is  a deformation of the standard  inclusion of the projectivization $\boldsymbol{\Gamma}_1(N)$ of $\Gamma_1(N)$ as a subgroup of ${\rm PSL}_2(\mathbb Z)\subset {\rm PSL}_2(\mathbb R)$ 
which is  analytic with respect to the parameter $\alpha_1\in ]0,1[$. \mk 

\paragraph{} We now explain how to compute $\Lambda(g)$ explicitly for 
$g=\big[\!\!\!\!\tiny{\begin{tabular}{cc}
 $p$\!\!\!\! &$q$\vspace{-0.1cm}\\
 $m$\!\!\!\!& $n$ \end{tabular}} \!\!\!\!\big]\in \Gamma_1(N) $. 
 
 Writing $g$ as a word in $T=\big[\!\!\!\!\tiny{\begin{tabular}{cc}
 $1$\!\!\!\! &$1$\vspace{-0.1cm}\\
 $0$\!\!\!\!& $1$ \end{tabular}} \!\!\!\!\big]$ 
 and $S=\big[\!\!\!\!\tiny{\begin{tabular}{cc}
 $0$\!\!\!\! &$1$\vspace{-0.1cm}\\
 $-1$\!\!\!\!& $0$ \end{tabular} 
}\!\!\!\!\big]$, one can use Lemma \ref{L:modularTandS} to get that 
\begin{equation}
\label{E:F01(gtau)Fcd}
F_{0,1}\left(\frac{a \tau+b}{c\tau+d}\right)= M(g)\cdot F_{m,n}(\tau)
\end{equation}
 where $M(g)$ is a product (which can be made explicit) of the matrices $T_{a'}$ and $S_{a''}$ 
 (see formulae \eqref{E:Tr} and \eqref{E:Sr} respectively) 
 for some $a'$ and $a''$ easy to determine. 
 
 Next, the fact that $g$ belongs to $\Gamma_1(N) $ implies in particular that $m=m'N$ and $n=1+n'N$ for some integers $m',n'$.  One can then use Lemma \ref{L:ConnectionFormulaeTranslation}  and construct a function $\lambda_g(\tau)$ and  a matrix $N(g)$ which is a product of $m'$ (resp.\;$n'$)  matrices of the type ${\rm VT2}_{\hat m,\hat n}$ (resp.\;${\rm HT2}_{\tilde m,\tilde n}$), for some $(\hat m,\hat n)$'s and 
 $(\tilde m,\tilde n)$'s,  which are easy to make explicit, 
 such that 
 \begin{equation}
 \label{E:Fcd}
 F_{m,n}(\tau)=\lambda_g(\tau) N(g)\cdot F_{0,1}(\tau)\, .  
 \end{equation}
 
 Then setting $\Lambda'(g)=M(g)\cdot N(g)$,  one gets \eqref{E:F01(gtau)}   from  
\eqref{E:F01(gtau)Fcd} and \eqref{E:Fcd}. 
\sk 

 Below, we illustrate the method just described by computing explicitly 
  the image by $\Lambda$ of two simple elements of $\Gamma_1(N)$.
\mk 

\begin{rem}
\label{Rem:remrem}
{\rm We have described above  an  algorithmic  method to compute $\Lambda (g)$ when $g$ is given. It would be interesting to have a closed formula for $\Lambda(g)$ in terms of the coefficients of $g$. Such  formulae have been obtained by Graf \cite{Graf1908a,Graf1908b} and more recently (and independently) by Watanabe \cite{Watanabe2007,Watanabe2014} in the very similar case of the {\it `complete elliptic hypergeometric integrals'} which are the 
 hypergeometric integrals associated to $\Gamma(2)$ of the following form
$$
\int_{\boldsymbol{\gamma}} \theta(u,\tau)^{\beta_0}
\theta_1(u,\tau)^{\beta_1}
\theta_2(u,\tau)^{\beta_2}
\theta_3(u,\tau)^{\beta_3} du\, , 
$$
where $\boldsymbol{\gamma}$ stands for a twisted cycle supported in $E_\tau\setminus E_\tau[2]$ (the $\beta_i$'s being fixed real parameters summing up to 0). }
\end{rem}
\mk

\paragraph{\bf Two explicit computations ($N$ arbitrary)} 
\label{Parag:2ExplicitComputations}
We consider the two following elements of $\Gamma_1(N)$: 
$$
T=\begin{bmatrix}
1 & 1\\
0 & 1
\end{bmatrix} 
\qquad 
\mbox{ and } \qquad 
U_N=\begin{bmatrix}
1 & 0\\
-N & 1
\end{bmatrix} 
\, . 
$$
We want to compute  their respective images by $\Lambda$ in 
 ${\rm SL}_2(\mathbb R)$.
\sk 

The case of $T$ is very easy to deal with.  From the first point of Lemma \ref{L:modularTandS}, one has 
$$\Lambda'(T)=\begin{bmatrix}
1 & \rho_\infty^{0,1}/\rho_{0}^{0,1}\\
0 & 1
\end{bmatrix}= \begin{bmatrix}
1 & \mu^{-1}\\
0 & 1
\end{bmatrix}\,.
$$
After conjugation by $Z$, one gets 
$$
\Lambda(T)= 
Z^{-1} \cdot \Lambda'(T) \cdot Z=
\begin{bmatrix}
1 & 1\\
 0 & 1
\end{bmatrix}
\in {\rm PSL}_2(\mathbb R)\,.$$

To deal with $U_N$, one begins by writing
$$
U_N= S\cdot T^N\cdot S^{-1}.
$$

In what follows, we write $=_\tau$  to designate an equality which holds true up to multiplication by a function depending on $\tau$. 

 Then, for any $\tau\in \mathbb H\simeq \mathcal F_{0,1}$,  setting $\tau'=-1/\tau$, one has 
\begin{align*}
F_{0,1}\left( \frac{\tau}{1-N\tau}
\right)=
F_{0,1}\left( \frac{-1}{\tau'+N}
\right)
=_\tau  &\,    S_{0,1}\cdot F_{-1,0}(\tau'+N) \\
=_\tau &  S_{0,1}\cdot T_{-1,0 }\cdot F_{-1,-1}(\tau'+N-1)\\
=_\tau &  S_{0,1}\cdot T_{-1,0 }\cdots T_{-1,-N+1}\cdot 
F_{-1,-N}(\tau')\\
=_\tau &  S_{0,1}\cdot T_{-1,0 }\cdots T_{-1,N-1}\cdot 
B_{-1,0}\cdot 
F_{-1,0}(\tau')\\
=_\tau &  S_{0,1}\cdot T_{-1,0 }\cdots T_{-1,N-1}\cdot 
B_{-1,0}\cdot  \big(S_{1,0} \big)^{-1}   \cdot 
F_{0,1}(\tau)\, .
\end{align*}

It follows that 
\begin{align*}
\Lambda'(U_N)=   & \, S_{0,1}\cdot T_{-1,0 }\cdots T_{1,-N+1}\cdot 
B_{-1,0}\cdot  \big(S_{0,1}\big)^{-1} \\
=   & \, S_{0,1} \cdot  
\begin{bmatrix}
1 &  \mu\\
0 & 1
\end{bmatrix}
\begin{bmatrix}
1 &  \mu^{2}\\
0 & 1
\end{bmatrix}\cdots 
\begin{bmatrix}
1 &  \mu^{N}\\
0 & 1
\end{bmatrix}\cdot B_{-1,0}\cdot  \big(S_{0,1}\big)^{-1}  \\ 
=   & \, 
S_{0,1}
\cdot  
\begin{bmatrix}
1 & \frac{ \mu (1-\mu^{N})}{(1-\mu)}
 \\
0 & 1
\end{bmatrix}\cdot 
B_{-1,0}\cdot  \big(S_{0,1}\big)^{-1}.
 \end{align*}

Since 
\begin{align*}
S_{0,1}= & \, \begin{bmatrix}
1-\mu^{-1} & 1\\
-1 & 0
\end{bmatrix}\\
\mbox{and }\quad 
B_{-1,0}=& \, \mu^{1-N/2}
\begin{bmatrix}
\mu^{2N-1}-\mu^{2N-2}+\mu^{N-2}+\mu^N-\mu^{N-1}
 &  \mu^{2N}- \mu^N
\\
-\frac{(\mu-1)^2(\mu^{N-1}+1)}{\mu^2}
 & \mu^{N-1}-\mu^N+1
\end{bmatrix}, 
\end{align*}
an explicit computation gives 
\begin{align*}
\Lambda'(U_N)=
 & \,
 \mu^{\frac{N}{2}} \cdot 
 \begin{bmatrix}
1  &   \frac{(1-\mu)^2}{\mu^2}  \\
  \frac{\mu^2(1-\mu^{-N})}{1-\mu}
 & 1-\mu-\mu^{-N}+2\mu^{1-N}
 \end{bmatrix}\, .\mk 
\end{align*}
\big(Remark: for $\alpha_1\rightarrow 0$, one has $\mu\rightarrow 1$ hence  $
\Lambda'(U_N)\rightarrow \tiny{\big[ \!\! \!\!\begin{tabular}{cc}
1 & 0 \vspace{-0.1cm}
\\
$-N$  \!\! \!\!  \!\! \!\! \!\!& 1
\end{tabular} \!\! \!\!\big]}$,  as expected\big). 

One deduces the following explicit expression for 
$\Lambda(U_N)=Z^{-1} \Lambda'(U_N)  Z$: 
\begin{align*}
\Lambda(U_N)
= 
\mu^{\frac{N}{2}} 
\begin{bmatrix} {\frac {1+ {\mu}^{2}
-{\mu}^{N}-{\mu}^{2-N}}{ \left( \mu-1 \right)  \left( {\mu}^{N}-1 \right) }}&
\frac{ \lambda(U_N)}{\left( \mu-1 \right)  \left( {\mu}^{N}-1 \right) ^{2}\mu}
\vspace{0.15cm}
\\
{\frac {\mu\, \left( {\mu}^{-N}-1 \right) }{\mu-1}}&-{\frac {-3\,{\mu}^{N+1}+6\,\mu+{\mu}^{2-N}+{\mu}^{2+N}+{\mu}^{N}-2-3\,{\mu}^{1-N}+{\mu}^{-N}-2\,{\mu}^{2}}{ \left( \mu-1 \right)  \left( {\mu}^{N}-1 \right) }}
\end{bmatrix}
\end{align*}
with 
\begin{align*}
\lambda(U_N)= 
  -  1 -&5\,{\mu}^{N+1}-2  \,{\mu}^{4}-{\mu}^{2+2N}-  {\mu}^{2N}+2\,{\mu}^{N}-2\,{\mu}^{3+N} +{\mu}^{2+N}  \\ +&{\mu}^{4-N}
  -{\mu}^{2}+{\mu}^{N+4}+{\mu}^{2-N}-3\,{\mu}^{3-N}+2\,\mu+3\,{\mu}^{1+2N}+5\,{\mu}^{3}\, .
 \end{align*}
 \big(Remark :  one has $
 \Lambda^*(U_N)=
     \mu^{-\frac{1}{2}}\Lambda(U_N)\in {\rm SL}_2(\mathbb R)$\big). 
 \sk
 
 A necessary condition for  the $\mathbb C\mathbb H^1$-holonomy of $Y_1(N)^{\alpha_1}$ to be   discrete is that  $ \Lambda^*(U_N)$ together with the fixed parabolic element 
 $\Lambda(T)= \tiny{\big[ \!\! \!\!\begin{tabular}{cc}
1  \!\! \!\!  \!\! \!\! \!\! & \!\! \!\!  \!\! \!\! \!\!  1 \vspace{-0.1cm}
\\
0   \!\! \!\!  \!\! \!\! \!\!& \!\! \!\!  \!\! \!\! \!\!\!  1
\end{tabular} \!\! \!\!\big]}$ generates a discrete subgroup of ${\rm PSL}_2(\mathbb R)$. 
There are many papers dealing  with this problem. For instance, in \cite{GilmanMaskit}, Gilman and Maskit give an explicit algorithm to answer this question. However, if this algorithm  can be used quite effectively to solve any given explicit case, the complexity of $ \Lambda^*(U_N)$ 
seems to make its use too involved to describe
precisely the set of parameters $\alpha_1\in ]0,1[$ and $N\in \mathbb N_{\geq 2}$  so  that $\langle  \Lambda^*(U_N) , \tiny{\big[ \!\! \!\!\begin{tabular}{cc}
1  \!\! \!\!  \!\! \!\! \!\! & \!\! \!\!  \!\! \!\! \!\!  1 \vspace{-0.1cm}
\\
0   \!\! \!\!  \!\! \!\! \!\!& \!\! \!\!  \!\! \!\! \!\!\!  1
\end{tabular} \!\! \!\!\big]}  \rangle$ be a lattice in ${\rm SL}_2(\mathbb R)$. 


\subsection{\bf Volumes}
\label{S:Volumes}
We recall that $Y_1(N)^{\alpha_1}$ stands for the modular curve $Y_1(N)$ endowed with the pull-back by Veech's map 
of the standard hyperbolic structure of $\mathbb C\mathbb H^1$. In particular, the curvature of 
the metric which is considered on $Y_1(N)^{\alpha_1}$ is constant and equal to -1. 
We denote by ${\rm Vol}(Y_1(N)^{\alpha_1})$ the corresponding volume (the `area' would be more accurate) of $Y_1(N)^{\alpha_1}$. 
\sk 

\subsubsection{}
According to  the version for compact hyperbolic surfaces with conical singularities  of Gau{\ss}-Bonnet's Theorem  ({\it cf.}\;Theorem A.1 in Appendix A) and in view of 
our results in \S \ref{SS:MetricCompletionOfY1(N)}, one has 
\begin{equation}
\label{E=Vol1NAlpha1}
{\rm Vol}\big( Y_1(N)^{\alpha_1}\big)=2\pi \Bigg[ 2g_1(N)-2
+ \sum_{\mathfrak c\in C_1(N)}  \bigg(  1-\frac{\theta_N(\mathfrak c)}{2\pi}   \bigg)    \Bigg]\; \footnotemark
\end{equation}
\footnotetext{Actually, this formula is only valid when $N\geq 4$. Indeed,  
$Y_1(N)$  has an orbifold point when $N=2,3$ and it   has  to be taken into account when computing ${\rm Vol}(Y_1(N)^{\alpha_1})$ in these two cases.}
 where
\begin{itemize}
\item   $g_1(N)$ stands for the genus of the compactified modular curve $X_1(N)$;  
\sk
\item  for any $\mathfrak c\in C_1(N)$,  $\theta_N(\mathfrak c)$ denotes  the conifold angle of $X_1(N)^{\alpha_1}$ 
at $\mathfrak c$.\sk
\end{itemize}

  Since $\theta_N(\mathfrak c)$ depends linearly on $\alpha_1$ for every $\mathfrak c$ ({\it cf.}\;\eqref{E:ConifoldAnglesY1(N)}), it follows that 
$$
{\rm Vol}\big( Y_1(N)^{\alpha_1}\big)=A(N)+B(N)\, \alpha_1
$$ 
for two arithmetic constants $A(N),B(N)$ depending only on $N$. 
\sk 

We recall that the following  closed formula 
$$
g_1(N)= 
g\big(X_1(N)\big)=1+\frac{N^2}{24}\prod_{p\lvert N}\big(1-p^{-2}\big)-\frac{1}{4}\sum_{0<d\lvert N}\phi(d)\phi(N/d)
$$ holds true 
for 
any $N\geq 5$, with $g_1(M)=0$ for $M=1,\ldots,4$ (see \cite{KimKoo}).   

On the other hand, 
we are not aware of any general closed formula, in terms of $N$,  for a set of representatives $[-a_i/c_i]$ with $i=1,\ldots,\lvert C_1(N)\lvert$ of the set of cusps $C_1(N)$ of $Y_1(N)$.  Consequently, obtaining  closed formulae
for $A(N)$ and $B(N)$ in terms of $N$ does not seem easy in general. 
However,  there are algorithmic methods  determining explicitly such a set of representatives. 
 Then determining  ${\rm Vol}(Y_1(N)^{\alpha_1})$ reduces to a computational task  once  $N$ has been given.

\subsubsection{} Since the two values $A(N)$ and $B(N)$ depend heavily  on the arithmetic properties of $N$, one can expect to be able to say more about them when $N$ is simple from this point of view, for instance when $N$ is prime. 
\sk 

Let $p$ be a prime number bigger than or equal to 5.  Then 
$$g_1(p)=\frac{1}{24}(p-5)(p-7)$$ and there is an explicit description 
of the conifold points and of the associated conifold angles of $X_1(p)^{\alpha_1}$ (see \S\ref{SS:CaseY1(p)ForpPrime}). 

In the case under scrutiny, formula \eqref{E=Vol1NAlpha1} specializes into 
\begin{align*}
{\rm Vol}\big( Y_1(p)^{\alpha_1}\big)
= & \, 2\pi\Big(2g_1(p)-2+(p-1)   \Big)-
2\pi   \alpha_1  \sum_{k=1}^{(p-1)/2} k\Big(1-\frac{k}{p}\Big) 
\end{align*}
and after a simple computation, one obtains the nice formula
\begin{equation}
\label{E:HyperbolicVolumeY1(p)Alpha1}
{\rm Vol}\big( Y_1(p)^{\alpha_1}\big)= \frac{ \pi  }{6}\big( p^2-1\big) \left(
1-\alpha_1 \right) \, . 
\end{equation}

\subsubsection{}   Even if it only concerns  the  algebraic leaves $Y_1(p)^{\alpha_1}$ 
of Veech's foliation  associated to prime numbers, the preceding formula can be used 
to determine 
$$
{\rm Vol}^{\alpha_1}\big(   \mathscr M_{1,2} \big) =\int_{\mathscr M_{1,2}}\Omega^{\alpha_1}
$$
  ({\it cf.}  \S\ref{SS:InAddition} and \S\ref{SS:Intro-Volume} for a few words about Veech's volume form $\Omega^{\alpha_1}$).  \sk 

We first review more carefully than in the Introduction the construction of Veech's volume form 
$\Omega^{\alpha_1}$  then present an easy proof of  Theorem \ref{T:VolumeIsFinite}. The latter  relies  on the following quite intuitive fact that  the euclidean volume ({\it i.e.} the area) of a nice open set $U \subset \mathbb{R}^{2}$ can be well approximated by counting the number of elements of $U \cap (1/p) \mathbb{Z}^2$, as soon as $p$ is a sufficiently big prime number. 
 For instance,   for any  parallelogram
  $P$ of the euclidean plane $\mathbb R^2$, 
  if $d \sigma=dr_0\wedge dr_{\infty}$ stands for the standard euclidean volume form on $\mathbb R^2$, then 
\begin{equation}
\label{F:EuclideanArea}
 \mathrm{Area}(P) =\int_P d \sigma
 =  \lim_{p \rightarrow +\infty}{{p^{-2}} \# \big( U \cap (1/p) \mathbb{Z}^2 \big )}. 
\end{equation}

 \paragraph{} 
Since we assume that $\alpha_1\in ]0,1[$ is fixed,  when we refer to the volume of a subset of $\mathscr{M}_{1,2}$ below,  it is always relatively to Veech's volume form $\Omega^{\alpha_1}$.
\sk 

 Let $\mathscr M_{1,2}^*$ stand for the set of non-orbifold points of $\mathscr M_{1,2}$. It is a dense open-subset whose complementary set has zero measure.  Consequently one can restrict to  consider only the volume of $\mathscr M_{1,2}^*$. This is what we do from now on.\mk

 Let $[m^*]$ be a point in $\mathscr M_{1,2}^*$ and consider a lift $m^*=(\tau^*,z^*)$ in $\mathcal T\!\!\!{or}_{1,2}$ over it.  One defines the germ of  real-analytic map $V^{\alpha_1}: ( \mathcal T\!\!\!{or}_{1,2},m^*) \rightarrow\mathbb H$ at $m$ by setting 
 $$
 V^{\alpha_1}(\tau,z) = V_{\xi^{\alpha_1}(\tau,z)}^{\alpha_1}(\tau)
 $$
  for any $(\tau,z)$ sufficiently close to $m^*$, where $\xi^{\alpha_1}$ is the map considered in Proposition  \ref{P:Main} and   $V_{\xi^{\alpha_1}(\tau,z)}^{\alpha_1}$ stands for the map \eqref{E:V-alpha1-a} with $a=\xi^{\alpha_1}(\tau,z)$. (Note that such a  $V^{\alpha_1}$ is not canonically defined but this will not cause any problem hence we will not dwell on this in what follows). 
  \sk

 The pull-back  by $V^{\alpha_1}$ of the  volume form
 $d\zeta\wedge d\overline{\zeta}/(2i \lvert\Im{\rm m}(\zeta)\lvert ^2)$ 
  inducing the standard hyperbolic structure on ${\mathbb H}$,  is a (germ of)  smooth 
  $(1,1)$-form at $m^*$, denoted by $\omega^{\alpha_1}$,  whose restriction along any leaf of Veech's foliation $\mathcal F^{\alpha_1}$ close to $m^*$ locally   induces Veech's hyperbolic structure of this leaf (as it follows from the second point of Proposition \ref{P:InH}). 
  \sk 
  
  Setting 
  $v^*=V^{\alpha_1}(m^*)$ and $r^*=\Xi(m^*)$, 
     one obtains a germ of real analytic diffeomorphism 
  \begin{align*}
 \varphi_{m^*}^{\alpha_1} =  V^{\alpha_1}\times \Xi : \big( \mathcal T\!\!\!{or}_{1,2},
  m^*
 \big) 
   \longrightarrow \Big( \mathbb H\times \big(\mathbb R^2\setminus \mathbb Z^2\big), \big(v^*,r^*\big)\Big)
    \end{align*}
  such that the pull-back by it of the horizontal foliation  
   on $\mathbb H\times \mathbb R^2\setminus \mathbb Z^2$, with all the horizontal upper-half planes  are endowed with the standard hyperbolic structure, is exactly (the germ at $m^*$ of) Veech's foliation $\mathcal F^{\alpha_1}$ with Veech's hyperbolic structures on the leaves. (Beware that  the germ $\varphi_{m^*}^{\alpha_1}$ is quite distinct from  the one of the global 
diffeomorphism    \eqref{E:Pi-Xi} at $m^*$).   
  \sk 
  
For $\epsilon>0$,  let  $D^\epsilon$ stand for the hyperbolic disk of radius $\epsilon $ centered at $v^*$ in $\mathbb H$ and denote by $S^\epsilon$ the euclidean square $r^*+]-\epsilon,\epsilon[^2\subset \mathbb R^2$. 
 If $\epsilon$ is chosen sufficiently small, $U_{m^*}^\epsilon= (\varphi_{m^*}^{\alpha_1})^{-1}(D^\epsilon\times S^\epsilon)\subset  
 \mathcal T\!\!\!{or}_{1,2} $ is well defined and $(U_{m^*}^\epsilon, \varphi_{m^*}^{\alpha_1})$ is a foliated chart  for Veech's foliation $\mathcal F^{\alpha_1}$ at $m^*$.  Since $[m^*]$ is not an orbifold point, it induces a foliated chart at $[m^*]$ for Veech's foliation 
 $\mathscr F^{\alpha_1}$ 
on the moduli space $\mathscr M_{1,2}$, which will be denoted by the same notation.
\sk

 Up to $\varphi_{m^*}^{\alpha_1}$, for any prime $p$, we have  
 $  U_{m^*}^\epsilon \cap  Y_1(p)^{\alpha^1}  \simeq  D^\epsilon \times   (S^\epsilon  \cap ({1}/{p}) \mathbb{Z}^2)$. Therefore, from \eqref{F:EuclideanArea}, it comes that the volume of $U_{m^*}^\epsilon$ is given by
$$ \mathrm{Vol}^{\alpha_1}\big(U_{m^*}^\epsilon\big) = \int_{U_{m^*}^\epsilon} \Omega^{\alpha_1}
=
\lim_{
\substack{p\rightarrow +\infty\\ 
 \tiny{p \; \mbox{\it prime }}}}\; 
 {\frac{1}{p^2}{\nu_p^{\alpha_1}\Big( U_{m^*}^\epsilon \cap 
Y_1(p)^{\alpha_1}\Big)}}$$
where $\nu^{\alpha_1}_p$ stands for the volume ({\it i.e.} the area) on $Y_1(p)^{\alpha_1}$, the latter being endowed with Veech's hyperbolic structure. 
  \mk

 \paragraph{} 
  Now let $(U_i)_{i\in \mathbb{N}}$ be a family of open  subsets  of $\mathscr{M}_{1,2}^*$ such that 
\begin{itemize}
\item each $U_i$ is the domain $U_{m_i^*}^{\epsilon_i}$ of a foliated chart  as above;


\item one has  $\bigcup_{i \in \mathbb{N}}{\overline{U_i}}=\mathscr{M}_{1,2}$;
\sk 

\item the $U_i$ are pairwise disjoint.

\end{itemize}
(We let the reader verify that such a family of open subsets indeed exists).
\sk 

From these assumptions and considering what has been established in the  preceding paragraph, we get that 
\begin{equation}
\label{E:lmo}
 \mathrm{Vol}^{\alpha_1}\big(\mathscr{M}_{1,2}\big) 
=\sum_{i\in \mathbb{N}} \mathrm{Vol}^{\alpha_1}\big(U_i\big)  
= \sum_{i\in \mathbb{N}}  \; 
\lim_{p\rightarrow + \infty}\; 
\frac{1}{p^2} \; \nu_p^{\alpha_1}\big(  U_i\cap Y_1(p)^{\alpha_1} \big)
 \;,     
\end{equation}
where (here and as  in any formula below) $p$ ranges amongst prime numbers.\sk 

Our main concern now consists in inverting the sum and the limit in the last term of the preceding equality. 

To this end, for any $I\in \mathbb N$, one considers  the partial sum 
$$
 \sum_{i=0}^I \mathrm{Vol}^{\alpha_1}\big(U_i\big)
 =
 \sum_{i=0}^I\; 
\lim_{p\rightarrow + \infty}
\; 
 \frac{1}{p^2} \, \nu^{\alpha_1}_p\big( U_i  \cap      Y_1(p)^{\alpha_1}    \big).$$

The summation being finite, 
we can invert the sum and the limit to obtain 
$$
 \sum_{i=1}^I \mathrm{Vol}^{\alpha_1}\big(U_i\big)
 =
\lim_{p\rightarrow + \infty}\; 
\frac{1}{p^2} \nu_p^{\alpha_1}\Big( \big(\cup _{i=1}^I{U_i}\big)  \cap 
Y_1(p)^{\alpha_1}\Big)\, . $$

From \eqref{E:HyperbolicVolumeY1(p)Alpha1}, it comes that  for any prime $p$, one has 
$$\nu_p^{\alpha_1}\Big( \big(\cup _{i=1}^I{U_i}\big)  \cap 
Y_1(p)^{\alpha_1}\Big)\leq \nu_p^{\alpha_1}\big(Y_1(p)^{\alpha_1}\big)=
\frac{\pi}{6}(1-\alpha_1)(p^2-1)$$ 
from which it follows that 
$$ 
 \sum_{i=1}^I \mathrm{Vol}^{\alpha_1}\big(U_i\big)
 \leq  
\lim_{p\rightarrow + \infty}
\;  \frac{1}{p^2}\cdot 
\frac{\pi}{6}(1-\alpha_1)(p^2-1)  =\frac{\pi}{6}(1-\alpha_1).$$

The integer $I$ being arbitrary, we conclude that 
$$  \mathrm{Vol}^{\alpha_1}\big(\mathscr{M}_{1,2}\big) \leq \frac{\pi}{6}(1-\alpha_1).$$

Since the volume  of $\mathscr{M}_{1,2}$ is finite, and because the 
 quantities  $p^{-2} \nu_{p}^{\alpha_1}(Y_1(p)^{\alpha_1} )$ 
 are uniformly bounded (precisely by the RHS of the preceding inequality), it is straightforward  that one can invert the sum and the limit in \eqref{E:lmo} and get that 
$$
{\rm Vol}^{\alpha_1}\big(   \mathscr M_{1,2} \big) =\frac{\pi}{6}\big(1-\alpha_1\big).
$$

\newpage

\section*{\bf Appendix A :   1-dimensional complex hyperbolic conifolds}
We define  and state a few basic results concerning $\mathbb C\mathbb H^1$-conifolds below.  The general notion of conifolds is rather abstract (see \cite{Thurston,McMullen} or \cite[Appendix\,B]{GP}) but  greatly simplifies  in the case under scrutiny.   
\sk 

We denote by $\mathbb D
$ the unit disk in the complex plane. As the upper half-plane $\mathbb H
 $, it is a model of the complex hyperbolic space $\mathbb C\mathbb H^1$.

\subsection*{\bf A.1. Basics} 
The map $f: \mathbb H\rightarrow \mathbb D^*,\, w\mapsto e^{iw}$ is (a model  of)  the universal cover of the punctured disk $\mathbb D^*=\mathbb D\setminus \{0\}$.  
We denote by $\widetilde {\mathbb D}^*$ the upper-half plane endowed with the pull-back by $f$ of the standard hyperbolic structure on $\mathbb D$. 

\subsection*{\bf A.1.1. $\boldsymbol{\mathbb C\mathbb H^1}$-cones} For any 
$\theta\in ]0,+\infty[$, the translation $t_\theta: w\mapsto w+\theta$ leaves  invariant the complex hyperbolic structure of $\widetilde {\mathbb D}^*$ (since it is a lift of the rotation $z\mapsto e^{i\theta}z$ which is an automorhism of $\mathbb D$  fixing the origin).  It follows that the complex hyperbolic structure of  $\widetilde {\mathbb D}^*$ factors through the action of $t_\theta$.  The quotient  $\mathfrak C_\theta^*= \widetilde {\mathbb D}^* /\langle t_\theta\rangle$ carries an hyperbolic structure which is not metrically complete.  Its metric completion, denoted by $\mathfrak C_\theta$,  is obtained by adjoining  only one point to $\mathfrak C_\theta^*$, called the {\bf apex} and denoted by 0.     
By definition, 
$\mathfrak C_\theta$ (resp.\;$\mathfrak C_\theta^*$) is the  {\bf (punctured) $\mathbb C\mathbb H^1$-cone of angle $\theta$}. 

 It will be convenient to also consider the case when $\theta=0$.  By convention, we define $\mathfrak C_0^*$ as $\mathbb H/{\langle \tau\mapsto \tau+1\rangle}$ when $\mathbb H$ is endowed with its standard hyperbolic structure.  It is nothing else but $\mathbb D^*$ but now endowed with the hyperbolic structure given by the uniformization (and by restriction  from the standard one of $\mathbb D$).  
   Note that $\mathfrak C_0^*$  is nothing else than a neighborhood of what is classically  called a {\bf cusp} in the   theory of Riemann surfaces. 
\sk

As is well-known, a $\mathbb C\mathbb H^1$-structure on an orientable smooth surface $\Sigma$  can be seen geometrically  as a (class for a certain equivalence relation of a) pair $(D,\mu)$ where $\mu: \pi_1(\Sigma)\rightarrow {\rm Aut}(\mathbb C\mathbb H^1)$ is a representation (the {\bf holonomy representation}) and $D:\widetilde \Sigma \rightarrow  \mathbb C\mathbb H^1$ a $\mu$-equivariant tale map (the {\bf developing map}).  
With this formalism, it is easy to give concrete models of the $\mathbb C\mathbb H^1$-cones defined 
 above.  

For any $\theta>0$,  one defines $D_\theta(z)=z^\theta$, 
and $\mu_\theta$ stands for the character associating $e^{i\theta}$ to the class of a small positively oriented circle  around the origin in $\mathbb D$. 
We see $D_\theta$ as a multivalued map from $\mathbb D$ to itself. Its monodromy $\mu_\theta$ leaves  the standard hyperbolic structure of $\mathbb D$ invariant. Consequently, the pair  $(D_\theta,\mu_\theta)$ defines a 
$\mathbb C\mathbb H^1$-structure
 on $\mathbb D^*$ and one verifies promptly that it identifies with the 
 one of the punctured  $ \mathbb C\mathbb H^1$-cone $\mathfrak C_\theta^*$.   To define $\mathfrak C_0^*$ this way, one can take $D_0(z)=\log(z)/(2i\pi)$ as a developing map and as  holonomy representation, we take the parabolic element $\mu_0: x\mapsto x+1$ of 
 the automorphism group of ${\rm Im}(D_0)=\mathbb H$ ($\mu_0$ is nothing else but the monodromy of $D_0$).

By computing the pull-backs of the standard hyperbolic metric on their target space by the elementary developing maps considered just above,  one gets the following  
 characterization of the  $\mathbb C\mathbb H^1$-cones in terms of the corresponding hyperbolic metrics: $\mathfrak C_0^*$ and $\mathfrak C_\theta^*$ for any $\theta>0$  can respectively be defined as the hyperbolic structure on $\mathbb D^*$ associated to the metrics 
$$
d\!s_0=\frac{\lvert dz\lvert}{\lvert z\lvert
\log\lvert z\lvert} 
\qquad 
\mbox{ and }
\qquad 
d\!s_\theta= 2\theta
 \frac{ \lvert z\lvert^{\theta-1}  \lvert dz\lvert}{1-\lvert z\lvert^{2\theta}}  \, . 
$$

Note that for any positive integer  $k$,   $\mathfrak C_{2\pi/k}$ is the orbifold quotient of $\mathbb D$ by $z\mapsto e^{2i \pi/k}z$. In particular, $\mathfrak C_{2\pi}$ and $\mathfrak C_{2\pi}^*$ are nothing else than $\mathbb D$ and $\mathbb D^*$ respectively,  hence most of the time it will be assumed that $\theta\neq 2\pi$.

One verifies that among all the $\boldsymbol{\mathbb C\mathbb H^1}$-cones, the one of angle 0 is characterized geometrically by the fact that the associated holonomy is parabolic, or metrically, by the fact that $\mathfrak C_0^*$ is complete.  
 Finally, we mention that the area of the $\mathfrak C_\theta^*$ is locally finite at the apex 0 for any $\theta\geq 0$. 
\sk


\subsection*{\bf A.1.2.   $\boldsymbol{\mathbb C\mathbb H^1}$- conifold structures} Let $S$ be a smooth oriented  surface and let $(s_i)_{i=1}^n$ be a $n$-uplet of pairwise distinct points on it. One sets $S^*=S\setminus \{s_i\}$.  A $\mathbb C\mathbb H^1$-structure on $S^*$    naturally induces a  conformal structure  or, equivalently,  a structure of Riemann surface 
  on $S^*$.  When endowed with this structure, we will denote $S^*$ by $X^*$ and $s_i$ by $x_i$ for every $i=1,\ldots,n$.
  \sk 
    
  We will say that the 
hyperbolic structure  on $X^*$   {\bf extends as} (or just {\bf  is} for short) {\bf  a   $\mathbb C\mathbb H^1$- conifold (structure)} on $X$ if, for every puncture  $x_i$,  
 there exists $\theta_i\geq 0$ and a germ of pointed biholomorphism 
 $(X^*,x_i)\simeq (\mathfrak C_\theta^*,0)$ which is compatible with the $\mathbb C\mathbb H^1$-structures on the source and on the target.  In this case, each puncture $x_i$ will be called a {\bf conifold point}  and  $\theta_i$  will be the associated  {\bf conifold} (or {\bf cone}) {\bf angle}.      Remark  that our definition differs from the classical one since we allow  some 
 cone angles $\theta_i$ to vanish.  The punctures with conifold angle  0 are just  cusps of $X$. \sk 

Note that when the considered hyperbolic structure on $X^*$ is conifold then  its metric completion (for the distance induced by the $\mathbb C\mathbb H^1$-structure) is obtained by adding to $X^*$ the set of conifold points of positive cone angles. \sk 

An important question is the existence (and possibly the unicity) of such conifold structures when $X$ is assumed to be compact.  
 In this case, as soon as the genus $g$ of $X$ and the number $n$ of punctures verify $2g-2+n>0$, it follows from {\bf Poincar-Koebe uniformization theorem}  
that there exists a Fuchsian group $\Gamma$ 
 such that $\mathbb H/\Gamma\simeq X^*$ as Riemann surfaces with cusps (and $\Gamma$ is essentially unique).  Actually, this theorem generalizes to any  $\mathbb C\mathbb H^1$-orbifold structures on $X$ (see {\it e.g.}\;Theorem IV.9.12 in \cite{FarkasKra} for a precise statement). It implies in particular that such a structure, when it exists, is uniquely characterized by the conformal type of $X^*$ and by the cone angles at the orbifold points.  \sk

It turns out  that the preceding corollary of Poincar's uniformization theorem generalizes to the class of $\mathbb C\mathbb H^1$-conifolds.  Indeed, long before  Troyanov proved his theorem  (recalled in \S\ref{S:VeechIntro}) concerning the existence and the unicity 
of a flat structure with conical singularities on a surface (we could call such a structure a `{\it $\mathbb E^2$-conifold structure}'), Picard had established the corresponding result for compact complex hyperbolic conifolds of dimension 1: 
\sk 

\noindent{\bf Theorem A.1.2.}
{\it 
Assume that $2g-2+n>0$ and let $(X, (x_i)_{i=1}^n)$ be a compact $n$-marked Riemann surface of genus $g$.  Let  $(\theta_i)_{i=1}^n\in [0, \infty[^n$ be an angle datum. 
\begin{enumerate}
\item The following two assertions are equivalent:\sk  
\begin{itemize}
\item there exists a hyperbolic conifold structure on $X$ with a conical singularity of  angle $\theta_i$  at $x_i$, for $i=1,\ldots,n$;\sk 
\item the $\theta_i$'s are such that the  following inequality is satisfied: 
\begin{equation}
\label{E:AreaCH1structure}
2\pi\big(  2g-2+n\big) - \sum_{i=1}^n \theta_i>0\, . 
\end{equation}
\end{itemize}
\sk
\item  When the two preceding conditions are satisfied, then the corresponding conifold  hyperbolic 
metric on $X$ is unique (if normalized in such a way that its curvature be -1) and 
 the area of $X$ is equal to the LHS 
 of 
      \eqref{E:AreaCH1structure}.
\end{enumerate}}

Actually, the preceding theorem 
has been obtained by Picard at the end of the 19th century under the assumption that $\theta_i>0$ for every $i$ (see \cite{Picard1905}  and the references therein). For the extension to the case when some hyperbolic cusps are allowed ({\it i.e}.\;when some of the angles $\theta_i$ vanish), we refer to \cite[Chap.II]{Heins} 
although it is quite likely that this generalization  was already  known to Poincar.

\subsection*{\bf A.2. Second order differential equations and $\boldsymbol{\mathbb C\mathbb H^1}$-conifold structures} 
Given a $\mathbb C\mathbb H^1$-structure on a punctured Riemann surface $X^*$, 
the question is to verify whether it actually extends as a conifold structure at the punctures.   This can be achieved by looking at the associated Schwarzian differential equation.  
\sk 

We detail below some aspects of the theory of second order differential equations which are needed for this. Most of the material presented below is very classical and well-known (the reader can consult \cite{YoshidaFuchsian,StGervais} among the huge am\-ount of references  which address the issue).

\subsubsection*{\bf A.2.1}  Since we are concerned by a local phenomenon, we will work locally and assume that $X^*=\mathbb D^*$.  In this case, the considered $\mathbb C\mathbb H^1$-structure on $X^*$, which we will denote by $\boldsymbol{\mathscr  X^*}$ for convenience,  is characterized by the data of its developing map $D: X^*\rightarrow \mathbb C\mathbb H^1$  alone.  Let $x$ be the usual coordinates on $\mathbb D$.  Although $D$ is a multivalued function of $x$, its 
monodromy lies in ${\rm Aut}( \mathbb C\mathbb H^1)$ hence is projective. 
It 
follows that   the {\bf Schwarzian derivative of $\boldsymbol{D}$ with respect to $
\boldsymbol{x}$},  defined as
$$
\big\{ D,x\big\}=\left(\frac{D''(x)}{D'(x)}\right)'-\frac{1}{2}
\left(\frac{D''(x)}{D'(x)}\right)^2=
\frac{D'''(x)}{D'(x)}-\frac{3}{2}
\left(\frac{D''(x)}{D'(x)}\right)^2\, , 
$$
is non-longer multivalued. In other words,  there exists a holomorphic function $Q$ on $X^*$ such that the 
following 
{\bf Schwarzian differential equation} holds true: 
\begin{equation*}
\big(  \mathscr S  \boldsymbol{\mathscr  X^*}\big) 
\qquad \qquad \qquad \qquad \qquad \qquad 
\big\{ D,x\big\}=Q(x). 
\qquad \qquad \qquad \qquad \qquad \qquad \qquad \qquad 
\end{equation*}

It turns out that the property for $\boldsymbol{\mathscr  X^*}$ to extend as a $\mathbb C\mathbb H^1$-conifold  at the origin can be deduced from this differential equation 
as we will  
explain 
 below. \sk 

Note that, since any function of the form $(aD+b)/(cD+d)$ with $\big[\!\!\!\!\tiny{\begin{tabular}{cc}
 $a$\!\!\!\! &$b$\vspace{-0.1cm}\\
 $c$\!\!\!\!& $d$ \end{tabular}} \!\!\!\!\big]\in {\rm SL}_2(\mathbb C)$ satisfies 
 $( \mathscr S  \boldsymbol{\mathscr  X^*})$, this differential equation (or, in other terms, the function $Q$) alone does not characterizes  $\boldsymbol{\mathscr  X^*}$.  This $\mathbb C\mathbb H^1$-structure is characterized by 
 the data of an explicit model $U$ 
 of $\mathbb C\mathbb H^1$ as a domain  in $\mathbb P^1$ (for instance $U=\mathbb D$ or $U=\mathbb H$) and by a ${\rm Aut}(U)$-orbit 
  of $U$-(multi)valued solutions of $( \mathscr S  \boldsymbol{\mathscr  X^*})$.

 \subsubsection*{\bf A.2.2} 
We now recall some  very classical material about Fuchsian differential equations  (see \cite{YoshidaFuchsian,Gauss2Painlev,StGervais} among many  references). 
\sk 

As is well-known, given $R\in \mathcal O(X^*)$, the Schwarzian differential equation 
 \begin{equation*}
\label{E:SchwarzianDifferentialEquationR}
\qquad \quad 
\big(\mathscr S_R\big)\;  \qquad \qquad \qquad \quad\quad  \qquad
\big\{ S,x\big\}=R(x)
\qquad \quad \qquad \quad \qquad \quad \qquad \quad\qquad \qquad 
\end{equation*}
is associated to the class of second-order differential equations of the form 
 \begin{equation*}
\label{E:SchwarzianDifferentialEquationF}
\qquad \quad 
\big(\mathscr E_{p,q}\big)\;  \qquad \quad \qquad \quad\quad  \quad
F''+p F' + q  F=0
\qquad \quad \qquad \quad \qquad \quad \qquad \quad\qquad \quad 
\end{equation*}
for any  function $p,q\in \mathcal O(X^*)$ such that 
$
R=2(   q-p'/2-  p/4)
$. Given two such functions $p$ and $q$, the solutions of $(\mathscr S_R)$ 
are the functions of the form $F_1/F_2$ for any basis  $(F_1,F_2)$  of the space of solutions of $(\mathscr E_{p,q})$.

In what follows, we fix such an equation $(\mathscr E_{p,q})$ and will work with it. The reason for doing so is twofold: first, it is easier to deal with such a linear equation than with $(\mathscr S_R)$ which involves a  non-linear Schwarzian derivative. Secondly, it is through  some second-order linear differential equations that we are studying Vecch's $\mathbb C\mathbb H^1$-structures on the algebraic leaves  of Veech's foliation on $\mathscr M_{1,2}$  in this text 
(see  \S\ref{SS:ManoDifferentialSystemAlgebraicLeaves} for more details). 
\sk 

We recall that $(\mathscr E_{p,q})$ (resp.\;$(\mathscr  S_R)$) is {\bf Fuchsian} (at the origin)  if $p,q$ are (resp.\;$R$ is) meromorphic at this point  with $p(x)=O(x^{-1})$  and $q(x)=O(x^{-2})$ (resp.\;$R(x)=O(x^{-2})$ for $x$ close to $0$ in $\mathbb D^*$.  
 In this case, defining $p_0$ and $q_0$ as the complex numbers such that 
 $p(x)=p_0x^{-1}+O_0(1)$ and $q(x)=q_0x^{-2}+O_0(x^{-1})$, one can construct
 the quadratic equation 
 $$ s(s-1)+sp_0+q_0=0
 $$
which is called the  {\bf characteristic equation} of $( \mathscr E_{p,q})$.  Its two roots $\nu_+$ and $\nu_-$ (we assume that  $\Re{\rm e}(\nu_+)\geq \Re{\rm e}(\nu_-)$)  are the two {\bf characteristic exponents} of this equation and their difference $\nu=\nu_+-\nu_-$ is the associated {\bf projective index}\footnote{Note that $\nu$ is actually only defined up to sign in full generality.}. The latter can also be defined as  the complex number
such that $R(x)=\frac{1-\nu^2}{2 }x^{-2}+O(x^{-1})$ in the vicinity of the origin,  which  shows that it is actually associated to the Schwarzian equation $(\mathscr  S_R)$
rather than to $( \mathscr E_{p,q})$.\sk 

It is known that one can give a normal form of a solution of $(\mathscr  S_R)$ in terms of $\nu$
 : generically (and this will be referred to as the {\bf standard case}), there 
is a local invertible  change of coordinate $x\mapsto y=y(x)$ at the origin
so that $y^\nu$ provides a solution of  $(\mathscr  S_R)$ on a punctured neighborhood of $0$.  Another  case is possible only when $\nu=n\in \mathbb N$. 
In this case, known as the {\bf logarithmic case},  a solution of $(\mathscr  S_R)$ could be of the form $y^{-n}+\log(y)$. These results (which are simple consequences  of Frobenius theorem 
for  Fuchsian second-order differential equations, see  for instance \cite[\S2.5]{YoshidaFuchsian})\footnote{See also \cite[{{Thorme\,IX.1.1}}]{StGervais} for the sketch of a more direct proof.} are summarized in Table \ref{T:yn}.

\begin{table}[!h]
\begin{center}
\begin{tabular}{|C|C|C|C|}
\hline
\multicolumn{1}{|c|}{\backslashbox{\hspace{1.5cm} {\bf Case}}{\vrule width 0pt height 1.25em  \hspace{-0.5cm} $
{\bf Index}\; \boldsymbol{\nu}$}} & $\boldsymbol{\nu\not \in \mathbb N}$    &
$\boldsymbol{\nu=n \in \mathbb N^*}$
&$\boldsymbol{\nu=0}$     \\
\hline
{\bf Standard} & $y^\nu$  &        $y^n$   & $-\!\!-$\\
\hline
{\bf Logarithmic} & $-\!\!-$   & $y^{-n}+\log y $     &  $\log y$ \\
\hline 
\end{tabular}\mk
\caption{}
\label{T:yn}
\end{center}
\end{table}

We will use this result  to determine  when the $\mathbb C\mathbb H^1$-structure $\boldsymbol{\mathscr X}^*$ extends as a conifold structure at the origin  by means of some analytical considerations about the associated Schwarzian differential equation 
 $( \mathscr S  \boldsymbol{\mathscr  X^*})$. 
  
  Before turning to this, we would like to state another very classical (and elementary) result about Fuchsian differential systems and equations that we use several times in this text (for instance in \S\ref{SS:ManoDifferentialSystemAlgebraicLeaves} above or in B.3.5 below). 
 \sk 
 
Let 
$$ \qquad \quad 
(\mathcal S)\;  \qquad \qquad \qquad \qquad \qquad
Z'=
M \cdot 
Z\qquad \quad \qquad 
\qquad \quad \qquad \quad \quad \quad \quad \quad 
 $$
be a meromorphic linear $2\times 2$ differential system on $(\mathbb C,0)$: 
$M=(M_{i,j})_{i,j=1}^2$ is a matrix of (germs of) meromorphic functions at the origin and the unknown $Z={}^t( F,G)$ is a $2\times 1$ matrix
whose coefficients $F$ and $G$ are  (germs  of) holomorphic  functions at a point $x_0\in (\mathbb C,0)$ distinct from 0.
\newpage

\noindent{\bf Lemma A.2.2.}
{\it 
\label{L:LemmaFromFG2P}
 Assume that $M_{1, 2}$ does not vanish identically. Then: ${}^{}$
\begin{enumerate}
\item   the space of first components $F$ 
 of solutions $Z={}^t( F,G)$ of 
$(\mathcal S)$ coincides with the space of solutions of the second-order differential equation
$$
\qquad \quad 
(\mathcal E_{\mathcal S})\;  \qquad \quad \qquad \quad
F''+p\, F'+q \, F=0\qquad \quad 
\qquad \quad \qquad \quad \qquad \quad \qquad \quad 
$$
with 
\begin{equation*}
p=-{\rm Tr}(M)-\frac{M_{12}'}{M_{12}} \qquad \mbox{ and }\qquad 
q=\det(M)-M_{11}'+M_{11}\frac{M_{12}'}{M_{12}}\, ; 
\end{equation*}
\item the differential equation 
 $(\mathcal E_{\mathcal S})$ is Fuchsian  if and only if 
  $M$ has a pole of order at most 1 at the origin.  In this case, the characteristic exponents of $(\mathcal E_{\mathcal S})$ coincide with the eigenvalues of the residue matrix of $M$ at $0$. 
\end{enumerate}}
\begin{proof} 
This is a classical result which can be proved by straightforward computations
 (see {\it e.g.}\;\cite[Lemma 6.1.1, \S3.6.1]{Gauss2Painlev} for the first part).
\end{proof}
 
 \subsubsection*{\bf A.2.3} 
 We now return to the problematic mentioned  in {\bf A.2.1.} above. 
\sk 

We first consider the models of $\mathbb C\mathbb H^1$-cones considered in {\bf A.1}.  By some easy computations, one gets that 
$$\big\{  D_s(x),x \big\}= \frac{1-s^2}{2x^2}$$ 
for any $s\geq 0$ (we recall that $D_0(x)=\log(x)$ and $D_s(x)=x^s$ for $s>0$). 

It follows that a necessary condition for  the origin to be a conifold 
point for the $\mathbb C\mathbb H^1$-structure $\boldsymbol{\mathscr X^*}$ is that 
the  Schwarzian differential equation $(\mathscr S{\boldsymbol{\mathscr X}^*})$ 
 must be  Fuchsian at this point, {\it i.e.}\;that $Q(x)=O(x^{-2})$ in the vicinity of $0$. 

A natural guess at this point would be that the preceding condition is also sufficient. 
 It turns out that it  is  the case indeed: \sk

\noindent{\bf Proposition A.2.3.}
{\it 
 The two following assertions are equivalent: 
\begin{enumerate}
\item the $\mathbb C\mathbb H^1$-structure $\boldsymbol{\mathscr X^*}$ extends as a conifold structure  at the origin; 
\sk 
\item the Schwarzian differential equation $(\mathscr S{\boldsymbol{\mathscr X^*}})$ is Fuchsian.
\end{enumerate}
}

Proving this result is not difficult. We provide a proof below for the sake of completeness.  We will denote  the monodromy operator acting 
on (germs at the origin of)  multivalued holomorphic functions 
on $(\mathbb D^*,0)$ by $M_0$. 
\sk 

We will need the following\sk

\noindent{\bf Lemma A.2.3.} 
{\it  For any positive integer $n$ 
 and any Moebius transformation 
 $g\in {\rm PGL}_2(\mathbb C)$, the multivalued map $D(x)=g(x^{-n}+\log(x))$ is not the developing map of a  $\mathbb C\mathbb H^1$-structure 
 on a punctured open neighborhood of the origin in $\mathbb C$.}
\sk 

\begin{proof} The monodromy around the origin of such a (multivalued) function $D$ is projective. 
  Let $T_0$ stand for  
the matrix associated to it.  On the one hand,  $T_0$ is parabolic with $g(0)\in \mathbb P^1$ as its unique fixed point. On the other hand, the image of any punctured small open neighborhood of the origin by $D$ is a punctured open neighbourhood of $g(0)$. These two facts imply that there does not exist a model $U$ of $\mathbb C\mathbb H^1$ in $\mathbb P^1$ (as an open domain) such that $D$ has values in $U$ and $T_0\in {\rm Aut}(U)$. This shows in particular that $D$ can not be the developing map of a $\mathbb C\mathbb H^1$-structure on any punctured open neighborhood of $0\in \mathbb C$.
\end{proof}
\sk

\noindent{\bf Proof of Proposition A.2.3.}
According to the discussion which precedes  the Proposition,  (1) implies (2), hence  the only thing remaining to be proven is the converse implication.  We assume that $(\mathscr S{\boldsymbol{\mathscr X^*}})$ is Fuchsian and let $\nu$ be its index. 
We will consider the different cases of Table \ref{T:yn} separately.    
\sk

We assume first that $\nu$ is not an integer. Then there exists a local change of coordinates $x\mapsto y=y(x)$ fixing the origin such that $y^\nu$ is a solution of $(\mathscr S{\boldsymbol{\mathscr X^*}})$. Consequently, the developing map $D : X^*\rightarrow \mathbb D$ of 
${\boldsymbol{\mathscr X}^*}$  can be written 
  $D=(a y^\nu+b)/({cy^\nu+d})$  for 
some complex numbers $a,b,c,d$ such that $ad-bc=1$. 

Clearly, $b/d\in \mathbb D$,  hence up to post-composition by an element of ${\rm Aut}(\mathbb D)={\rm PU}(1,1)$ sending 
$b/d$ onto $0$, one can assume that $b=0$.  By assumption, the monodromy of $D$ belongs to ${\rm PU}(1,1)$. Since it has necessarily the origin as a fixed point, it follows that this monodromy is given by $$M_0(D)= e^{2i\pi\mu} D$$
 for a certain real number $\mu$.  On the other hand, one has 
$$M_0(D)=\frac{a M_0(y^\nu)}{c M_0(y^\nu)+d}=\frac{a e^{2i\pi\nu} y^\nu}{c e^{2i\pi\nu} y^\nu+d}\, .$$

From the two preceding expressions for $M_0(D)$ and since $a\neq 0$, one deduces that  the relations $$\begin{tabular}{rcl}
$e^{2i\pi\mu} \frac{aY}{cY+d}=\frac{a e^{2i\pi\nu} Y}{c e^{2i\pi\nu} Y+d}$ 
& $\Longleftrightarrow$ &  $e^{2i\pi\mu}\big(  c e^{2i\pi\nu} Y+d \big)= 
e^{2i\pi\nu}\big(  c  Y+d \big)$ 
\end{tabular}
$$
hold true as rational/polynomial identities in $Y$. 
Because $e^{2i\pi\nu}\neq 1$ by assumption, it follows that $c=0$ and $\nu\in \mathbb R^+\setminus \mathbb N$.   
Consequently, one has $D(x)=\widetilde y(x)^\nu$ for a certain multiple $\widetilde y$ of $y$. It is a local biholomorphism  which induces an isomorphism of   $\mathbb C\mathbb H^1$-structures $\boldsymbol{\mathscr X}^*\simeq \mathfrak C_{2\pi\nu}^*$.  This proves (1) in this case.\sk

 Assume now that $\nu=0$. Then $\log(y)/(2i\pi)$ is a solution of $(\mathscr S{\boldsymbol{\mathscr X^*}})$ for a certain local coordinate $y$ fixing the origin. In this case, it is more convenient to take $\mathbb H$ as the target space of the developing map $D$ of $\boldsymbol{\mathscr X^*}  $.  Since the monodromy of $D$ is parabolic, one can assume that its fixed point is $i\infty$,  which implies that $D$ can be written $D=a\log(y)/(2i\pi)+b$ with $a,b\in \mathbb C$ and $a\neq 0$.  Setting $\beta=\exp(2i\pi b/a)\neq 0$ and replacing $y$ by $\beta y$, one can assume that $b=0$. 

Moreover, since $D$ has monodromy in ${\rm PSL}_2(\mathbb R)$,  $a$ must be real and positive. Computing the pull-back by $D$ of the hyperbolic metric of $\mathbb H$, one gets 
$$
D^*\left( \frac{\lvert dz\lvert}{  \big\lvert\Im{\rm m}(z)\big\lvert }
\right)= \frac{\lvert dD\lvert}{  \big\lvert\Im{\rm m}(D)\big\lvert }
=\frac{ \frac{a}{2\pi}\frac{\lvert dy\lvert}{\lvert y\lvert} }{   \frac{a}{2\pi}   \big\lvert  \Re{\rm e}\big(   \log(y)  \big)   \big\lvert    }
= \frac{\lvert dy\lvert}{ \lvert y\lvert    \log\lvert y\lvert}$$
which shows that $y$ induces an isomorphism of
$\mathbb C\mathbb H^1$-structures 
 $\boldsymbol{\mathscr X^*}\simeq  \mathfrak C_{0 }^*$. 
\sk 

 We now consider the case when $\nu=n\in \mathbb N^*$ and $y^n$ is a solution of $(\mathscr S{\boldsymbol{\mathscr X^*}})$ for a certain local coordinate $y=y(x)$ fixing the origin.  As above, one can assume that the developing map $D$ of $\boldsymbol{\mathscr X^*}$ is written $D=a y^n/(cy^n+d)$. In this case, the monodromy argument used previously does not apply but one can conclude directly by remarking that since $n$ is an integer, there exists another local coordinate $\widetilde y$ at the origin such that the relation $ay^n/(cy^n+d)=\widetilde y^n$    holds true identically.  This shows that $\boldsymbol{\mathscr X^*}$ is isomorphic to 
$ \mathfrak C_{2\pi n }^*$. \sk 

Finally, the last case of Table 6, namely the logarithmic case with $\nu\in \mathbb N^*$,     does not occur according to Lemma A.2.3., hence we are done.
\qed



\section*[The Gau{\ss}-Manin connection]{\bf Appendix B\,: 
the  Gau{\ss}-Manin connection associated to Veech's map}

Many  properties of the hypergeometric function $F(a,b,c; 
\boldsymbol{\cdot})$ hence of the associated $\mathbb C\mathbb H^1$-valued multivalued Schwarz map $S(a,b,c; \boldsymbol{\cdot})$ can be deduced from the hypergeometric differential equation \eqref{HGE}.  
\sk 

Let $\mathcal F_a^\alpha$ be a leaf of Veech's foliation  in the Torelli space $\mathcal T\!\!\!{\it or}_{1,n}$. 
 As shown in \S\ref{SS:ComparisonVeechDM}, Veech's map $V_a^\alpha: \mathcal F_a^\alpha\rightarrow \mathbb C\mathbb H^{n-1}$ has an expression $V_a^\alpha=[v_\bullet]$ whose components $v_\bullet=\int_{\boldsymbol{\gamma}_{\!\!\bullet}}  T^\alpha_{\!\!a}(u)du$, with $\bullet=\infty,0,3,\ldots,n$ are elliptic hypergeometric integrals. A very  natural approach to the study of $V_a^\alpha$ is by first constructing the differential system satisfied by these. \sk 

Something very similar has been done in the papers \cite{Mano} and \cite{ManoWatanabe} but in the   more general context of isomonodromic deformations of linear differential systems on punctured elliptic curves.  The results of these two papers can be specialized to the case we are interested in, but this requires a little work in order to be made completely explicit.  This is what we do in this appendix. 
\mk 

We first introduce the Gau{\ss}-Manin connection in a general context and  then specialize and make everything explicit in the case of punctured elliptic curves.


\subsection*{\bf B.1. Basics on Gau{\ss}-Manin}
In this subsection, we present general facts relative to the construction of the 
Gau{\ss}-Manin connection $\nabla^{GM}$. We first define it analytically in B.1.2. Then we explain how it can be computed  by means of relative differential forms, see B.1.3. We conclude in B.1.4 by stating the comparison theorem which asserts that, under reasonable hypotheses, 
one can construct$\nabla^{GM}$ by considering only algebraic relative differential forms. 
\sk 

The material presented below is well-known hence no proof is given. 
Classical references are the paper \cite{KatzOda} by Katz and Oda and the book \cite{Deligne} by Deligne. 

Another  more recent and useful reference is  the book \cite{AndrBaldassarri} by Andr and Baldassarri, in particular the third chapter. Note that the general strategy followed in this book goes by `dvissage' and reduces the proofs of most of the main results  to a particular ideal case, called an {\it `elementary coordinatized fibration}' by the authors ({\it cf.}\;\cite[Chap.\,3, Definition 1.3]{AndrBaldassarri}).  We think it is worth mentioning   that the specific case of punctured elliptic curves we are interested in is precisely of this kind, see Remark B.2.4 below. 


\subsubsection*{\bf B.1.1} 
Let  $\pi: \mathcal X\longrightarrow S$ be a family of  Riemann surfaces  over
a complex manifold $S$.  This means that $\pi$ is a holomorphic morphism whose fibers  
$X_s=\pi^{-1}(s)$, $s\in S$,  all are (possibly non-compact) Riemann surfaces.  We assume that $\pi$  is smooth and as  nice as needed 
to make everything we say below work well.
\sk

Let $\Omega$ be a holomorphic 1-form on $\mathcal X$ 
and for any $s\in S$, denote by $\omega_s$ its restriction to the fiber $X_s$: 
$\omega_s=\Omega\lvert_{X_s}$.  
Then one defines differential covariant operators by setting 
$$
\nabla(\eta)=d\eta+\Omega\wedge \eta \qquad \Big(\mbox{resp. }\; \nabla_{s}(\eta)=d\eta+\omega_s\wedge \eta\Big)
$$
for any (germ of) differential form $\eta$ on $\mathcal X$ (resp.\;on $X_s$, for any $s\in S$).

The associated kernels 
$$L={\rm Ker}\big( \nabla: \mathcal O_{\mathcal X}\rightarrow 
\Omega^1_{\mathcal X}\big) \qquad \mbox{ and }\qquad L_s={\rm Ker}\big( \nabla_{s}: \mathcal O_{X_s}\rightarrow 
\Omega^1_{X_s}\big)\,  $$  are local systems  on $\mathcal X$ and $X_s$ respectively, such that $L\lvert _{X_s}=L_s$  for any $s\in S$. 
\mk


\subsubsection*{\bf B.1.2} 
  Let $B$ 
 be 
the first derived direct image 
of $L$ by $\pi$: 
$$B=R^1\pi_*(L).$$ 

It is the sheaf on $S$  the stalk of which at $s\in S$ is the first group of twisted cohomology $H^1(X_s, L_s)$. 
 We assume that $\pi: \mathcal X\rightarrow S$ and $\Omega$ are such that $B$ is a local system on $S$, of finite rank denoted by $r$. 
 Then,  tensoring by the structural sheaf of $S$, one obtains 
$$\mathcal B=B\otimes _{\mathbb C} \mathcal O_S \, .$$  
It is a locally free sheaf of rank $r$ on $S$. Moreover, there exists a unique connection  on $\mathcal B$ whose kernel  is $B$. The latter is known as the {\bf Gau{\ss}-Manin connection}  and will be denoted by 
$$\nabla^{GM}: \mathcal B\rightarrow \mathcal B\otimes \Omega_S^1.$$

We have thus given an analytic  definition of the Gau{\ss}-Manin connection in the relative twisted context. Note that this definition, although rather direct,  is not constructive at all. We will remedy to this below. 


\subsubsection*{\bf B.1.3} We recall that  the sheaves $\Omega^\bullet_{\mathcal X/S}$ of {\bf relative differential forms} on $\mathcal X$ are 
the ones characterized by requiring that the following short sequences of $\mathcal O_{\mathcal X}$-sheaves are exact: 
$$  \xymatrix@R=0.5cm@C=0.5cm{0     \ar@{->}[r]  &    \pi^* \Omega_S^\bullet    \ar@{->}[rr] &&
 \Omega_{\mathcal X}^\bullet    \ar@{->}[rr]  &&    \Omega_{\mathcal X/S}^\bullet    \ar@{->}[r]  &    0   }.  $$

More concretely, let $s_1,\ldots,s_n$ stand for local holomorphic coordinates on a small open subset $U\subset S$ and let $z$ be a vertical local coordinate on an open subset $\widetilde U\subset \pi^{-1}(U)$ tale over $U$. 
 Then there are natural isomorphisms 
\begin{equation}
\label{E:LocalIsomSheaves}
\Omega_{\mathcal X/S}^0\big\lvert_{\widetilde U}\simeq \mathcal O_{\widetilde U} 
\qquad \mbox{ and }\qquad 
\Omega_{\mathcal X/S}^1\big\lvert_{\widetilde U}\simeq \mathcal O_{\widetilde U}\cdot dz \, .  
\end{equation}

For any local section $\eta$   of $\Omega_{\mathcal X}^\bullet$, we denote by $\eta_{\mathcal X/S}$ the section of $\Omega_{\mathcal X/S}^\bullet$ it induces. With the above notation, assuming that $\eta$ is a holomorphic 1-form on $\widetilde U$, then 
the local isomorphism \eqref{E:LocalIsomSheaves} identifies 
$\eta_{\mathcal X/S}$ with  $\eta- \eta(\partial_z)dz$. \sk

Since the exterior derivative $d$ commutes with the pull-back by $\pi$, one obtains 
the {\bf relative de Rham complex} $(\Omega_{\mathcal X/S}^\bullet, d)$. One verifies easily that the connexion $\nabla_\Omega$ on $\Omega_{\mathcal X}^\bullet$ induces a connexion  $\nabla _{\mathcal X/S}$ on the  relative de Rham complex, so that any square of 
$\mathcal O_{\mathcal X}$-sheaves as 
below is commutative: 
 $$  \xymatrix@R=1.4cm@C=1cm{
 \Omega_{\mathcal X}^\bullet    \ar@{->}[r]  
 \ar@{->}[d]^{\nabla}
 &    \Omega_{\mathcal X/S}^\bullet    \ar@{->}[r]   \ar@{.>}[d]^{\nabla_{\mathcal X/S}}
 &    0 \; \\
 \Omega_{\mathcal X}^{\bullet+1}    \ar@{->}[r]  &    \Omega_{\mathcal X/S}^{\bullet+1}    \ar@{->}[r]  &    0\; .
 } $$
 
 In the local coordinates $(s_1,\ldots,s_n,z)$ considered above, writing  $
 \Omega=\omega+\varphi dz$ for a holomorphic function $\varphi$ and a 1-form $\omega$ such that $\omega(\partial_z)=0$   ({\it i.e.}\;$\omega=\Omega _{\mathcal X/S}$ with the notation introduced above), it comes that  $\nabla_{\mathcal X/S}$ satisfies 
\begin{equation}
\label{E:NablaX/Sf}
 \nabla_{\mathcal X/S}(f) =\sum_{i=1}^n 
 \big({\partial f}/{\partial s_i}\big)ds_i+ \omega\, f 
 \end{equation}
 for any holomorphic function $f$ on $\widetilde U$
  and is characterized by this property.\sk 
  
  By definition, $( \Omega_{\mathcal X/S}^\bullet, \nabla_{\mathcal X/S})$ is the {\bf relative twisted de Rham complex} associated to $\pi$ and $\Omega$.  Under some natural assomptions, the direct images $\pi_*\Omega^\bullet_{\mathcal X/S}$ are coherent sheaves of $\mathcal O_S$-modules and $\nabla_{\mathcal X/S}$ gives rise to a connection on $S$ 
  $$
  \pi_*\big(\nabla_{\mathcal X/S}\big) \, :\, \pi_*({\mathcal O}_{\mathcal X})\longrightarrow \pi_*\big(\Omega_{\mathcal X/S}^1\big)\, .
  $$
  
  Note that  $\pi_*({\mathcal O}_{\mathcal X})$ is nothing else but $\mathcal O_{\mathcal X}$   seen as a $\mathcal O_S$-module by means of $\pi$.  For this reason, we will abusively write down
   the preceding connection as 
 \begin{equation}
 \label{E:tytu}
\nabla_{\mathcal X/S} \, :\, {\mathcal O}_{\mathcal X}\longrightarrow \pi_*\big(\Omega_{\mathcal X/S}^1\big)\, .
  \end{equation}
 
   \subsubsection*{\bf B.1.4}
 On the other hand,  the map 
 $$
 U\longmapsto {\bf H}^1\Big(\pi^{-1}(U), \big( \Omega_{\mathcal X/S}^\bullet, \nabla_{\mathcal X/S}\big)\big\lvert_U\Big) 
 $$
defines a presheaf (of hypercohomology groups) on $S$.   The associated sheaf  is denoted by $R^1 \pi_*(\Omega_{\mathcal X/S}^\bullet, \nabla)$. Its stalk at any  $s\in S$ coincides with ${\bf H}^1(X_s,(\Omega_{X_s}^\bullet,\omega_s))$ hence is naturally isomorphic to $H^1(X_s,L_s)$ (see \S\ref{S:TwistedDeRham}).  

It follows that one has a natural isomorphism 
$$
\mathcal B\simeq R^1 \pi_*\big(\Omega_{\mathcal X/S}^\bullet, \nabla\big)\, . $$

 We make the supplementary assumption that $\pi$ is affine (this implies in particular that the fibers $X_s$ can no more  be assumed to be compact).
 Then it follows (see \cite[Chapt.III,\S2.7]{AndrBaldassarri}) that $R^1 \pi_*\big(\Omega_{\mathcal X/S}^\bullet, \nabla\big)$ hence $\mathcal B$ identifies with the cokernel of the connection  $\pi_*\big(\nabla_{\mathcal X/S}\big)$, denoted by $\nabla_{\mathcal X/S}$ for short, see \eqref{E:tytu}. 
 
 In other terms, one has a natural isomorphism of $\mathcal O_S$-sheaves
 \begin{equation}
 \label{E:IsomB-CokerPushForwardNablaX/S}
 \mathcal B\simeq \frac{\pi_*\Omega^1_{\mathcal X/S}}{
 \nabla_{\mathcal X/S}\big(
 \mathcal O_{\mathcal X} \big)}\, . 
 \end{equation}

 For a local  section $\eta_{\mathcal X/S}$ of $\Omega^1_{\mathcal X/S}$, we denote  by $[\eta_{\mathcal X/S}]$ its class in $\mathcal B$, or equivalently, its class modulo $ \nabla_{\mathcal X/S}(
 \mathcal O_{\mathcal X})$.\sk

 By means of the  latter isomorphism, one can give an effective description of the action of the Gau{\ss}-Manin connection.  Let $\nu$ be   a vector field over the open subset $U\subset S$ of $T_S$ ({\it i.e.}\;an element of $\Gamma(U,T_S)$). 
Then 
$$\nabla^{GM}_\nu=\big\langle \nabla^{GM}(\cdot)
 \, , \nu\big\rangle
 $$ 
 is a derivation of the $\mathcal O_U$-module $\Gamma(U,\mathcal B)$. An element of this space of sections is represented by the class $[\eta_{\mathcal X/S}]$  (that is 
$\eta_{\mathcal X/S}$ modulo 
$\nabla_{\mathcal X/S}\mathcal O(\widetilde U) $)
of a relative 1-form $\eta_{\mathcal X/S} \in \Gamma(\tilde U,\Omega^1_{\mathcal X/S})$.  Let 
$ \tilde \eta$ be a section of $\Omega^1_{\mathcal X}$ over $\tilde U$ 
such that   $ \tilde \eta_{\mathcal X/S}=\eta_{\mathcal X/S}$.  Then, for any lift $\tilde \nu$ of $\nu$ over $\tilde U$, one has
$$
\nabla_\nu^{GM}\big(   [\eta_{\mathcal X/S}  ]   \big) =   \big[  \nabla_{\tilde \nu}(   \tilde \eta)_{\mathcal X/S}\big]\, . 
$$

Finally, we mention that when not only $\pi$ but also $S$ is supposed to be affine (as  will hold true in the case we will interested in, {\it cf.}\;B.3 below), then there is a more elementary description of the RHS of the isomorphism \eqref{E:IsomB-CokerPushForwardNablaX/S}. Indeed, in  this case, according to  \cite[p.117]{AndrBaldassarri}, $\mathcal B$ identifies with the $\mathcal O_S$-module attached to 
the first cohomology group of the complex of global sections 
$$
\mathcal O({\mathcal X})\rightarrow \Omega^1_{\mathcal X/S}(\mathcal X)
\rightarrow \Omega^2_{\mathcal X/S}(\mathcal X)\rightarrow \cdots .
$$

If additionally $S$ is assumed to be of dimension 1, then  $\Omega^2_{\mathcal X/S}$ is trivial,  hence one obtains 
the following  generalization  of  \eqref{E:popol} in the relative case: 
 \begin{equation}
 \label{E:IsomB-CokerSAffineDim1}
 \mathcal B\simeq \mathcal O_S\otimes_{\mathbb C}\frac{
H^0\big(\mathcal X, 
 \Omega^1_{\mathcal X/S}\big)}{
 \nabla_{\mathcal X/S} \big( H^0\big(
 \mathcal X, 
 \mathcal O_{\mathcal X}\big)\big)}\, .
 \end{equation}

  \subsubsection*{\bf B.1.5}
Assume that the fibers $X_s$'s  are punctured Riemann surfaces 
and that $\mathcal X$ can be compactified in the vertical direction into a family $\overline \pi: \overline{\mathcal X}\rightarrow S$ of compact Riemann  surfaces.  
The original map $\pi$ is the restriction of $\overline{\pi}$ to $\mathcal X$ which is nothing else but the complement of a divisor  $\mathcal Z$ in $\overline{\mathcal X}$. 

Instead of considering holomorphic (relative) differential forms on $\mathcal X$ as above, one can make the same constructions using   rational (relative) differential forms 
on $\overline{\mathcal X}$ with poles on $\mathcal Z$.   
More concretely, one makes all the constructions sketched above starting from the sheaves of $\mathcal O_{\overline{\mathcal X}}(*\mathcal Z)$-modules 
$\Omega_{\overline{\mathcal X}}^\bullet(*\mathcal Z)$ on $\mathcal X$. 
\sk 

A fundamental result of the field,  due to Deligne in its full generality,   is that the twisted comparison theorem mentioned in \S\ref{S:AlgebraicdeRhamComparisonTHM} can be generalized to the relative case, at least  when $\mathcal Z$ is a relative divisor 
with normal crossing over $S$ 
 (see \cite[Thorme 6.13]{Deligne} or \cite[Chap.4,\,Theorem 3.1]{AndrBaldassarri} for precise  statements). \sk
 
In the particular case of relative dimension 1, this gives the following version of the isomorphism  \eqref{E:IsomB-CokerPushForwardNablaX/S}: 
 \begin{equation*}
 \mathcal B\simeq \frac{\pi_*\Omega^1_{\mathcal X/S}(*\mathcal Z)
 }{
 \nabla_{\mathcal X/S}\big(
 \mathcal O_{\mathcal X}(*\mathcal Z) \big)}\; . 
 \end{equation*}

When $S$ is also assumed affine, one gets the following 
generalization of \eqref{E:IsomTutu}:
 \begin{equation}
 \label{E:IsomB-CokerSAffineDim1}
 \mathcal B\simeq \mathcal O_S\otimes_{\mathbb C}\frac{
H^0\big(\mathcal X, 
 \Omega^1_{\mathcal X/S}(*\mathcal Z)\big)}{
 \nabla_{\mathcal X/S} \big( H^0\big(
 \mathcal X, 
 \mathcal O_{\mathcal X}(*\mathcal Z)\big)\big)}\; .
 \end{equation}

  \subsubsection*{\bf B.1.6}
  We now explain how the material introduced above can be used to construct differential systems satisfied by generalized hypergeometric integrals.\sk
  
 Let $\check{B}$ be the dual of $B$. It is the  local system on $S$ whose fiber $\check{B}_s$ at $s$ is the twisted homology group $H_1(X_s,{L}_s^\vee)$.    Let $\check{\nabla}^{GM}$ be the {\bf dual Gau{\ss}-Manin connection} on the associated sheaf $\check{\mathcal B}=\mathcal O_S\otimes \check{B}$. We recall that, by definition, it is the connection  the solutions of which form the local system $\check{B}$. 
  It can also be characterized by the following property: 
    for any local sections with the same definition domain $b$ and $\check{\beta}$ of $\mathcal B$ and $\check{\mathcal B}$ respectively, one has
  \begin{equation}
  \label{E:CheckNablaGM}
  d\big\langle b, \check{\beta} \big\rangle 
  =\Big\langle
\nabla^{GM}(b), \check{\beta}
\Big\rangle+\Big\langle
b , \check{\nabla}^{GM}\big(\check{\beta}\big)\Big\rangle\, . 
    \end{equation}

   \subsubsection*{\bf B.1.7}
We use again  the notations of B.1.1. Let $T$ be a global (but multivalued) function on $\mathcal X$ satisfying $\check{\nabla}(T)=dT-\Omega T=0$. 
For any $s\in S$, one denotes  its restriction to $X_s$ by $T_s$.  Let $I$ be the local holomorphic function  defined on a small open subset $U\subset S$ as the following generalized hypergeometric integral depending holomorphically on $s$:  
\begin{equation}
\label{E:I(s)}
I(s)=\int_{\boldsymbol{\gamma}_{\!\!s}} T_s\cdot \eta^s\, .
\end{equation}
  
  Here the $ \boldsymbol{\gamma}_{\!\!s}$'s stand for  ${L}_s^\vee$-twisted 1-cycles which depend analytically on $s\in U$ and $s\mapsto \eta^s$ is a holomorphic `section of $\Omega_{\mathcal X}^1$ over $U$', {\it i.e.}\;$\eta^s\in \Omega^1(X_s)$ for every $s\in U$ and the dependency with respect to $s$ is holomorphic. 
   From what has been said above, the value $I(s)$ actually depends only on the twisted homology classes $[\boldsymbol{\gamma}_s]$ and on the class $[\eta^s_{\mathcal X/S}]$ of $\eta^s$ in $H^0(X_s, \Omega^1_{X_s})/\nabla_{\!\!s}(\mathcal O(X_s))$. 
  
In other terms, for every $s\in U$, one has 
\begin{equation}
\label{E:(Co)homDefForI(s)}
I(s)=
 \Big\langle 
\big[\boldsymbol{\gamma}_{\!\!s}\big]\, , \, \big[\eta^s_{\mathcal X/S}\big]
\Big\rangle \, . 
\end{equation}

To simplify the discussion, assume now that $S$ is affine and of dimension 1 (as it will be the case in B.3 below).  Now $s$ has to be understood as a global holomorphic coordinate on $U=S$. Setting $\sigma=\partial/\partial s$,  one denotes 
 the associated derivation by $\nabla_{\!\!\sigma}^{GM}(\cdot)=\big\langle \nabla^{GM}(\cdot ) \, , \, \sigma \big\rangle$. We define $\check{\nabla}_{\!\!\sigma}^{GM}$ similarly. 
\sk

In most of the cases (if not all), the twisted 1-cycles $\boldsymbol{\gamma}_{\!\!s}$'s appearing in such an integral are locally obtained by topological deformations. In this case, it is well known ({\it cf.}\;\cite[Remark ({\bf 3.6})]{DeligneMostow}) that $s\mapsto [\boldsymbol{\gamma}_{\!\!s}]$ is a  section of $\check{ B}$ hence belongs to the kernel of $\check{\nabla}^{GM}$,  {\it i.e.}\;$\check{\nabla}^{GM}(\boldsymbol{\gamma}_{\!\!s})\equiv 0$. \sk 

Let $\widetilde \sigma$ be a fixed lift of $\sigma$ over $U$. Then from \eqref{E:CheckNablaGM} and \eqref{E:(Co)homDefForI(s)}, it follows  that 
\begin{align*}
I'(s)
 =  \frac{d\;}{ds}\int_{\boldsymbol{\gamma}_{\!\!s}} T_s\cdot \eta^s
=   & \, \frac{d\;}{ds}\Big\langle 
\big[\boldsymbol{\gamma}_{\!\!s}\big]\, , \, \big[\eta^s_{\mathcal X/S}\big]
\Big\rangle\\
=   & \, \Big\langle 
[\boldsymbol{\gamma}_{\!\!s}]\, , \, 
\nabla_{\!\!\sigma}^{GM}
\big[\eta^s_{\mathcal X/S}\big]
\Big\rangle
=  \int_{\boldsymbol{\gamma}_{\!\!s}} T_s \cdot 
\nabla_{\!\!\widetilde \sigma} \big(\eta^s\big)
\end{align*}
for every $s\in U$.  
More generally, for any integer $n$, one has 
\begin{equation}
\label{E:I(n)(s)}
I^{(n)}(s)=
\Big\langle 
[\boldsymbol{\gamma}_{\!\!\sigma}]\, , \, 
\left(\nabla_{\!\!\sigma}^{GM}\right)^n 
\big[\eta^s_{\mathcal X/S}\big]
\Big\rangle
=\int_{\boldsymbol{\gamma}_{\!\!s}} T_s \cdot 
\nabla_{\!\!\widetilde \sigma}^n \big(\eta^s\big)
\end{equation}
 where $\nabla_{\!\!\sigma}^n$ stands for the 
$n$-th iterate of $\nabla_{\widetilde \sigma}$ acting on the sheaf of 1-forms 
on $\mathcal X$.\sk 

To make the writing simpler, if 
 $\mu$ is a section of $\Omega_{\mathcal X}^1$, 
 we will denote the section $[\mu_{\mathcal X/S}]$ of $ H^0(\mathcal X, 
 \Omega^1_{\mathcal X/S})/
 \nabla_{\mathcal X/S} ( H^0(
 \mathcal X, 
 \mathcal O_{\mathcal X}))$  that it induces just by $[\mu]$ below.

By hypothesis, the twisted cohomology groups $H^1(X_s,L_s)$  are all of the same finite dimension $N>0$. It follows that there is a non-trivial $\mathcal O(U)$-linear relation  between the 
classes of  the $
\nabla_{\!\!\sigma}^k \big(\eta^s\big)$'s for $k=0,\ldots,N$, {\it i.e.}\;there exists 
 $(A_0,\ldots,A_N)\in \mathcal O(U)^{N+1}$ non-trivial 
 and such that  the following relation 
$$
A_0(s)\cdot \big[\eta^s\big]+A_1(s) \big[
\nabla_{\!\!\sigma}\big(\eta^s\big)
\big]+\cdots+A_N(s) \big[
\nabla_{\!\!\sigma}^N\big(\eta^s\big)\big] =0  
 $$
  holds true for every $s\in U$.
Since the value  of the $k$-th derivative $I^{(k)}$ at $s$ actually  depends only on the class 
of $\nabla_{\!\!\sigma}^k(\eta^s)$ (see \eqref{E:I(n)(s)}), one obtains that 
the function $I$ satisfies the following linear differential equation on $U$: 
$$
A_0\cdot I+A_1\cdot I'+\cdots+A_N\cdot I^{(N)} =0 \, . $$

Note that the function  $I$ defined in \eqref{E:I(s)} is not the only solution of this differential equation. Indeed, it is quite clear that this equation is also 
satisfied by any function of the form $
s\mapsto \int_{\boldsymbol{\beta}_s} T_s\cdot \eta^s$ as soon as   $s\mapsto \boldsymbol{\beta}_s$  is a section of $\check{B}$.

\subsection*{\bf B.2. The  Gau{\ss}-Manin connection on a leaf of Veech's foliation}
We are now going to specialize the material presented in the preceding subsections to the case of punctured elliptic curves we are interested in. 
\mk 

In what follows, as before, $\alpha_1,\ldots,\alpha_n$ stand for fixed real numbers bigger than $-1$ that sum up to $0$: one has $\alpha_i\in ]-1,\infty[$ for $i=1,\ldots,n$ and $\sum_{i=1}^n \alpha_i=0$. 

\subsubsection*{\bf B.2.1.} 
Forgetting the last variable $z_{n+1}$ induces a projection
  from $ \mathcal T\!\!\!{\it or}_{1,n+1}
$ onto $ \mathcal T\!\!\!{\it or}_{1,n}$. For our purpose, it will be convenient  to 
see 
this  space  rather as a kind of  covering space of the `universal curve' over the target Torelli space. For this reason, we will write $u$ instead of $z_{n+1}$ and take this  variable as the first one.

 In other terms, we consider 
$$\mathcal C\!\mathcal T\!\!\!{\it or}_{1,n}=\Big\{ \big(u,\tau,z\big)\in \mathbb C\times \mathcal T\!\!\!{\it or}_{1,n}\; \big\lvert \; u\in \mathbb C\setminus \cup_{i=1}^n \big(z_i+\mathbb Z_\tau\big)
\Big\} \simeq \mathcal T\!\!\!{\it or}_{1,n+1}
$$
and the corresponding projection $\mathcal C\!\mathcal T\!\!\!{\it or}_{1,n}\rightarrow \mathcal T\!\!\!{\it or}_{1,n}: \, (u,\tau,z)\rightarrow (\tau,z)$. \sk

We define two automorphisms of $\mathcal C\!\mathcal T\!\!\!{\it or}_{1,n}$ by setting \begin{align*}
T_{1} (u,\tau,z)= \big(u+1,\tau,z\big) \qquad \mbox{ and }
\qquad 
T_{\tau} (u,\tau,z)= \big(u+\tau,\tau,z\big)
 \end{align*}
 for 
any element  $(u,\tau,z)$  of this space.  \sk

The group spanned by $T_1$  and $T_\tau$  is isomorphic to $\mathbb Z^2$ and acts discontinuously without fixed points on $\mathcal C\!\mathcal T\!\!\!{\it or}_{1,n}$.  The associated quotient, denoted by $\mathcal E_{1,n}$, 
is nothing but the  {\bf universal $\boldsymbol{n}$-punctured elliptic curve} over $\mathcal T\!\!\!{\it or}_{1,n}$. This terminology is justified by the fact that 
the projection onto $\mathcal T\!\!\!{\it or}_{1,n}$ factorizes and gives rise to  a fibration 
$$\pi: \mathcal E_{1,n}\longrightarrow \mathcal T\!\!\!{\it or}_{1,n} $$
 the fiber of which at  $(\tau,z)\in  \mathcal T\!\!\!{\it or}_{1,n} $ is the $n$-punctured elliptic curve $E_{\tau,z}$. 

There is a partial vertical compactification 
$$\overline{\pi}: \overline{\mathcal E}_{1,n}\longrightarrow \mathcal T\!\!\!{\it or}_{1,n}$$ whose fiber at $(\tau,z)$ is the unpunctured elliptic curve $E_\tau$. The latter extends $\pi$,  is smooth and proper and comes with $n$ canonical  sections 
(for $k=1,\ldots,n$): 
\begin{align*}
[k]_{1,n}: \mathcal T\!\!\!{\it or}_{1,n}& \longrightarrow \overline{\mathcal E}_{1,n}\\
\big(\tau,z) & \longmapsto [z_k] \in E_\tau\, .
\end{align*}
    In particular, because of the normalization $z_1=0$,  $[1]_{1,n}$ is nothing 
else but 
 the zero section $[0]_{1,n}$ which associates $[0]\in E_\tau$ to $(\tau,z)$: one has $[1]_{1,n}=[0]_{1,n}$.


\subsubsection*{\bf B.2.2.}
Recall the expression 
\eqref{E:functionT} for  the function $T$ considered 
in Section  \ref{S:Twisted(Co)Homology}: 
$$T(u,\tau,z)=\exp\big(2i\pi\alpha_0u\big)\prod_{k=1}^n \theta\big(u-z_k,\tau\big)^{\alpha_k}
\, .
$$
 
  Contrary to \S \ref{S:Twisted(Co)Homology} where $\tau$ and $z$ were assumed to be fixed and only $u$ was allowed to vary, we want now all the variables $u,\tau$ and $z$ to be free. In other terms, we  now see $T$ as a multivalued holomorphic function on $\mathcal C\!\mathcal T\!\!\!{\it or}_{1,n}$. 
 \sk

 Let $\Omega$ stand for the total logarithmic derivative 
 of $T$ on 
 $\mathcal C\!\mathcal T\!\!\!{\it or}_{1,n}$:
 $$
 \Omega= {d}\!\log T=(\partial \log T/\partial u) d\!u+(\partial \log T/\partial \tau)d\!\tau+\sum_{j=2}^n (\partial \log T/\partial z_j) d\!z_j\, .
 $$
 
 A straightforward computation shows that 
 \begin{equation}
 \label{E:OMEGA}
 \Omega=  
 \omega 
+\sum_{k=1}^n
\alpha_k \bigg[ \eta(u-z_k) d\tau
- \rho(u-z_k)dz_k
\bigg]
\end{equation}
 where 
 \begin{itemize}
 \item $\omega=(2i\pi\alpha_0+\delta )du$ stands for  the logarithmic total derivative of $T$ with respect to the single variable $u$ (thus $\delta=\sum_{k=1}^n\alpha_k \rho(u-z_k)$ 
 see \eqref{E:dLogT} in Section \ref{S:Twisted(Co)Homology}) but now considered as a  holomorphic 1-form on $\mathcal C\!\mathcal T\!\!\!{\it or}_{1,n}$;
\sk  
  \item we have set for any $(u,\tau,z)\in \mathcal C\!\mathcal T\!\!\!{\it or}_{1,n}$: 
 $$
 \eta(u)=\eta(u,\tau)=
 {\partial \log\theta(u,\tau)}/{\partial \tau}
 = \frac{1}{4i\pi}\frac{\theta''(u, \tau)}{\theta(u,\tau)}\, .
 $$
 \end{itemize}

After easy computations, one deduces from the functional equations \eqref{E:ThetaQuasiPeriodicity} that for every $\tau \in \mathbb H$ and every $u\in \mathbb C\setminus \mathbb Z_\tau$, one has 
\begin{align}
\label{E:mumu}
\rho(u+1)=& \rho(u)      && \eta(u+1)= \eta(u)  \\
\rho(u+\tau)= & \rho(u)-2i\pi && \eta(u+\tau)=
\eta(u)-\rho(u)+i\pi\,.
\label{E:mumuu}
\end{align}


In Section \ref{S:Twisted(Co)Homology}, we have shown that when $\tau$ is assumed to be fixed, $\omega$ is $\mathbb Z_\tau$-invariant.  It follows that,  on $\mathcal C\!\mathcal T\!\!\!{\it or}_{1,n}$,   one has: 
\begin{equation}
\label{E:T1omega}
T_1^*(\omega)=\omega \qquad \mbox{ and }
\qquad  T_\tau^*(\omega)=\omega+\big(2i\pi\alpha_0+\delta\big) d\tau\, .
\end{equation}

We  set  
$$\widetilde\omega=\Omega- \omega=
\sum_{k=1}^n\alpha_k\,  \eta\big(u-z_k\big) 
d\tau
- \sum_{k=1}^n\alpha_k \, \rho\big(u-z_k\big)dz_k\, .$$ 
 It follows immediately from \eqref{E:mumu} that $T_1^*(\widetilde\omega)=\widetilde\omega$. 
 With \eqref{E:T1omega}, this gives us
 \begin{equation}
 \label{E:T1Omega}
 T_1^*\big(  \Omega  \big)=\Omega\, . 
 \end{equation}
 
  On the other hand, using \eqref{E:mumuu} and the fact that $\sum_{k=1}^n\alpha_k=0$, one has 
  \begin{align*}
\label{E:TtauTildeomega}
T_\tau^*\big(\widetilde\omega\big)= & \widetilde\omega
 -\delta  d\tau +
 2i\pi\sum_{k=1}^n\alpha_k dz_k\, . 
\end{align*}
Combining the latter equation  with \eqref{E:T1omega}, 
 one eventually  obtains 
\begin{equation}
 \label{E:TtauOmega}
T_\tau^*(  \Omega) = \Omega +  2i\pi \Big(\alpha_0d\tau +\sum_{k=2}^n \alpha_k dz_k \Big)\, .
\end{equation}


\subsubsection*{\bf B.2.3.}
The fact that $\Omega$ is not $T_\tau$-invariant prevents this 1-form 
from descending onto $\mathcal E_{1,n}$. However, 
viewed the obstruction  $T_\tau^*(\Omega)-\Omega$ explicited just above, this will not be the case  over a leaf of Veech's foliation on the Torelli space. \sk

More precisely, let $a=(a_0,a_\infty)\in \mathbb R^2$ be such that the leaf 
$\mathcal F_a=\mathcal F_{(a_0,a_\infty)}$ of Veech's foliation on $\mathcal T\!\!\!{\it or}_{1,n}$ 
is not empty. Remember that this leaf is 
cut out by the equation
\begin{equation}
\label{E:Falpha0alphaInfty}
a_0\tau+\sum_{j=2}^n \alpha_j z_j=a_\infty\, .
\end{equation}

Let $\mathcal E_a$ and $\mathcal C\!\mathcal T\!\!\!{\it or}_a$
stand for the restrictions of $\mathcal E_{1,n}$ and  $\mathcal C\!\mathcal T\!\!\!{\it or}_{1,n}$ over $\mathcal F_a$ respectively.  Clearly, $\mathcal C\!\mathcal T\!\!\!{\it or}_a$ is invariant by $T_1$ and $T_\tau$. Moreover,  from \eqref{E:Falpha0alphaInfty}, it comes that  $a_0d \tau+\sum_{j=2}^n \alpha_j dz_j=0$ when restricting to $\mathcal F_a$.  \sk

Thus, denoting by $\Omega_a$   the restriction of $\Omega$ to $\mathcal C\!\mathcal T\!\!\!{\it or}_a$, 
it follows from \eqref{E:T1Omega} and 
\eqref{E:TtauOmega} that
$$
T_1^*\big(\Omega_a\big)=\Omega_a\qquad \mbox{and}\qquad 
T_\tau^*\big(\Omega_{a}\big)=\Omega_{a}\, .
$$

This means that $\Omega_{a}$ descends to $\mathcal E_{a}$ as a holomorphic 1-form. We denote again its push-forward onto $\mathcal E_{a}$ by $\Omega_{a}$.  

Looking at  \eqref{E:OMEGA}, it is quite clear that for any $(\tau,z)\in \mathcal F_{a}$, one has
\begin{equation}
\label{E:Omega-omega}
\Omega_{a}\lvert_{E_{\tau,z}}= \omega_a(\cdot\, ,  \tau,z)
\end{equation}
where the right-hand side  is the rational 1-form \eqref{E:dLogT} on 
$E_{\tau,z}=\pi^{-1}(\tau,z)$.\sk 

With the help of $
\Omega_{a}$ we are going to make the same constructions as in  Section \ref{S:Twisted(Co)Homology} but relatively over the leaf $\mathcal F_{a}$. 


\subsubsection*{\bf B.2.4.} \hspace{-0.2cm}  
We now specialize the constructions and results of  B.1 by taking 
$$\mathcal X=\mathcal E_a\, , \qquad
S=\mathcal F_a  \qquad \mbox{ and }
\qquad \Omega=\Omega_a\, .$$

The covariant operator $\nabla_{\Omega_{a}}: \eta  \mapsto d\eta+\Omega_{a}\wedge \eta$  induces an integrable connexion on $\Omega^\bullet_{\mathcal E_a}$. Its kernel $L_a$  is a local system of rank 1 on $\mathcal E_{a}$. Moreover, it follows 
immediately from \eqref{E:Omega-omega} that given $(\tau,z)$ in  $\mathcal F_{a}$, its restriction to 
 $E_{\tau,z}=\pi^{-1}(\tau,z)$ coincides with the local system 
 $L_{\omega(\cdot\, ,  \tau,z)}$  associated to the 1-form $\omega(\cdot \, , \tau,z)$ on $E_{\tau,z}$ constructed in \S\ref{SS:OnTori},  denoted here by $ L_{\tau,z}$ for simplicity. 
 
On the leaf $\mathcal F_a\subset \mathcal T\!\!\!{\it or}_{1,n}$, one considers the 
local system $B_a=R^1 \pi_*  (L_a)$ whose stalk  at $(\tau,z)$ is nothing else but $H^1(E_{\tau,z}, L_{\tau,z})$.  The associated  sheaf $\mathcal B_a=\mathcal O_{\mathcal F_a}\otimes_{\mathbb C} B_a$ is  locally free and of rank $n$ according to Theorem \ref{T:DescriptionTwistedH1}. 

We are interested in the Gau{\ss}-Manin connection 
$$\nabla_{\!\!a}^{GM}: \mathcal B_a\rightarrow \mathcal B_a\otimes \Omega^1_{\mathcal F_a}$$ which we would like to make explicit. \mk 

Let $
 \overline{\mathcal E}_{a}$ and $[k]_a$ (for $k=1,\ldots,n$) stand for the restrictions of $\overline{\mathcal E}_{1,n}$ and of $[k]_{1,n}$ over $\mathcal F_a$.  For any $k=1,\ldots,n$, the image   of $\mathcal F_a$ by  $[k]_a$  
 is a divisor in $\overline{\mathcal E}_{a}$, denoted by $Z[k]_a$. 
 Consider their union 
  $$\mathcal Z_a=\bigcup_{k=1}^n Z[k]_a\, .$$
It is a relative divisor in  $\overline{\mathcal E}_a$ with simple normal crossing (the $Z[k]_a$'s are smooth and pairwise disjoint!),  hence Deligne's comparison theorem of B.1.5 applies: 
 there is an isomorphism of $\mathcal O_{\mathcal F_a}$-sheaves 
 \begin{equation}
 \label{E:IsomB-CokerSAffineDim1Ba}
 \mathcal B_a\simeq \mathcal O_{\mathcal F_a}\otimes_{\mathbb C}\frac{
H^0\big({\mathcal E}_a, 
 \Omega^1_{{\mathcal E}_a/{\mathcal F_a}}(*\mathcal Z_a)\big)}{
 \nabla_{{\mathcal E}_a/{\mathcal F_a}} \big( H^0\big(
{\mathcal E}_a, 
 \mathcal O_{{\mathcal E}_a}(*\mathcal Z_a)\big)\big)}\; .
 \end{equation}

 \noindent
 {\bf Remark B.2.4.}    
{\rm Actually, the geometrical picture we have can be summarized by the 
following commutative diagram}
  $$  \xymatrix@R=1.3cm@C=2cm{
 {\mathcal E}_a   \,   \ar@{^{(}->}[r]
   \ar@{->}[dr]_{\pi_a }
 &    \overline{\mathcal E}_a      \ar@{->}[d]^{\overline{\pi}_a}
 &  \ar@{_{(}->}[l]  \;  \mathcal Z_a
 \ar@{->}[dl]
  \\
 &    \mathcal F_a  \,&     } $$
 {\rm   where the two horizontal arrows are complementary inclusions. Since the restriction of $\overline{\pi}_a$ to $\mathcal Z_a$ is obviously an tale covering, this means that $\pi_a: {\mathcal E}_a \rightarrow \mathcal F_a$ 
 is precisely what  is called an {\it `elementary fibration'}
  in \cite{AndrBaldassarri}.  Even better, quotienting by the elliptic involution 
over $\mathcal F_a$ (which exists since the latter is affine), one sees that   
  $\overline{\pi}_a$ factorizes through the relative projective line $\mathbb P^1_{\!\!\mathcal F_a}\rightarrow \mathcal F_a$. In the terminology of \cite{AndrBaldassarri}, this means that the elementary fibration $\pi_a$ can be {\it `coordinatized'}. } 


\subsubsection*{\bf B.2.5.} \hspace{-0.2cm} 
  At this point, we use Theorem \ref{T:DescriptionTwistedH1} to obtain a relative version of it.
  
   We consider the horizontal non-reduced divisor supported on $\mathcal Z_a$: 
  $$
  \mathcal Z_a'=\mathcal Z_a+Z[0]_a=2Z[0]_a+\sum_{k=2}^n Z[k]_a\,. 
  $$
  
For dimensional reasons, it follows immediately from Theorem \ref{T:DescriptionTwistedH1} that 
  \begin{equation*}
 \mathcal B_a\simeq \mathcal O_{\mathcal F_a}\otimes_{\mathbb C}\frac{
H^0\big({\mathcal E}_a, 
 \Omega^1_{{\mathcal E}_a/{\mathcal F_a}}(\mathcal Z'_a)\big)}{
 \nabla_{{\mathcal E}_a/{\mathcal F_a}} \big( H^0\big(
{\mathcal E}_a, 
 \mathcal O_{{\mathcal E}_a}(\mathcal Z'_a)\big)\big)}\; .
 \end{equation*}
  
  Recall the 1-forms 
  $$\varphi_0=du\, , \qquad  \varphi_1=\rho'(u, \tau,z)du\quad  \mbox{ and } \quad \varphi_k=\big(\rho(u-z_k, \tau)-\rho(u,  \tau)\big)du$$ 
   (with $k=2,\ldots,n$)   considered in \S\ref{SS:TwistedCohomologyH1}.  We now consider them with $(\tau,z)$ varying in $\mathcal F_a$. Then these appear as elements of $H^0({\mathcal E}_a, 
 \Omega^1_{{\mathcal E}_a/{\mathcal F_a}}(\mathcal Z'_a))$.  Moreover, they span this space 
and if $
[\varphi_0],\ldots, [\varphi_n]$ stand for 
their associated classes up to   the image of $ H^0(
{\mathcal E}_a, 
 \mathcal O_{{\mathcal E}_a}(\mathcal Z'_a))$  by $\nabla_{{\mathcal E}_a/{\mathcal F_a}}$, it follows from Theorem \ref{T:DescriptionTwistedH1} that 
 $[\varphi_0],\ldots, [\varphi_{n-1}]$ form a basis of $\mathcal B_a$ over $\mathcal O_{\mathcal F_a}$. In other terms, one has  
 $$\mathcal B_a\simeq \mathcal O_{\mathcal F_a} \otimes \left( \oplus_{i=0}^{n-1}\mathbb C [\varphi_i]
 \right)
 .$$
 
 From the preceding trivialization, one deduces that
  $$
 \nabla^{GM} \begin{pmatrix}  [\varphi_0] \\  
 \vdots \\
 [\varphi_{n-1}]  
 \end{pmatrix}=  M\, \begin{pmatrix}  [\varphi_0] \\  
 \vdots \\
 [\varphi_{n-1}]  \end{pmatrix}
 $$
for a certain matrix $M\in GL_{n}(\Omega^1_{\mathcal F_a})$ which completely characterizes 
  the Gau{\ss}-Manin connection.
  We explain below how $M$ can be explicitly computed.  
  
\subsubsection*{\bf B.2.6.} \hspace{-0.2cm}    Knowing  $\nabla^{GM}$
 is equivalent to knowing the action of any  $\mathcal O_{\mathcal F_a}$-
  derivation 
  $$\nabla^{GM}_{\sigma}=\big\langle 
  \nabla^{GM}\, , \, {\sigma}
  \big\rangle\, 
   : \, \mathcal B_a\longrightarrow \mathcal B_a$$ 
  for any  vector field $\sigma$ on $\mathcal F_a$.  
 Since $\tau$ and $z_2,\ldots,z_{n-1}$ are global affine coordinates on $\mathcal F_a$, $T\mathcal F_a$ is a locally free $\mathcal O_{\mathcal F_a}$-module with $(\partial/\partial \tau,\partial/\partial z_2,\ldots,\partial/\partial z_{n-2})$ as a basis.  It follows that the Gau{\ss}-Manin connection 
 we are interested in is completely determined by the $n$ `Gau{\ss}-Manin derivations'
 \begin{equation}
\label{E:GMderivations} 
  \nabla^{GM}_\tau:= \nabla^{GM}_{\partial/\partial \tau}
 \qquad \mbox{ and }
 \qquad  \nabla^{GM}_{z_i}:= \nabla^{GM}_{\partial/\partial z_i}\quad \mbox{for }\, i=2,\ldots,n-1.
 \end{equation}
  
  \sk 
  
  Let $U$ be a non-empty open sub-domain of $\mathcal F_a$ and 
 set 
  $ \widetilde U=\pi^{-1}(U)$.
     \\ For $\widetilde \eta\in \Gamma(   \widetilde U   , \Omega^1_{\mathcal E_a})$,   
we recall the following notations: 
  \begin{itemize}
\item   $\widetilde \eta_{\mathcal E_a/\mathcal F_a}$ stands for the class of $\eta$ in 
  $\Gamma(   \widetilde U   , \Omega^1_{\mathcal E_a/\mathcal F_a})$. 
%
  \sk
  \item  $[\widetilde \eta_{\mathcal E_a/\mathcal F_a}]$ stands for the class of $\eta_{\mathcal E_a/\mathcal F_a}$ modulo the image of 
  $ \nabla_{\mathcal E_a/\mathcal F_a} $.\sk
\end{itemize}
 
   Let $\mu$ be a section  of $\pi_*\Omega^1_{\mathcal E_a/\mathcal F_a}$ over $U$.  To compute $\nabla^{GM}_\xi(\mu)$ with $\xi=\tau$ or $\xi=z_i$ with $i\in \{2,\ldots,n-1\}$, 
 we first consider a relative 1-form $\eta_{\mathcal E_a/\mathcal F_a}$  over $\widetilde U$ such that $[ \eta_{\mathcal E_a/\mathcal F_a}]=\mu$ (here we use the isomorphism \eqref{E:IsomB-CokerSAffineDim1Ba}). 
 
  In the coordinates $u,\tau,z=(z_2,\ldots,z_{n-1})$ on $\mathcal E_a$, one can write explicitly 
 $$
 \eta_{\mathcal E_a/\mathcal F_a}=N(u,  \tau,z)du
 $$
  for a holomorphic function $N$ such that for any $(\tau,z)\in U$, the map  $u\mapsto N(u,  \tau,z)$ is a  rational function on $E_\tau$, with poles at $[0]$ and $[z_2],\ldots,[z_n]$ exactly, where 
$$
z_n=\frac{1}{\alpha_n}\Big(a_\infty-a_0\tau-\sum_{k=2}^{n-1}\alpha_k z_k
\Big)
\,. 
$$

Consider the following 1-form 
 $$  \Xi=du+\frac{\rho(u,\tau)}{2i\pi} d\tau
  $$ 
which is easily seen to be invariant by $T_1$ and $T_\tau$. 

Then one defines 
\begin{equation}
\label{E:tildeeta}
 \widetilde  \eta= N\cdot \Xi=
N(u,\tau,z)\Big(du+ \frac{\rho(u,\tau)}{2i\pi}d\tau\Big). 
\end{equation}
Using the fact that  $N(u,\tau,z)$ is $\mathbb Z_\tau$-invariant with respect to $u$ when $(\tau,z)\in U$ is fixed, one verifies easily 
that the 1-form $ \widetilde  \eta$ defined just above is invariant by $T_1$ and $T_\tau$ hence descends to a section of 
$\pi_* \Omega^1_{{\mathcal E}_a}$ 
over $U$, again denoted by $\widetilde{\eta}$\footnote{More conceptually, the map $N(u,\tau,z)du\mapsto N(u,\tau,z)(du+ (2i\pi)^{-1}{\rho(u, \tau)}d\tau)$ can be seen as a splitting of the epimorphism of sheaves $\Omega^1_{\mathcal E_a}\rightarrow \Omega^1_{\mathcal E_a/\mathcal F_a}$.}. 
\sk 

The vector fields 
\begin{equation}
\label{E:ZetaStar}
  \zeta_\tau
  =\frac{\partial}{\partial \tau}-\frac{\rho}{2i\pi}   \frac{\partial}{\partial u}
  \qquad \mbox{ and }\qquad 
  \zeta_i=\frac{\partial}{\partial z_i}\quad \mbox{for }\, i=2,\ldots,n-1
  \end{equation}
all are 
 invariant by $T_1$ and by $T_\tau$ hence descend  to  rational vector fields on 
  $\overline{\mathcal E}_a$ with poles along $\mathcal Z_a$, all denoted by the same notation.   Clearly, one has $\pi_*( \zeta_\tau)=\partial/\partial\tau$ 
and   $\pi_*(\zeta_i)=\partial/\partial z_i$ for $i=2,\ldots,n-1$. 
\sk 

We now have at our disposal everything we need to compute the actions of the derivations 
\eqref{E:GMderivations} on $\mu\in \Gamma(U, \pi_*\Omega^1_{\mathcal E_a/\mathcal F_a})$: for $\star \in \{\tau,z_2,\ldots,z_{n-1}\}$, one has 
$$
\nabla_\star^{GM}\mu=     \left[   \big\langle \nabla\widetilde \eta,\zeta_\star \big\rangle_{\mathcal E_a/\mathcal F_a}\right]=
\left[   \big\langle d\widetilde \eta
+\Omega_a\wedge \widetilde \eta
,\zeta_\star \big\rangle_{\mathcal E_a/\mathcal F_a}\right]
$$
and the right hand side can be explicitly computed with the help of the explicit formulae 
\eqref{E:OMEGA}, \eqref{E:tildeeta} and \eqref{E:ZetaStar}.  

We will not make  the computations of the $\nabla_\star^{GM}[\varphi_k]$ explicit in the general case but only in the case when $n=2$ just below.

  \subsection*{\bf B.3. The  Gau{\ss}-Manin connection for elliptic curves with two conical points} One specializes now in the case when $n=2$. 
 Then   the leaf $\mathcal F_a$ is isomorphic to $\mathbb H$,  hence the $\mathcal O_{\mathcal F_a}$-module of derivations on $\mathcal F_a$  is $\mathcal O_{\mathcal F_a}\cdot ({\partial}/{\partial \tau})$. Thus in this case,  the Gau{\ss}-Manin connection is completely determined  by $
  \nabla^{GM}_\tau$. 
  \sk


   We will use below the following convention about the partial derivatives of 
  a function $N$ holomorphic in the variables $u$ and $\tau$: we will denote by $N_u$ or $N'$\vspace{-0.1cm}\\ (resp.\;$N_\tau$ or $\stackrel{\bullet}{N}$) the partial derivative of $N$ with respect to $u$ (resp.\;to $\tau$).  The notation $N'$ will be used to mean that we consider $N$ as a function of $u$ with $\tau$ fixed (and vice versa for $\stackrel{\bullet}{N}$). 
  
  
 \subsection*{\bf B.3.1.}\hspace{-0.2cm}  As in B.2.6,  let  $\eta$ be a section of $\pi_* \Omega^1_{{\mathcal E_a/\mathcal F_a}}$ over a small open subset $U \subset  \mathcal F_a\simeq \mathbb H$. It is written 
\begin{equation*}
\label{E:nota}
\eta=N(u, \tau)du
\end{equation*}
for a holomorphic function $N$ which, for any $\tau\in U$,  is  rational on $E_\tau$, with poles at $[0]$ and $[t]$ exactly, with
$$
t=t_\tau=\frac{a_0}{\alpha_1}\tau-\frac{a_\infty}{\alpha_1}\,. 
$$

Then one has (with 
$ \widetilde  \eta= N\cdot \Xi=
N(u,\tau) (du+ (2i\pi)^{-1}{\rho(u,\tau)}d\tau) 
$): 
\begin{align*}
\nabla_{\!a} \widetilde \eta= 
\nabla_{\!a}\big(  N\cdot \Xi   \big)
= dN\wedge\Xi+
N\cdot  \nabla_{\!a}\Xi
\end{align*}
and since $\langle \Xi\, , \,  \zeta_\tau    \rangle =0$, it follows that 
\begin{align}
\label{E:mopo}
\big\langle \nabla_{\!a} \widetilde \eta, 
\zeta_\tau \big\rangle
=  &  \, \big \langle dN , 
\zeta_\tau \big \rangle
\cdot \Xi
+
N\cdot  
\big\langle  d\Xi  \, , \, 
\zeta_\tau
\big\rangle
+
N\cdot  
\big\langle  \Omega_a\wedge \Xi \, , \, 
\zeta_\tau
\big\rangle\, .
\end{align}

Easy computations give 
\begin{align*}
\big \langle dN , 
\zeta_\tau  \big  \rangle= & \, N_\tau-(2i\pi)^{-1} \rho\cdot N_u\, ,\\
\big \langle d\Xi , 
\zeta_\tau \big  \rangle= & \, -(2i\pi)^{-1} \rho_u \cdot \Xi \\
\mbox{and}\quad 
\big\langle  \Omega_a\wedge \Xi \, , \, 
\zeta_\tau  \big\rangle= & \, 
\big( 
\Omega_\tau-(2i\pi)^{-1} \rho\cdot \Omega_u
\big)\cdot \Xi\,. 
\end{align*}
Injecting these into \eqref{E:mopo}
and  since $\Xi_{{\mathcal E_a/\mathcal F_a}}=du$, one  finally  gets 
\begin{equation}
\label{E:tildeNablaTau}
  \big\langle   \nabla_{\!a} \widetilde \eta, \zeta_\tau  \big\rangle
  _{{\mathcal E_a/\mathcal F_a}}
  = N_\tau du 
  + \Omega_\tau 
Ndu  
  -(2i\pi)^{-1}
  \nabla_{\mathcal E_a/\mathcal F_a}\big(  \rho\, N   \big)
  \end{equation}
  where $ \nabla_{\mathcal E_a/\mathcal F_a}=d_u(\cdot )+\Omega_u du\wedge\cdot $ stands for the 
vertical covariant derivation  
  \begin{align*}
 \nabla_{\mathcal E_a/\mathcal F_a} \; : \; \mathcal O_{\mathcal E_a/\mathcal F_a} & \longrightarrow \Omega^1_{\mathcal E_a/\mathcal F_a}\\
 F=F(u, \tau) & \longmapsto  F_u du+F\,\Omega_u\,  du\, . 
\end{align*}

It follows essentially from \eqref{E:tildeNablaTau} that the 
 differential operator  
\begin{align}
\widetilde \nabla_\tau : \Omega^1_{\mathcal E_a/\mathcal F_a}& \longrightarrow \Omega^1_{\mathcal E_a/\mathcal F_a}     \nonumber \\
N du & \longmapsto N_\tau du 
  + \Omega_\tau  N du  
  -\frac{1}{2i\pi}
  \nabla_{\mathcal E_a/\mathcal F_a}\big(  \rho\, N   \big)
  \label{E:TildeNablaTau}
\end{align}
 is a $\pi^{-1} \mathcal O_{\mathcal F_a}$-derivation which  is nothing else but a lift of the Gau{\ss}-Manin deri\-vation 
$\nabla^{GM}_\tau$ we are interested in.  The fact that $\widetilde \nabla_\tau$ is explicit will allow us to determine explicitly the action of $\nabla^{GM}_\tau$ below. 
\mk 

\noindent{\bf Remark B.3.1.} 
{\rm  It is interesting to compare our formula \eqref{E:TildeNablaTau} for $\widetilde \nabla_\tau$ to the corresponding one in \cite{ManoWatanabe}, namely the 
specialization when $\lambda=0$ of the one for the  differential operator 
$\nabla_\tau$ given 
 just before Proposition 4.1 page 3878 in  \cite{ManoWatanabe}.   
 The latter is not completely explicit since  in order to compute $\nabla_\tau N\,du$ with $N$ as above it is necessary to introduce a deformation $N(u,\tau,\lambda)$ of $N=N(u, \tau)$ which 
 is meromorphic with respect to $\lambda$.  However such deformations $\varphi_i(u,\tau,\lambda)du$ are explicitly given for the $N_i=\varphi_i(u,\tau,0) du$'s ({\it cf.}\;\cite[p.\,3875]{ManoWatanabe}),  hence Mano and Watanabe's formula can be used  to effectively determine the Gau{\ss}-Manin connection. 
 Note that our arguments above show that  $\widetilde \nabla_\tau$ is a lift of 
the Gau{\ss}-Manin derivation $\nabla_{\tau}^{GM}$  indeed. The corresponding statement is not justified  in 
\cite{ManoWatanabe}  and is 
implicitly left to  the reader.\sk 

Finally, it is fair to mention a  notable feature of Mano-Watanabe's operator $\nabla_\tau$ that our $\widetilde \nabla_\tau$ does not share: for $i\in \{0,1,2\}$, $ \nabla_\tau N_i$ is a rational 1-form on $E_\tau$, with polar divisor $\geq 2[0]+[t_\tau]$,  hence can be written as a linear combination in $N_0,N_1$ and $N_2$.  This is not the case for the $\widetilde \nabla_\tau N_i$'s. For instance, $\widetilde \nabla_\tau N_1$ has a pole of order four at $[0]$ (see also B.3.3 below).} 


 \subsection*{\bf B.3.2. Some explicit formulae}\hspace{-0.0cm}
In the case under study, we have
$$
T(u,\tau)=
 e^{2i\pi a_0 u}\theta(u)^{\alpha_1}\cdot \theta(u-t)^{-\alpha_1}
$$
(with $t=(a_0/\alpha_1)\tau-(a_\infty/\alpha_1)$) hence 
$$\Omega=d\log T=\Omega_u du+\Omega_\tau d\tau$$
 with 
  \begin{align}
  \label{E:OmegaU&OmegaTau}
   \Omega_u= & \, {\partial \log T}/{\partial u}=   2i\pi {a_0}+ \alpha_1 \big( \rho(u)-\rho(u-t)
  \big)\\
  \mbox{ and }\quad 
\Omega_\tau= & \, {\partial \log T}/{\partial \tau}= 
   \frac{\alpha_1}{4i\pi} \left(
  \frac{\theta''(u)}{\theta(u)}- \frac{\theta''(u-t)}{\theta(u-t)}
  \right)+ a_0 \rho(u-t)\, . \nonumber
 \end{align}
  
 For $i=0,1,2$, one writes $ \varphi_i=N_i(u) du$ with 
  \begin{equation*}
N_0(u)=1\, , \qquad 
N_1(u)=\rho'(u)\qquad \mbox{ and }
\qquad  
N_2(u)=\rho(u-t)-\rho(u)\, .
\end{equation*}

The following functions will appear in our computations below: 
    \begin{align*}
      P(u)=P(u,\tau)=
    &  \frac{\theta''(u)}{\theta(u)}- \frac{\theta''(u-t)}{\theta(u-t)}
  - 2 \big( \rho(u)-\rho(u-t)
  \big)\cdot\rho(u) 
  \\ 
  \mbox{ and }\quad   
  \mu(u)=\mu(u,\tau)
  =  & \, -\frac{1}{2} 
  \left(
  \frac{\theta'''(u)}{\theta(u)} -
  \frac{\theta''(u)\theta'(u)}{\theta(u)^2}
  \right)   
 \, .    \end{align*}
  \sk 
  
  \noindent{\bf Lemma B.3.2.1.} 
{\it For any fixed $\tau\in \mathbb H$, $P(u)$ is $\mathbb Z_\tau$-invariant  and one has 
\begin{equation}
\label{E:DecompositionP}
P=
 \left[\rho'(t)+\rho(t)^2-\frac{\theta'''}{\theta'}
  \right]  \cdot N_0
 +2\cdot N_1+ 2\rho(t)  \cdot N_2
 \end{equation}
 as an elliptic function of $u$.
}\sk 

\begin{proof} Using \eqref{E:ThetaQuasiPeriodicity} and \eqref{E:mumu}, one verifies easily that for $\tau$ fixed, $P(\cdot,\tau)$ is $\mathbb Z_\tau$-invariant and, viewed as a rational function on $E_\tau$,  its polar divisor is $2[0]+[t]$.  By straightforward computations, one verifies that $P(\cdot)$ has the same polar part  as  the right-hand-side of \eqref{E:DecompositionP}.  By evaluating at one point (for instance at $u=0$), the lemma follows.
%
\end{proof}
\mk

By straightforward computations, one verifies that the following holds true: 
\sk 

  \noindent{\bf Lemma B.3.2.2.} 
{\it For $\tau\in \mathbb H$ fixed, the meromorphic function 
$$  u \longmapsto \mu(u)+\rho(u)\rho'(u)$$ 
is an elliptic function, {\it i.e.}\;is $\mathbb Z_\tau$-invariant in the variable $u$. 
}


        \subsection*{\bf B.3.3. Computation of $\boldsymbol{\nabla_\tau^{GM} [{\varphi}}_0]$}
         Since $N_0$ is constant, the partial derivatives 
          ${\partial N_0}/{\partial u}$ and ${\partial N_0}/{\partial \tau}$ both vanish. Then
          from \eqref{E:tildeNablaTau}, it comes 
      \begin{align*}
         \widetilde  \nabla_\tau \varphi_0=  & \, 
          \bigg[
           \Omega_\tau
          -\frac{1}{2i\pi} \Big( \rho_u
  +  \Omega_u \cdot \rho  \Big) 
   \bigg] du \\
   =  & \, 
          \Bigg[
           \frac{\alpha_1}{4i\pi} \left(
  \frac{\theta''(u)}{\theta(u)}- \frac{\theta''(u-t)}{\theta(u-t)}
  \right) +  a_0 \rho(u-t)  \\
   &              \hspace{2.2cm}-   \frac{1}{2i\pi} \Big( \rho'(u)
  +  
\Big(2i\pi  \frac{a_0}{\alpha_1}+ \alpha_1 \big( \rho(u)-\rho(u-t)
  \big)\Big)  
   \cdot \rho(u)  \Big) 
   \Bigg] du
   \\
  =   & \,  \frac{a_0}{\alpha_1} du -\frac{1}{2i\pi}\rho'(u)du+
     \frac{\alpha_1}{4i\pi} P(u) du\, . 
 \end{align*}       
 
 It follows from Lemma B.3.2.1. 
 that 
  $$
\widetilde \nabla_\tau {\varphi}_0=
\frac{\alpha_1}{4i\pi} \left(\rho'(t)+\rho(t)^2-\frac{\theta'''}{\theta'}\right)\cdot  {\varphi}_0
+
\frac{\alpha_1-1}{2i\pi}\cdot  {\varphi}_1
+\left(a_0+\frac{\alpha_1}{2i\pi}\rho(t)\right)\cdot  {\varphi}_2
$$
  thus  in (twisted) cohomology, because  $2i\pi a_0[\varphi_0]=\alpha_1[\varphi_2]$ ({\it cf.}\;\eqref{E:RELATION}), one deduces that the following relation holds true: 
   \begin{equation}      
\label{E:NablaTauGLPHI0}
           \nabla_\tau^{GM} [{\varphi}_0]
       =  \left(2i\pi\frac{a_0^2}{\alpha_1} +a_0\rho(t)+
\frac{\alpha_1}{4i\pi} \left(\rho'(t)+\rho(t)^2-\frac{\theta'''}{\theta'}\right)
\right)
 [{\varphi}_0]
+
\frac{\alpha_1-1}{2i\pi}\,   [{\varphi}_1]\,.
       \end{equation}


  \subsection*{\bf B.3.4. Computation of $\boldsymbol{ \nabla_\tau^{GM}[ {\varphi}_1]}$}
\sk 

From  \eqref{E:tildeNablaTau}, it comes
\begin{align*}
\widetilde \nabla_\tau\varphi_2= 
\widetilde \nabla_\tau \big(\rho' du\big)= &  \bigg[ 
\stackrel{\bullet}{\rho}\! {}'+ 
 \Omega_\tau \, \rho'
 -\frac{1}{2i\pi} \Big( \rho\cdot \rho''+(\rho')^2
   + \Omega_u\cdot \rho \rho'  \Big) \bigg] du \, .
   \end{align*}

By construction, for any $\tau\in \mathbb H$ fixed, the right-hand-side is a rational 1-form on $E_\tau$.   It follows from \cite{ManoWatanabe} that there exist three `constants depending on $\tau$', $A_i(\tau)$ with $i=0,1,2$ and a rational function $\Phi(\cdot)=\Phi(\cdot,\tau)$ depending on $\tau$, all to be determined,  such that 
\begin{align*}
\widetilde \nabla_\tau\varphi_2=       A_0(\tau)\cdot \varphi_0+  A_1(\tau) \cdot \varphi_1+
  A_2(\tau) \cdot \varphi_2 
  -\frac{1}{2i\pi}
 \nabla_{\mathcal E_a/\mathcal F_a}\Phi. 
\end{align*}

Using \eqref{E:OmegaU&OmegaTau} and the following formulae
\begin{align*}
\rho(u)= & \, {\theta'(u)}/{\theta(u)}\\
\rho'(u)=& \, {\theta''(u)}/{\theta(u)}-\big({\theta'(u)}/{\theta(u)}\big)^2\\
\rho''(u)=& \, {\theta'''(u)}/{\theta(u)}-3{\theta''(u)\theta'(u)}/{\theta(u)^2}
+2\big( {\theta'(u)}/{\theta(u)}  \big)^3
\\
\mbox{and}
\;    \stackrel{\bullet}{\rho}\! {}'(u)
= & \, \frac{1}{4i\pi}
\left[\frac{
{\theta}^{(4)}(u)}{\theta(u)}-
\left(\frac{\theta''(u)}{\theta(u)}\right)^2
-2 \frac{{\theta}'''(u)\theta'(u)}{\theta(u)^2}
+2 \frac{\theta''(u)\theta'(u)^2}{\theta(u)^3}
\right]
\end{align*}
one verifies by lengthy but straightforward computations that one has 
\begin{align*}
A_0(\tau) = & \, -a_0 \mu(t)
-\frac{\alpha_1}{4i\pi}\Big(
\mu'(t)+2\rho(t)\mu(t)-3\, \mu'(0)\Big);
\\
A_1(\tau) = & \, 
-a_0\rho(t)
-\frac{\alpha_1}{4i\pi}\left(\rho'(t)+\rho(t)^2-\frac{\theta'''}{\theta'}
\right);
\\
A_2(\tau) = & \, a_0 \rho'(t)-\frac{\alpha_1}{2i\pi}\mu(t) \\
\mbox{ and }\quad   
\Phi(u) = & \,  \mu(u)+\rho(u)\rho'(u) 
 \, .
\end{align*}

Since $\Phi(u)$ is rational according to  Lemma B.3.2.2., 
one has $ [\nabla_{\mathcal E_a/\mathcal F_a}\Phi]=0$ in (twisted) cohomology and because  $2i\pi a_0[\varphi_0]=\alpha_1[\varphi_2]$,  
one obtains that \begin{align*}
\nabla_\tau^{GM} [{\varphi}_1]= & \, 
\left(
A_0(\tau)+2i\pi\frac{a_0}{\alpha_1}A_2(\tau)
\right)\cdot [{\varphi}_0]+
A_1(\tau) \cdot [{\varphi}_1]
\end{align*}
uniformly with respect to $\tau\in \mathbb H$, that is, more explicitly
\begin{align}
\label{E:NablaTauGLPHI1}
\nonumber
\nabla_\tau^{GM} [{\varphi}_1]= & \,
\left(
2i\pi\frac{a_0^2}{\alpha_1}\rho'(t)
-2 a_0\mu(t)
-\frac{\alpha_1}{4i\pi}\Big(
\mu'(t)+2\rho(t)\mu(t)-3\, \mu'(0)\Big)
\right)\cdot [ {\varphi}_0]
\\ & \hspace{2cm} \, -\left(a_0\rho(t)
+\frac{\alpha_1}{4i\pi}\left(\rho'(t)+\rho(t)^2-\frac{\theta'''}{\theta'}
\right)
\right)
 \cdot  [{\varphi}_1]\, . 
\end{align}



 \subsection*{\bf B.3.5. 
The Gau{\ss}-Manin connection $\nabla^{GM}$ and the differential equation 
satisfied by the components of Veech's map}

From \eqref{E:NablaTauGLPHI0} and \eqref{E:NablaTauGLPHI1}, one deduces the  
\sk 

\noindent{\bf Thorem B.3.5.} 
{\it The action of the Gau{\ss}-Manin derivation $\nabla_{\!\tau}^{GM}$ in the basis 
formed by $[{\varphi}_0]$ and $ [{\varphi}_1]$ is given by 
\begin{equation}
\label{T:GM}
\nabla_{\!\tau}^{GM} \begin{pmatrix}
[{\varphi}_0]\\
[{\varphi}_1]
\end{pmatrix}=
\begin{pmatrix}
 M_{00}&  M_{01} \\ 
 M_{10}&   M_{11}
\end{pmatrix}
\cdot 
\begin{pmatrix}
[{\varphi}_0]\\
[{\varphi}_1]
\end{pmatrix}
\end{equation}
with
\begin{align*}
M_{00}=  &  2i\pi \frac{ a_0^2}{\alpha_1}+ a_0 \rho(t)+
     \frac{\alpha_1}{4i\pi} \left(      \rho'(t)+\rho(t)^2-\frac{\theta'''}{\theta'}
      \right) \, ;    \\ 
M_{01}=  &   \frac{\alpha_1-1}{2i\pi}\, ;   \\ 
M_{10}=  & 2i\pi\frac{ a_0^2}{\alpha_1}\rho'(t)
-2 a_0\mu(t)
-\frac{\alpha_1}{4i\pi}\Big(
\mu'(t)+2\rho(t)\mu(t)-3\, \mu'(0)\Big)
\\ 
\mbox{and}\quad M_{11}= &  -a_0\rho(t)
-\frac{\alpha_1}{4i\pi}\left(\rho'(t)+\rho(t)^2-\frac{\theta'''}{\theta'}
\right)\, . 
\end{align*}
}

         Consequently, according to B.1.5, for any horizontal family of  twisted 1-cycles $\tau\mapsto \boldsymbol{\gamma}(\tau)$, if one sets 
    \begin{align*} 
    F_0(\tau)=      \int_{\boldsymbol{\gamma}(\tau)} T(u,\tau) du
\qquad  \mbox{and}\qquad 
         F_1(\tau) = 
           \int_{\boldsymbol{\gamma}(\tau)}  T(u,\tau) \rho'(u,\tau) du
\end{align*}
then $F={}^t\!(F_0,F_1)$       
          satisfies the differential system 
          \begin{equation}
          \label{E:tyto}
          \stackrel{\bullet}{F}=dF/d\tau=M\, F
          \end{equation}
           where $M=M(\tau)$  is the $2\times 2$  matrix appearing in \eqref{T:GM}.  \sk
          
 At this point, we recall the definition of Veech's map: it is the map          
 \begin{align}
\label{E:DernireEquation}
V
 \, : \; \mathcal F_a\simeq \mathbb H 
 & \longrightarrow \mathbb P^1\; , \quad
\tau \longmapsto 
V(\tau)=\begin{bmatrix}
v_0(\tau) \\
v_\infty (\tau)
\end{bmatrix}
\end{align}
 with for every $\tau\in \mathbb H$: 
$$ v_0(\tau)= \int_{\boldsymbol{\gamma}_0} T( u, \tau)du 
\qquad \mbox{ and }
\qquad  
v_\infty(\tau)=
 \int_{\boldsymbol{\gamma}_\infty} T(u,  \tau)du\, .\sk
$$

Then applying Lemma 6.1.1 of   \cite[\S3.6.1]{Gauss2Painlev} (see also Lemma 
 A.2.2. 
above) 
  to the differential system  \eqref{E:tyto}, one obtains the
\mk

\noindent{\bf Corollary B.3.5.} 
{\it 
The components $v_0$ and $v_\infty$ of Veech's map of the leaf $\mathcal F_a$ 
form a basis of the space of solutions of the  following  linear differential equation 
\begin{equation}
\label{E:2dOrderEDOVeech}
\stackrel{\bullet\bullet}{v} -\big(2i\pi{ a_0^2}/{\alpha_1}\big) \,    \stackrel{\bullet}{v}+
\Big(   \det M(\tau)+  \stackrel{\bullet}{M_{11}} \Big)
 \, v = 0 \, . 
\end{equation}}

The coefficient of $\stackrel{\bullet}{v}$ in \eqref{E:2dOrderEDOVeech} being constant, 
the functions  
$$
\qquad \qquad 
\widetilde v_\star(\tau)=\exp\big(-i\pi ({a_0^2}/{\alpha_1})\cdot \tau\big)\, v_\star(\tau)\qquad 
\mbox{with }\; \star=0,\infty
$$
 satisfy a  linear second order differential equation in reduced form and can be taken as  the components of Veech's map \eqref{E:DernireEquation}.\sk 
 
 From our point of view, the second-order Fuchsian differential equation \eqref{E:2dOrderEDOVeech} is for elliptic curves with two punctures what {Gau{\ss}}  
      hypergeometric differential equation \eqref{HGE} is for $\mathbb P^1$ with four punctures.\mk 
  
Finally, in the case when $a=(a_0,a_\infty)=\alpha_1(m/N,-n/N)$ with $N\geq 2$ and $(m,n)\in \{0,\ldots,N-1\}^2\setminus \{(0,0)\}$, we have $t=(m/N)\tau+(n/N)$,  thus 
$$
T(u)=e^{\frac{2i\pi m}{N}\alpha_1} \theta(u)^{\alpha_1}\theta\big(u-(m/N)\tau-n/N\big)^{-\alpha_1}\, .
$$

Specializing Theorem B.3.5.\;and Corollary B.3.5.\;to this case, we let 
the readers verify that one recovers (the special case of) Mano's differential system 
considered in \S\ref{SS:ManoDifferentialSystemAlgebraicLeaves}.


\end{document}